\theoremstyle{definition} 
\numberwithin{equation}{section}
\newtheorem{introtheorem}{Theorem}[section]
\newtheorem{introcor}[introtheorem]{Corollary}
\newtheorem{theorem}{Theorem}[section]
\newtheorem*{theorem*}{Theorem}
\newtheorem{lemma}[theorem]{Lemma}
\newtheorem{cor}[theorem]{Corollary}
\newtheorem{prop}[theorem]{Proposition}
\newtheorem*{prop*}{Proposition}
\newtheorem{constr}[theorem]{Construction}
\newtheorem{ex}[theorem]{Example}
\newtheorem{defin}[theorem]{Definition}
\newtheorem*{defin*}{Definition}
\newtheorem{obs}[theorem]{Observation}
\newtheorem{reminder}[theorem]{Reminder}
\newtheorem{notat}[theorem]{Notation}
\newtheorem{rem}[theorem]{Remark}
\newtheorem{warning}[theorem]{Warning}
\Crefname{rmk}{Remark}{Remarks}
\Crefname{cor}{Corollary}{Corollaries}
\Crefname{defn}{Definition}{Definitions}
\Crefname{thm}{Theorem}{Theorems}
\Crefname{corollary}{Corollary}{Corollaries}
\Crefname{axioms}{Axiom}{Axioms}
\Crefname{exercise}{Exercise}{Exercises}
\Crefname{exercisenum}{Exercise}{Exercises}
\Crefname{construction}{Construction}{Constructions}
\Crefname{problem}{Problem}{Problems}
\Crefname{theorem}{Theorem}{Theorems}
\Crefname{definition}{Definition}{Definitions}
\Crefname{proposition}{Proposition}{Propositions}
\Crefname{lemma}{Lemma}{Lemmas}
\Crefname{lem}{Lemma}{Lemmas}
\Crefname{remark}{Remark}{Remarks}
\Crefname{obs}{Observation}{Observations}
\Crefname{reminder}{Reminder}{Reminders}
\Crefname{example}{Example}{Examples}
\Crefname{examplealph}{Example}{Examples}
\Crefname{section}{Section \S\!}{Sections}
\Crefname{subsection}{Subsection \S\!}{Subsections}
\Crefname{summary}{Summary}{Summaries}
\Crefname{warning}{Warning}{Warnings}
\Crefname{part}{Part}{Parts}
\Crefname{conjecture}{Conjecture}{Conjectures}
\crefname{nul}{}{}
\Crefname{nul}{}{}
\DeclareFontFamily{U}{rcjhbltx}{}	
\DeclareFontShape{U}{rcjhbltx}{m}{n}{<->rcjhbltx}{}
\DeclareSymbolFont{hebrewletters}{U}{rcjhbltx}{m}{n}
\DeclareMathSymbol{\lamed}{\mathord}{hebrewletters}{108}
\DeclareMathSymbol{\mem}{\mathord}{hebrewletters}{109}
\DeclareMathSymbol{\ayin}{\mathord}{hebrewletters}{96}
\DeclareMathSymbol{\tsadi}{\mathord}{hebrewletters}{118}
\DeclareMathSymbol{\qof}{\mathord}{hebrewletters}{113}
\DeclareMathSymbol{\shin}{\mathord}{hebrewletters}{152}
\DeclareMathSymbol{\tav}{\mathord}{hebrewletters}{116}
\newcommand{\kcpt}{\kappa}
\newcommand{\Z}{\mathbb{Z}}
\newcommand{\C}{\mathbb{C}}
\newcommand{\E}{\mathbb{E}}
\newcommand{\unicodeinfty}{∞}
\newcommand{\Hom}{\operatorname{Hom}}
\newcommand{\Endo}{\operatorname{End}}
\newcommand{\End}{\operatorname{End}}
\newcommand{\Fun}{\operatorname{Fun}}
\newcommand{\LinFun}{\Fun^{\mathrm{L}}_{\mathcal{V}}}
\newcommand{\LinEnd}{\Endo^{\mathrm{L}}_{\mathcal{V}}}
\newcommand{\LinEndW}{\Endo^{\mathrm{L}}_{\mathcal{W}}}
\newcommand{\Arr}{\operatorname{Arr}}
\newcommand{\Ind}{\operatorname{Ind}}
\newcommand{\lInd}{\Ind_{\tav}}
\newcommand{\iHom}{\underline{\Hom}}
\newcommand{\iEnd}{\underline{\Endo}}
\newcommand{\Env}{\operatorname{Env}}
\newcommand{\RMod}{\operatorname{RMod}}
\newcommand{\Mod}{\operatorname{Mod}}
\newcommand{\Spc}{\mathcal{S}}
\newcommand{\Sp}{\mathcal{S} {p}}
\newcommand{\Spcn}{\Sp^{\mathrm{cn}}}
\newcommand{\Spaces}{\Spc}
\newcommand{\Ab}{\operatorname{Ab}}
\newcommand{\Set}{\operatorname{Set}}
\newcommand{\Tw}{\operatorname{Tw}}
\newcommand{\id}{\operatorname{id}}
\newcommand{\op}{\mathrm{op}}
\newcommand{\cat}{\mathop{\mathcal{C}at}\nolimits}
\newcommand{\BMod}[2]{{}_{#1 \!} \operatorname{Bimod}_{#2}}
\newcommand{\Catcolim}{\widehat{\cat} {}^{\mathrm{colim}}}
\newcommand{\Ass}{\operatorname{Ass}}
\newcommand{\Alg}{\operatorname{Alg}}
\newcommand{\CAlg}{\operatorname{CAlg}}
\newcommand{\LM}{\operatorname{LM}}
\newcommand{\RM}{\operatorname{RM}}
\newcommand{\BM}{\operatorname{BM}}
\newcommand{\Op}{\mathop{\mathcal{O} {p}}\nolimits}
\newcommand{\OpAss}{\Op_{\Ass}}
\newcommand{\lOpAss}{\widehat{\Op}_{\Ass}}
\newcommand{\lOpRM}{\widehat{\Op}_{\RM}}
\newcommand{\Monlax}{\operatorname{Mon}^{\text{lax}}}
\newcommand\ev{\operatorname{ev}}
\newcommand{\Fin}{\operatorname{Fin}_*}
\newcommand{\PrL}{\operatorname{Pr}}
\newcommand{\PrV}{\Pr_{\mathcal{V}}}
\newcommand{\PrW}{\Pr_{\mathcal{W}}}
\newcommand{\PrViL}{\operatorname{Pr}_\mathcal{V}^{\mathrm{iL}}}
\renewcommand{\Pr}{\operatorname{Pr}\nolimits}
\newcommand{\Map}{\operatorname{Map}}
\newcommand{\PSh}{{\mathcal{P}}} 
\newcommand{\lPSh}{\hat{\PSh}}
\newcommand{\freecoc}{\PSh^{\tav\text{-rex}}}
\newcommand{\PShV}{{\PSh_\mathcal{V}}}
\newcommand{\PShW}{{\PSh_\mathcal{W}}}
\newcommand{\LMod}{\operatorname{LMod}}
\newcommand{\kerodon}[1]{\cite[\href{https://kerodon.net/tag/#1}{Tag #1}]{kerodon}}
\newcommand{\kerodons}[2]{\cite[\href{https://kerodon.net/tag/#1}{Tags #1} and \href{https://kerodon.net/tag/#2}{#2}]{kerodon}}
\newcommand{\catV}{\cat(\mathcal{V})}
\newcommand{\catW}{\cat(\mathcal{W})}
\newcommand{\vcat}{{\mathop{v\mathcal{C}at}\nolimits}}
\newcommand{\lvcat}{\widehat{\vcat}}
	\newcommand{\vcatsmall}{\vcat^{\tav}}
	\newcommand{\vcatlarge}{\vcat^{\hat{\tav}}}
	\newcommand{\catsmall}{\cat^{\tav}}
	\newcommand{\catlarge}{\cat^{\hat{\tav}}}
\newcommand{\lcat}{\widehat{\cat}}
\newcommand{\llcat}{\doublewidehat{\cat}}
\newcommand{\lSpaces}{\hat{\Spaces}}
\newcommand{\vcatV}{\vcat(\mathcal{V})}
\newcommand{\vcatW}{\vcat(\mathcal{W})}
\newcommand{\vcatS}{\vcat(\Spaces)}
\newcommand{\vcatXV}{\vcat_X(\mathcal{V})}
\newcommand{\vcatXW}{\vcat_X(\mathcal{W})}
\newcommand{\vcatYV}{\vcat_Y(\mathcal{V})}
\newcommand{\Enr}{\mathop{\mathcal{E}nr}\nolimits}
\newcommand{\vEnr}{\mathop{v\mathcal{E}nr}\nolimits}
\newcommand{\lEnr}{\widehat{\Enr}}
\newcommand{\lvEnr}{\widehat{\vEnr}}
\newcommand{\FCat}{{\mathop{\!\mathcal{FC}at}\nolimits}}
\newcommand{\PrVag}{\Pr_{\cV \mathrm{, ag}}}
\newcommand{\lPrL}{\widehat{\Pr}{}}
\newcommand{\MMod}{{\operatorname{Mod}^\star}}
\newcommand{\MModV}{{\operatorname{Mod}^\star_\cV}}
\newcommand{\MModW}{{\operatorname{Mod}^\star_\cW}}
\newcommand{\MModS}{{\operatorname{Mod}^\star_{\Spaces}}}
\newcommand{\MModXV}{{\operatorname{Mod}^\star_{X, \cV}}}
\newcommand{\MModXW}{{\operatorname{Mod}^\star_{X, \cW}}}
\newcommand{\MModYV}{{\operatorname{Mod}^\star_{Y, \cV}}}
\newcommand{\MModbV}{{\operatorname{Mod}^\star_{(-), \cV}}}
\newcommand{\MModXb}{{\operatorname{Mod}^\star_{X,-}}}
\newcommand{\MModb}{{\operatorname{Mod}^\star_{(-)}}}
\newcommand{\MModbb}{{\operatorname{Mod}^\star_{-,-}}}
\newcommand{\MModqb}{{\operatorname{Mod}^\star_{?,-}}}
\newcommand{\Quiv}{\operatorname{Quiv}}
\newcommand{\QuivX}{\Quiv_X }
\newcommand{\QuivXV}{\QuivX(\mathcal{V}) }
\newcommand{\ob}{ob}
\newcommand{\cdom}{\mathrm{cdom}}
\newcommand{\CIm}{\operatorname{CIm}}
\newcommand{\EM}{\operatorname{EM}}
\newcommand{\To}{\Rightarrow}
\newcommand{\IPr}{\mathbb{P}\mathrm{r}}
\newcommand{\IPrV}{\IPr_{\mathcal{V}}}
\DeclareMathOperator*\colim{colim}
\newcommand{\oArr}[2]{\overunderset{#1}{#2}{\downarrow}}	
\newcommand{\doublehat}[1]{%
	\begingroup%
	\let\macc@kerna\z@%
	\let\macc@kernb\z@%
	\let\macc@nucleus\@empty%
	\hat{\raisebox{.2ex}{\vphantom{\ensuremath{#1}}}\smash{\hat{#1}}}%
	\endgroup%
}
\newcommand{\doublewidehat}[1]{%
	\begingroup%
	\let\macc@kerna\z@%
	\let\macc@kernb\z@%
	\let\macc@nucleus\@empty%
	\widehat{\raisebox{.2ex}{\vphantom{\ensuremath{#1}}}\smash{\widehat{#1}}}%
	\endgroup%
}
\DeclareFontFamily{T1}{cbgreek}{}
\DeclareFontShape{T1}{cbgreek}{m}{n}{<-6>  grmn0500 <6-7> grmn0600 <7-8> grmn0700 <8-9> grmn0800 <9-10> grmn0900 <10-12> grmn1000 <12-17> grmn1200 <17-> grmn1728}{}
\DeclareSymbolFont{quadratics}{T1}{cbgreek}{m}{n}
\DeclareMathSymbol{\qoppa}{\mathord}{quadratics}{19}
\DeclareMathSymbol{\Qoppa}{\mathord}{quadratics}{21}
\DeclareFontFamily{U}{dmjhira}{}
\DeclareFontShape{U}{dmjhira}{m}{n}{ <-> dmjhira }{}
\DeclareRobustCommand{\yo}{\text{\usefont{U}{dmjhira}{m}{n}\symbol{"48}}}
\newcommand{\yoV}{\yo {}^{\!\mathcal{V}}}
\def\cA{\mathcal A}\def\cB{\mathcal B}\def\cC{\mathcal C}\def\cD{\mathcal D}
\def\cE{\mathcal E}
\def\cJ{\mathcal J}\def\cL{\mathcal L}
\def\cM{\mathcal M}\def\cN{\mathcal N}\def\cP{\mathcal P}
\def\cQ{\mathcal Q}\def\cR{\mathcal R}\def\cS{\mathcal S}
\def\cV{\mathcal V}\def\cW{\mathcal W}\def\cX{\mathcal X}
\def\cY{\mathcal Y}
\def\ccC{C}\def\ccD{D}
\newcommand{\rL}{{\mathrm L}}
\newcommand{\rR}{{\mathrm R}}
\newcommand{\HTTsec}[1]{\href{http://www.math.ias.edu/~lurie/papers/HTT.pdf\#section.#1}{\S #1}}
\newcommand{\HAsubsec}[1]{\href{http://www.math.ias.edu/~lurie/papers/HA.pdf\#subsection.#1}{\S #1}}
\newcommand{\HTTthm}[2]{\href{http://www.math.ias.edu/~lurie/papers/HTT.pdf\#theorem.#2}{#1~#2}}
\newcommand{\HAthm}[2]{\href{http://www.math.ias.edu/~lurie/papers/HA.pdf\#theorem.#2}{#1~#2}}
\newcommand{\SAGthm}[2]{\href{http://www.math.ias.edu/~lurie/papers/SAG-rootfile.pdf\#theorem.#2}{#1~#2}}
\newcommand{\HTT}[2]{\cite[\HTTthm{#1}{#2}]{HTT}}
\newcommand{\HA}[2]{\cite[\HAthm{#1}{#2}]{HA}}
\newcommand{\HAss}[4]{\cite[\HAthm{#1}{#2}, \HAthm{#3}{#4}]{HA}}
\newcommand{\SAG}[2]{\cite[\SAGthm{#1}{#2}]{SAG}}
\tikzset{curve/.style={settings={#1},to path={(\tikztostart)
    .. controls ($(\tikztostart)!\pv{pos}!(\tikztotarget)!\pv{height}!270:(\tikztotarget)$)
    and ($(\tikztostart)!1-\pv{pos}!(\tikztotarget)!\pv{height}!270:(\tikztotarget)$)
    .. (\tikztotarget)\tikztonodes}},
    settings/.code={\tikzset{quiver/.cd,#1}
        \def\pv##1{\pgfkeysvalueof{/tikz/quiver/##1}}},
    quiver/.cd,pos/.initial=0.35,height/.initial=0}
\title{Enriched \texorpdfstring{$\infty$}{\unicodeinfty}-categories as Marked Module Categories}
\author{David Reutter}
\address{Fachbereich Mathematik, Universit\"at Hamburg}
\email{david.reutter@uni-hamburg.de}
\urladdr{https://www.davidreutter.com}
\author{Markus Zetto}
\address{Fachbereich Mathematik, Universit\"at Hamburg}
\email{markus.zetto@uni-hamburg.de}
\urladdr{https://www.markus-zetto.com}
\date{\today{}, Hamburg}
\begin{document}
	
	\pagenumbering{gobble}
	\clearpage
	\hypersetup{pageanchor=false}
	\begin{abstract}
	We prove that an enriched $\infty$-category is completely determined by its enriched presheaf category together with a `marking' by the representable presheaves. More precisely, for any presentably monoidal $\infty$-category $\cV$ we construct an equivalence between the category of $\cV$-enriched $\infty$-categories and  a certain full sub-category of the category of presentable $\cV$-module categories equipped with a functor from an $\infty$-groupoid. This effectively allows us to reduce many aspects of enriched $\infty$-category theory to the theory of presentable $\infty$-categories.
	
	As applications, we use Lurie's tensor product of presentable $\infty$-categories to construct a tensor product of enriched $\infty$-categories with many desirable properties --- including compatibility with colimits and appropriate monoidality of presheaf functors --- and compare it to existing tensor products in the literature. 
	We also re-examine and provide a model-independent reformulation of the notion of  univalence (or Rezk-completeness) for enriched $\infty$-categories.

	Our comparison result relies on a monadicity theorem for presentable module categories which may be of independent interest.

	\end{abstract}
	\maketitle
	\hypersetup{pageanchor=true}
	
	\setcounter{tocdepth}{1}
	\tableofcontents

	
	\pagenumbering{roman}
	
	\section{Introduction}
	\label{sec:intro}

Enriched $\infty$-category theory is a homotopy coherent analog of the classical theory of enriched categories. As such, enriched $\infty$-categories naturally occur whenever morphism spaces carry additional structure,  and provide a flexible and powerful tool for an inductive definition of (weak) $n$-categories and $(\infty,n)$-categories~\cite{haugseng2015rectification}, and structured variants thereof.

There exist several definitions of enriched $\infty$-categories \cite{HA, haugseng, hinich}, all known to be equivalent by  \cite{macpherson2019operad,heine}.  The goal of this paper is to give a further equivalent definition axiomatizing enriched $\infty$-categories in terms of their presheaf categories together with a `marking' by the representable presheaves. A key advantage of this approach is that it does not rely on intricate combinatorial or operadic constructions and effectively reduces enriched $\infty$-category theory to the higher algebra of presentable $\infty$-categories, arguably one of the best-developed and most powerful branches of $\infty$-category theory. Presentable $\infty$-categories, defined in \cite{HTT} generalizing the classical notion of \cite{gabriel2006lokal}, are $\infty$-categories which admit small colimits and satisfy a certain set-theoretic size constraint. They are engineered to allow for a simplified adjoint functor theorem and thus provide a setting in which $\infty$-categories and functors can be easily constructed from universal properties.

\subsection*{Overview}

Intuitively speaking, an $\infty$-category $\cC$ enriched in a monoidal $\infty$-category $\cV$  consists of
	\begin{itemize}
		\item an $\infty$-groupoid $X$ of objects;
		\item for any two objects $x, x' \in X$ a morphism object $\Hom_\cC (x, x') \in \mathcal{V}$;
		\item for each $x \in X$ an identity morphism $\operatorname{id}_x : 1_\mathcal{V} \to \Hom_\mathcal{C} (x,x)$;
		\item for each triple of objects $x_0, x_1, x_2$ a composition operation in $\cV$:
		\begin{equation*}
			\Hom_\mathcal{C} (x_{0}, x_1) \otimes \Hom_{\cC}(x_1, x_2) \to \Hom_{\cC}(x_0, x_2) \; ;
		\end{equation*}
		\item higher coherence isomorphisms witnessing associativity and unitality of composition.
	\end{itemize}
	The definition of enriched $\infty$-category of Gepner and Haugseng \cite[Def.\ 4.3.1]{haugseng} directly encodes this data as an algebra over a certain $\infty$-operad.	If $\cV$ is presentably monoidal, i.e.\ admits small colimits which are preserved by the tensor product in each variable and satisfies a certain set theoretic size constraint, we  follow an equivalent but technically easier approach due to Hinich:
	Namely,  for any $\infty$-category $C$, the $\infty$-category $\Fun(C^{\op}, \cV)$ is the free $\infty$-category with colimits and a $\cV$-module structure which is compatible with colimits. In particular,  there is an equivalence
	\[\Fun(C^{\op}\times C, \cV) \simeq \Fun(C, \Fun(C^{\op}, \cV)) \simeq \LinEnd(\Fun(C^{\op},\cV))\]
	with the category of $\cV$-linear colimit-preserving endofunctors of $\Fun(C^{\op},\cV)$. 
	Transporting the composition of endofunctors along this equivalence defines a monoidal structure on $\Fun(C^{\op}\times C, \cV)$. If $C=X$ is an $\infty$-groupoid, the tensor product of $A, B \in \Fun(X^{\op}\times X, \cV)$ explicitly unpacks to the following `matrix multiplication':
	\[ A\otimes B (x,z):= \colim_{y \in X} A(x,y) \otimes B(y,z)
	\] 
	
\begin{defin*}[{\cite[Prop.\ 4.5.3]{hinich}}] \label{intro:enriched} For $\cV$ a presentably monoidal $\infty$-category\footnote{
The restriction to presentable monoidal enrichment is technically  convenient but does not restrict the scope of the above definition: for $V$ a not necessarily presentably monoidal $\infty$-category, $V$-enriched $\infty$-categories coincide with $\infty$-categories enriched in the presentably monoidal presheaf category $\PSh(V)$ for which all hom-objects lie in the full subcategory $V \subseteq \PSh(V)$. We extend several of our results to this setting in \S \ref{sec:small}. 
} and $X$ an $\infty$-groupoid,  a \emph{$\cV$-enriched $\infty$-category with $\infty$-groupoid of objects $X$} is an algebra object in $ \Fun(X^{\op}\times X, \cV) \simeq \LinEnd(\Fun(X^{\op}, \cV))$. We denote the $\infty$-category of $\cV$-enriched $\infty$-categories\footnote{In the rest of the paper, we refer to them as \emph{valent} enriched categories, to distinguish them from the \emph{univalent} enriched categories from \cref{introthm:univalence}.} with $\infty$-groupoid of objects $X$ by $\vcat_X(\cV):= \Alg(\LinEnd(\Fun(X^{\op},\cV)))$. 
\end{defin*}

A \emph{presheaf} on an enriched $\infty$-category with $\infty$-groupoid of objects $X$ is a functor $F:X^{\op} \to \cV$ together with  morphisms $\Hom(x,y) \otimes F(y) \to F(x)$ in $\cV$, natural in $x,y \in X$, and higher coherence isomorphisms. In the spirit of the previous definition, this can be made precise by defining \[\PShV(\cC):= \LMod_{\cC}(\Fun(X^{\op},\cV))\] using the canonical left action of $\LinEnd(\Fun(X^{\op},\cV))$ on $\Fun(X^{\op},\cV)$.
The presheaf category is itself presentable and admits a $\cV$-action which is compatible with colimits: It thus defines an object of the $\infty$-category $\RMod_{\cV}(\PrL)$ of right $\cV$-modules in $\PrL$, the $\infty$-category of presentable categories and colimit-preserving functors. Moreover,  it comes equipped with a canonical Yoneda functor $\yoV_{\cC}: X \to \PShV(\cC)$ sending an object $x$ to the representable presheaf $\Hom_{\cC}(-, x): X^{\op} \to \cV$.

Our first main theorem is that any $\cC\in \vcat_X(\cV)$ is completely determined by its presheaf category together with its marking by representable presheaves, proving Conjecture 1.1 of \cite{berman}:\begin{introtheorem}[{\cref{thm:charessim}.}] \label{intro:maintheoremA}For $X$ an $\infty$-groupoid and $\cV$ a presentably monoidal $\infty$-category, the functor \[\vcat_X(\cV) \to \RMod_{\cV}(\PrL)_{X/} := \widehat{\cat}_{X/} \times_{\widehat{\cat}} \RMod_{\cV}(\PrL) \hspace{1cm} \cC \mapsto (X\to \PShV(\cC))\] is fully faithful. Its essential image consists of those $(X\to \cM) \in \RMod_{\cV}(\PrL)_{X/}$ for which the image of $X$ in $\cM$ consists of $\cV$-atomic objects\footnote{An object $m\in \cM$ in a presentable $\cV$-module category is \emph{$\cV$-atomic} (also called \emph{tiny}) if the internal hom $\iHom_{\cM}(m,-): \cM \to \cV$ preserves colimits and if for all $v\in \cV$ the canonical morphisms $\iHom_{\cM}(m, -)\otimes v \to \iHom_{\cM}(m, -\otimes v)$ is an isomorphism.}   and generates $\cM$ under colimits and the $\cV$-action. We refer to such a $X\to \cM$ as a \emph{marked $\cV$-module}. 
\end{introtheorem}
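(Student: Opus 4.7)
The plan is to exhibit an explicit inverse (on the purported essential image) to the functor $\cC \mapsto (X \to \PShV(\cC))$ by means of a monadicity argument, which will simultaneously yield both fully faithfulness and the description of the essential image.

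Given a marked $\cV$-module $(f : X \to \cM)$, I first use the universal property of $\Fun(X^{\op}, \cV)$ as the free presentable $\cV$-module on $X$ to extend $f$ to a $\cV$-linear colimit-preserving functor $F_f : \Fun(X^{\op}, \cV) \to \cM$, and take $G_f$ to be its right adjoint in $\PrL$. The hypothesis that the image of $f$ is pointwise $\cV$-atomic is precisely what promotes $G_f$ to a $\cV$-linear colimit-preserving functor: its value at $m$ is the presheaf $x \mapsto \iHom_\cM(f(x), m)$, and the two clauses in the definition of $\cV$-atomic supply respectively the pointwise preservation of colimits and the pointwise compatibility with the $\cV$-action. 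Consequently, the monad $T_f := G_f F_f$ lives in $\LinEnd(\Fun(X^{\op}, \cV))$ and defines an enriched category $\cC_f \in \vcat_X(\cV)$.

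The main step is then to apply a $\cV$-linear version of the Barr--Beck--Lurie monadicity theorem to $F_f \dashv G_f$. Since $G_f$ preserves all colimits, only conservativity needs verification, and this follows from the generation hypothesis on the image of $f$: the functors $\iHom_\cM(f(x),-)$ then form a jointly conservative family of $\cV$-linear colimit-preserving functors to $\cV$. This yields an equivalence $\cM \simeq \LMod_{T_f}(\Fun(X^{\op}, \cV)) = \PShV(\cC_f)$ in $\RMod_\cV(\PrL)_{X/}$. Upgrading the standard monadicity equivalence in $\PrL$ to one internal to $\RMod_\cV(\PrL)$ --- so that the $\cV$-module structures and $X$-markings are properly respected --- is the chief technical obstacle, and constitutes the ``monadicity theorem for presentable module categories'' advertised in the abstract.

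In the other direction, for $\cC \in \vcat_X(\cV)$ the free-forgetful adjunction between $\Fun(X^{\op}, \cV)$ and $\PShV(\cC) = \LMod_\cC(\Fun(X^{\op}, \cV))$ is tautologically monadic with endomorphism monad $\cC$. The representables $\yoV(x) \in \PShV(\cC)$ are $\cV$-atomic, since an enriched Yoneda computation identifies $\iHom_{\PShV(\cC)}(\yoV(x), -)$ with evaluation at $x$; and they generate $\PShV(\cC)$ under colimits and the $\cV$-action by bar-resolving any $\cC$-module by free modules. Therefore $\cC \mapsto (X \to \PShV(\cC))$ genuinely lands in marked $\cV$-modules, and the round-trip $\cC \mapsto \PShV(\cC) \mapsto \cC_{\PShV(\cC)}$ returns $\cC$. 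Combined with the monadicity equivalence of the preceding step, this identifies the two constructions as mutually inverse equivalences between $\vcat_X(\cV)$ and the full subcategory of marked $\cV$-modules, simultaneously yielding fully faithfulness and the essential image description.
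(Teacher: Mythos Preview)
Your proposal is essentially the paper's approach: both arguments reduce to a monadicity theorem internal to $\RMod_{\cV}(\PrL)$, identify atomicity of the marked objects with the right adjoint $G_f$ being $\cV$-linear and colimit-preserving, and identify generation with conservativity of $G_f$. The paper organizes this slightly differently---it first proves that the Eilenberg--Moore functor $\Alg(\LinEnd(\cP)) \to (\PrV)_{\cP/}$ is fully faithful for any $\cP \in \PrV$ by exhibiting the endomorphism-object construction as a right adjoint with invertible unit, and then separately characterizes the essential image via the internal monadicity criterion---whereas you build an explicit inverse on the image and verify both round-trips; but these are equivalent packagings of the same argument, and you correctly flag the internal monadicity upgrade as the main technical content.
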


Allowing arbitrary $\infty$-groupoids of objects $X$, the categories $\vcat_X(\cV)$ assemble into a Cartesian fibration $ \ob: \vcat(\cV) \to \Spaces$ over the $\infty$-category $\Spaces$ of $\infty$-groupoids, assigning to a $\cV$-enriched $\infty$-category $\cC$ its underlying $\infty$-groupoid of objects $\ob \cC$. 
On the other hand, the dependence on $X$ in marked modules may be dropped by replacing $(\RMod_\cV(\PrL))_{X/}$ by the $\infty$-category\footnote{Here, and below we write $\Arr(\cD):= \Fun([1], \cD)$ for the arrow category of an $\infty$-category $\cD$.} $\Arr(\lcat) \times_{\lcat}\RMod_{\cV}(\PrL)$ of functors from a (large) category to a presentable $\cV$-module.

\begin{introtheorem}[{\cref{prop:coCart} and \cref{thm:functorialcomparison}}] \label{introthm:functorial}
The assignment $\cC \mapsto (\ob \cC \to \PShV(\cC))$ induces a fully faithful functor   \[\vcat(\cV) \hookrightarrow   \Arr(\widehat{\cat}) \times_{\widehat{\cat}} \RMod_{\cV}(\PrL)\] with essential image the marked $\cV$-modules. 
In fact, this functor is itself a component of a natural transformation between functors $\Alg(\PrL) \to \lcat$ relating change-of-enrichment with extension of scalars in $\RMod(\PrL)$. 
\end{introtheorem}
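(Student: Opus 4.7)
The plan is to establish Theorem~B in two stages: first upgrade the fiber-wise statement of Theorem~A to a fully faithful functor over the base $\Spaces$ (with the correct essential image), then promote the resulting functor to a component of a natural transformation in $\cV$.

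For the construction of the global functor, I would use the universal property of $\PShV(\cC)$. A morphism $F: \cC \to \cD$ covering $f: \ob\cC \to \ob\cD$ induces a $\cV$-linear colimit-preserving left Kan extension $F_!: \PShV(\cC) \to \PShV(\cD)$, and naturality of the enriched Yoneda functor gives a homotopy $F_! \circ \yoV_\cC \simeq \yoV_\cD \circ f$, producing a commuting square with $\cV$-linear module-map on the right. This assembles into a functor $\vcat(\cV) \to \Arr(\lcat) \times_\lcat \RMod_\cV(\PrL)$ that lives over $\Spaces$ via $\ob$ on the source and source-evaluation of the arrow on the target, and whose restriction to the fiber over $X \in \Spaces$ is exactly the functor of Theorem~A.

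For fully faithfulness, I would compute mapping spaces fiber-wise over $\Map_{\Spaces}(\ob\cC, \ob\cD)$. On the enriched side, a morphism over $f$ is a morphism $\cC \to f^*\cD$ in $\vcat_X(\cV)$. On the target side, a morphism over $f$ is a $\cV$-linear colimit-preserving $\varphi: \PShV(\cC) \to \PShV(\cD)$ with $\varphi \circ \yoV_\cC \simeq \yoV_\cD \circ f$. By the universal property of $\PShV(\cC)$ as the free $\cV$-module presentably generated by $\cC$, such a $\varphi$ corresponds to a morphism $\cC \to \cF^* \PShV(\cD)$ in $\vcat_X(\cV)$, where $\cF = \yoV_\cD \circ f$ and $\cF^* \PShV(\cD)$ denotes the enriched category on $X$ with hom-objects $\iHom_{\PShV(\cD)}(\cF(x), \cF(x'))$. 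The enriched Yoneda lemma identifies $\iHom_{\PShV(\cD)}(\yoV_\cD(fx), \yoV_\cD(fx')) \simeq \Hom_\cD(fx, fx') = \Hom_{f^*\cD}(x, x')$, whence $\cF^* \PShV(\cD) \simeq f^*\cD$ and the two mapping spaces agree. The essential image statement is fiber-wise, hence inherited directly from Theorem~A since both the $\cV$-atomicity and the generation conditions are visibly stable and detected in each fiber.

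For the natural transformation statement, both functors extend to $\Alg(\PrL) \to \lcat$: on the source, a morphism $\cV \to \cW$ acts as change-of-enrichment $\cC \mapsto \cW \otimes_\cV \cC$; on the target, as extension of scalars $\cM \mapsto \cW \otimes_\cV \cM$ on the module component (with identity on the $\Arr(\lcat)$ side, since change of enrichment does not alter the groupoid of objects). Naturality amounts to the compatibility
\[ \PShW(\cW \otimes_\cV \cC) \simeq \cW \otimes_\cV \PShV(\cC), \]
together with matching Yoneda functors. This identity ultimately comes from the commutation of relative tensor products with $\LMod$ for algebras in $\PrL$: both sides present $\LMod_{\cW \otimes_\cV \cC}$ of $\cW \otimes_\cV \Fun(X^{\op}, \cV) \simeq \Fun(X^{\op}, \cW)$. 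The main obstacle will be organizing this compatibility coherently rather than pointwise: the cleanest route is to exhibit both $\cV \mapsto \vcat(\cV)$ and $\cV \mapsto \Arr(\lcat) \times_\lcat \RMod_\cV(\PrL)$ as (co)Cartesian unstraightenings of a single bifunctor on $\Alg(\PrL)$, and package the enriched Yoneda as a natural transformation at that level, so that uniqueness of universal constructions in $\RMod_\cV(\PrL)$ forces the required coherences automatically.
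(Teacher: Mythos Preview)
Your fiber-wise fully faithfulness argument is essentially correct and close in spirit to what the paper does at the end (checking the unit of a relative adjunction is invertible, which reduces to Theorem~A fiber-wise). The genuine gap is in the construction of the functor itself: the phrase ``this assembles into a functor'' is doing all the work. Making left Kan extensions and Yoneda squares coherent in both $\cC$ and $\cV$ simultaneously is exactly the technical obstacle, and your proposal does not explain how to achieve this beyond the final sentence's gesture toward unstraightening.

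The paper takes a rather different route to produce the functor. Rather than building $\cC \mapsto (\ob\cC \to \PShV(\cC))$ directly, it imports from \cite{heine} a functor $\chi: \RMod(\PrL) \to \vEnr_{\text{large spc}}$ going the \emph{other} way, already constructed as a map of Cartesian fibrations over $\Alg(\PrL)$. The presheaf functor $\PSh_{\mathrm{enr}}$ is then obtained as a \emph{partial left adjoint} to $\chi$ (relative to $\Alg(\PrL)$), which is automatically a map of coCartesian fibrations; the lift to the arrow category is produced by a formal ``counit'' construction (\cref{obs:counit}) applied to the free-enriched-category adjunction $\Spaces \times \Alg(\PrL) \rightleftarrows \vEnr$. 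This buys all the coherence for free: partial adjoints and counits are unique, so no assembly is required. Fully faithfulness then follows because the resulting functor is itself a relative left adjoint over $\Spaces$, whose unit is checked to be invertible fiber-wise via Theorem~A.

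Your direct approach could likely be made to work, but it would require either a careful $(\infty,2)$-categorical argument for the functoriality of enriched left Kan extension, or a reduction to the operadic models where such functoriality is built in---at which point you are essentially reconstructing Heine's input. The paper's strategy of passing through a right adjoint $\chi$ and then taking left adjoints is what makes the coherence tractable.
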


Theorems \ref{intro:maintheoremA} and \ref{introthm:functorial} allow us to reduce many definitions and constructions involving enriched $\infty$-categories to the well-developed toolbox of presentable $\infty$-categories.
We demonstrate this by examining two important concepts in enriched $\infty$-category theory.

Firstly, the following variants of statements in \cite{haugseng} become essentially automatic when expressed in terms of marked modules: 
\begin{introtheorem}[{\cref{cor:GHcomparison}, \cref{prop:univalization}, and \cref{thm:spacesenruniv}}]
\label{introthm:univalence}
Let $\cV$ be a presentably monoidal $\infty$-category.
\begin{enumerate}[(1)]
\item A $\cV$-enriched $\infty$-category $\cC$ is \emph{univalent} (or Rezk complete) in the sense of \cite[Def.~5.2.2]{haugseng}  if and only if the functor $\yoV_{\cC}: \ob \cC \to \PShV(\cC)$ is a subcategory inclusion, i.e.\ induces a fully faithful embedding of $\infty$-groupoids $\ob \cC \hookrightarrow \PShV(\cC)^{\simeq}$. We denote the full sub-$\infty$-category on the univalent enriched $\infty$-categories by $\cat(\cV) \subseteq \vcat(\cV)$.
\item The full inclusion $\cat(\cV) \subseteq \vcat(\cV)$ of the univalent enriched categories admits a left adjoint,  \emph{univalization}, which sends $\cC$ to the enriched $\infty$-category associated to the marked module $\mathrm{Im}(\ob \cC \to \PShV(\cC))^{\simeq} \to \PShV(\cC)$.
\item The functor $\cat(\Spaces) \to \cat$ sending a  $\cC \in \cat(\Spaces)$ to the full image of $\ob \cC \to \PSh_{\Spaces}(\cC)$ is an equivalence. 
\end{enumerate}
\end{introtheorem}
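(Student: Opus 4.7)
I would prove the equivalence by constructing an explicit inverse functor $\Phi : \cat \to \cat(\Spaces)$ and verifying both composites are naturally equivalent to the identity via \cref{intro:maintheoremA} and part~(1). Given $D \in \cat$, set $\Phi(D) \in \cat(\Spaces)$ to be the enriched $\infty$-category corresponding, under \cref{intro:maintheoremA}, to the pair $(D^{\simeq} \xrightarrow{\yo|_{D^{\simeq}}} \PSh(D))$. To verify this is a marked $\Spaces$-module I would check that each representable $\yo(d)$ is $\Spaces$-atomic --- $\Map_{\PSh(D)}(\yo(d),-)$ is evaluation at $d$ and hence preserves all colimits --- and that the representables generate $\PSh(D)$ under colimits. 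Moreover, $\Phi(D)$ is univalent by part~(1), since $D^{\simeq} \to \PSh(D)^{\simeq}$ is the core of the fully faithful Yoneda embedding, hence fully faithful on $\infty$-groupoids.

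\emph{First composite.} Applying the functor from~(3) to $\Phi(D)$ returns the full subcategory of $\PSh(D)$ spanned by the representables, which is equivalent to $D$ itself by fully faithful Yoneda, producing a natural equivalence with $\id_{\cat}$.

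\emph{Second composite.} Fix $\cC \in \cat(\Spaces)$ and let $D := \mathrm{Im}(\ob \cC \to \PSh_{\Spaces}(\cC))$. By univalence and part~(1), the induced map $\ob \cC \to D^{\simeq}$ is an equivalence. The inclusion $D \hookrightarrow \PSh_{\Spaces}(\cC)$ extends uniquely, via the universal property of $\PSh(D)$ as the free cocompletion of $D$, to a colimit-preserving functor $\iota_{!} : \PSh(D) \to \PSh_{\Spaces}(\cC)$ intertwining the Yoneda embeddings. I would then argue $\iota_{!}$ is an equivalence of presentable $\infty$-categories: it is essentially surjective because $D$ generates $\PSh_{\Spaces}(\cC)$ under colimits (the marked-module condition); for fully faithfulness, both sides of the comparison map $\Map_{\PSh(D)}(F,G) \to \Map_{\PSh_{\Spaces}(\cC)}(\iota_{!}F, \iota_{!}G)$ send colimits in $F$ to limits and agree when $F = \yo(d)$ is representable, where writing $G = \colim_{j} \yo(e_{j})$ the computation
\[
\Map_{\PSh_{\Spaces}(\cC)}(d, \colim_j e_j) \simeq \colim_j \Map_{\PSh_{\Spaces}(\cC)}(d, e_j) \simeq \colim_j \Map_D(d, e_j) \simeq G(d)
\]
reduces to $\Spaces$-atomicity of $d$ in the first step. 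Hence $\iota_{!}$ is an equivalence of $\Spaces$-modules transporting the marked module of $\Phi(D)$ to that of $\cC$, and \cref{intro:maintheoremA} yields $\cC \simeq \Phi(D)$.

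\emph{Main obstacle.} The critical step is fully faithfulness of $\iota_{!}$, which requires the full strength of the marked-module condition: atomicity supplies the commutation of $\Map$ with colimits in the first variable, while generation gives essential surjectivity. Once $\iota_{!}$ is identified as an equivalence of marked modules, functoriality of $\Phi$ and naturality of both composite equivalences follow formally from the universal characterization of \cref{intro:maintheoremA}.
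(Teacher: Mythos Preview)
Your argument for part~(3) is correct and follows essentially the same route as the paper: both construct the inverse $D \mapsto (D^{\simeq} \to \PSh(D))$ and verify the two round-trips, the key step being that $\PSh(D) \to \PSh_{\Spaces}(\cC)$ is an equivalence when $D$ is the full image of the marking. The paper packages this slightly differently---it first proves the \emph{valent} analogue $\FCat \simeq \vcat(\Spaces)$ via an adjunction between flagged categories and marked $\Spaces$-modules whose unit and counit are shown to be invertible, then restricts to univalent objects---and for the key equivalence it simply cites \HTT{Cor.}{5.1.6.11} rather than arguing directly as you do. One small point: in your computation $\colim_j \Map_D(d,e_j) \simeq G(d)$ you are implicitly also using that $\yo(d)$ is completely compact in $\PSh(D)$, so atomicity enters on both sides; and the logical order should be fully-faithfulness first, since essential surjectivity of $\iota_!$ relies on its image being a full subcategory closed under colimits.
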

In particular, it follows from \cref{introthm:univalence}(1) (see  \cref{lem:PShValmostff}) that the presheaf functor \[\PShV:\cat(\cV) \to \RMod_{\cV}(\PrL)\] is faithful, i.e.\ induces a monomorphism on mapping spaces: A $\cV$-linear colimit-preserving functor $\PShV(\cC)\to \PShV(\cD)$ arises from a $\cV$-enriched functor $\cC \to \cD$ if and only if $\pi_0 \PShV(\cC)^{\simeq} \to \pi_0 \PShV(\cD)^{\simeq}$ preserves the subsets of representable presheaves. 

Secondly, the marked module perspective allows to use Lurie's tensor product on $\PrL$ to construct monoidal structures on $\vcat(\cV)$ with desirable properties:

\begin{introtheorem}[Corollaries \ref{cor:laxsymmetric} and \ref{cor:catpreservescolims}] \label{introthm:lax}
Consider the natural transformation 
\[\vcat(-) \To \Spaces\times_{\RMod_{-}(\cV)} \Arr(\RMod_{-}(\PrL)) : \Alg(\PrL) \to\lcat
\]
given at $\cV \in \Alg(\PrL)$ by the functor $\vcat(\cV) \to \Spaces\times_{\RMod_{\cV}(\PrL)} \Arr( \RMod_{\cV}(\PrL))$ sending   $\cC \in \vcat(\cV)$ to its underlying $\infty$-groupoid of objects $\ob \cC$ together with the unique $\cV$-linear colimit-preserving extension $\Fun((\ob \cC)^{\op}, \cV) \to \PShV(\cC)$ of its Yoneda functor $\ob \cC \to \PShV(\cC)$.  \begin{enumerate}[(1)] \item  The functor $\vcat(-): (\Alg(\PrL), \otimes) \to (\lcat, \times)$ admits a unique lax symmetric monoidal structure making this natural transformation symmetric monoidal. 
\item The functor $\cat(-):(\Alg(\PrL), \otimes) \to (\lcat, \times)$  admits a unique lax symmetric monoidal structure making the univalization natural transformation $\vcat(-) \To \cat(-)$ symmetric monoidal. 
\end{enumerate}
\end{introtheorem}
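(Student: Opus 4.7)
The plan is to transfer a symmetric monoidal structure from the target of the natural transformation down to $\vcat(-)$, exploiting the fact that the transformation is levelwise fully faithful by \cref{introthm:functorial} and then showing that its essential image is closed under the ambient symmetric monoidal structure.

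First I would equip the target functor
\[
T(\cV) := \Spaces \times_{\RMod_{\cV}(\PrL)} \Arr(\RMod_{\cV}(\PrL))
\]
with a symmetric monoidal structure, natural in $\cV \in \Alg(\PrL)$. This rests on three ingredients: (i) Lurie's relative tensor product makes $\RMod_{(-)}(\PrL) : \Alg(\PrL) \to \lcat$ a symmetric monoidal functor; (ii) the pointwise tensor of arrows makes $\Arr(-)$ a symmetric monoidal endofunctor of $\lcat$; and (iii) the free-module functor $\Spaces \to \RMod_{\cV}(\PrL)$, $X \mapsto \Fun(X^{\op}, \cV)$, is symmetric monoidal via the Day-convolution equivalence $\Fun(X^{\op}, \cV) \otimes \Fun(Y^{\op}, \cW) \simeq \Fun((X \times Y)^{\op}, \cV \otimes \cW)$. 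Assembling these yields a symmetric monoidal structure on $T$.

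Next I would invoke the general principle that a levelwise fully faithful natural transformation $F \To G : \cJ \to \lcat$ with $G$ (lax) symmetric monoidal lifts uniquely to a lax symmetric monoidal structure on $F$ compatible with the inclusion, provided the essential image of $F$ is closed under the symmetric monoidal structure of $G$. By \cref{introthm:functorial}, $\vcat(-) \To T(-)$ is levelwise fully faithful with essential image the marked modules, so proving (1) reduces to the claim: given marked $(X \to \cM) \in T(\cV)$ and $(Y \to \cN) \in T(\cW)$, the pair $(X \times Y \to \cM \otimes \cN) \in T(\cV \otimes \cW)$ is again marked. Generation---that the images $x \otimes y$ generate $\cM \otimes \cN$ under colimits and the $\cV \otimes \cW$-action---is largely formal, since pure tensors generate $\cM \otimes \cN$ and $X$, $Y$ generate $\cM$, $\cN$ individually. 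Atomicity---that each $x \otimes y$ is $\cV \otimes \cW$-atomic---reduces to an identification
\[
\iHom_{\cM \otimes \cN}(x \otimes y,\, m \otimes n) \simeq \iHom_{\cM}(x, m) \otimes \iHom_{\cN}(y, n),
\]
extended colimit-linearly in $(m, n)$, from which both colimit-preservation and action-compatibility of $\iHom_{\cM \otimes \cN}(x \otimes y, -)$ follow from the corresponding properties of $x$ and $y$ individually.

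For part (2), \cref{introthm:univalence}(2) identifies $\cat(\cV) \subseteq \vcat(\cV)$ as a reflective subcategory with univalization as left adjoint. The lax symmetric monoidal structure on $\vcat(-)$ descends to $\cat(-)$---with univalization symmetric monoidal---exactly when univalization equivalences are stable under tensor. By \cref{introthm:univalence}(1), the essential image of $\cat(\cV) \subseteq T(\cV)$ is the univalent marked modules, those for which $X \to \cM^{\simeq}$ is a monomorphism of spaces. Closure under tensor reduces to: if $X \to \cM^{\simeq}$ and $Y \to \cN^{\simeq}$ are monomorphisms with atomic targets, then $X \times Y \to (\cM \otimes \cN)^{\simeq}$ is again a monomorphism---which follows from the same mapping-space identification as above, since atomicity implies that distinct pairs of atomic objects yield inequivalent pure tensors.

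The main obstacle is the atomicity verification: the strong notion of $\cV$-atomicity used here---requiring both colimit-preservation of $\iHom$ \emph{and} compatibility with the $\cV$-action---requires a focused auxiliary lemma on internal homs in the relative tensor product of presentable module categories. All other steps---constructing the symmetric monoidal structure on $T$, applying the descent principle for fully faithful subfunctors, checking generation, and invoking the reflective-localization principle for (2)---are essentially formal once this technical input is in place.
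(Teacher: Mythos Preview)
Your approach to part~(1) is essentially the paper's: transfer the symmetric monoidal structure along the fully faithful embedding by checking that the essential image (marked modules) is closed under tensor. The paper packages this slightly differently---working with the unstraightened total category $\vEnr$ and showing it is a coCartesian fibration of operads over $\Alg(\PrL)$---but the content is the same, and your identification of the atomicity check as the key technical point is accurate (the paper handles it via the internal-left-adjoint characterization, \cref{lem:propertiestensorbetter}).

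Part~(2), however, contains a genuine error. You correctly begin by stating the localization criterion: the monoidal structure descends along univalization exactly when \emph{univalization equivalences} (i.e.\ fully faithful and surjective-on-objects functors) are stable under tensor. But you then pivot to a different claim---that \emph{univalent objects} are closed under tensor---and this claim is false. The paper gives an explicit counterexample (see the Warning preceding \cref{prop:univalenttensor}): take $\cC = (C^\simeq \to \PSh(C)) \in \cat(\Spaces)$ for any category $C$ and $\cD = (* \to *) \in \cat(*)$; then $\cC \boxtimes \cD \simeq (C^\simeq \to *)$, which is univalent only when $C^\simeq$ is contractible. Your proposed justification (``atomicity implies that distinct pairs of atomic objects yield inequivalent pure tensors'') fails because the functor $\cM \times \cN \to \cM \otimes \cN$ can collapse objects---in the counterexample $\cN = *$ collapses everything.

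The correct route, taken in the paper, stays with your first criterion: show directly that the external tensor product preserves the class of fully faithful and surjective-on-objects functors (\cref{prop:extprodandfunctors}), so that the localization is compatible with the monoidal structure in the sense of \HA{Def.}{2.2.1.6}, and then invoke \HA{Prop.}{2.2.1.9}. Note also that closure of local objects under tensor would in any case make the \emph{inclusion} $\cat(-) \hookrightarrow \vcat(-)$ monoidal, not the univalization; these are different structures, and it is the latter that the statement demands.
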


\begin{introcor}[Corollaries  \ref{constr:internaltensor}, \ref{cor:catVmonoidal} and \ref{cor:finalcomparisonGH} and \cref{thm:preservescolim}    ] \label{introthm:operad}
Let $O$ be an $\infty$-operad and $\cV$ a presentably $O \otimes \E_1$-monoidal $\infty$-category. 
\begin{enumerate}[(1)]
\item  There is a unique $O$-monoidal structure on $\vcat(\cV)$ making the fully faithful functor $\vcat(\cV) \to \Spaces \times_{\RMod_{\cV}(\PrL)} \Arr(\RMod_{\cV}(\PrL))$ $O$-monoidal.

\item There is a unique $O$-monoidal structure on $\cat(\cV)$ making the univalization functor $\vcat(\cV) \to \cat(\cV)$ $O$-monoidal. 
\end{enumerate}
These $O$-monoidal structures are compatible with colimits and agree with the ones constructed in \cite[Cor. 4.3.12]{haugseng} and  \cite[Cor.~5.7.12]{haugseng}, respectively.
\end{introcor}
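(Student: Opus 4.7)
The strategy is to deduce \cref{introthm:operad} from \cref{introthm:lax} by evaluating the lax symmetric monoidal functors at a specific $O$-algebra, together with the formal observation that a fully faithful embedding reflects monoidal structures.

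First, by Dunn additivity (or more precisely the Boardman--Vogt tensor product formalism in the form $\Alg_{O\otimes \E_1}(\PrL) \simeq \Alg_O(\Alg_{\E_1}(\PrL)) = \Alg_O(\Alg(\PrL))$), a presentably $O\otimes\E_1$-monoidal $\infty$-category is precisely an $O$-algebra object in the symmetric monoidal $\infty$-category $(\Alg(\PrL), \otimes)$ furnished with Lurie's tensor product. Thus $\cV$ corresponds canonically to an $O$-algebra in $\Alg(\PrL)$. Because lax symmetric monoidal functors send $O$-algebras to $O$-algebras, I apply the lax symmetric monoidal $\vcat(-)\colon (\Alg(\PrL), \otimes) \to (\lcat, \times)$ supplied by \cref{introthm:lax}(1) to obtain an $O$-algebra in the Cartesian symmetric monoidal category $(\lcat, \times)$, which is exactly an $O$-monoidal structure on $\vcat(\cV)$. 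The analogous application of $\cat(-)$ from \cref{introthm:lax}(2) endows $\cat(\cV)$ with an $O$-monoidal structure, and renders univalization $\vcat(\cV) \to \cat(\cV)$ $O$-monoidal.

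For the claim that the fully faithful embedding becomes $O$-monoidal: since the natural transformation
\[\vcat(-) \To \Spaces \times_{\RMod_{-}(\PrL)} \Arr(\RMod_{-}(\PrL))\]
from \cref{introthm:lax} is already symmetric monoidal, evaluating it at the $O$-algebra $\cV$ produces an $O$-monoidal natural transformation, so the embedding $\vcat(\cV) \to \Spaces \times_{\RMod_\cV(\PrL)} \Arr(\RMod_\cV(\PrL))$ inherits an $O$-monoidal structure. Uniqueness follows formally: an $O$-monoidal structure on a full subcategory compatible with a fixed $O$-monoidal structure on the ambient category is determined uniquely up to contractible ambiguity, because an $O$-algebra factoring through a full subcategory lifts uniquely. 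The parallel argument, using that univalization is an $O$-monoidal localization with essential image a full subcategory, handles uniqueness on $\cat(\cV)$. Colimit compatibility follows from the same factorization, since Lurie's $\otimes$ on $\RMod_\cV(\PrL)$ preserves colimits in each variable and, by \cref{thm:preservescolim}, $\vcat(\cV)$ is closed under the relevant colimits in the target.

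Finally, comparison with Haugseng's constructions \cite[Cor.~4.3.12 and Cor.~5.7.12]{haugseng} is reduced, via the uniqueness in the previous step, to checking that those $O$-monoidal structures also render the embedding into $\Arr(\RMod_\cV(\PrL))$ an $O$-monoidal functor. Concretely, this amounts to identifying the presheaf category of Haugseng's tensor product of enriched $\infty$-categories with Lurie's $\otimes$-tensor product of the individual presheaf categories, together with the evident identification of representable-generating functors. I expect this last step to be the main obstacle, since the comparison requires tracking the Yoneda embedding through the operadic construction in \cite{haugseng}; however, by the universal property of the free cocompletion this identification should reduce to comparing the free $\cV$-modules generated by spaces, where both constructions agree essentially tautologically.
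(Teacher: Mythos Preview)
Your derivation of parts (1) and (2), and of colimit compatibility, is correct and matches the paper: these are immediate consequences of evaluating the lax symmetric monoidal functors $\vcat(-)$ and $\cat(-)$ from \cref{introthm:lax} at the $O$-algebra $\cV \in \Alg_O(\Alg(\PrL))$, together with closure of the full subcategory $\vcat(\cV)$ under colimits in the ambient category. This is precisely how the paper obtains \cref{constr:internaltensor}, \cref{cor:catVmonoidal}, and \cref{thm:preservescolim}.

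The comparison with Haugseng's tensor product, however, has a genuine gap. Your strategy is to invoke uniqueness: show that Haugseng's $O$-monoidal structure also makes the embedding $\vcat(\cV) \hookrightarrow \Spaces \times_{\PrV} \Arr(\PrV)$ $O$-monoidal. But this embedding is defined via the enriched presheaf functor $\PShV$, so your approach requires knowing that $\PShV$ is $O$-monoidal for Haugseng's tensor product. That is itself a nontrivial theorem, only recently established by Heine (and noted as such in the paper's introduction); your suggestion that it reduces to comparing free $\cV$-modules on spaces addresses only the pointwise values, not the coherent monoidal structure, and does not yield a proof without substantial further work.

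The paper takes a different route entirely (\S\ref{sec:small}--\ref{sec:comptensor}). First, it restricts to enrichment in \emph{small} monoidal categories $V \in \Alg(\cat)$, where both tensor products live on $\vEnr_{\Alg(\cat)}$. The key technical step (\cref{thm:pullbackcartesian}) is that the paper's external tensor product on $\vEnr_{\Alg(\cat)}$ is \emph{Cartesian}; since Haugseng's is Cartesian by definition, the two agree in this regime (\cref{cor:GHmonoidalCartagrees}). Then one bootstraps to presentable $\cV$ via universe enlargement: both $\vcat(\cV)$ (via $\Ind_{\tav}$) and the large-variant $\vcatlarge(\cV)$ (via $\widehat{\PSh}$ and the localization to $\Ind_{\tav}$) embed as the \emph{same} full lax symmetric monoidal subfunctor of $\widehat{\vcat}(\Ind_{\tav}(-))$, forcing them to coincide (\cref{cor:centralcomparison}, \cref{cor:finalcomparisonGH}). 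This argument never needs $\PShV$ to be monoidal for Haugseng's structure as an input.
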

In particular, if $\cV$ is a presentably $\E_n= \E_{n-1} \otimes \E_1$-monoidal $\infty$-category for $n\geq 1$, then $\vcat(\cV)$ and $\cat(\cV)$ inherit an $\E_{n-1}$-monoidal structure.
The statement that the $O$-monoidal structure on $\vcat(\cV)$ and $\cat(\cV)$ is compatible with colimits is immediate from the above construction but hard to prove in the model of \cite{haugseng}, see \cite{haugseng2023tensor}.

It also follows from  \cref{introthm:operad}  that $\PShV: \vcat(\cV) \to \RMod_{\cV}(\PrL)$ is $O$-monoidal, a statement that was only very recently  proven in \cite[Cor. 5.1]{heineweighted} while our paper was in preparation.


\subsection*{A monadicity theorem for presentable categories}

By  definition, a $\cV$-enriched $\infty$-category $\cC$ with $\infty$-groupoid of objects $X$ is an algebra in the $\infty$-category $\LinEnd(\Fun(X^{\op}, \cV))$ of $\cV$-linear colimit-preserving endofunctors of $\Fun(X^{\op}, \cV)$. In other words it is a \emph{monad} in the $(\infty,2)$-category $\RMod_{\cV}(\PrL)$ on the object $\Fun(X^{\op}, \cV)$. The key idea in our proof of \cref{intro:maintheoremA} is that the unique $\cV$-linear colimit-preserving extension $\Fun(X^{\op}, \cV) \to \PShV(\cC)$ of its associated marked module $X\to \PShV(\cC)$ is the left adjoint in the monadic adjunction (aka Eilenberg-Moore adjunction) associated to the monad $\cC$. \cref{intro:maintheoremA} is therefore a direct consequence of the following monadicity theorem in $\RMod_{\cV}(\PrL)$: 

\begin{introtheorem}[Theorems~\ref{thm:monadicff} and \ref{thm:monadicity}]\label{introthm:monadicity} Let $\cV$ be a presentably monoidal $\infty$-category and $\cP \in \RMod_{\cV}(\PrL)$. Then, the functor 
\[\Alg(\LinEnd(\cP)) \to \RMod_{\cV}(\PrL)_{\cP/} \hspace{1cm}  A \mapsto  \left(\mathrm{Free}:\cP \to \LMod_A(\cP)\right)
\]
is fully faithful with essential image given by those $F: \cP \to \cM$ in $\RMod_{\cV}(\PrL)_{\cP/}$ whose right adjoint $F^\rR: \cM \to \cP$ is $\cV$-linear and colimit-preserving and which furthermore satisfies one of the following equivalent conditions:
\begin{enumerate}[(1)]
\item The induced functor $\LMod_{F^\rR \circ F}( \cP) \to \cM$  is an equivalence. 
\item The adjunction $F \dashv F^\rR$ is monadic in the sense of Barr-Beck-Lurie ~\HA{Def.}{4.7.3.4}; 
\item $F^\rR$ is conservative;
\item The full image of $F$ generates $\cM$ under colimits. 
\end{enumerate}
\end{introtheorem}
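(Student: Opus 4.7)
The plan is to realize $\LMod_A(\cP)$ as the Eilenberg--Moore object of the monad $A$ in the $(\infty,2)$-category $\RMod_\cV(\PrL)$, and then deduce the essential image description from the Barr--Beck--Lurie theorem. As a preliminary, for any $A \in \Alg(\LinEnd(\cP))$ the $\cV$-action on $\cP$ descends to $\LMod_A(\cP)$ (using $\cV$-linearity of $A$), and both $\mathrm{Free}_A$ and $\mathrm{Forget}_A$ lie in $\RMod_\cV(\PrL)$ since colimits in $\LMod_A(\cP)$ are computed underlying on $\cP$. Hence the functor factors through the full subcategory $\cR \subseteq \RMod_\cV(\PrL)_{\cP/}$ of those $(F : \cP \to \cM)$ whose right adjoint $F^\rR$ lies in $\RMod_\cV(\PrL)$.

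For \textbf{fully faithfulness}, I would realize $A \mapsto \LMod_A(\cP)$ as left adjoint to the endomorphism monad functor $R : \cR \to \Alg(\LinEnd(\cP))$ sending $(F : \cP \to \cM)$ to $F^\rR F$, where the monad structure comes from the counit of $F \dashv F^\rR$. The key input is a natural equivalence
\[\Map_\cR\bigl((\mathrm{Free}_A : \cP \to \LMod_A(\cP)),\, (F : \cP \to \cM)\bigr) \simeq \Map_{\Alg(\LinEnd(\cP))}(A, F^\rR F),\]
expressing the $(\infty,2)$-categorical universal property of $\LMod_A(\cP)$ as the Eilenberg--Moore object of $A$ in $\RMod_\cV(\PrL)$: a $\cV$-linear colimit-preserving lift of $F$ to $\LMod_A(\cP)$ is the same datum as an $A$-action on $F$. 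Granting this, the unit $A \to R(\mathrm{Free}_A) = \mathrm{Forget}_A \circ \mathrm{Free}_A$ is tautologically an equivalence, so the left adjoint is fully faithful. Establishing the universal property rigorously is the main technical step; it requires combining Lurie's theory of modules over algebras in monoidal $\infty$-categories \cite{HA} with the $(\infty,2)$-categorical enhancement of $\RMod_\cV(\PrL)$.

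The \textbf{essential image} of this adjunction consists precisely of those $(F : \cP \to \cM) \in \cR$ for which the counit $\LMod_{F^\rR F}(\cP) \to \cM$ is an equivalence -- that is, condition (1). The equivalence $(1) \Leftrightarrow (2)$ is the definition of monadicity \HA{Def.}{4.7.3.4}, and $(2) \Leftrightarrow (3)$ is the Barr--Beck--Lurie theorem \HA{Thm.}{4.7.3.5}, whose second clause (preservation of $F^\rR$-split geometric realizations) is automatic since $F^\rR$ preserves all colimits by hypothesis.

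Finally, for $(3) \Leftrightarrow (4)$: the direction $(4) \Rightarrow (3)$ is a Yoneda argument -- if $F^\rR(g)$ is an equivalence, then $\Map_\cM(F(p), m) \to \Map_\cM(F(p), m')$ is an equivalence for all $p \in \cP$, and since every $n \in \cM$ is a colimit of objects in the image of $F$, mapping out of $n$ is a limit of such mapping spaces, making $\Map_\cM(n, m) \to \Map_\cM(n, m')$ an equivalence for all $n \in \cM$. For $(3) \Rightarrow (4)$, let $\cM' \subseteq \cM$ be the colimit-closure of $F(\cP)$, which is presentable as the cocompletion of an essentially small subcategory of $\cM$; the inclusion $\iota : \cM' \hookrightarrow \cM$ preserves colimits and hence admits a right adjoint $r$ by the adjoint functor theorem. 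Factoring $F = \iota \phi$ through $\cM'$ gives $F^\rR = \phi^\rR r$, and since $r \iota \simeq \mathrm{id}_{\cM'}$ (full faithfulness of $\iota$), the map $F^\rR$ applied to the counit $\iota r \to \mathrm{id}_\cM$ is an equivalence. Conservativity of $F^\rR$ then forces $\iota r \to \mathrm{id}_\cM$ to be an equivalence, so $\cM' = \cM$.
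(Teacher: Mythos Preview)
Your outline follows the same overall architecture as the paper: realize $\LMod_A(-)$ as left adjoint to an endomorphism-monad functor, verify the unit is invertible, and characterize the essential image via monadicity. The paper, however, fills in precisely the step you flag as ``the main technical step'': rather than invoking an abstract $(\infty,2)$-categorical Eilenberg--Moore universal property, it computes directly
\[
\LinFun(\LMod_T(\cP),\cM) \simeq \LinFun(\LMod_T(\LinEnd(\cP))\otimes_{\LinEnd(\cP)}\cP,\cM) \simeq \RMod_T(\LinFun(\cP,\cM))
\]
via \HA{Thm.}{4.8.4.6} and the tensor--hom adjunction, then identifies the fiber over $F$ with $\Alg(\LinEnd(\cP))_{/\iEnd(F)}$ using the universal property of endomorphism objects. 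This is concrete and avoids any appeal to a general theory of $2$-categorical monadicity.

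For the equivalence of conditions, your route $(1)\Leftrightarrow(2)\Leftrightarrow(3)\Leftrightarrow(4)$ is correct, but the paper takes a slightly different and more elementary path: it proves $(4)\Rightarrow(1)$ directly via a short factorization-system argument (its Proposition~\ref{prop:keymonadicity}), so that the core equivalences $(1)\Leftrightarrow(3)\Leftrightarrow(4)$ do not rely on the full Barr--Beck--Lurie theorem at all; the latter is only invoked to add condition $(2)$ to the list. Your direct argument for $(3)\Leftrightarrow(4)$ is fine in spirit, but the claim that the colimit-closure $\cM'$ of $F(\cP)$ is presentable needs justification (the paper handles this by choosing $\kappa$ so that $F$ preserves $\kappa$-compacts and observing that $F(\cP^\kappa)$ generates $\cM'$; see its Lemma~\ref{lem:imageispres}).
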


\subsection*{Comparison to other models of enriched $\infty$-categories} We briefly summarize existing models for enriched $\infty$-categories and their relation to marked modules:

\begin{enumerate}[(1)]
\item The \emph{operadic definition}: Gepner and Haugseng \cite{haugseng} define $V$-enriched $\infty$-categories with $\infty$-groupoid of objects $X$ as algebras in $V$ over a certain generalized operad $\Delta_X^{\op}$.  Variants of this are also studied in \cite{hinich} and \cite[\S 2]{stefanichpres}, and compared in \cite{macpherson2019operad, heinecomparison, heine, hinich}. Using an operadic Day convolution, Heine constructs in \cite{heine} an operad $\Quiv_X(V)$ with underlying category $\Fun(X\times X, V)$  so that algebras over $\Delta_X^{\op}$ in $V$ become identified with $\E_1$-algebras in $\Quiv_X(V)$. An equivalent direct construction of $\Quiv_X(V)$ appears in \cite[\S 4]{hinich}. A closely related approach in terms of Segal presheaves is developed in  \cite[\S 4.5]{haugseng}, \cite{haugseng2023tensor}. A key advantage of these models is their straight-forward functoriality in the $\infty$-groupoid of objects $X$ and the enriching category $V$; on the other hand, working with enriched categories is technically challenging.

\item The \emph{monadic definition}: In \cite[Prop.\ 4.5.3]{hinich}, Hinich shows that for presentably monoidal $\infty$-categories $\cV$, the operad $\Quiv_X(\cV)$ is equivalent to the monoidal category $\LinEnd(\Fun(X^{\op}, \cV))$ leading to the definition of enriched $\infty$-category from above. While only relying on very little higher algebra --- essentially only on algebras, monoidal categories and the composition of endofunctors --- this definition has the disadvantage that its functoriality in $X$ and $V$ are much harder to establish, though see \cite[Prop. 4.5.5]{hinich}, \cite[\S 6]{heinemonads}.

\item The \emph{laxly tensored definition}: In \HA{Def.}{4.2.1.28}, Lurie defines a notion of enriched $\infty$-category which may roughly be thought of as a category equipped with a lax, or operadic, action by $V$, and which admits internal homs. In \cite[Prop. 6.10]{heine}, Heine shows that this notion agrees with approach (1). 
\
\end{enumerate}
We add a fourth approach, probably closest in spirit to Lurie's definition:
\begin{enumerate}[(4)]
\item The \emph{marked module definition}: This definition encodes the data of an $\infty$-category enriched in a presentably monoidal category $\cV$ as a presentably $\cV$-module together with a marking by a collection of atomically generating objects.
\end{enumerate}

This definition is both evidently functorial in X and V, and at the same time only involves a minimal amount of operadic coherence work, only involving algebras and modules in presentable categories. Our main Theorems \ref{intro:maintheoremA} and \ref{introthm:functorial} compare these with the approaches (1) and (2), a direct comparison with (3) is also possible.

While approaches (1), (3) work natively for enriching in small monoidal categories, and more generally operads, approaches (2) and (4) are primarily suited for enrichment in presentably monoidal $\cV$. In \S \ref{subsec:smallmonoidal}, we show how one may generalize marked modules for enrichment in small monoidal categories and prove that this also coincides with the other available models.

	\subsection{Plan of the paper}
	
	In \S \ref{sec:quiv}, we recall background material on presentable $\infty$-categories and internal homs. We then define $\cV$-enriched $\infty$-categories as algebras in $\LinEnd(\Fun(X^{\op}, \cV))$ and establish basic properties.
	
	What follows is a short digression \S \ref{sec:atomicgen} developing the theory of presentable module $\infty$-categories by introducing atomic objects, atomically generated $\infty$-categories and internally left adjoint functors. 
	This prepares us for \S \ref{sec:markednew}, where we prove \cref{intro:maintheoremA}; an equivalence between monads on $\Fun(X^{\op}, \cV)$ in $\RMod_{\cV}(\PrL)$ and the full subcategory of $\RMod_{\cV}(\PrL)_{X/}$ on the marked modules. This uses our monadicity \cref{introthm:monadicity} for presentable module $\infty$-categories developed in \S \ref{sec:monadicity}. Subsequently \S \ref{sec:functoriality} explains how to make our definition of enriched categories functorial in both the underlying $\infty$-groupoid and enrichment category and comparing it to the one from \cite{haugseng} resulting in \cref{introthm:functorial}.
	
	Next,  \S \ref{sec:univalence} develops the theory of univalent enriched $\infty$-categories in terms of marked modules and establishes various properties, including the ones in \cref{introthm:univalence}. In \S \ref{sec:multiplicativity} we define an external tensor product of enriched $\infty$-categories leading to the lax symmetric monoidal structures on $\vcat(-)$ and $\cat(-)$ from \cref{introthm:lax} and in particular to the $O$-monoidal structures on $\vcat(\cV$) and $\cat(\cV)$ for presentably $O\otimes \E_1$-monoidal categories $\cV$ as in \cref{introthm:operad}.
	
	\S \ref{sec:small} extends our notion of enrichment to any (not necessarily presentably) monoidal $\infty$-category $V$ by embedding it into a presentably monoidal $\infty$-category. Building on this, \S \ref{sec:comptensor} shows that our tensor products agree with those defined in \cite{haugseng, haugseng2023tensor}, by developing techniques that allow us to switch between different set-theoretic universes. Finally \S \ref{sec:groth} recalls and developes some auxiliary statements on the Grothendieck construction and two-sided fibration, and \S \ref{sec:operads} on operads.

	\subsection{Notation and conventions}
	Throughout this paper we freely use language and statements of $\infty$-category theory as developed in \cite{joyal2002quasi}, \cite{HTT}, \cite{HA} and \cite{kerodon}.

	\begin{itemize}
			\item We use the terms `space' and `$\infty$-groupoid' interchangeably and write $\Spaces$ for their $\infty$-category. 
			\item 
			By a category or $2$-category we always refer to an $(\infty, 1)$-category, $(\infty, 2)$-category; and by a $\cV$-category or $\cV$-enriched category, we mean a $\cV$-enriched $\infty$-category. All categorical constructions, like functors, limits and algebra objects, are homotopy coherent.
			
		\item We fix uncountable inaccessible cardinals $\tav < \hat{\tav} < \doublehat{\tav}$, and call them the universes of \emph{small}, \emph{large} and \emph{very large} sets, respectively\footnote{$\tav$ (\emph{tav} or \emph{taw}) is the last letter in the Hebrew alphabet, used by Cantor to denote the ``absolute infinite''.}.

		\item Unless stated otherwise (e.g.\ by calling them presentable) categories are always small, and by having or preserving all colimits we refer to small colimits. The large analog of a construction is then denoted by a hat -- e.g.\ $\cat$ denotes the large category of small categories, while $\widehat{\cat}$ denotes the very large category of large categories.
				\item The mapping space in a category $\ccC$ is denoted $\Map_\ccC$, the morphism object in an enriched category $\cC$ by $\Hom_\cC$, and the internal Hom in a module category $\cM$ by $\iHom_\cM$.
		\item We use $C \hookrightarrow D$ to denote a (not necessarily full) subcategory inclusion, i.e.\ a functor inducing a monomorphism $C^\simeq \hookrightarrow D^\simeq$ on maximal subspaces, and monomorphisms $\Map_C(c, c') \hookrightarrow \Map_D(Fc, Fc')$ for all $c, c' \in C$. Equivalently, these are the monomorphisms in $\cat$ by \kerodon{04W5}.
		\item Given a functor $F: \ccC \to \cat$ we denote its associated coCartesian fibration by $\smallint\nolimits_\ccC F\to \ccC$, similarly for $G: \ccC^{\op} \to \cat$  its associated Cartesian fibration is $\smallint\nolimits^{\ccC} G \to \ccC$.
		\item We write $\Arr(\ccC) := \Fun([1], \ccC)$ for the \emph{arrow category} of a category $\ccC$, where $[1]$ is the walking arrow. Given a property $P$ that morphisms of $\ccC$ can possess, we write $\Arr^P(\ccC) \subseteq \Arr(\ccC)$ for the respective full subcategory of the arrow category. Similarly, we write $\ccC_{/^{P} c} \subseteq \ccC_{/ c}, \ccC_{c/^{P}} \subseteq \ccC_{c/}$ for the respective full subcategories on arrows satisfying $P$.
		\item A category $\ccC$ is called \emph{weakly contractible} if its geometric realization $|\ccC| \in \Spaces$, i.e.\ its localization at all morphisms, is contractible. A functor $f: \ccC \to \ccD$ is called \emph{right (left) cofinal} if precomposing with it preserves colimits (limits).
		\item A functor $F: C \to D$ \emph{reflects $K$-shaped colimits} if for any cone $\bar{p} : K^\triangleright \to C$ such that $F \circ \bar{p}$ is a colimit cone in $D$, already $\bar{p}$ was a colimit cone in $C$. Further $F$ \emph{creates $K$-shaped colimits} if it both preserves and reflects them. Similarly for limits.
		\item We write $F: C \rightleftarrows D : G$ or $F \dashv G$ for functors $F: C \to D$ and $G: D \to C$ such that $F$ is left adjoint to $G$.

		\item By an operad $O$ we usually mean a small symmetric colored $\infty$-operad, unless explicitly stated otherwise. We denote its \emph{category of operations}, the associated fibration over the category $\Fin$ of finite pointed sets, by $O^\otimes \to \Fin$ 
		and its \emph{underlying category} (or \emph{category of colors}), defined as the fiber over $\langle 1 \rangle \in \Fin$, by $\underline{O}$. We refer to the morphisms covering the terminal map $\langle n \rangle \to \langle 1 \rangle$ in $\Fin$ as the \emph{$n$-ary multimorphisms} or \emph{$n$-ary operations}. We write $\Op$ for the category of operads, and $\OpAss := \Op_{/\Ass}$ for that of non-symmetric operads.  
	\end{itemize}

	\subsection{Acknowledgements}
	We thank Shay Ben-Moshe, Rune Haugseng and Maxime Ramzi   for valuable  discussions and comments relevant to the material of this paper.

	DR and MZ were supported by the Deutsche Forschungsgemeinschaft under the Emmy Noether program – 493608176, and DR also under the Collaborative Research Center (SFB) 1624 “Higher structures, moduli spaces and integrability” – 506632645.
	
		\pagenumbering{arabic}

	
		\section{Enriched categories}
	\label{sec:quiv}
	
	In this section, we first recall background on presentable categories and internal homs, and then define valent enriched categories as outlined in the introduction. We also define the enriched presheaf category and Yoneda embedding.

	\subsection{Recollections on presentable categories}

	A convenient setting for both classical and $\infty$-category theory is provided by the notion of a presentable category. A \emph{presentable category} is a (large but) locally small category which has all small colimits (i.e.\ is \emph{cocomplete}) and is accessible, i.e.\ generated under $\kappa$-filtered colimits by a small set of $\kappa$-compact objects for some regular cardinal $\kappa$; see~\HTT{\S}{5.5.1} for details. The category $\cS$ of spaces is presentable~\HTT{Ex.}{5.5.1.8}, and so is the functor category $\Fun(C, \cD)$ for $C$ a small category and $\cD$ a presentable category, which includes the presheaf category $\PSh(C):= \Fun(C^{\op}, \cS)$ of a small category $C$.
	A main application of the notion of presentable category is the \emph{adjoint functor theorem} \HTT{Cor.}{5.5.2.9}: A functor from a locally small category to a presentable category preserves small colimits (i.e.\ is \emph{cocontinuous})  if and only if it is a left adjoint. 
	
	\begin{notat} We let $\Catcolim$ denote the subcategory of the (very large) category $\lcat$ of large categories on those which admit small colimits and small-colimit preserving functors.  We follow~\HTT{Def.}{5.5.3.1} and let $\PrL$ denote the full subcategory of $\Catcolim$ on the presentable categories. 	\end{notat}
	 For presentable categories $\cC, \cD$, we let $\Fun^\rL(\cC, \cD)$ denote the full subcategory of the functor category $\Fun(\cC,\cD)$ on the functors which preserve colimits.

	Constructed in~\mbox{\HA{Prop.}{4.8.1.15}} is a symmetric monoidal structure on $\Pr$, making the presheaf functor $\PSh: \cat \to \PrL$ symmetric monoidal for the Cartesian symmetric monoidal structure on $\cat$. Its unit is the presentable category $\cS$ of spaces and it is universally characterized by the property that for presentable categories $\cC_1, \cC_2, \cD \in \PrL$ the category $\Fun^\rL(\cC_1 \otimes \cC_2, \cD)$ is equivalent to the full subcategory of the functor category $\Fun(\cC_1 \times \cC_2,\cD)$ on those functors which preserve colimits separately in both variables. By~\HA{Prop.}{4.8.1.17}, the tensor product $\cC_1 \otimes \cC_2$ is equivalent to the category $\Fun^\rL(\cC_1, \cC_2^{\op})^{\op}$. In particular, given a small category $C\in \cat$ and a presentable category $\cD \in \PrL$, there is an equivalence 
	\begin{equation*}
	\PSh(C) \otimes \cD \simeq \Fun^\rL(\PSh(C), \cD^{\op})^{\op} \simeq \Fun(C, \cD^{\op})^{\op} \simeq \Fun(C^{\op}, \cD),
	\end{equation*}
	where the second equivalence follows from the Yoneda lemma. 
	
	A morphism $F: \cC \to \cD$ in $\PrL$ is called a \emph{localization} if it admits a fully faithful right adjoint. By  \HTT{Prop.}{5.5.4.2} $(3)$ this exhibits $\cD$ as the accessible localization of $\cC$ at a small set $S$ of morphisms. By~\cite[Prop.\ 2.13]{monadictower}, such localization form the left class in a factorization system on $\PrL$ whose right class are those functors in $\PrL$ which are conservative. In particular, localizations are closed under colimits in the arrow category $\Arr(\PrL) = \Fun([1], \PrL)$. 
	\begin{lemma} \label{lem:localizationtensor}If $F: \cC \to \cD$ in $\PrL$ is a localization and $\cE \in \PrL$, then $\cE \otimes F: \cE \otimes \cC \to \cE \otimes \cD$ is also a localization. 
	\end{lemma}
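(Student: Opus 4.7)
The plan is to combine the two structural features of $\PrL$ mentioned in the preceding paragraph: the factorization system whose left class is localizations and whose right class is conservative functors, and the closed symmetric monoidal structure on $\PrL$ whose internal hom is $\Fun^{\rL}(-,-)$. From the closedness, tensoring with $\cE$ admits a right adjoint $\Fun^{\rL}(\cE,-) : \PrL \to \PrL$, which induces an adjunction $\cE \otimes - \dashv \Fun^{\rL}(\cE,-)$ on arrow categories $\Arr(\PrL)$. The idea is then to deduce the lemma by showing that the right adjoint $\Fun^{\rL}(\cE,-)$ preserves the right class of the factorization system, so that by adjunction the left adjoint $\cE \otimes -$ preserves the left class.

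Concretely, I would first verify that if $G : \cA \to \cB$ is a conservative morphism in $\PrL$, then $\Fun^{\rL}(\cE, G) : \Fun^{\rL}(\cE, \cA) \to \Fun^{\rL}(\cE, \cB)$ is again conservative. This is straightforward: a natural transformation $\alpha : H_1 \to H_2$ in $\Fun^{\rL}(\cE, \cA)$ is an equivalence if and only if $\alpha_e$ is an equivalence for every $e \in \cE$, and since $G$ detects equivalences, $G\alpha_e$ being an equivalence for each $e$ implies the same for $\alpha_e$.

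Next, I would invoke the characterization of the left class of a factorization system as precisely those morphisms which are left orthogonal to every morphism in the right class. Given any conservative $G$ in $\PrL$, the adjunction identifies the space of lifting problems for $\cE \otimes F$ against $G$ with the space of lifting problems for $F$ against $\Fun^{\rL}(\cE, G)$. Since $\Fun^{\rL}(\cE, G)$ is conservative by the previous step and $F$ is a localization, this lifting space is contractible. Hence $\cE \otimes F$ is left orthogonal to every conservative functor in $\PrL$, and therefore lies in the left class, i.e.\ is a localization.

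The only subtlety I anticipate is checking that the induced adjunction on arrow categories compatibly interchanges the $\infty$-categorical orthogonality conditions used to characterize the factorization system. This is essentially formal --- the orthogonality condition $F \perp G$ is a contractibility statement about a certain hom-space built from $F$ and $G$, and these hom-spaces transform correctly under an adjunction --- so I do not expect a real obstruction here, and the lemma follows.
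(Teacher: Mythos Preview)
Your proposal is correct and follows essentially the same approach as the paper: both use the (localization, conservative) factorization system on $\PrL$ together with the closed monoidal structure to reduce the claim to showing that $\Fun^{\rL}(\cE,-)$ preserves conservative functors, which is checked componentwise. The paper compresses the orthogonality-under-adjunction step into a single clause (``the assertion is equivalent to''), but the substance is identical.
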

\begin{proof}
Since localizations and conservative functors form a factorization system on $\PrL$, the assertion is equivalent to the statement that any conservative left adjoint $\cX \to \cY$ induces a conservative $\Fun^{\mathrm{L}}(\cE, \cX) \to \Fun^{\mathrm{L}}(\cE, \cY)$ which follows since invertibility of a natural transformation can be checked component-wise.
\end{proof}

%

		\subsection{Recollections on presentable module categories}

	We refer to an algebra object $\cV \in \Alg(\PrL)$ as a \emph{presentably monoidal category}; this unpacks to a monoidal category $\cV$ whose underlying category is presentable and so that the tensor product functor $\cV \times \cV \to \cV$ preserves colimits separately in both variables. As the unit in $\PrL$, the category $\Spaces$ of spaces inherits a unique presentably symmetric monoidal structure (namely the Cartesian product), making it an initial object in $\Alg(\PrL)$, see~\HA{Prop.}{3.2.1.8}. For a $\cV \in \Alg(\PrL)$, we denote by $\operatorname{Free}^{\cV}$ the unique left adjoint monoidal functor $\cS \to \cV$, namely the left Kan extension of $1_{\cV}: * \to \cV$ against $* \to \cS$.  Its right adjoint is given by $\Map_{\cV}(1_{\cV}, -) : \cV \to \Spaces$.

	Given a $\cV \in \Alg(\PrL)$, we refer to a module object $\cM \in \RMod_{\cV}(\PrL)$ as a \emph{presentable right $\cV$-module category} (and analogously for left modules); this unpacks to a right module category $\cM$ over $\cV$ whose underlying category is presentable and so that the action functor $ \cM  \times \cV \to \cM$ preserves colimits separately in both variables. 
	
		\begin{notat}
	For $\cV \in \Alg(\PrL)$, we write $\PrV:= \RMod_{\cV}(\PrL)$. 
	\end{notat}

	\begin{ex}\label{ex:algebrastoPrV} If $\cV \in \Alg(\PrL)$ and $a \in \Alg(\cV)$, it follows from~\HA{Cor.}{4.2.3.7} that $\LMod_a(\cV) \in \RMod_{\cV}(\PrL)$ with right $\cV$-action  given by sending a left $a$-module ${}_am$ and an object $v$ in $\cV$ to the left $a$-module ${}_a m\otimes v$.  We will use this repeatedly throughout. 
	\end{ex}

	By the defining universal property, for $\cC, \cD \in \PrL$ there is a functor $\cC \times \cD \to \cC \otimes \cD$ and similarly for $\cM \in \RMod_{\cV}(\PrL)$ and $\cN \in \LMod_{\cV}(\PrL)$ there is a functor $\cM \times \cN \to \cM \otimes_{\cV} \cN$. We refer to objects in the image of these functors as \emph{pure tensors}. 
	
	\begin{lemma}
		\label{lem:puretensors}
		Let $\cC, \cD \in \PrL$. Then, every object of $\cC \otimes \cD$ is a small colimit of pure tensors as defined above. More generally,  if $\cM \in \RMod_\cV(\PrL), \cN \in \LMod_\cV(\PrL)$ then every object of $\cM \otimes_\cV \cN$ is a small colimit of pure tensors.
	\end{lemma}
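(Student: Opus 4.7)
The plan is to reduce both claims to the case of presheaf categories, where they follow immediately from the symmetric monoidality of $\PSh: \cat \to \PrL$ and the fact that every presheaf is a small colimit of representables.

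For the absolute claim, I would first choose small subcategories $C_0 \hookrightarrow \cC$ and $D_0 \hookrightarrow \cD$ whose images generate under colimits; these exist by presentability of $\cC$ and $\cD$, and the induced functors $\PSh(C_0) \to \cC$ and $\PSh(D_0) \to \cD$ are localizations in $\PrL$. Applying \cref{lem:localizationtensor} twice then shows that the induced functor $L: \PSh(C_0) \otimes \PSh(D_0) \to \cC \otimes \cD$ is again a localization, hence in particular essentially surjective and colimit-preserving. Under the symmetric monoidal identification $\PSh(C_0) \otimes \PSh(D_0) \simeq \PSh(C_0 \times D_0)$, the representable presheaf on $(c_0, d_0)$ corresponds to the pure tensor of representables $y(c_0) \otimes y(d_0)$. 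Since every object of $\PSh(C_0 \times D_0)$ is a small colimit of representables, and $L$ preserves colimits and sends pure tensors to pure tensors, every object of $\cC \otimes \cD$ is a small colimit of pure tensors.

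For the relative claim, the strategy is to produce an essentially surjective, colimit-preserving functor $\Psi: \cM \otimes \cN \to \cM \otimes_\cV \cN$ which sends pure tensors to pure tensors, and then combine this with the absolute case applied to $\cM \otimes \cN$. The functor $\Psi$ arises from the universal property of the relative tensor product; concretely, $\cM \otimes_\cV \cN$ is the geometric realization of the bar construction $\cM \otimes \cV^{\otimes \bullet} \otimes \cN$ in $\PrL$, and $\Psi$ is the structure map from the $0$-simplex. I expect the main obstacle to be verifying essential surjectivity of $\Psi$; I would tackle this by identifying the right adjoint of $\Psi$ with the fully faithful inclusion of $\cV$-balanced objects of $\cM \otimes \cN$, so that $\Psi$ is a localization. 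Once this is in hand, any $y \in \cM \otimes_\cV \cN$ lifts to some $x \in \cM \otimes \cN$, which by the absolute case can be written as a small colimit of pure tensors $m_i \otimes n_i$, and applying the colimit-preserving $\Psi$ expresses $y$ as the small colimit of pure tensors $\Psi(m_i \otimes n_i) \simeq m_i \otimes_\cV n_i$.
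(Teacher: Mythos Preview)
Your absolute case is correct and matches the paper's argument essentially verbatim.

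For the relative case, your strategy of reducing to the absolute case via $\Psi: \cM \otimes \cN \to \cM \otimes_\cV \cN$ is sound and more direct than the paper's route, but your proposed justification contains a genuine error: $\Psi$ is \emph{not} a localization in general, so the right adjoint $\Psi^\rR$ is not fully faithful and there is no ``full subcategory of $\cV$-balanced objects'' to which it is the inclusion. For a counterexample take $\cV = \cM = \cN = \LMod_A(\Sp)$ for a commutative ring spectrum $A$; then $\cM \otimes \cN \simeq \LMod_{A \otimes A}(\Sp)$, $\cM \otimes_\cV \cN \simeq \LMod_A(\Sp)$, and $\Psi$ is extension of scalars along the multiplication map $A \otimes A \to A$. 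Its right adjoint is restriction of scalars, which is conservative but not fully faithful unless $A \otimes A \to A$ is an equivalence.

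What you actually need, and what is true, is that $\Psi$ is colimit-dominant, equivalently that $\Psi^\rR$ is conservative. Since colimits in $\PrL$ are computed as limits in $\widehat{\cat}$ of the diagram of right adjoints, $\cM \otimes_\cV \cN$ is the totalization of a cosimplicial diagram $[n] \mapsto \cM \otimes \cV^{\otimes n} \otimes \cN$, and $\Psi^\rR$ becomes the projection to level $[0]$. This projection is conservative because every $[n] \in \Delta$ receives a map from $[0]$, so invertibility of a morphism in the limit is already detected at level $[0]$. With colimit-dominance in hand your argument goes through after a minor tweak: every $y$ is a colimit of objects $\Psi(x_j)$, each $x_j$ is a colimit of pure tensors by the absolute case, and $\Psi$ preserves colimits, so $y$ is an iterated and hence single small colimit of pure tensors.

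The paper takes a different path: it chooses a regular cardinal $\kappa$ so that $\cV, \cM, \cN$ live in $\Pr_\kappa$, uses the monoidal localizations $\PSh(\cV^\kappa) \to \cV$, $\PSh(\cM^\kappa) \to \cM$, $\PSh(\cN^\kappa) \to \cN$, and combines \cref{lem:localizationtensor} with closure of localizations under colimits in $\Arr(\PrL)$ to obtain a localization $\PSh(\cM^\kappa \otimes_{\cV^\kappa} \cN^\kappa) \to \cM \otimes_\cV \cN$, where the relative tensor product in the source is taken with respect to the Cartesian product in $\cat$; the surjectivity of $\cM^\kappa \times \cN^\kappa \to \cM^\kappa \otimes_{\cV^\kappa} \cN^\kappa$ then finishes. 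Your (corrected) route avoids the detour through $\kappa$-compacts entirely.
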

	\begin{proof}
		Write $\cC = \PSh(\ccC_0)[S^{-1}], \cD = \PSh(\ccD_0)[T^{-1}]$ as accessible localizations of presheaf categories at small sets of morphisms $S$ and $T$, which is possible by combining \HTT{Thm.}{5.5.1.1} $(5)$ and \HTT{Prop.}{5.5.4.2} $(3)$. Then by \cref{lem:localizationtensor} and decomposing $F \otimes G = (F \otimes \operatorname{Id}_{\cD}) \circ (\operatorname{Id}_{\cC} \otimes G)$, we obtain an accessible localization $\PSh(\ccC_0 \times \ccD_0) = \PSh(\ccC_0) \otimes \PSh(\ccD_0) \to \ccC \otimes \ccD$. Every object in $\PSh(\ccC_0 \times \ccD_0)$ is a colimit of objects in $\ccC_0 \times \ccD_0$, which are reflected to pure tensors in $\cC \otimes \cD$, so we are finished since localizations are surjective (as their counit is an isomorphism).
		
		For the relative statement, recall
		\[ \cM \otimes_\cV \cN := \colim_{\Delta^{\op}} \left(\dots \cM \otimes \cV \otimes \cN \rightrightarrows \cM \otimes \cN\right) \]
		and choose a regular cardinal $\kappa$ such that $\cV \in \Alg(\Pr_\kappa), \cM \in \RMod_\cV(\Pr_\kappa), \cN \in \LMod_\cV(\Pr_\kappa)$, where $\Pr_{\kappa}$ denotes the subcategory of $\Pr$ on the $\kappa$-compactly generated categories and functors preserving $\kappa$-compact objects with its tensor product from \HA{Prop.}{4.8.1.15}. Then the full subcategory $\cV^\kappa$ on $\kappa$-compact objects is a monoidal subcategory of $\cV$, and $\cM^\kappa$, $\cN^\kappa$ are $\cV^{\kappa}$-modules. We obtain a monoidal localization functor $\PSh(\cV^\kappa) \to \cV$ and $\PSh(\cV^{\kappa})$-linear localization functors $\PSh(\cM^\kappa) \to \cM$, $\PSh(\cN^\kappa) \to \cN$. Combining \cref{lem:localizationtensor} and that localizations are closed under colimits, we obtain a localization $\PSh(\cM^\kappa \otimes_{\cV^\kappa} \cN^\kappa) \to \cM \otimes_\cV \cN$ where $\cM^\kappa \otimes_{\cV^\kappa} \cN^\kappa$ denotes the relative tensor product with respect to the Cartesian product on $\cat$. We are finished after noticing that the map $\cM^\kappa \times \cN^\kappa \to \cM^\kappa \otimes_{\cV^\kappa} \cN^\kappa$ is surjective, so every object of $\PSh(\cM^\kappa \otimes_{\cV^\kappa} \cN^\kappa)$ is a small colimit of objects in the image of $\cM^\kappa \times \cN^\kappa$.
			\end{proof}

	\subsection{Recollections on internal homs}

	
	\begin{defin}[{\HA{Def.}{4.2.1.28}}]
		\label{reminder:iHom}
		Let $V \in \Alg(\cat)$ be a monoidal category and $M \in \LMod_V(\cat)$ a left $V$-module category. An \emph{internal hom} $\iHom(m, m') \in V$ between objects $m, m'\in M$ is a representing object for the presheaf $\Map_M(- \otimes m, m'): V^{\op} \to \Spaces$. A $V$-module category $M$ is \emph{closed} if an internal hom exists between any pair of objects $m,m'$.  A \emph{closed monoidal category} $V$ is a monoidal category which is closed as a left module over itself. Analogous definitions apply for right $V$-module categories. Passing to a larger universe leads to analogous notions for large categories. 
		\end{defin}
		\begin{obs} 
		A $V$-module category $M$ is closed if and only if the functor \[M^{\op} \times M \to \PSh(V),~m, m' \mapsto \Map_M(- \otimes m, m')\] factors through the full subcategory $V \hookrightarrow \PSh(V)$, establishing functoriality of $\iHom_M(-,-): M^{\op} \times M \to V$. 
		\end{obs}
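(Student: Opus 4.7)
The plan is to verify both implications in the stated equivalence and then extract the functoriality as an immediate consequence of the fully faithfulness of the Yoneda embedding $y_V: V \hookrightarrow \PSh(V)$.

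First, I would check that the assignment $(m, m') \mapsto \Map_M(- \otimes m, m')$ canonically refines to a functor $\Phi: M^{\op} \times M \to \PSh(V)$. Indeed, composing the action $V \times M \to M$ with the mapping space functor $\Map_M(-,-): M^{\op} \times M \to \Spaces$ yields a functor $V^{\op} \times M^{\op} \times M \to \Spaces$, whose image under the currying equivalence
\[
\Fun(V^{\op} \times M^{\op} \times M, \Spaces) \simeq \Fun(M^{\op} \times M, \PSh(V))
\]
is precisely $\Phi$.

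Next, since $y_V$ is fully faithful with essential image spanned by the representable presheaves, $\Phi$ factors through $V \hookrightarrow \PSh(V)$ if and only if $\Phi(m, m') = \Map_M(- \otimes m, m')$ is representable for every pair $(m, m')$. By the preceding definition, representability of this presheaf is exactly the existence of an internal hom $\iHom_M(m, m')$, so the factorization condition on $\Phi$ is equivalent to the closedness of $M$.

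Finally, post-composition with $y_V$ provides a fully faithful functor
\[
(y_V)_* : \Fun(M^{\op} \times M, V) \hookrightarrow \Fun(M^{\op} \times M, \PSh(V)),
\]
so any factorization of $\Phi$ through $V$ is unique up to contractible choice and delivers the desired functor $\iHom_M(-,-): M^{\op} \times M \to V$. The only (minor) subtlety is to argue that a pointwise choice of representing objects assembles into an actual functor, and this is handled exactly by the fully faithfulness of $(y_V)_*$; I expect no substantive obstacle.
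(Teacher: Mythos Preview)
Your proposal is correct and is the standard argument. The paper itself treats this statement as an observation without proof, so there is nothing to compare against; your write-up simply supplies the expected details (currying to build $\Phi$, then invoking fully faithfulness of the Yoneda embedding to identify the factorization condition with pointwise representability and to extract the functor $\iHom_M(-,-)$).
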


We now collect a few basic observations about internal homs that we will implicitly use throughout the paper: 

	\begin{obs}
		\label{obs:iHomunderlying}
		Mapping out of the monoidal unit $1_V$ of $V$ recovers the underlying mapping spaces: 
				\begin{equation*}
					\Map_{V} (1_V, \iHom_{M}(m, n)) \simeq \Map_{M}(1_V \otimes m, n) \simeq \Map_M(m,n)\, .
				\end{equation*}
	\end{obs}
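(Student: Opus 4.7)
The plan is to unwind the two equivalences directly from the definitions, as the statement is essentially a tautology. Both equivalences follow from structural properties already in hand: the defining universal property of the internal hom and the unit isomorphism of a left module category.

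First, I would invoke the representability defining $\iHom_M(m, n)$ from \cref{reminder:iHom}: namely, this object is required to represent the presheaf $\Map_M(- \otimes m, n) : V^{\op} \to \Spaces$. Evaluating the Yoneda equivalence $\Map_V(-, \iHom_M(m, n)) \simeq \Map_M(- \otimes m, n)$ at the object $v = 1_V \in V$ yields the first equivalence
\[
\Map_V(1_V, \iHom_M(m, n)) \simeq \Map_M(1_V \otimes m, n).
\]

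Second, I would apply the unit isomorphism of the left $V$-module structure on $M$, which supplies a canonical equivalence $1_V \otimes m \simeq m$ natural in $m$; this is part of the data of $M \in \LMod_V(\cat)$. Applying $\Map_M(-, n)$ converts this into the second equivalence $\Map_M(1_V \otimes m, n) \simeq \Map_M(m, n)$, and composition with the previous display gives the observation.

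There is no real obstacle: both steps are formal consequences of the definitions recalled immediately above the observation. The only subtlety worth flagging is that the version in \cref{reminder:iHom} states the convention for left modules, so if one wants the right-module analogue one reads off the representability $\Map_V(-, \iHom_M(m,n)) \simeq \Map_M(m \otimes -, n)$ and applies the right unitor; the argument is otherwise identical.
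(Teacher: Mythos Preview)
Your proposal is correct and matches the paper's treatment: the paper states this as an observation with no proof, precisely because it is the tautological unwinding you describe. Your two steps---evaluating the defining representability at $1_V$ and applying the unit isomorphism of the module structure---are exactly what is implicit in the statement.
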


	\begin{obs}
	\label{obs:functorialityhom}\label{obs:adjunctioninthom} 
		If $V\in \Alg(\cat)$ is a monoidal category,  $F:M \to N$ is a morphism in $\LMod_{V}(\cat)$ between closed module categories $M$ and $N$, and $m, m' \in M$, then the universal property of internal homs induces a morphism in $V$:
	\[ \iHom_M(m, m') \to \iHom_N(Fm, Fm').
	\]
	If  $F$ admits a right adjoint $F^\rR$ (which inherits a lax $V$-linear structure by \HA{Cor.}{7.3.2.7}), then the composite 
			\[ \iHom_M(m, F^\rR n) \to \iHom_N(Fm, FF^\rR n) \to \iHom_N (Fm, n)
		\]
		is an isomorphism. 
		\end{obs}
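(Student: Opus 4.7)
The plan is to invoke the Yoneda lemma together with the representability of internal homs. For the first assertion, I would note that $\iHom_M(m, m')$ and $\iHom_N(Fm, Fm')$ represent the presheaves $\Map_M(-\otimes m, m')$ and $\Map_N(-\otimes Fm, Fm')$ on $V$ respectively, so producing a map in $V$ between them amounts by Yoneda to producing a natural transformation of these presheaves. Such a natural transformation is given by sending $f\colon v\otimes m \to m'$ to $Ff\colon v\otimes Fm \simeq F(v\otimes m) \to Fm'$, where the first equivalence uses the $V$-linear structure on $F$. Naturality in $v$ follows from naturality of the $V$-linear coherence isomorphism for $F$.

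For the second assertion, I would compute the presheaf represented by $\iHom_M(m, F^\rR n)$ using the adjunction $F\dashv F^\rR$: for every $v \in V$,
\[\Map_V\bigl(v, \iHom_M(m, F^\rR n)\bigr) \simeq \Map_M(v\otimes m, F^\rR n) \simeq \Map_N\bigl(F(v\otimes m), n\bigr) \simeq \Map_N(v\otimes Fm, n),\]
naturally in $v$, using the universal property of $\iHom_M$, the $F\dashv F^\rR$ adjunction, and the $V$-linearity of $F$. Since the last term represents $\iHom_N(Fm, n)$, Yoneda yields an equivalence $\iHom_M(m, F^\rR n) \xrightarrow{\sim} \iHom_N(Fm, n)$ in $V$.

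The subtle step that I expect to require the most care is verifying that this abstractly constructed equivalence agrees with the concrete composite in the statement, namely the functoriality map from part one followed by postcomposition with the counit $\varepsilon\colon FF^\rR \To \id$. I would do this by unwinding the constructions on mapping-space elements: the functoriality map sends $f\colon v\otimes m \to F^\rR n$ to $Ff\colon v\otimes Fm \simeq F(v\otimes m)\to FF^\rR n$, and postcomposing with $\varepsilon_n$ yields $\varepsilon_n\circ Ff$, which is by construction the adjunct of $f$ under $F\dashv F^\rR$. Since this is precisely the natural transformation of presheaves realized above, the composite is indeed an equivalence by Yoneda.
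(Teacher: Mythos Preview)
Your argument is correct and is precisely the standard unwinding of the statement. The paper does not give a separate proof of this observation; it is stated as following directly from the universal property of internal homs, and your proposal spells out exactly what that means via Yoneda and representability. In particular, your care in verifying that the abstract Yoneda equivalence coincides with the displayed composite (functoriality followed by postcomposition with the counit) is the right thing to check and is handled correctly.
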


	\begin{obs} If $F:V \to W$ is a monoidal functor and $M$ a left $W$ module, denote its restriction to a $V$-module by $F^*M$. Then, if the functor $F$ admits a right adjoint $F^\rR$ and $m,n \in M$ admit an internal hom $\iHom_{M}(m, n) \in W$, it follows from the universal property that $m,n$ also admit an internal hom $\iHom_{F^*M}(m,n) \in V$ which is equivalent to	\[\iHom_{F^*M}(m, n) \simeq F^\rR \iHom_{M}(m,n).
	\] 
	\end{obs}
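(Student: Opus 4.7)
The plan is to verify that $F^\rR \iHom_M(m,n)$ represents the functor $V^{\op} \to \Spaces$ classifying the internal hom in $F^*M$, namely $v \mapsto \Map_{F^*M}(v \otimes m, n)$. The proof is a direct chain of natural equivalences, and the only real care needed is in bookkeeping the $V$-module structure on $F^*M$ together with the universal property of the internal hom.

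First I would recall that, by definition of restriction of scalars along a monoidal functor, the $V$-action on $F^*M$ is given on objects by $v \otimes m := Fv \otimes m$, using the $W$-action on $M$ on the right-hand side. Next I would fix $v \in V$ and chain the equivalences
\begin{align*}
\Map_V(v, F^\rR \iHom_M(m,n)) &\simeq \Map_W(Fv, \iHom_M(m,n)) \\
&\simeq \Map_M(Fv \otimes m, n) \\
&\simeq \Map_{F^*M}(v \otimes m, n),
\end{align*}
where the first equivalence uses the adjunction $F \dashv F^\rR$, the second the defining universal property of $\iHom_M(m, n) \in W$, and the third the description of the $V$-action on $F^*M$ just recalled. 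The composite is natural in $v \in V$, so by the Yoneda lemma $F^\rR \iHom_M(m, n)$ represents the presheaf $\Map_{F^*M}((-) \otimes m, n): V^{\op} \to \Spaces$, which is exactly what it means to be an internal hom of $m, n$ in $F^*M$.

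The main potential obstacle is to make the naturality of the above equivalences in $v$ precise, since the $V$-action on $F^*M$ and the equivalence $\Map_M(Fv \otimes m, n) \simeq \Map_{F^*M}(v \otimes m, n)$ are both parts of the module structure transported along $F$. However, this is already built into the definition of $F^*M$ as a $V$-module: the structural equivalence $Fv \otimes m \simeq v \otimes_{F^*M} m$ is natural in $v$, and the adjunction $\Map_W(F-,-) \simeq \Map_V(-, F^\rR-)$ together with the universal property of $\iHom_M(m, n)$ are each natural in their arguments. Thus there is no genuine obstruction, and the statement follows by combining these natural equivalences and invoking Yoneda.
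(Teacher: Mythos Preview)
Your proof is correct and is exactly the verification the paper alludes to when it says ``it follows from the universal property'': the paper states this as an observation without further argument, and your chain of equivalences using $F \dashv F^{\rR}$, the defining property of $\iHom_M(m,n)$, and the definition of the $V$-action on $F^*M$ is the intended unpacking.
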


		\begin{ex}[{\HA{Prop.}{4.2.1.33}}] \label{ex:presentableihom}
 If $\cV \in \Alg(\PrL)$ is a presentably monoidal category, then it follows from the adjoint functor theorem that $\cV$ is closed monoidal and similarly that any presentable module category $\cM \in \LMod_{\cV}(\PrL)$ is closed. \end{ex}

\begin{obs} \label{obs:reducetopresentable} Let $V \in \Alg(\cat) $ be a monoidal category and $M$ a closed left $V$-module category. Then, $\PSh(M)$ is a presentable left  $\PSh(V)$-module category and for $m, m' \in M$ with associated representable presheaves $\yo(m), \yo(m') \in \PSh(M)$ it follows immediately from the definition that
\[\iHom_{\PSh(M)} ( \yo(m),  \yo(m')) \simeq \yo_{\iHom_M(m, m')} \in \PSh(V)~.\]
Thus, computations of internal homs can always be reduced to \cref{ex:presentableihom}.
\end{obs}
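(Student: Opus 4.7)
The observation combines two claims: a structural assertion that $\PSh(M)$ is canonically a presentable $\PSh(V)$-module, and the concrete identification of internal homs on representables. My plan is to deduce the former from the symmetric monoidality of the presheaf construction $\PSh: \cat \to \PrL$, and to derive the latter by testing the universal property of the internal hom on representable generators together with full faithfulness of the Yoneda embedding.

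For the module structure, I would apply the symmetric monoidal functor $\PSh : \cat \to \PrL$ of \HA{Prop.}{4.8.1.15} to the algebra $V \in \Alg(\cat)$ and module $M \in \LMod_V(\cat)$, obtaining $\PSh(V) \in \Alg(\PrL)$ and $\PSh(M) \in \LMod_{\PSh(V)}(\PrL)$, which gives presentability of $\PSh(M)$ as a $\PSh(V)$-module. By the universal property of $\PSh$ as the free cocompletion, the Yoneda embeddings $\yo_V$ and $\yo_M$ are compatible with the monoidal and module structures in the sense that on pure representables the action restricts to the original one, yielding $\yo(v) \otimes \yo(m) \simeq \yo(v \otimes m)$. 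Since $\PSh(V)$ is presentably monoidal, \cref{ex:presentableihom} already ensures that $\PSh(M)$ is closed, so $\iHom_{\PSh(M)}(\yo(m), \yo(m'))$ exists and it remains to identify it.

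To identify this internal hom with $\yo_{\iHom_M(m, m')}$, I would argue via the Yoneda lemma in $\PSh(V)$: it suffices to produce a natural equivalence after applying $\Map_{\PSh(V)}(\yo(v), -)$ for all $v \in V$. Unwinding the defining universal property,
\[\Map_{\PSh(V)}\bigl(\yo(v), \iHom_{\PSh(M)}(\yo(m), \yo(m'))\bigr) \simeq \Map_{\PSh(M)}\bigl(\yo(v) \otimes \yo(m), \yo(m')\bigr),\]
substituting $\yo(v) \otimes \yo(m) \simeq \yo(v \otimes m)$, applying full faithfulness of $\yo_M$, and then the universal property of $\iHom_M(m, m')$ in $V$, gives $\Map_V(v, \iHom_M(m, m'))$, which by full faithfulness of $\yo_V$ is $\Map_{\PSh(V)}(\yo(v), \yo(\iHom_M(m, m')))$. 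Naturality in $v$ together with Yoneda then delivers the claimed equivalence.

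There is no real obstacle, and the author's remark that this follows ``immediately from the definition'' is accurate. The only step meriting slight care is the compatibility $\yo(v) \otimes \yo(m) \simeq \yo(v \otimes m)$, which rests on the fact that the $\PSh(V)$-action on $\PSh(M)$ is uniquely determined by being bicocontinuous and restricting along $V \times M \hookrightarrow \PSh(V) \times \PSh(M)$ to the original action of $V$ on $M$ composed with $\yo_M$; this is exactly the content of the symmetric monoidality of $\PSh$ and need only be noted once.
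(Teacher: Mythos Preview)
Your proposal is correct and is precisely the unpacking the paper has in mind; the paper gives no explicit proof, simply asserting that the identification ``follows immediately from the definition,'' and your Yoneda-lemma verification against representables $\yo(v)$ together with the compatibility $\yo(v)\otimes\yo(m)\simeq\yo(v\otimes m)$ is exactly how one would make that assertion precise.
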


	\begin{ex} The category $\cat$ of categories equipped with its Cartesian monoidal structure is presentably monoidal by \HA{Lemma}{4.8.4.2}, in particular closed. The internal hom between $C, D \in \cat$ is the functor category  $\Fun(C, D)$.
	\end{ex}
	
	It follows from \HA{Prop.}{4.3.2.5} that if $a \in \Alg(V)$ is an algebra in a monoidal category $V \in \Alg(\cat)$, then the category $\RMod_a(V)$ of right $a$-modules is a left $V$-module category, i.e.\ in $\LMod_V(\cat)$ with action intuitively given by sending $v\in V$ and $m_a \in \RMod_a(V)$ to the module $v \otimes m_a \in \RMod_a(V)$. In particular, the forgetful functor $\RMod_a(V) \to V$ is a left $V$-module functor, i.e.\ a morphism in $\LMod_V(\cat)$.

	\begin{ex} \label{ex:FunV}
	If $V \in \Alg(\cat) $ is a monoidal category, then the (large) left $\cat$-module category $\RMod_V(\cat)$ is in $\LMod_{\cat}(\PrL)$ by \HA{Prop.}{4.8.3.22} and \HA{Cor.}{4.2.3.7},  and hence is a closed  $\cat$-module category. We denote the internal hom between $M, N \in \RMod_V(\cat)$ by  $\Fun_V(M, N) \in \cat$ and refer to it as the \emph{category of $V$-linear functors}. Applying \cref{obs:functorialityhom} to the forgetful functor $\RMod_V(\cat) \to \cat$ provides a forgetful functor $\Fun_V(M, N) \to \Fun(M, N)$ which we will use implicitly throughout. 
	\end{ex}
	
	\begin{obs}
	\label{obs:equivalentFunV}
	By the Grothendieck construction, a monoidal category is equivalently a coCartesian fibration $V^{\otimes} \to \Ass^{\otimes}$ of operads, and a  pair $(V,M)$ of a monoidal category $V$ and a right $V$-module category $M$ is a coCartesian fibration $M^{\otimes} \to \RM^{\otimes}$ of operads.  In these terms, $\RMod_V(\cat)$ is equivalent to the category $\mathrm{Op}_{/^{\mathrm{coCart}} \RM^{\otimes}} \times_{\mathrm{Op}_{/^{\mathrm{coCart}} \Ass^{\otimes}}} \{V^{\otimes}\}$ and $\Fun_V(M, N)$ is equivalent to  the category of $V$-linear functors from~\HA{Definition}{4.6.2.7}, i.e.\ the full subcategory of $\Fun_{/\RM^\otimes} (M^\otimes, N^\otimes) \times_{\Fun_{/\Ass^{\otimes}}(V^{\otimes}, V^{\otimes})}\{ \id\}$ on those functors $M^{\otimes} \to N^{\otimes}$ that preserve coCartesian morphisms.  If we equivalently regard $M, N$ as locally coCartesian fibrations $M^\oast, N^\oast \to V^\oast$ as in \HA{Notation}{4.2.2.17}, then by \HA{Lemma}{4.8.4.12} $\Fun_V(M, N)$ is also equivalent to the full subcategory of $\Fun_{/V^\oast}(M^\oast, N^\oast)$ on the functors preserving locally coCartesian morphisms,  cf.\ \cite[Lemma 4.1.6]{soergel} and \cite[Rem.\ 3.68, Prop.\ 3.89]{heine}. In the following, we will use all these perspectives on $\Fun_V(M, N)$ interchangeably. 
	\end{obs}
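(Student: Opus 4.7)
The plan is to verify this observation by combining the straightening--unstraightening equivalence for operads with the defining universal property of the internal hom in $\RMod_V(\cat)$ given by Example \ref{ex:FunV}.

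First, I would establish the identification $\RMod_V(\cat) \simeq \mathrm{Op}_{/^{\mathrm{coCart}}\RM^\otimes} \times_{\mathrm{Op}_{/^{\mathrm{coCart}}\Ass^\otimes}} \{V^\otimes\}$ via the operadic Grothendieck construction: under straightening, a pair $(V,M)$ of a monoidal category and a right $V$-module corresponds to a coCartesian fibration of operads $M^\otimes \to \RM^\otimes$ whose pullback along $\Ass^\otimes \hookrightarrow \RM^\otimes$ recovers $V^\otimes$. Fixing $V^\otimes$ exhibits $\RMod_V(\cat)$ as the displayed fibered product, and naturality of straightening identifies morphisms in $\RMod_V(\cat)$ with morphisms $M^\otimes \to N^\otimes$ of coCartesian fibrations over $\RM^\otimes$ preserving coCartesian edges and restricting to $\operatorname{id}_{V^\otimes}$.

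Second, to identify the internal hom $\Fun_V(M,N)$ of Example \ref{ex:FunV} with the $V$-linear functor category of \HA{Def.}{4.6.2.7}, I would show that both represent the same presheaf on $\cat$. By definition, the internal hom is characterized by
\[ \Map_\cat(C, \Fun_V(M,N)) \simeq \Map_{\RMod_V(\cat)}(C \otimes M, N), \]
where the $\cat$-action on $\RMod_V(\cat)$ is induced by the free--forgetful adjunction with $\Alg(\cat)$ and satisfies $C \otimes M \simeq C \times M$ as underlying $V$-modules. Under the straightening of the previous step, the right-hand side becomes the space of maps $C \times M^\otimes \to N^\otimes$ of coCartesian fibrations of operads over $\RM^\otimes$ restricting to $\operatorname{id}_{V^\otimes}$, which is exactly the value at $C$ of Lurie's functor category by its defining universal property in \HA{Def.}{4.6.2.7}. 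Yoneda then yields the claimed equivalence. The subsequent equivalence with the full subcategory of $\Fun_{/V^\oast}(M^\oast,N^\oast)$ on functors preserving locally coCartesian edges is a direct citation of \HA{Lemma}{4.8.4.12}, translating between the $\otimes$- and $\oast$-presentations of the same data.

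The main obstacle lies in the second step: one must carefully verify that the $\cat$-action on $\RMod_V(\cat)$ is implemented on the fibrational side by taking Cartesian products with $C$, and that this matches the universal property presented in \HA{Def.}{4.6.2.7}. Once this bookkeeping is in place, the remainder is a formal Yoneda argument, and the locally coCartesian reformulation follows by citation.
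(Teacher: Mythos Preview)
The paper does not give a proof of this observation at all: it is stated as a fact with inline citations to \HA{Definition}{4.6.2.7}, \HA{Lemma}{4.8.4.12}, \cite[Lemma 4.1.6]{soergel}, and \cite[Rem.\ 3.68, Prop.\ 3.89]{heine}, and the reader is implicitly referred to those sources for the details. Your proposal therefore supplies strictly more argument than the paper, and your overall strategy---identify $\RMod_V(\cat)$ with the fibered product of coCartesian fibrations, then compare the two candidate internal homs via Yoneda---is the natural one and is correct.

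One point to tighten: you write that the right-hand side ``is exactly the value at $C$ of Lurie's functor category by its defining universal property in \HA{Def.}{4.6.2.7}.'' That definition does not state a universal property; it defines the category directly as a full subcategory of $\Fun_{/\RM^\otimes}(M^\otimes,N^\otimes)\times_{\Fun_{/\Ass^\otimes}(V^\otimes,V^\otimes)}\{\id\}$. So you still owe the identification of $\Map_{\RMod_V(\cat)}(C\times M, N)$ with $\Map_{\cat}(C,\text{Lurie's }\Fun_V(M,N))$. Concretely this amounts to checking that the coCartesian fibration $(C\times M)^\otimes \to \RM^\otimes$ has fiber $V$ over $\mathfrak{a}$ and $C\times M$ over $\mathfrak{m}$ (it is not literally $C\times M^\otimes$), and that maps out of it over $\RM^\otimes$ restricting to $\id_{V^\otimes}$ unwind to functors $C\to \Fun_V(M,N)$ in Lurie's sense. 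This is exactly the bookkeeping you flag as the main obstacle, and it is carried out in the references the paper cites (notably \cite[Lemma 4.1.6]{soergel}); once that is in place your Yoneda argument goes through.
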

	\begin{rem} If $V$ is $\E_2$-monoidal, then the relative tensor product over $V$ makes the category $\RMod_V(\cat)$ presentably monoidal by~\HA{Cor.}{4.8.5.20} and hence closed monoidal. In this case, the category $\Fun_V(M, N)$ inherits a $V$-module structure making it the internal hom in $\RMod_V(\cat)$. 
	\end{rem}

	For $\cV \in \Alg(\PrL)$ a presentably monoidal category and $\cM, \cN \in \PrV:= \RMod_{\cV}(\PrL)$, we let $\LinFun(\cM, \cN)$ denote the full subcategory of $\Fun_V(\cM, \cN)$ on those $V$-linear functors whose underlying functor $\cM \to \cN$ is colimit-preserving.
%
	\begin{prop}[{cf. \cite[Lemma 4.1.6]{soergel}}]
		\label{prop:LinFuniHom}
		Let $\cV \in \Alg(\PrL)$ be a presentably monoidal category and $\cM, \cN \in \PrV$. Then, $\LinFun(\cM, \cN)$ is presentable and defines an internal hom between $\cM$ and $\cN$ for the left $\PrL$-module structure on $\PrV:=\RMod_{\cV}(\PrL)$ induced by the tensor product of presentable categories. 
	\end{prop}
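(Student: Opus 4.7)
The plan is to realize $\LinFun(\cM, \cN)$ as a cosimplicial totalization of presentable categories in $\PrL$, from which both presentability and the internal hom property will follow by unwinding universal properties. Specifically, from the description of $\cV$-linear functors via the bar resolution of $\cV$ (cf.\ \cref{obs:equivalentFunV}), one obtains a natural equivalence
\[
  \LinFun(\cM, \cN) \;\simeq\; \lim_{[n] \in \Delta} \Fun^{\mathrm{L}}\bigl(\cM \otimes \cV^{\otimes n},\, \cN\bigr),
\]
whose cosimplicial structure maps are built from the right $\cV$-action on $\cM$, the multiplication of $\cV$, and the projections to $\cN$; the $n=0$ term records the underlying colimit-preserving functor and the higher terms encode $\cV$-linearity with all its coherences. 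Each term $\Fun^{\mathrm{L}}(\cM \otimes \cV^{\otimes n}, \cN)$ is presentable by \HA{Prop.}{4.8.1.17}, and since $\PrL$ admits small limits and they are computed along right adjoints in $\lcat$, the totalization $\LinFun(\cM, \cN)$ is presentable.

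For the internal hom property, I would test against an arbitrary $\cC \in \PrL$. Commuting the cosimplicial limit past $\Map_{\PrL}(\cC, -)$ and applying the internal hom adjunction for $\Fun^{\mathrm{L}}$ in $\PrL$ together with associativity of $\otimes$ yields
\[
  \Map_{\PrL}\bigl(\cC,\, \LinFun(\cM, \cN)\bigr) \;\simeq\; \lim_{[n] \in \Delta} \Map_{\PrL}\bigl((\cC \otimes \cM) \otimes \cV^{\otimes n},\, \cN\bigr).
\]
By definition, the left $\PrL$-module structure on $\PrV$ equips $\cC \otimes \cM$ with the $\cV$-action $\id_\cC \otimes \mathrm{act}_\cM$, so the right-hand side is precisely the cosimplicial formula computing $\Map_{\PrV}(\cC \otimes \cM, \cN)$. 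Naturality in $\cC$ then upgrades this to the claimed equivalence, identifying $\LinFun(\cM, \cN)$ as the internal hom of $\cM, \cN \in \PrV$ over $\PrL$.

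The main obstacle is setting up the cosimplicial diagram functorially and matching it cleanly with the right-module-map cosimplicial formula; once arranged, the universal-property calculations are formal. A more direct alternative would be to argue at the level of (locally) coCartesian fibrations as in \cref{obs:equivalentFunV}, presenting $\LinFun(\cM, \cN)$ as a limit in $\widehat{\cat}$ of categories of sections of $\cM^\oast, \cN^\oast \to \cV^\oast$ preserving locally coCartesian morphisms, and verifying that this limit lies in $\PrL$ and satisfies the required adjunction by direct inspection. Either route hinges on the bar-resolution picture of $\cV$-linearity, which interacts formally with the closed symmetric monoidal structure on $\PrL$.
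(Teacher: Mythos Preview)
Your approach is correct in outline and shares with the paper the use of the bar resolution to express $\LinFun(\cM,\cN)$ as a totalization of the presentable categories $\Fun^{\rL}(\cM\otimes\cV^{\otimes n},\cN)$; the paper uses exactly this step to prove presentability. The difference lies in how the internal hom property is established. You derive the adjunction entirely from the cosimplicial formula, which means you must first justify that formula for $\LinFun(\cM,\cN)$ (defined in the paper as the full subcategory of $\Fun_{\cV}(\cM,\cN)$ on colimit-preserving functors) without already knowing it is an internal hom---this is the obstacle you flag, and it is genuine, since the most natural derivation of the formula goes through the very adjunction you are proving.

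The paper sidesteps this circularity by reversing the order: it first proves the internal hom property at the level of $\Catcolim$, using that $\Fun_{\cV}(\cM,\cN)$ is already known to be the internal hom in $\RMod_{\cV}(\widehat{\cat})$ (from \cref{ex:FunV}) and that restricting to colimit-preserving functors cuts out exactly the subspace corresponding to maps out of $\cP\otimes\cM$ rather than $\cP\times\cM$. Only \emph{then} does it invoke the bar resolution, at which point the cosimplicial formula is a consequence of the now-established adjunction rather than an input to it. Your route would work if you either set up the cosimplicial object directly from the operadic description in \cref{obs:equivalentFunV}, or proved the formula for mapping spaces $\Map_{\PrV}(-,\cN)$ from the free--forgetful monadicity and then upgraded; but the paper's order of operations is cleaner precisely because it avoids that bookkeeping.
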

	\begin{proof}
%
%

	By \HA{Prop.}{4.3.2.5}, $\RMod_{\cV}(\PrL)$ is a left $\PrL$-module category. We now show that $\LinFun(-,-)$ defines an internal hom. 
	By definition, $\LinFun(\cM, \cN)$ is a full subcategory of $\Fun_{\cV}(\cM, \cN) \in \widehat{\cat}$ which is an internal hom between $\cM$ and $\cN$ for the $\widehat{\cat}$-module structure on $\RMod_{\cV}(\widehat{\cat})$ induced by the cartesian product (see~\cref{ex:FunV}). By~\HA{Rem.}{4.8.4.14}, the full inclusion $\LinFun(\cM, \cN) \to \Fun_{\cV}(\cM, \cN)$ is closed under colimits. Let $\Catcolim$ denote the category of large categories with colimits and colimit-preserving functors and note that $\PrL \subseteq \Catcolim$ is a full symmetric monoidal subcategory~\HA{Prop.}{4.8.1.15}.	 Unwinding definitions, it  follows that for $\cP \in \Catcolim$, the full subspace $\Map^{\mathrm{colim}}(\cP, \LinFun(\cM, \cN)) \subseteq \Map(\cP, \LinFun(\cM, \cN))$ on the colimit-preserving functors is the full subspace of $\Map_{\RMod_{\cV}(\widehat{\cat})}(\cP \times \cM, \cN)$ on those functors which preserve colimits separately in both variables and thus we have produced an equivalence \[\Map^{\mathrm{colim}}(\cP, \LinFun(\cM, \cN)) \simeq \Map_{\RMod_{\cV}(\Catcolim)}( \cP \otimes \cM , \cN).\] 
	Thus, $\LinFun(\cM ,\cN)$ is an internal hom between $\cM$ and $\cN$ for the left $\Catcolim$-action on $\RMod_{\cV}(\Catcolim)$.
We are thus finished if we show that $\LinFun(\cM, \cN)$ is presentable. Indeed, it follows from the above equivalence that for $\cC \in \PrL$, there is an equivalence  $\LinFun(\cC \otimes \cV, \cN) \simeq \Fun^{\mathrm{L}}(\cC, \cN)$. But we can resolve any $\cM$ in $\RMod_{\cV}(\Pr)$ by a bar construction of free modules, meaning that $\LinFun(\cM, \cN)$ is a limit in $\Catcolim$ of categories of the form $\LinFun(\cM \otimes \cV^{\otimes n}, \cN) \simeq \Fun^{\mathrm{L}}(\cM \otimes \cV^{\otimes n-1}, \cN)$, which are all presentable by~\HTT{Prop.}{5.5.3.8}. Since the full inclusion $\PrL \subseteq \Catcolim$ is closed under limits (since for both the further subcategory inclusion into $\widehat{\cat}$ is closed under limits, see~\HTT{Prop.}{5.5.3.13} and~\HTT{Cor.}{5.3.6.10}), we deduce that $\LinFun(\cM, \cN)$ is presentable. 
	\end{proof}

	\begin{rem}
		Since $\PrV$ is a closed $\Pr$-module category, we could regard it as a $\Pr$-enriched category (in particular a $2$-category) $\IPrV$ with Hom-categories given by $\LinFun(\cM, \cN)$ using the correspondence between tensoring and enrichment from \cite[Prop. 6.10 (5)]{heine}. While we will not formally use this perspective, it will be useful to keep in mind since we will effectively work with adjunctions and monadic $1$-morphisms internal to this $2$-category.
	\end{rem}

	\begin{prop}
		\label{prop:reltensoradj}
		Let $\cQ, \cV \in \Alg(\PrL)$ and $\cP \in \BMod{\cQ}{\cV}(\PrL)$. The functor $- \otimes_{\cQ} \cP : \Pr_{\cQ} \to \PrV$ admits a right adjoint $\LinFun(\cP, -) : \PrV \to \Pr_{\cQ}$, whose composition with the forgetful functor $\Pr_{\cQ} \to \Pr$ agrees with the internal hom functor from \cref{prop:LinFuniHom}. In particular, for $\cL \in \RMod_{\cQ}(\PrL)$ and $\cM \in \RMod_{\cV}(\PrL)$ the evaluation $\LinFun(\cP, \cM) \otimes_{\cQ} \cP \to \cM$ induces an equivalence
		\[
		\LinFun(\cL \otimes_{\cQ} \cP, \cM) \simeq \Fun^{\rL}_{\cQ} \left(\cL , \LinFun(\cP, \cM) \right) \, .
		\]
	\end{prop}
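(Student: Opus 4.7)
The plan is to package the internal hom from \cref{prop:LinFuniHom} into a right adjoint of $- \otimes_\cQ \cP$, using the left $\cQ$-action on $\cP$ to lift $\LinFun(\cP, \cM)$ to an object of $\Pr_\cQ$, and then to verify the adjunction formula via the bar resolution for the relative tensor product.

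\textbf{Step 1: Module structure on the candidate right adjoint.} Viewing $\cP$ as an object of $\Pr_\cV$ by restricting along its right $\cV$-action, the left $\cQ$-action on $\cP$ corresponds to a morphism of algebras $\cQ \to \LinEnd(\cP)$ in $\Alg(\PrL)$, where the target is a monoid under composition. By \cref{prop:LinFuniHom}, $\LinFun(\cP, \cM) \in \PrL$ carries a right $\LinEnd(\cP)$-action by precomposition, functorial in $\cM \in \Pr_\cV$. Restricting this action along $\cQ \to \LinEnd(\cP)$ yields the desired factorization $\LinFun(\cP, -) : \Pr_\cV \to \Pr_\cQ$; composing with the forgetful functor $\Pr_\cQ \to \PrL$ recovers the internal hom of \cref{prop:LinFuniHom} by construction.

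\textbf{Step 2: The adjunction.} I would resolve the relative tensor product by the standard two-sided bar construction $\cL \otimes_\cQ \cP \simeq |\cL \otimes \cQ^{\otimes \bullet} \otimes \cP|$ in $\Pr_\cV$ (see \HAsec{4.4.2}). Applying $\LinFun(-, \cM)$ level-wise, which carries colimits in $\PrL$ to limits by \cref{prop:LinFuniHom}, and then using the $\PrL$-internal hom adjunction at each level gives
\[ \LinFun(\cL \otimes_\cQ \cP, \cM) \;\simeq\; \lim_{[n] \in \Delta} \Fun^{\rL}(\cL \otimes \cQ^{\otimes n},\, \LinFun(\cP, \cM)). \]
By the construction of the right $\cQ$-action in Step 1, the cosimplicial diagram on the right is exactly the cobar resolution computing right $\cQ$-linear colimit-preserving functors out of $\cL$; its totalization is $\Fun^\rL_\cQ(\cL, \LinFun(\cP, \cM))$. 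This yields the asserted equivalence, and naturality in $\cL, \cM$ exhibits $\LinFun(\cP, -)$ as right adjoint to $- \otimes_\cQ \cP$; the counit is the evaluation $\LinFun(\cP, \cM) \otimes_\cQ \cP \to \cM$ obtained from the $\PrL$-counit of \cref{prop:LinFuniHom} by passing to the $\cQ$-quotient.

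\textbf{Main obstacle.} The delicate step is establishing the equivalence of cosimplicial objects in Step 2 \emph{coherently}, not just level-wise: the bar face/degeneracy maps must intertwine with the cobar structure induced by the right $\cQ$-action of Step 1. I would handle this by working within the framework of \HAsec{4.8.4} on enriched modules in presentable categories, which packages such compatibilities uniformly; alternatively, one may take the softer route of first establishing the equivalence for free right $\cQ$-modules $\cL = \cN \otimes \cQ$ (where both sides collapse to the plain $\PrL$-internal hom of \cref{prop:LinFuniHom} with no $\cQ$-structure to track), and then extending by cocontinuity in $\cL$, since both sides of the proposed equivalence are cocontinuous functors of $\cL$ and every right $\cQ$-module is a colimit of free ones.
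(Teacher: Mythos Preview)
Your approach is correct in principle but takes a genuinely different route from the paper's proof, and it is worth comparing the two.

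The paper sidesteps all coherence issues by a universe trick: it observes that $- \otimes_{\cQ} \cP : \RMod_{\cQ}(\Catcolim) \to \RMod_{\cV}(\Catcolim)$ preserves large colimits, and since both sides become presentable after passing to a larger universe, the adjoint functor theorem produces a right adjoint $R$ automatically, with its $\cQ$-module structure included. One then identifies the underlying category of $R(\cM)$ by computing $\Fun^{\mathrm{colim}}_{\cQ}(\cQ, R\cM) \simeq \LinFun(\cQ \otimes_{\cQ} \cP, \cM) \simeq \LinFun(\cP, \cM)$ via \cref{obs:adjunctioninthom}, and this is presentable by \cref{prop:LinFuniHom}, so $R$ factors through $\Pr_{\cQ}$. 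The displayed equivalence then follows formally: since $- \otimes_{\cQ} \cP$ is $\PrL$-linear and $\LinFun$, $\Fun^{\rL}_{\cQ}$ are the respective internal homs, \cref{obs:adjunctioninthom} gives the result immediately.

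Your approach is more constructive: you build the $\cQ$-module structure on $\LinFun(\cP, \cM)$ by hand (via the algebra map $\cQ \to \LinEnd(\cP)$ and precomposition), and then verify the adjunction via bar/cobar. This has the advantage of being explicit and not relying on universe-hopping, but the cost is exactly the coherence obstacle you identify. Your free-module reduction is a reasonable way to manage this, though note that you should say both sides send colimits in $\cL$ to \emph{limits} (they are contravariant), and the reduction still requires the equivalence on free modules to be natural with respect to the simplicial maps of the bar resolution --- so some care remains. The paper's approach buys you freedom from all of this at the price of a mild universe enlargement.
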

	\begin{proof}
		Note that $- \otimes_{\cQ} \cP$ is $\PrL$-linear, and $\LinFun$ and $\Fun^{\rL}_{\cQ}$ are the respective internal homs by \cref{prop:LinFuniHom}, so the second assertion follows from the first and \cref{obs:adjunctioninthom}.  Note that $- \otimes_{\cQ} \cP: \RMod_{\cQ}(\Catcolim) \to \RMod_{\cV}(\Catcolim)$ preserves large colimits, and since both sides are presentable after passing to a larger universe it admits a right adjoint $R$. Moreover, using the appropriate internal homs and \cref{obs:adjunctioninthom}, the underlying object of $R ( \cM )$ can be recovered as $\Fun^{\mathrm{colim}}_{\cQ}(\cQ, R \cM) \simeq \LinFun(\cQ \otimes_\cQ \cP, \cM) \simeq \LinFun( \cP, \cM )$. In particular the result is presentable by \cref{prop:LinFuniHom}, and thereby factors through $\Pr_{\cQ}$.
	\end{proof}

We end this section with a recollection of induced algebraic structures on internal ends and homs, and their functoriality:
	
	\begin{reminder}
		\label{reminder:endo}
		Let $V \in \Alg(\cat)$ be a monoidal category,  $M \in \LMod_V(\cat)$ a left $V$-module category and $m \in M$. If the internal end $\iEnd_M (m) := \iHom_M (m, m) \in V$ exists, then by \HA{Cor.}{4.7.1.40} the `composition morphism' \[\iEnd_M(m) \otimes \iEnd_M(m) \to \iEnd_M(m)\] (adjunct to the composition of evaluations $\iEnd_M(m) \otimes \iEnd_M(m) \otimes m \to \iEnd_M(m) \otimes m \to m$)  is the multiplication of an algebra structure on $\iEnd_M(m)$. Indeed, equipped with this algebra structure, $\iEnd_M(m)$ is the terminal object of $\LMod(M) \times_M \{m\}$, i.e.\  the universal algebra in $V$ with a left action on $m$. In particular, for any algebra morphism $a\to \iEnd_M(m)$ in $V$ the associated morphism $a\otimes m \to m$ in $M$ enhances to a left $a$-module structure on $m$ and this construction assembles into an equivalence 
		\[\Alg(V)_{/\iEnd_M(m)} \simeq \LMod(M) \times_M \{m\}
		\]
		compatible with the respective projections to $\Alg(V)$.
	\end{reminder}

	\begin{ex}\label{lem:endounit}
		 For $V\in \Alg(\cat)$, $a\in \Alg(V)$ we write  $a_a \in \RMod_a(V)$ for $a$ considered as a free right $a$-module over itself. The $a$-bimodule structure on $a$ induces a left $a$-module structure on $a_a \in \RMod_a(V)$. Then, it follows from the defining universal properties that the algebra morphism $a\to \iEnd_{\RMod_a(V)}(a_a)$ in $V$ induced by the universal property of~\cref{reminder:endo}  is an isomorphism. 
	\end{ex}
		
	\begin{obs}
		\label{obs:endofunctoriality}
		Let $V\in \Alg(\cat)$ a monoidal category and $F:M \to N$ a morphism in $\LMod_C(\cat)$. For $m \in M$, $C$-linearity of $F$ implies that the defining $\iEnd_M(m)$-action on $m \in M$ induces a $\iEnd_M(m)$-action on $F m $ in $N$. Thus, by the defining universal property, there is an induced algebra homomorphism in $C$
		\[ \iEnd_M(m) \to \iEnd_N(Fm) 
		\]
		whose underlying morphism agrees with the one from~\cref{obs:functorialityhom}. 
	\end{obs}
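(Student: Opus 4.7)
The plan is to leverage the universal property of $\iEnd_M(m)$ from \cref{reminder:endo} together with the $V$-linearity of $F$. By \cref{reminder:endo}, algebra homomorphisms $a \to \iEnd_M(m)$ in $V$ correspond bijectively to left $a$-module structures on $m \in M$ (extending its $V$-module structure); in particular, $\id_{\iEnd_M(m)}$ corresponds to the tautological $\iEnd_M(m)$-action on $m$ given by the evaluation $\iEnd_M(m) \otimes m \to m$.

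To construct the desired algebra homomorphism, I would first observe that post-composition with the $V$-linear functor $F$ induces a functor $\LMod(M) \times_M \{m\} \to \LMod(N) \times_N \{Fm\}$ over $\Alg(V)$: an $a$-action $a \otimes m \to m$ in $M$ is sent to the composite $a \otimes Fm \simeq F(a \otimes m) \to Fm$ in $N$, using the $V$-linear structure of $F$. Applying the equivalence of \cref{reminder:endo} on both sides then yields a functor $\Alg(V)_{/\iEnd_M(m)} \to \Alg(V)_{/\iEnd_N(Fm)}$ over $\Alg(V)$, and the desired algebra homomorphism $\iEnd_M(m) \to \iEnd_N(Fm)$ is defined as the image of the terminal object $\id_{\iEnd_M(m)}$ on the left.

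To establish the comparison with \cref{obs:functorialityhom}, I would unpack both constructions at the level of the underlying morphism in $V$. By definition, the morphism from \cref{obs:functorialityhom}, specialized to $m = m'$, is adjunct to the map $\iEnd_M(m) \otimes Fm \to Fm$ obtained by applying $F$ to the evaluation $\iEnd_M(m) \otimes m \to m$. But this is precisely the $\iEnd_M(m)$-action on $Fm$ constructed in the previous paragraph; its further adjunct under the universal property of $\iEnd_N(Fm)$ is thus the underlying morphism of our algebra homomorphism, and agreement follows from uniqueness of adjuncts.

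The main point to verify at the level of $\infty$-categories is that post-composition with the $V$-linear functor $F$ indeed induces the claimed functor $\LMod(M) \times_M \{m\} \to \LMod(N) \times_N \{Fm\}$ compatibly with the equivalences of \cref{reminder:endo}. This is a functoriality of $\LMod(-)$ with respect to $V$-linear morphisms of $V$-module categories, implicit in the $\infty$-operadic setup underlying \cref{reminder:endo}, and I do not anticipate it as a substantive obstacle.
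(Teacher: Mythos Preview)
Your proposal is correct and is exactly the argument the paper has in mind; indeed, the paper treats this as an observation rather than a theorem with a separate proof, and the text of the observation already contains the sketch you have carefully unpacked (transport the tautological $\iEnd_M(m)$-action on $m$ along the $V$-linear $F$, then invoke the universal property of $\iEnd_N(Fm)$ from \cref{reminder:endo}). Your final paragraph correctly identifies the one coherence point that needs checking and correctly assesses it as routine.
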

		


	\begin{obs}\label{obs:boringbimodule}
	Let $V\in \Alg(\cat)$ be a monoidal category, $a\in \Alg(V)$, and consider $\RMod_a(V)$ with its induced left $V$-action. Then, by \cref{reminder:endo} any $x \in \RMod_a(V)$ carries a universal left $\iEnd_{\RMod_a(V)}(x)$-action and hence enhances to an object in \[\LMod_{\iEnd_{\RMod_a(V)}(x)} \left( \RMod_a(V) \right) = \BMod{\iEnd_{\RMod_a(V)}(x)}{a}(V),\] i.e.\ to a $\iEnd_{\RMod_a(V)}(x)$--$a$ bimodule in $V$. 
	\end{obs}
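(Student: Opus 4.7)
The plan is to observe that the statement is essentially a direct application of \cref{reminder:endo} in the particular case $M = \RMod_a(V)$, combined with the standard identification of iterated module categories with bimodule categories. No new construction needs to be made.

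First, I would recall that $\RMod_a(V)$ is a left $V$-module category (the setup was already mentioned just before the observation, using \HA{Prop.}{4.3.2.5}); in particular it belongs to $\LMod_V(\cat)$ and the compatibility of the two actions on $V$ itself already takes care of all bimodule coherences that appear below. Then I would apply \cref{reminder:endo} verbatim to $(M, m) = (\RMod_a(V), x)$: this yields an algebra $\iEnd_{\RMod_a(V)}(x) \in \Alg(V)$ together with a universal morphism $\iEnd_{\RMod_a(V)}(x) \otimes x \to x$ in $\RMod_a(V)$ exhibiting $x$ as the terminal object of $\LMod(\RMod_a(V)) \times_{\RMod_a(V)} \{x\}$, hence enhancing $x$ canonically to an object of $\LMod_{\iEnd_{\RMod_a(V)}(x)}(\RMod_a(V))$.

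It then only remains to identify
\[ \LMod_{\iEnd_{\RMod_a(V)}(x)}(\RMod_a(V)) \;\simeq\; \BMod{\iEnd_{\RMod_a(V)}(x)}{a}(V), \]
which is a standard fact, cited for instance via \HA{Thm.}{4.3.2.7} (bimodules are modules-in-modules on the opposite side, and here the left $V$-action on $\RMod_a(V)$ is precisely the one coming from restriction of scalars along $V \to V \otimes \Ass^{\op}$). Under this equivalence, the left $\iEnd_{\RMod_a(V)}(x)$-module structure on $x$ produced in the previous step corresponds to a $\iEnd_{\RMod_a(V)}(x)$--$a$-bimodule structure on the underlying object of $x$ in $V$.

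There is no real obstacle here: the content of the observation is just packaging \cref{reminder:endo} together with the bimodule identification. The only thing one should be slightly careful about is that the left $V$-action on $\RMod_a(V)$ used in forming $\iEnd_{\RMod_a(V)}(x)$ is indeed the one induced by $a \in \Alg(V)$ (as in \cref{ex:algebrastoPrV}), which ensures the bimodule identification is compatible in the sense needed.
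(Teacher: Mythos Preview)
Your proposal is correct and is essentially the paper's own reasoning: the observation has no separate proof in the paper because its content is exactly what you describe---apply \cref{reminder:endo} with $M = \RMod_a(V)$ and then invoke the standard identification $\LMod_b(\RMod_a(V)) \simeq \BMod{b}{a}(V)$. The only minor remark is that the paper does not explicitly cite \HA{Thm.}{4.3.2.7} here (it treats the identification as tacit), but your citation is appropriate and the approach is the same.
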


	Intuitively, for a closed module category $M$ over a monoidal category, composition defines an $\iEnd_M(m')$--$\iEnd_M(m)$-bimodule structure on the internal hom $\iHom_M(m,m')$. The following makes this precise:
	\begin{prop}\label{prop:bimoduleihom}
	Let $V\in \Alg(\cat)$ a monoidal category, $M \in \LMod_V(\cat)$ a closed module category and $m,m' \in M$. Then, the morphism 
	\[
	\iEnd_M(m') \otimes \iHom_M(m,m') \otimes \iEnd_M(m) \to \iHom_M(m,m')
	\]
	adjunct to the composition of evaluation maps $\iEnd_M(m') \otimes \iHom_M(m,m') \otimes \iEnd_M(m) \otimes m \to \iEnd_M(m') \otimes \iHom_M(m,m') \otimes m \to \iEnd_M(m') \otimes m' \to m'$ enhances to a $\iEnd_M(m')$--$\iEnd_M(m)$-bimodule structure on $\iHom(m, m')$ for the algebra structures on $\iEnd_M(m')$ and $\iEnd_M(m)$ from~\cref{reminder:endo}.	\end{prop}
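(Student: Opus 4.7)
The plan is to construct the bimodule structure on $h := \iHom_M(m, m')$ by invoking the universal property of the internal hom, converting the universal $a := \iEnd_M(m)$- and $a' := \iEnd_M(m')$-actions on $m$ and $m'$ from \cref{reminder:endo} into commuting actions on $h$. To handle higher coherences uniformly, I would first reduce, via \cref{obs:reducetopresentable}, to the case where $V$ is presentable and $M$ is a presentable left $V$-module: the Yoneda-style embeddings $V \hookrightarrow \PSh(V)$ and $M \hookrightarrow \PSh(M)$ preserve internal homs and endomorphism algebras, and hence any bimodule enhancement of a hom object descends along these embeddings.

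In the presentable setting, I would construct a refined right adjoint $\iHom_M(m,-): M \to \RMod_a(V)$ lifting the usual internal hom functor. The $V$-linear adjunction $(-) \otimes m : V \rightleftarrows M : \iHom_M(m, -)$ has lax $V$-linear right adjoint by \cref{obs:adjunctioninthom}, and the universal left $a$-action $\alpha : a \otimes m \to m$ from \cref{reminder:endo} transfers across this adjunction to a natural right $a$-action $\rho_x : \iHom_M(m, x) \otimes a \to \iHom_M(m, x)$ for every $x \in M$, defined as the unique morphism corresponding under the universal property of the internal hom to the composite
\[ \iHom_M(m, x) \otimes a \otimes m \xrightarrow{\id \otimes \alpha} \iHom_M(m, x) \otimes m \xrightarrow{\text{ev}} x. \]
The assignment $x \mapsto (\iHom_M(m, x), \rho_x)$ then upgrades $\iHom_M(m,-)$ to a functor $M \to \RMod_a(V)$, and setting $x = m'$ gives $h$ a natural right $a$-module structure.

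To obtain the left $a'$-action and its compatibility with the right $a$-action in one go, I would apply \cref{obs:endofunctoriality} to the refined functor $M \to \RMod_a(V)$: the universal $a'$-action on $m'$ induces a canonical algebra homomorphism
\[ a' = \iEnd_M(m') \longrightarrow \iEnd_{\RMod_a(V)}(h) \]
in $V$. Then \cref{obs:boringbimodule}, applied to $h \in \RMod_a(V)$, equips $h$ with a canonical $\iEnd_{\RMod_a(V)}(h)$--$a$-bimodule structure, and pulling back along the above algebra homomorphism yields the desired $a'$--$a$-bimodule structure on $h$. Unwinding the construction, the resulting multiplication $a' \otimes h \otimes a \to h$ is precisely the adjunct of the composite of evaluations specified in the statement.

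The main obstacle is the construction of the refined right adjoint $\iHom_M(m,-): M \to \RMod_a(V)$ with all higher coherence data: at the $1$-categorical level the existence of each $\rho_x$ and its module axioms are immediate from uniqueness in the universal property of $h$, but coherently lifting the assignment $x \mapsto (\iHom_M(m, x), \rho_x)$ to a functor requires combining the lax $V$-linear structure on $\iHom_M(m, -)$ from \cref{obs:adjunctioninthom} with the universal property of \cref{reminder:endo}. This is most cleanly phrased by observing that $\iHom_M(m, -)$, viewed in the $2$-category of presentable $V$-modules and lax $V$-linear functors, is the right adjoint of a $V$-linear functor whose source $V$ carries a canonical right $a$-action via the identity algebra map $a \to \iEnd_V(V) = 1_V$-nothing—so that the refined right adjoint is produced by the universal property of \cref{reminder:endo} applied inside this $2$-category, thereby yielding the bimodule structure together with all its coherences at once.
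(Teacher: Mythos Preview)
Your reduction to the presentable case and the overall shape of the argument are in line with the paper, but the two crucial steps where you diverge are not solidly justified.

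First, the construction of the refined functor $\iHom_M(m,-): M \to \RMod_a(V)$ with all higher coherence is, as you say, the main obstacle --- and your proposed resolution in the final paragraph is not a resolution. The sentence about ``a canonical right $a$-action via the identity algebra map $a \to \iEnd_V(V) = 1_V$-nothing'' is garbled (the internal endomorphisms of $1_V$ in $V$ are $1_V$, not $a$), and the appeal to the $2$-category of presentable $V$-modules and lax $V$-linear functors does not by itself produce the lift to $\RMod_a(V)$. Second, even granting the refined functor, your application of \cref{obs:endofunctoriality} is not licensed as stated: that observation is formulated for a morphism in $\LMod_V(\cat)$, i.e.\ a \emph{strong} $V$-linear functor, whereas $\iHom_M(m,-)$ is only lax $V$-linear. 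The construction does go through for lax functors (lax $V$-linear functors preserve left modules over algebras in $V$), but this extension needs to be stated and checked, not silently assumed.

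The paper avoids both issues by working entirely on the left-adjoint side. Rather than first constructing the refined $\iHom$ and then acting by $a'$, it writes down the composite of cocontinuous functors
\[
\cV \xrightarrow{a' \otimes -} \LMod_{a'}(\cV) \xrightarrow{- \otimes a} \BMod{a'}{a}(\cV) \xrightarrow{- \otimes_a m} \LMod_{a'}(\cM),
\]
observes that this composite agrees with $\cV \xrightarrow{-\otimes m} \cM \xrightarrow{a' \otimes -} \LMod_{a'}(\cM)$, and then invokes the adjoint functor theorem (presentability) to obtain a right adjoint $\cR:\LMod_{a'}(\cM) \to \BMod{a'}{a}(\cV)$ together with all coherence for free. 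The underlying functor of $\cR$ is identified as $\iHom_M(m,-)$ by chasing the free--forgetful adjoints, and evaluating $\cR$ at the universal module $m' \in \LMod_{a'}(\cM)$ produces the bimodule $\iHom_M(m,m')$ directly. This packages the refined right adjoint, the $a'$-action, and their compatibility into a single application of the adjoint functor theorem, sidestepping both coherence problems in your outline.
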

	\begin{proof}
	By \cref{obs:reducetopresentable}, we may assume that $\cV \in \Alg(\PrL)$ and $\cM \in \LMod_{\cV}(\PrL)$. Consider the functors \[   \cV \stackrel{\iEnd_M(m') \otimes -}{\longrightarrow} \LMod_{\iEnd_M(m')}(\cV)  \stackrel{- \otimes \iEnd_M(m)}{\longrightarrow} \RMod_{\iEnd_M(m)} \left( \LMod_{\iEnd_M(m')}(\cV)\right) \]\[ \simeq\BMod{\iEnd_M(m')}{\iEnd_M(m)}(\cV) \stackrel{- \otimes_{\iEnd_M(m)} m}{\longrightarrow} \LMod_{\iEnd_M(m')}(\cM)\]
	where the former two are free module functors and the latter is as in~\HA{Ex.}{4.4.2.12} using that $m$ is a (universal) left $\iEnd_M(m)$-module. The composite is equivalent to the functor \[\cV \stackrel{-\otimes m}{\longrightarrow} \cM \stackrel{\iEnd_M(m') \otimes -}{\longrightarrow} \LMod_{\iEnd_M(m')}(\cM).\]
	All these functors are cocontinuous by~\HA{Cor.}{4.4.2.16} and all categories presentable by~\HA{Cor.}{4.3.3.10}, hence all functors admit a right adjoint by the adjoint functor theorem. In particular, since the right adjoints of free module functors are forgetful functors~\HA{Cor.}{4.3.3.13}, it follows that the right adjoint of $\BMod{\iEnd_M(m')}{\iEnd_M(m)}(\cV) \to \LMod_{\iEnd(m')}(\cM)$ defines a functor $\cR: \LMod_{\iEnd_M(m')}(\cM) \to \BMod{\iEnd_M(m')}{\iEnd_M(m)}(\cV)$ whose composite with the forgetful functor to $\cV$ is given by  the composite \[\LMod_{\iEnd_M(m')}(\cM) \stackrel{\text{forget}}{\longrightarrow} \cM \stackrel{\iHom_M(m,-)}{\longrightarrow} \cV.\]
	In particular, applied to the defining module $m' \in \LMod_{\iEnd_M(m')}(\cM)$, this defines a bimodule $\cR(m') \in \BMod{\iEnd_M(m')}{\iEnd_M(m)}(\cV)$ whose underlying object in $\cV$ is given by $\iHom_M(m,m')$. 	
	Unwinding this proof, the left and right action of $\iEnd_M(m)$ and $\iEnd_M(m')$ on $\iHom_M(m,m')$ are as claimed.
	\end{proof}

	\subsection{Enriched categories}
	
	In this section, we recall a definition of enriched $\infty$-category from \cite{hinich}. By~\cite[Prop.\ 4.5.3]{hinich},\cite{macpherson2019operad} this definition agrees with the definition in \cite{haugseng}.
	
	For a presentably monoidal category  $\cV \in \Alg(\PrL)$, we will define a $\cV$-enriched $\infty$-category as a pair of a space $X$ (of objects) together with an algebra in the category $\Fun(X \times X, \cV)$ equipped with a certain `matrix multiplication' monoidal structure. We first construct this monoidal structure.

If $C\in \cat$ is a small category and $\cV\in \Alg(\PrL)$ is a presentably monoidal category, then $\PSh(C) \otimes \cV$ may be thought of as the \emph{free presentable $\cV$-module category} on $C$, as follows: 
	\begin{lemma}
		\label{lem:freetensored}
		Let $C\in \cat$ and $\cV \in \Alg(\PrL)$ with unit $\cS \to \cV$. Then,  precomposing with the composite functor 
		\[C \hookrightarrow \PSh(C) \simeq \PSh(C) \otimes \cS \to \PSh(C) \otimes \cV\]
		induces an equivalence 
				\[\LinFun(\PSh(C) \otimes \cV, \cM) \simeq \Fun(C, \cM)\, .
		\] 
	\end{lemma}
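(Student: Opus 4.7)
The strategy is to split the claimed universal property as the composition of two free-forgetful adjunctions: the free cocompletion $C \mapsto \PSh(C)$, and the free right $\cV$-module structure on a presentable category.

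As a first step, I would establish the following free-module equivalence: for any $\cL \in \Pr$ and $\cM \in \PrV$, precomposition along $\cL \simeq \cL \otimes \cS \to \cL \otimes \cV$ induces an equivalence
\[
\LinFun(\cL \otimes \cV, \cM) \simeq \Fun^{\rL}(\cL, \cM).
\]
This is an instance of \cref{prop:reltensoradj}, applied with $\cQ = \cS$ (the initial object of $\Alg(\PrL)$, so that $\Pr_{\cS} = \Pr$ and $\cS$-linearity is vacuous) and with $\cP = \cV$ viewed as an $\cS$--$\cV$-bimodule. This reduces the claim to the identification $\LinFun(\cV, \cM) \simeq \cM$, which holds because $\cV$ is the free right $\cV$-module on a point: any $\cV$-linear colimit-preserving functor $\cV \to \cM$ is determined by its value at $1_\cV$, as one sees by applying \cref{prop:reltensoradj} once more (with $\cL = \cS$, $\cP = \cV$) together with the identification $\Fun^{\rL}(\cS, \cM) \simeq \cM$ from~\HTT{Thm.}{5.1.5.6}.

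Specializing to $\cL = \PSh(C)$, this yields $\LinFun(\PSh(C) \otimes \cV, \cM) \simeq \Fun^{\rL}(\PSh(C), \cM)$. I then combine this with the universal property of $\PSh(C)$ as free cocompletion, namely~\HTT{Thm.}{5.1.5.6}, which states that precomposition along the Yoneda embedding induces an equivalence $\Fun^{\rL}(\PSh(C), \cM) \simeq \Fun(C, \cM)$. Composing the two equivalences produces the desired equivalence, and by construction it is realized by precomposition along the composite $C \hookrightarrow \PSh(C) \simeq \PSh(C) \otimes \cS \to \PSh(C) \otimes \cV$.

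I expect the only mildly delicate point to be bookkeeping: namely, verifying that the unit of the free $\cV$-module adjunction identifies with the map $\cL \simeq \cL \otimes \cS \to \cL \otimes \cV$ induced by the unit of $\cV$, so that the two successive precomposition equivalences compose to precomposition along the claimed map. This follows by tracing through the construction of the right adjoint in \cref{prop:reltensoradj} and the definition of the tensor product in $\PrL$. No genuine new input is required; the lemma is essentially a formal unpacking of the universal properties established earlier in this section.
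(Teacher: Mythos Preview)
Your proposal is correct and follows essentially the same approach as the paper: the paper's proof is a one-liner invoking exactly the two universal properties you spell out (free right $\cV$-module, then free cocompletion). One small citation wrinkle: your second invocation of \cref{prop:reltensoradj} with $\cL=\cS$, $\cP=\cV$ yields the tautology $\LinFun(\cV,\cM)\simeq \Fun^{\rL}(\cS,\LinFun(\cV,\cM))$ rather than $\LinFun(\cV,\cM)\simeq\cM$ directly; the latter follows instead by observing that both $\LinFun(\cV,-)$ and the forgetful functor $\PrV\to\Pr$ are right adjoint to $-\otimes\cV$ and hence agree. This is exactly the ``$\cV$ is the free right $\cV$-module on a point'' statement you name, so the argument is sound---only the pointer needs adjusting.
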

	\begin{proof}
		This follows immediately from the universal properties of $\PSh(X) \otimes \cV$ as the free right $\cV$-module on $\PSh(X)$, and of the presheaf category.
	\end{proof}

	Given a functor $A:C \to \cM$, the corresponding functor $\Fun(C^{\op}, \cV) \simeq \PSh(C) \otimes \cV \to\cM$ is by definition the unique colimit-preserving $\cV$-module functor extending $A$. The following gives an explicit formula for this functor using the \emph{$\infty$-categorical coend}\footnote{The \emph{coend} of a functor $F: C\times C^{\op} \to D$, denoted $\oint^{c\in C} F(c,c) \in D$, is the colimit of the composite $\mathrm{Tw}^r(C) \to C \times C^{\op} \to D$. 
Here, $\mathrm{Tw}^r(C) \to C \times C^{\op} = (C^{\op} \times C)^{\op}$ denotes the \emph{twisted arrow category} of $C$, i.e.\ the right fibration corresponding to the functor $\Map_C(-,-): C^{\op} \times C \to \Spaces$. 
}, developed in~\cite{glasman2016spectrum, NikolausLax, haugseng2022co}.
	
	\begin{lemma}
\label{lem:coendformula}
Let $C\in \cat$, $\cV \in \Alg(\PrL)$ and $\cM \in \PrV$. The equivalence 
\[\LinFun(\Fun(C^{\op}, \cV), \cM) \simeq \Fun(C, \cM)\]
from~\cref{lem:freetensored} combined with the equivalence $\PSh(C) \otimes V \simeq \Fun(C^{\op}, V)$ sends a functor $A\in \Fun(C, \cM)$ to the functor $\widehat{A}\in \LinFun(\Fun(C^{\op}, \cV), \cM)$ explicitly given by
\begin{equation*}
\Fun(C^{\op}, \cV) \ni F \mapsto \oint^{c\in C} A(c) \otimes F(c) \in \cM.
\end{equation*}
If $C=X \in \Spaces$ is a space, this formula reduces to a colimit over $X$:
\begin{equation*}
\Fun(X, \cV) \ni F \mapsto \colim_{x \in X} A(x) \otimes F(x) \in \cM.
\end{equation*}
\end{lemma}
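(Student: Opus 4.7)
The plan is to apply the uniqueness clause of \cref{lem:freetensored}: any $\cV$-linear colimit-preserving functor out of $\Fun(C^{\op}, \cV)$ is determined up to equivalence by its restriction along the composite $C \hookrightarrow \PSh(C) \simeq \PSh(C) \otimes \cS \to \PSh(C) \otimes \cV \simeq \Fun(C^{\op}, \cV)$. Setting $\Phi(A)(F) := \oint^{c \in C} A(c) \otimes F(c)$, the identification $\Phi(A) \simeq \widehat{A}$ thus reduces to verifying (i) that $\Phi(A) \in \LinFun(\Fun(C^{\op}, \cV), \cM)$, and (ii) that its restriction along the above composite recovers $A$.

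For (i), I would observe that colimits in $\Fun(C^{\op}, \cV)$ are pointwise, the coend is itself a colimit (over the twisted arrow category $\Tw^r(C)$), and the $\cV$-action on $\cM$ preserves colimits separately in each variable since $\cM \in \PrV$; these combine to show that $\Phi(A)$ preserves colimits. The $\cV$-linear structure on $\Phi(A)$ is obtained by pulling the action out of the coend, using associativity of the $\cV$-action and the colimit-preservation of $-\otimes w$ for each $w \in \cV$.

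The main content is (ii). The presheaf in $\Fun(C^{\op}, \cV)$ associated to $c \in C$ via the composite above is $\operatorname{Free}^{\cV} \circ \Map_C(-, c) : C^{\op} \to \cV$. Since $\operatorname{Free}^{\cV}(K) \simeq \colim_K 1_{\cV}$ and the $\cV$-action preserves colimits, one has $a \otimes \operatorname{Free}^{\cV}(K) \simeq \colim_K a$ for every $a \in \cM$ and $K \in \Spaces$; in other words, the $\cV$-action on $\cM$ restricts along $\operatorname{Free}^{\cV}: \Spaces \to \cV$ to the canonical $\Spaces$-tensoring of the cocomplete category $\cM$. Plugging in, $\Phi(A)$ evaluated on the presheaf above identifies with the classical $\Spaces$-enriched co-Yoneda coend
\[
\oint^{c' \in C} A(c') \otimes_{\Spaces} \Map_C(c', c),
\]
which evaluates to $A(c)$. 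Hence $\Phi(A)|_C \simeq A$, and \cref{lem:freetensored} concludes.

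For the special case $C = X \in \Spaces$, the right fibration $\Tw^r(X) \to X^{\op} \times X$ classifies the mapping functor $\Map_X(-,-)$, which is equivalent to the constant functor with value $*$ because all morphisms in a space are invertible. Consequently $\Tw^r(X) \simeq X$ and the coend simplifies to the ordinary colimit $\colim_{x \in X} A(x) \otimes F(x)$. The main obstacle is the co-Yoneda reduction in (ii); the surrounding structural checks are formal consequences of the presentability of $\cM$ and the compatibility between the $\cV$- and $\Spaces$-tensorings.
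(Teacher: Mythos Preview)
Your proposal is correct and follows essentially the same strategy as the paper's proof: both verify that the coend formula defines an object of $\LinFun(\Fun(C^{\op},\cV),\cM)$ and then check that its restriction along $C \to \Fun(C^{\op},\cV)$ recovers $A$ via the co-Yoneda lemma, with the space case handled by $\Tw^r(X)\simeq X$. The only differences are cosmetic: the paper cites \cite[Rem.~4.2]{haugseng2022co} for co-Yoneda and \kerodons{048H}{048L} for the twisted arrow identification, whereas you spell out the intermediate identification of the $\cV$-action with the $\Spaces$-tensoring a bit more explicitly.
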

\begin{proof}
Since colimits in functor categories are computed pointwise and since colimits commute with colimits and tensoring, the functor in the first expression preserves colimits. As a composite of right $V$-module functors it itself has the structure of a right $V$-module functor. Thus, it defines an object of $\LinFun(\Fun(C^{\op}, \cV), \cM)$. Unwinding \cref{lem:freetensored}, the equivalence $\LinFun(\Fun(C^{\op}, \cV), \cM) \to \Fun(C, \cM)$ is given by precomposing with $C\to \PSh(C) \simeq \PSh(C) \otimes \Spaces \to  \PSh(C) \otimes \cV \simeq \Fun(C^{\op}, \cV)$, i.e.\ given by $c \mapsto 1_{\cV} \otimes \Map_{C}(-, c) \in \Fun(C^{\op}, \cV)$.   Thus, under this equivalence the functor in our claim is sent to $\oint^{c \in C} A(c) \otimes \Map_{C}(-,c)$ which coincides by the coYoneda lemma~\cite[Rem.~4.2]{haugseng2022co} with $A$.
If $X$ is a space, the coend reduces to an ordinary colimit since $\Tw(X) \simeq X$ by  \kerodons{048H}{048L}.
\end{proof}

	\begin{cor}
	\label{cor:profunctorvscocont}
	For $C, D \in \cat$ and $\cV \in \Alg(\PrL)$, there is an equivalence of categories 
	\begin{align*} \LinFun( \PSh(C) \otimes \cV, \PSh(D) \otimes \cV)&  \simeq \Fun(C, \PSh(D) \otimes \cV) \simeq \Fun(C, \Fun(D^{\op}, \cV))\\& \simeq \Fun(C \times D^{\op}, \cV) \simeq \Fun(D^{\op} \times C, \cV) \, .
	\end{align*}
	\end{cor}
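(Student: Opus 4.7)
The statement is a chain of four equivalences, and the plan is simply to identify each one with a result already available in the excerpt or with a standard categorical identity, then observe that nothing more than composing these is needed.

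First I would obtain the leftmost equivalence
$\LinFun(\PSh(C) \otimes \cV, \PSh(D) \otimes \cV) \simeq \Fun(C, \PSh(D)\otimes \cV)$
as a direct instance of \cref{lem:freetensored}, applied with target the presentable $\cV$-module $\cM := \PSh(D) \otimes \cV$ (which lies in $\PrV$ as a free right $\cV$-module). Next, the equivalence $\PSh(D) \otimes \cV \simeq \Fun(D^{\op}, \cV)$ already recorded in the recollections on presentable categories (via $\PSh(D) \otimes \cD' \simeq \Fun^{\rL}(\PSh(D), (\cD')^{\op})^{\op} \simeq \Fun(D^{\op}, \cD')$ for presentable $\cD'$) yields the second equivalence by postcomposition with $\Fun(C, -)$.

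For the third equivalence, I would invoke the closed cartesian structure on $\cat$: the internal hom is the functor category, so currying gives $\Fun(C, \Fun(D^{\op}, \cV)) \simeq \Fun(C \times D^{\op}, \cV)$. The final equivalence $\Fun(C \times D^{\op}, \cV) \simeq \Fun(D^{\op} \times C, \cV)$ is just precomposition with the symmetry isomorphism of the cartesian product on $\cat$. Composing all four equivalences in order produces the desired chain.

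There is no real obstacle here; the only subtlety worth checking is coherence of universes, namely that \cref{lem:freetensored} is being applied at a large target $\cM = \PSh(D) \otimes \cV$ (so one works in $\lcat$ throughout), which is already the framework in which \cref{prop:LinFuniHom} and \cref{lem:freetensored} are set up. Thus the argument is purely formal, a straightforward unwinding of the universal properties of $\PSh(C) \otimes \cV$ as the free presentable right $\cV$-module on $C$, of $\PSh(D) \otimes \cV$ as the internal hom in $\PrV$, and of the cartesian closure of $\cat$.
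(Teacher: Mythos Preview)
Your proof is correct and follows essentially the same route as the paper's: invoke \cref{lem:freetensored} with $\cM = \PSh(D)\otimes\cV$, then use the identification $\PSh(D)\otimes\cV \simeq \Fun(D^{\op},\cV)$, with the remaining steps being standard currying and symmetry. One minor slip: in your final summary you describe $\PSh(D)\otimes\cV$ as ``the internal hom in $\PrV$'', but it is of course another free module; this does not affect the argument.
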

	\begin{proof} Combine~\cref{lem:freetensored} with the equivalence $\PSh(D) \otimes \cV \simeq \Fun(D^{\op}, \cV)$.
	\end{proof}

		\begin{rem}
	A \emph{$\cV$-valued profunctor} (or \emph{$\cV$-bimodule}) between  categories $C$ and $D$  is by definition a functor $D^{\op} \times C \to \cV$. Thus, \cref{cor:profunctorvscocont} is an $\infty$-categorical version of the classical correspondence between profunctors and cocontinuous module functors. In~\cref{rem:profunctor}, we will generalize these to $\cV$-enriched profunctors, allowing for $C,D$ to be $\cV$-enriched categories and $D^{\op} \times C \to \cV$ to be a $\cV$-enriched functor.
	\end{rem}
	
	Under the correspondence from~\cref{cor:profunctorvscocont}, the composition of cocontinuous $\cV$-module functors turns into a `matrix multiplication' of profunctors:
	
	\begin{cor}\label{cor:profunctorcomp}
	For $\cV \in \Alg(\PrL)$, $A: E^{\op} \times D \to \cV$ and $B:D^{\op} \times C\to \cV$  with image under the equivalence from~\cref{cor:profunctorvscocont} denoted $\widehat{A}: \PSh(D) \otimes \cV \to \PSh(E) \otimes \cV$ and $\widehat{B}: \PSh(C) \otimes \cV \to \PSh(D) \otimes \cV$, the composition $\widehat{A} \circ \widehat{B}$ corresponds under~\cref{cor:profunctorvscocont} to the functor $E^{\op} \times C \to \cV$ given by 
	\[ 
	(e, c) \mapsto \oint^{d\in D} A(e, d) \otimes B(d, c).
		\]
		If $D=X$ is a space, this becomes an ordinary colimit over $X$:
			\[ 
	(e, c) \mapsto \colim_{d \in D} A(e,d) \otimes B(d, c).
		\]
	\end{cor}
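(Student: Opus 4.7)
The plan is to directly apply \cref{lem:coendformula} twice. Unpacking the equivalence of \cref{cor:profunctorvscocont} via \cref{lem:freetensored}, the profunctor associated to a cocontinuous $\cV$-linear functor $F: \PSh(C) \otimes \cV \to \PSh(E) \otimes \cV$ is recovered by precomposing with the embedding $C \hookrightarrow \PSh(C) \otimes \cV \simeq \Fun(C^{\op}, \cV)$ (sending $c \mapsto 1_\cV \otimes \Map_C(-, c)$) and then evaluating the resulting object of $\PSh(E) \otimes \cV \simeq \Fun(E^{\op}, \cV)$ at $e \in E$. Hence it suffices to evaluate $(\widehat{A} \circ \widehat{B})(1_\cV \otimes \Map_C(-, c))$ at $e$.

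First I would apply \cref{lem:coendformula} to $\widehat{B}$ on the representable $1_\cV \otimes \Map_C(-, c)$, obtaining
\[
\widehat{B}\bigl(1_\cV \otimes \Map_C(-, c)\bigr) \;\simeq\; \oint^{c' \in C} B(-, c') \otimes \Map_C(c', c) \;\simeq\; B(-, c) \in \Fun(D^{\op}, \cV),
\]
where the second equivalence is the coYoneda lemma \cite[Rem.~4.2]{haugseng2022co}. Next I would apply \cref{lem:coendformula} to $\widehat{A}$ at the input $B(-, c)$, producing $\oint^{d \in D} A(-, d) \otimes B(d, c) \in \Fun(E^{\op}, \cV)$. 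Evaluating at $e \in E$ (which, as an evaluation functor out of $\Fun(E^{\op}, \cV)$, is colimit-preserving and therefore commutes with the coend) yields the desired formula $\oint^{d \in D} A(e, d) \otimes B(d, c)$.

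The specialization to $D = X \in \Spaces$ is then immediate from the identification $\Tw(X) \simeq X$ invoked at the end of the proof of \cref{lem:coendformula}, which turns the coend into an ordinary colimit over $X$.

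The main subtlety is purely bookkeeping: one must keep careful track of the variance conventions for the two profunctors and of which variable each coend runs over. The actual homotopy-coherent content is entirely outsourced to \cref{lem:coendformula} (the explicit coend description of the free $\cV$-linear cocontinuous extension) and the coYoneda lemma, so no substantial obstacle is expected.
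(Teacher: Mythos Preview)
Your proposal is correct and follows the same route as the paper, which simply says ``Immediate from \cref{lem:coendformula}.'' The only minor redundancy is your first step: you need not invoke \cref{lem:coendformula} and coYoneda to compute $\widehat{B}$ on representables, since by construction of the equivalence in \cref{cor:profunctorvscocont} (via \cref{lem:freetensored}) the restriction of $\widehat{B}$ to $C$ is already $c \mapsto B(-,c)$; a single application of \cref{lem:coendformula} to $\widehat{A}$ on this input then suffices.
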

	\begin{proof} 
	Immediate from~\cref{lem:coendformula}. 
	\end{proof}

	\begin{defin}
		\label{defin:quivend}
		For $C\in \cat$ and $\cV \in \Alg(\PrL)$, we define the \emph{quiver monoidal category}
		\[\LinEnd(\Fun(C^{\op}, \cV)) \simeq \LinEnd(\PSh(C) \otimes \cV) \simeq \Fun(C^{\op} \times C, \cV) \in \Alg(\PrL)
		\]
		which by~\cref{prop:LinFuniHom} is the internal end of $\Fun(C^{\op}, \cV) \simeq \PSh(C) \otimes \cV \in \PrV$ and thus by \cref{reminder:endo} an algebra in $\PrL$ with monoidal structure given by composition of endo-functors. 	
	\end{defin}
	\begin{rem}
		\label{rem:quivspaces}
		Under the equivalence \[\LinEnd(\PSh(C) \otimes \cV)   \simeq \Fun(C^{\op}\times C, \cV) \simeq \PSh(C \times C^{\op}) \otimes \cV \simeq \End^\rL(\PSh(C)) \otimes \cV\] the composition monoidal structure from~\cref{defin:quivend} agrees with the induced monoidal structure as a product of algebras $ \End^\rL(\PSh(C))$ and $\mathcal{V}$ in $\Alg(\PrL)$ by \cite[Prop.\ 3.10]{redshift}.
	\end{rem}
	

	\begin{obs}
		\label{prop:matrixproduct} 	
		For $C \in \cat$ and $\cV \in \Alg(\PrL)$, \cref{cor:profunctorcomp} yields an explicit formula for the monoidal structure on $\LinEnd(\PSh(\cC) \otimes \cV) \simeq \Fun(C^{\op} \times C, \cV)$, agreeing with the anticipated `matrix multiplication' from \S\ref{sec:intro}: 
		Indeed, for $A, B \in  \Fun(C^{\op} \times C, \cV)$ we have
		\[
		(A \otimes B) (c, c') := \oint^{d\in C} A(c, d) \otimes B(d, c').
		\] 
		In particular, if $C = X \in \Spaces$ is a space this simplifies to
		\[
		(A\otimes B)(x, x') = \colim_{y \in X} A(x, y) \otimes B(y, x') \, .
		\]
	\end{obs}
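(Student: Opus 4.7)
The observation follows immediately from the preceding results, so the plan is just to identify which ingredient provides each formula and check that the reduction to a colimit in the groupoid case is nothing more than $\mathrm{Tw}(X) \simeq X$.

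More precisely, the monoidal structure on $\LinEnd(\PSh(C) \otimes \cV)$ is by \cref{defin:quivend} the composition-of-endofunctors structure coming from \cref{reminder:endo} applied to the internal end of $\PSh(C) \otimes \cV \in \PrV$. Under the identification of \cref{cor:profunctorvscocont} with $E = D = C$, an element $A \in \Fun(C^{\op} \times C, \cV)$ corresponds to the cocontinuous $\cV$-linear endofunctor $\widehat{A}: \PSh(C) \otimes \cV \to \PSh(C) \otimes \cV$. Therefore the tensor product $A \otimes B$ in $\LinEnd(\PSh(C) \otimes \cV)$ corresponds to the composition $\widehat{A} \circ \widehat{B}$, and \cref{cor:profunctorcomp} (with $E = D = C$) gives the coend formula
\[ (A \otimes B)(c, c') \simeq \oint^{d \in C} A(c, d) \otimes B(d, c'). \]

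For the second formula, when $C = X$ is an $\infty$-groupoid, the twisted arrow category $\mathrm{Tw}(X)$ is equivalent to $X$ (as recorded in the proof of \cref{lem:coendformula}, using \kerodons{048H}{048L}), so any coend over $X$ reduces to an ordinary colimit. Applying this to the formula above yields the claimed
\[ (A \otimes B)(x, x') \simeq \colim_{y \in X} A(x, y) \otimes B(y, x'). \]

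Since everything reduces to invoking \cref{cor:profunctorcomp} and the groupoid-case simplification of coends, there is no real obstacle here; the statement is recorded as an observation precisely because the work has already been done in \cref{lem:coendformula} and \cref{cor:profunctorcomp}. The only mild subtlety worth flagging in the write-up is that the monoidal structure from \cref{defin:quivend} (composition of endofunctors, coming from \cref{reminder:endo}) is transported along the equivalence $\LinEnd(\PSh(C) \otimes \cV) \simeq \Fun(C^{\op} \times C, \cV)$ of \cref{cor:profunctorvscocont} in the order $A \otimes B \leftrightarrow \widehat{A} \circ \widehat{B}$, rather than the opposite order.
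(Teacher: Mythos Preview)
Your proposal is correct and matches the paper's approach exactly: the statement is recorded as an Observation precisely because it is an immediate specialization of \cref{cor:profunctorcomp} to the case $E = D = C$, together with the $\mathrm{Tw}(X) \simeq X$ simplification already noted in \cref{lem:coendformula}. Your unpacking of the transport of the composition monoidal structure along the equivalence of \cref{cor:profunctorvscocont}, including the remark about the order $A \otimes B \leftrightarrow \widehat{A} \circ \widehat{B}$, is spot on.
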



	\begin{defin}[{\cite[Prop.\ 4.5.3]{hinich}}]\label{def:valentVcat}
	Let $\cV\in \Alg(\PrL)$ and $X\in \Spaces$. The category of \emph{valent\footnote{The term `valent' is used for distinction with the `univalent' or `Rezk complete' $\cV$-enriched categories from \cref{def:univalence}. In a valent $\cV$-enriched category, the space of objects may not coincide with the maximal subgroupoid of the underlying category.} $\cV$-enriched categories with space of objects $X$} is the category \[ \vcatXV:=\Alg(\LinEnd(\PSh(X) \otimes \cV)) \;  ,\]where $\LinEnd(\PSh(X) \otimes \cV)$ comes equipped with the composition monoidal structure from \cref{defin:quivend}. The \emph{graph} of a valent $\cV$-enriched category $\cC \in \vcatXV$ is its underlying object in $\Fun(X \times X, \cV) \simeq \LinEnd(\PSh(X) \otimes \cV).$
	\end{defin}

	\begin{prop}\label{obs:pCatXpres} For $X\in \Spaces, \cV \in \Alg(\PrL)$, the category $\vcat_X^{\cV}$ of valent $\cV$-enriched categories is presentable.
	\end{prop}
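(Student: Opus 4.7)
The plan is to reduce this to the standard fact that the category of associative algebras in a presentably monoidal $\infty$-category is itself presentable. This is \HA{Cor.}{3.2.3.5} applied to the $\infty$-operad $\Ass$, which asserts that if $\cW^\otimes \to \Ass^\otimes$ is a coCartesian fibration of $\infty$-operads whose underlying category $\cW$ is presentable and whose tensor product preserves small colimits separately in each variable, then $\Alg(\cW)$ is presentable.

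First I would verify the hypothesis that $\LinEnd(\PSh(X)\otimes \cV)$ is presentably monoidal, i.e.\ lies in $\Alg(\PrL)$. This has essentially already been recorded in \cref{defin:quivend}: by \cref{prop:LinFuniHom} the underlying category $\LinFun(\PSh(X)\otimes\cV,\PSh(X)\otimes\cV)$ is presentable; the composition monoidal structure arises as the internal end in $\PrV$ via \cref{reminder:endo}, so it is an algebra object in $\PrL$ by construction. For a direct verification that composition preserves colimits separately in each variable, one notes that composition on the left by a fixed $\cV$-linear cocontinuous endofunctor is cocontinuous because that endofunctor preserves colimits, while composition on the right is cocontinuous since colimits in $\LinFun$ are computed pointwise (by the closure property of $\LinFun$ inside $\Fun_\cV$ used in the proof of \cref{prop:LinFuniHom}).

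Second I would apply \HA{Cor.}{3.2.3.5} to this presentably monoidal category to conclude that
\[
\vcatXV = \Alg\bigl(\LinEnd(\PSh(X)\otimes \cV)\bigr)
\]
is presentable, as desired.

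No step is a real obstacle here: the substantive work is already contained in \cref{prop:LinFuniHom} (presentability of $\LinEnd$) and the structural result \cref{defin:quivend} (identification of composition as an internal-end monoidal structure, hence a morphism in $\Alg(\PrL)$). If one prefers a self-contained argument avoiding \HA{Cor.}{3.2.3.5}, one can instead observe that the forgetful functor $\Alg(\cW)\to \cW$ is monadic with a colimit-preserving left adjoint (the free algebra functor), so presentability of $\Alg(\cW)$ follows from \HTT{Cor.}{5.5.3.9} applied to the presentable category $\cW = \LinEnd(\PSh(X)\otimes\cV)$; this is in spirit the route used by Lurie.
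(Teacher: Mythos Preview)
Your proposal is correct and matches the paper's proof exactly: the paper simply notes that $\LinEnd(\PSh(X)\otimes\cV)$ is presentably monoidal (which was established in \cref{defin:quivend} via \cref{prop:LinFuniHom}) and then invokes \HA{Cor.}{3.2.3.5}. Your additional elaboration on why composition preserves colimits in each variable and the alternative monadicity argument are fine but unnecessary here.
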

	\begin{proof} Since $\LinEnd(\PSh(X) \otimes \cV)$ is presentably monoidal, this immediately follows from \HA{Cor.}{3.2.3.5}.
	\end{proof}

	\begin{ex}
		\label{ex:quivpoint}
		If $X = *$ is the point, $\LinEnd(\PSh(*) \otimes \cV)= \Fun(* \times *, \cV) = \cV$, and its monoidal structure agrees with the original one on $\cV$ by \cref{lem:endounit}.  Hence, $\vcat_*(\cV) \simeq \Alg(\cV)$;  valent $\cV$-categories with space of objects $*$ are equivalently algebras in $\cV$; $\vcat_{*}(\cV) \simeq \Alg(\cV)$.
	\end{ex}



\subsection{Change of enrichment}
Given a morphism $f:\cV \to \cW$ in $\Alg(\PrL)$, we now construct an induced adjunction $\vcatXV \rightleftarrows \vcatXW$ which sends an enriched category $\cC$ to the enriched category $f_!\cC$ with same space of objects and 
\[\Hom_{f_! \cC}(x, x') = f (\Hom_{\cC}(x,x')) \in \cW.
\]
In \S \ref{sec:functoriality}, we will make this fully functorial and compare it with the literature. 

	\begin{constr}
		\label{constr:changeofenrquiv}
		For $X\in \Spaces$ and $f: \cV \to \cW$ a morphism in $\Alg(\PrL)$, the extension-of-scalars functor $- \otimes_\cV \cW : \PrV \to \PrW$ is a left $\PrL$-module functor and hence induces by \cref{obs:endofunctoriality} a morphism in $\Alg(\PrL)$, i.e.\  a colimit-preserving monoidal functor
		\[ f_!: \LinEnd(\PSh(X) \otimes \cV) \to \LinEndW(\PSh(X) \otimes \cW).
		\] Unwinding this construction, on underlying categories of graphs 
			\[
		\Fun(X \times X, \cV) \to \Fun(X \times X, \cW)
		\]
		 this functor acts by postcomposition with $f$. 
		 Hence, it admits an (automatically laxly monoidal) right adjoint $f^\rR_!$ given by postcomposition with $f^\rR$. We therefore obtain an adjunction of \emph{change-of-enrichment functors} between the respective categories of algebras
		\[
		f_! : \vcatXV = \Alg(\LinEnd(\PSh(X) \otimes \cV)) \rightleftarrows \Alg(\LinEndW(\PSh(X) \otimes \cW)) = \vcat_X(\cW) : f^\rR_! \; .
		\]
	\end{constr}

	\begin{prop} 		\label{obs:changeofenrff}Let $X\in \Spaces$ and $f: \cV \to \cW$ in $\Alg(\PrL)$. Then, if $f$ is fully faithful, so is the induced functor $f_! : \vcatXV \to \vcatXW$. Its image is the full subcategory of $\vcatXW$ on those $\cW$-categories whose graph $X \times X \to \cW$ factors through $\cV \subseteq \cW$. 
Similarly, if the right adjoint $f^\rR$ is fully faithful, then so is the induced right adjoint $f_!^\rR: \vcatXW \to \vcatXV$ and its image is given by those $\cV$-categories whose underlying graph $X \times X \to \cV$ factors through $f_!^\rR: \cW \hookrightarrow \cV$. 
	\end{prop}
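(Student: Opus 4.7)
The plan is to reduce both statements to the general principle that a fully faithful (lax) monoidal functor induces a fully faithful functor on categories of algebras, with essential image characterized by those algebras whose underlying object lies in the essential image.

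First I would recall from \cref{constr:changeofenrquiv} that, on the underlying categories of graphs $\Fun(X \times X, \cV) \to \Fun(X \times X, \cW)$, the monoidal functor $f_!$ acts by postcomposition with $f$. Fully faithfulness is preserved under postcomposition into a functor category (since mapping spaces in $\Fun(X \times X, -)$ are limits of pointwise mapping spaces), so $f_!$ is a fully faithful monoidal functor between the quiver monoidal categories of \cref{defin:quivend}. The same observation with $f^\rR$ in place of $f$ shows that when $f^\rR$ is fully faithful, the right adjoint $f_!^\rR$ (which inherits a lax monoidal structure via \HA{Cor.}{7.3.2.7} and acts on graphs by postcomposition with $f^\rR$) also has fully faithful underlying functor.

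Second, I would invoke the folklore statement that for a lax monoidal functor $g \colon V \to W$ whose underlying functor is fully faithful, $\Alg(g) \colon \Alg(V) \to \Alg(W)$ is fully faithful with essential image the algebras in $W$ whose underlying object lies in the essential image of $g$. This unwinds directly from the operadic definition of $\Alg$: writing $\Alg(V)$ as a full subcategory of $\Fun_{/\Fin}(\Ass^{\otimes}, V^{\otimes})$ cut out by the inert-preserving condition, fully faithfulness of $V^{\otimes} \to W^{\otimes}$ over $\Fin$ (which follows fiberwise from that of $V \to W$, using the Segal-type description of the fibers) implies fully faithfulness of the induced functor on section categories, and an algebra whose underlying object lies in $V$ can be lifted essentially uniquely along the fully faithful inclusion $V^{\otimes} \hookrightarrow W^{\otimes}$. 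Applying this to $f_!$ and $f_!^\rR$ yields both halves of the proposition.

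The only nontrivial step is this general principle about fully faithful (lax) monoidal functors and algebras; once set up through the operadic description of $\Alg$ it reduces to standard properties of fully faithful functors under sections and relative functor categories, so I expect no serious difficulty. Everything else in the proof is bookkeeping from \cref{constr:changeofenrquiv}.
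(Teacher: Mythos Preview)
Your proposal is correct and follows the same overall strategy as the paper: reduce to the underlying graph categories $\Fun(X\times X,\cV)\to\Fun(X\times X,\cW)$ where the functor is postcomposition with $f$ (resp.\ $f^\rR$), and then lift full faithfulness to the categories of algebras.

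The mechanism differs slightly. You invoke the folklore principle that a fully faithful lax monoidal functor induces a fully faithful functor on algebra categories, arguing via the operadic description of $\Alg$ and section categories. The paper instead exploits the adjunction already set up in \cref{constr:changeofenrquiv}: since $f$ is fully faithful, the unit of $f\dashv f^\rR$ is invertible, hence so is the unit of the postcomposition adjunction on graph categories, and by conservativity of the forgetful functor $\Alg\to\text{underlying}$ the unit of the induced adjunction $\vcatXV\rightleftarrows\vcatXW$ is invertible as well. This is a bit slicker since it avoids unpacking $\Alg$ operadically, but your argument has the virtue of not requiring the adjoint to exist. Both routes yield the image characterization in the same way (an algebra is in the essential image iff its underlying object is).
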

	\begin{proof}
	If $f$ is fully faithful, the unit of $f\dashv f^\rR$ is an isomorphism. Since the adjunction $\Fun(X \times X, \cV) \rightleftarrows \Fun(X \times X, \cW)$ is given by postcomposition, it follows that the unit of that adjunction is also invertible. By conservativity of the forgetful functor $\vcatXV = \Alg(\LinEnd(\PSh(X) \otimes \cV)) \to \LinEnd(\PSh(X) \otimes \cV) \simeq \Fun(X \times X, \cV)$, it follows that the unit of the adjunction $\vcatXV \rightleftarrows \vcatXW$ is an isomorphism, and hence that the left adjoint is fully faithful. The proof for the right adjoint is analogous. 
	\end{proof}

	\subsection{Enriched presheaves and the Yoneda embedding}
	\label{sec:presheaves}

	By \cref{def:valentVcat}, a $\cV$-enriched category $\cC$ with space of objects $X$  is an algebra object in the presentably monoidal category $\LinEnd(\PSh(X) \otimes \cV)$. Since this is the endomorphism algebra of the object $\PSh(X) \otimes \cV \in \PrV$ and hence acts on it (universally) from the left, we may consider the category of left $\cC$-modules in $\PSh(X) \otimes \cV$:

	\begin{defin}\label{def:presheaf} For $X \in \Spaces, \cV \in \Alg(\PrL)$ and $\cC \in \vcat_{X}^{\cV} := \Alg(\LinEnd(\PSh(X) \otimes \cV))$, we define its \emph{enriched presheaf category} as 
	\[\PSh_{\cV}(\cC):= \LMod_{\cC}( \PSh(X) \otimes \cV). 
	\]
	We define the \emph{enriched Yoneda embedding of $\cC$} to be the composite 
	\[\yoV_\cC : X \overset{\yo}{\hookrightarrow} \PSh(X) \overset{\id \otimes 1_{\cV}}{\to} \PSh(X) \otimes \cV  \overset{\mathrm{Free}}{\to} \LMod_{\cC}(\PSh(X) \otimes \cV) =: \PShV(\cC),
	\]
	where $\yo$ denotes the ordinary Yoneda embedding of $X$ and $\mathrm{Free}$ the free $\cC$-module functor.  An object of $\PShV(\mathcal{C})$ is called \emph{representable} if it is in the image of $\yoV_{\cC}$. 
	For $x\in X$, we define the \emph{evaluation functor} $\ev_x: \PShV(\cC) \to \cV$ as the composite
	\[\PShV(\cC) := \LMod_{\cC}(\PSh(X) \otimes \cV)  \overset{\mathrm{Forget}}{\to} \PSh(X) \otimes \cC \simeq  \Fun(X^{\op}, \cV) \overset{\ev_x}{\to} \cV.
	\]
	\end{defin}

	\begin{obs}\label{obs:presheafunpacked}
	By definition, an enriched presheaf of a $\cV$-enriched category $\cC$ with space of objects $X$ is therefore an object of $\LMod_{\cC}(\PSh(X) \otimes \cV)\simeq \LMod_{\cC}(\Fun(X^{\op}, \cV))$, i.e.\ a functor $F:X^{\op} \to \cV$ together with a left action of the algebra object $\Hom_\cC \in \Fun(X^{\op} \times X, \cV)$ via the universal action of $\Fun(X^{\op} \times X, \cV) = \LinEnd(\Fun(X^{\op}, \cV))$ on $\Fun(X^{\op}, \cV)$. By~\cref{cor:profunctorcomp}, this amounts to morphisms in $\cV$
	\[
		\Hom_\cC (x, x') \otimes F(x') \to F(x)
	\]
	natural in $x, x' \in X$ which are coherently compatible with the composition in $\cC$. 
	
	In these terms,  for an $x\in X$ the evaluation functor $\ev_x: \PShV(\cC) \to \cV$ sends $F$ to $F(x)$ and the representable presheaf $\yoV_{\cC}(x)$ unpacks to the functor $\Hom_{\cC}(-, x) : X^{\op} \to \cV$.
	\end{obs}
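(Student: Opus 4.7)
The plan is to unpack the module structure concretely using the explicit description of the matrix-multiplication monoidal structure from \cref{prop:matrixproduct,cor:profunctorcomp}, and then compute the images of the forgetful, evaluation, and Yoneda functors in these terms.

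First, I would establish the concrete description of an enriched presheaf. By \cref{def:presheaf}, $\PShV(\cC) = \LMod_\cC(\PSh(X) \otimes \cV)$, and using the equivalence $\PSh(X) \otimes \cV \simeq \Fun(X^{\op}, \cV)$ together with the identification $\LinEnd(\Fun(X^{\op}, \cV)) \simeq \Fun(X^{\op} \times X, \cV)$ from \cref{defin:quivend}, the universal left action of the monoidal category on $\Fun(X^{\op}, \cV)$ is the one encoded by \cref{cor:profunctorcomp}: an object $A \in \Fun(X^{\op} \times X, \cV)$ acts on $F: X^{\op} \to \cV$ by $(A\cdot F)(x) = \colim_{y \in X} A(x,y) \otimes F(y)$, since $X$ is a space. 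A left $\cC$-module structure on $F$ is therefore, by definition, a morphism $\colim_{y \in X} \Hom_\cC(x,y) \otimes F(y) \to F(x)$ natural in $x$; by the universal property of colimits in functor categories, this is equivalently a family of morphisms $\Hom_\cC(x,y) \otimes F(y) \to F(x)$ natural in $x,y \in X$, and the $\cC$-module axioms supply the higher coherences for composition and unitality.

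Next, for the evaluation functor, by definition $\ev_x$ is the composite $\PShV(\cC) \to \Fun(X^{\op}, \cV) \xrightarrow{\ev_x} \cV$ whose first arrow is the forgetful functor. The forgetful functor $\LMod_\cC(\Fun(X^{\op}, \cV)) \to \Fun(X^{\op}, \cV)$ remembers the underlying functor $F$, so indeed $\ev_x(F) = F(x)$.

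Finally, for the Yoneda embedding I would unwind the three components of $\yoV_\cC$. The ordinary Yoneda sends $x$ to $\Map_X(-,x) \in \PSh(X)$; tensoring with $1_\cV$ produces the functor $\Map_X(-,x) \otimes 1_\cV \in \Fun(X^{\op}, \cV)$; and the free $\cC$-module functor sends this to $\cC \cdot (\Map_X(-,x) \otimes 1_\cV)$. Applying the explicit action formula above, its underlying functor is
\[
y \mapsto \colim_{z \in X} \Hom_\cC(y,z) \otimes \Map_X(z,x) \otimes 1_\cV \;\simeq\; \Hom_\cC(y,x),
\]
where the last equivalence uses that $X$ is a space, so that the forgetful $X_{/x} \to X$ is right cofinal (indeed an equivalence, since $x \in X$ is a final object of $X_{/x}$) and $1_\cV$ is the unit. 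This identifies $\yoV_\cC(x)$ with $\Hom_\cC(-,x)$ as claimed.

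The only mildly subtle point is justifying that the data ``morphisms $\Hom_\cC(x,y) \otimes F(y) \to F(x)$ natural in $x,y$, coherently compatible with composition'' really captures a $\cC$-module without appealing to a point-set model — but this is subsumed by the fact that $\LMod_\cC(-)$ is \emph{defined} as modules over the algebra $\cC \in \Alg(\LinEnd(\Fun(X^{\op}, \cV)))$, so once the action formula is made explicit via \cref{cor:profunctorcomp} no additional coherence argument is needed; the $\infty$-categorical coherences are packaged into the $\Alg/\LMod$ formalism automatically.
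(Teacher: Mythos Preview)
Your unpacking is correct and matches the paper's approach, which treats this observation as a direct consequence of the definitions together with \cref{cor:profunctorcomp} and does not supply a separate proof.

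One small correction in your Yoneda computation: the claim that the forgetful functor $X_{/x} \to X$ is right cofinal, ``indeed an equivalence,'' is not right in general---for $X$ a space, $X_{/x}$ is contractible, but the forgetful map is neither cofinal nor an equivalence unless $X$ is connected (respectively, contractible). The correct justification is that tensoring with the space $\Map_X(z,x)$ rewrites the colimit over $X$ as a colimit over the unstraightening $X_{/x}$, which has terminal object $\id_x$; equivalently, this is precisely the coYoneda lemma invoked in the proof of \cref{lem:coendformula}. This does not affect your conclusion.
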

	
	\begin{rem}
		\label{rem:profunctor}
		Given enriched categories $\cC \in \vcatXV, \cD \in \vcatYV$, by \cref{prop:bimoduleihom} the internal hom $\LinFun(\PSh(X) \otimes \cV , \PSh(Y) \otimes \cV) \simeq \Fun(X \times Y^{\op}, \cV)$ in $\PrV$ enhances to an $\LinEnd(\PSh(Y) \otimes \cV)$-$\LinEnd(\PSh(X) \otimes \cV)$-bimodule. We refer to objects of $P \in \BMod{\cD}{\cC}(\Fun(X \times Y^{\op}, \cV))$ as \emph{enriched profunctors} from $\cC$ to $\cD$, since by~\cref{cor:profunctorcomp} such a bimodule structure amounts to morphisms
		\[
		\Hom_\cD (y, y') \otimes P(y', x') \otimes \Hom_\cC (x', x)  \to P(y,x)
		\]
		natural in $x, x' \in X, y, y' \in Y$ which are coherently compatible with the composition in $\cC, \cD$. Compare with the classical (c.f.\ \cite{benabou1973distributeurs}) notion of enriched profunctors as enriched functors $\cC \otimes \cD^{\op} \to \cV$.
	\end{rem}
		
	\begin{prop}\label{prop:presentablepresheaf}
	For $X\in \Spaces, \cV \in \Alg(\PrL)$ and $\cC \in \vcat_X^{\cV}$, the enriched presheaf category $\PSh_{\cV}(\cC)$ is a presentable right $\cV$-module category, i.e.\ an object of $\PrV:=\RMod_{\cV}(\PrL)$. Moreover, the functor $\yoV_{\cC}:X \to \PSh_{\cV}(\cC)$ extends via \cref{lem:freetensored} (uniquely) to the cocontinuous $\cV$-module functor $\mathrm{Free}: \PSh(X) \otimes \cV \to \LMod_{\cC}(\PSh(X) \otimes \cV) =: \PSh_{\cV}(\cC)$. 
	\end{prop}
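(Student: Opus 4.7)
The statement splits into two independent claims: (i) that $\PShV(\cC) := \LMod_\cC(\PSh(X)\otimes\cV)$ is a presentable right $\cV$-module category, and (ii) that the free-module functor out of $\PSh(X)\otimes\cV$ realizes the unique $\cV$-linear cocontinuous extension of $\yoV_\cC$.

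For (i), the plan is to upgrade $\PSh(X)\otimes\cV$ to a $\cC$--$\cV$-bimodule in $\PrL$. The point is that, by construction, $\LinEnd(\PSh(X)\otimes\cV)$ denotes the internal end of $\PSh(X)\otimes\cV$ in $\PrV$ (i.e.\ $\cV$-linear cocontinuous endofunctors), and therefore by \cref{reminder:endo} (applied in the $\PrL$-module category $\PrV$) comes equipped with a universal left action on $\PSh(X)\otimes\cV$ \emph{in $\PrV$}. Unwinding the structure on $\PrV = \RMod_{\cV}(\PrL)$, this is exactly the data of a $\LinEnd(\PSh(X)\otimes\cV)$--$\cV$-bimodule structure on $\PSh(X)\otimes\cV$ in $\PrL$. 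Restricting along the algebra morphism $\cC \to \LinEnd(\PSh(X)\otimes\cV)$ corresponding to $\cC \in \vcatXV$, we obtain
\[
\PSh(X)\otimes\cV \in \BMod{\cC}{\cV}(\PrL).
\]
Now $\LMod_\cC$ of a bimodule in $\PrL$ lies in $\RMod_\cV(\PrL) = \PrV$, presentability being provided by \HA{Cor.}{4.2.3.7} (combined with \HA{Prop.}{4.3.2.5} for the surviving right $\cV$-action).

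For (ii), the free-module functor $\mathrm{Free}: \PSh(X)\otimes\cV \to \LMod_\cC(\PSh(X)\otimes\cV)$ is the left adjoint of the (conservative) forgetful functor; by \HA{Cor.}{4.2.4.8} it exists, is colimit-preserving, and is a morphism of right $\cV$-modules, hence belongs to $\LinFun(\PSh(X)\otimes\cV, \PShV(\cC))$. By \cref{lem:freetensored}, such a functor is determined by, and uniquely extends, its precomposition with $X \hookrightarrow \PSh(X) \xrightarrow{\id\otimes 1_\cV} \PSh(X)\otimes\cV$. But that precomposite is, by \cref{def:presheaf}, exactly $\yoV_\cC$.

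The only genuinely non-routine input is the first step: that the left $\cC$-action on $\PSh(X)\otimes\cV$ commutes (coherently, as a bimodule) with the right $\cV$-action. I expect this to be the only place where care is needed, and it amounts to making precise the statement that the composition monoidal structure on $\LinEnd(\PSh(X)\otimes\cV)$ from \cref{defin:quivend} is by definition the one encoding endomorphisms \emph{in the $2$-category $\IPrV$}, so the tautological action is automatically through $\cV$-linear maps. Once this is in place, everything else reduces to standard facts about modules over algebras in presentable $\infty$-categories and the universal property of \cref{lem:freetensored}.
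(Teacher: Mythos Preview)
Your proposal is correct and follows essentially the same route as the paper: the paper packages your ``only non-routine input'' as \cref{obs:boringbimodule} (the universal $\iEnd$-action on an object of a closed module category $\RMod_a(V)$ automatically upgrades it to a bimodule), invoked with $V=\PrL$ and $a=\cV$, then uses \HA{Thm.}{4.3.2.7} and \HA{Cor.}{4.2.3.7} exactly as you do, and handles (ii) by the same appeal to \cref{lem:freetensored}. Your worry in the final paragraph is precisely what \cref{obs:boringbimodule} resolves, and your explanation of why it holds (the endomorphism algebra is computed \emph{in} $\PrV$) is the right one.
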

	\begin{proof}
	\cref{obs:boringbimodule} enhances $\PSh(X) \otimes \cV \in \PrV$ to an object of$\BMod{\LinEnd(\PSh(X) \otimes \cV)}{ \cV}(\PrL)$, which is equivalent to $\LMod_{\LinEnd(\PSh(X) \otimes \cV)}(\PrV)$ by~\HA{Thm.}{4.3.2.7}. Thus,  the category  $\PSh_{\cV}(\cC):= \LMod_{\cC}(\PSh(X) \otimes \cV)$ retains a right $\cV$-module structure, which by~\HA{Cor.}{4.2.3.7} is a presentable module category. Moreover, $\mathrm{Free}: \PSh(X) \otimes \cV \to \LMod_{\cC}(\PSh(X) \otimes \cV)$ is a $\cV$-linear left adjoint and hence a morphism in $\PrV$. Thus, by \cref{lem:freetensored}  it is the unique extension of its restriction to $X$ which is $\yoV_{\cC}$. 
	\end{proof}

	With this definition, one may easily prove the following enriched variant of the Yoneda lemma:

	\begin{prop}\label{prop:Yonedafancy}
	For $\cV \in \Alg(\PrL)$, $X\in \Spaces, \cC \in \vcatXV$ and  $x\in X$, the functor $\ev_x: \PShV(\cC) \to \cV$ is right adjoint to the functor $\yoV_{\cC}(x)\otimes -: \cV \to \PShV(\cC)$ and is thus equivalent to $\iHom_{\PShV(\cC)}(\yoV_{\cC}(x),-)$. 
	\end{prop}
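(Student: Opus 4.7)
The plan is to express the adjunction $\yoV_\cC(x) \otimes - \dashv \ev_x$ as the composite of two standard adjunctions, and then invoke uniqueness of right adjoints together with \cref{ex:presentableihom} to identify $\ev_x$ with the internal hom.

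First, I would unwind the left adjoint candidate. Since $\mathrm{Free} : \PSh(X) \otimes \cV \to \PShV(\cC)$ is a morphism in $\PrV$ (by \cref{prop:presentablepresheaf}) and in particular $\cV$-linear, one has
\[
\yoV_\cC(x) \otimes v = \mathrm{Free}(\yo(x) \otimes 1_\cV) \otimes v \simeq \mathrm{Free}(\yo(x) \otimes v),
\]
naturally in $v \in \cV$. Thus $\yoV_\cC(x) \otimes - : \cV \to \PShV(\cC)$ factors as the composite $\cV \xrightarrow{\yo(x) \otimes -} \PSh(X) \otimes \cV \xrightarrow{\mathrm{Free}} \PShV(\cC)$. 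Dually, by \cref{def:presheaf}, $\ev_x : \PShV(\cC) \to \cV$ is the composite of the forgetful functor $\mathrm{Forget} : \LMod_\cC(\PSh(X) \otimes \cV) \to \PSh(X) \otimes \cV \simeq \Fun(X^{\op}, \cV)$ followed by evaluation at $x$.

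Next, I would verify each component adjunction separately. The free-forget adjunction $\mathrm{Free} \dashv \mathrm{Forget}$ is standard (\HA{Cor.}{4.2.4.8}). For the first factor, I would use that under the equivalence $\PSh(X) \otimes \cV \simeq \Fun(X^{\op}, \cV)$ from the recollections on presentable categories, the pure tensor $\yo(x) \otimes v$ corresponds to the functor $y \mapsto \Map_X(y, x) \otimes v$, since the $\cV$-action on $\Fun(X^{\op}, \cV)$ is pointwise. This functor is precisely the pointwise formula for the left Kan extension of $v : \ast \to \cV$ along $x : \ast \to X^{\op}$, and hence is left adjoint to $\ev_x : \Fun(X^{\op}, \cV) \to \cV$ by the universal property of left Kan extensions. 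This is the step requiring the most care, though it is still routine; the key input is simply that $X$ is a space, so that $\Map_{X^{\op}}(x, y) \simeq \Map_X(y, x)$ gives the expected comma-category formula.

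Composing the two adjunctions yields $\yoV_\cC(x) \otimes - \dashv \ev_x \circ \mathrm{Forget} = \ev_x$, establishing the first claim. For the second, since $\cV$ is presentably monoidal and $\PShV(\cC) \in \PrV$, the module $\PShV(\cC)$ is closed (\cref{ex:presentableihom}), so $\iHom_{\PShV(\cC)}(\yoV_\cC(x), -)$ is by definition the right adjoint of $\yoV_\cC(x) \otimes -$. Uniqueness of right adjoints then gives a canonical equivalence $\ev_x \simeq \iHom_{\PShV(\cC)}(\yoV_\cC(x), -)$, completing the proof.
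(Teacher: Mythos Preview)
Your proof is correct and follows essentially the same approach as the paper: both factor $\yoV_\cC(x)\otimes -$ and $\ev_x$ as two-step composites through $\PSh(X)\otimes\cV$, invoke the free--forget adjunction and the adjunction between $\yo(x)\otimes -$ and $\ev_x$, and use $\cV$-linearity of $\mathrm{Free}$ to identify the composite left adjoint with $\yoV_\cC(x)\otimes -$. Your version is more explicit about justifying the inner adjunction $\yo(x)\otimes - \dashv \ev_x$ via left Kan extension, whereas the paper's proof simply asserts it ``by definition''; otherwise the arguments are the same.
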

	\begin{proof}
	By definition, the left adjoint of $\ev_x$ is the functor \[ \cV \overset{\yo_x \otimes \id_{\cV}}{\to} \PSh(X) \otimes \cV \overset{\mathrm{Free}}{\to}  \LMod_{\cC}(\PSh(X) \otimes \cV) =: \PShV(\cC) \]. Since $\mathrm{Free}$ is a $\cV$-module functor, this agrees with $\mathrm{Free}(\yo_x) \otimes - =: \yoV_{\cC}(x) \otimes-$. 
	\end{proof}
	
	\begin{obs}\label{prop:Yoneda}\label{cor:PShVgraph}
	It immediately follows from~\cref{prop:Yonedafancy} that for $x\in X$ and $F\in \PShV(\cC)$ we have 
	\[\iHom_{\PShV(\mathcal{C})}(\yoV_{\cC}(x), F) \simeq \operatorname{ev}_x(F) \in \mathcal{V}\; .\]
	In particular, it follows from the unpacking in~\cref{obs:presheafunpacked} that for $x,x' \in X$
\[\iHom_{\PShV(\mathcal{C})}(\yoV_\cC(x), \yoV_\cC(x')) \simeq \Hom_{\mathcal{C}}(x,x') \in \mathcal{V} \; .\]
	\end{obs}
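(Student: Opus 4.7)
The plan is to deduce both equivalences essentially formally from the adjunction established in \cref{prop:Yonedafancy} together with the explicit unpacking of representable presheaves in \cref{obs:presheafunpacked}. Since the statement is advertised as ``immediate,'' the work is purely bookkeeping, and the only subtlety is identifying $\ev_x$ with the internal hom out of the representable.

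For the first equivalence, I would argue as follows. By \cref{prop:Yonedafancy}, the evaluation functor $\ev_x : \PShV(\cC) \to \cV$ is right adjoint to the functor $\yoV_{\cC}(x) \otimes - : \cV \to \PShV(\cC)$. On the other hand, since $\PShV(\cC) \in \PrV$ is a presentable $\cV$-module category (by \cref{prop:presentablepresheaf}), it is closed (by \cref{ex:presentableihom}), so the internal hom $\iHom_{\PShV(\cC)}(\yoV_{\cC}(x), -) : \PShV(\cC) \to \cV$ exists and is by definition right adjoint to $\yoV_{\cC}(x) \otimes -$. Uniqueness of right adjoints then gives a canonical equivalence
\[
\iHom_{\PShV(\cC)}(\yoV_{\cC}(x), F) \simeq \ev_x(F)
\]
natural in $F \in \PShV(\cC)$.

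For the second equivalence, I would specialize to $F = \yoV_{\cC}(x')$. Per \cref{obs:presheafunpacked}, the representable presheaf $\yoV_{\cC}(x')$ has underlying object $\Hom_\cC(-, x') \in \Fun(X^{\op}, \cV)$, since this is precisely the image of $\yo(x') \in \PSh(X)$ under the composite defining $\yoV_{\cC}$ (the free $\cC$-module functor applied to the tensor unit twisted by $\yo(x')$). Because $\ev_x$ is defined as composition of the forgetful functor $\LMod_\cC(\Fun(X^{\op}, \cV)) \to \Fun(X^{\op}, \cV)$ with evaluation at $x$, we obtain $\ev_x(\yoV_{\cC}(x')) \simeq \Hom_\cC(x, x')$, and combining with the first equivalence yields the claim. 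I anticipate no real obstacle beyond verifying that the two potential definitions of ``right adjoint to $\yoV_{\cC}(x) \otimes -$'' are indeed identified with the internal hom, which is immediate from the universal property in \cref{reminder:iHom}.
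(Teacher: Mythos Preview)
Your proposal is correct and follows essentially the same approach as the paper; in fact, the identification of $\ev_x$ with $\iHom_{\PShV(\cC)}(\yoV_{\cC}(x),-)$ via uniqueness of right adjoints is already recorded in the last clause of \cref{prop:Yonedafancy}, so the observation reduces precisely to the specialization and unpacking you describe.
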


Our main  \cref{thm:charessim} may be understood as a vast generalization of~\cref{cor:PShVgraph}, showing that the full structure of the enriched category $\cV$ can be recovered from its enriched presheaf category $\PShV(\cC)$ together with its Yoneda embedding $\yoV_{\cC}:X \to \PShV(\cC)$.
	
	\begin{warning}
		Generally, the functor $\yoV_{\cC}: X \to \PShV(\cC)$ is \emph{not} a subcategory inclusion. In particular, two representable presheaves $\yoV_\cC(x) \simeq \yoV_\cC(y)$ can be isomorphic in $\PShV(\cC)$ even though $x$ and $y$ do not lie in the same connected component of $X$. Enriched categories for which $X \to \PShV(\cC)$ is a subcategory inclusion, i.e.\ exhibit $X$ as the maximal subgroupoid of the full image of $X$ in $\PShV(\cC)$,  are called \emph{univalent} (or `complete' in the terminology of~\cite{haugseng}) and will be discussed in \S \ref{sec:univalence}.
	\end{warning}

	\section{Atomic objects and atomically generated categories}
	\label{sec:atomicgen}
	
	Our main theorem expresses the data of a $\cV$-enriched category $\cC$ in terms of its presheaf category $\PShV(\cC)$. In this section, we study the properties satisfied by such presheaf categories. 
	Many of the following statements, as well as their proofs, are direct analogs of similar results for presentable categories; and variants of many of them can e.g.\ be found in \cite{ben2024naturality}, \cite{ramzi2024dualizable}, \cite{redshift}.

\subsection{Generating subcategories and colimit-dominant functors}

Recall the following terminology, which we will use throughout this paper:

	\begin{defin}\label{def:generating}
	For $\cC \in \PrL$, we say that a full subcategory $\cC_0 \subseteq \cC$ \emph{generates $\cC$ under colimits} if $\cC$ is the smallest full subcategory of $\cC$ which contains $\cC_0$ and is closed under small colimits. 
	For $\cV \in \Alg(\PrL)$ and  $\cM \in \PrV$, we say that a full subcategory $\mathcal{M}_0 \subseteq \mathcal{M}$  \emph{generates $\cM$ under colimits and tensoring} if $\cM$ is the smallest full subcategory of $\cM$ which contains $\cM_0$ and is closed under small colimits and tensoring with objects of $\cV$. 
	\end{defin}
It will follow from \cref{prop:conservativegen} that a full subcategory $\cM_0 \subseteq\cM$ generates $\cM$ under colimits and tensoring if and only if the functors $\{\iHom_{\cM}(m_0, -): \cM \to \cV\}_{m_0 \in \cM_0}$ are jointly conservative.

\begin{notat} We let $\operatorname{Im}(F:C \to D)$ denote the \emph{full image} of  a functor $F: C \to D$ in $\cat$.
\end{notat}

\begin{defin} The \emph{closed image} $\CIm(F) \subseteq \cM$ of a morphism $F: \cN \to\cM$ in $\PrL$ is the smallest full subcategory of $\cM$ which contains the full image of $F$ and is closed under colimits. 
\end{defin}
If clear from context, we sometimes also write $\CIm(\cN \to \cM)$ or just $\CIm(\cN)$.

\begin{lemma}
	\label{lem:imageispres}
	The closed image $\CIm(F)$ of a morphism $F: \cN \to \cM$ in $\PrL$ is presentable and the inclusion $\CIm(F) \subseteq \cM$ is closed under colimits, i.e.\ defines a morphism in $\PrL$.
\end{lemma}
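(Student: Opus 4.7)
My plan is to realize $\CIm(F)$ concretely as a $\kappa$-accessible completion of an essentially small full subcategory of $\kappa$-compact objects of $\cM$, for a suitably chosen regular cardinal $\kappa$. First, I would invoke \HTT{Prop.}{5.4.7.7} to fix $\kappa$ large enough that $\cN$ is $\kappa$-accessible and that $F$ preserves $\kappa$-compact objects (equivalently, that the right adjoint $F^{\rR}$ is $\kappa$-accessible). Let $C_0 \subseteq \cM^{\kappa}$ denote the essentially small full image of $F$ restricted to $\cN^{\kappa} \subseteq \cN$, and let $C_0'$ be its closure under $\kappa$-small colimits in $\cM$. Since $\cM^{\kappa}$ is stable under $\kappa$-small colimits by~\HTT{Cor.}{5.3.4.15}, $C_0'$ remains essentially small and contained in $\cM^{\kappa}$.

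Next, I would extend the inclusion $C_0' \hookrightarrow \cM$ to a functor $\tilde{\jmath} \colon \Ind_{\kappa}(C_0') \to \cM$, which by~\HTT{Prop.}{5.3.5.11} is fully faithful since $C_0'$ maps into the $\kappa$-compact objects of $\cM$. Moreover $\tilde{\jmath}$ preserves all small colimits: it preserves $\kappa$-filtered colimits by its construction, $\kappa$-small colimits because $C_0'$ is closed under such colimits in $\cM$, and the general case reduces to these two via the standard commutation of $\kappa$-small with $\kappa$-filtered colimits. Being fully faithful and cocontinuous, the essential image $\cI := \operatorname{Im}(\tilde{\jmath})$ is closed under all small colimits in $\cM$: any diagram in $\cI$ lifts to $\Ind_{\kappa}(C_0')$ via full faithfulness, and its colimit there is transported back into $\cI$.

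Finally, to identify $\CIm(F)$ with $\cI$, I would argue by two containments. Since $C_0 \subseteq \operatorname{Im}(F) \subseteq \CIm(F)$ and $\CIm(F)$ is closed under small colimits, we get $\Ind_{\kappa}(C_0') \simeq \cI \subseteq \CIm(F)$. Conversely, every object of $\cN$ is a $\kappa$-filtered colimit of objects in $\cN^{\kappa}$, so by colimit preservation of $F$ every object of $\operatorname{Im}(F)$ lies in $\cI$; combined with colimit-closure of $\cI$ this yields $\CIm(F) \subseteq \cI$. We conclude that $\CIm(F) \simeq \Ind_{\kappa}(C_0')$ is presentable and that its inclusion into $\cM$ preserves small colimits, defining a morphism in $\PrL$. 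I expect the main technical obstacle to be justifying the commutation of $\kappa$-small with $\kappa$-filtered colimits inside $\cM$ to reduce all small colimits to these two cases, but this is a standard consequence of the presentability of $\cM$.
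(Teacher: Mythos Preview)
Your proposal is correct and follows essentially the same approach as the paper: both choose $\kappa$ so that $\cN$ is $\kappa$-compactly generated and $F$ preserves $\kappa$-compact objects, then observe that $F(\cN^{\kappa})$ furnishes a small set of $\kappa$-compact generators for $\CIm(F)$. The paper's version is terser, simply asserting $\kappa$-compact generation from this data, whereas you carry out the identification $\CIm(F) \simeq \Ind_{\kappa}(C_0')$ explicitly (the extra step of closing under $\kappa$-small colimits is only needed for your direct appeal to \HTT{Prop.}{5.3.5.11}).
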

\begin{proof}
By definition, $\CIm(F)$ is closed under colimits in $\cM$. We show that $\CIm(F)$ is $\kappa$-compactly generated for a regular cardinal $\kappa$. Indeed, let $\kappa$ be a regular cardinal such that $\cN$ is $\kappa$-compactly generated and $F$ preserves $\kappa$-compact objects. Since $F$ preserves colimits, the image $F(\cN^{\kcpt})$ of the full subcategory $\cN^{\kcpt} \subseteq \cN$ on the $\kappa$-compacts generates $\CIm(F)$ under colimits. But by assumption on $\kappa$ the objects in $F(\cM^{\kcpt})$ are $\kappa$-compact in $\cM$ and hence also in $\CIm(F)$. 
\end{proof}

\begin{warning}The full image $\operatorname{Im}(F)$ is generally not closed under arbitrary colimits.  Consider for example the unique functor $\Spaces \to \Ab$ in $\PrL$ sending the point to $\Z$, but which does not contain $\Z_2 = \operatorname{coker}(\Z \overset{\cdot 2}{\to} \Z)$ in its image. \end{warning}
	However,  $\operatorname{Im}(F) \subseteq \cM$ is always closed under coproducts, since by surjectivity of $\cN \to \operatorname{Im}(F)$ for any set $X$ any diagram $X\to \operatorname{Im}(F)$ can be lifted to a diagram $X\to \cN$ and its colimit can be computed there. 


\begin{defin}\label{defin:colimitdominant}
A morphism $F: \cN \to \cM$ in $\PrL$ is called \emph{colimit-dominant} if $\CIm(F) = \cM$, i.e.\ if its full image generates $\cM$ under colimits. A morphism in $\PrV$ is called colimit-dominant if its underlying morphism in $\PrL$ is. 
\end{defin}

\begin{prop}[{\cite[Prop.\ 2.9]{monadictower}}]
	\label{lem:colimit-dominant-spaces}
	A morphism $F: \cN \to \cM$ in $\PrL$ is colimit-dominant if and only if its right adjoint $F^\rR$ is conservative. \end{prop}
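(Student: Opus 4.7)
The plan is to prove the two directions separately, using the Yoneda lemma for the forward direction and the presentability of $\CIm(F)$ (established in \cref{lem:imageispres}) for the converse.

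For the forward direction, suppose $F$ is colimit-dominant and let $g: m \to m'$ be a morphism in $\cM$ with $F^\rR(g)$ invertible. By the adjunction, the induced map $g_*: \Map_\cM(Fn, m) \to \Map_\cM(Fn, m')$ is an equivalence for every $n \in \cN$. Let $\cM_0 \subseteq \cM$ be the full subcategory on those $x \in \cM$ for which $g_*: \Map_\cM(x, m) \to \Map_\cM(x, m')$ is an equivalence. Then $\cM_0$ contains the full image of $F$, and $\cM_0$ is closed under small colimits in $\cM$ because $\Map_\cM(-,y)$ turns colimits into limits and equivalences are stable under limits of diagrams of arrows. By colimit-dominance $\cM_0 = \cM$, and by the Yoneda lemma $g$ is an isomorphism.

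For the converse, suppose $F^\rR$ is conservative. By \cref{lem:imageispres}, the inclusion $i: \CIm(F) \hookrightarrow \cM$ is a morphism in $\PrL$, and hence admits a right adjoint $i^\rR$ by the adjoint functor theorem. The factorization $F = i \circ \tilde F$ through $\CIm(F)$ gives $F^\rR \simeq \tilde F^\rR \circ i^\rR$; in particular, if $i^\rR(g)$ is an isomorphism then so is $F^\rR(g)$, hence $g$ is an isomorphism. Thus $i^\rR$ is conservative as well.

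Finally, since $i$ is fully faithful, the unit $\eta: \id_{\CIm(F)} \to i^\rR i$ is an isomorphism. The triangle identity $(i^\rR \varepsilon) \circ (\eta i^\rR) = \id_{i^\rR}$ then forces $i^\rR(\varepsilon_m)$ to be an isomorphism for every $m \in \cM$, and conservativity of $i^\rR$ upgrades this to $\varepsilon_m$ being an isomorphism. Hence $i$ is essentially surjective, so $\CIm(F) = \cM$ and $F$ is colimit-dominant. The only non-routine input is the existence of $i^\rR$, which in turn rests on $\CIm(F)$ being presentable — this is the content of \cref{lem:imageispres} and is the genuine technical content of this argument.
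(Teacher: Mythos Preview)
Your proof is correct. The paper itself does not supply a proof of this proposition; it simply cites \cite[Prop.\ 2.9]{monadictower}. Your argument is a clean self-contained proof: the forward direction is the standard Yoneda argument, and for the converse you use \cref{lem:imageispres} to get a right adjoint $i^\rR$ to the inclusion $\CIm(F) \hookrightarrow \cM$, observe that $i^\rR$ inherits conservativity from $F^\rR$, and then invoke the elementary fact that a fully faithful left adjoint with conservative right adjoint is an equivalence. One small point worth making explicit is that the factorization $\tilde F: \cN \to \CIm(F)$ is indeed a morphism in $\PrL$ (it preserves colimits because $F$ does and $\CIm(F)$ is closed under colimits in $\cM$), so $\tilde F^\rR$ exists; but this is routine.
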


\begin{cor}
	\label{obs:cancellationcolimdom}
	The composition of colimit-dominant functors in $\PrL$ is colimit-dominant. Moreover, if $F: \cN \to \cM$ and $G: \cM \to \cL$ are morphisms in $\PrL$ and $G\circ F$ is colimit-dominant, then $G$ is colimit-dominant. 
\end{cor}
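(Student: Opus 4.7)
The plan is to reduce both assertions to well-known facts about conservative functors by invoking \cref{lem:colimit-dominant-spaces}, which characterizes colimit-dominant morphisms in $\PrL$ as precisely those whose right adjoint is conservative. Since taking right adjoints reverses composition, i.e.\ $(G\circ F)^\rR \simeq F^\rR \circ G^\rR$, both claims translate into statements about composites of conservative functors between large categories.

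For the first assertion, I would observe that if $F^\rR$ and $G^\rR$ both reflect isomorphisms, then so does their composite $F^\rR\circ G^\rR$, whence $G\circ F$ is colimit-dominant. For the second assertion, I would apply the standard cancellation property for conservativity: given $\alpha$ in $\cL$ with $G^\rR(\alpha)$ invertible, functoriality of $F^\rR$ makes $F^\rR(G^\rR(\alpha))$ invertible, and conservativity of the composite $F^\rR\circ G^\rR$ then forces $\alpha$ to be invertible; hence $G^\rR$ is conservative and $G$ is colimit-dominant. I do not anticipate any real obstacle, as the corollary is a formal consequence of \cref{lem:colimit-dominant-spaces} combined with these elementary properties of conservative functors, essentially all of the conceptual content having been absorbed into the preceding proposition.
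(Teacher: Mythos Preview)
Your proposal is correct and takes essentially the same approach as the paper: both reduce the claims to the corresponding closure and cancellation properties of conservative functors via \cref{lem:colimit-dominant-spaces}. The paper's proof is simply the one-line remark ``This follows from the respective properties of conservative functors,'' so your version is just a more explicit unpacking of the same argument.
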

\begin{proof}
This follows from the respective properties of conservative functors.
\end{proof}

\begin{lemma}\label{lem:colimdomtensor}
Let $F: \cN \to \cM$ in $\PrL$ be colimit-dominant and $\cL \in \PrL$. Then, $F\otimes \cL : \cN \otimes \cL \to \cM \otimes \cL$ is colimit-dominant. 
\end{lemma}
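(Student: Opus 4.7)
The plan is to use \cref{lem:puretensors} to reduce the problem to pure tensors. That lemma guarantees that every object of $\cM \otimes \cL$ is a small colimit of pure tensors $m \otimes \ell$ with $m \in \cM$ and $\ell \in \cL$. Since the closed image $\CIm(F \otimes \id_\cL) \subseteq \cM \otimes \cL$ is by definition closed under small colimits, it then suffices to show that every such pure tensor lies in $\CIm(F \otimes \id_\cL)$.

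To verify this, I fix $m \in \cM$ and $\ell \in \cL$. Colimit-dominance of $F$ (\cref{defin:colimitdominant}) lets me write $m \simeq \colim_i F(n_i)$ as a small colimit of objects in the image of $F$. The one-variable functor $- \otimes \ell : \cM \to \cM \otimes \cL$ preserves colimits, because the universal comparison $\cM \times \cL \to \cM \otimes \cL$ preserves colimits separately in each variable. Combined with the identification $(F \otimes \id_\cL)(n \otimes \ell) \simeq F(n) \otimes \ell$, which is built into the construction of $F \otimes \id_\cL$, this gives
\[ m \otimes \ell \;\simeq\; \colim_i \bigl( F(n_i) \otimes \ell \bigr) \;\simeq\; \colim_i (F \otimes \id_\cL)(n_i \otimes \ell), \]
exhibiting $m \otimes \ell$ as a small colimit of objects in the full image of $F \otimes \id_\cL$, hence an object of $\CIm(F \otimes \id_\cL)$, as desired.

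I do not anticipate any serious obstacle: once \cref{lem:puretensors} has been invoked, the rest is a direct manipulation using only the universal property of $\otimes$ in $\PrL$ together with the definition of colimit-dominance. An alternative route would be to argue via conservativity of right adjoints (\cref{lem:colimit-dominant-spaces}), perhaps by factoring $F$ using the localization/conservative factorization system together with \cref{lem:localizationtensor} for the localization factor, but this seems more roundabout than the direct pure-tensor argument sketched above.
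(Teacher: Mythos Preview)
Your approach is correct in spirit, but one step is imprecise: colimit-dominance of $F$ means $\cM$ is the smallest full subcategory containing $\operatorname{Im}(F)$ and closed under colimits, which does not immediately imply that every $m \in \cM$ is a \emph{single} colimit of objects in $\operatorname{Im}(F)$ --- a priori iterated colimits may be required. The fix is easy and standard: instead of writing $m \simeq \colim_i F(n_i)$, consider the full subcategory $\widetilde{\cM} \subseteq \cM$ of those $m$ with $m \otimes \ell \in \CIm(F \otimes \id_\cL)$ for all $\ell \in \cL$. Since each $- \otimes \ell$ preserves colimits, $\widetilde{\cM}$ is closed under colimits; since $F(n) \otimes \ell \simeq (F \otimes \id_\cL)(n \otimes \ell)$, it contains $\operatorname{Im}(F)$; hence $\widetilde{\cM} = \cM$ by colimit-dominance. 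Combined with \cref{lem:puretensors}, this completes the argument as you intended.

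The paper takes a genuinely different route. Rather than working with pure tensors, it uses that colimit-dominant functors form the left class in a factorization system on $\PrL$ whose right class consists of the fully faithful left adjoints. Since $\PrL$ is closed monoidal with internal hom $\Fun^{\mathrm{L}}(-,-)$, preservation of the left class by $- \otimes \cL$ is equivalent (via orthogonality) to preservation of the right class by $\Fun^{\mathrm{L}}(\cL, -)$, which is immediate since postcomposition with a fully faithful functor is fully faithful. Your argument is more hands-on and self-contained, relying only on \cref{lem:puretensors}; the paper's argument is slicker and avoids pure tensors, at the cost of importing the factorization system from \cite[Cor.\ 2.10]{monadictower}. (Note that your proposed alternative via the localization/conservative factorization system is not what the paper does --- that is the \emph{other} factorization system on $\PrL$.)
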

\begin{proof}
In~\cite[Cor.\ 2.10]{monadictower} it is shown that colimit-dominant functors are the left class of a factorization system (in the  sense of ~\HTT{\S}{5.2.8}) on $\PrL$ with right class the fully faithful left adjoints. (In~\cref{lem:prfact} below, we lift this to a factorization system on $\PrV$.) Since $\PrL$ is closed monoidal, the statement of the lemma is thus equivalent to the assertion that for every fully faithful left adjoint $G: \cC \to \cD$ the induced morphism $\Fun^{\mathrm{L}}(\cL, \cC ) \to \Fun^{\mathrm{L}}(\cL, \cD)$ is fully faithful which is clear as both categories are full subcategories of functor categories and post-composition with fully faithful functors remains fully faithful. 
\end{proof}

We now extend these notions to $\PrV$ for a $\cV \in \Alg(\PrL)$. 

\begin{lemma}
	\label{lem:imagetens}
	Let $\cM \in \PrV$ and $\cM_0 \subseteq \cM$ be a full subcategory which is closed under $\cV$-tensoring. If $\hat{\cM}_0$ is the smallest full subcategory of $\cM$ that is closed under colimits and contains $\cM_0$, then $\hat{\cM}_0$ is also closed under $\cV$-tensoring. In particular, $\cM_0$ generates $\cM$ under colimits and tensoring if and only if it generates $\cM$ under colimits. 
\end{lemma}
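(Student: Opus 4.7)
The plan is to use a standard ``subcategory of good objects'' argument, exploiting the fact that the action functor of a presentable $\cV$-module preserves colimits in each variable.

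First I would fix an arbitrary $v \in \cV$ and consider the full subcategory
\[
\cM_v := \{\, m \in \cM : m \otimes v \in \hat{\cM}_0 \,\} \subseteq \cM.
\]
Since $\cM \in \PrV$, the functor $-\otimes v : \cM \to \cM$ preserves (small) colimits, and $\hat{\cM}_0 \subseteq \cM$ is closed under colimits by construction, so $\cM_v$ is closed under colimits in $\cM$. Moreover, $\cM_v$ contains $\cM_0$: for any $m \in \cM_0$ we have $m \otimes v \in \cM_0 \subseteq \hat{\cM}_0$ by the hypothesis that $\cM_0$ is closed under $\cV$-tensoring. Thus $\cM_v$ is a full subcategory of $\cM$ containing $\cM_0$ and closed under colimits, hence by minimality of $\hat{\cM}_0$ it contains $\hat{\cM}_0$. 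Since $v \in \cV$ was arbitrary, this shows $\hat{\cM}_0$ is closed under $\cV$-tensoring.

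For the ``in particular'' statement, one direction is immediate: if $\cM_0$ generates $\cM$ under colimits, it generates under colimits and tensoring. Conversely, suppose $\cM_0$ (closed under tensoring by hypothesis) generates $\cM$ under colimits and tensoring. The first part of the lemma then shows that $\hat{\cM}_0$, the colimit-closure of $\cM_0$, is already closed under both colimits and tensoring and contains $\cM_0$, so it must equal $\cM$ by minimality, giving the claim.

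The argument is essentially formal and I don't expect a real obstacle; the only point to be careful about is that $-\otimes v$ genuinely preserves small colimits (which is the defining property of $\cM \in \PrV = \RMod_\cV(\PrL)$) so that $\cM_v$ is closed under colimits, and that the tensoring closure hypothesis on $\cM_0$ is used precisely to ensure $\cM_0 \subseteq \cM_v$.
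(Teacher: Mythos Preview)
Your proof is correct and essentially identical to the paper's: both use the ``good objects'' argument, showing that the subcategory of $m$ with $m \otimes v \in \hat{\cM}_0$ contains $\cM_0$ and is closed under colimits, hence contains $\hat{\cM}_0$. The only cosmetic difference is that the paper quantifies over all $v$ at once (defining $\widetilde{\cM} = \{m \in \hat{\cM}_0 : m \otimes v \in \hat{\cM}_0 \text{ for all } v\}$) whereas you fix one $v$ at a time; the logic is the same.
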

\begin{proof}
	Let $\widetilde{\cM}$ denote the full subcategory of $\hat{\cM}_0$ on those $m$ such that $m \otimes v \in \hat{\cM}_0$ for all $v\in \cV$. This contains $\cM_0$ and is closed under colimits, hence is all of $\hat{\cM}_0$. 
\end{proof}

\begin{prop} \label{cor:CImV}Let $F: \cN \to \cM$ be a morphism in $\PrV$. Then, $\CIm(F) \subseteq \cM$ is the smallest full subcategory of $\cM$ which contains the image of $F$ and is closed under colimits and tensoring. In particular, there is a factorization $\cN \to \CIm(F) \to \cM$ in $\PrV$. 
\end{prop}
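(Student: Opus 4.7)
The plan is to show that $\CIm(F)$, defined a priori as the smallest full subcategory of $\cM$ containing $\operatorname{Im}(F)$ and closed under colimits, is automatically closed under the $\cV$-tensoring as well, and then to upgrade the factorization $\cN \to \CIm(F) \to \cM$ from $\PrL$ to $\PrV$.

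First I would observe that since $F : \cN \to \cM$ is $\cV$-linear, for every $n \in \cN$ and $v \in \cV$ we have an isomorphism $F(n) \otimes v \simeq F(n \otimes v)$ in $\cM$; hence the full image $\operatorname{Im}(F) \subseteq \cM$ is closed under tensoring with $\cV$. I can then apply \cref{lem:imagetens} to the full subcategory $\cM_0 := \operatorname{Im}(F)$, which tells me that the colimit-closure $\hat\cM_0$ of $\operatorname{Im}(F)$ in $\cM$ is also closed under $\cV$-tensoring. But by the definition of $\CIm(F)$ this colimit-closure is exactly $\CIm(F)$, so $\CIm(F)$ contains $\operatorname{Im}(F)$ and is closed under both colimits and tensoring. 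Conversely, any full subcategory of $\cM$ containing $\operatorname{Im}(F)$ and closed under colimits must already contain $\CIm(F)$, so a fortiori this holds for subcategories closed under colimits and tensoring, proving the minimality claim.

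For the factorization, note that by \cref{lem:imageispres} the inclusion $\CIm(F) \hookrightarrow \cM$ is a morphism in $\PrL$ and $\cN \to \cM$ factors (essentially uniquely) through $\CIm(F)$ in $\PrL$. Since we have just shown that $\CIm(F) \subseteq \cM$ is closed under $\cV$-tensoring, it inherits a right $\cV$-module structure making the inclusion $\cV$-linear, hence an object of $\PrV$ and a morphism in $\PrV$ respectively. The induced functor $\cN \to \CIm(F)$ is colimit-preserving by \cref{lem:imageispres}, and its $\cV$-linearity can be extracted from the $\cV$-linearity of $F$ by composing with the fully faithful $\cV$-linear inclusion $\CIm(F) \hookrightarrow \cM$. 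I do not expect any serious obstacle; the only mild subtlety is to confirm that the $\cV$-module structure on $\CIm(F)$ inherited from $\cM$ makes the factorization $\cV$-linear, which is immediate from the fully faithfulness of the inclusion.
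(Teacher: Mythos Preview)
Your proof is correct and follows essentially the same approach as the paper: the paper's proof is the one-liner ``Immediate from \cref{lem:imagetens} since the full image of $F$ is closed under tensoring,'' and you have simply unpacked this, additionally spelling out the factorization claim which the paper leaves implicit.
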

\begin{proof}Immediate from~\cref{lem:imagetens} since the full image of $F$ is closed under tensoring. 
\end{proof}

it follows from~\cref{cor:CImV} that $\CIm(F)$ is also closed under tensoring, justifying \cref{defin:colimitdominant} for morphisms in $\PrV$.

\begin{cor}\label{cor:imagevsPsh}
Let $\cC \in \cat$ be a small category, $\cM \in \PrV$ and $F: \cC \to \cM$ a functor with induced morphism $\PSh(\cC) \otimes \cV \to \cM$ in $\PrV$. Then, $\CIm(\PSh(\cC) \otimes \cV \to \cM)$ is the smallest full subcategory of $\cM$ which contains the image $F(\cC)$ and is closed under colimits and $\cV$-tensoring. 
\end{cor}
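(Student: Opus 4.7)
The plan is to reduce to \cref{cor:CImV}: that result already identifies $\CIm(G)$, for $G:\PSh(\cC) \otimes \cV \to \cM$ the induced morphism in $\PrV$, as the smallest full subcategory of $\cM$ closed under small colimits and $\cV$-tensoring which contains the full image $G(\PSh(\cC) \otimes \cV) \subseteq \cM$. Write $\cN_0 \subseteq \cM$ for the smallest such subcategory containing $F(\cC)$. Then the task splits into showing both inclusions $\CIm(G) \supseteq \cN_0$ and $\CIm(G) \subseteq \cN_0$.

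For $\CIm(G) \supseteq \cN_0$: by construction via \cref{lem:freetensored}, the functor $G$ is obtained as the unique cocontinuous $\cV$-linear extension of $F$ along the composite $\cC \hookrightarrow \PSh(\cC) \simeq \PSh(\cC) \otimes \cS \to \PSh(\cC) \otimes \cV$. Thus $F(\cC) \subseteq G(\PSh(\cC) \otimes \cV) \subseteq \CIm(G)$, and since $\CIm(G)$ is closed under small colimits and $\cV$-tensoring, it contains $\cN_0$.

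For the reverse inclusion, I would argue that $G(\PSh(\cC) \otimes \cV) \subseteq \cN_0$, which together with the fact that $\cN_0$ is already closed under colimits and tensoring forces $\CIm(G) \subseteq \cN_0$. To see this, I apply \cref{lem:puretensors} to write every object of $\PSh(\cC) \otimes \cV$ as a small colimit of pure tensors $P \otimes v$ with $P \in \PSh(\cC)$ and $v \in \cV$. Density of the Yoneda embedding realizes each such $P$ as a small colimit of representables $\yo(c)$, and since tensoring with $v$ preserves colimits in $\PSh(\cC) \otimes \cV$, every object becomes a small colimit of objects of the form $\yo(c) \otimes v$. Cocontinuity and $\cV$-linearity of $G$, together with the identification $G \circ (\yo \otimes 1_\cV) \simeq F$ from \cref{lem:freetensored}, then give $G(\yo(c) \otimes v) \simeq F(c) \otimes v \in \cN_0$, and closure of $\cN_0$ under colimits finishes the argument.

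I do not expect any real obstacle here: the statement is essentially a bookkeeping exercise combining \cref{cor:CImV}, \cref{lem:puretensors}, \cref{lem:freetensored}, and the density of the Yoneda embedding. The only point deserving care is ensuring that the chain ``colimit of pure tensors of colimits of representables'' can be collapsed to a single colimit of objects $\yo(c) \otimes v$, which is immediate from the fact that $-\otimes v$ and $G$ both preserve colimits.
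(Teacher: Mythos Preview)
Your proof is correct, but the paper takes a different route for the reverse inclusion $\CIm(G) \subseteq \cN_0$. Instead of explicitly decomposing objects of $\PSh(\cC) \otimes \cV$ via \cref{lem:puretensors} and density of Yoneda, the paper argues abstractly: any full subcategory $\widetilde{\cM} \subseteq \cM$ closed under colimits and $\cV$-tensoring and containing $F(\cC)$ is itself a cocomplete $\cV$-module, so by the universal property in \cref{lem:freetensored} the functor $F:\cC \to \widetilde{\cM}$ extends to a $\cV$-linear cocontinuous $\PSh(\cC) \otimes \cV \to \widetilde{\cM}$; composing with the inclusion and invoking uniqueness shows $G$ factors through $\widetilde{\cM}$, whence $\CIm(G) \subseteq \widetilde{\cM}$.

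The trade-off: the paper's argument is slicker and avoids \cref{lem:puretensors} entirely, relying only on the universal property of the free presentable $\cV$-module. Your argument is more concrete and self-contained once \cref{lem:puretensors} is available, and has the minor advantage of not needing to worry about whether $\widetilde{\cM}$ is presentable (only that it is closed under colimits and tensoring in $\cM$). Both are perfectly valid; the paper's approach is closer in spirit to how the surrounding material is organized.
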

\begin{proof}
Note that $\CIm(\PSh(\cC) \otimes \cV \to \cM) \subseteq \cM$ contains the image $F(\cC)$ and is closed under colimits and $\cV$-tensoring. Conversely, if $\widetilde{\cM}$ is a full subcategory of $\cM$ which is closed under colimits and $\cV$-tensoring and contains the image of $F$, then $\widetilde{\cM} \hookrightarrow \cM$ is a morphism in $\PrV$ and $\PSh(\cC) \otimes \cV \to \cM$ factors through $\widetilde{\cM}$. Hence, $\CIm(\PSh(\cC) \otimes \cM \to \cV) \subseteq \widetilde{\cM}$. 
\end{proof}

\begin{cor}
	\label{prop:conservativegen}
	Let $\cC \in \cat$ be a small category, $\cM \in \PrV$ and $F: \cC \to \cM$ a functor. Then, the following are equivalent: 
	\begin{enumerate}[(1)]
		\item The full image $F(\cC) \subseteq \cM$ generates $\cM$ under colimits and tensoring;
		\item The induced functor $\PSh(\cC) \otimes \cV \to \cM$ in $\PrV$ is colimit-dominant;
		\item The functor $\iHom_{\mathcal{M}}(F -, -): \mathcal{M} \to \Fun(\cC^{\op}, \mathcal{V})$ is conservative;
		\item The family of functors $\{\iHom_{\cM}(F(c), -) : \mathcal{M} \to \mathcal{V}\}_{c \in \cC}$ is jointly conservative.
	\end{enumerate}
\end{cor}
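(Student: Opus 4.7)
The plan is to establish the chain $(1) \Leftrightarrow (2) \Leftrightarrow (3) \Leftrightarrow (4)$ by combining three earlier tools: the description of $\CIm$ in $\PrV$ from \cref{cor:imagevsPsh}, the characterization of colimit-dominant morphisms by conservativity of their right adjoint from \cref{lem:colimit-dominant-spaces}, and pointwise detection of isomorphisms in functor categories.

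The equivalence $(1) \Leftrightarrow (2)$ is immediate: by \cref{cor:imagevsPsh}, $\CIm(\PSh(\cC) \otimes \cV \to \cM)$ is precisely the smallest full subcategory of $\cM$ containing $F(\cC)$ and closed under colimits and $\cV$-tensoring, so this subcategory equals $\cM$ exactly when condition~(1) holds, which by \cref{defin:colimitdominant} is the meaning of $(2)$.

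For $(2) \Leftrightarrow (3)$, I would first identify the right adjoint of the extension $\widehat{F}: \Fun(\cC^{\op}, \cV) \simeq \PSh(\cC) \otimes \cV \to \cM$. By \cref{lem:coendformula}, $\widehat{F}$ sends $G \in \Fun(\cC^{\op}, \cV)$ to $\oint^{c \in \cC} F(c) \otimes G(c)$. For any $m \in \cM$, the universal property of the internal hom together with the coend formula gives
\[
\Map_{\cM}(\widehat{F}(G), m) \simeq \oint_{c \in \cC} \Map_{\cV}(G(c), \iHom_{\cM}(F(c), m)) \simeq \Map_{\Fun(\cC^{\op},\cV)}\bigl(G,\, \iHom_{\cM}(F-, m)\bigr),
\]
so the right adjoint of $\widehat{F}$ is exactly the functor $\iHom_{\cM}(F-, -): \cM \to \Fun(\cC^{\op}, \cV)$ appearing in~(3). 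Applying \cref{lem:colimit-dominant-spaces} to $\widehat{F}$ then yields $(2) \Leftrightarrow (3)$.

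Finally, $(3) \Leftrightarrow (4)$: a morphism in $\Fun(\cC^{\op},\cV)$ is an isomorphism if and only if its evaluation at every $c \in \cC$ is an isomorphism in $\cV$. Hence $\iHom_{\cM}(F-, -)$ is conservative exactly when the family $\{\ev_c \circ \iHom_{\cM}(F-, -)\}_{c \in \cC} = \{\iHom_{\cM}(F(c), -)\}_{c \in \cC}$ is jointly conservative. The main obstacle is the identification of the right adjoint of $\widehat{F}$ as $\iHom_{\cM}(F-,-)$; everything else is a direct appeal to previously established lemmas.
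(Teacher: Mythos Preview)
Your proof is correct and follows essentially the same route as the paper's: $(1)\Leftrightarrow(2)$ via \cref{cor:imagevsPsh}, $(2)\Leftrightarrow(3)$ via \cref{lem:colimit-dominant-spaces}, and $(3)\Leftrightarrow(4)$ by pointwise detection of isomorphisms. The only difference is that you spell out the identification of the right adjoint of $\widehat{F}$ as $\iHom_{\cM}(F-,-)$ via the coend formula, whereas the paper treats this as evident.
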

\begin{proof} The equivalence between (1) and (2) follows directly from~\cref{cor:imagevsPsh}. Applying \cref{lem:colimit-dominant-spaces}, this is equivalent to the assertion that the right adjoint $\cM \to \PSh(\cC) \otimes \cV \simeq \Fun(\cC^{\op}, \cV)$ is conservative, i.e.\ to (3). Statement (3) is evidently equivalent to (4). 
\end{proof}
\begin{obs}
It follows from~\cref{prop:conservativegen} that a small set $\cJ$ of objects of $\cM$  generates $\cM$ under colimits and $\cV$-tensoring if and only if the functors $\{\iHom_{\cM}(x, -): \cM \to \cV\}_{x\in \cJ}$ are jointly conservative. 
\end{obs}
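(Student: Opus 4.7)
The plan is to derive the observation directly by specializing \cref{prop:conservativegen} to the case where the small category $\cC$ is taken to be the set $\cJ$ viewed as a discrete (small) category, and $F : \cJ \hookrightarrow \cM$ is the tautological inclusion functor sending each $x \in \cJ$ to itself. With this choice, the full image $F(\cJ) \subseteq \cM$ is precisely the full subcategory of $\cM$ spanned by the objects of $\cJ$, so the smallest full subcategory of $\cM$ containing $F(\cJ)$ and closed under colimits and $\cV$-tensoring coincides with the one generated by $\cJ$ in the sense of \cref{def:generating}. In particular, condition (1) of \cref{prop:conservativegen} becomes exactly the statement that $\cJ$ generates $\cM$ under colimits and tensoring.

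On the other hand, with this choice of $\cC = \cJ$ and $F$, condition (4) of \cref{prop:conservativegen} — joint conservativity of $\{\iHom_{\cM}(F(c), -) : \cM \to \cV\}_{c \in \cC}$ — reads exactly as joint conservativity of $\{\iHom_{\cM}(x, -) : \cM \to \cV\}_{x \in \cJ}$. The equivalence (1)$\Leftrightarrow$(4) of \cref{prop:conservativegen} then gives the observation. No genuine obstacle is expected; the content is entirely absorbed into \cref{prop:conservativegen}, and the only bookkeeping is the identification of the full image of a discrete functor with the full subcategory spanned by its objects.
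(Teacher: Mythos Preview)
Your proposal is correct and matches the paper's approach: the observation is stated without a separate proof, simply as an immediate specialization of \cref{prop:conservativegen} to the case where $\cC$ is the discrete category on the set $\cJ$ and $F$ is the tautological inclusion, exactly as you describe.
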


\begin{cor}
	\label{cor:colimit-dominant}
	The following conditions on a morphism $F:\cN \to \cM$ in $\PrV$ are equivalent:
	\begin{enumerate}[(1)]
		\item The full image $F(\cN) \subseteq \cM$ generates $\cM$ under colimits and tensoring.
		\item The full image $F(\cN)$ generates $\cM$ under colimits, i.e.\ $F$ is colimit-dominant.
		\item The right adjoint $F^\rR: \cM \to \cN$ is conservative.
	\end{enumerate}
\end{cor}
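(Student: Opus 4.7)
The plan is to reduce all three conditions to statements about the underlying morphism in $\PrL$ using the results already established in this subsection.

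First, the equivalence (1) $\Leftrightarrow$ (2) follows directly from \cref{lem:imagetens}. Since $F: \cN \to \cM$ is a morphism in $\PrV$, it is $\cV$-linear, so for any $n \in \cN$ and $v \in \cV$ we have $F(n) \otimes v \simeq F(n \otimes v) \in F(\cN)$. Thus the full image $F(\cN) \subseteq \cM$ is already closed under $\cV$-tensoring, and \cref{lem:imagetens} then says that the smallest full subcategory of $\cM$ containing $F(\cN)$ and closed under colimits is automatically closed under tensoring. Hence $F(\cN)$ generates $\cM$ under colimits if and only if it generates $\cM$ under colimits and tensoring.

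Next, (2) $\Leftrightarrow$ (3) is a direct application of \cref{lem:colimit-dominant-spaces} to the underlying morphism of $F$ in $\PrL$: by definition, colimit-dominance of a morphism in $\PrV$ means colimit-dominance of its underlying morphism in $\PrL$ (see \cref{defin:colimitdominant}), and the criterion that the right adjoint be conservative is a property of the underlying functor. The right adjoint in $\PrL$ coincides with the $F^\rR$ of the statement (its conservativity does not depend on any $\cV$-linear enhancement).

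There is no real obstacle here: everything has been prepared by the preceding lemmas. The only point worth spelling out carefully is that (1) is a priori stronger than (2), but this collapses because $\cV$-linearity of $F$ makes the full image automatically closed under $\cV$-tensoring, so that \cref{lem:imagetens} applies.
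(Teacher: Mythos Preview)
Your proof is correct and follows essentially the same route as the paper. The only cosmetic difference is that for (1)$\Leftrightarrow$(2) the paper cites \cref{cor:CImV} (which already packages the observation that the full image of a $\cV$-linear functor is closed under tensoring together with \cref{lem:imagetens}), whereas you unpack this and cite \cref{lem:imagetens} directly; the argument for (2)$\Leftrightarrow$(3) via \cref{lem:colimit-dominant-spaces} is identical.
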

\begin{proof}
The first two conditions are equivalent by \cref{cor:CImV}, the third and the second are equivalent by \cref{lem:colimit-dominant-spaces}. \end{proof}

In fact, the factorization of \cref{cor:CImV} gives rise to a factorization system as defined in~\HTT{\S}{5.2.8} in $\PrV$, generalizing the corresponding factorization system in $\PrL$ from~\cite[Cor.\ 2.10]{monadictower}. \begin{prop}
	\label{lem:prfact}
	The classes of fully faithful functors and colimit-dominant functors  form a factorization system in $\PrV$. The factorization of a morphism $F: \cN \to \cM$ in $\PrV$ is given by $
	\cN \to \CIm(F) \to \cM \; .
	$
\end{prop}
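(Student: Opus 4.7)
The plan is to verify the three axioms of a factorization system: closure of both classes under composition (with isomorphisms contained in each), existence of the claimed factorizations, and the orthogonality/lifting property. The closure axioms are routine --- for colimit-dominant morphisms by \cref{obs:cancellationcolimdom}, and for fully faithful morphisms by a standard argument --- and the existence of factorizations is immediate from \cref{cor:CImV}: the map $\cN \to \CIm(F)$ is colimit-dominant by construction while $\CIm(F) \hookrightarrow \cM$ is fully faithful as a full subcategory inclusion.

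For the orthogonality, given a commutative square in $\PrV$ with left leg $f: \cA \to \cB$ colimit-dominant and right leg $h: \cC \to \cD$ fully faithful, I would form the pullback $\cP := \cB \times_\cD \cC$ in $\PrV$. Since limits in $\PrV$ are created by the forgetful functor to $\PrL$ and further to $\widehat{\cat}$, and fully faithful functors of $\infty$-categories are stable under pullback, the projection $p: \cP \to \cB$ is fully faithful in $\PrV$. The square provides a canonical comparison $\cA \to \cP$ with $p \circ (\cA \to \cP) = f$, so \cref{obs:cancellationcolimdom} forces $p$ to also be colimit-dominant. The sought lift $\ell: \cB \to \cC$ then arises as the composite $\cB \xrightarrow{p^{-1}} \cP \to \cC$, provided $p$ is an equivalence.

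The main obstacle is thus to establish the auxiliary claim that any morphism $G: \cX \to \cY$ in $\PrV$ which is both fully faithful and colimit-dominant is an equivalence. Since the forgetful $\PrV \to \PrL$ is conservative and thus reflects equivalences, it suffices to argue in $\PrL$: full faithfulness of $G$ makes the unit $\eta$ of $G \dashv G^R$ invertible, colimit-dominance of $G$ makes $G^R$ conservative by \cref{lem:colimit-dominant-spaces}, and the triangle identity $\eta G^R \cdot G^R \epsilon = \id_{G^R}$ then forces $G^R \epsilon$ and hence $\epsilon$ itself to be invertible.

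Finally, uniqueness of the lift is automatic from the discreteness of the filler space: postcomposition with $h$ induces a fully faithful $\LinFun(\cX, \cC) \hookrightarrow \LinFun(\cX, \cD)$ for every $\cX \in \PrV$ (as it is the restriction of the fully faithful $\Fun(\cX, \cC) \to \Fun(\cX, \cD)$ to linear functors), so the space of diagonal fillers against any colimit-dominant $f$ is either empty or contractible, and we have just exhibited a point.
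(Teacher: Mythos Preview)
Your proof is correct and takes a genuinely different route from the paper's. Both arguments share the same reduction for uniqueness --- fully faithful functors are monomorphisms in $\PrV$, so the space of fillers is $(-1)$-truncated --- but diverge on existence of the lift.

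The paper argues directly: given the lifting square, it defines $\hat{\cM} \subseteq \cM$ as the full subcategory of objects whose image under $\cM \to \cM'$ lands in the full subcategory $\cN' \subseteq \cM'$, observes that $\hat{\cM}$ is closed under colimits and $\cV$-tensoring and contains the image of $\cN$, and concludes $\hat{\cM} = \cM$ by colimit-dominance. This uses the concrete description of colimit-dominance (image generates under colimits) rather than any abstract machinery.

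Your pullback argument is the standard categorical trick: pull back the fully faithful leg, use cancellation (\cref{obs:cancellationcolimdom}) to see the pulled-back map is both fully faithful and colimit-dominant, and invoke the auxiliary lemma that the intersection of the two classes consists of equivalences. This is more portable --- it works for any pair of classes satisfying factorization plus cancellation plus ``left $\cap$ right $=$ equivalences'' --- whereas the paper's argument exploits the specific meaning of colimit-dominance. One minor point: you verify closure under composition rather than closure under retracts (the axiom in \HTT{Def.}{5.2.8.8}); these axiomatizations are equivalent once factorization and orthogonality are in place, but the paper checks retracts explicitly.
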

\begin{proof}
	By \cref{cor:CImV}, any morphism in $\PrV$ factors through its closed image into a fully faithful functor followed by a colimit-dominant functor. It thus suffices to show that the classes of fully faithful and colimit-dominant functors are orthogonal and closed under retracts.
	We will therefore show that any diagram in $\PrV$
	\[
	\begin{tikzcd}
		\cN \arrow[d] \arrow[r] & \cN' \arrow[d, hook] \\ \cM \arrow[ur, dashed] \arrow[r] & \cM'
	\end{tikzcd}
	\]
	where the left vertical map is colimit-dominant and the right is fully faithful has  a contractible space of dashed lifts. Since $\cN' \to \cM'$ is fully faithful and hence in particular a monomorphism in $\widehat{\cat}$ and thus $\PrV$ the space of lifts is either empty or contractible, i.e.\ it suffices to show that $\cM \to \cM'$ factors through the full subcategory $\cN' \subseteq \cM'$. Let $\hat{\cM}$ denote the full subcategory of $\cM$ on those objects whose image under $\cM \to \cM'$ lands in the full subcategory $\cN' \subseteq \cM'$. 
	Since all morphisms are in $\PrV$, the full subcategory $\hat{\cM}$ is closed under colimits and tensoring. Moreover, by commutativity of the diagram, the full image of $\cN \to \cM$ is contained in $\hat{\cM}$. Thus, $\cM= \CIm(\cN \to \cM) \subseteq \hat{\cM}$ and hence $\hat{\cM} = \cM$ and therefore $\cM \to \cM'$ factors through $\cN'$.
	
	Given a section-retraction pair $r\circ s = \id$ in $\PrV$, then $r$ is surjective on objects and in particular colimit-dominant. It then follows from~\cref{obs:cancellationcolimdom} that colimit-dominant functors are closed under retracts in the arrow category of $\PrV$. That fully faithful functors are closed under retracts follows by passing to mapping spaces and noting that isomorphisms are closed under retracts in $\Spaces$.
\end{proof}
\begin{obs}
	\label{obs:cdomstab}
	By \HTT{Prop.}{5.2.8.6}, the class of colimit-dominant morphisms is therefore closed under retracts, pushouts along morphisms in $\PrV$ and colimits in $\Arr(\PrV):= \Fun([1], \PrV) $. By \HTT{Lem.}{5.2.8.19} the full inclusion $\Arr^{\mathrm{cdom}}(\PrV) \subseteq \Arr(\PrV)$ of the colimit-dominant functors admits a right adjoint sending $F: \cN \to \cM$ to its corestriction $F: \cN \to \CIm(F)$. 
\end{obs}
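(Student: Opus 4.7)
The plan is to follow the standard template for establishing a factorization system: produce the factorization, verify orthogonality, and then verify closure under retracts. Existence of the factorization is already built in: by \cref{cor:CImV}, any $F:\cN \to \cM$ in $\PrV$ factors as $\cN \to \CIm(F) \to \cM$, where the first map is colimit-dominant by definition of $\CIm$ and the second is the inclusion of a full subcategory, hence fully faithful. So the real content is orthogonality and retract-closure.

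For orthogonality, given a commutative square
\[
\begin{tikzcd}
\cN \arrow[d, "L"'] \arrow[r] & \cN' \arrow[d, "J", hook] \\ \cM \arrow[r, "G"'] & \cM'
\end{tikzcd}
\]
with $L$ colimit-dominant and $J$ fully faithful, I would first observe that $J$ is a monomorphism in $\widehat{\cat}$ (and hence in $\PrV$), so the space of lifts is either empty or contractible; it therefore suffices to construct a factorization of $G$ through $J$. To this end, let $\widetilde{\cM} \subseteq \cM$ be the full subcategory on those $m$ whose image $G(m)$ lies in the essential image of $J$. Because $J$ is a fully faithful morphism in $\PrV$ its essential image is closed under colimits and $\cV$-tensoring in $\cM'$, and because $G$ preserves colimits and tensoring, the preimage $\widetilde{\cM}$ is closed under colimits and tensoring in $\cM$. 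The commutativity of the square says that the full image of $L$ lies in $\widetilde{\cM}$, and colimit-dominance of $L$ together with \cref{cor:CImV} then forces $\widetilde{\cM} = \cM$, giving the desired lift.

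For retract-closure I would argue as follows. If $r \circ s = \id$ in $\PrV$ then $r$ is essentially surjective on objects, so $r$ is trivially colimit-dominant; if $F$ is a retract of a colimit-dominant $G$ in $\Arr(\PrV)$, then writing out the retract diagram and using the cancellation property from \cref{obs:cancellationcolimdom} (applied to the composition $F = r \circ G \circ s'$, viewed on the level of a single functor built out of colimit-dominant pieces) yields colimit-dominance of $F$. For fully faithful functors, closure under retracts is a routine mapping-space computation: fully faithfulness translates into isomorphisms on mapping spaces, and isomorphisms are closed under retracts in $\Spaces$.

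I do not anticipate any serious obstacle — the entire argument is really an instance of the same template used in \cite{monadictower} for $\PrL$, with \cref{cor:CImV} providing exactly the extra input needed to keep everything in $\PrV$. The one subtle point worth spelling out carefully is that a fully faithful map in $\PrV$ has essential image closed under colimits \emph{and} $\cV$-tensoring; this is what lets the lifting argument work in $\PrV$ rather than merely in $\PrL$, and it is immediate from $\cV$-linearity of $J$ together with the fact that colimits in $\cN'$ are computed as in $\cM'$ via $J$.
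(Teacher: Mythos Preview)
Your argument is a correct proof, but of \cref{lem:prfact} (the factorization system on $\PrV$), which in the paper is the proposition \emph{immediately preceding} this observation and which the observation takes as already established. The observation itself carries no separate proof: the word ``therefore'' points back to \cref{lem:prfact}, and the actual claims---closure of colimit-dominant morphisms under retracts, pushouts, and colimits in $\Arr(\PrV)$, and existence of the right adjoint to $\Arr^{\mathrm{cdom}}(\PrV) \hookrightarrow \Arr(\PrV)$---are direct citations of \HTT{Prop.}{5.2.8.6} and \HTT{Lem.}{5.2.8.19}, which are general facts about the left class of any factorization system.

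Your proof of the factorization system matches the paper's proof of \cref{lem:prfact} essentially line for line: the factorization via $\CIm$ from \cref{cor:CImV}, the preimage-subcategory argument for orthogonality, and the same retract-closure argument (retractions are surjective hence colimit-dominant, then cancellation via \cref{obs:cancellationcolimdom}; fully faithful functors via mapping spaces). So there is no mathematical gap---you have simply re-derived the prerequisite rather than invoked it, and you never explicitly state the conclusions that the observation actually records (pushout-stability, colimit-stability in $\Arr$, and the reflector).
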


\begin{cor}\label{cor:cdomreltens} Let $\cV, \cW \in \Alg(\PrL)$ and  $\cB \in \BMod{\cV}{\cW}(\PrL)$. Given a colimit-dominant $F: \cM \to \cN$ in $\PrV$, then $F\otimes_{\cV} \cB : \cM \otimes_{\cV} \cB \to \cN \otimes_{\cV} \cB$ is colimit-dominant. 
\end{cor}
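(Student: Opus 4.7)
The plan is to imitate the argument used for the absolute tensor product in \cref{lem:colimdomtensor}, but replacing the internal hom on $\PrL$ by the relative internal hom $\LinFun(\cB,-):\PrW \to \PrV$ provided by \cref{prop:reltensoradj}. Since $\cB$ carries a $\cV$--$\cW$-bimodule structure, the extension-of-scalars functor $-\otimes_{\cV}\cB:\PrV \to \PrW$ admits a right adjoint, and a morphism $\cM\otimes_{\cV}\cB \to \cN\otimes_{\cV}\cB$ in $\PrW$ is colimit-dominant if and only if it has the left lifting property against every fully faithful morphism in $\PrW$ (this is the content of the factorization system from \cref{lem:prfact} applied to $\PrW$).

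Concretely, I would let $G:\cX \hookrightarrow \cY$ be an arbitrary fully faithful morphism in $\PrW$ and observe that lifts of $F\otimes_{\cV}\cB$ against $G$ correspond under the adjunction of \cref{prop:reltensoradj} to lifts of $F$ against the induced morphism
\[
\LinFun(\cB,G):\LinFun(\cB,\cX) \longrightarrow \LinFun(\cB,\cY)
\]
in $\PrV$. Since $F$ is colimit-dominant in $\PrV$ by assumption, the factorization system from \cref{lem:prfact} applied to $\PrV$ supplies a unique such lift, provided that $\LinFun(\cB,G)$ is fully faithful.

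The key verification is therefore that $\LinFun(\cB,-)$ preserves fully faithful morphisms. This is essentially automatic from the description of $\LinFun(\cB,\cX)$ as a full subcategory of the ordinary functor category $\Fun(\cB,\cX)$ discussed around \cref{prop:LinFuniHom} and \cref{ex:FunV}: post-composition with the fully faithful $G$ induces a fully faithful functor $\Fun(\cB,\cX)\hookrightarrow \Fun(\cB,\cY)$ (since full faithfulness of functor categories can be checked pointwise), and this restricts to a fully faithful functor on the full subcategories of $\cW$-linear colimit-preserving functors. This completes the argument.

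I do not expect a substantive obstacle: once the factorization system of \cref{lem:prfact} and the internal-hom adjunction of \cref{prop:reltensoradj} are in hand, the proof is a direct transcription of \cref{lem:colimdomtensor}. The only step requiring a brief justification is the stability of full faithfulness under $\LinFun(\cB,-)$, which is handled by the observation above. Alternatively, one could give a bar-resolution proof by writing $\cM\otimes_{\cV}\cB$ as the geometric realization $|\cM\otimes\cV^{\otimes\bullet}\otimes\cB|$, noting that each $F\otimes \cV^{\otimes n}\otimes\cB$ is colimit-dominant by iterated application of \cref{lem:colimdomtensor}, and invoking the closure of colimit-dominant morphisms under colimits in $\Arr(\PrW)$ from \cref{obs:cdomstab}.
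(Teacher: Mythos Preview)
Your alternative bar-resolution argument at the end is exactly the paper's proof: express $\cM\otimes_{\cV}\cB$ and $\cN\otimes_{\cV}\cB$ as geometric realizations of the bar construction, apply \cref{lem:colimdomtensor} levelwise, and conclude via closure of colimit-dominant morphisms under colimits in the arrow category from \cref{obs:cdomstab}.

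Your primary approach via the lifting property and the adjunction $-\otimes_{\cV}\cB \dashv \LinFun(\cB,-)$ is a genuinely different and valid route; it is the direct relative analog of the argument in \cref{lem:colimdomtensor}. One small imprecision: $\LinFun(\cB,\cX)$ is a full subcategory of the category $\Fun_{\cW}(\cB,\cX)$ of $\cW$-linear functors, not of $\Fun(\cB,\cX)$ itself, and the forgetful functor $\Fun_{\cW}(\cB,\cX)\to\Fun(\cB,\cX)$ is not fully faithful in general (a $\cW$-linear structure is data, not a property). Your conclusion still stands, however: using the description in \cref{obs:equivalentFunV}, a fully faithful $\cW$-linear $G:\cX\hookrightarrow\cY$ induces a fully faithful functor on the relevant categories of maps between the coCartesian fibrations $\cB^{\otimes}\to\RM^{\otimes}$ and $\cX^{\otimes}\to\RM^{\otimes}$, so $\LinFun(\cB,G)$ is fully faithful. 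Alternatively, the limit presentation of $\LinFun(\cB,\cX)$ given in the proof of \cref{prop:LinFuniHom} reduces this to the fact that $\Fun^{\rL}(-,G)$ is fully faithful. Either way the lifting argument goes through. The bar-construction proof is shorter given the tools already in place; your lifting argument has the advantage of not invoking closure under colimits and parallels \cref{lem:colimdomtensor} more transparently.
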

\begin{proof}
By expressing the relative tensor product via the bar construction as a colimit of tensors in $\PrL$, this follows from~\cref{lem:colimdomtensor} and the fact that colimit-dominant functors are closed under colimits in $\Arr(\PrV)$ from \cref{obs:cdomstab}. 
\end{proof}
\begin{cor}\label{cor:generatingbasechange} Let $\cV \to \cW$ be a morphism in $\Alg(\PrL)$, $\cM \in \RMod_{\cV}(\PrL)$, $C \in \cat$ a small category, $F: C\to \cM$ a functor and assume that the image $F(C)$ generates $\cM$ under colimits and $\cV$-tensoring. Then, the image of $C \to \cM \to \cM \otimes_{\cV} \cW$ generates $\cM \otimes_{\cV} \cW$ under colimits and $\cW$-tensoring. 
\end{cor}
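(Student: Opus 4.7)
The plan is to translate the generation hypothesis into a statement about colimit-dominance, use that colimit-dominant morphisms are stable under relative tensor products, and then translate back.

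By \cref{prop:conservativegen} applied to $F: C \to \cM$ in $\PrV$, the hypothesis that $F(C)$ generates $\cM$ under colimits and $\cV$-tensoring is equivalent to the assertion that the induced $\cV$-linear colimit-preserving functor $\PSh(C) \otimes \cV \to \cM$ is colimit-dominant in $\PrV$.

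Applying \cref{cor:cdomreltens} to the $\cV$-$\cW$-bimodule $\cW$ (regarded in $\BMod{\cV}{\cW}(\PrL)$ via the morphism $\cV \to \cW$), the relative tensor product
\[
(\PSh(C) \otimes \cV) \otimes_{\cV} \cW \longrightarrow \cM \otimes_{\cV} \cW
\]
is colimit-dominant in $\Pr_{\cW}$. Since $(\PSh(C) \otimes \cV) \otimes_{\cV} \cW \simeq \PSh(C) \otimes \cW$, this colimit-dominant functor is precisely the $\cW$-linear colimit-preserving extension of the composite $C \to \cM \to \cM \otimes_{\cV} \cW$.

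Finally, applying \cref{prop:conservativegen} in the other direction, now over $\cW$, we conclude that the image of $C \to \cM \to \cM \otimes_{\cV} \cW$ generates $\cM \otimes_{\cV} \cW$ under colimits and $\cW$-tensoring. No single step is an obstacle; the content of the proof is simply assembling the three cited results in the correct order.
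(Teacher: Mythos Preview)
Your proof is correct and follows exactly the same approach as the paper's: translate the generation hypothesis to colimit-dominance of $\PSh(C)\otimes\cV\to\cM$ via \cref{prop:conservativegen}, apply \cref{cor:cdomreltens} to pass to $-\otimes_{\cV}\cW$, and identify $(\PSh(C)\otimes\cV)\otimes_{\cV}\cW\simeq\PSh(C)\otimes\cW$. The paper's version is slightly terser (it leaves the final application of \cref{prop:conservativegen} implicit), but the argument is the same.
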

\begin{proof} By \cref{prop:conservativegen} (2), equivalently the induced functor $\PSh(C) \otimes \cV \to \cM$ is colimit-dominant. By \cref{cor:cdomreltens}, this implies that also the functor $\PSh(C) \otimes \cW \simeq \PSh(C) \otimes \cV\otimes_{\cV} \cW \to \cM \otimes_{\cV} \cW$ is colimit-dominant. 
\end{proof}
	\subsection{Atomic objects}
	
	If $C$ is a small category, then any representable presheaf is a \emph{completely compact object} in $\PSh(C)$, i.e.\ an object $x\in \PSh(C)$ such that $\Map_{\PSh(C)}(x,-): \PSh(C) \to \Spaces$ preserves small colimits. This almost gives a characterization of representable presheaves internal to $\PSh(C)$: By~\HTT{Prop.}{5.1.6.8}, completely compact objects in $\PSh(C)$ are retracts of representable presheaves and thus the full subcategory of $\PSh(C)$ on the completely compact objects recovers the idempotent completion of $C$. We now study the generalization of these notions to enriched categories.

	\begin{defin}
		An object $m \in \mathcal{M}$ in a presentable $\cV$-module category $\cM \in \PrV$ is called $\cV$-\emph{atomic}\footnote{Atomic objects are sometimes also called `tiny', see e.g \cite{brandenburg2014reflexivity}.} if the functor $\iHom_{\mathcal{M}}(m, -): \mathcal{M} \to \mathcal{V}$
		preserves small colimits and preserves $\cV$-tensoring, in the sense that for all $v \in \cV$ the canonical map
		\begin{equation*}
			\iHom_{\mathcal{M}}(m, -) \otimes v \to \iHom_{\mathcal{M}}(m, - \otimes v)
		\end{equation*}
		adjoint to the evaluation $m \otimes \iHom_{\mathcal{M}}(m, -) \otimes v \to - \otimes v$ is an isomorphism. We denote by $\mathcal{M}^{\mathrm{at}}$ the full subcategory of $\cM$ on the atomic objects.
	\end{defin}
If clear from context, we will simply say atomic instead of $\cV$-atomic. 
	
	\begin{rem}
 		Any object $m \in \cM$ induces a morphism $m\otimes -: \cV \to \cM$ in $\Pr_{\cV} = \RMod_{\cV}(\PrL)$ whose right adjoint $\iHom_{\cM}(m,-): \cM \to \cV$ inherits by ~\HA{Example}{7.3.2.8} a lax $\cV$-linear structure. Atomicity of $m$ precisely asks this lax $\cV$-linear functor $\iHom_{\cM}(m,-)$ to be strongly $\cV$-linear and moreover colimit preserving, i.e.\ to itself be a morphism in $\PrV$. 
	\end{rem}

	
	\begin{lemma}
		\label{prop:atomicdual}
		For $\cV \in \Alg(\PrL)$ and $A\in \Alg(\cV)$, a left $A$-module $M$ is atomic in $\LMod_A(\cV) \in \RMod_{\cV}(\PrL)$ (see~\cref{ex:algebrastoPrV})  if and only if it is right dualizable, i.e.\ there exists a right module $M^{\vee} \in \RMod_A(\cV)$ such that the functor		\[
					M^\vee \otimes_A - : \LMod_A(\mathcal{V}) \to \mathcal{V}
		\]
		is right adjoint to $M \otimes -$.
	\end{lemma}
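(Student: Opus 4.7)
The plan is to prove each direction directly, using the description of the right $\cV$-action on $\LMod_A(\cV)$ from \cref{ex:algebrastoPrV}, namely ${}_A m \otimes v$ for $v \in \cV$.

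For the implication $(\Leftarrow)$, suppose $M$ is right dualizable with right adjoint $M^{\vee} \otimes_A -: \LMod_A(\cV) \to \cV$ to $M \otimes -: \cV \to \LMod_A(\cV)$. The relative tensor product $M^{\vee} \otimes_A -$ is cocontinuous (as a colimit of iterated tensor products, each of which preserves colimits separately in both variables), and it is strongly $\cV$-linear by associativity: $(M^{\vee} \otimes_A N) \otimes v \simeq M^{\vee} \otimes_A (N \otimes v)$. By uniqueness of adjoints, this functor agrees with $\iHom_{\LMod_A(\cV)}(M, -)$, which is therefore cocontinuous and strongly $\cV$-linear; hence $M$ is atomic.

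For the implication $(\Rightarrow)$, suppose $M$ is atomic, so that $H := \iHom_{\LMod_A(\cV)}(M, -): \LMod_A(\cV) \to \cV$ defines a morphism in $\PrV$. Applying \cref{prop:bimoduleihom} to the $A$--$A$-bimodule $A$ viewed as a left $A$-module (so that its internal endomorphism algebra is $A$ by \cref{lem:endounit}), we obtain a canonical right $A$-module structure on $M^{\vee} := H(A) = \iHom_{\LMod_A(\cV)}(M, A) \in \RMod_A(\cV)$. The construction of the relative tensor product then supplies a canonical natural transformation of cocontinuous $\cV$-linear functors
\[
M^{\vee} \otimes_A - \;\To\; H: \LMod_A(\cV) \to \cV
\]
whose evaluation on the free left $A$-module $A$ is an equivalence $M^{\vee} \otimes_A A \simeq M^{\vee} = H(A)$. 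Since every left $A$-module is a colimit of free modules of the form $A \otimes v$ (by the bar resolution, using that $\LMod_A(\cV) \to \cV$ is monadic, \HA{Cor.}{4.7.3.15}), and since both sides are cocontinuous and $\cV$-linear in the input, this natural transformation is an equivalence on objects of the form $A \otimes v$ and hence on all of $\LMod_A(\cV)$. Thus $H \simeq M^{\vee} \otimes_A -$, exhibiting $M^{\vee} \otimes_A -$ as right adjoint to $M \otimes -$, so $M$ is right dualizable.

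The main technical point is the construction of the canonical natural transformation $M^{\vee} \otimes_A - \To H$ and the verification that $H(A)$ carries the right $A$-action making this map $\cV$-linear and compatible with the right $A$-action coming from functoriality of $H$. Both of these are clean consequences of \cref{prop:bimoduleihom} together with the universal property of the relative tensor product, so no genuine obstacle arises once the framework of internal hom bimodules is in place.
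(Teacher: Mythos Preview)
Your argument is correct and follows essentially the same route as the paper. Both directions coincide: for $(\Leftarrow)$ you and the paper both observe that $M^\vee\otimes_A-$ is colimit-preserving and strongly $\cV$-linear; for $(\Rightarrow)$ you both set $M^\vee=\iHom_{\LMod_A(\cV)}(M,A)$ and use atomicity to reduce to free modules. The paper compresses your ``natural transformation plus check on generators'' into the single line
\[
\iHom_{\LMod_A(\cV)}(M,N)\simeq\iHom_{\LMod_A(\cV)}(M,A\otimes_A N)\simeq\iHom_{\LMod_A(\cV)}(M,A)\otimes_A N,
\]
commuting $\iHom(M,-)$ through the bar construction directly; your version makes the same computation but unpacks it by first producing the comparison map and then checking it on $A\otimes v$.

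One minor bookkeeping point: \cref{prop:bimoduleihom} and \cref{lem:endounit} are stated for \emph{left} $V$-module categories and \emph{right} $a$-modules respectively, whereas here $\LMod_A(\cV)$ is a \emph{right} $\cV$-module category. The obvious opposite-handed analogs hold and give what you need (in particular $\iEnd_{\LMod_A(\cV)}({}_AA)\simeq A$ as algebras), so this is only a matter of citing ``the right-module analog of'' those results rather than the results themselves. With that caveat, nothing is missing.
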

	\begin{proof}
		If $M$ is dualizable, it follows by definition that $M^{\vee} \otimes_A - \simeq \iHom_{\LMod_A(\cV)}(M,-)$, and hence preserves $\cV$-tensoring and colimits. Conversely, if $M$ is atomic, 
	\[ \iHom_{\LMod_A(\cV)} (M, N) \simeq \iHom_{\LMod_A(\cV)}(M, A \otimes_A N) \simeq \iHom_{\LMod_A(\cV)}(M, A) \otimes_A N \]
	where the last equivalence uses atomicity to commute $\iHom_{\LMod_A(\cV)}(M,-)$ with bar constructions. This exhibits $\iHom_{\LMod_A(\cV)}(M,-)$ as the right dual of $M$. 
	\end{proof}
	
	Important examples of presentably symmetric monoidal categories are given by \emph{modes} \HA{before Prop.}{4.8.2.11}, \cite[Def. 5.2.4]{carmeli2021ambidexterity}; by definition, these are unital idempotent objects in $\PrL$, i.e.\ presentable categories $\cE$ equipped with a left adjoint functor $\Spaces \to \cE$ (equivalently, a choice of object $1_{\cE} \in \cE$)  such that the induced functor $\cE \simeq \cE \otimes \Spaces \to \cE \otimes \cE$ is an equivalence. By \HA{Prop.}{4.8.2.9}, this induces a commutative algebra structure on $\cE$ with unit $1_{\cE}$  and for which the forgetful functor $\Mod_E(\PrL) \to \PrL$  is fully faithful and hence exhibits $\Mod_{\cE}(\PrL)$ as a reflective localization of $\PrL$ equivalent to the full image of the functor $\cE \otimes -: \PrL \to \PrL$, see~\HA{Prop.}{4.8.2.10}.

\begin{prop}[{\cite[Rem. 2.5]{redshift}}]\label{prop:mode} Let $\cE$ be a mode, i.e.\ a unital idempotent in $\PrL$ and $\cM\in \Pr_{\cE} \subseteq \PrL$. Then, $m \in \cM$ is atomic if and only if $\iHom_{\cM}(m,-): \cM \to \cE$ preserves colimits.  
\end{prop}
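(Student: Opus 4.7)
The ``only if'' direction is immediate from the definition of $\cE$-atomicity. For the converse, suppose $\phi := \iHom_{\cM}(m, -): \cM \to \cE$ preserves colimits; we must show that the canonical lax $\cE$-linear structure $\alpha_{x,v}: \phi(x) \otimes v \to \phi(x \otimes v)$ arising from the adjunction $(-) \otimes m \dashv \phi$ via \cref{obs:adjunctioninthom} is an isomorphism for all $x \in \cM$ and $v \in \cE$.

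The main input is the following consequence of the mode property: for any $\cE$-module $\cD \in \Pr_{\cE}$, evaluation at the unit induces an equivalence $\ev_{1_{\cE}}: \Fun^{\mathrm{L}}(\cE, \cD) \xrightarrow{\sim} \cD$ in $\PrL$. Indeed, combining the universal property of $\Fun^{\mathrm{L}}$, the fully faithful reflection $\Pr_{\cE} \hookrightarrow \PrL$ with reflector $\cE \otimes -$ (\HA{Prop.}{4.8.2.10}), symmetry of the tensor product in $\PrL$, and the defining mode identity $\cE \otimes \cE \simeq \cE$, one obtains natural equivalences
\[
\Map_{\PrL}(\cC, \Fun^{\mathrm{L}}(\cE, \cD)) \simeq \Map_{\PrL}(\cC \otimes \cE, \cD) \simeq \Map_{\Pr_{\cE}}(\cE \otimes \cC \otimes \cE, \cD) \simeq \Map_{\PrL}(\cC, \cD)
\]
for every $\cC \in \PrL$. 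The Yoneda lemma then yields $\Fun^{\mathrm{L}}(\cE, \cD) \simeq \cD$, and tracing the identity identifies the equivalence with $\ev_{1_{\cE}}$.

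Now fix $x \in \cM$. Both $\phi(x) \otimes (-): \cE \to \cE$ and $\phi(x \otimes -): \cE \to \cE$ are colimit-preserving (using cocontinuity of $\phi$ and of the $\cE$-action on $\cM$), so $\alpha_{x, -}$ defines a morphism in $\Fun^{\mathrm{L}}(\cE, \cE)$. By the previous paragraph, $\alpha_{x, -}$ is invertible if and only if $\alpha_{x, 1_{\cE}}: \phi(x) \otimes 1_{\cE} \to \phi(x \otimes 1_{\cE})$ is invertible. Unwinding the construction, $\alpha_{x, 1_{\cE}}$ is adjunct to the composite $\phi(x) \otimes 1_{\cE} \otimes m \simeq \phi(x) \otimes m \xrightarrow{\epsilon} x \simeq x \otimes 1_{\cE}$ (pure counit, up to unitors), and hence coincides with the identity on $\phi(x)$ via the unitor isomorphisms. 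Thus $\alpha_{x,v}$ is an isomorphism for all $v$, showing that $m$ is $\cE$-atomic.

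The principal obstacle is the identification $\Fun^{\mathrm{L}}(\cE, \cD) \simeq \cD$ via $\ev_{1_{\cE}}$, which hinges on the reflective localization structure \HA{Prop.}{4.8.2.10} and the mode property $\cE \otimes \cE \simeq \cE$. Once this reduction is in place, the remainder of the argument is a routine unwinding of the lax structure at the unit.
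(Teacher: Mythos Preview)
Your proof is correct. The paper's argument is a one-liner: since $\Pr_{\cE} \hookrightarrow \PrL$ is a full subcategory, a colimit-preserving functor $\cM \to \cE$ between objects of $\Pr_{\cE}$ is automatically a morphism in $\Pr_{\cE}$, i.e.\ (strongly) $\cE$-linear; hence colimit-preservation alone suffices for atomicity. Your approach instead fixes $x$ and views the lax comparison $\alpha_{x,-}$ as a morphism in $\Fun^{\rL}(\cE,\cE)$, then uses the mode identity $\Fun^{\rL}(\cE,\cE)\simeq \cE$ (via $\ev_{1_{\cE}}$) to reduce to the unit, where the comparison is visibly the unitor.

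These are two packagings of the same idempotency phenomenon. The paper's version is more conceptual but leaves implicit the step that full faithfulness of $\Pr_{\cE}\hookrightarrow\PrL$ forces the \emph{given} lax $\cE$-structure on $\iHom_{\cM}(m,-)$ (rather than merely \emph{some} $\cE$-structure) to be strong; your argument makes exactly this step explicit by checking invertibility of $\alpha$ directly. What your route buys is self-containedness: a reader who has not internalised that ``module structures over an idempotent are property-like down to the level of lax morphisms'' sees why. What the paper's route buys is brevity and a reusable slogan. Both rest on the same input (\HA{Prop.}{4.8.2.10} and $\cE\otimes\cE\simeq\cE$), so neither is more general than the other.
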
	
\begin{proof}
By definition, $m$ is atomic if $\iHom_{\cM}(m,-): \cM \to \cE$ is a morphism in $\Pr_{\cE}$. But since $\Pr_{\cE}$ is a full subcategory of $\PrL$, it suffices that $\iHom_{\cM}(m,-): \cM \to \cE$ preserves colimits. 
\end{proof}
In other words, if $\cE$ is a mode, $m \in \cM \in \Pr_{\cE}$ and $\iHom_{\cM}(m,-)$ preserves colimits, it is automatically compatible with tensoring.

\begin{ex}
Using \cref{prop:mode},  we can collect many examples of atomic objects: 
\begin{itemize}
\item  $\cV=\Spaces$, the initial algebra in $\PrL$. Then, an $m \in \cM \in \PrL$ is atomic if and only if it is completely compact~ \HTT{Def.}{5.1.6.2}, i.e.\ $\Map_{\cM}(m,-): \cM \to \Spaces$ preserves colimits. 
\item Let $\cV= \Spaces_{\leq m}$ be the mode of $m$-truncated spaces, classifying by  \HA{Prop.}{4.8.2.15}  the category $\Pr_{\Spaces_{\leq m}}$ of presentable $(m+1, 1)$-categories. Since for $\cM \in \Pr_{\Spaces_{\leq m}}$, the inner hom is given by the mapping space, it follows that $m\in \cM$ is atomic if  $\Map_{\cM}(m,-): \cM \to \Spaces_{\leq m}$ preserves colimits. (Note that since $\Spaces_{\leq m} \to \Spaces$ does not preserve colimits, this condition is weaker than being completely compact.)
\item Let $\cV=\Spaces_*$ be the mode of pointed spaces, classifying by~\HA{Prop.}{4.8.2.11} the category $\Pr_{\mathrm{zero}}:= \Pr_{\Spaces_*}$ of pointed presentable categories (also known as categories with a zero object). Then, for $\cM \in \Pr_{\mathrm{zero}}$, an object $m \in \cM$ is atomic if and only if the mapping space functor $\Map_{\cM}(m,-): \cM \to \Spaces$ preserves weakly contractible colimits, i.e.\ colimits whose indexing category has contractible geometric realization. Indeed, as a right adjoint, $\Map_{\cM}(m,-): \cM \to \Spaces_*$ preserves the terminal and hence by pointedness the initial object. Since colimits are generated by initial objects and weakly contractible colimits\footnote{Given a colimit diagram $p: K \to C$, if $C$ admits an initial object $\emptyset$ we can always uniquely extend to a diagram $p': K^\triangleleft \to C$ sending the cone point to $\emptyset$. Its colimit agrees with that of $p$, but $K^\triangleleft$ is weakly contractible.}, it follows that $m$ is atomic if and only if $\Map_{\cM}(m,-): \cM \to \Spaces_*$ preserves weakly contractible colimits. Since $\Spaces_* \to \Spaces$ is conservative and preserves weakly contractible colimits by \cref{lem:weaklycontrslice}, the claimed statement follows. 
\item Let $\cV= \Sp$ be the mode of spectra, classifying by \HA{Prop.}{4.8.2.18}  the category $\Pr_{\mathrm{st}}:= \Pr_{\Sp}$ of stable presentable categories. Then, for $\cM \in \Pr_{\mathrm{st}}$ an object $m \in \cM$ is atomic if it is compact, see~\cite[Prop.\ 2.8]{redshift}.
\item Let $\cV=\Spcn$ be the mode of connective spectra, classifying by~\SAG{Cor.}{C.4.1.3} the category $\Pr_{\mathrm{add}}=\Pr_{\Spcn}$ of additive presentable categories. Then, for $\cM \in \Pr_{\mathrm{add}}$ an object $m \in \cM$ is atomic if it is \emph{compact projective}, i.e.\ if $\Map_{\cM}(m, -): \cM \to \Spaces$ preserves sifted colimits. Indeed, $\iHom_{\cM}(m,-): \cM \to \Spcn$ preserves finite products and hence by additivity finite coproducts. Thus, $m$ is atomic if and only if $\iHom_{\cM}(m,-): \cM \to \Spcn$ preserves sifted colimits which is equivalent to $\Map_{\cM}(m,-): \cM \to \Spaces$ preserving sifted colimits since $\Omega^{\infty}: \Spcn\to \Spaces$ preserves sifted colimits ~\HA{Prop.}{1.4.3.9}  and is conservative.  \end{itemize}
\end{ex}

	\begin{prop}[{\cite[Prop.\ 5.9]{ben2024naturality}}]
		\label{prop:atomicarecpt}
 		If $\kappa$ is a regular cardinal such that the unit $1_\mathcal{V} \in \mathcal{V}$ is $\kappa$-compact, then every atomic object in $\mathcal{M} \in \PrV$ is also $\kappa$-compact. In particular, the full subcategory $\mathcal{M}^{\mathrm{at}} \subseteq \mathcal{M}$ on the atomic objects is always small.
	\end{prop}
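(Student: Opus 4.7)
The plan is to use the relation between the underlying mapping space in $\cM$ and the internal hom in $\cV$, namely \cref{obs:iHomunderlying}, which gives a natural equivalence
\[
\Map_\cM(m,-) \simeq \Map_\cV(1_\cV, \iHom_\cM(m,-)) \, .
\]
If $m \in \cM$ is atomic, then by definition $\iHom_\cM(m,-) \colon \cM \to \cV$ preserves all small colimits. If moreover $1_\cV$ is $\kappa$-compact in $\cV$, then $\Map_\cV(1_\cV, -) \colon \cV \to \Spaces$ preserves $\kappa$-filtered colimits. Composing, the functor $\Map_\cM(m,-) \colon \cM \to \Spaces$ preserves $\kappa$-filtered colimits, which is precisely the statement that $m$ is $\kappa$-compact.

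For the smallness statement, since $\cV$ is presentable and hence accessible, the unit $1_\cV$ is $\kappa$-compact for some regular cardinal $\kappa$. Applying the first part of the proposition for this $\kappa$ gives an inclusion $\cM^{\mathrm{at}} \subseteq \cM^\kappa$ of the atomic objects into the $\kappa$-compact objects. Since $\cM$ is presentable (hence accessible for all sufficiently large cardinals, in particular for a cardinal which may be taken larger than $\kappa$; alternatively, $\cM^\kappa$ is essentially small whenever $\cM$ is $\kappa$-accessible, which we may assume after enlarging $\kappa$), the subcategory $\cM^\kappa$ is essentially small, and hence so is $\cM^{\mathrm{at}}$.

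The argument is almost entirely formal once one recognizes the factorization of $\Map_\cM(m,-)$ through $\iHom_\cM(m,-)$; there is no real obstacle. The only mild care is in the smallness step, where one has to pick a single $\kappa$ which simultaneously makes $1_\cV$ compact in $\cV$ and makes $\cM$ accessible so that the $\kappa$-compact objects form a small subcategory.
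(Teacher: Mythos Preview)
Your proof is correct and takes essentially the same approach as the paper: factor $\Map_\cM(m,-)$ as $\Map_\cV(1_\cV,-) \circ \iHom_\cM(m,-)$ and observe each factor preserves $\kappa$-filtered colimits. For the smallness step the paper simply invokes \HTT{Prop.}{5.4.2.2}, which says that for any accessible category and any regular cardinal $\kappa$ the $\kappa$-compacts are essentially small, so your hedging about enlarging $\kappa$ is unnecessary (though harmless).
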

	\begin{proof}
		For $m \in \mathcal{M}^{\mathrm{at}}$, the functor
		\begin{equation*}
			\Map_{\mathcal{M}}(m , -) = \Map_{\mathcal{V}}(1_{\mathcal{V}}, \iHom_{\mathcal{M}}(m, -))
		\end{equation*}
		is the composition of two functors that preserve $\kappa$-filtered colimits, making $m$ a $\kappa$-compact object as well. So $\mathcal{M}^{\mathrm{at}} \subseteq \mathcal{M}^{\kcpt}$ the full subcategory of $\kappa$-compact objects, which is small by \HTT{Prop.}{5.4.2.2}.
	\end{proof}

	\begin{defin}
		If the full subcategory $\mathcal{M}^{\mathrm{at}} \subseteq \mathcal{M}$ generates $\cM$ under colimits and tensoring, we call $\mathcal{M}$ \emph{atomically generated}\footnote{Atomically generated categories are also called \emph{molecular}~\cite{ben2024naturality} and \emph{tinily-spanned}~\cite{brandenburg2014reflexivity}.}. We let $\PrVag$ denote the full subcategory of $\PrV$ on the atomically-generated categories.
	\end{defin}
	
	\begin{rem}\label{rem:atomicgenerator}
	By \cref{prop:atomicarecpt}, $\cM$ is atomically generated if and only if there exists a small set of atomic objects which generate $\cM$ under colimits and tensoring. We call such a set a \emph{set of atomic generators}. \end{rem}

	\subsection{Internally left adjoint functors}

	Recall that an object $m \in \cM$ is defined to be atomic if the functor $m \otimes - : \cV \to \cM$ in $\PrV$ admits a right adjoint $\iHom_\cM(m, -)$ in $\PrV$. This can immediately be generalized to other functors $F: \cN\to \cM$ in $\PrV$:
	
	\begin{defin}
		\label{defin:iL}
	A functor $F: \cN \to \cM$ in $\PrV$ is \emph{internally left adjoint} if its right adjoint $F^\rR: \cM \to \cN$ preserves colimits and if for $m\in \cM, v\in \cV$ the morphism 
	\[F^\rR(m) \otimes v \to F^\rR(m \otimes v) 
	\]
	adjoint to $F(F^\rR m \otimes v) \simeq FF^\rR(m) \otimes v \stackrel{\ev_F \otimes \id_v}{\longrightarrow} m \otimes v$ is an isomorphism. 
	
	In this case, we say that $F \dashv F^\rR$ is an \emph{adjunction internal to $\PrV$} and denote by $\Pr^{\mathrm{iL}}_{\mathcal{V}}$ the subcategory of $\PrV$ on all objects and the internally left adjoint functors.
	\end{defin}


	\begin{rem} A functor $F: \cN \to \cM$ in $\PrV$ has, as a functor, by definition a right adjoint $F^\rR: \cM \to \cN$ which carries by \HA{Cor.}{7.3.2.7} a lax $\cV$-linear structure. Therefore, an $F: \cN \to \cM$ in $\PrV$ is internally left adjoint precisely if this right adjoint $F^\rR$ preserves colimits (and hence admits a further right adjoint) and this lax linear structure is strong, i.e.\ if $F^\rR$ itself defines a morphism in $\PrV$. The terminology alludes to the fact that the internally left adjoints are precisely the adjunctions internal to the 2-category $\IPrV$, see  \cite[\S~4]{ben2024naturality} for a more thorough discussion.
		\end{rem}

	\begin{obs}
		\label{obs:iLandatomic}
		The unique morphism $\cV \to \cM$ in $\PrV$ sending $1_\cV \mapsto m$ (and therefore $v \mapsto m \otimes v$) is internally left adjoint if and only if $m$ is atomic.
	\end{obs}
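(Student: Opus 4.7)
The plan is to directly unpack Definition~\ref{defin:iL} applied to the functor $F_m := m \otimes -\colon \cV \to \cM$ and compare it with the definition of an atomic object, showing they match term by term.

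First I would identify the right adjoint of $F_m$. Since $\cV$ and $\cM$ are presentable and the tensor action preserves colimits in each variable, $F_m$ is a left adjoint in $\PrL$, and by the defining universal property of the internal hom (Definition~\ref{reminder:iHom}, together with Example~\ref{ex:presentableihom}) its right adjoint is precisely $\iHom_\cM(m, -)\colon \cM \to \cV$. Moreover, unwinding the construction of the unique extension $\cV \to \cM$ in $\PrV$ sending $1_\cV \mapsto m$ (which exists since $\cV$ is the free $\cV$-module on a point), this functor is indeed given by $v \mapsto m \otimes v$.

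Now Definition~\ref{defin:iL} requires two things for $F_m$ to be internally left adjoint: (i) the right adjoint $F_m^\rR = \iHom_\cM(m, -)$ preserves small colimits; and (ii) for all $n \in \cM$ and $v \in \cV$, the canonical comparison morphism
\[
	\iHom_\cM(m, n) \otimes v \longrightarrow \iHom_\cM(m, n \otimes v)
\]
(adjoint to the evaluation $m \otimes \iHom_\cM(m, n) \otimes v \to n \otimes v$) is an isomorphism. But these are precisely the two conditions in the definition of $m \in \cM$ being $\cV$-atomic. Hence $F_m$ is internally left adjoint if and only if $m$ is atomic.

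There is no real obstacle here — the statement is essentially a tautology once the right adjoint is identified with the internal hom, which is an instance of Example~\ref{ex:presentableihom}. The only thing to double check is that the comparison map appearing in the definition of atomic agrees (up to swapping the order of arguments) with the Beck--Chevalley map that appears in Definition~\ref{defin:iL}; both are mates of the same evaluation morphism, so they coincide.
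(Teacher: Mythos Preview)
Your proposal is correct and matches the paper's treatment: the paper states this as an observation without proof, since it is an immediate unpacking of Definition~\ref{defin:iL} against the definition of atomic (the remark immediately preceding the definition of atomic already identifies $\iHom_\cM(m,-)$ as the right adjoint and notes that atomicity is exactly the condition that it lies in $\PrV$). Your write-up simply makes this explicit.
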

	
	\begin{obs}
		\label{obs:iLpresatomic}
		Internally left adjoint functors in $\PrV$ are closed under composition. In particular, combined with \cref{obs:iLandatomic}, internally left adjoint functors preserve atomic objects; and we will next show a partial converse.
	\end{obs}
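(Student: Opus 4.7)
The plan is to first establish the composition closure in the definition, and then derive preservation of atomic objects as a corollary via \cref{obs:iLandatomic}.

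For closure under composition, suppose $G : \cP \to \cN$ and $F : \cN \to \cM$ are internally left adjoint morphisms in $\PrV$. The right adjoint of $F \circ G$ is $G^\rR \circ F^\rR$, which preserves colimits since both factors do by hypothesis. It remains to verify that for every $m \in \cM$ and $v \in \cV$, the canonical comparison map
\[
G^\rR F^\rR(m) \otimes v \to G^\rR F^\rR(m \otimes v)
\]
is an isomorphism. The natural thing to do is to factor this map as the composite
\[
G^\rR F^\rR(m) \otimes v \xrightarrow{\;\sim\;} G^\rR\bigl(F^\rR(m) \otimes v\bigr) \xrightarrow{\;\sim\;} G^\rR F^\rR(m \otimes v),
\]
where the first arrow is an isomorphism because $G$ is internally left adjoint (applied to $F^\rR(m) \in \cN$) and the second is obtained by applying $G^\rR$ to the isomorphism witnessing that $F$ is internally left adjoint. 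The only subtlety is checking that this factorization really agrees with the canonical comparison map for $F \circ G$; this is a straightforward mate calculation using the triangle identities and the fact that the canonical comparison is natural in $m$ and in the variable functor.

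For preservation of atomic objects, I combine closure under composition with \cref{obs:iLandatomic}. Given $F : \cN \to \cM$ internally left adjoint and an atomic object $n \in \cN$, \cref{obs:iLandatomic} tells us that the morphism $n \otimes - : \cV \to \cN$ in $\PrV$ is internally left adjoint. The composite
\[
\cV \xrightarrow{\,n \otimes -\,} \cN \xrightarrow{\,F\,} \cM
\]
is then internally left adjoint by what we just proved. Using $\cV$-linearity of $F$ to identify this composite with $Fn \otimes - : \cV \to \cM$, we conclude from \cref{obs:iLandatomic} again that $Fn$ is atomic.

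There is no real obstacle here — the content is formal once the definitions of internal left adjoint and atomic object are unwound. The only point requiring any care is the identification of the composite comparison map with the canonical one for $F \circ G$, but this is a routine mate/adjunction argument.
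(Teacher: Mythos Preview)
Your argument is correct and is exactly the standard unwinding that the paper leaves implicit: the paper states this as an observation without proof, and your proof supplies precisely the routine details (composition of right adjoints preserves colimits and tensoring, then apply \cref{obs:iLandatomic} twice) that make the observation immediate.
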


	\begin{prop}
		\label{prop:iLpreserve}
		Given a morphism $F: \cN \to \cM$ in $\PrV$ and assume that $\cN$ is atomically generated. Then, the following are equivalent: 
		\begin{enumerate}[(1)]
		\item $F$ is internally left adjoint;
		\item $F$ preserves atomic objects;
		\item If $\cN_0 \subseteq \cN^{\mathrm{at}}$ is a  full subcategory which generates $\cN$ under colimits and $\cV$-tensoring, then $F(\cN_0) \subseteq \cM^{\mathrm{at}}$. 
		\end{enumerate}
		\end{prop}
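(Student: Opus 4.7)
The implications $(1) \Rightarrow (2)$ and $(2) \Rightarrow (3)$ are essentially free: the first is recorded already in \cref{obs:iLpresatomic} (an atomic $n \in \cN$ corresponds via \cref{obs:iLandatomic} to an internally left adjoint $\cV \to \cN$, which composed with $F$ remains internally left adjoint), and the second is a tautology since $\cN_0 \subseteq \cN^{\mathrm{at}}$. All of the work is in $(3) \Rightarrow (1)$.

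For $(3) \Rightarrow (1)$, the plan is as follows. The right adjoint $F^{\rR}: \cM \to \cN$ exists and carries a canonical lax $\cV$-linear structure by \HA{Cor.}{7.3.2.7}, so we need to upgrade it to a morphism of $\PrV$, i.e.\ verify that (a) $F^{\rR}$ preserves small colimits, and (b) the lax structure maps $F^{\rR}(m) \otimes v \to F^{\rR}(m \otimes v)$ are isomorphisms for all $m \in \cM$, $v \in \cV$. The central observation is that for any $n \in \cN_0$ the adjunction identity of \cref{obs:adjunctioninthom} gives a natural isomorphism
\[
\iHom_{\cN}(n, F^{\rR}(-)) \;\simeq\; \iHom_{\cM}(F n, -),
\]
and by hypothesis (3) \emph{both sides} of this equation preserve small colimits and $\cV$-tensoring, since $n$ and $Fn$ are atomic.

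Given this, I would test (a) and (b) against the family $\{\iHom_{\cN}(n,-)\}_{n \in \cN_0}$, which is jointly conservative: the full subcategory $\cN_0 \subseteq \cN$ generates $\cN$ under colimits and $\cV$-tensoring (by \cref{lem:imagetens}, generation under colimits suffices), so \cref{prop:conservativegen}(4) applies. Explicitly, for a diagram $m_{\bullet}: K \to \cM$, applying $\iHom_{\cN}(n,-)$ to the comparison map $\colim F^{\rR} m_{\bullet} \to F^{\rR} \colim m_{\bullet}$ yields, using atomicity of $n$ on the source and the displayed identity on both sides, the canonical isomorphism
\[
\colim \iHom_{\cM}(Fn, m_{\bullet}) \;\xrightarrow{\sim}\; \iHom_{\cM}(Fn, \colim m_{\bullet}),
\]
which is an iso since $Fn$ is atomic by~(3); joint conservativity then forces the original comparison to be an iso. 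An entirely analogous calculation handles the tensoring map: $\iHom_{\cN}(n, F^{\rR}(m) \otimes v)$ unfolds to $\iHom_{\cM}(Fn, m) \otimes v$ using atomicity of $n$, and $\iHom_{\cN}(n, F^{\rR}(m \otimes v)) \simeq \iHom_{\cM}(Fn, m \otimes v)$ unfolds to the same expression using atomicity of $Fn$, so joint conservativity gives (b).

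The only delicate point is confirming that the two \emph{a priori} isomorphisms obtained from atomicity of $n$ and of $Fn$ really do identify the comparison map $\iHom_{\cN}(n, -)$ applied to the natural transformation in question with the identity; this is a compatibility of mates that I expect to be the main obstacle to a fully rigorous write-up, and which I would handle by realizing both sides as the same canonical map of representable presheaves via the Yoneda embedding of $\cV$ and tracing adjunction units. Once that compatibility is in place, joint conservativity of $\{\iHom_{\cN}(n,-)\}_{n \in \cN_0}$ closes the argument.
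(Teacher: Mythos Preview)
Your proof is correct and takes essentially the same approach as the paper: both prove $(3)\Rightarrow(1)$ by testing the comparison maps for $F^{\rR}$ against $\iHom_{\cN}(n,-)$ for $n\in\cN_0$, using atomicity of $n$ and of $Fn$ together with the adjunction identity $\iHom_{\cN}(n,F^{\rR}(-))\simeq\iHom_{\cM}(Fn,-)$. The only cosmetic difference is that you conclude via joint conservativity of this family (\cref{prop:conservativegen}), whereas the paper instead shows the full subcategory of $n$'s satisfying the test is closed under colimits and tensoring, hence equals $\cN$, and then applies Yoneda; your mate-compatibility concern is handled by naturality of the adjunction isomorphism and is not discussed in the paper either.
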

		\begin{proof}
		Internally left adjoint functors preserve atomic objects by \cref{obs:iLpresatomic}, thus (1) implies (2). Clearly, (2) implies (3). We now prove that (3) implies (1). Consider the full subcategory $\widetilde{\cN} \subseteq \cN$ on those objects $n \in \cN$ such that for all diagrams $I \to \cM$, and all objects $m \in \cM$ and $v\in \cV$ the morphisms
		\[ \iHom_{\cN} (n, \colim_i F^\rR m_i) \to  \iHom_{\cN}(n, F^\rR \colim_i m_i)  
		\]
		\[\iHom_{\cN}(n, F^\rR m \otimes v) \to \iHom_{\cN}(n, F^\rR (m \otimes v))
		\]
		are isomorphisms in $\cV$. Then, since $\cN_0 \subseteq \cN^{\mathrm{at}}$ and $F(\cN_0) \subseteq \cM^{\mathrm{at}}$ it follows that $\cN_0 \subseteq \widetilde{\cN}$. Moreover, $\widetilde{\cN}$ is closed under colimits and tensoring with objects in $\cV$. Thus, since $\cN_0$ is $\cV$-generating, it follows that $\widetilde{\cN} = \cN$. By Yoneda, it then follows that $F^\rR$ preserves colimits and tensoring and thus $F$ is internally left adjoint. 
		\end{proof}

		\begin{lemma}
		\label{lem:fflemma}
		A morphism $F: \cN \to \cM$ in $\PrV$ is fully faithful if and only if for all $m \in \cM$, the induced morphism 
		\begin{equation*}
			F: \iHom_{\mathcal{N}}(m, m') \overset{}{\to} \iHom_{\mathcal{M}} (F(m), F(m'))
		\end{equation*}
		 is an isomorphism in $\mathcal{V}$. 
		
		If $\cN$ is atomically generated by $\cN_0 \subseteq \cN$ and $F$ is internally left adjoint, then it suffices to verify this condition for $m, m' \in \cN_0$.
	\end{lemma}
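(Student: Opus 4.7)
The plan has two parts: a Yoneda argument for the biconditional, and a two-step bootstrapping from $\cN_0$-generators for the refinement. For the biconditional, since $F \colon \cN \to \cM$ is strongly $\cV$-linear (being a morphism in $\PrV$) we have $F(m \otimes v) \simeq F(m) \otimes v$, and the defining universal property of the internal hom gives a chain
\[
\Map_\cV(v, \iHom_\cN(m,m')) \simeq \Map_\cN(m \otimes v, m') \overset{F}{\longrightarrow} \Map_\cM(F(m) \otimes v, F(m')) \simeq \Map_\cV(v, \iHom_\cM(F(m), F(m')))
\]
natural in $v \in \cV$. If $F$ is fully faithful the middle arrow is an isomorphism for every $v$, so Yoneda in $\cV$ yields that the induced map on internal homs is an iso. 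Conversely, specializing $v = 1_\cV$ and invoking \cref{obs:iHomunderlying} recovers isomorphisms on underlying mapping spaces.

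For the refined statement, I plan to propagate the isomorphism in two successive steps. By \cref{prop:iLpreserve}, since $F$ is internally left adjoint it preserves atomic objects, so for each $n \in \cN_0$ the functors $\iHom_\cN(n,-)$ and $\iHom_\cM(F(n),-)$ are themselves morphisms in $\PrV$, i.e.\ cocontinuous and $\cV$-tensor-preserving. Fix $m \in \cN_0$ and let $\widetilde{\cN}_m \subseteq \cN$ denote the full subcategory on those $m'$ for which the comparison map $\iHom_\cN(m,m') \to \iHom_\cM(F(m), F(m'))$ is an iso. Because both sides are colimit- and $\cV$-tensor-preserving functors of $m'$ (using cocontinuity and $\cV$-linearity of $F$ together with atomicity of $m$ and $F(m)$), $\widetilde{\cN}_m$ is closed under colimits and $\cV$-tensoring, hence, containing $\cN_0$ by hypothesis, equals $\cN$.

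Now fix any $m' \in \cN$ and let $\widetilde{\cN}^{m'} \subseteq \cN$ denote the full subcategory on those $m$ for which the same comparison is an iso. The functors $\iHom_\cN(-,m')$ and $\iHom_\cM(F(-), F(m'))$ both send colimits in $\cN$ to limits in $\cV$ (the latter via cocontinuity of $F$), and turn $\cV$-tensors in their first argument into $\cV$-internal homs by tensor-hom adjunction (using $\cV$-linearity of $F$). Hence $\widetilde{\cN}^{m'}$ is also closed under colimits and $\cV$-tensoring; since it contains $\cN_0$ by the previous step, it equals $\cN$, concluding the argument. The main subtlety is the order of the two propagation steps: one must first use the atomicity of $m \in \cN_0$ to extend over all $m' \in \cN$, because only with $m$ atomic are both sides of the comparison \emph{covariantly} cocontinuous and tensor-preserving in $m'$; only after this step is it legitimate to vary $m$, where the arguments on the two sides are automatically limit-preserving in $m$ regardless of atomicity.
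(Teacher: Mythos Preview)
Your proof is correct and follows essentially the same approach as the paper: a Yoneda argument for the biconditional, followed by the same two-step bootstrap (first extending over $m'$ using atomicity of $m$ and $F(m)$, then over $m$ using that internal homs send colimits in the first variable to limits). Your write-up is in fact more explicit than the paper's, particularly in the first part and in articulating why the order of the two propagation steps matters.
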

	\begin{proof}
		The first statement follows immediately from the universal property of $\iHom$. 
		For the second statement, let $n_0 \in \cN_0$ and note that the full subcategory of those $n \in \cN$ for which $\iHom_{\cN}(n_0, n) \to \iHom_{\cM}(Fn_0, Fn)$ is an isomorphism contains $\cN_0$ by assumption and is closed under colimits and $\cV$-tensoring since $m_0$ and $Fm_0$ are atomics since $F$ preserves atomics by \cref{prop:iLpreserve}. Since $\cN$ is atomically generated by $\cN_0$, this full subcategory is all of $\cN$ and hence $\iHom_{\cN}(n_0, n) \to \iHom_{\cM}(Fn_0, Fn')$ is an isomorphism for all $n_0 \in \cN_0$ and $n\in \cN$. Now for a fixed $n\in \cN$ consider the full subcategory of those $\widetilde{n} \in \cN$ for which $\iHom_{\cN}(\widetilde{n}, n) \to \iHom_{\cM}(F\widetilde{n}, Fn)$ is an isomorphism. By the above, this contains $\cN_0$ and is closed under colimits and tensoring since $F\in \PrV$.
	\end{proof}

\begin{prop}\label{lem:iLunder}
Given a commuting diagram in $\PrV$
\[\begin{tikzcd}
&\arrow[dl, "\text{iL, cdom}"'] \arrow[dr, " \text{iL}"] \cX& \\
\cN \arrow[rr, "F"] && \cM
\end{tikzcd}
\]
where the morphisms are internal left adjoint and colimit-dominant as indicated. Then, $F$ is also internally left adjoint.
\end{prop}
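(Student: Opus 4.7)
The plan is to pass to right adjoints everywhere and exploit conservativity. Denote the colimit-dominant internally-left-adjoint functor by $p \colon \cX \to \cN$ and the other internally-left-adjoint functor by $q \colon \cX \to \cM$, so that $q \simeq F \circ p$. Taking right adjoints (which exist by the adjoint functor theorem) produces an equivalence $p^\rR \circ F^\rR \simeq q^\rR \colon \cM \to \cX$. The hypotheses directly provide three inputs: since $p$ and $q$ are internally left adjoint, $p^\rR$ and $q^\rR$ preserve small colimits and are strongly $\cV$-linear; and since $p$ is colimit-dominant, \cref{lem:colimit-dominant-spaces} gives that $p^\rR$ is conservative.

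To show $F$ is internally left adjoint I must verify that $F^\rR$ preserves small colimits and that its canonical lax $\cV$-linear structure (provided by \HA{Cor.}{7.3.2.7}) is strong. For colimit-preservation, let $\phi \colon I \to \cM$ be a small diagram and consider the canonical comparison morphism $\alpha \colon \colim_i F^\rR \phi(i) \to F^\rR \colim_i \phi(i)$ in $\cN$. Applying the colimit-preserving functor $p^\rR$ and using $p^\rR \circ F^\rR \simeq q^\rR$ identifies $p^\rR\alpha$ with the canonical comparison $\colim_i q^\rR \phi(i) \to q^\rR \colim_i \phi(i)$, which is an isomorphism since $q^\rR$ preserves colimits. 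Conservativity of $p^\rR$ then forces $\alpha$ itself to be an isomorphism.

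For strong $\cV$-linearity, fix $m \in \cM$ and $v \in \cV$ and consider the canonical comparison $\beta \colon F^\rR(m) \otimes v \to F^\rR(m \otimes v)$ in $\cN$. Applying the strongly $\cV$-linear functor $p^\rR$ identifies $p^\rR \beta$ with the canonical map $q^\rR(m) \otimes v \to q^\rR(m \otimes v)$, which is an isomorphism by strong $\cV$-linearity of $q^\rR$. Again conservativity of $p^\rR$ yields that $\beta$ itself is an isomorphism, completing the argument.

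The only real subtlety — the step that would require most care in a fully written proof — is the functoriality needed to identify the canonical lax $\cV$-linear structure on the composite $p^\rR \circ F^\rR$ with the strong $\cV$-linear structure on $q^\rR$ under the equivalence $F \circ p \simeq q$. This is not a serious obstacle: it is exactly the statement that passage to right adjoints with their lax $\cV$-linear structure from \HA{Cor.}{7.3.2.7} is functorial in $\cV$-linear morphisms in $\PrV$, and it suffices to invoke this functoriality rather than unpack it explicitly.
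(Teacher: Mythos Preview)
Your proof is correct and follows essentially the same approach as the paper: pass to right adjoints, use that $p^\rR$ is conservative (from colimit-dominance) and that $p^\rR$ and $p^\rR \circ F^\rR \simeq q^\rR$ both preserve colimits and tensoring, then deduce the same for $F^\rR$. The paper's version is terser but the argument is identical; your explicit treatment of the comparison maps $\alpha$ and $\beta$ and the remark on functoriality of the lax $\cV$-linear structure just unpack what the paper leaves implicit.
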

\begin{proof}
Denote the colimit-dominant internal left adjoint by $G:\cX \to \cN$. Passing to right adjoints, $G^\rR$ is conservative by \cref{lem:colimit-dominant-spaces} and both $G^\rR$ and  $G^\rR \circ F^\rR$ preserve colimits and tensoring. Thus, also $F^\rR$ preserves colimits and tensoring.\end{proof}

\begin{cor}\label{cor:prilfact} The factorization system of colimit-dominant and fully faithful functors on $\PrV$ from \cref{lem:prfact} restricts to one on the subcategory $\PrViL \to \PrV$. 
\end{cor}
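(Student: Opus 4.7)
The plan is to show that the factorization of a morphism $F: \cN \to \cM$ in $\PrViL$ provided by \cref{lem:prfact} as $\cN \xrightarrow{p} \CIm(F) \xrightarrow{i} \cM$ has both legs in $\PrViL$. The remaining axioms for a factorization system on $\PrViL$ (orthogonality and retract-closure of each class) then follow essentially formally from the corresponding statements in $\PrV$ via \cref{lem:iLunder}.

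The main step is to show that $p$ is internally left adjoint. The inclusion $i: \CIm(F) \hookrightarrow \cM$ is fully faithful, so the unit $\id_{\CIm(F)} \to i^\rR i$ of the adjunction $i \dashv i^\rR$ is an isomorphism. Combined with $F^\rR = p^\rR \circ i^\rR$, restricting along $i$ yields $p^\rR \simeq F^\rR \circ i$. Now $i$ is already a morphism in $\PrV$, since by \cref{cor:CImV} the subcategory $\CIm(F) \subseteq \cM$ is closed under small colimits and $\cV$-tensoring, and $F^\rR$ lies in $\PrV$ by hypothesis. Hence $p^\rR \simeq F^\rR \circ i$ is a composite of morphisms in $\PrV$, so $p$ is internally left adjoint.

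With $p$ known to be internally left adjoint and colimit-dominant, and $F = i \circ p$ known to be internally left adjoint, \cref{lem:iLunder} applied to the triangle $\cN \xrightarrow{p} \CIm(F) \xrightarrow{i} \cM$ yields that $i$ is also internally left adjoint, completing the construction of the factorization inside $\PrViL$. For orthogonality, a lifting diagram in $\PrViL$ whose left leg is colimit-dominant and whose right leg is fully faithful admits a unique lift in $\PrV$ by \cref{lem:prfact}; a further application of \cref{lem:iLunder} to the triangle formed by the left leg, the lift, and the diagonal composite shows this lift is automatically internally left adjoint and hence lives in $\PrViL$. Closure of each class under retracts inside $\PrViL$ is inherited from the corresponding statement in $\PrV$, since the subcategory inclusion $\PrViL \hookrightarrow \PrV$ preserves retracts.

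The main obstacle is the identification $p^\rR \simeq F^\rR \circ i$ used in the main step; once this is in hand, both the existence of the factorization inside $\PrViL$ and the orthogonality condition reduce to applications of \cref{lem:iLunder}, and retract-closure is automatic.
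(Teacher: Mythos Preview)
Your proof is correct and follows essentially the same approach as the paper: you identify $p^\rR \simeq F^\rR \circ i$ to show the first leg of the factorization is internally left adjoint, then invoke \cref{lem:iLunder} for the second leg and for orthogonality, exactly as the paper does. The only cosmetic difference is that the paper does not separately discuss retract-closure (it is implicit once factorizations and orthogonality are established), and your phrase ``the diagonal composite'' for the third side of the orthogonality triangle would more clearly be called the top horizontal $\cN \to \cN'$ of the lifting square.
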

\begin{proof}
Given a square of internal left adjoint morphisms in $\PrV$ with left vertical morphism colimit-dominant and right vertical morphisms fully faithful, it follows from \cref{lem:iLunder} that the unique filler in $\PrV$ guaranteed by \cref{lem:prfact} is in fact in $\PrViL$. 

It therefore suffices to show that the morphisms in the factorization $\cA \to \CIm(F) \to \cB$ of an internal left adjoint $F: \cA \to \cB$ are again internal left adjoints. First, the right adjoint $\CIm(F) \to \cA$ is given by the composite $\CIm(F) \hookrightarrow \cB \overset{F^\rR}{\to} \cA$, where the inclusion $\CIm(F) \subseteq \cB$ is in $\PrV$, i.e.\ preserves colimits and tensoring. Thus, $\CIm(F) \to \cA$ preserves colimits and tensoring since $F^\rR$ does. 
Hence, $\cA \to \CIm(F)$ is internal left adjoint and colimit-dominant. It then follows from \cref{lem:iLunder} that also $\CIm(F) \to \cB$ is internal left adjoint. 
 \end{proof}
 \begin{obs}
	\label{obs:cdomiLstab}
	Exactly as in \cref{obs:cdomstab}, it follows that the class of colimit-dominant internal left adjoints is closed under retracts, pushout along morphisms in $\PrViL$ and colimits in $\Arr(\PrViL)$. Moreover, the full inclusion $\Arr^{\cdom}(\PrViL) \to \Arr(\PrViL)$ admits a right adjoint sending an internal left adjoint to its corestriction to its closed image. 
\end{obs}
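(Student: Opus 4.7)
The plan is to apply the general factorization-system machinery of HTT essentially verbatim, following the blueprint already used in \cref{obs:cdomstab}. By \cref{cor:prilfact}, the classes of colimit-dominant and fully faithful morphisms form a factorization system on $\PrViL$. Feeding this factorization system into \HTT{Prop.}{5.2.8.6} immediately yields that its left class --- the colimit-dominant internally left adjoint functors --- is closed under retracts, under pushouts along any morphism of $\PrViL$ (whenever such pushouts exist), and under colimits in $\Arr(\PrViL)$. This gives the first half of the observation.

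For the right adjoint, I would invoke \HTT{Lem.}{5.2.8.19} applied to the same factorization system on $\PrViL$: it provides a right adjoint to the full inclusion $\Arr^{\cdom}(\PrViL) \hookrightarrow \Arr(\PrViL)$ that sends a morphism to the left factor of its (colimit-dominant, fully faithful)-factorization. By the explicit description of the factorization established in \cref{cor:prilfact}, this left factor is precisely the corestriction $\cN \to \CIm(F)$ of $F \colon \cN \to \cM$, which by that corollary stays inside $\PrViL$. So the right adjoint exists and has the advertised form, paralleling \cref{obs:cdomstab}.

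There is no substantive new obstacle beyond what has already been done: the content is entirely in \cref{cor:prilfact}, and the present observation is just a formal transport of the HTT facts through that factorization system. The only subtlety worth flagging is that, since $\PrViL$ is a non-full subcategory of $\PrV$, one must apply HTT's general statements to the factorization system on $\PrViL$ itself --- not deduce them by restriction from the ambient factorization system on $\PrV$ --- but this is automatic once \cref{cor:prilfact} is in hand.
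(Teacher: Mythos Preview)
Your proposal is correct and matches the paper's approach exactly: the observation is stated without a separate proof precisely because it follows ``exactly as in \cref{obs:cdomstab}'' by applying \HTT{Prop.}{5.2.8.6} and \HTT{Lem.}{5.2.8.19} to the factorization system on $\PrViL$ provided by \cref{cor:prilfact}, with the explicit factorization through $\CIm(F)$ supplying the description of the right adjoint.
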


\begin{lemma}\label{lem:colimdomiLgen}
Given an internal left adjoint and colimit-dominant morphism $F:\cN \to \cM$ in $\PrV$ and assume that $\cN$ is atomically generated. Then, $\cM$ is atomically generated by the image $F(\cN^{\mathrm{at}})$.
\end{lemma}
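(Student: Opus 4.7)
The plan is to verify the two requirements separately: atomicity of the generators and generation of $\cM$ under colimits and $\cV$-tensoring. Both should follow immediately from results already established in the excerpt, so this will be a short argument rather than a technical one.

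First I would observe that $F(\cN^{\mathrm{at}}) \subseteq \cM^{\mathrm{at}}$. This is an immediate consequence of \cref{prop:iLpreserve} (or equivalently \cref{obs:iLpresatomic}): since $F$ is internally left adjoint, it preserves atomic objects, as can also be seen by composing $F$ with the internally left adjoint $v \mapsto n \otimes v : \cV \to \cN$ witnessing atomicity of an $n \in \cN^{\mathrm{at}}$ via \cref{obs:iLandatomic}.

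Next I would show that $F(\cN^{\mathrm{at}})$ generates $\cM$ under colimits and $\cV$-tensoring. Let $\widetilde{\cM} \subseteq \cM$ denote the smallest full subcategory containing $F(\cN^{\mathrm{at}})$ and closed under small colimits and $\cV$-tensoring. Since $F$ is a morphism in $\PrV$, it preserves colimits and $\cV$-tensoring, so the full subcategory of $\cN$ consisting of those $n$ with $F(n) \in \widetilde{\cM}$ is closed under colimits and $\cV$-tensoring and contains $\cN^{\mathrm{at}}$. Since $\cN$ is atomically generated, this subcategory is all of $\cN$, i.e. $F(\cN) \subseteq \widetilde{\cM}$.

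Finally, since $F$ is colimit-dominant, by \cref{defin:colimitdominant} and \cref{cor:CImV} the full image $F(\cN)$ already generates $\cM$ under colimits and $\cV$-tensoring, i.e.\ $\CIm(F) = \cM$. Combining with the previous step yields $\widetilde{\cM} = \cM$, which together with the first step shows that $F(\cN^{\mathrm{at}})$ is a set of atomic generators for $\cM$. No step looks like a real obstacle here; the main thing to be careful about is invoking the correct form of generation (under both colimits \emph{and} $\cV$-tensoring), which is precisely what \cref{defin:generating} and \cref{cor:CImV} supply.
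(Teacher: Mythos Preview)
Your proof is correct and follows essentially the same approach as the paper's proof, which is the terse one-liner: ``The full subcategory $F(\cN^{\mathrm{at}})$ consists of atomics since $F$ is internal left adjoint and generates $\cM$ under colimits and $\cV$-tensoring since $F$ is colimit-dominant.'' Your argument simply unpacks the second clause (showing that $F(\cN^{\mathrm{at}})$ generates $\cM$ via the intermediate step $F(\cN) \subseteq \widetilde{\cM}$), which the paper leaves implicit.
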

\begin{proof}
The full subcategory $F(\cN^{\mathrm{at}})$ consists of atomics since $F$ is internal left adjoint and generates $\cM$ under colimits and $\cV$-tensoring since $F$ is colimit-dominant.
\end{proof}

Internal left adjoints are precisely the adjoints internal to the $2$-category $\IPrV$. A way to express this without using $2$-categorical machinery is as follows.

\begin{defin} A commutative square of categories 
\[\begin{tikzcd}
A\arrow[r, "F"] \arrow[d, "H"] & B \arrow[d, "K"] \\
C \arrow[r, "G"] & D
\end{tikzcd}
\] 
is called \emph{horizontally right adjointable} if  $F$ and $G$ have right adjoints $F^\rR$ and $G^\rR$ and the induced morphism $ H \circ F^\rR \To G^\rR \circ K$  is an isomorphism and is called \emph{vertically left adjointable} if $H$ and $K$ have left adjoints $H^\rL$ and $K^\rL$ and the induced morphism $  K^\rL \circ G \To  F\circ H^\rL$ is an isomorphism.
\end{defin}

\begin{prop}\label{prop:adjointable}
A functor $F:\cN \to \cM$  in $\PrV$ is internally left adjoint if and only if either of the following equivalent conditions holds:
\begin{enumerate}[(1)]
\item For all $G: \cP \to \cQ$ in $\PrV$ the square of categories
\[\begin{tikzcd}
\LinFun(\cQ, \cN) \arrow[d, "-\circ G"] \arrow[r, "F\circ -"] & \LinFun(\cQ, \cM) \arrow[d, "-\circ G"] \\ \LinFun(\cP, \cN) \arrow[r, "F\circ-"] & \LinFun(\cP, \cM) 
\end{tikzcd}
\]
is horizontally right adjointable. 
\item For all $G: \cP \to \cQ$ in $\PrV$ the square of categories
\[\begin{tikzcd}
\LinFun(\cM, \cP) \arrow[d, "-\circ F"] \arrow[r, "G\circ -"] & \LinFun(\cM, \cQ) \arrow[d, "-\circ F"] \\ \LinFun(\cN, \cP) \arrow[r, "G\circ-"] & \LinFun(\cN, \cQ) 
\end{tikzcd}
\]
is vertically left adjointable. 
\end{enumerate}
\end{prop}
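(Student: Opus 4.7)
The plan is to identify each of conditions (1) and (2) with the assertion that $F$ admits a right adjoint in the $(\infty, 2)$-categorical enhancement $\IPrV$ of $\PrV$ (with mapping objects the presentable categories $\LinFun(\cM, \cN)$ from \cref{prop:LinFuniHom}); such a right adjoint is by definition a morphism $F^\rR\colon \cM \to \cN$ in $\PrV$ fitting into a compatible unit/counit pair, which is exactly internal left adjointness of $F$.

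For the forward direction, I first observe that if $F$ admits a right adjoint $F^\rR$ in $\PrV$, then postcomposition with $F^\rR$ defines a functor $F^\rR \circ -\colon \LinFun(\cQ, \cM) \to \LinFun(\cQ, \cN)$ that is pointwise right adjoint to $F \circ -$; both composites in the Beck--Chevalley square of (1) then equal $F^\rR \circ - \circ G$, so the square commutes strictly. The proof of (2) from internal left adjointness is dual, using precomposition $- \circ F^\rR\colon \LinFun(\cN, \cP) \to \LinFun(\cM, \cP)$ as the pointwise left adjoint of $- \circ F$, with both composites equal to $G \circ - \circ F^\rR$.

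For the reverse direction (1) $\Rightarrow$ internal left adjointness, the first step is to note that $F \circ -$ automatically admits a right adjoint $R_\cQ$ for each $\cQ \in \PrV$: both sides are presentable by \cref{prop:LinFuniHom} and $F \circ -$ preserves colimits, since these are computed pointwise. Hence the content of (1) lies entirely in the Beck--Chevalley equivalences, which promote the $R_\cQ$ to a natural transformation $R\colon \LinFun(-, \cM) \To \LinFun(-, \cN)$ of functors $\PrV^{\op} \to \widehat{\cat}$. I then set $F^\rR := R_\cM(\id_\cM) \in \LinFun(\cM, \cN)$, giving a $\cV$-linear colimit-preserving functor $\cM \to \cN$, and aim to identify it with the honest right adjoint $F^\rR_0$ of $F$ in $\widehat{\cat}$. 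For each $m \in \cM$, using the unique $G_m\colon \cV \to \cM$ in $\PrV$ sending $1_\cV \mapsto m$ from \cref{lem:freetensored}, I have $- \circ G_m = \ev_m$; under the identification $\LinFun(\cV, -) \simeq \id_{\PrV}$ (evaluation at $1_\cV$), $R_\cV$ agrees with $F^\rR_0$, and the Beck--Chevalley equivalence for $G_m$ applied to $\id_\cM$ yields $F^\rR(m) \simeq F^\rR_0(m)$. The implication (2) $\Rightarrow$ internal left adjointness is entirely dual, using the pointwise left adjoint $L_\cP$ of $- \circ F$ and setting $F^\rR := L_\cN(\id_\cN)$, where naturality of $L$ in $\PrV$-morphisms identifies $L_\cQ(K) \simeq K \circ F^\rR$.

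The main obstacle is promoting the pointwise equivalences $F^\rR(m) \simeq F^\rR_0(m)$ to a functorial equivalence $F^\rR \simeq F^\rR_0$ of functors $\cM \to \cN$, which is what is actually needed to conclude that $F^\rR_0 \in \PrV$. This requires fully exploiting the coherent naturality of $R$ (resp.\ $L$) as a natural transformation of $\widehat{\cat}$-valued functors on $\PrV^{\op}$, amounting to a $2$-categorical Yoneda argument inside $\IPrV$: the natural transformation $R\colon \LinFun(-, \cM) \To \LinFun(-, \cN)$ between representables is identified with a single $1$-morphism in $\LinFun(\cM, \cN)$, namely $F^\rR_0$.
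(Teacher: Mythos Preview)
Your approach is correct and the obstacle you flag is less severe than you suggest. Your forward direction matches the paper's (both amount to observing that $F^\rR\circ-$ lands in $\LinFun$ and is the right adjoint). For the converse, both you and the paper set $H=R_\cM(\id_\cM)\in\LinFun(\cM,\cN)$ with counit $\eta:FH\To\id_\cM$, but you diverge thereafter.

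The paper uses only the single square with $G=F$: from the bottom-row adjunction and the Beck--Chevalley isomorphism $HF\simeq R_\cN(F)$ one extracts a unit $\nu:\id_\cN\To HF$, and then argues directly that $(\eta,\nu)$ exhibit $H$ as right adjoint to $F$ in $\widehat{\cat}$; since $H\in\PrV$ and $\eta$ is a morphism in $\LinFun$, internal left adjointness follows. You instead use the squares for $G_m:\cV\to\cM$ (one for each $m\in\cM$) to identify $H(m)\simeq F^\rR_0(m)$ and then worry about assembling these into a natural equivalence. But this assembly requires no $2$-categorical Yoneda: you already have a canonical natural transformation $\beta:H\To F^\rR_0$, namely the mate of $\eta$ under the ordinary adjunction $F\dashv F^\rR_0$. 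Unwinding the definition of the Beck--Chevalley map for $G_m$ and evaluating at $1_\cV$, you find it \emph{is} $\beta_m$; since condition (1) says each such map is invertible, $\beta$ is a natural isomorphism. So your proof closes with one sentence rather than an appeal to representability in $\IPrV$.

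In short: the paper's route is more economical (a single square suffices) but its final deduction is compressed; your route trades economy for explicitness by probing with the generators $G_m:\cV\to\cM$, which makes the identification $H\simeq F^\rR_0$ completely transparent once you notice the comparison map is already natural.
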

\begin{proof}
\newcommand{\LaxFun}{\Fun^{\mathrm{lax}}_{\cV}}
We prove the equivalence with (1), the equivalence with (2) is proven completely analogously:
For $\cR, \cS \in \PrV$, let $\LaxFun(\cR, \cS)$ denote the category of lax $\cV$-linear functors\footnote{If we regard $\cR^\otimes, \cS^\otimes \to \RM^\otimes$ as coCartesian fibrations of operads, $\LaxFun(\cR, \cS)$ may be defined as the full subcategory of $\Fun_{/\RM^\otimes}(\cR^\otimes, \cS^\otimes) \times_{\Fun_{/\Ass^\otimes}(\cV^\otimes, \cV^\otimes)} \{ \id_{\cV^\otimes} \}$ on those functors that preserve coCartesian lifts of inert morphisms; while (strong) $\cV$-linear functors preserve all coCartesian lifts.} so that $\LinFun(\cR, \cS) \subseteq \LaxFun(\cR, \cS)$ is a full subcategory. 
Since $F: \cN \to \cM$ is in $\PrV$ and hence in $\PrL$, it has a right adjoint $F^\rR$ which by \HA{Ex.}{7.3.2.8} is lax $\cV$-linear. Thus, for any $G$ as above, the square
\[\begin{tikzcd}
\LaxFun(\cQ, \cN) \arrow[d, "-\circ G"] \arrow[r, "F\circ -"] & \LaxFun(\cQ, \cM) \arrow[d, "-\circ G"] \\ \LaxFun(\cP, \cN) \arrow[r, "F\circ-"] & \LaxFun(\cP, \cM) 
\end{tikzcd}
\]
is horizontally right adjointable with horizontal right adjoints given by postcomposing with $F^\rR$. If $F$ is internally left adjoint, then by definition for all  $\cR$ in $\PrV$ the above horizontal right adjoints $F^\rR \circ - : \LaxFun(\cR, \cM) \to \LaxFun(\cR, \cN)$ factor through the full subcategories $\LinFun(\cR, \cM) \to \LinFun(\cR, \cN)$ and therefore make the original square in (1) right adjointable. 

For the converse, it suffices to assume that the square 
\[\begin{tikzcd}
\LinFun(\cM, \cN) \arrow[d, "-\circ F"] \arrow[r, "F\circ -"] & \LinFun(\cM, \cM) \arrow[d, "-\circ F"] \\ \LinFun(\cN, \cN) \arrow[r, "F\circ-"] & \LinFun(\cN, \cM) 
\end{tikzcd}
\]
is horizontally right adjointable. The right adjoint of $\LinFun(\cM, \cN) \to \LinFun(\cM, \cM)$ applied to $\id_{\cM}$ defines a  $H \in \LinFun(\cM, \cN)$ together with a morphism $\eta: F\circ H \To \id_{\cM}$ in $\LinFun(\cM, \cM)$. Using that the square is horizontally right adjointable, it follows that (the underlying natural transformation of) $\eta$ exhibits the functor $H$ as a right adjoint to $F$. Since $H$ is moreover in $\PrV$ and $\eta$ is a morphism in $\LinFun(\cM,\cM)$, it follows that $F$ is internally left adjoint with internal right adjoint $H$. 
\end{proof}

\begin{cor}\label{cor:iLreltens}
\label{lem:propertiestensorbetter}
		Let $\cV, \cW \in \Alg(\PrL)$ and $F: \cN \to \cM$ be an internal left adjoint in $\PrV$. 
		\begin{enumerate}[(1)]
		\item If $G: \cN' \to \cM'$ is an internal left adjoint in $\PrW$, then $F\otimes G$ is an internal left adjoint in $\Pr_{\cV \otimes \cW}$ with right adjoint  $F^\rR \otimes G^\rR$.
		\item If $\cB \in \BMod{\cV}{\cW}( \PrL)$, then $F\otimes_{\cV} B: \cN \otimes_{\cV} \cB \to \cM \otimes_{\cV} \cB$ is an internal left adjoint in $\PrW$ with right adjoint $F^\rR \otimes_{\cV} \cB$. 
		\end{enumerate}
		\end{cor}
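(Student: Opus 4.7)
The plan is to prove (2) first and deduce (1) from it by decomposing the external tensor $F \otimes G$ as a composition of two base-change morphisms.

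For (2), the key tool is \cref{prop:reltensoradj}, which exhibits $-\otimes_\cV \cB : \PrV \to \PrW$ as a left adjoint and in particular as a functor of $\infty$-categories. The hypothesis that $F$ is internal left adjoint in $\PrV$ means by \cref{defin:iL} that $F^\rR$ is also a morphism in $\PrV$, so the adjunction $F \dashv F^\rR$ lifts from $\cat$ to an adjunction in $\PrV$ (its unit and counit are natural transformations between $\cV$-linear colimit-preserving functors, hence automatically morphisms in $\PrV$). Since $\infty$-functors preserve adjunctions, applying $-\otimes_\cV \cB$ yields an adjunction $F \otimes_\cV \cB \dashv F^\rR \otimes_\cV \cB$ in $\PrW$. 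In particular $F^\rR \otimes_\cV \cB$ is a morphism in $\PrW$ right adjoint to $F \otimes_\cV \cB$, which is exactly the content of (2).

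For (1), I would decompose
\[
F \otimes G = (F \otimes \id_{\cM'}) \circ (\id_\cN \otimes G)
\]
in $\Pr_{\cV \otimes \cW}$. The factor $\id_\cN \otimes G$ may be identified with the base change $G \otimes_\cW (\cW \otimes \cN)$, where $\cW \otimes \cN$ carries the evident $\cW$-$(\cV \otimes \cW)$-bimodule structure; then (2) shows this is internal left adjoint in $\Pr_{\cV \otimes \cW}$ with right adjoint $G^\rR \otimes_\cW (\cW \otimes \cN) \simeq \id_\cN \otimes G^\rR$. An entirely analogous identification $F \otimes \id_{\cM'} \simeq F \otimes_\cV (\cV \otimes \cM')$ handles the other factor. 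The composite $F \otimes G$ is then internal left adjoint by closure under composition (\cref{obs:iLpresatomic}), with right adjoint $(\id_\cN \otimes G^\rR) \circ (F^\rR \otimes \id_{\cM'})$. This composite sends a pure tensor $m \otimes m'$ to $F^\rR m \otimes G^\rR m'$, agreeing with $F^\rR \otimes G^\rR$; since both functors lie in $\Pr_{\cV \otimes \cW}$ and pure tensors generate $\cM \otimes \cM'$ under colimits by \cref{lem:puretensors}, they coincide as functors.

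The main subtlety is the lifting of the adjunction $F \dashv F^\rR$ from $\cat$ to the $\infty$-category $\PrV$ in order to apply $\infty$-functoriality of $-\otimes_\cV \cB$. This is essentially built into \cref{defin:iL}, but if one wishes to avoid it, the verification can instead be carried out through \cref{prop:adjointable}: one checks that for each $H: \cP \to \cQ$ in $\PrW$, the induced square of $\cW$-linear functor categories is vertically left adjointable, which after applying the natural equivalence $\Fun^{\rL}_{\cW}(\cX \otimes_\cV \cB, \cY) \simeq \Fun^{\rL}_{\cV}(\cX, \Fun^{\rL}_{\cW}(\cB, \cY))$ from \cref{prop:reltensoradj} reduces to the corresponding adjointability condition for $F$, which holds by hypothesis.
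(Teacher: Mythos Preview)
Your overall structure---proving (2) first and deducing (1) by decomposing $F\otimes G = (F\otimes\id_{\cM'})\circ(\id_{\cN}\otimes G)$ and invoking \cref{obs:iLpresatomic}---matches the paper exactly.

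For (2), however, your primary argument has a genuine gap. The step ``since $\infty$-functors preserve adjunctions, applying $-\otimes_\cV\cB$ yields an adjunction'' requires $-\otimes_\cV\cB$ to act on the non-invertible 2-cells (the unit and counit), i.e.\ to be a 2-functor between the $(\infty,2)$-categories $\IPrV$ and $\IPrW$. The paper does not set this up: \cref{prop:reltensoradj} gives $-\otimes_\cV\cB$ only as a morphism of $(\infty,1)$-categories, and \cref{defin:iL} only records that $F^\rR$ lands in $\PrV$---it says nothing about how the unit and counit interact with base change. So the claim ``this is essentially built into \cref{defin:iL}'' is not correct; there is a real missing ingredient.

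Your alternative route via \cref{prop:adjointable}(2) together with the natural equivalence $\Fun^\rL_\cW(\cX\otimes_\cV\cB,\cY)\simeq\Fun^\rL_\cV(\cX,\Fun^\rL_\cW(\cB,\cY))$ from \cref{prop:reltensoradj} is precisely the paper's proof, and it is the right fix: the whole point of \cref{prop:adjointable} is to encode internal adjointness purely in terms of the $(\infty,1)$-categories $\LinFun(-,-)$, so that one can transport it along such equivalences without ever needing 2-functoriality. Your proposal is therefore correct once you drop the first argument for (2) and lead with the second. (Incidentally, the identification of the right adjoint in (1) with $F^\rR\otimes G^\rR$ follows directly from bifunctoriality of $\otimes$ on $\PrL$; the appeal to \cref{lem:puretensors} is unnecessary.)
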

		\begin{proof} Since internal left adjoints compose, the first statement follows from the statement for $G=\id_{\cN'}$ which is a special case of the second statement. The second statement follows from the characterization of internal left adjoints from  \cref{prop:adjointable}(2) together with the universal property of the relative tensor product from \cref{prop:reltensoradj}.
		\end{proof}

This may be used to show that atomics are preserved under basechange:
\begin{cor}\label{cor:atomicbasechange}
Let $\cV \to \cW$ a morphism in $\Alg(\PrL)$ and $m \in \cM \in \PrV$ a $\cV$-atomic object. Then, $m \otimes 1_{\cW} \in \cM \otimes_{\cV} \cW \in \Pr_{\cW}$ is $\cW$-atomic.
\end{cor}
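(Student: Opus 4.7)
The plan is to reduce the statement to the previously established fact that relative tensor products preserve internal left adjoints. By \cref{obs:iLandatomic}, an object $m \in \cM$ is $\cV$-atomic if and only if the unique $\cV$-linear colimit-preserving functor
\[
m \otimes -: \cV \to \cM, \qquad 1_\cV \mapsto m,
\]
is internally left adjoint in $\PrV$. Hence our assumption gives us such an internally left adjoint morphism in $\PrV$.

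Next, I would view $\cW$ as an object of $\BMod{\cV}{\cW}(\PrL)$ via the monoidal functor $\cV \to \cW$ and apply the base-change functor $-\otimes_\cV \cW : \PrV \to \Pr_\cW$ to $m\otimes -$. By \cref{cor:iLreltens}(2), this produces an internally left adjoint functor in $\Pr_{\cW}$ of the form
\[
(m\otimes -) \otimes_{\cV} \cW : \cV \otimes_\cV \cW \to \cM \otimes_\cV \cW.
\]
Identifying the source with $\cW$ via the canonical equivalence $\cV \otimes_\cV \cW \simeq \cW$, this functor is the unique $\cW$-linear colimit-preserving extension sending $1_\cW$ to $m \otimes 1_\cW$.

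Applying \cref{obs:iLandatomic} once more, now in $\Pr_{\cW}$, the existence of such an internally left adjoint $\cW \to \cM \otimes_\cV \cW$ picking out $m \otimes 1_\cW$ is exactly the statement that $m \otimes 1_\cW$ is $\cW$-atomic, completing the proof. The argument is almost tautological once \cref{cor:iLreltens}(2) is available; the only point requiring a moment of thought is the identification of the functor $(m\otimes-)\otimes_\cV \cW$ with the canonical $\cW$-linear extension picking out $m \otimes 1_\cW$, which follows by tracing through the unit isomorphism $\cV \otimes_\cV \cW \simeq \cW$.
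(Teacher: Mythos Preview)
Your proof is correct and follows exactly the same approach as the paper: the paper's proof is the one-line statement that the result ``follows immediately from \cref{cor:iLreltens} since $\cV$-atomics in $\cM \in \PrV$ are precisely internal left adjoints $\cV \to \cM$,'' and your argument is a careful unpacking of that sentence.
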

\begin{proof}
This follows immediately from~\cref{cor:iLreltens} since $\cV$-atomics in $\cM \in \PrV$ are precisely internal left adjoints $\cV \to \cM$. \end{proof}

\begin{cor}\label{cor:internalhombasechange}
For $f: \cV \to \cW$ a morphism in $\Alg(\PrL)$, $\cM \in \PrV$ and $m, n \in \cM$ with $m\in \cM$ $\cV$-atomic, the induced morphism
\[  f(\iHom_{\cM}(m,n)) \to \iHom_{\cM \otimes_{\cV} \cW}(m\otimes 1_{\cW}, n \otimes 1_{\cW} ) 
\]is an isomorphism in $\cW$. 
\end{cor}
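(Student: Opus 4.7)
The plan is to deduce this from \cref{cor:iLreltens}(2) by identifying the right adjoint of a suitable internal left adjoint in two ways. Since $m \in \cM$ is $\cV$-atomic, by \cref{obs:iLandatomic} the functor $m \otimes - : \cV \to \cM$ is an internal left adjoint in $\PrV$ with right adjoint $\iHom_{\cM}(m,-) : \cM \to \cV$. Applying $- \otimes_{\cV} \cW$ and invoking \cref{cor:iLreltens}(2), we obtain an internal left adjoint
\[
(m \otimes -) \otimes_{\cV} \cW : \cV \otimes_{\cV} \cW \simeq \cW \longrightarrow \cM \otimes_{\cV} \cW
\]
in $\Pr_{\cW}$ whose internal right adjoint is $\iHom_{\cM}(m,-) \otimes_{\cV} \cW : \cM \otimes_{\cV} \cW \to \cW$.

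Next I would identify the above left adjoint concretely. Under the equivalence $\cV \otimes_{\cV} \cW \simeq \cW$ the morphism $f : \cV \to \cW$ in $\Alg(\PrL)$ corresponds to the map $v \mapsto v \otimes 1_{\cW}$, so $(m \otimes -) \otimes_{\cV} \cW$ is sent $1_{\cW} \mapsto m \otimes 1_{\cW}$ and hence identifies with the $\cW$-linear colimit-preserving functor $(m \otimes 1_{\cW}) \otimes - : \cW \to \cM \otimes_{\cV} \cW$. By \cref{cor:atomicbasechange}, $m \otimes 1_{\cW}$ is $\cW$-atomic, so this functor is internally left adjoint in $\Pr_{\cW}$ with internal right adjoint $\iHom_{\cM \otimes_{\cV} \cW}(m \otimes 1_{\cW}, -)$.

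By uniqueness of right adjoints, there is a canonical equivalence
\[
\iHom_{\cM}(m, -) \otimes_{\cV} \cW \;\simeq\; \iHom_{\cM \otimes_{\cV} \cW}(m \otimes 1_{\cW}, -)
\]
of functors $\cM \otimes_{\cV} \cW \to \cW$. Evaluating both sides on the pure tensor $n \otimes 1_{\cW}$ and using that the base-change functor sends a pure tensor $x \otimes w$ to $F(x) \otimes w$, the left-hand side yields $\iHom_{\cM}(m,n) \otimes 1_{\cW} \simeq f(\iHom_{\cM}(m,n))$, while the right-hand side gives $\iHom_{\cM \otimes_{\cV} \cW}(m \otimes 1_{\cW}, n \otimes 1_{\cW})$, yielding the desired equivalence.

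The main thing to check carefully is that the equivalence produced by uniqueness of adjoints coincides with the canonical comparison map in the statement of the corollary. This amounts to tracing that both are obtained from the counit of the adjunction $(m \otimes -) \dashv \iHom_{\cM}(m,-)$ via its image under $- \otimes_{\cV} \cW$ together with the counit of the base-changed adjunction for $m \otimes 1_{\cW}$, and is a routine unwinding of the universal properties of internal homs and base change along $f$.
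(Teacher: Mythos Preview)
Your proposal is correct and follows essentially the same route as the paper: both arguments base-change the internal left adjoint $m\otimes -:\cV\to\cM$ along $-\otimes_{\cV}\cW$ via \cref{cor:iLreltens}(2), identify the resulting functor with $(m\otimes 1_{\cW})\otimes -$, and then compare its right adjoint computed as $\iHom_{\cM}(m,-)\otimes_{\cV}\cW$ against the definitional right adjoint $\iHom_{\cM\otimes_{\cV}\cW}(m\otimes 1_{\cW},-)$. Your write-up is somewhat more careful than the paper's (you invoke \cref{cor:atomicbasechange} explicitly and flag the verification that the abstract equivalence of right adjoints is the canonical comparison map), but the substance is the same.
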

\begin{proof}By definition, the functor $\iHom_{\cM \otimes_{\cV} \cW} (m \otimes 1_{\cW}, -) : \cM \otimes_{\cV} \cW \to \cW$ is right adjoint to the functor $m \otimes - : \cW \to \cM \otimes_{\cV} \cW$. By atomicity of $m \in \cM$ and \cref{cor:iLreltens} this functor is internal left adjoint and its right adjoint is given by $\iHom_{\cM}(m, -) \otimes_{\cV} \cW : \cM \otimes_{\cV}\cW \to \cV \otimes_{\cV} \cW \simeq \cW$ which after restriction along $\cM \to \cM \otimes_{\cV}\cW$ is $f(\iHom_{\cM}(m,-)): \cM \to \cW$.
\end{proof}

We will also use the following fact from \cite{ramzi2024dualizable}:
\begin{prop}[{\cite[Corollarly 1.32]{ramzi2024dualizable}}]
		\label{lem:iLclosedcolim}
		The subcategory $\PrViL \hookrightarrow \PrV$ is closed under colimits.
	\end{prop}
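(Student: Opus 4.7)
The plan is to use the standard principle that colimits in $\PrL$ can be computed as limits in $\widehat{\cat}$ by passing to right adjoints, and to lift this to the $\cV$-linear setting along the subcategory $\PrViL$. Given a small diagram $D: I \to \PrViL$, every morphism $D(i) \to D(j)$ in the image admits a right adjoint in $\PrV$ by definition of internal left adjointness, yielding a diagram $D^{\rR}: I^{\op} \to \PrV$ of the right adjoints.

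The first step is to form the limit $\cL := \lim_{I^{\op}} D^{\rR}$ in $\PrV$; by \HTT{Prop.}{5.5.3.13} and the fact that the forgetful functor $\PrV \to \widehat{\cat}$ creates limits, this can be computed in $\widehat{\cat}$, and the projections $\pi_i: \cL \to D(i)$ are themselves morphisms in $\PrV$. Next I would show that each $\pi_i$ admits a left adjoint $\iota_i: D(i) \to \cL$ and that these $\iota_i$ assemble into a cone exhibiting $\cL$ as the colimit of $D$ in $\PrV$ -- the $\cV$-linear analog of the classical description of $\PrL$-colimits as limits of right adjoints. Because all $\pi_i$ are in $\PrV$, each adjunction $\iota_i \dashv \pi_i$ is internal to $\PrV$, so each cone map $\iota_i$ is internally left adjoint, which is exactly the desired statement.

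The main obstacle is assembling the $\iota_i$ into a cone and verifying its universal property in $\PrV$; this amounts to checking Beck--Chevalley adjointability for the squares associated to morphisms of $I$. Concretely: for $i \to j$ in $I$, the map $D^{\rR}(j \to i): D(j) \to D(i)$ is the right adjoint of $D(i \to j)$, so under $\pi_i \circ (\text{connecting map}) = D^{\rR}(j \to i) \circ \pi_j$ one can pass to left adjoints on both sides and use uniqueness of adjoints to promote these into a coherent cone. Alternatively, one can bypass the explicit construction and invoke the characterization of \cref{prop:adjointable}(2) directly: since $\LinFun(-, \cP)$ sends colimits in $\PrV$ to limits in $\widehat{\cat}$, the precomposition map $-\circ \iota_i: \LinFun(\cM,\cP) \to \LinFun(D(i),\cP)$ is a projection from a limit whose transition functors are themselves left adjoints (with right adjoints obtained by precomposing with the internal right adjoints of the $D(i\to j)$), and the desired vertical left adjointability follows formally from this observation together with the naturality of Beck--Chevalley transformations in the postcomposition variable.
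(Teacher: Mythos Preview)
The paper does not prove this statement; it is cited as \cite[Cor.~1.32]{ramzi2024dualizable} without argument, so there is no in-paper proof to compare against.

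Your outline is the standard approach and is on the right track, but it has a genuine gap. You establish only that the cone legs $\iota_i:D(i)\to\cL$ into the colimit are internally left adjoint. For the subcategory inclusion $\PrViL\hookrightarrow\PrV$ to be \emph{closed under colimits}, you must also show that the universal property holds in $\PrViL$: given any cone $(\phi_i:D(i)\to\cN)$ with every $\phi_i$ internally left adjoint, the induced comparison map $\Phi:\cL\to\cN$ in $\PrV$ must itself be internally left adjoint. You do not address this, and it is not automatic. Once your limit description of $\cL$ is in place the missing step is short --- one checks that $\pi_i\circ\Phi^\rR\simeq(\Phi\circ\iota_i)^\rR=\phi_i^\rR$ lies in $\PrV$ for every $i$, and since the projections $\pi_i$ are jointly conservative and both colimits and the $\cV$-action on the limit $\cL$ are computed pointwise (the transition maps $D(i\to j)^\rR$ being in $\PrV$), this forces $\Phi^\rR$ to preserve colimits and $\cV$-tensoring --- but it must be said.

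Separately, the obstacle you flag (assembling the $\iota_i$ into a coherent cone and verifying it is the $\PrV$-colimit) is real, and your suggested workarounds do not cleanly avoid it. The claim in your alternative that projections from a limit whose transition functors admit left adjoints are automatically vertically left adjointable is essentially a restatement of the colimit--limit duality for presentable categories that you are trying to establish in the $\cV$-linear setting; invoking \cref{prop:adjointable} here is circular unless you first prove (or cite) that colimits in $\PrV$ are computed as limits in $\PrV$ along the internal right adjoints. That statement does hold, but it deserves an explicit reference or argument rather than a Beck--Chevalley hand-wave.
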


	\begin{obs}
		\label{obs:iLstability}
		In particular, the pushout of an internally left adjoint functor in $\PrV$ along an internal left adjoint is still internally left adjoint, since we obtain a pushout square in $\PrViL$. Also $\Arr(\PrViL) \subseteq \Arr(\PrV)$ is closed under colimits as those are calculated pointwise.
	\end{obs}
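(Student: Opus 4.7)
Both assertions are essentially formal consequences of \cref{lem:iLclosedcolim}, which says the forgetful functor $\PrViL \hookrightarrow \PrV$ preserves colimits. I would unpack this as follows.

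For the first assertion, suppose I am given a pushout diagram in $\PrV$ of the form $\cM \leftarrow \cN \to \cM'$ in which both legs are internally left adjoint, and let $\cM \to \cP \leftarrow \cM'$ be its pushout in $\PrV$. The plan is to compute the same pushout in $\PrViL$: since the indexing span lives inside the subcategory $\PrViL \hookrightarrow \PrV$, it admits a colimit in $\PrViL$, and by \cref{lem:iLclosedcolim} the forgetful functor preserves it. Hence the pushout in $\PrV$ is canonically the pushout in $\PrViL$, and in particular the cocone legs $\cM \to \cP$ and $\cM' \to \cP$ lie in $\PrViL$, i.e.\ are internally left adjoint.

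For the second assertion, recall that for any category $\cE$ with small colimits, colimits in $\Arr(\cE) = \Fun([1], \cE)$ are computed pointwise; so it suffices to check that for a diagram $K \to \Arr(\PrViL)$ whose underlying diagram in $\Arr(\PrV)$ has a colimit, both endpoints of the colimit arrow are colimits of the corresponding pointwise diagrams in $\PrV$, and these colimits agree with the ones in $\PrViL$ by \cref{lem:iLclosedcolim}. The resulting arrow between them is then the colimit in $\Arr(\PrViL)$ and its underlying morphism is still internally left adjoint (again by \cref{lem:iLclosedcolim} applied to the arrow viewed as a morphism of cocones).

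There is no real obstacle here beyond correctly invoking \cref{lem:iLclosedcolim}; the only small subtlety is the pointwise computation of colimits in $\Arr$, but this is standard and applies equally to $\PrV$ and $\PrViL$ once closedness under colimits of the inclusion is known.
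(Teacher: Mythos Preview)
Your proposal is correct and matches the paper's approach exactly: the observation is stated immediately after \cref{lem:iLclosedcolim} as a direct consequence, with the reasoning already embedded in the statement itself (``since we obtain a pushout square in $\PrViL$'' and ``as those are calculated pointwise''). The paper gives no separate proof, and your unpacking of the two assertions via \cref{lem:iLclosedcolim} is precisely what is intended.
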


	\section{Monadicity internal to \texorpdfstring{$\PrV$}{presentable module categories}}
	\label{sec:monadicity}

	A \emph{monad on an object $c$} in a $2$-category $\C$ is an algebra $T \in \Alg(\operatorname{End}_{\C}(c))$ in the monoidal category of endomorphisms of $c$. In particular, a valent $\cV$-category as in \cref{def:valentVcat} is precisely a monad in the $2$-category $\IPrV$. Informally, an \emph{Eilenberg-Moore object} for $T$ is an object $\LMod_{T}(c) \in \C$ equipped with an adjunction $c \rightleftarrows \LMod_{T}(c)$ internal to $\C$ which induces for any $c' \in \C$ an equivalence 
		\[
	\LMod_T \left( \Hom_{\C} (c', c) \right) \simeq \Hom_{\C}(c', \LMod_T (c) ) \, ,
	\]
	i.e.\ it represents the functor $\LMod_T \Hom_{\C}(-,c)$ thereby internalizing the construction of module categories into $\C$. Any adjunction in $\C$ arising this way is called \emph{monadic adjunction}, and any right adjoint in a monadic adjunction is called \emph{monadic $1$-morphism} in $\C$. We refer to \cite{heinemonads} or \cite[\S 7]{stefanich} for a general treatment; here we will only develop the theory of monads and monadic adjunctions working in the closed monoidal $1$-category $\PrV$.

	\subsection{The Eilenberg-Moore functor}

	\begin{constr} \label{cons:EM}Let $\cQ, \cV \in \Alg(\PrL)$ and $\cP \in \BMod{\cQ}{\cV}(\PrL) \simeq \LMod_{\cQ}(\PrV)$. We now construct a functor 
	\[	
	\EM: \Alg(\cQ) \to (\PrV)_{\cP /}
	\] 
	sending an algebra $T \in \Alg(\cQ)$ to the `free-module' functor $(\cP \to \LMod_T(\cP))$. 
	
	Indeed, the forgetful functor $\LMod(\cP) \to \Alg(\cQ)$ is a Cartesian and coCartesian fibration, with Cartesian transport given by restriction and coCartesian transport by extension of scalars; see \HA{Cor.}{4.2.3.2} and \HA{Lemma}{4.8.3.15} respectively. Regarded as a coCartesian fibration, it classifies a functor $\LMod_{(-)}(\cP): \Alg(\cQ) \to \widehat{\cat}$ sending $T \mapsto \LMod_T(\cP)$. It factors through the subcategory $\PrL$ since extension of scalars preserves colimits and $\LMod_T(\cP)$ is presentable.
	
	We now lift this functor $\LMod_{-}(\cP): \Alg(\cQ) \to \PrL$ through $\PrV$:	Combining \HA{Prop.}{4.3.2.5} and \HA{Prop.}{4.3.2.6}, we learn that taking sections on $\LM^\otimes$ of the composition $\cP^\otimes \to \BM^\otimes \to \LM^\otimes \times \RM^\otimes$ exhibiting $\cP$ as a left $\cQ$- and right $\cV$-module, we obtain a right $\cV$-module $\LMod(\cP)_{\cV}$, or equivalently a coCartesian fibration $\LMod(\cP)^{\otimes}\to \RM^{\otimes}$ whose pullback along $\Ass^{\otimes} \to \RM^{\otimes} $ is $\cV^{\otimes}$. As in \HA{Prop.}{4.8.3.22} we can parametrize by the algebras in $\cQ$, yielding a coCartesian fibration $\LMod(\cP)^\otimes \to \Alg(\cQ) \times \RM^\otimes$ which unstraightens to a functor $\LMod_{-}(\cP): \Alg(\cQ) \to \RMod_{\cV}(\widehat{\cat})$. Since the forgetful functor $\LMod_T(\cP) \to \cP$ creates colimits, this functor factors through $\PrV$.

	Finally, $\Alg(\cQ)$ has an initial object $1_{\cQ}$ which the functor $\Alg(\cQ) \to \PrV$ sends to $\LMod_{1_{\cQ}}(\cP) = \cP$, so we obtain an induced functor $\Alg(\cQ) \simeq \Alg(\cQ)_{1_{\cQ}/} \to (\PrV)_{\cP/}$.
	\end{constr}

The functor $\EM$ from~\cref{cons:EM} in fact lands in the full subcategory of $(\PrV)_{\cP/}$ on the internal left adjoints and colimit-dominant functors:
	\begin{prop}
		\label{prop:EMisiL}
		For any $\cQ, \cV \in \Alg(\PrL), \cP \in \BMod{\cQ}{\cV}(\PrL)$ and $T\in \Alg(\cQ)$, the free-module functor $\cP \to \LMod_T(\cP)$ is internal left adjoint in $\PrV$ and colimit-dominant. 
	\end{prop}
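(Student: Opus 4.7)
The plan is to unpack both conditions and reduce them to standard properties of the forgetful functor $U: \LMod_T(\cP) \to \cP$, which is the right adjoint to $\Free := \mathrm{Free}$ by \HA{Cor.}{4.2.4.8}. Concretely, I want to check that $U$ (i)~preserves small colimits, (ii)~is strongly $\cV$-linear, and (iii)~is conservative; by \cref{defin:iL} the first two conditions give internal left adjointness of $\Free$, while by \cref{lem:colimit-dominant-spaces} the third gives colimit-dominance.

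For (i), $U$ in fact creates small colimits by \HA{Cor.}{4.2.3.3} (applied to the coCartesian fibration $\LMod(\cP) \to \Alg(\cQ)$ used in \cref{cons:EM}), so in particular it preserves them. For (ii), the key point is that the right $\cV$-action on $\LMod_T(\cP)$ constructed in \cref{cons:EM} is obtained by taking sections of the structure on $\cP^{\otimes} \to \BM^{\otimes}$ that exhibits $\cP$ as a $\cQ$-$\cV$-bimodule, and then restricting the resulting coCartesian fibration $\LMod(\cP)^{\otimes} \to \Alg(\cQ) \times \RM^{\otimes}$ to the fiber at $T$. Unwinding this, the forgetful map $\LMod_T(\cP)^{\otimes} \to \cP^{\otimes}$ over $\RM^{\otimes}$ is a map of coCartesian fibrations, i.e.\ a morphism in $\RMod_{\cV}(\widehat{\cat})$, so $U$ is strongly $\cV$-linear by construction; together with (i), $U$ is therefore a morphism in $\PrV$.

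For (iii), conservativity of the forgetful functor from modules over an algebra is \HA{Cor.}{4.2.3.2}. Combining (i)-(iii), $\Free \dashv U$ is an adjunction internal to $\PrV$ in the sense of \cref{defin:iL} with conservative right adjoint, which by \cref{lem:colimit-dominant-spaces} (or equivalently \cref{cor:colimit-dominant}) means $\Free$ is moreover colimit-dominant.

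I expect no real obstacle here: once one correctly identifies the right adjoint as the forgetful functor and recalls that the construction in \cref{cons:EM} was built so that $U$ is strictly $\cV$-linear (rather than just lax $\cV$-linear, which would have been automatic), the statement reduces to three standard facts from \HAch{4}. The only mild subtlety is making sure that the $\cV$-linearity we need is the strong one, which is why invoking the particular construction of $\LMod_T(\cP) \in \PrV$ from \cref{cons:EM} — and not merely its existence as a presentable category — is essential.
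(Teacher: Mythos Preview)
Your proposal is correct and follows essentially the same approach as the paper: identify the right adjoint as the forgetful functor, check conservativity (for colimit-dominance via \cref{lem:colimit-dominant-spaces}), colimit-preservation, and strong $\cV$-linearity (by construction of the $\cV$-action in \cref{cons:EM}). The only minor difference is in citations: the paper invokes \HA{Cor.}{4.2.3.5} for colimit-preservation rather than 4.2.3.3, and does not give an explicit HA reference for conservativity.
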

	\begin{proof}
	The right adjoint is the forgetful functor $\LMod_T(\cP) \to \cP$ which is conservative, hence by~\cref{lem:colimit-dominant-spaces} $\cP \to \LMod_T(\cP)$ is colimit-dominant. Moreover, the forgetful functor $\LMod_T(\cP) \to \cP$ preserves colimits by  \HA{Cor.}{4.2.3.5} and its lax $\cV$-module structure is strong by the definition of the right $\cV$-action on $\LMod_T(\cP)$. 
	\end{proof}

	\begin{obs}\label{obs:EMmonad}
	For $\cV \in \Alg(\PrL)$ and $\cP \in \PrV$, recall from \cref{prop:LinFuniHom} that $\LinEnd(\cP) \in \Alg(\PrL)$ is the internal end of $\cP$ for the left $\PrL$ action on $\PrV$. By \cref{reminder:endo}, this enhances $\cP$ to an object of $\LMod_{\LinEnd(\cP)}(\PrV) \simeq \BMod{\LinEnd(\cP)}{\cV}(\PrL)$. 
	We may thus apply \cref{cons:EM} to it and obtain a functor \[
	\EM: \Alg(\LinEnd(\cP)) \to (\PrV)_{\cP/}~.
	\]
\end{obs}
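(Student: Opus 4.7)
The plan is to stitch together three ingredients already established in the paper, with essentially no new content required. First, \cref{prop:LinFuniHom} guarantees that $\LinEnd(\cP) = \LinFun(\cP, \cP)$ exists as the internal end of $\cP$ with itself, taken with respect to the left $\PrL$-module structure on $\PrV$ coming from the presentable tensor product. Thus $\LinEnd(\cP) \in \PrL$ is available as a well-defined object.

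Next, I apply \cref{reminder:endo} with $V := \PrL$, $M := \PrV$ and $m := \cP$. This both endows $\LinEnd(\cP)$ with its canonical (universal) algebra structure, placing it in $\Alg(\PrL)$, and simultaneously promotes $\cP$ to an object of $\LMod_{\LinEnd(\cP)}(\PrV)$ via the universal left $\LinEnd(\cP)$-action. Unwinding the definition $\PrV = \RMod_{\cV}(\PrL)$ and invoking the standard iterated-module identification from \HA{Thm.}{4.3.2.7}, one rewrites
\[
\LMod_{\LinEnd(\cP)}(\PrV) \;=\; \LMod_{\LinEnd(\cP)}(\RMod_{\cV}(\PrL)) \;\simeq\; \BMod{\LinEnd(\cP)}{\cV}(\PrL),
\]
so that $\cP$ acquires the structure of a $(\LinEnd(\cP), \cV)$-bimodule in presentable categories.

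Finally, this is precisely the input required by \cref{cons:EM}: setting $\cQ := \LinEnd(\cP)$ and feeding in the bimodule $\cP \in \BMod{\LinEnd(\cP)}{\cV}(\PrL)$, the construction immediately produces the advertised functor
\[
\EM : \Alg(\LinEnd(\cP)) \longrightarrow (\PrV)_{\cP/}.
\]
The only subtle point worth flagging is a set-theoretic one: \cref{reminder:endo} is formulated for a monoidal $1$-category $V$ and a left $V$-module category $M$ living in $\cat$, whereas here $V = \PrL$ and $M = \PrV$ are large. However, under the universe conventions fixed in the introduction, the statement and proof of \cref{reminder:endo} apply verbatim after replacing $\cat$ by $\widehat{\cat}$; no genuine obstacle arises and no further argument is needed.
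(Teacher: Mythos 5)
Your proposal is correct and follows exactly the paper's own route: \cref{prop:LinFuniHom} for the internal end, \cref{reminder:endo} for the universal algebra structure and left action, the identification $\LMod_{\LinEnd(\cP)}(\RMod_{\cV}(\PrL)) \simeq \BMod{\LinEnd(\cP)}{\cV}(\PrL)$ via \HA{Thm.}{4.3.2.7}, and then \cref{cons:EM} with $\cQ = \LinEnd(\cP)$. The universe-size point you flag is already handled by the paper's conventions (cf.\ the last sentence of \cref{reminder:iHom}), so nothing further is needed.
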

	\begin{defin}
		Let $\cV \in \Alg(\PrL)$ and $\cP \in \PrV$. An \emph{internal monad on $\cP$ in $\PrV$} is a $T \in \Alg(\LinEnd(\cP))$, its \emph{Eilenberg-Moore object} is $\LMod_T(\cP) \in \PrV$ and its \emph{Eilenberg-Moore adjunction} is the free-forget adjunction $\cP \rightleftarrows \LMod_T(\cP)$.\end{defin}

		Given a monad, the functor from \cref{obs:EMmonad} therefore assigns its Eilenberg-Moore adjunction, which by \cref{prop:EMisiL} is an internal adjunction in $\PrV$. Conversely, given any such internal adjunction we may recover a monad as follows:
	\begin{lemma}[{cf.~\HA{Lem.}{4.7.3.1}}]
		\label{obs:adjendo}
		Given an internal adjunction $F: \mathcal{P} \rightleftarrows \mathcal{M} :F^\rR$ in $\PrV$, the transformation
		\[ F \circ (F^\rR \circ F) \simeq (F\circ F^\rR) \circ F \to F
		\]
		exhibits $F^\rR \circ F\in \LinEnd(\cP)$ as the internal endomorphism object of $F\in \LinFun(\cP, \cM)$ with respect to the right $\LinEnd(\cP)$-action on $\LinFun(\cP, \cM)$ from \cref{prop:bimoduleihom}. In particular, $F^\rR \circ F$ inherits the structure of a monad on $\cP$ in $\PrV$ by \cref{reminder:endo}.
		\end{lemma}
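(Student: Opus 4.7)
\begin{sketch}
The plan is to verify directly that $F^\rR \circ F$ satisfies the universal property of the internal endomorphism object of $F$ in $\LinFun(\cP, \cM)$, regarded as a right $\LinEnd(\cP)$-module category via the action from \cref{prop:bimoduleihom} (which on objects is given by precomposition, $G \otimes T := G \circ T$). Concretely, I need to produce, for every $G \in \LinEnd(\cP)$, a natural equivalence
\[ \Map_{\LinEnd(\cP)}(G, F^\rR \circ F) \simeq \Map_{\LinFun(\cP, \cM)}(F \circ G, F) \]
whose universal element at $G = F^\rR \circ F$ is the displayed morphism $F F^\rR F \xrightarrow{\epsilon F} F$, where $\epsilon : F F^\rR \To \id_\cM$ denotes the counit of $F \dashv F^\rR$.

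The main input is that, since $F \dashv F^\rR$ is an adjunction \emph{internal} to $\PrV$, both $F$ and $F^\rR$ are themselves $\cV$-linear and colimit-preserving. Consequently, postcomposition with either functor preserves $\cV$-linearity and colimit-preservation; combined with the unit and counit of $F \dashv F^\rR$, this yields an adjunction of postcomposition functors
\[ F \circ - : \LinEnd(\cP) \rightleftarrows \LinFun(\cP, \cM) : F^\rR \circ - \, . \]
This is a direct special case of \cref{prop:adjointable}(1) applied to $F$ with the auxiliary morphism chosen to be $\id_\cP$, but it can also be verified by hand from the triangle identities. Taking $\Map(-, F)$ of this adjunction yields the desired equivalence, and chasing $\id_{F^\rR F}$ through its unit identifies the universal element with $\epsilon F$ as claimed.

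The monad structure on $F^\rR \circ F$ is then furnished by \cref{reminder:endo}, which automatically endows any internal endomorphism object with a canonical algebra structure. The only subtle point is ensuring that the postcomposition adjunction genuinely lands in $\LinEnd(\cP)$ rather than merely in lax $\cV$-linear endofunctors --- and this is precisely what the internal adjointness of $F^\rR$ guarantees.
\end{sketch}
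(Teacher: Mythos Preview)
Your proof is correct and follows the same approach as the paper: both establish the required universal property by using that postcomposition $F\circ- : \LinEnd(\cP) \to \LinFun(\cP,\cM)$ has right adjoint $F^\rR\circ-$, which is exactly what internality of the adjunction provides. The paper phrases this more tersely as the composite $\Map_{\LinEnd(\cP)}(H, F^\rR F) \to \Map_{\LinFun(\cP,\cM)}(F H, F F^\rR F) \to \Map_{\LinFun(\cP,\cM)}(F H, F)$ being an equivalence, but this is precisely the mapping-space manifestation of the postcomposition adjunction you spell out.
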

		\begin{proof}The proof is entirely analogous to \HA{Lem.}{4.7.3.1}: For each $H \in \LinEnd(\cP)$ the composition
		\[ \Map_{\LinEnd(\cP)}(H, F^\rR \circ F) \to \Map_{\LinFun(\cP, \cM)}(F \circ H , F \circ F^\rR \circ F ) \to \Map_{\LinFun(\cP, \cM)}(F \circ H , F) \]
		is an equivalence since $F, F^\rR$ are adjoint.
\end{proof}
\begin{rem}
An analogous argument shows that $F^\rR \circ F$ is also isomorphic to the internal endomorphism object of $F^\rR \in \LinFun(\cM, \cP)$ with respect to the left $\LinEnd(\cP)$ action on $\LinFun(\cM, \cP)$. \end{rem}

	Using these observations, we now show that the functor from \cref{obs:EMmonad} is in fact fully faithful and admits a right adjoint.
	
	\begin{theorem}
		\label{thm:monadicff}		Let $\cV \in \Alg(\PrL)$ and $\cP \in \PrV$. The functor \[\EM: \Alg(\LinEnd(\cP)) \to (\PrV)_{\cP /}\]from \cref{obs:EMmonad} is fully faithful and admits a right adjoint which sends $(F:\cP \to \cM) \in (\PrV)_{\cP/}$ to the endomorphism object $\iEnd_{\LinFun(\cP, \cM)}(F) \in \LinEnd(\cP)$ in the presentable right $\LinEnd(\cP)$-module category $\LinFun(\cP, \cM)$ (with action induced by composition as defined in \cref{prop:bimoduleihom}).
	\end{theorem}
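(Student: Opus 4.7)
The plan is to construct a right adjoint $R \colon (\PrV)_{\cP/} \to \Alg(\LinEnd(\cP))$ by setting $R(F\colon \cP \to \cM) := \iEnd_{\LinFun(\cP, \cM)}(F)$, where the internal end uses the right $\LinEnd(\cP)$-action on $\LinFun(\cP, \cM)$ by precomposition (\cref{prop:bimoduleihom}). By \cref{reminder:endo}, $R(F)$ is then characterized by the universal property
\[
\Map_{\Alg(\LinEnd(\cP))}(T, R(F)) \simeq \LMod_T(\LinFun(\cP, \cM)) \times_{\LinFun(\cP, \cM)} \{F\},
\]
i.e.\ algebra maps $T \to R(F)$ classify left $T$-module structures on $F$ inside the right $\LinEnd(\cP)$-module $\LinFun(\cP, \cM)$.

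To establish the adjunction $\EM \dashv R$, I would match this with $\Map_{(\PrV)_{\cP/}}(\EM(T), F)$, the space of $\cV$-linear cocontinuous $G\colon \LMod_T(\cP) \to \cM$ equipped with $G \circ \mathrm{Free}_T \simeq F$. The forward map postcomposes with $G$ the tautological $T$-action on $\mathrm{Free}_T \in \LinFun(\cP, \LMod_T(\cP))$ induced by the multiplication of $T$ (i.e.\ $T \circ \mathrm{Free}_T \simeq \mathrm{Free}_T \circ T \to \mathrm{Free}_T$). The inverse requires a parametrized universal property of $\LMod_T(\cP)$ inside $\PrV$: that it corepresents on $(\PrV)_{\cP/}$ the functor sending $F$ to the space of $T$-actions on it. This can be derived by upgrading the classical universal property of $\LMod_T(\cP)$ from \HA{Prop.}{4.7.1.40} to $\PrV$ via the closed monoidal structure of \cref{prop:LinFuniHom}, or alternatively by identifying $\LMod_T(\cP)$ with an appropriate relative tensor product in $\PrV$ and invoking \cref{prop:reltensoradj}.

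With the adjunction in place, fully faithfulness reduces to showing the unit $T \to R(\EM(T)) = \iEnd_{\LinFun(\cP, \LMod_T(\cP))}(\mathrm{Free}_T)$ is an equivalence. But $\mathrm{Free}_T \dashv \mathrm{Forget}_T$ is an internal adjunction in $\PrV$ by \cref{prop:EMisiL}, so \cref{obs:adjendo} identifies $\iEnd_{\LinFun(\cP, \LMod_T(\cP))}(\mathrm{Free}_T) \simeq \mathrm{Forget}_T \circ \mathrm{Free}_T = T$ as algebras in $\LinEnd(\cP)$; unwinding the universal properties of both sides shows that this isomorphism is precisely the unit. The main technical obstacle will be the parametrized universal property of $\LMod_T(\cP)$ inside $\PrV$, where the $2$-categorical content of the Eilenberg--Moore construction is doing the actual work; all other steps are essentially formal consequences of \cref{reminder:endo} and \cref{obs:adjendo}.
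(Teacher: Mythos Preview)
Your approach is essentially the same as the paper's: the paper establishes the adjunction by rewriting $\LinFun(\LMod_T(\cP), \cM)$ via the identification $\LMod_T(\cP) \simeq \LMod_T(\LinEnd(\cP)) \otimes_{\LinEnd(\cP)} \cP$ together with \cref{prop:reltensoradj} and \HA{Thm.}{4.8.4.1} to obtain $\RMod_T(\LinFun(\cP,\cM))$, exactly the relative tensor product route you suggest, and then proves fully faithfulness via \cref{prop:EMisiL} and \cref{obs:adjendo} just as you do. One small slip: since $\LinFun(\cP,\cM)$ carries a \emph{right} $\LinEnd(\cP)$-action by precomposition, the universal property of the endomorphism object yields $\RMod_T(\LinFun(\cP,\cM)) \times_{\LinFun(\cP,\cM)} \{F\}$ rather than $\LMod_T$.
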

	\begin{proof}
		We first show that taking internal ends defines a right adjoint for $\EM$. Given $T \in \Alg(\LinEnd(\cP))$ and $(\cP \to \cM) \in (\PrV)_{\cP/}$, the category of $\cV$-linear functors over $\cP$ fits into the pullback diagram
		\[
		\begin{tikzcd}
			\Fun^{\rL}_{\cV, \cP/}\left(\LMod_T(\cP), \cM \right) \arrow[r] \arrow[d] \arrow[dr, phantom, "\scalebox{1}{$\lrcorner$}" , very near start, color=black] & \LinFun\left(\LMod_T(\cP), \cM \right) \arrow[d] \\
			\{ \cP \to \cM \} \arrow[r] & \LinFun(\cP , \cM)
		\end{tikzcd}
		\]
		where the right vertical map is induced by precomposition with the free functor $\cP \to \LMod_T(\cP)$. But we can rewrite the upper right entry as
		\begin{align*}
			&\LinFun\left(\LMod_T(\cP), \cM \right) \simeq \LinFun(\LMod_T(\LinEnd(\cP)) \otimes_{\LinEnd(\cP)} \cP, \cM) \simeq \\ &\quad \simeq \Fun^{\rL}_{\LinEnd(\cP)} \left(\LMod_T(\LinEnd(\cP)), \LinFun(\cP, \cM) \right) \simeq \RMod_T (\LinFun(\cP, \cM))
		\end{align*}
		using \HA{Thm.}{4.8.4.6}, \cref{prop:reltensoradj} and \HA{Thm.}{4.8.4.1}. Now recall from \cref{reminder:endo} that algebra homomorphisms that go from $T$ into the endomorphism object $\iEnd_{\LinFun(\cP, \cM)}(\cP \to \cM) \in \LinEnd(\cP)$ agree with the very same pullback \[\RMod_T(\LinFun(\cP, \cM)) \times_{\LinFun(\cP, \cM)} \{\cP \to \cM\}.\] Thus, the functor $\Alg(\LinEnd(\cP)) \to (\PrV)_{\cP/}$ has a pointwise right adjoint\footnote{A \emph{pointwise right adjoint of a functor $F: \cC \to \cD$ at an object $d\in \cD$} is an object in $\cC$ which represents the functor $\Map_{\cD}(F- , d): \cC^{\op} \to \Spaces$. A functor $F$ has a right adjoint if and only if it has a pointwise right adjoint at every object $d\in \cD$; the right adjoint in this case is given by the induced factorization $\cD \to \PSh(\cC)$ through the full subcategory $\cC$.   } and  therefore a right adjoint. 
		
		Now, it suffices to show that the unit $T \to \iEnd_{\LinFun(\cP, \cM)}(\cP \to \LMod_T(\cP)) $ is an isomorphism for any algebra $T$. In other words, the canonical $T$-action by precomposition exhibits $T$ as the endomorphism object of $\cP \to \LMod_T(\cP)$. This is precisely the statement of \cref{obs:adjendo} using \cref{prop:EMisiL}  that $\cP \to \LMod_T(\cP)$ is an internal left adjoint.
	\end{proof}
	
	\begin{rem} For $\cP = \cV$ this recovers \HA{Thm.}{4.8.5.5}. 	\end{rem}


	\subsection{A monadicity theorem for presentable module categories}
	
	We now characterize the image of the fully faithful functor $\Alg(\LinEnd(\cP)) \to (\PrV)_{\cP/}$ thereby proving a variant of the Barr-Beck-Lurie monadicity theorem for presentable module categories.

Given an internal adjunction $F: \cP \rightleftarrows \cM: F^\rR$ in $\PrV$, the composition $F^\rR \circ F$ is by~\cref{obs:adjendo} the endomorphism object of $F$ and thus inherits the structure of a monad. Moreover, the counit of the adjunction from~\cref{thm:monadicff} defines a morphism $\LMod_{F^\rR \circ F}(\cP) \to \cM$ in $(\PrV)_{\cP/}$.
\begin{defin}\label{defin:internalmonadic} An internal adjunction $F: \cP \rightleftarrows \cM: F^\rR$ in $\PrV$ is called \emph{internally monadic} if this induced functor $\LMod_{F^\rR \circ F} (\cP) \to \cM$ is an equivalence. 
\end{defin}

	\begin{obs}
		\label{obs:counitcomparison}
		By \cref{thm:monadicff}, given an $F: \cP \to \cM$ in $\PrV$ the counit \[\LMod_{\iEnd_{\LinFun(\cP, \cM)}(F)}(\cP) \to \cM\] is an equivalence if and only if $F$ is in the image of $\EM: \Alg(\LinEnd(\cP)) \to (\PrV)_{\cP/}$. Thus, the image consists precisely of those internal left adjoints $F: \cP \to \cM$ which are part of an internal monadic adjunction. 
	\end{obs}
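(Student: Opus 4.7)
The plan is to read off both assertions directly from \cref{thm:monadicff} together with two structural inputs already established: \cref{prop:EMisiL} (which shows the image of $\EM$ lies in the internal left adjoints) and \cref{obs:adjendo} (which identifies endomorphism objects of internal left adjoints with $F^\rR \circ F$).

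For the first assertion, I would invoke the general categorical principle that for a fully faithful left adjoint $L \dashv R$, an object $d$ lies in the essential image of $L$ if and only if the counit $LR(d) \to d$ is an equivalence. Applied to the adjunction of \cref{thm:monadicff}, whose right adjoint sends $F: \cP \to \cM$ to $\iEnd_{\LinFun(\cP, \cM)}(F) \in \LinEnd(\cP)$, this immediately identifies the essential image of $\EM$ with the class of those $F$ for which the counit $\LMod_{\iEnd_{\LinFun(\cP, \cM)}(F)}(\cP) \to \cM$ is an equivalence. No additional computation is required here since full faithfulness of $\EM$ was already established in \cref{thm:monadicff}.

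For the second assertion, I would observe that any $F$ in the image of $\EM$ is internal left adjoint by \cref{prop:EMisiL}, so the image is automatically contained in the subcategory of internal left adjoints in $(\PrV)_{\cP/}$. Restricted to this subcategory, \cref{obs:adjendo} supplies a canonical isomorphism $\iEnd_{\LinFun(\cP, \cM)}(F) \simeq F^\rR \circ F$ of algebras in $\LinEnd(\cP)$; hence the counit $\LMod_{\iEnd_{\LinFun(\cP, \cM)}(F)}(\cP) \to \cM$ from the first step may be rewritten as $\LMod_{F^\rR \circ F}(\cP) \to \cM$. By \cref{defin:internalmonadic}, the latter being an equivalence is precisely the condition that $F \dashv F^\rR$ is internally monadic, and combining with the first assertion identifies the image of $\EM$ with the internally monadic internal left adjoints.

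Since this is a bookkeeping observation, I do not anticipate any real obstacle: the only subtle point is ensuring that the identification in \cref{obs:adjendo} (made up to equivalence of algebras in $\LinEnd(\cP)$, not merely of underlying objects) is compatible with the counit of \cref{thm:monadicff}, but this compatibility is built into the construction of the right adjoint in the proof of \cref{thm:monadicff}, where the unit was shown to be an isomorphism by appeal to exactly the same endomorphism-object universal property that underlies \cref{obs:adjendo}.
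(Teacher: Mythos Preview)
Your proposal is correct and unpacks exactly the reasoning the paper leaves implicit in this observation: the first assertion is the standard fact that a fully faithful left adjoint detects its image via invertibility of the counit, and the second assertion follows by combining \cref{prop:EMisiL} with \cref{obs:adjendo} to rewrite the counit as $\LMod_{F^\rR \circ F}(\cP) \to \cM$, which is the defining condition of \cref{defin:internalmonadic}. The compatibility you flag at the end is indeed built in, since \cref{defin:internalmonadic} defines the comparison functor as the counit of the adjunction from \cref{thm:monadicff}.
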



The following proposition is a presentable version of~\HA{Cor.}{4.7.3.16} and will be key to our monadicity theorem. 
	\begin{prop} \label{prop:keymonadicity}Given a diagram in $\PrL$ 
	\[\begin{tikzcd}& \cA \arrow[dl, "F"'] \arrow[dr, "F'"] \\ \cB \arrow[rr, "H"] && \cC
	\end{tikzcd}
	\]
	for which $F, F'$ are internal left adjoints and $F$ is colimit-dominant, and assume that the induced transformation \[F^\rR F \To F^\rR H^\rR H F  \simeq (F')^\rR F'\] is an isomorphism.   Then, $H$ is internal left adjoint and fully faithful. 
	If $F'$ is also colimit-dominant, then $H$ is an equivalence. 
	\end{prop}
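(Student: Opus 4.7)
The plan is to establish the three assertions in sequence: first that $H$ is internal left adjoint, then that $H$ is fully faithful, and finally under the additional colimit-dominance hypothesis on $F'$ that $H$ is an equivalence.

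For the first assertion I would directly invoke \cref{lem:iLunder} applied to the commuting triangle $F' = H\circ F$: since $F$ is internal left adjoint and colimit-dominant, and $F' = HF$ is internal left adjoint, the lemma yields that $H$ is internal left adjoint. Unpacking the argument, $F^\rR$ is conservative by \cref{lem:colimit-dominant-spaces} while both $F^\rR$ and $F^\rR \circ H^\rR = (F')^\rR$ preserve colimits and $\cV$-tensoring, so $H^\rR$ does too.

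For the second assertion, the natural transformation $F^\rR F \To F^\rR H^\rR H F$ appearing in the hypothesis is nothing but $F^\rR \cdot (\eta^H F)$, where $\eta^H:\id_{\cB}\To H^\rR H$ is the unit of the adjunction $H\dashv H^\rR$. Its being an isomorphism, combined with the conservativity of $F^\rR$ just established, forces $\eta^H F: F \To H^\rR HF$ to be a pointwise isomorphism, i.e.\ $\eta^H_b$ is an iso for every $b$ in the full image of $F$. To upgrade this to all of $\cB$, I would observe that the full subcategory of $b\in \cB$ with $\eta^H_b$ an iso is closed under colimits and $\cV$-tensoring --- since both $\id_\cB$ and $H^\rR H$ preserve colimits and $\cV$-tensoring, using step one for $H^\rR$ --- and contains $F(\cA)$, which colimit-and-$\cV$-generates $\cB$ by \cref{cor:colimit-dominant}. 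Hence $\eta^H$ is invertible and $H$ is fully faithful.

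For the third assertion, if $F'$ is also colimit-dominant then so is $H$ by the cancellation property (\cref{obs:cancellationcolimdom}), so $H^\rR$ is conservative by \cref{lem:colimit-dominant-spaces}. The triangle identity $H^\rR \epsilon^H \cdot \eta^H H^\rR = \id_{H^\rR}$, together with $\eta^H H^\rR$ being an iso from the previous step, shows that $H^\rR \epsilon^H$ is an iso, and conservativity of $H^\rR$ upgrades this to $\epsilon^H$ itself being an iso. Hence $H$ is an equivalence. The main subtlety I expect is the bookkeeping in the first step, namely checking that the hypothesis of the diagram lying in $\PrL$ is compatible with the $\PrV$-version of \cref{lem:iLunder}; once $H$ is known to be internal left adjoint the remaining argument is the familiar formal dance between conservativity and colimit-closure.
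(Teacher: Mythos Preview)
Your argument is correct and matches the paper's proof almost exactly for the first two steps: use conservativity of $F^\rR$ to upgrade the hypothesis to $\eta^H F$ being an isomorphism, invoke \cref{lem:iLunder} to make $H^\rR H$ colimit-preserving, then extend $\eta^H$ to all of $\cB$ by colimit-closure. Your worry about the $\PrL$/$\PrV$ mismatch is unfounded: the proposition lives in $\PrL = \Pr_{\Spaces}$, so \cref{lem:iLunder} applies with $\cV=\Spaces$ and the ``$\cV$-tensoring'' clauses you mention are vacuous (you can simply drop them).

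For the final step you take a slightly different route than the paper. The paper observes that $H$ fully faithful together with $F'$ colimit-dominant forces $H$ to be an equivalence by uniqueness of the colimit-dominant/fully-faithful factorization (\cref{lem:prfact}), applied to the two factorizations $F' = H\circ F = \id_{\cC}\circ F'$. Your argument instead shows $H$ is colimit-dominant via cancellation, then uses the triangle identity plus conservativity of $H^\rR$ to invert the counit. Both are short and valid; the paper's version packages the endgame into a single appeal to the factorization system, while yours is more hands-on and avoids that dependency.
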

	\begin{proof}
By ~\cref{lem:colimit-dominant-spaces}, $F^\rR$ is conservative and hence $F \To H^\rR H F \simeq H^\rR F'$ is an equivalence.	By~\cref{lem:iLunder}, $H$ is an internal left adjoint and hence $H^\rR$ and thus $H^\rR H$ preserve colimits. Since $F$ is colimit-dominant, this implies that the counit $\id \To H^\rR H$ is also an equivalence and hence that $H$ is fully faithful. 
	If $F'$ is also colimit-dominant, it follows from the uniqueness of factorization in the colimit-dominant/fully faithful factorization system on $\PrL$ from ~\cref{lem:prfact} that $H$ is an equivalence. 
	\end{proof}

	\begin{theorem}\label{thm:monadicity}
	Let $\cV \in \Alg(\PrL)$ and $\cP \in \PrV$. 	An object $(F:\cP \to \cM)$ in $(\PrV)_{\cP/}$ is in the image of the fully faithful functor $\EM: \Alg(\LinEnd(\cP)) \hookrightarrow (\PrV)_{\cP/}$ from \cref{thm:monadicff} if and only if $F$ is internally left adjoint in $\PrV$ and one of the following equivalent conditions holds:
	\begin{enumerate}[(1)]
	\item The internal adjunction $ F \dashv F^\rR$ is internally monadic in $\PrV$ as in \cref{defin:internalmonadic};
	\item The adjunction $F\dashv F^\rR$ is monadic in the sense of Barr-Beck-Lurie~\HA{Def.}{4.7.3.4}; 
	\item $F^\rR$ is conservative;
	\item $F$ is colimit-dominant.
	\end{enumerate}
	\end{theorem}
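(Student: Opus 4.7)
The plan is to first invoke \cref{obs:counitcomparison}, which already identifies the image of $\EM$ with those internally left adjoint $F:\cP \to \cM$ for which the comparison $\LMod_{F^\rR F}(\cP) \to \cM$ is an equivalence, i.e.\ with those satisfying~(1). It remains to establish the equivalence of (1)--(4) under the standing assumption that $F$ is internally left adjoint. The implication (3)~$\Leftrightarrow$~(4) is immediate from \cref{lem:colimit-dominant-spaces}; and (1)~$\Rightarrow$~(3) is immediate since the forgetful functor $\LMod_T(\cP) \to \cP$ is always conservative and gets identified with $F^\rR$ via~(1).

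The heart of the argument is (4)~$\Rightarrow$~(1), which I would deduce by applying \cref{prop:keymonadicity} to the triangle
\[
\begin{tikzcd}
& \cP \arrow[dl, "\mathrm{Free}"'] \arrow[dr, "F"] & \\
\LMod_{F^\rR F}(\cP) \arrow[rr, "H"] && \cM
\end{tikzcd}
\]
where $H$ is the comparison morphism furnished by the unit of the adjunction of \cref{thm:monadicff}. By \cref{prop:EMisiL}, $\mathrm{Free}$ is internally left adjoint and colimit-dominant; $F$ is internally left adjoint by hypothesis and colimit-dominant by~(4). The required compatibility condition---that $\mathrm{Forget} \circ \mathrm{Free} \to F^\rR \circ F$ is an isomorphism---holds essentially tautologically: by \cref{obs:adjendo} the endomorphism monad of $F \in \LinFun(\cP, \cM)$ is $F^\rR F$, so $H$ is by construction the image under $\EM$ of the identity on this monad, making both sides canonically equal to $F^\rR F$. \cref{prop:keymonadicity} then yields that $H$ is an equivalence, establishing~(1).

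To close the equivalences, I would handle~(2) separately. Because $F$ is internally left adjoint, $F^\rR$ preserves \emph{all} small colimits, hence in particular all $F^\rR$-split simplicial objects. Lurie's monadicity theorem \HA{Thm.}{4.7.3.5} therefore asserts that $F \dashv F^\rR$ is monadic precisely when $F^\rR$ is conservative, giving (2)~$\Leftrightarrow$~(3).

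The main obstacle I anticipate is not conceptual but one of bookkeeping: making precise---for the interaction between (1) and (2)---that the internal Eilenberg-Moore object $\LMod_{F^\rR F}(\cP) \in \PrV$ of \cref{cons:EM} has as underlying $\infty$-category the ordinary Barr-Beck-Lurie Eilenberg-Moore category of the endofunctor monad $F^\rR F$ on $\cP$, so that the internal comparison $H$ really recovers the classical comparison after forgetting the $\cV$-linear structure. This should be a routine unwinding of how \cref{cons:EM} promotes the underlying coCartesian fibration of ordinary module categories to one carrying a compatible $\cV$-module structure, but it does require care to track how algebras in $\LinEnd(\cP)$ correspond to monads on $\cP$ at the level of underlying $\infty$-categories.
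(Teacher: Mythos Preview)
Your proof is correct and follows essentially the same route as the paper's: invoke \cref{obs:counitcomparison} for the image characterization, use \cref{lem:colimit-dominant-spaces} for (3)$\Leftrightarrow$(4), Barr--Beck--Lurie for (2)$\Leftrightarrow$(3), the conservativity of the forgetful functor for (1)$\Rightarrow$(3), and \cref{prop:keymonadicity} applied to the Eilenberg--Moore triangle for (4)$\Rightarrow$(1). The obstacle you anticipate in your final paragraph does not arise---since (2) enters the argument only through its equivalence with (3) via the classical Barr--Beck--Lurie theorem, you never need to identify the internal comparison $H$ with the classical comparison functor, and the paper's proof makes no such identification either.
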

	\begin{proof}
	By \cref{obs:counitcomparison}, $F$ is in the image of $\EM$ if and only if it is internally left adjoint and (1) holds. We will now show that (1) is equivalent to (2), (3), and (4). 
	By the Barr-Beck-Lurie monadicity theorem \HA{Thm.}{4.7.3.5}, (2) is equivalent to (3). By \cref{cor:colimit-dominant}, (3) is equivalent to (4). Moreover, if $F\dashv F^\rR$ is internally monadic in $\PrV$, then $F: \cP \to \cM$ is equivalent to the composite $ \cP \to \LMod_T(\cP) \simeq \cM$ and hence is colimit-dominant, proving that (1) implies (4). We will now show that (4) implies (1). 
	By construction, the morphism $\LMod_{F^\rR \circ F} (\cP) \to \cM$ is in $(\PrV)_{\cP/}$; hence we have a commuting diagram in $\PrV$ and thus in $\PrL$:
		\[\begin{tikzcd}
		& \cP \arrow[dl] \arrow[dr, "F"] \\ 
		\LMod_{F^\rR \circ F} (\cP)  \arrow[rr]&& \cM 
	\end{tikzcd}\; .
	\]
	By assumption $F$ is internally left adjoint and colimit-dominant, and so is the other diagonal functor by \cref{prop:EMisiL}. Moreover, the composite of the free- and forget functor $\cP \to \LMod_{F^\rR F} (\cP) \to \cP$ is equivalent to the underlying functor of the monad which by \cref{obs:adjendo} is  $F^\rR \circ F$. Thus, the conditions of \cref{prop:keymonadicity} are satisfied and $\LMod_{F^\rR \circ F} \to \cM$ is an equivalence, hence $F \dashv F^\rR$ is internally monadic in the sense of \cref{defin:internalmonadic}.	\end{proof}
\begin{rem}
The equivalences $(1) \Leftrightarrow (3) \Leftrightarrow(4)$ in our \cref{thm:monadicity} do not depend on Lurie's monadicity theorem \HA{Thm.}{4.7.3.5}. Instead our proof uses \cref{prop:keymonadicity} as a simpler presentable variant of \HA{Cor.}{4.7.3.16}.
\end{rem}

	\section{Marked modules}
	\label{sec:markednew}

	We will now use our monadicity theorem to relate enriched categories with marked modules and prove our main result \cref{thm:charessim}.
	
	We start with the following preparatory statements:
	\begin{lemma}
		\label{lem:PShVXtg}
		Let $\cV \in \Alg(\PrL)$ and $X\in \Spaces$. Then, the objects in the image of $X\to \PSh(X) \simeq \PSh(X) \otimes \Spaces \to \PSh(X) \otimes \cV \in \PrV$ are $\cV$-atomic. Moreover, they generate $\PSh(X) \otimes \cV$ under colimits and $\cV$-tensoring. In particular, $\PSh(X) \otimes \cV$ is atomically generated. 
	\end{lemma}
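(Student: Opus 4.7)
The plan is to reduce both assertions to the case $\cV = \Spaces$ via base change along the unit morphism $\Spaces \to \cV$ in $\Alg(\PrL)$. Under the identification $\PSh(X) \otimes \cV \simeq \PSh(X) \otimes_{\Spaces} \cV$, the composite $X \to \PSh(X) \simeq \PSh(X) \otimes \Spaces \to \PSh(X) \otimes \cV$ sends a point $x \in X$ to $\yo(x) \otimes 1_{\cV}$, so both claims concern these objects.

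For the atomicity claim, I would first observe that each representable presheaf $\yo(x) \in \PSh(X)$ is $\Spaces$-atomic: by the ordinary Yoneda lemma, the functor $\Map_{\PSh(X)}(\yo(x), -)$ is the evaluation functor $\ev_x \colon \Fun(X^{\op}, \Spaces) \to \Spaces$, which preserves colimits since these are computed pointwise in functor categories. Applying \cref{cor:atomicbasechange} to $\Spaces \to \cV$ then yields that $\yo(x) \otimes 1_{\cV}$ is $\cV$-atomic in $\PSh(X) \otimes \cV$, giving the first assertion.

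For the generation claim, I would invoke the standard fact that the image of $X \to \PSh(X)$ generates $\PSh(X)$ under colimits (which, since $\Spaces$-tensoring is itself a colimit indexed by the given space, coincides with generation under colimits and $\Spaces$-tensoring). Applying \cref{cor:generatingbasechange} to the morphism $\Spaces \to \cV$ transfers this directly to the statement that the image of $X$ in $\PSh(X) \otimes \cV$ generates under colimits and $\cV$-tensoring. The `In particular' clause is then immediate from combining the two assertions.

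I do not expect any serious obstacle: both cited corollaries, together with the factorization system machinery they rely on, are already in place, and the only additional input is the $\Spaces$-atomicity of ordinary representables in $\PSh(X)$, which is immediate from the Yoneda lemma and the pointwise nature of colimits in $\Fun(X^{\op}, \Spaces)$.
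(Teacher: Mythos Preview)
Your proposal is correct and essentially matches the paper's proof. The atomicity argument is identical (representables in $\PSh(X)$ are completely compact, then apply \cref{cor:atomicbasechange}); for generation the paper instead invokes \cref{cor:imagevsPsh} directly (the induced functor $\PSh(X)\otimes\cV \to \PSh(X)\otimes\cV$ is the identity, so its closed image is everything), but your use of \cref{cor:generatingbasechange} is an equally valid and closely related route.
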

	\begin{proof}
	Recall that the representable presheaves in $\PSh(X)$ are $\Spaces$-atomic, i.e.\ completely compact. Thus, by~\cref{cor:atomicbasechange}, the objects in the image of $X \to \PSh(X) \otimes \cV$ are $\cV$-atomic. Moreover, by \cref{cor:imagevsPsh} the full image of $X\to \PSh(X) \otimes \cV$ generates under colimits and $\cV$-tensoring.
	\end{proof}

		\begin{cor}\label{cor:iLfactoring}
		Let $\cV \in \Alg(\PrL), \cM \in \PrV$ and $C \in \cat$ a small category. Then, for a functor $\iota: C \to \cM$ the following are equivalent: 
		\begin{enumerate}[(1)]
		\item $\iota$ factors through the full subcategory $\cM^{\mathrm{at}}$;
		\item The induced functor $\PSh(C) \otimes \cV \to \cM$ in $\PrV$ is internally left adjoint.
		\end{enumerate}
		\end{cor}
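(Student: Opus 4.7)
The plan is to reduce both implications to \cref{prop:iLpreserve} after establishing that $\PSh(C) \otimes \cV$ is atomically generated by the image of $C$ under the composite
\[
j : C \hookrightarrow \PSh(C) \simeq \PSh(C) \otimes \Spaces \to \PSh(C) \otimes \cV.
\]
This is the analogue of \cref{lem:PShVXtg} for a small category $C$ in place of a space $X$, and is proved by exactly the same argument: representable presheaves in $\PSh(C)$ are $\Spaces$-atomic (i.e.\ completely compact, by the Yoneda lemma), so by \cref{cor:atomicbasechange} their images in $\PSh(C) \otimes \cV$ are $\cV$-atomic; and by applying \cref{cor:imagevsPsh} to the identity functor induced by $j : C \to \PSh(C) \otimes \cV$, the image $j(C)$ generates $\PSh(C) \otimes \cV$ under colimits and $\cV$-tensoring.

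For the implication $(2) \Rightarrow (1)$, suppose $F : \PSh(C) \otimes \cV \to \cM$ is internally left adjoint. By \cref{obs:iLpresatomic}, internally left adjoint functors preserve atomic objects. Since $j(C)$ consists of $\cV$-atomic objects of $\PSh(C) \otimes \cV$, the composition $\iota = F \circ j : C \to \cM$ factors through $\cM^{\mathrm{at}}$.

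For the converse $(1) \Rightarrow (2)$, suppose $\iota : C \to \cM$ factors through $\cM^{\mathrm{at}}$. Setting $\cN := \PSh(C) \otimes \cV$ and $\cN_0 := j(C) \subseteq \cN^{\mathrm{at}}$, we have shown that $\cN_0$ generates $\cN$ under colimits and $\cV$-tensoring, so $\cN$ is atomically generated. The induced functor $F : \cN \to \cM$ sends $\cN_0$ into $\cM^{\mathrm{at}}$ by hypothesis, so condition (3) of \cref{prop:iLpreserve} is verified and $F$ is internally left adjoint. No step here is expected to present a serious obstacle; the main content is the generalization of \cref{lem:PShVXtg} from $X \in \Spaces$ to $C \in \cat$, which is immediate from the tools already developed.
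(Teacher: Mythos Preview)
Your proof is correct and follows essentially the same approach as the paper. The paper's proof simply cites \cref{lem:PShVXtg} (implicitly using its obvious generalization from $X \in \Spaces$ to $C \in \cat$, exactly as you spell out) and then invokes \cref{prop:iLpreserve}(3), which already packages both implications $(1) \Leftrightarrow (2)$ into a single equivalence; your separate treatment of $(2) \Rightarrow (1)$ via \cref{obs:iLpresatomic} is just the implication $(1) \Rightarrow (2)$ of \cref{prop:iLpreserve} made explicit.
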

		\begin{proof}
		Since the full image of $C \to \PSh(C) \otimes \cV$ consists of atomics and generates under colimits and $\cV$-tensoring by~\cref{lem:PShVXtg}, the statement follows from~\cref{prop:iLpreserve}(3).
		\end{proof}

	\begin{notat}\label{not:PrVX} Let $C \in \cat$ be a small category. Then, we write $(\PrV)_{C/}$ for the pullback $\widehat{\cat}_{C/} \times_{\widehat{\cat}} \PrV$, where $\PrV \to \widehat{\cat}$ is the forgetful functor.  Informally, an object of $(\PrV)_{C/}$ is an $\cM \in \PrV$ equipped with a functor $C \to \cM$.
	\end{notat}
	
	\begin{lemma}\label{obs:PrVX}
	Let $\cV \in \Alg(\PrL)$, $C \in \cat$, then restricting along $C \to \PSh(C) \otimes \cV$ induces an equivalence  \[ (\PrV)_{\cP(C) \otimes \cV/}  \to (\PrV)_{C/}.
	\]
	\end{lemma}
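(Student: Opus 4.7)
The plan is to realize the restriction-along-$\iota$ functor as a morphism of left fibrations over $\PrV$, and to reduce the claim to a fiberwise equivalence supplied by \cref{lem:freetensored}.

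To begin, I would observe that both sides are total spaces of left fibrations over $\PrV$. The codomain projection $(\PrV)_{\PSh(C) \otimes \cV/} \to \PrV$ is a left fibration by general theory of slice categories (\HTT{Prop.}{2.1.2.2}), whose fiber over $\cM \in \PrV$ is the mapping space $\Map_{\PrV}(\PSh(C) \otimes \cV, \cM)$. Unwinding the defining pullback in \cref{not:PrVX}, the projection $(\PrV)_{C/} = \widehat{\cat}_{C/} \times_{\widehat{\cat}} \PrV \to \PrV$ is the pullback of the left fibration $\widehat{\cat}_{C/} \to \widehat{\cat}$ along $\PrV \hookrightarrow \widehat{\cat}$ and hence itself a left fibration, with fiber $\Map_{\widehat{\cat}}(C, \cM)$ over $\cM$. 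Pre-composition with the composite $\iota : C \hookrightarrow \PSh(C) \to \PSh(C) \otimes \cV$ defines a map between these two left fibrations over $\PrV$.

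On the fiber over any $\cM \in \PrV$, this map implements the restriction $F \mapsto F \circ \iota$ from $\Map_{\PrV}(\PSh(C) \otimes \cV, \cM)$ to $\Map_{\widehat{\cat}}(C, \cM)$, which is exactly the map of underlying maximal subgroupoids of the equivalence of categories $\LinFun(\PSh(C) \otimes \cV, \cM) \simeq \Fun(C, \cM)$ furnished by \cref{lem:freetensored}, and so is an equivalence. Since a morphism of left fibrations over a common base which is a fiberwise equivalence is itself an equivalence (via straightening, e.g.\ \HTT{Cor.}{2.1.2.2} combined with the characterization of equivalences of left fibrations), this finishes the argument.

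No serious obstacle is anticipated; the only subtlety is keeping track of which interpretation of the slice $(\PrV)_{\PSh(C) \otimes \cV/}$ is intended (the intrinsic slice in $\PrV$ versus the pullback $\widehat{\cat}_{\PSh(C)\otimes \cV/}\times_{\widehat{\cat}}\PrV$), but this is immaterial for the above argument because only the intrinsic slice is needed and its fiber over $\cM$ is by definition $\Map_{\PrV}(\PSh(C)\otimes \cV,\cM)$.
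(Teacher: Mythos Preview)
Your argument is correct. Both sides are indeed left fibrations over $\PrV$, restriction along $\iota$ is a map over $\PrV$, and \cref{lem:freetensored} supplies exactly the fiberwise equivalence needed; a map of left fibrations which is a fiberwise equivalence is an equivalence.

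The paper takes a slightly different, more abstract route: it realizes the situation as an instance of the general slice-adjunction equivalence $\cD_{L(c)/} \simeq \cD_{c/}$ from \cref{cor:sliceadjoints}. Because there is no left adjoint $\widehat{\cat} \to \PrV$ landing in presentable categories on the nose, the paper passes through the ambient adjunction $\widehat{\cat} \rightleftarrows \RMod_{\cV}(\Catcolim)$ (free cocompletion composed with the free $\cV$-module functor), applies \cref{cor:sliceadjoints} there, and then restricts to the full subcategories corresponding to small $C$ and presentable modules. Your approach avoids this detour by invoking \cref{lem:freetensored} directly, which already encodes the universal property at the level of $\PrV$; the paper's approach has the virtue of making explicit that this is a formal consequence of an adjunction and nothing specific to $\PSh(C)\otimes\cV$. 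Both arguments ultimately rest on the same representability statement, just packaged differently.
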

	\begin{proof}
	Let $\freecoc: \widehat{\cat} \rightleftarrows \Catcolim$ denote the free cocompletion under small colimits which by \HTT{Cor.}{5.3.6.10} is left adjoint to the subcategory inclusion $\Catcolim \to \widehat{\cat}$. For \emph{small} categories $C$, $\freecoc(C) \simeq \Fun(C^{\op}, \Spaces) = \PSh(C)$ as explained in \HTT{Ex.}{5.3.6.6}. Composing with the free-forgetful adjunction we obtain an adjunction  $\widehat{\cat} \rightleftarrows \RMod_{\cV}(\Catcolim): \mathrm{Forget}$\ and thus for large categories $D$ by \cref{cor:sliceadjoints} an equivalence 
	\[ \RMod_{\cV}(\Catcolim)_{D/} \simeq \RMod_{\cV}(\Catcolim)_{\freecoc(D) \otimes \cV/}.
	\]
	For small categories $C$ this restricts to the desired equivalence between full subcategories. 
	\end{proof}

	We are now ready to state and prove the first main theorem of this paper:
		\begin{theorem}
		\label{thm:charessim}
		Let $\cV \in \Alg(\PrL)$ and $X\in \Spaces$. The composite
		\[\vcatXV := \Alg(\LinEnd(\PSh(X) \otimes \cV)) \stackrel{\EM}{\longrightarrow} (\PrV)_{\PSh(X) \otimes \cV/}  \overset{\text{\cref{obs:PrVX}}}{\simeq}(\PrV)_{X/}
		\]
		sends a valent $\cV$-category $\cC$ to $(\yo {}^{\cV}_{\cC}: X \to \PShV(\cC)) \in (\PrV)_{X/}$, i.e.\ its enriched presheaf category marked by the representable presheaves. This functor is fully faithful, has a right adjoint and its image consists of those functors $\yoV: X \to \cM$ for which either of the following equivalent conditions holds:
		\begin{enumerate}[(1)]
		\item The image of $\yoV$ is contained in the atomics and generates $\cM$ under colimits and $\cV$-tensoring; 
		\item The image of $\yoV$ is contained in the atomics and the functors $\{\iHom_{\cM}(\iota x, -): \cM \to \cV\}_{x\in X}$ are jointly conservative;
		\item The induced functor $\PSh(X) \otimes \cV \to \cM$ is an internal left adjoint in $\PrV$ which is colimit-dominant (equivalently its right adjoint is conservative).
		\end{enumerate}
		\end{theorem}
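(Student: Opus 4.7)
The proof will largely be an assembly of results already established in the excerpt. The functor under consideration factors as the composition of $\EM$ from \cref{thm:monadicff} with the equivalence of \cref{obs:PrVX}, so both the fully faithfulness and the existence of a right adjoint come for free from those two statements. Therefore the only substantive content is (a) identifying the image of an enriched category under the composite with its Yoneda marking, and (b) translating the intrinsic characterization of the image of $\EM$ from \cref{thm:monadicity} across the equivalence of \cref{obs:PrVX}.

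For (a), I would unwind the definition of $\EM$: a valent enriched category $\cC \in \Alg(\LinEnd(\PSh(X) \otimes \cV))$ is sent by $\EM$ to the free-module functor $\mathrm{Free}: \PSh(X) \otimes \cV \to \LMod_\cC(\PSh(X) \otimes \cV) = \PShV(\cC)$, and restricting along $X \hookrightarrow \PSh(X) \simeq \PSh(X) \otimes \Spaces \to \PSh(X) \otimes \cV$ reproduces the composite defining $\yoV_\cC$ in \cref{def:presheaf}. This identification is immediate from the definitions once one notes that \cref{obs:PrVX} realizes the equivalence $(\PrV)_{\PSh(X) \otimes \cV/} \simeq (\PrV)_{X/}$ precisely by restriction along this inclusion.

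For (b), the plan is to observe that under the equivalence of \cref{obs:PrVX} a functor $\iota: X \to \cM$ corresponds to an object of $(\PrV)_{\PSh(X) \otimes \cV/}$ given by the unique $\cV$-linear cocontinuous extension $\widehat{\iota}: \PSh(X) \otimes \cV \to \cM$. By \cref{thm:monadicity}, $\widehat{\iota}$ lies in the image of $\EM$ if and only if it is internally left adjoint and colimit-dominant, i.e.\ condition (3). Now apply \cref{cor:iLfactoring} to see that internal-left-adjointness of $\widehat{\iota}$ is equivalent to $\iota$ factoring through $\cM^{\mathrm{at}}$; and apply \cref{cor:colimit-dominant} to see that colimit-dominance is equivalent to the full image of $\widehat{\iota}$ generating $\cM$ under colimits, which by \cref{lem:imagetens} is equivalent to the image of $\iota$ generating $\cM$ under colimits and $\cV$-tensoring. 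Together these yield the equivalence (1) $\Leftrightarrow$ (3). Finally, (1) $\Leftrightarrow$ (2) is immediate from \cref{prop:conservativegen} applied to $\iota: X \to \cM$, since joint conservativity of $\{\iHom_\cM(\iota x, -)\}_{x \in X}$ is the defining content of generation under colimits and tensoring (given the atomicity condition).

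I do not anticipate any substantial obstacle; the real work was done in \cref{thm:monadicff,thm:monadicity,cor:iLfactoring}. The only step requiring any care is being clear that the equivalence of \cref{obs:PrVX} is compatible with the characterizations, which follows from its construction via restriction and the universal property of $\PSh(X) \otimes \cV$ as the free presentable $\cV$-module on $X$.
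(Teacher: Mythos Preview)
Your proposal is correct and follows essentially the same approach as the paper: identify the image of $\cC$ with the Yoneda marking via \cref{def:presheaf} and \cref{prop:presentablepresheaf}, deduce fully faithfulness and the right adjoint from \cref{thm:monadicff}, and obtain the image characterization from \cref{thm:monadicity} together with \cref{cor:iLfactoring} and \cref{prop:conservativegen}. The only minor difference is that you route the equivalence of the generation condition through \cref{cor:colimit-dominant} and \cref{lem:imagetens}, whereas the paper cites \cref{prop:conservativegen} directly, which already packages all the equivalences (1)--(3) into one statement.
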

		\begin{proof}
		By definition, the composite sends a monad $\cC \in \Alg(\LinEnd(\PSh(X) \otimes \cV))$ to the free module functor $\PSh(X) \otimes \cV \to \LMod_\cC(\PSh(X) \otimes \cV)$ in $\PrV$. By \cref{def:presheaf} and \cref{prop:presentablepresheaf}, this is precisely $\PSh(X) \otimes \cV \to \PShV(\cC)$ corresponding under the equivalence $(\PrV)_{\PSh(X) \otimes \cV/} \simeq (\PrV)_{X/}$ to $\yoV: X \to \PShV(\cC)$. 
		Fully faithfulness follows from \cref{thm:monadicff} and the characterization of the image from~\cref{thm:monadicity} and Corollaries \ref{cor:iLfactoring} and \ref{prop:conservativegen}.
		\end{proof}
		
		\begin{defin} \label{def:ModXV}We refer to a $\cM \in \PrV$ together with a small space $X \in \Spaces$ and a functor $X\to \cM$ satisfying the equivalent conditions of~\cref{thm:charessim} as a \emph{marked module}. We let $\MModXV \subseteq (\PrV)_{X/}$ denote the full subcategory on the marked modules. 
		\end{defin}
		
		\begin{defin} \label{def:PShVfunctor}
			Using \cref{cons:EM} we define the functor \[\PShV := \LMod_{(-)}(\PSh(X) \otimes \cV): \Alg(\LinEnd(\PSh(X) \otimes \cV)) \to \PrV\] sending a $\cV$-category $\cC$ to its enriched presheaf category $\PShV(\cC)$ (see \cref{def:presheaf}). Under the equivalence from \cref{thm:charessim}, it corresponds to the slice projection $\MModXV \subseteq (\PrV)_{X/} \to \PrV$.
		\end{defin}

	\begin{obs} \label{cor:PShVtg}An immediate consequence of~\cref{thm:charessim} is that for any $\cC \in \vcatXV$ the presheaf category $\PShV(\cC)$ is atomically generated by the image of the Yoneda embedding $\yoV:X \to \PShV(\cC)$. 
	\end{obs}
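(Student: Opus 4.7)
The plan is to just unwind what Theorem~\ref{thm:charessim} already tells us; the statement is essentially tautological once the theorem is in place, which is why the authors phrase it as an observation.

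Given $\cC \in \vcatXV$, Theorem~\ref{thm:charessim} identifies the associated object $(\yoV_{\cC}: X \to \PShV(\cC)) \in (\PrV)_{X/}$ with a marked module in the sense of Definition~\ref{def:ModXV}. By condition~(1) of that theorem, this means precisely that the image of $\yoV_{\cC}$ is contained in the full subcategory $\PShV(\cC)^{\mathrm{at}}$ of $\cV$-atomic objects and that this image generates $\PShV(\cC)$ under small colimits and $\cV$-tensoring.

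Let $\PShV(\cC)_0 \subseteq \PShV(\cC)^{\mathrm{at}}$ denote the essential image of $\yoV_{\cC}$. Since $X \in \Spaces$ is small and the image of a small category under a functor remains essentially small, $\PShV(\cC)_0$ is a small subcategory of atomic objects which generates $\PShV(\cC)$ under colimits and $\cV$-tensoring. By Remark~\ref{rem:atomicgenerator} this is exactly what it means for $\PShV(\cC)$ to be atomically generated, so we are done.

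The only content here beyond Theorem~\ref{thm:charessim} is the (trivial) observation that a set of generators pulled from the image of a map out of a small space is itself small; there is no real obstacle.
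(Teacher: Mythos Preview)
Your proposal is correct and matches the paper's reasoning exactly: the paper offers no proof for this observation beyond the phrase ``an immediate consequence of \cref{thm:charessim}'', and your unwinding of condition~(1) is precisely the intended content. The extra sentence about smallness of the image is harmless but not strictly needed, since the observation only asserts atomic generation \emph{by the image of $\yoV$}, which is verbatim condition~(1).
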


		\begin{rem}
		In~\cite[Prop.\ 6.3.1]{hinich} given an $\cM \in \PrV$, a small space $X$ and a functor $X\to \cM$, Hinich constructs a $\cV$-enriched category as the internal end of the induced functor $(\PSh(X) \otimes \cV \to \cM) \in \LinFun(\PSh(X)\otimes \cV, \cM)$ with respect to its right $\LinEnd(\PSh(X) \otimes \cV)$-action. By~\cref{thm:monadicity} this precisely agrees with the right adjoint of the fully faithful comparison functor $\vcatXV \to (\PrV)_{X/}$ from~\cref{thm:charessim}.
		\end{rem}

		\begin{rem}
		\label{rem:tgarepshv}
		By~\cref{thm:charessim}, every atomically generated $\cM \in \PrV$ is the enriched presheaf category of some valent $\cV$-category $\cC$. There is in fact a distinguished choice for such $\cC$, namely that corresponding to the marked module $\cM^{\text{at,}\simeq} \to \cM$ where $\cM^{\text{at,}\simeq}$ is the small (by \cref{prop:atomicarecpt}) subspace of $\cM$ spanned by the atomic objects. This satisfies condition $(1)$ by definition. Enriched categories $\cC$ arising this way are called \emph{Cauchy-complete}.
	\end{rem}

	\begin{cor} \label{cor:fullyfaithfulatomic}
	If $\cV \to \cW$ is a fully faithful morphism in $\Alg(\PrL)$ and $\cM \in \PrV$ is $\cV$-atomically generated, then $\cM \simeq \cM \otimes_{\cV} \cV  \to \cM \otimes_{\cV} \cW$ is fully faithful. 
	\end{cor}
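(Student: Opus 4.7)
The plan is to recognise the canonical comparison functor $F : \cM \to \cM \otimes_{\cV} \cW$ as the unit, at $\cM$, of the adjunction $-\otimes_{\cV} \cW : \PrV \rightleftarrows \Pr_{\cW} : f^*$ obtained by restriction along $f : \cV \to \cW$; in particular, $F$ is naturally a morphism in $\PrV$ with codomain $f^*(\cM \otimes_{\cV} \cW)$. Since full faithfulness is a property of the underlying functor, it suffices to prove that $F$ is fully faithful in $\PrV$. By \cref{lem:fflemma}, this reduces to showing that for all $m, m' \in \cM$ the induced map
\[\alpha_{m,m'} : \iHom_{\cM}(m, m') \longrightarrow \iHom_{f^*(\cM \otimes_{\cV} \cW)}(Fm, Fm')\]
is an isomorphism in $\cV$.

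First I would verify this when $m \in \cM^{\mathrm{at}}$ is atomic. Combining the formula $\iHom_{f^*N}(-,-) \simeq f^{\rR}\iHom_N(-,-)$ (recorded just before \cref{ex:presentableihom}) with \cref{cor:internalhombasechange}, which yields $\iHom_{\cM \otimes_{\cV} \cW}(m \otimes 1_{\cW}, m' \otimes 1_{\cW}) \simeq f(\iHom_{\cM}(m,m'))$ whenever $m$ is $\cV$-atomic, provides an identification
\[\iHom_{f^*(\cM \otimes_{\cV} \cW)}(Fm, Fm') \simeq f^{\rR}f(\iHom_{\cM}(m,m')).\]
Tracing through the constructions, $\alpha_{m,m'}$ corresponds under this identification to the unit $\iHom_{\cM}(m,m') \to f^{\rR}f(\iHom_{\cM}(m,m'))$ of $f \dashv f^{\rR}$, which is an isomorphism because $f$ is fully faithful.

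To propagate this to arbitrary $m$, I fix $m'$ and consider the full subcategory $\cM_{m'} \subseteq \cM$ of objects $m$ for which $\alpha_{m,m'}$ is an isomorphism. Both $\iHom_{\cM}(-, m')$ and $\iHom_{f^*(\cM \otimes_{\cV} \cW)}(F(-), Fm')$ send colimits in $\cM$ to limits in $\cV$ (the former by the universal property of the internal hom, the latter using additionally that $F$ preserves colimits); hence $\cM_{m'}$ is closed under colimits. Moreover, the identity $\iHom_{\cM}(m \otimes v, m') \simeq \iHom_{\cV}(v, \iHom_{\cM}(m, m'))$, applied on both sides using $F(m \otimes v) \simeq Fm \otimes v$ by $\cV$-linearity of $F$, shows that $\cM_{m'}$ is closed under $\cV$-tensoring. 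Since $\cM_{m'}$ contains $\cM^{\mathrm{at}}$ by the previous paragraph and $\cM$ is $\cV$-atomically generated, $\cM_{m'} = \cM$, completing the argument.

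The main obstacle I anticipate is the naturality bookkeeping in the atomic case: one must check that the composite isomorphism $\iHom_{f^*(\cM \otimes_{\cV} \cW)}(Fm, Fm') \simeq f^{\rR}f(\iHom_{\cM}(m,m'))$ built from \cref{cor:internalhombasechange} and the restriction-of-internal-hom formula genuinely intertwines $\alpha_{m,m'}$ with the unit of $f \dashv f^{\rR}$, rather than only up to some correction. This reduces to carefully unpacking the universal-property definitions of both maps and comparing the evaluation morphisms involved.
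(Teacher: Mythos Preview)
Your proof is correct and takes a genuinely different route from the paper's. The paper invokes \cref{rem:tgarepshv} (which rests on the main \cref{thm:charessim}) to write any atomically generated $\cM$ as $\LMod_{\cC}(\Fun(X^{\op},\cV))$ for some enriched category $\cC$; then, using \HA{Thm.}{4.8.4.6} to identify $\cM \otimes_{\cV} \cW \simeq \LMod_{\cC}(\Fun(X^{\op},\cW))$, the comparison functor becomes postcomposition with the fully faithful $\cV \hookrightarrow \cW$ on module categories, which is visibly fully faithful.

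Your argument instead stays at the level of internal homs and only uses material from \S\ref{sec:atomicgen}: \cref{cor:internalhombasechange} handles the case of atomic $m$, and a direct closure argument (not even the second part of \cref{lem:fflemma}, since $F$ need not be internally left adjoint) propagates to all $m$. This avoids any dependence on \cref{thm:charessim} and so is logically lighter. The paper's route, by contrast, gives a more conceptual picture of \emph{why} the result holds (the comparison is literally postcomposition), and its use of the identification $\cM\otimes_{\cV}\cW \simeq \LMod_{\cC}(\PSh(X)\otimes\cW)$ sidesteps the naturality bookkeeping you flag at the end. That bookkeeping is routine, but if you want to bypass it entirely you could note that both $\alpha_{m,m'}$ and the unit of $f\dashv f^{\rR}$ are determined, after applying $\Map_{\cV}(1_{\cV},-)$, by the same map $\Map_{\cM}(m,m')\to\Map_{\cM\otimes_{\cV}\cW}(Fm,Fm')$, and then use atomicity of $m$ to upgrade this identification to one in $\cV$.
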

	\begin{proof} By \cref{rem:tgarepshv}, there is an $X\in \Spaces$ and a $\cC \in \vcatXV = \Alg(\LinEnd(\PSh(X) \otimes \cV))$ so that $\cM \simeq \PShV(\cC) := \LMod_{\cC}(\PSh(X) \otimes \cV)$. Since $ \Fun(X^{\op}, \cV) \simeq \PSh(X) \otimes \cV \to \PSh(X) \otimes  \cV\otimes_{\cV} \cW \simeq \PSh(X) \otimes \cW \simeq \Fun(X^{\op}, \cW) $ is given by postcomposition and hence is fully faithful, it immediately follows that also \[\LMod_{\cC}(\PSh(X) \otimes \cV) \to \LMod_{\cC}(\PSh(X) \otimes \cV) \otimes_{\cW} \cW \simeq \LMod_{\cC}(\PSh(X) \otimes \cW)\] is fully faithful, the last equivalence using {{\HA{Thm.}{4.8.4.6}}}.	\end{proof}
	
	\begin{warning}
	The condition that $\cM$ is atomically generated is necessary in \cref{cor:fullyfaithfulatomic}. For a counterexample, take $\cV \to \cW$ the full inclusion $\Spcn \to \Sp$ of connective spectra into spectra and let $\cM= \Ab$ be the category of abelian groups with $\Spcn$-action induced by the morphism $\pi_0 : \Spcn \to \Ab$  in $\CAlg(\PrL)$. Then, \[ \Ab \to {\Ab} \otimes_{\Spcn} {\Sp} \simeq {\Ab} \otimes {\Sp} \simeq *\]
	is not fully faithful. Compare \cite{haine2021nonabelian}.	\end{warning}

As an immediate corollary of \cref{thm:charessim} we obtain the following variant of \HA{Prop.}{4.8.5.8}:
\begin{cor}
Let $\cV \in \Alg(\PrL)$. Then, the functor 
\[\Alg(\cV) \to (\PrV)_{*/}
\]
sending an algebra $A$ to the category $\LMod_A(\cV)$ pointed by the free $A$-module $A$ is fully faithful. Its image consists precisely of those $\cM \in \PrV$ with pointing $m \in \cM$ for which $m$ is an atomic generator.
\end{cor}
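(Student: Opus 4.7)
The plan is to deduce this corollary as the special case $X = *$ of \cref{thm:charessim}. The work consists of unwinding all the identifications on both sides of the fully faithful functor in that theorem when $X = *$, and translating the characterization (1) of the image into the statement that $m$ is an atomic generator.

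First I would identify the source: since $\PSh(*) \simeq \Spaces$ is the unit of $\PrL$, we have $\PSh(*) \otimes \cV \simeq \cV$ as a right $\cV$-module, and under this identification the endomorphism algebra $\LinEnd(\PSh(*) \otimes \cV)$ becomes $\cV$ with its original monoidal structure (this is \cref{ex:quivpoint}, following from \cref{lem:endounit}). Hence
\[\vcat_*(\cV) = \Alg(\LinEnd(\PSh(*) \otimes \cV)) \simeq \Alg(\cV).\]
Next I would identify the target and the functor. The equivalence $(\PrV)_{\PSh(*) \otimes \cV /} \simeq (\PrV)_{*/}$ from \cref{obs:PrVX} becomes the statement that pointing by an object is the same as pointing by a $\cV$-linear colimit-preserving functor out of $\cV$. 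For an algebra $A \in \Alg(\cV)$, \cref{thm:charessim} sends it to $\PShV(A) = \LMod_A(\cV)$ equipped with the Yoneda embedding $* \to \LMod_A(\cV)$. Chasing through \cref{def:presheaf}, the Yoneda sends the unique object to $\mathrm{Free}(1_\cV) = A$, viewed as a free left $A$-module over itself; so the pointed $\cV$-module is $(\LMod_A(\cV), A)$.

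Finally I would translate the image condition. Specializing condition (1) of \cref{thm:charessim} to $X = *$: the image of $* \to \cM$ consists of a single object $m \in \cM$, and this image is contained in the atomics and generates $\cM$ under colimits and $\cV$-tensoring precisely when $m$ is atomic and $\{m\}$ is a set of atomic generators in the sense of \cref{rem:atomicgenerator}; i.e., $m$ is an atomic generator. Assembling these three identifications gives exactly the statement of the corollary. I do not expect any real obstacle here — it is purely a matter of recognizing the specialization $X = *$ and tracking the Yoneda embedding through the definitions; the only mild care needed is to confirm that under the equivalence $\LinEnd(\cV) \simeq \cV$ the free module functor of \cref{cons:EM} matches the usual free $A$-module functor, which is immediate from the construction.
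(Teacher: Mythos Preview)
Your proposal is correct and matches the paper's approach exactly: the paper's proof is the single line ``Apply \cref{thm:charessim} in the case $X=*$,'' and you have simply spelled out the identifications (via \cref{ex:quivpoint}, \cref{def:presheaf}, and condition (1) of \cref{thm:charessim}) that make this specialization yield the stated corollary.
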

\begin{proof}
Apply \cref{thm:charessim} in the case $X=*$. 
\end{proof}

	\section{Functoriality of marked modules}
	\label{sec:functoriality}
	In our first main \cref{thm:charessim}, we constructed  for a fixed space $X$ and $\cV \in \Alg(\PrL)$ an equivalence between the category $\vcatXV:= \Alg(\LinEnd(\PSh(X) \otimes \cV))$ of valent $\cV$-enriched categories with space of objects $X$ and the full subcategory $\MModXV \subseteq (\PrV)_{X/}$  of marked modules, i.e.\ those functors $X \to \cM$ whose image consists of atomic objects that generates under colimits and $\cV$-tensoring. 
	
	One advantage of the latter perspective on enriched categories is that the functoriality of the full subcategory $\MModXV$ in $X$ and $\cV$ is much easier to establish than the functoriality of $\Alg(\LinEnd(\PSh(X) \otimes \cV))$ defined in terms of endomorphism algebras (cf.~\cite[Prop. 4.5.5]{hinich}). In  \cite{haugseng} this is remedied by explicitly constructing $\vcatXV$ as algebras over an operad in an evidently functorial way, however adding combinatorial complications.
	In this section, we explain how to make this full subcategory $\MModXV \subseteq (\PrV)_{X/} $ functorial in $X$ and $\cV$ and establish a number of basic properties. In~\S \ref{subsec:heinecomparison}, we will show that this functoriality indeed agrees with the one arising from \cite{haugseng}, making use of tools developed in \cite{heine}.

	\subsection{Functoriality in spaces}

	\begin{defin}
		\label{defin:MModV}
		Let $\Arr(\widehat{\cat}) \times_{\widehat{\cat}} \PrV$ denote the pullback of the target projection $\Arr(\widehat{\cat}) \to \widehat{\cat}$ along the forgetful functor $\PrV \to \widehat{\cat}$. 
		The category of \emph{marked presentable $\mathcal{V}$-module categories}, or in short \emph{marked $\cV$-modules} is the full subcategory		\[\MModV \subseteq \Arr(\widehat{\cat}) \times_{\widehat{\cat}} \PrV
		\]
		on those functors $ X \to \cM$ for which $X$ is a small space and the image of $X \to \cM$ consists of atomics which generate $\cM$ under colimits and $\cV$-tensoring. By definition, the functor $\MModV \to \Arr(\widehat{\cat}) \stackrel{\mathrm{src}}{\to} \widehat{\cat}$ factors through the full subcategory $\Spaces \subseteq \widehat{\cat}$ of small spaces. We refer to the induced functor $ \ob: \MModV \to \Spaces$ as the \emph{space of objects} of a marked $\cV$-module. 
		\end{defin}
		
		By definition, the fiber of $\ob:\MModV \to \Spaces$ at a space $X$ agrees with the category $\MModXV$ from \cref{def:ModXV} which by \cref{thm:charessim} is equivalent to $\vcatXV$.

	\begin{ex}
		\label{defin:enrfunctor}
		Given $\cV$-categories $\cC \in \vcatXV, \cD \in \vcat_Y(\cV)$ identified via \cref{thm:charessim} with their Yoneda functors  $(X \to \PShV(\cC)) \in \MModXV$ and $(Y \to \PShV(\cD))\in \MModYV$, a morphism between them in $\MModV$ amounts to a commuting diagram 
		\[\begin{tikzcd}X \arrow[r] \arrow[d, dashed] & \PShV(\cC) \arrow[d, dashed] \\ Y \arrow[r] & \PShV(\cD)
		\end{tikzcd}
		\]
		where the right vertical morphism is in $\PrV$. In particular, there is a map of underlying spaces of objects $f: X\to Y$ and for every $x,y\in X$ by \cref{prop:Yoneda} and \cref{obs:adjunctioninthom} a morphism in $\cV$: \[ \Hom_{\cC}(x,x') \simeq \iHom_{\PShV \cC} (\yo {}_{\cC}^\cV(x), \yo {}_{\cC}^{\cV}(y)) \to \iHom_{\PShV \cD}(\yo {}^{\cV}_{\cC}(fx), \yo {}^{\cV}_{\cD}(fy)) \simeq \Hom_{\cD}(fx, fy)\]In fact, we will show in~\cref{thm:functorialcomparison} that the category $\MModV$ together with its functor $\MModV \to \Spaces$ is equivalent to the category $\vcatV$ of valent $\cV$-enriched categories and $\cV$-enriched functors in the sense of \cite{haugseng}, \cite{heine}, \cite{hinich}.
		\end{ex}
		In the remainder, it will be convenient to express $\MModV$ in terms of the free $\cV$-module $\PSh(X) \otimes \cV$:
		\begin{notat}
		Let $\Arr^{\text{iL,cdom}}(\PrV)$ denote the full subcategory of $\Arr(\PrV) $ on the internal left adjoint and colimit-dominant functors and $\Arr^{\mathrm{cdom}}(\PrViL)$ the full subcategory of $\Arr(\PrViL)$ on the colimit-dominant functors; compare~\cref{defin:colimitdominant} and~\cref{defin:iL}.
		\end{notat}
		
		\begin{prop} \label{prop:MModVequiv}
		The category $\MModV$ is the pullback 
				\[
			\MModV\simeq  \Spaces \times_{\PrV} \Arr^{\text{iL,cdom}} \left( \PrV \right)
		\]
		of the source projection $\Arr^{\text{iL,cdom}} \left( \PrV \right) \overset{\mathrm{src}}{\to} \PrV$ along the functor $\Spaces \to \PrV$ sending $X$ to the free presentable $\cV$-module category $\PSh(X) \otimes \cV$ on $X$.
		
		Moreover, the factorization of $\Spaces \to \PrV$ through the subcategory inclusion $ \PrViL \to \PrV$ induces an equivalence
		\[ \Spaces \times_{\PrViL} \Arr^{\mathrm{cdom}} \left( \PrViL \right) \overset{\simeq}{\to} \Spaces \times_{\PrV} \Arr^{\text{iL,cdom}} \left( \PrV \right) \simeq \MModV \, , \]
		\end{prop}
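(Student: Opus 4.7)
Plan of proof. The plan is to handle the two asserted equivalences in turn.

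First I will establish $\MModV \simeq \Spaces \times_{\PrV} \Arr^{\text{iL,cdom}}(\PrV)$. By definition $\MModV$ is a full subcategory of $\Spaces \times_{\widehat{\cat}} \Arr(\widehat{\cat}) \times_{\widehat{\cat}} \PrV$. The adjunction $\mathrm{Forget}: \PrV \to \widehat{\cat}$, whose left adjoint sends a small space $X$ to $\PSh(X) \otimes \cV$, yields by \cref{obs:PrVX} an equivalence $(\PrV)_{X/} \simeq (\PrV)_{\PSh(X) \otimes \cV/}$ for each $X \in \Spaces$. Promoting this to an equivalence natural in $X$ — equivalently, identifying the two coCartesian fibrations over $\Spaces$ given by the source projections — I obtain an equivalence $\Spaces \times_{\widehat{\cat}} \Arr(\widehat{\cat}) \times_{\widehat{\cat}} \PrV \simeq \Spaces \times_{\PrV} \Arr(\PrV)$ sending $(X \to \cM)$ to its essentially unique $\cV$-linear colimit-preserving extension $(\PSh(X) \otimes \cV \to \cM)$. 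Under this identification, the two conditions defining $\MModV$ translate cleanly: by \cref{cor:iLfactoring}, the image of $X \to \cM$ consists of atomic objects precisely when the extension is internally left adjoint; and by \cref{prop:conservativegen}, the image generates $\cM$ under colimits and $\cV$-tensoring precisely when the extension is colimit-dominant. Passing to the respective full subcategories yields the claimed equivalence.

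Next I will establish the second equivalence. Both pullbacks have the same objects — pairs $(X, f)$ with $X \in \Spaces$ and $f: \PSh(X) \otimes \cV \to \cM$ a colimit-dominant internal left adjoint in $\PrV$. The comparison functor, induced by the subcategory inclusion $\PrViL \hookrightarrow \PrV$, is therefore essentially surjective and faithful, so I need only verify fullness. A morphism from $(X, f)$ to $(Y, g)$ in $\Spaces \times_{\PrV} \Arr^{\text{iL,cdom}}(\PrV)$ consists of a map $u: X \to Y$ in $\Spaces$ and a morphism $\beta: \cM \to \cN$ in $\PrV$ satisfying $\beta \circ f = g \circ (\PSh(u) \otimes \cV)$. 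I must show $\beta$ is automatically internally left adjoint. Now $\PSh(u): \PSh(X) \to \PSh(Y)$ is iL in $\PrL$ since its right adjoint $u^*$ (precomposition with $u$) preserves colimits, and then \cref{cor:iLreltens} implies that $\PSh(u) \otimes \cV$ is iL in $\PrV$. Hence the composite $\beta \circ f = g \circ (\PSh(u) \otimes \cV)$ is a composition of internal left adjoints, so is itself iL; since $f$ is iL and colimit-dominant, \cref{lem:iLunder} forces $\beta$ to be iL as well.

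I expect the main technical obstacle to lie in the first equivalence, specifically in upgrading the pointwise equivalence of slice categories from \cref{obs:PrVX} to a genuine equivalence of fibrations natural in $X \in \Spaces$. Conceptually this is a parametrized instance of the general fact that an adjunction identifies undercategories under pullback along the left adjoint, but the bookkeeping required to assemble it into an equivalence of total categories rather than a fiberwise statement is the bulk of the work. The second equivalence, by contrast, reduces cleanly to applications of \cref{cor:iLreltens} and \cref{lem:iLunder}.
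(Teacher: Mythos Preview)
Your proposal is correct and follows essentially the same route as the paper. For the first equivalence, the paper composes the free-cocompletion adjunction $\widehat{\cat} \rightleftarrows \Catcolim$ with the free-forgetful adjunction $\Catcolim \rightleftarrows \RMod_{\cV}(\Catcolim)$ and then invokes \cref{lem:adjunctionunstr} --- the general statement that an adjunction $L: C \rightleftarrows D : R$ yields $C \times_D \Arr(D) \simeq \Arr(C) \times_C D$ --- to obtain the ambient equivalence in one step before restricting to full subcategories; this is precisely the ``bookkeeping'' you anticipate as the main obstacle, packaged into a single lemma. For the second equivalence the paper simply cites \cref{lem:iLunder}, which is exactly your argument (with your additional verification that $\PSh(u)\otimes\cV$ is internally left adjoint, needed to see that $\Spaces \to \PrV$ indeed factors through $\PrViL$).
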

		\begin{proof}
		Composing the free cocompletion under small colimits adjunction $\widehat{\cat} \rightleftarrows \Catcolim$ from \HTT{Cor.}{5.3.6.10} with the free-forgetful adjunction $\Catcolim \rightleftarrows \RMod_{\cV}(\Catcolim)$ results in an adjunction $\widehat{\cat} \rightleftarrows \RMod_{\cV}(\Catcolim)$. \cref{lem:adjunctionunstr} then yields an equivalence 
		\[\widehat{\cat} \times_{\RMod_{\cV}(\Catcolim)} \Arr(\RMod_{\cV}(\Catcolim)) \simeq \Arr(\widehat{\cat}) \times_{\widehat{\cat}} \RMod_{\cV}(\Catcolim)\; .
		\]
		Restricting to the relevant full subcategories proves the first statement. 
		The second statement follows from \cref{lem:iLunder}.
		\end{proof}
		
		\begin{cor}\label{cor:MModVclosed} The full subcategory $\MModV \subseteq \Spaces\times_{\PrV} \Arr(\PrV)$ is closed under colimits. 
		\end{cor}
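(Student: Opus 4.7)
The plan is to reduce the statement to closure properties of internal left adjoints and colimit-dominant morphisms already established in \S~\ref{sec:atomicgen}. I will use the equivalence from \cref{prop:MModVequiv} that identifies $\MModV$ with $\Spaces \times_{\PrV} \Arr^{\text{iL,cdom}}(\PrV)$ sitting inside $\Spaces \times_{\PrV} \Arr(\PrV)$.

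First, I would verify that colimits in the ambient pullback $\Spaces \times_{\PrV} \Arr(\PrV)$ are computed componentwise. Given a small diagram $D \colon I \to \Spaces \times_{\PrV} \Arr(\PrV)$, its projections yield diagrams $I \to \Spaces$ and $I \to \Arr(\PrV)$. The functor $\Spaces \to \PrV,\ X \mapsto \PSh(X) \otimes \cV$ is a composite of left adjoints (the free cocompletion $\Spaces \to \PrL$ followed by $- \otimes \cV \colon \PrL \to \PrV$, see the proof of \cref{obs:PrVX}), and hence preserves colimits; the source projection $\mathrm{src} \colon \Arr(\PrV) \to \PrV$ also preserves colimits since they are computed pointwise in arrow categories. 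Consequently, the pullback compatibility condition is automatic at the colimit, and the colimit in the pullback is the pair $(X, F \colon \PSh(X) \otimes \cV \to \cM)$ where $X = \colim_i X_i$ in $\Spaces$ and $F$ is the colimit in $\Arr(\PrV)$ of the arrows $F_i \colon \PSh(X_i) \otimes \cV \to \cM_i$.

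The core step is then to show that $F$ remains internally left adjoint and colimit-dominant. By \cref{lem:iLclosedcolim}, the inclusion $\PrViL \hookrightarrow \PrV$ is closed under colimits, and consequently so is $\Arr(\PrViL) \hookrightarrow \Arr(\PrV)$, since colimits in arrow categories are computed pointwise. Therefore $F$ is itself internally left adjoint and agrees with the colimit of the $F_i$ computed in $\Arr(\PrViL)$. Finally, \cref{obs:cdomiLstab} ensures that the colimit-dominant morphisms are closed under colimits in $\Arr(\PrViL)$, so $F$ is colimit-dominant as well. Combining these observations, the colimit in $\Spaces \times_{\PrV} \Arr(\PrV)$ of a diagram in $\MModV$ lies in $\MModV$, as claimed. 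No substantial obstacle is anticipated, as all required closure properties have been assembled in \S~\ref{sec:atomicgen}.
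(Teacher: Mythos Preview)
Your proof is correct and follows essentially the same approach as the paper: both reduce via \cref{prop:MModVequiv} to the closure of internal left adjoints under colimits (\cref{lem:iLclosedcolim}) and of colimit-dominant morphisms under colimits in $\Arr(\PrViL)$ (\cref{obs:cdomiLstab}). The paper phrases the latter step as the existence of a right adjoint to $\Arr^{\cdom}(\PrViL) \hookrightarrow \Arr(\PrViL)$ and works with the second identification in \cref{prop:MModVequiv}, but the content is the same.
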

		\begin{proof} By \cref{cor:prilfact} colimit-dominant and fully faithful functors form a factorization system on $\PrViL$, so it follows as in \cref{obs:cdomiLstab} that the full inclusion $\Arr^{\cdom}(\PrViL) \hookrightarrow \Arr(\PrViL)$ has a right adjoint and hence so has the full inclusion 
		\[ \MModV \simeq \Spaces\times_{\PrViL} \Arr^{\cdom}(\PrViL) \hookrightarrow \Spaces \times_{\PrViL} \Arr(\PrViL)\,.
		\]
	 Using \cref{lem:iLclosedcolim}  that the subcategory $\PrViL \to \PrV$ is closed under colimits, so is the induced subcategory inclusion $\Spaces \times_{\PrViL} \Arr(\PrViL) \to \Spaces\times_{\PrV}\Arr(\PrV)$. 
		\end{proof}

	We will now prove that $\ob: \MModV \to \Spaces$ is a Cartesian and coCartesian fibration. This will directly follow from the following:

		\begin{cor}\label{cor:coCartMModV}The functor $\ob: \MModV \to \Spaces$ is a Cartesian and coCartesian fibration, classifying a functor 
		\[\MModbV : \Spaces \to \Pr \; .
		\] 	\end{cor}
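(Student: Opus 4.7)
The plan is to use the reformulation $\MModV \simeq \Spaces \times_{\PrViL} \Arr^{\cdom}(\PrViL)$ from \cref{prop:MModVequiv}, under which $\ob$ becomes the base change of the source projection $s: \Arr^{\cdom}(\PrViL) \to \PrViL$ along the functor $\Spaces \to \PrViL$ sending $X$ to $\PSh(X) \otimes \cV$. Since Cartesian and coCartesian fibrations are stable under base change, it suffices to show $s$ is both a Cartesian and coCartesian fibration. Once this is done, the classifying functor will take values in $\Pr$ by combining two facts: each fiber $\MModXV \simeq \vcatXV$ is presentable by \cref{obs:pCatXpres}, and in a bifibration the coCartesian transport along any $f$ is left adjoint to the Cartesian transport along $f$, hence colimit-preserving.

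For coCartesian lifts the construction is direct. Given $f: \cA \to \cA'$ in $\PrViL$ and $(\alpha: \cA \to \cB) \in \Arr^{\cdom}(\PrViL)$, I would form the pushout $\cA' \cup_\cA \cB$ in $\PrViL$, which exists by cocompleteness of $\PrViL$ (\cref{lem:iLclosedcolim}). The resulting square is a coCartesian arrow in $\Arr(\PrViL)$, and its pushforward leg $\cA' \to \cA' \cup_\cA \cB$ remains colimit-dominant by \cref{obs:cdomiLstab}, so this arrow lies in $\Arr^{\cdom}(\PrViL)$.

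The main obstacle is Cartesian lifts, because the naive pullback $\alpha' \circ f$ need not be colimit-dominant. To remedy this, I would apply the colimit-dominant/fully faithful factorization system on $\PrViL$ from \cref{cor:prilfact} to produce $\cA \xrightarrow{\beta} \CIm(\alpha' \circ f) \xhookrightarrow{\iota} \cB'$, and use the associated square (with left leg $f$ and right leg $\iota$) as the candidate Cartesian lift. Checking its universal property reduces to the following: given any $(\gamma: \cZ \to \cW) \in \Arr^{\cdom}(\PrViL)$ and any morphism $(\gamma) \to (\alpha')$ whose top edge factors as $\cZ \xrightarrow{h} \cA \xrightarrow{f} \cA'$, its bottom edge $k: \cW \to \cB'$ must factor uniquely through $\iota$. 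Uniqueness is immediate since $\iota$ is fully faithful and hence a monomorphism; for existence I would argue by generators: colimit-dominance of $\gamma$ forces $\cW$ to be generated under colimits and $\cV$-tensoring by the image of $\cZ$ in $\cW$, whose further image under $k$ equals $\iota \circ \beta \circ h$ and thus lies in the full image of $\iota$; since $\CIm(\alpha' \circ f) \hookrightarrow \cB'$ is closed under colimits and $\cV$-tensoring and $k$ preserves both, the entire image of $k$ is contained in $\CIm(\alpha' \circ f)$.

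Once both (co)Cartesian structures are established, coCartesian transport along any $f: X \to Y$ admits a right adjoint (the Cartesian transport) and is therefore colimit-preserving. Combined with presentability of the fibers, the classifying functor then factors through $\Pr$ as claimed.
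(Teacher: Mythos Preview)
Your proposal is correct and follows essentially the same route as the paper: both reduce via \cref{prop:MModVequiv} to showing that the source projection $\Arr^{\cdom}(\PrViL) \to \PrViL$ is a biCartesian fibration, then pull back along $\Spaces \to \PrViL$. The difference is that the paper packages your explicit construction of (co)Cartesian lifts into a general lemma, \cref{prop:ArrLCartesian}, which says that for any category $C$ with pushouts and a factorization system $(L,R)$, the source projection $\Arr^L(C) \to C$ is a biCartesian fibration; this is then applied with $(L,R)$ the colimit-dominant/fully-faithful system on $\PrViL$ from \cref{cor:prilfact}. Your argument is precisely the proof of that lemma specialized to this case---in particular, your ``factor-through-$\iota$'' step is exactly the orthogonality of colimit-dominant maps against fully faithful ones, which you could cite directly from \cref{cor:prilfact} rather than reproving via generators. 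On the final point, your bifibration argument (coCartesian transport is left adjoint to Cartesian transport, hence colimit-preserving) is more explicit than the paper's one-line appeal to presentability of the fibers, and is indeed what makes the classified functor land in $\Pr$ rather than just $\lcat$.
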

	\begin{proof}
	Since $\PrViL$ has colimits  (preserved by the inclusion into $\PrV$, see  \cref{obs:iLstability}) and since colimit-dominant and fully faithful functors form a factorization system on $\PrViL$  by \cref{cor:prilfact}, it follows from \cref{prop:ArrLCartesian} that $\Arr^{\cdom}(\PrViL) \to \PrViL$ is a Cartesian and coCartesian fibration.
	By \cref{prop:MModVequiv}, $\MModV \to \Spaces$ is therefore a pullback of a (co)Cartesian fibration and hence itself one.
	Since the fibers $\MModXV \simeq \vcatXV$ are presentable by \cref{obs:pCatXpres}, it follows that the classified functor lands in $\Pr$.
	\end{proof}
	
		\begin{ex}	\label{ex:carttransportgraph}
	Unpacked, given a map of spaces $g:X \to Y$, the induced right adjoint $g^! : \MModYV \to \MModXV$ sends a marked module $Y\to \cM$ to the marked module $X\to \CIm(X \to Y \to \cM)$. 
	In particular, using the equivalence $\vcatXV \simeq \MModXV$ from \cref{thm:charessim} and \cref{cor:PShVgraph},  the induced functor $\vcatYV \to \vcatXV$ sends a valent $\cV$-enriched category $\cD$ with space of objects $Y$ to the valent $\cV$-enriched category $g^! \cD$ with space of objects $X$ and $\Hom_{g^!\cD}(x , x') = \Hom_{\cD}(g(x), g(x'))$.
	\end{ex}

	\begin{obs}\label{obs:coCartMModVDesc}Using the description of Cartesian and coCartesian morphisms from \cref{prop:ArrLCartesian}, the proof of~\cref{cor:coCartMModV} implies that a morphism in $\MModV$ consisting of a map $g: X \to Y$ in $\Spaces$ and a commutative square in $\PrV$
		\[
		\begin{tikzcd}
			\PSh(X) \otimes \mathcal{V} \arrow[r] \arrow[d] & \PSh(Y) \otimes \mathcal{V} \arrow[d] \\
			\cM \arrow[r] & \cN
		\end{tikzcd}
		\]
		\begin{itemize}
			\item is Cartesian iff the functor $\cM \to \cN$ is fully faithful,
			\item is coCartesian iff this square is a pushout square in $\PrV$ (or equivalently, $\PrL$).
		\end{itemize}
	\end{obs}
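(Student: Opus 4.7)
The plan is to unwind the equivalence $\MModV \simeq \Spaces \times_{\PrViL} \Arr^{\cdom}(\PrViL)$ from \cref{prop:MModVequiv} that underlies the proof of \cref{cor:coCartMModV}, and then apply the description of Cartesian and coCartesian morphisms for the source projection $\Arr^{\cdom}(\PrViL) \to \PrViL$ provided by \cref{prop:ArrLCartesian}. Under the second equivalence in \cref{prop:MModVequiv}, any morphism of $\MModV$ lifts uniquely to a commutative square in $\PrViL$ of the stated form, with both vertical arrows colimit-dominant (the bottom $\cM \to \cN$ being automatically internal left adjoint by \cref{lem:iLunder}), and $\ob$ corresponds under this identification to the source projection to $\Spaces$.

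Since the pullback of a (co)Cartesian fibration preserves (co)Cartesian morphisms and is computed fiberwise, it suffices to characterize (co)Cartesian morphisms for $\Arr^{\cdom}(\PrViL) \to \PrViL$. The relevant input is the factorization system on $\PrViL$ of colimit-dominant and fully faithful morphisms from \cref{cor:prilfact}, whose left class is closed under pushouts by \cref{obs:cdomiLstab}. By the general result \cref{prop:ArrLCartesian} (i.e.\ for a cocomplete category $\cC$ with factorization system $(L,R)$ whose left class is stable under pushouts, the source projection $\Arr^L(\cC) \to \cC$ is a bifibration whose Cartesian lifts are exactly the squares with $R$-morphism on the bottom, and whose coCartesian lifts are exactly the pushout squares in $\cC$), applied with $L = \cdom$ and $R = \text{fully faithful}$, I can read off the two claims: a morphism of $\MModV$ is $\ob$-Cartesian iff its bottom $\cM \to \cN$ is fully faithful, and $\ob$-coCartesian iff the square is a pushout in $\PrViL$.

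It remains to match the pushout in $\PrViL$ with a pushout in $\PrV$ and $\PrL$, as stated in the parenthetical. For this I would invoke \cref{lem:iLclosedcolim}, which says $\PrViL \hookrightarrow \PrV$ preserves colimits, together with the fact that the forgetful functor $\PrV = \RMod_{\cV}(\PrL) \to \PrL$ preserves colimits (being the forgetful from a module category); hence pushouts in $\PrViL$, $\PrV$ and $\PrL$ all agree. Likewise, fully faithful morphisms in $\PrViL$ coincide with fully faithful functors between the underlying categories, so no ambiguity arises on the Cartesian side.

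The main point of care is orientation: \cref{prop:ArrLCartesian} should be cited in its source-projection form (rather than a dual target-projection statement), which matches the construction of $\ob: \MModV \to \Spaces$ as sending a marked module $X \to \cM$ to the source $X$; this is already the orientation used in \cref{cor:coCartMModV}, so no new work is required. Everything else is immediate bookkeeping: since the subcategories inclusions $\PrViL \subseteq \PrV \subseteq \PrL$ only add conditions that either hold automatically by \cref{lem:iLunder} or are recorded in the definition of $\MModV$, the characterization transports cleanly.
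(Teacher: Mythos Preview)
Your proposal is correct and follows exactly the approach the paper indicates: unwind the pullback description $\MModV \simeq \Spaces \times_{\PrViL} \Arr^{\cdom}(\PrViL)$ from \cref{prop:MModVequiv}, apply \cref{prop:ArrLCartesian} to the colimit-dominant/fully faithful factorization system on $\PrViL$ from \cref{cor:prilfact}, and then use \cref{lem:iLclosedcolim} and colimit-creation by the forgetful functor to identify pushouts across $\PrViL \subseteq \PrV \to \PrL$. The paper treats this as an observation requiring no further argument beyond these references, and your write-up simply spells out that bookkeeping.
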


	\begin{cor}
		\label{prop:CatVcolimSpace} 
		The category $\MModV$ has limits and colimits, and the functor $\ob: \MModV \to \Spaces$ preserves them.
	\end{cor}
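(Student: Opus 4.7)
The strategy is to exploit the structure of $\ob: \MModV \to \Spaces$ as both a Cartesian and coCartesian fibration established in \cref{cor:coCartMModV}, whose coCartesian straightening $\MModbV: \Spaces \to \Pr$ takes values in presentable categories and colimit-preserving functors between them.

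For colimits, I would invoke the standard principle that the total space of a coCartesian fibration $p: E \to B$ admits $K$-indexed colimits (which $p$ preserves) whenever $B$ does, each fiber does, and every coCartesian transport functor preserves them (cf.\ \HTT{Prop.}{4.3.1.10}). All three hypotheses hold in our setting: $\Spaces$ is cocomplete, each fiber $\MModXV \simeq \vcatXV$ is presentable by \cref{obs:pCatXpres}, and the coCartesian transport functors are colimit-preserving since the classifying functor lands in $\Pr$.

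Dually, for limits I would use the Cartesian fibration structure. Its Cartesian transport $g^!: \MModYV \to \MModXV$ described in \cref{ex:carttransportgraph} is the right adjoint of the coCartesian transport, hence preserves limits. Combined with completeness of $\Spaces$ and of each presentable fiber, the dual of the same principle yields that $\MModV$ admits limits and $\ob$ preserves them.

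No serious obstacle is expected: everything reduces to invoking standard (co)Cartesian fibration facts on top of the structural result \cref{cor:coCartMModV}. As an alternative route for the colimit half, one could argue directly from \cref{cor:MModVclosed}, which identifies $\MModV$ as a colimit-closed subcategory of $\Spaces \times_{\PrV} \Arr(\PrV)$: the ambient category is itself cocomplete since $\PSh(-) \otimes \cV: \Spaces \to \PrV$ is a left adjoint (hence colimit-preserving) and the source projection $\Arr(\PrV) \to \PrV$ is a coCartesian fibration with fibers having colimits, making the pullback cocomplete with colimits projecting to $\Spaces$ componentwise.
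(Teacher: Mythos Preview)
Your proposal is correct and follows essentially the same approach as the paper: both use that $\ob$ is a Cartesian and coCartesian fibration with presentable fibers whose transport functors are (right, resp.\ left) adjoints, and then invoke the standard fibration criterion for (co)limits in the total space. The paper cites \HTT{Cor.}{4.3.1.11} and \HTT{Prop.}{4.3.1.5} rather than \HTT{Prop.}{4.3.1.10}, but these are the same circle of results; your alternative route via \cref{cor:MModVclosed} for colimits is a nice extra observation not in the paper.
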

	\begin{proof}
	By \cref{cor:coCartMModV}, $\ob$ is a Cartesian and coCartesian fibration classifying a functor $\Spaces \to \Pr$. In particular, the transport maps preserve limits and colimits in the fibers of $\ob$ respectively, so our claim follows from \HTT{Cor.}{4.3.1.11} and \HTT{Prop.}{4.3.1.5} since the basis $\Spaces$ admits limits and colimits.  	\end{proof}
	\begin{rem}
	By the previous proof, colimits in $\MModV$ are calculated by first taking them in $\Spaces$ obtaining a space $X$, then performing coCartesian transports along the colimit cone to push the diagram over $X$, and finally calculating the colimit of the transports in the fiber $\vcatXV$ which admits limits and colimits as it is presentable by \cref{obs:pCatXpres}. Dually for limits and Cartesian transport.
\end{rem}


\subsection{Functoriality in the enriching category}

We now  construct a coCartesian fibration $\MMod \to \Alg(\PrL)$ with fibers $\MModV$, exhibiting the latter as functorial in $\mathcal{V}$. 

In \cref{defin:MModV} for a fixed $\cV \in \Alg(\PrL)$, we defined $\MModV$ as the full subcategory of $\Arr(\widehat{\cat}) \times_{\widehat{\cat}} \PrV$ on those functors $C \to \cM$ where $C$ is a small space and the functor atomically generates.

Recall the coCartesian fibration $p: \RMod(\PrL) \to \Alg(\PrL)$ which by  \HA{Lemma}{4.8.3.15}  straightens to the functor $ \RMod_{-}(\PrL)=: \Pr_{-}: \Alg(\PrL) \to \widehat{\cat}$ sending algebra homomorphisms to the induced extension-of-scalars functors.

\begin{defin} \label{defin:MMod} We define $\MMod$ as the full subcategory of $\Arr(\widehat{\cat})\times_{\widehat{\cat}} \RMod(\PrL)$ on those triples of $\cV \in \Alg(\PrL), \cM \in \PrV$ and functor $C\to \cM$ for which $C$ is a small space and whose image lands amongst the $\cV$-atomics and generates $\cM$ under colimits and tensoring. 
\end{defin}

\begin{lemma}
	\label{lem:arrlcattwosided}
	The functor $\Arr(\lcat) \times_{\lcat} \RMod(\Pr) \overset{\mathrm{src} \times p}{\longrightarrow} {\lcat \times \Alg(\Pr)}$ is a two-sided fibration (c.f.\ \cref{reminder:twosidedfib}), classifying a functor $\lcat \times \Alg(\Pr) \to \llcat, (C, \cV) \mapsto (\PrV)_{C/}$.
\end{lemma}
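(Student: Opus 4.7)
The plan is to verify that $\mathrm{src} \times p$ satisfies the defining properties of a two-sided fibration as in \cref{reminder:twosidedfib}, namely: the existence of Cartesian lifts of morphisms in the $\lcat$-direction, the existence of coCartesian lifts of morphisms in the $\Alg(\Pr)$-direction, plus the relevant Beck--Chevalley-style compatibility. Once the fibration structure is established, identifying the fiber at $(C, \cV)$ with $(\PrV)_{C/}$ is immediate from the fibered-product description: an object of $E := \Arr(\lcat) \times_{\lcat} \RMod(\Pr)$ over $(C, \cV)$ is a triple $(C \in \lcat, \cM \in \PrV, F \colon C \to \underline{\cM})$, which is the data of an object of $(\PrV)_{C/}$.

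For the Cartesian lifts in the $\lcat$-direction, I would use that the source projection $\mathrm{src}\colon \Arr(\lcat) \to \lcat$ is always a Cartesian fibration, with the Cartesian lift of $g\colon C_0 \to C_1$ at $(F_1\colon C_1 \to D) \in \Arr(\lcat)$ given by the commuting square with $g$ on top and $\id_D$ on the bottom, exhibiting the morphism $(F_1 \circ g) \to F_1$. Since Cartesian fibrations are stable under pullback along arbitrary functors, this structure lifts to $E$ upon base-changing along the underlying-category functor $U\colon \RMod(\Pr) \to \lcat$, yielding Cartesian lifts over morphisms of the form $(g, \id_{\cV})$. Dually, for coCartesian lifts in the $\Alg(\Pr)$-direction, I would invoke \HA{Cor.}{4.8.3.24} that $p\colon \RMod(\Pr) \to \Alg(\Pr)$ is a coCartesian fibration classifying $\cV \mapsto \PrV$ with transport along $h\colon \cV_0 \to \cV_1$ given by extension of scalars $- \otimes_{\cV_0} \cV_1$. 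The coCartesian lift in $E$ at $(F_0\colon C_0 \to \cM_0)$ is then the square with $\id_{C_0}$ on top and $\cM_0 \to \cM_0 \otimes_{\cV_0} \cV_1$ on the bottom, landing at $(F_0'\colon C_0 \to \cM_0 \otimes_{\cV_0} \cV_1)$; coCartesianness for $\mathrm{src} \times p$ follows from that in $\RMod(\Pr)$ together with the fact that the top side of the square is the identity, so there is no obstruction to lifting cones against the $\mathrm{src}$-component.

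The compatibility condition reduces to showing that every morphism in $E$ covering $(g, h)$ factors uniquely as a coCartesian lift of $(\id, h)$ followed by a Cartesian lift of $(g, \id)$ (or in the opposite order, as dictated by \cref{reminder:twosidedfib}), together with the corresponding Beck--Chevalley identity relating the two factorizations. This will follow from the universal property of extension of scalars: $\cV_0$-linear colimit-preserving functors $\cM_0 \to \cM_1$ covering $h$ correspond bijectively to $\cV_1$-linear colimit-preserving functors $\cM_0 \otimes_{\cV_0} \cV_1 \to \cM_1$, applied to the bottom row of the defining square, while the top row is freely chosen. The main obstacle will be formalising this compatibility against the precise formulation of two-sided fibration in \cref{reminder:twosidedfib}; once the conventions are aligned, verification is essentially automatic. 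Combining these observations, the classified bivariant functor sends $(C, \cV) \mapsto (\PrV)_{C/}$, with transport by precomposition in the $\lcat$-variable and by extension of scalars in the $\Alg(\Pr)$-variable, as claimed.
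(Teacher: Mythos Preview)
Your proposal is correct, and in fact makes the Cartesian and coCartesian lifts (and hence the identification of the classified functor) more explicit than the paper does. However, the paper takes a shorter, more modular route: it first observes that $(\mathrm{src},\mathrm{trg})\colon \Arr(\lcat)\to \lcat\times\lcat$ is already a two-sided fibration (\cref{ex:arrowtwosided}), then pulls back along $U\colon \RMod(\Pr)\to\lcat$ on the target factor to obtain a two-sided fibration $E\to \lcat\times\RMod(\Pr)$ (using \cref{obs:twosidedfib}), and finally postcomposes with the coCartesian fibration $p\colon\RMod(\Pr)\to\Alg(\Pr)$. This avoids re-verifying the axioms from scratch: the Cartesian structure over $\lcat$ and the compatibility condition are inherited from the arrow bifibration, and only the postcomposition step needs a word of justification. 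Your explicit description of the coCartesian lifts (identity on the source, extension of scalars on the target) is exactly what one unwinds from the paper's composite-of-coCartesian-fibrations argument, and your description of the Cartesian lifts (precomposition on the source, identity on the module) is what one unwinds from the arrow-category bifibration; so the two proofs are really the same computation, packaged differently. The paper's approach buys brevity and reuse of \cref{ex:arrowtwosided}; yours buys transparency about what the transports actually are, which is what one needs anyway to see that the classified functor is $(C,\cV)\mapsto (\PrV)_{C/}$.
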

\begin{proof}
	By \cref{ex:arrowtwosided} the arrow category $(\mathrm{src}, \mathrm{trg}): \Arr(\lcat) \to \lcat \times \lcat$ is a two-sided fibration, so by \cref{obs:twosidedfib} the pullback $\Arr(\lcat) \times_{\lcat} \RMod(\Pr) \to \lcat \times \RMod(\Pr)$ is one as well. Postcomposing with the coCartesian fibration $p: \RMod(\Pr) \to \Alg(\Pr)$ proves the claim.
	\end{proof}
	
	\begin{notat}\label{notat:intPrVC}
	For better readability, we will sometimes follow \cref{reminder:twosidedfib} and write $\int^C_{\cV} (\PrV)_{C/} \to \lcat \times \Alg(\PrL)$ for the two-sided fibration from \cref{lem:arrlcattwosided}. We write $\int^X_{\cV} (\PrV)_{X/} \to \Spaces \times \Alg(\PrL)$ for its pullback along the inclusion $\Spaces \hookrightarrow \lcat$ (classifying the restriction of the functor $(C,\cV) \mapsto (\PrV)_{C/}$ to $\Spaces \hookrightarrow \lcat$). 
	\end{notat}

\begin{prop}\label{prop:coCart} The composite $\MMod \hookrightarrow \Arr(\widehat{\cat}) \times_{\widehat{\cat}} \RMod(\PrL) \to \RMod(\PrL) \to \Alg(\PrL)$ is a coCartesian fibration and the inclusion $\MMod \to \Arr(\widehat{\cat}) \times_{\widehat{\cat}} \RMod(\PrL)$ preserves coCartesian morphisms. 
\end{prop}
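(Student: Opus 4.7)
The plan is to exhibit coCartesian lifts in an ambient fibration and then show that they restrict to $\MMod$ by appealing to preservation of atomicity and generation under base change, both of which are already established.

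First I would observe that the projection $\int^C_\cV (\PrV)_{C/} \to \Alg(\PrL)$ from \cref{notat:intPrVC} is itself a coCartesian fibration. Indeed, it factors as
\[
\Arr(\widehat{\cat}) \times_{\widehat{\cat}} \RMod(\PrL) \longrightarrow \RMod(\PrL) \longrightarrow \Alg(\PrL)\,,
\]
where the second map is a coCartesian fibration by \HA{Lemma}{4.8.3.15} with transport given by extension of scalars $\cM \mapsto \cM \otimes_\cV \cW$, and the first map is the pullback of the target-evaluation coCartesian fibration $\mathrm{trg}\colon \Arr(\widehat{\cat}) \to \widehat{\cat}$ along the forgetful functor $\RMod(\PrL) \to \widehat{\cat}$. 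Since the coCartesian transport for $\mathrm{trg}$ along a morphism $\cM \to \cN$ sends $F\colon C \to \cM$ to the composite $C \to \cM \to \cN$, composing the two fibrations yields the explicit coCartesian lift at $(C \to \cM)$ of $f\colon \cV \to \cW$ as the morphism to $(C \to \cM \to \cM \otimes_\cV \cW)$.

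The main step is then to show that this lift stays inside $\MMod$. Assume $(C \to \cM)$ is a marked $\cV$-module: $C$ is a small space, the image lies in the $\cV$-atomics, and generates $\cM$ under colimits and $\cV$-tensoring. The source $C$ is preserved, so it remains a small space. By \cref{cor:atomicbasechange}, each $\cV$-atomic $m \in \cM$ gives a $\cW$-atomic $m \otimes 1_\cW \in \cM \otimes_\cV \cW$, so the new marking lands in $\cW$-atomics. By \cref{cor:generatingbasechange}, the image of $C$ in $\cM \otimes_\cV \cW$ still generates under colimits and $\cW$-tensoring. Hence $(C \to \cM \otimes_\cV \cW)$ is a marked $\cW$-module, i.e.\ the coCartesian lift belongs to the full subcategory $\MMod$.

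Finally, since $\MMod \hookrightarrow \int^C_\cV (\PrV)_{C/}$ is a full subcategory inclusion closed under coCartesian lifts over $\Alg(\PrL)$, it is formal that $\MMod \to \Alg(\PrL)$ is itself a coCartesian fibration, with coCartesian morphisms inherited from the ambient fibration; in particular the inclusion preserves them. There is no real obstacle in this argument: the two base-change compatibility corollaries do all the substantive work, and the remainder is purely a matter of unpacking the (co)Cartesian fibration structures.
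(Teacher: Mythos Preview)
Your proposal is correct and follows essentially the same approach as the paper: show the ambient category is a coCartesian fibration over $\Alg(\PrL)$ with transport $(C\to\cM)\mapsto(C\to\cM\otimes_{\cV}\cW)$, then invoke \cref{cor:atomicbasechange} and \cref{cor:generatingbasechange} to see that $\MMod$ is closed under this transport, and conclude via \cref{lem:coCartrefl}. The only cosmetic difference is that the paper appeals to \cref{lem:arrlcattwosided} for the ambient coCartesian fibration, while you argue it directly as a composite of two coCartesian fibrations.
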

\begin{proof} By \cref{lem:arrlcattwosided} the functor $\Arr(\widehat{\cat})\times_{\widehat{\cat}} \RMod(\PrL) \to \Alg(\PrL)$ is a coCartesian fibration. Explicitly, given a morphism $\cV \to \cW$ in $\Alg(\PrL)$, the induced coCartesian transport sends a $C\to \cM_{\cV} \in \Arr(\widehat{\cat}) \times_{\widehat{\cat}} \RMod(\PrL)$ to $C \to \cM \otimes_{\cV} \cW_{\cW}$. In particular, if $C\to \cM_{\cV}$ is a marked module for $\cV$, i.e.\ if its image lands in the $\cV$-atomics and generates under colimits and $\cV$-tensoring, then also $C\to \cM \otimes_{\cV} \cW$ is a marked module for $\cW$ by \cref{cor:atomicbasechange} and \cref{cor:generatingbasechange}. Thus, the full subcategory $\MMod \subseteq \Arr(\widehat{\cat}) \times_{\widehat{\cat}} \RMod(\PrL)$ is closed under coCartesian transport and hence the composite $\MMod \to \Alg(\PrL)$ is also a coCartesian fibration by \cref{lem:coCartrefl}.
\end{proof}

\begin{ex}\label{ex:basechangerelative}
It follows directly from the proof of~\cref{prop:coCart} that the induced straightened functor $\MModb: \Alg(\PrL) \to \widehat{\cat}$ sends a morphism $f: \cV \to \cW$ in $\Alg(\PrL)$ to the functor $\MModV \to \MModW$ sending a marked $\cV$-module $X \to \cM$ to the marked $\cW$-module $X\to  \cM \to \cM \otimes_{\cV} \cW$. 
\end{ex}

\begin{obs}\label{obs:MModsubfunctor}
Let $\Arr(\lcat) \times_{\lcat} \Pr_{-}: \Alg(\PrL) \to \llcat$ denote the straightening of the coCartesian fibration $\Arr(\lcat) \times_{\lcat} \RMod(\PrL) \to \Alg(\PrL)$. Then, \cref{prop:coCart} is equivalent to the assertion  that $\MMod \to \Alg(\PrL)$ is the unstraightening of a full subfunctor $\MModb : \Alg(\PrL) \to \lcat$ given at a $\cV \in \Alg(\PrL)$ by the full subcategory $\MModV \subseteq \Arr(\lcat) \times_{\lcat} \PrV$ from \cref{defin:MModV}.
\end{obs}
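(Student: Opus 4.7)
The plan is to derive the observation as a direct application of the straightening/unstraightening equivalence to \cref{prop:coCart}. The general principle I invoke is: for a coCartesian fibration $p: E \to B$ classified by a functor $F: B \to \llcat$, a subcategory inclusion $E' \hookrightarrow E$ whose fibers $E'_b \subseteq E_b$ are full subcategories corresponds, under straightening, to a pointwise fully faithful natural transformation $F' \To F$ if and only if $E' \to B$ is itself a coCartesian fibration and the inclusion preserves coCartesian morphisms. This is essentially immediate from unravelling definitions: coCartesian morphisms in $E'$ get identified with coCartesian morphisms in $E$ whose target lies in $E'$, and closure of $E'$ under coCartesian transport is exactly the condition that makes the straightening of $E'$ assemble into a subfunctor of $F$.

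With this principle, the observation is a two-line application. By \cref{prop:coCart}, the composite $\MMod \hookrightarrow \Arr(\lcat) \times_{\lcat} \RMod(\PrL) \to \Alg(\PrL)$ is a coCartesian fibration and the first inclusion preserves coCartesian morphisms. By \cref{defin:MMod} the fiber of $\MMod \to \Alg(\PrL)$ at each $\cV \in \Alg(\PrL)$ is $\MModV$, which is by \cref{defin:MModV} a full subcategory of $\Arr(\lcat) \times_{\lcat} \PrV$. Applying the principle above, straightening $\MMod \to \Alg(\PrL)$ yields a functor $\MModb: \Alg(\PrL) \to \lcat$ and a pointwise fully faithful natural transformation $\MModb \To \Arr(\lcat) \times_{\lcat} \Pr_{-}$ whose value at $\cV$ is the inclusion $\MModV \hookrightarrow \Arr(\lcat) \times_{\lcat} \PrV$.

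Conversely, given any full subfunctor $\MModb$ of $\Arr(\lcat) \times_{\lcat} \Pr_{-}$, its unstraightening sits inside the unstraightening of the ambient functor as a subcategory whose projection to $\Alg(\PrL)$ is coCartesian with coCartesian-morphism-preserving inclusion. This reproduces the content of \cref{prop:coCart}, establishing the claimed equivalence.

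The only bookkeeping point is that the ambient functor $\Arr(\lcat) \times_{\lcat} \Pr_{-}$ lands in $\llcat$ (since $\Arr(\lcat)$ is very large), while the subfunctor $\MModb$ lands in $\lcat$; this is fine because each $\MModV$ is presentable, hence large, by \cref{obs:pCatXpres}. No real obstacle arises — the entire observation is an unwrapping of the straightening equivalence once the closure of $\MMod$ under coCartesian transport is granted by \cref{prop:coCart}.
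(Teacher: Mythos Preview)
Your proposal is correct and matches the paper's intent: the observation is stated without proof precisely because it is a direct unpacking of the straightening equivalence once \cref{prop:coCart} is in hand, and your argument spells this out accurately (the general principle you invoke is what the paper uses elsewhere via \kerodon{01VB}). One minor quibble: your citation of \cref{obs:pCatXpres} for why $\MModV$ is large is slightly off, since that result concerns the fiber $\vcatXV$ rather than the total space $\MModV$; the correct justification is simply that $\MModV$ is a full subcategory of $\Spaces \times_{\PrV} \Arr(\PrV)$, which is large since both $\Spaces$ and $\PrV$ are.
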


We now show that for a morphism $f: \cV \to \cW$ the induced change-of-enrichment $\MModV \to \MModW$ sending $(X\to \cM) \mapsto (X\to \cM \otimes_{\cV} \cW)$ acts as expected on enriched categories. 

\begin{constr}	\label{obs:EMfunct}

Let ${}_{g} F_f : {}_{\cQ} \cP_\cV \to {}_{\cQ'} \cP'_{\cV'}$ be a morphism in $\operatorname{BMod}(\PrL)$ from a $\cQ$-$\cV$-bimodule $\cP$ to a $\cQ'$-$\cV'$-bimodule $\cP'$. Applying \cref{cons:EM}, this induces a map of coCartesian fibrations between $\LMod(\cP)^{\otimes} \to \Alg(\cQ) \times \RM^{\otimes}$ and $\LMod(\cP')^{\otimes} \to \Alg(\cQ') \times \RM^{\otimes}$ and hence  a natural transformation \[ 
\begin{tikzcd}[column sep=large]
\Alg(\cQ) \arrow[d, "\Alg(g)"']  \arrow[r, "\LMod_{-}(\cP)"]& \arrow[dl, Rightarrow] \RMod_{\cV}(\PrL) \arrow[d, "\otimes_{\cV} \cV'"] \\
\Alg(\cQ') \arrow[r, "\LMod_{-}(\cP')"'] & \RMod_{\cV'}(\PrL).
\end{tikzcd} \, .
\]
\end{constr}

\begin{prop}\label{prop:changeofenragree}
For $X\in \Spaces$ and $f: \cV \to \cW$ in $\Alg(\PrL)$, the induced coCartesian transport $\MModXV \to \MModXW$, restricted to the fibers over a fixed space $X$ of objects,  agrees under the equivalence $\MModXV \simeq \vcatXV$ from \cref{thm:charessim} with the functor from \cref{constr:changeofenrquiv}. 
\end{prop}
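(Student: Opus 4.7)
The plan is to exhibit, naturally in $\cC \in \vcatXV$, an equivalence of marked $\cW$-modules $(X \to \PShV(\cC) \otimes_\cV \cW) \simeq (X \to \PSh_\cW(f_!\cC))$, and to identify it with the restriction of the coCartesian transport of $\MMod \to \Alg(\PrL)$ to fibers over $X$. Combined with the equivalences $\vcatXV \simeq \MModXV$ of \cref{thm:charessim} and the explicit description of the coCartesian transport from \cref{ex:basechangerelative}, this yields the claim.

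To construct the comparison, I would apply \cref{obs:EMfunct} to the morphism $F: \PSh(X) \otimes \cV \to \PSh(X) \otimes \cW$ regarded as a morphism in $\operatorname{BMod}(\PrL)$ from the $\LinEnd(\PSh(X) \otimes \cV)$-$\cV$-bimodule $\PSh(X) \otimes \cV$ to the $\LinEnd_\cW(\PSh(X) \otimes \cW)$-$\cW$-bimodule $\PSh(X) \otimes \cW$, covering the monoidal functors $f_!$ and $f$; this is precisely the bimodule lift by which $f_!$ is defined in \cref{constr:changeofenrquiv}. The output is a natural transformation
\[ \LMod_{(-)}(\PSh(X) \otimes \cV) \otimes_\cV \cW \; \To \; \LMod_{f_!(-)}(\PSh(X) \otimes \cW) \]
of functors $\vcatXV \to \Pr_\cW$, which at $\cC$ evaluates to a morphism $\Phi_\cC : \PShV(\cC) \otimes_\cV \cW \to \PSh_\cW(f_!\cC)$ in $\Pr_\cW$. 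Pointwise, $\Phi_\cC$ is an equivalence by the base-change theorem \HA{Thm.}{4.8.4.6}: one has
\[ \LMod_\cC(\PSh(X) \otimes \cV) \otimes_\cV \cW \simeq \LMod_\cC(\PSh(X) \otimes \cW) = \LMod_{f_!\cC}(\PSh(X) \otimes \cW), \]
the final identity because, by construction of $f_!$, the $\cC$-action on $\PSh(X) \otimes \cW$ factors through $f_!$.

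Compatibility of $\Phi_\cC$ with markings is automatic, since on both sides the marking arises by precomposing the respective free-module functor with the Yoneda embedding $X \hookrightarrow \PSh(X)$, and the natural transformation supplied by \cref{obs:EMfunct} intertwines the two free-module functors by naturality. Thus $\Phi$ upgrades to a natural isomorphism of functors $\vcatXV \to \MModXW$, identifying $\cC \mapsto (X \to \PShV(\cC) \otimes_\cV \cW)$ with $\cC \mapsto (X \to \PSh_\cW(f_!\cC))$. The main bookkeeping obstacle is to trace through the proof of \cref{prop:coCart} and match the coCartesian transport of $\MMod \to \Alg(\PrL)$ --- obtained as a restriction of the two-sided fibration of \cref{lem:arrlcattwosided} --- with the assignment $\cC \mapsto \Phi_\cC$ supplied by \cref{obs:EMfunct}. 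Both constructions encode the same universal property (that of extension of scalars on $\PSh(X) \otimes \cV$ as a bimodule), so the matching is essentially formal, but it requires unwinding each construction in sufficient detail to pair up the relevant naturalities.
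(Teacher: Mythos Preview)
Your proposal is correct and follows essentially the same approach as the paper: apply \cref{obs:EMfunct} to the bimodule morphism $\PSh(X)\otimes\cV \to \PSh(X)\otimes\cW$ to obtain a natural transformation comparing the two composites, then verify it is invertible. The only notable difference is how invertibility is checked: you invoke \HA{Thm.}{4.8.4.6} directly to identify $\LMod_\cC(\PSh(X)\otimes\cV)\otimes_\cV\cW$ with $\LMod_{f_!\cC}(\PSh(X)\otimes\cW)$, whereas the paper instead reduces to the underlying graph via conservativity of $\MModXW \to \Fun(X\times X,\cW)$ and then uses \cref{cor:internalhombasechange} to see that the induced map on hom-objects is an equivalence. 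Your route is slightly more direct (and indeed the paper uses exactly your base-change argument elsewhere, in the proof of \cref{cor:fullyfaithfulatomic}); the paper's route has the mild advantage of not needing to identify the comparison map of \cref{obs:EMfunct} with the canonical base-change equivalence, since any map of marked modules inducing an equivalence on graphs is already an equivalence.
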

\begin{proof}
	The functor $f$ induces a map $\PSh(X) \otimes \cV \to \PSh(X) \otimes \cV \otimes_{\cV} \cW \simeq \PSh(X) \otimes \cW$ in $\RMod(\PrL)$ by coCartesian transport. Applying \cref{obs:endofunctoriality} to this morphism and the left $\PrL$-action on $\RMod(\PrL)$ we obtain a map ${}_{\LinEnd(\PSh(X) \otimes \cV} (\PSh(X) \otimes \cV)_{\cV} \to {}_{\LinEndW(\PSh(X) \otimes \cW} (\PSh(X) \otimes \cW)_{\cW}$ in $\LMod(\RMod(\PrL)) = \operatorname{BMod}(\PrL)$ whose underlying morphism between endomorphism algebras agrees with the one constructed in \cref{constr:changeofenrquiv}. Applying \cref{obs:EMfunct}, we obtain a natural transformation
\[ 
\begin{tikzcd}[column sep=large]
\vcatXV = \Alg(\LinEnd(\PSh(X) \otimes \cV)) \arrow[d]  \arrow[r, "\PShV"]& \arrow[dl, Rightarrow] \PrV \arrow[d, " -\otimes_{\cV} \cW"] \\
\vcatXW=\Alg(\LinEndW(\PSh(X) \otimes \cW)) \arrow[r, "\PShW"'] & \PrW
\end{tikzcd}
\]
where the left vertical morphism is the functor $f_!$ from \cref{constr:changeofenrquiv}. This induces a natural transformation 
\[\begin{tikzcd}
\vcatXV  \arrow[d, "f_!"']  \arrow[r, "\simeq"]& \arrow[dl, Rightarrow] \MModXV \arrow[d, " -\otimes_{\cV} \cW"] \\
\vcatXW \arrow[r, hook, "\simeq"'] & \MModXW
\end{tikzcd} \, .
\] We will conclude by proving that this natural transformation is invertible. Let therefore $\cC \in \vcatXV$ and since $\MModXV \simeq \Alg(\LinEnd(\PSh(X) \otimes \cV)) \to\Fun(X \times X, \cV)$ is conservative, it suffices to show invertibility on the underlying graphs where $f_!$ is simply given by postcomposition. 
But for $x, y \in X$, the $\cW$-morphism $\iHom_{\PShV(\cC) \otimes_{\cV} \cW} (\yoV_{\cC}(x) \otimes 1_\cW, \yoV_{\cC}(y) \otimes 1_{\cW}) \to f(\iHom_{\PShV(\cC)}(\yoV_{\cC} (x), \yoV_{\cC}(y))$ is by \cref{cor:internalhombasechange} an isomorphism since $\yoV_{\cC}(x)$ is atomic, proving the claim. 
\end{proof}

	\begin{cor}
		\label{cor:coerelativeadjoint}
		For a morphism $f:\cV \to \cW$ in $\Alg(\PrL)$, the functor $f_! : \MModV \to \MModW$ admits a right adjoint $f^\rR_!$ whose unit and counit are sent to equivalences\footnote{In the terminology of \cite[\HAsubsec{7.3.2}]{HA},  $f_! \dashv f_!^\rR$ is a \emph{relative adjunction} over $\Spaces$, cf.~\cref{reminder:relativeadjoints}.}  under the respective functors $\ob: \MModV \to \Spaces$ and $ \ob: \MModW \to \Spaces$. 	\end{cor}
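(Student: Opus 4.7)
The plan is to apply Lurie's criterion \HA{Prop.}{7.3.2.11} for relative adjoints: given coCartesian fibrations $p:\cC \to \cE$, $q:\cD \to \cE$, and a coCartesian-edge-preserving functor $F:\cC\to \cD$ over $\cE$, a relative right adjoint exists iff each fiberwise $F_e$ admits a right adjoint. In our setting, both $\MModV\to \Spaces$ and $\MModW \to \Spaces$ are coCartesian fibrations by \cref{cor:coCartMModV}, and $f_!$ is manifestly a functor over $\Spaces$. It therefore suffices to verify (a) that $f_!$ preserves coCartesian edges over $\Spaces$, and (b) that each fiberwise restriction $f_{!,X}:\MModXV \to \MModXW$ admits a right adjoint.

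I would first dispatch (b) by invoking \cref{prop:changeofenragree}, which identifies $f_{!,X}$, under the equivalence $\MModXV \simeq \vcatXV$ of \cref{thm:charessim}, with the change-of-enrichment functor $f_!:\vcatXV \to \vcatXW$. A right adjoint to the latter was constructed directly in \cref{constr:changeofenrquiv} from the adjoint pair $f\dashv f^\rR$ in $\PrL$ by postcomposition. For (a), I would invoke \cref{obs:coCartMModVDesc}: a morphism in $\MModV$ is $p$-coCartesian iff its underlying square in $\PrV$ is a pushout. Since $f_!$ acts on the underlying modules by extension of scalars $-\otimes_\cV\cW$ (\cref{ex:basechangerelative}), which preserves pushouts as a left adjoint, the image square is a pushout in $\PrW$, so $f_!$ preserves coCartesian edges.

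Combining (a) and (b), \HA{Prop.}{7.3.2.11} produces the sought relative right adjoint $f_!^\rR:\MModW \to \MModV$; the defining property that unit and counit project to equivalences in $\Spaces$ is built into the notion of relative adjunction (see \cref{reminder:relativeadjoints}). Both verifications are direct consequences of results established earlier in this section, so I do not expect a significant obstacle — the main content is the assembly of the fiberwise adjunctions of \cref{constr:changeofenrquiv} with the compatibility data provided by \cref{prop:changeofenragree} and \cref{obs:coCartMModVDesc}.
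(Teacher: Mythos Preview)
Your proposal is correct and follows essentially the same approach as the paper: both arguments reduce to Lurie's criterion for relative adjunctions between coCartesian fibrations, with the fiberwise adjoints supplied by \cref{prop:changeofenragree} and \cref{constr:changeofenrquiv}, and preservation of coCartesian edges deduced from \cref{obs:coCartMModVDesc} together with the fact that $-\otimes_{\cV}\cW$ preserves colimits. The only discrepancy is that the paper cites \HA{Prop.}{7.3.2.6} rather than 7.3.2.11 for the assembly step.
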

	\begin{proof}
	Both categories are coCartesian fibrations over $\Spaces$ by \cref{cor:coCartMModV}. Hence,  by \HA{Prop.}{7.3.2.6} it suffices to show that for each $X\in \Spaces$, the induced map on fibers $ \MModXV \to \MModXW$ admits a right adjoint, and that $f_!: \MModV \to \MModW$ preserves coCartesian morphisms. The former follows from \cref{prop:changeofenragree} and \cref{constr:changeofenrquiv}, the latter follows from the characterization of coCartesian morphisms in \cref{obs:coCartMModVDesc} and the fact that $-\otimes_{\cV} \cW$ preserves colimits. 
	\end{proof}

	\begin{cor}
		The coCartesian fibration $\MMod \to \Alg(\PrL)$ from \cref{prop:coCart} is also Cartesian. In particular, $\MMod$ admits limits and colimits and $\MMod \to \Alg(\PrL)$ preserves them. 	
	\end{cor}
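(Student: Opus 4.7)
The plan has two parts: first, upgrade the coCartesian fibration to a bifibration by exhibiting the required right adjoints fiberwise; then use standard fibration calculus to deduce existence and preservation of all small (co)limits.

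For Cartesianness, \cref{cor:coerelativeadjoint} produces a right adjoint $f_!^{\rR}: \MModW \to \MModV$ to the coCartesian transport $f_! : \MModV \to \MModW$ along every morphism $f: \cV \to \cW$ in $\Alg(\PrL)$. A coCartesian fibration is also Cartesian precisely when each coCartesian transport admits a right adjoint, in which case the Cartesian transports are obtained as the pointwise right adjoints of the classifying functor (see \HTT{Cor.}{5.2.2.5} and the surrounding discussion). Applied to the coCartesian fibration $\MMod \to \Alg(\PrL)$ of \cref{prop:coCart}, this yields the first assertion.

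For the existence and preservation of limits and colimits, I would apply \HTT{Prop.}{4.3.1.5} (limits) and \HTT{Prop.}{4.3.1.11} (colimits) to the bifibration $\MMod \to \Alg(\PrL)$. The base $\Alg(\PrL)$ admits small limits (created by the forgetful functor to $\PrL$, which has all small limits and colimits by \HTT{Prop.}{5.5.3.13} and \HTT{Prop.}{5.5.3.18}) and small colimits (since $\PrL$ is presentably monoidal, $\Alg(\PrL)$ is presentable in a larger universe and hence has all small colimits, cf.\ \HA{Cor.}{3.2.3.5}). Each fiber $\MModV$ admits small limits and colimits by \cref{prop:CatVcolimSpace}. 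Finally, the coCartesian transport $f_!$ preserves colimits as a left adjoint (by the first part), while the Cartesian transport $f_!^{\rR}$ preserves limits as a right adjoint; these are exactly the compatibilities required by the propositions.

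The only step requiring real care is the coCartesian-to-Cartesian conversion, which while standard must be cited precisely; beyond that, everything is a combination of \cref{cor:coerelativeadjoint}, \cref{prop:CatVcolimSpace}, and the general fibration calculus already assembled in the paper.
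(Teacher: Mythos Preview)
Your proof is correct and follows essentially the same approach as the paper: Cartesianness via \cref{cor:coerelativeadjoint} combined with \HTT{Cor.}{5.2.2.5}, then existence and preservation of (co)limits via \cref{prop:CatVcolimSpace} and the fibration calculus of \HTT{Prop.}{4.3.1.5} and \HTT{Cor.}{4.3.1.11}. The paper's argument is identical, only slightly terser in justifying that $\Alg(\PrL)$ has limits and colimits.
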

	\begin{proof}
	It follows from~\cref{cor:coerelativeadjoint} that the coCartesian transports admit right adjoints and hence that $\MMod \to \Alg(\PrL)$ is also a Cartesian fibration by \HTT{Cor.}{5.2.2.5}. 
	Since the base $\Alg(\PrL)$ has limits and colimits, the fibers $\MModV$ have limits and colimits by \cref{prop:CatVcolimSpace} and the Cartesian transports preserve limits  while the coCartesian transports preserve colimits, it follows from  \HTT{Cor.}{4.3.1.11} that $\MMod$ has limits and colimits which are preserved by $\MMod \to \Alg(\PrL)$.
		\end{proof}

	\begin{lemma}
		\label{prop:changeofenrprespre}
		If $f: \cV \to \cW$ is a morphism in $\Alg(\PrL)$, the coCartesian transport $f_! : \MModV \to \MModW$ is a map of Cartesian fibrations over $\Spaces$.
	\end{lemma}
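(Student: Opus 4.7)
The strategy is to combine the explicit description of $\MModV \simeq \Spaces \times_{\PrViL} \Arr^{\cdom}(\PrViL)$ from \cref{prop:MModVequiv} with the characterization of Cartesian morphisms from \cref{obs:coCartMModVDesc}. By \cref{ex:basechangerelative}, the functor $f_!$ acts on a marked module $X \to \cM$ by basechange to $X \to \cM \otimes_\cV \cW$, and hence on a commuting square in $\PrV$ with top row $\PSh(X) \otimes \cV \to \PSh(Y) \otimes \cV$ and bottom row $\cM \to \cN$ by applying $-\otimes_\cV \cW$ entrywise.

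Let $F \in \MModV$ be a Cartesian morphism over a map $g:X \to Y$ of spaces. By \cref{obs:coCartMModVDesc}, this amounts to the condition that its bottom morphism $\cM \to \cN$ is fully faithful. Moreover, since by \cref{prop:MModVequiv} all four sides of the defining square of $F$ live in $\PrViL$, the morphism $\cM \to \cN$ is already an internal left adjoint in $\PrV$. After applying $-\otimes_\cV \cW$, \cref{cor:iLreltens}(2) ensures that the basechanged morphism $\cM \otimes_\cV \cW \to \cN \otimes_\cV \cW$ remains an internal left adjoint in $\PrW$. It thus suffices to verify that this morphism is again fully faithful, so that the result is Cartesian in $\MModW$ by \cref{obs:coCartMModVDesc}.

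For this, fix a set $\cM_0 \subseteq \cM$ of atomic generators given by the image of $X \to \cM$; by \cref{cor:atomicbasechange} and \cref{cor:generatingbasechange}, the corresponding set $\{m \otimes 1_\cW : m \in \cM_0\}$ consists of atomic generators of $\cM \otimes_\cV \cW$. Since $\cM \otimes_\cV \cW \to \cN \otimes_\cV \cW$ is internal left adjoint, the second part of \cref{lem:fflemma} reduces fully faithfulness to checking that for all $m, m' \in \cM_0$ the induced map on internal homs is an isomorphism. Using \cref{cor:internalhombasechange} (applicable in both $\cM$ and $\cN$, since $m, m'$ are also atomic in $\cN$ as $\cM \to \cN$ is an internal left adjoint by \cref{obs:iLpresatomic}), this map is identified with $f$ applied to the comparison $\iHom_\cM(m, m') \to \iHom_\cN(m, m')$, which is an isomorphism by fully faithfulness of $\cM \to \cN$ and \cref{lem:fflemma}.

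The main obstacle is that fully faithfulness of a morphism in $\PrV$ is not generally preserved by basechange $-\otimes_\cV \cW$; the crucial ingredients that make the argument work are the additional property that $\cM \to \cN$ is internal left adjoint (built into the $\Arr^{\cdom}(\PrViL)$-description of marked module morphisms) together with atomic generation of both modules, which together allow us to reduce the check to atomic generators where \cref{cor:internalhombasechange} provides explicit control.
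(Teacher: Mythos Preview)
Your proof is correct and follows essentially the same approach as the paper: reduce the Cartesian condition to fully faithfulness via \cref{obs:coCartMModVDesc}, then use \cref{lem:fflemma} together with atomic generation to reduce to an internal-hom check on generators, and finally invoke \cref{cor:internalhombasechange} on both $\cM$ and $\cN$. Your version is slightly more explicit in verifying the hypotheses (e.g.\ that the basechanged map is internal left adjoint via \cref{cor:iLreltens}, and that the images of generators are atomic in $\cN$), but the argument is the same.
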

	\begin{proof}
		By \cref{obs:coCartMModVDesc} a morphism $G: (\iota_{\cM}: X \to \cM) \to (\iota_{\cN}: Y \to \cN)$ in $\MModV$ is Cartesian if and only if $\cM \to \cN$ is fully faithful. Since $\cM$ is atomically generated by $X$ and $\cM \to \cN$ is internal left adjoint by \cref{prop:MModVequiv}, this is by  \cref{lem:fflemma}  equivalent to the condition that for all $x,x' \in X$ the induced map on internal homs 
		\[ \iHom_{\cM}(\iota_{\cM} (x), \iota_{\cM}(x') ) \to \iHom_{\cN}(\iota_{\cN}G(x),\iota_{\cN} G (x') )\]
		is an isomorphism in  $\cV$. Applying $f:\cV \to \cW$ results in an isomorphism in $\cW$ and it therefore follows from \cref{cor:internalhombasechange} that so  is the map 
			\[\iHom_{ \cM \otimes_{\cV} \cW}(\iota_{\cM} x \otimes 1_{\cW} , \iota_{\cM}x' \otimes 1_{\cW})   \to \iHom_{\cN \otimes_{\cV}\cW}(\iota_{\cN} Gx \otimes 1_{\cW}, \iota_{\cN} Gx' \otimes 1_{\cW}) \, .\qedhere
		\]
			\end{proof}

	\begin{cor}
		\label{cor:vEnrtwosided}
		The projection $\MMod \to \Spaces \times \Alg(\PrL)$ is a two-sided fibration (in the sense of \cref{reminder:twosidedfib}) and hence straightens to a functor $\MModbb: \Spaces^{\op} \times \Alg(\PrL) \to \widehat{\cat}$.
	\end{cor}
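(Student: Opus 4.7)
The plan is to apply a standard characterization of two-sided fibrations and then assemble three already-established results. Specifically, I expect \cref{reminder:twosidedfib} records that a functor $\pi: E \to B \times A$ is a two-sided fibration (Cartesian in $B$, coCartesian in $A$) precisely when (i) the composite $E \to A$ is a coCartesian fibration, (ii) for each $a \in A$ the fiber $E \times_A \{a\} \to B$ is a Cartesian fibration, and (iii) for every morphism $a \to a'$ in $A$ the coCartesian transport $E \times_A \{a\} \to E \times_A \{a'\}$ sends Cartesian morphisms over $B$ to Cartesian morphisms over $B$.

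I then verify each of these three conditions for $\MMod \to \Spaces \times \Alg(\PrL)$ using the fibrational analysis of the previous subsection. Condition (i) is \cref{prop:coCart}, which shows that $\MMod \to \Alg(\PrL)$ is a coCartesian fibration with coCartesian transports given by extension of scalars $-\otimes_{\cV}\cW$. Condition (ii) is \cref{cor:coCartMModV}: for each fixed $\cV \in \Alg(\PrL)$, the map $\ob: \MModV \to \Spaces$ is in particular a Cartesian fibration, with Cartesian transport along $g: X \to Y$ sending $Y \to \cM$ to $X \to \CIm(X \to Y \to \cM)$ (\cref{ex:carttransportgraph}). Condition (iii) is precisely the content of \cref{prop:changeofenrprespre}, which says that for any morphism $f: \cV \to \cW$ in $\Alg(\PrL)$ the extension-of-scalars functor $f_!: \MModV \to \MModW$ preserves Cartesian morphisms over $\Spaces$. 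Combining these three inputs gives the desired two-sided fibration structure.

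For the final straightening statement, I invoke the straightening equivalence for two-sided fibrations (again presumably recorded in \cref{reminder:twosidedfib}, cf.\ its invocation in \cref{lem:arrlcattwosided}), which turns a two-sided fibration $E \to B \times A$ into a functor $B^{\op} \times A \to \widehat{\cat}$. This produces the desired $\MModbb : \Spaces^{\op} \times \Alg(\PrL) \to \widehat{\cat}$, whose value at $(X, \cV)$ is the category $\MModXV$ of marked $\cV$-modules on $X$ from \cref{def:ModXV}, with contravariant functoriality in $X$ given by the Cartesian transport of \cref{ex:carttransportgraph} and covariant functoriality in $\cV$ given by $-\otimes_{\cV}\cW$ as described in \cref{ex:basechangerelative}.

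There is no real obstacle: the statement is an assembly of the fibrational results established in the preceding subsections, combined with the standard characterization and straightening of two-sided fibrations. One could alternatively note that $\MMod$ is a full subcategory of the ambient two-sided fibration of \cref{lem:arrlcattwosided} (pulled back along $\Spaces \hookrightarrow \widehat{\cat}$), closed under both the relevant Cartesian and coCartesian transports, and deduce that it inherits the two-sided fibration structure as a subfunctor in the sense of \cref{obs:MModsubfunctor}.
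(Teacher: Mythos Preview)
Your main argument is correct and matches the paper's proof: you verify the conditions of \cref{reminder:twosidedfib} by invoking \cref{prop:coCart}, \cref{cor:coCartMModV}, and \cref{prop:changeofenrprespre}, exactly as the paper does. One small imprecision: your list of conditions omits the requirement that the projection $\MMod \to \Spaces$ send $\Alg(\PrL)$-coCartesian morphisms to equivalences (the second bullet of \cref{reminder:twosidedfib}); this is needed for condition (iii) even to be meaningful, and the paper does check it explicitly. It follows immediately from \cref{ex:basechangerelative}, which you cite elsewhere, so the gap is cosmetic.

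However, your suggested alternative approach at the end is \emph{incorrect}. The full subcategory $\MMod$ of the ambient two-sided fibration $\int^X_{\cV}(\PrV)_{X/}$ is \emph{not} closed under the ambient Cartesian transport over $\Spaces$: given a marked module $Y \to \cM$ and a map $g: X \to Y$, the composite $X \to Y \to \cM$ need not atomically generate $\cM$ unless $g$ is surjective. The Cartesian transport in $\MMod$ replaces $\cM$ by the closed image $\CIm(X \to \cM)$ (\cref{ex:carttransportgraph}), which differs from the ambient transport. The paper addresses this exact point in the Warning immediately following \cref{cor:vEnrtwosided}: the inclusion $\MMod \hookrightarrow \Arr(\lcat)\times_{\lcat}\RMod(\PrL)$ is a map of coCartesian fibrations over $\Alg(\PrL)$ but \emph{not} a map of two-sided fibrations, so $\MMod$ is not a subfunctor of $(X,\cV) \mapsto (\PrV)_{X/}$ in the way you suggest.
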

	\begin{proof}
	By \cref{prop:coCart}, $\MMod \to \Alg(\PrL)$ is a coCartesian fibration with coCartesian morphisms sent to equivalences under $\MMod \to \Spaces$. Moreover, the fibers $\MModV \to \Spaces$ are Cartesian fibrations by \cref{cor:coCartMModV} and Cartesian morphisms are preserved by coCartesian transport $f_!$ due to \cref{prop:changeofenrprespre}.
	\end{proof}
	
	\begin{warning} The full inclusion $\MMod \to \Arr(\lcat)\times_{\lcat} \RMod(\PrL)$ is a map over $\Spaces \times \Alg(\PrL)$ which is a map of coCartesian fibrations over $\Alg(\PrL)$, but is not a map of two-sided fibrations. In particular, it does not straighten to a natural transformation $\MModqb \To (\Pr_{-})_{?/}$ (but merely a lax natural transformation). Indeed, by \cref{ex:carttransportgraph} Cartesian transport along non-surjective maps of spaces in $\MMod$ changes not just the underlying space but also the presentable module category. 
	\end{warning}

Analogous to~\cref{prop:MModVequiv}, we may rewrite~\cref{defin:MMod} in terms of functors out of $\PSh(X) \otimes \cV$:

\begin{notat}\label{not:restriLcdom} We let $\int_{\cV} \Arr(\PrV) \to \Alg(\PrL)$ denote the coCartesian unstraightening of the functor $\Arr \circ \RMod_{-}(\PrL) : \Alg(\PrL) \to \widehat{\cat}$. Since internal left adjoints and colimit-dominant functors are preserved by coCartesian transport by \cref{cor:iLreltens} and \cref{cor:cdomreltens}, the functor $\cV \mapsto \Arr^{\mathrm{iL, cdom}}(\RMod_{\cV}(\PrL)) \subseteq \Arr(\RMod_{\cV}(\PrL)$ by \kerodon{01VB} induces a full inclusion $\int_{\cV} \Arr^{\mathrm{iL,cdom}} \left( \PrV \right) \hookrightarrow \int_{\cV} \Arr(\PrV) $ on unstraightenings that is a map of coCartesian fibrations over $\Alg(\PrL)$. 
\end{notat}

\begin{prop}\label{prop:bigpullback} The following define pullback squares (see \cref{notat:intPrVC} and \ref{not:restriLcdom}):
\[
		\begin{tikzcd}
			\MMod \arrow[d, hook] \arrow[r] \arrow[dr, phantom, "\scalebox{1}{$\lrcorner$}" , very near start, color=black] & \smallint\nolimits_\cV \Arr^{\mathrm{iL,cdom}} \left( \PrV \right) \arrow[d, hook]                          \\
			\smallint\nolimits^X_\cV  (\PrV)_{X/} \arrow[d, hook] \arrow[r] \arrow[dr, phantom, "\scalebox{1}{$\lrcorner$}" , very near start, color=black] & \smallint\nolimits_\cV \Arr \left( \PrV \right) \arrow[dr, phantom, "\scalebox{1}{$\lrcorner$}" , very near start, color=black]  \arrow[d, hook]  \arrow[r]&\Alg(\PrL) \arrow[d,hook, "\cV \mapsto \id_{\cV}"]
			\\
			(\Spaces \times \Alg(\PrL)) \times_{\RMod(\PrL)} \Arr(\RMod(\PrL))\arrow[r] \arrow[d]\arrow[dr, phantom, "\scalebox{1}{$\lrcorner$}" , very near start, color=black] & \Arr(\RMod(\PrL)) \arrow[r]  \arrow[d, " \mathrm{src}"]& \Arr(\Alg(\PrL))\\
			 \Spaces \times \Alg(\PrL) \arrow[r, "X{,} \cV \mapsto \PSh(X) \otimes \cV_{\cV}"']         & \RMod(\PrL)      
		\end{tikzcd}
\]
Here, $\hookrightarrow$ denote fully faithful functors. 
Moreover, the above squares lift to pullback squares in the subcategory  $\widehat{\mathrm{coCart}}_{/\Alg(\PrL)}$ of $\widehat{\cat}_{/\Alg(\PrL)}$ of coCartesian fibrations and functors preserving coCartesian morphisms.\end{prop}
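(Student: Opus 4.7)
The plan is to verify each of the four interior squares is a pullback in $\widehat{\cat}$, then upgrade each to a pullback square in $\widehat{\mathrm{coCart}}_{/\Alg(\PrL)}$.

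The bottom square is a pullback by definition of the fibered product notation. The key non-formal step will be the middle-right square, which must identify $\smallint\nolimits_\cV \Arr(\PrV)$ with $\Alg(\PrL) \times_{\Arr(\Alg(\PrL))} \Arr(\RMod(\PrL))$, where the left map sends $\cV \mapsto \id_\cV$. I plan to use the following general fact: if $p : \cE \to \cB$ is a coCartesian fibration classifying $F : \cB \to \widehat{\cat}$, then $\Arr(p) : \Arr(\cE) \to \Arr(\cB)$ is itself a coCartesian fibration (by applying $\Fun([1], -)$), whose fiber over $\id_b \in \Arr(\cB)$ is canonically $\Arr(\cE_b) \simeq \Arr(F(b))$; consequently $\Arr(\cE) \times_{\Arr(\cB)} \cB \to \cB$ is the coCartesian unstraightening of $\Arr \circ F$. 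Applied to $\RMod(\PrL) \to \Alg(\PrL)$ (classifying $\cV \mapsto \PrV$), this identifies the middle-right pullback with $\smallint\nolimits_\cV \Arr(\PrV)$. The middle-left square then follows by pullback pasting with the bottom square, since an arrow in $\RMod(\PrL)$ lying over $\id_\cV$ whose source is $\PSh(X) \otimes \cV$ is precisely an object of $(\PrV)_{X/}$.

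For the top square, the inclusion $\smallint\nolimits_\cV \Arr^{\mathrm{iL, cdom}}(\PrV) \hookrightarrow \smallint\nolimits_\cV \Arr(\PrV)$ is fully faithful, since $\Arr^{\mathrm{iL,cdom}}(\PrV) \subseteq \Arr(\PrV)$ is a fiberwise full inclusion preserved under the change-of-enrichment coCartesian transport by \cref{cor:iLreltens} and \cref{cor:cdomreltens} (cf.\ \cref{not:restriLcdom}). Pulling back to $\smallint\nolimits_\cV^X (\PrV)_{X/}$ therefore yields the full subcategory on those $(X, \cV, F : \PSh(X) \otimes \cV \to \cP)$ with $F$ internally left adjoint and colimit-dominant in $\PrV$; by \cref{prop:MModVequiv} this subcategory coincides fiberwise over $\Alg(\PrL)$ with $\MModV$, and globally with $\MMod$ via \cref{obs:MModsubfunctor}.

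For the lift to $\widehat{\mathrm{coCart}}_{/\Alg(\PrL)}$, all relevant objects are coCartesian fibrations over $\Alg(\PrL)$ ($\MMod$ by \cref{prop:coCart}; the others by the identifications above together with the stability of coCartesian fibrations under pullback), and the horizontal arrows in each square preserve coCartesian morphisms (by \cref{prop:coCart} for $\MMod \hookrightarrow \smallint\nolimits_\cV \Arr^{\mathrm{iL,cdom}}(\PrV)$, and by the naturality of $\Arr$ and the pullback identifications elsewhere). Since a pullback square of coCartesian fibrations in which the horizontal arrows preserve coCartesian morphisms is a pullback in $\widehat{\mathrm{coCart}}_{/\Alg(\PrL)}$ if and only if it is one in $\widehat{\cat}$, the lift follows. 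The main obstacle will be the middle-right identification, since it requires careful bookkeeping with $\Arr$ of coCartesian fibrations and straightening; the remaining identifications are more routine consequences of the preparatory results.
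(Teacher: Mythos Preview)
Your overall strategy matches the paper's: verify each square is a pullback in $\widehat{\cat}$, then lift to $\widehat{\mathrm{coCart}}_{/\Alg(\PrL)}$. Your treatment of the bottom square, the middle-right square (via \cref{prop:grothfacts}(2) with $K=[1]$), and the top square (via \cref{prop:MModVequiv}) is essentially the paper's argument. However, there is a genuine gap in your handling of the middle-left square, and a smaller omission in the coCartesian lift.

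For the middle-left square, you write that it ``follows by pullback pasting with the bottom square, since an arrow in $\RMod(\PrL)$ lying over $\id_\cV$ whose source is $\PSh(X)\otimes\cV$ is precisely an object of $(\PrV)_{X/}$.'' This only matches \emph{objects}. The category $\int^X_\cV (\PrV)_{X/}$ is defined in \cref{notat:intPrVC} as (the restriction to $\Spaces$ of) $\Arr(\lcat)\times_{\lcat}\RMod(\PrL)$; a morphism there is a commuting square in $\lcat$ together with a map in $\RMod(\PrL)$ on targets. By contrast, an object of the iterated pullback you compute is a morphism in $\RMod(\PrL)$ with specified source $\PSh(X)\otimes\cV$, and a morphism there is a commuting square in $\RMod(\PrL)$. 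Identifying these two categories---not just their objects---requires the free--forgetful relative adjunction $\PSh^{\tav\text{-rex}}(-)\otimes ?:\lcat\times\Alg(\Catcolim)\rightleftarrows\RMod(\Catcolim)$ together with \cref{obs:relativeAdjObs} (or equivalently \cref{lem:adjunctionunstr}), which swaps an arrow category on the $\RMod$ side for one on the $\lcat$ side. This is exactly the step the paper supplies and you omit; without it, your ``pullback pasting'' does not yield an equivalence of categories, only a bijection on isomorphism classes of objects.

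For the coCartesian lift, you assert that it suffices for the \emph{horizontal} arrows to preserve coCartesian morphisms. In fact, for a square to be a pullback in $\widehat{\mathrm{coCart}}_{/\Alg(\PrL)}$ one needs the entire cospan---both the bottom horizontal and the right vertical arrow---to lie in that subcategory (this is what makes \cref{prop:grothfacts}(1) applicable). The paper therefore checks each functor individually: $\mathrm{src}:\Arr(\RMod(\PrL))\to\RMod(\PrL)$, the inclusion $\Alg(\PrL)\hookrightarrow\Arr(\Alg(\PrL))$, the bottom functor $\Spaces\times\Alg(\PrL)\to\RMod(\PrL)$, and so on. None of these verifications is hard, but they are not automatic, and your citation of \cref{prop:coCart} covers the inclusion of $\MMod$ into $\Arr(\lcat)\times_{\lcat}\RMod(\PrL)$, not directly into $\int_\cV\Arr^{\mathrm{iL,cdom}}(\PrV)$.
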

\begin{proof}
We first show that all squares are pullback squares in $\lcat$. The bottom left one is by definition, the middle right by \cref{prop:grothfacts} $(2)$. The pullback of the middle left square $(\Spaces \times \Alg(\PrL)) \times_{\RMod(\PrL)} \Arr(\RMod(\PrL)) \times_{\Arr(\Alg(\PrL))} \Alg(\PrL)$ is a full subcategory of  $(\lcat \times \Alg(\Catcolim)) \times_{\RMod(\Catcolim)} \Arr(\RMod(\Catcolim)) \times_{\Arr(\Alg(\Catcolim))} \Alg(\Catcolim)$. Using \cref{obs:relativeAdjObs} for the relative adjunction 
\[  \PSh^{\tav\text{-rex}}(-)\otimes ?: \widehat{\cat} \times \Alg(\Catcolim) \rightleftarrows \RMod(\Catcolim): \mathrm{forget}\]
over $\Alg(\Catcolim)$ (composed from the free  cocompletion under small colimits $\PSh^{\tav\text{-rex}}$ (\HTT{Cor.}{5.3.6.10}) and the free algebra functor (\cref{lem:freesm})), the latter category is equivalent to 
\begin{align*}
	& \Alg(\Catcolim) \times_{\Arr(\Alg(\Catcolim))}  \Arr(\lcat \times \Alg(\Catcolim)) \times_{\lcat \times \Alg(\Catcolim)} \RMod(\Catcolim)  \\
	&\simeq \Arr(\widehat{\cat}) \times_{\widehat{\cat}} \RMod(\Catcolim)\, .
\end{align*}
 Unwinding these equivalences,  the pullback of the middle left square is equivalent to the  full subcategory \[\smallint\nolimits^X_{\cV} (\PrV)_{X/}:= \Spaces \times_{\lcat}  \Arr(\lcat) \times_{\lcat} \RMod(\PrL) \subseteq \Arr(\lcat) \times_{\lcat} \RMod(\Catcolim).\] 
Lastly, the top left square is a pullback since a functor $X\to \cM$ with $\cM \in \PrV$ defines a marked module if and only if the induced functor $\PSh(X) \otimes \cV \to \cM$ is internally left adjoint and colimit-dominant by \cref{thm:charessim}.

Since the functor $\widehat{\mathrm{coCart}}_{/\Alg(\PrL)} \to \widehat{\cat}$ creates weakly contractible limits and in particular pullbacks (\cref{reminder:grothadjoints} and \cref{prop:grothfacts} $(1)$), it suffices to show that all categories in the above diagram are coCartesian fibrations over $\Alg(\PrL)$ and all functors are maps of coCartesian fibrations. 
 
The bottom horizontal functor is formally defined as the composite of the presheaf category functor $\PSh: \Spaces \subseteq \cat \to \PrL$ and the free module functor  $\PrL \times \Alg(\PrL) \simeq \Alg_{\operatorname{Triv} \sqcup \Ass}(\PrL) \to \Alg_{\RM}(\PrL)$ induced by the inclusion of operads $\operatorname{Triv} \sqcup \Ass \hookrightarrow \RM$. 
 A morphism in $\Spaces \times \Alg(\Pr) \to \Alg(\Pr)$ is coCartesian if its projection to $\cS$ is an equivalence. On the other hand, a morphism $(\cV \to \cW, \cM_{\cV} \to \cN_{\cW})$ in $\RMod(\PrL) \to \Alg(\PrL)$ is coCartesian if the induced functor $\cM\otimes_{\cV}\cW \to \cN$ is an equivalence. Hence, the bottom horizontal functor is a map of coCartesian fibrations over $\Alg(\PrL)$. A morphism in $\Arr(\Alg(\PrL))$ is coCartesian if the corresponding square in $\Alg(\PrL)$  is a pushout square, hence the functor $\Alg(\PrL) \to \Arr(\Alg(\PrL))$ is a map of coCartesian fibrations. Since $\Arr(\RMod(\PrL)) \to \RMod(\PrL)$ is itself a coCartesian fibration, it follows that $\Arr(\RMod(\PrL)) \to \RMod(\PrL) \to \Alg(\PrL)$ is a coCartesian fibration and the first functor is a map of coCartesian fibrations over $\Alg(\PrL)$. Using that $\Arr(\RMod(\PrL)) \to \Arr(\Alg(\PrL))$ is a coCartesian fibration by \kerodon{01VG}, the same argument shows that it is also a map of coCartesian fibrations over $\Alg(\PrL)$. 
 Finally, the full inclusion   $\smallint\nolimits_\cV \Arr^{\mathrm{iL,cdom}} \left( \PrV \right) \to \int_{\cV}\Arr(\PrV)$ is a map of coCartesian fibrations by \cref{not:restriLcdom}. Since all squares are pullbacks, all remaining functors are also morphisms of coCartesian fibrations.
\end{proof}
	
	We will also often use the following equivalent rephrasing of \cref{prop:bigpullback}:
	\begin{obs}\label{obs:MModPrsubfunctor} Analogous to \cref{obs:MModsubfunctor}, let \[\Spaces \times_{\Pr_{-}} \Arr(\Pr_{-}) : \Alg(\PrL) \to \lcat\] denote the straightening of the coCartesian fibration \[(\Spaces \times \Alg(\PrL))\times_{\RMod(\PrL)} \Arr(\RMod(\PrL)) \times_{\Arr(\Alg(\PrL))} \Alg(\PrL) \to \Alg(\PrL)\; .\] 
	\cref{prop:bigpullback} exhibits $\MModb: \Alg(\PrL) \to \lcat$ as a full subfunctor of $\Spaces \times_{\Pr_{-}} \Arr(\Pr_{-})$. At a fixed $\cV \in \Alg(\PrL)$, the inclusion agrees with the one from \cref{prop:MModVequiv}.
	\end{obs}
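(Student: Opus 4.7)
The plan is to straighten the coCartesian-fibration pullback diagram of \cref{prop:bigpullback} across the Grothendieck equivalence $\widehat{\mathrm{coCart}}_{/\Alg(\PrL)} \simeq \Fun(\Alg(\PrL), \widehat{\cat})$. The crucial input is the last sentence of \cref{prop:bigpullback}, which asserts that the top-left square already lives in $\widehat{\mathrm{coCart}}_{/\Alg(\PrL)}$: both vertices of the left column and both functors down to $\Alg(\PrL)$ are coCartesian fibrations, and the horizontal inclusions are maps of coCartesian fibrations.

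First, I would identify the straightenings of the two vertical functors from the top-left square. On the source side, the coCartesian fibration $\MMod \to \Alg(\PrL)$ straightens by construction (see \cref{obs:MModsubfunctor}) to the functor $\MModb: \Alg(\PrL) \to \lcat$, $\cV \mapsto \MModV$. On the target side, the coCartesian fibration $\smallint^X_\cV (\PrV)_{X/} \to \Alg(\PrL)$ straightens by definition to $\Spaces \times_{\Pr_-}\Arr(\Pr_-)$. The fully faithful inclusion $\MMod \hookrightarrow \smallint^X_\cV (\PrV)_{X/}$ from \cref{prop:bigpullback} is a map of coCartesian fibrations over $\Alg(\PrL)$ whose fibers are full subcategory inclusions, hence straightens to a pointwise fully faithful natural transformation $\MModb \To \Spaces \times_{\Pr_-}\Arr(\Pr_-)$, which is exactly what it means to be a full subfunctor.

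For the second assertion, I would restrict the entire pullback diagram of \cref{prop:bigpullback} to the fiber over a fixed $\cV \in \Alg(\PrL)$. Taking fibers over $\cV$ commutes with pullbacks in $\lcat$, so the fiber of the middle row is exactly the pullback square $\Spaces \times_{\PrV} \Arr(\PrV) \to \Spaces \to \PrV$ whose vertical map sends $X \mapsto \PSh(X) \otimes \cV$. Combined with the fiber over $\cV$ of the top-left square, this reproduces verbatim the pullback description $\MModV \simeq \Spaces \times_{\PrV} \Arr^{\mathrm{iL,cdom}}(\PrV)$ of \cref{prop:MModVequiv}, identifying the inclusion of the fiber of $\MModb$ into $\Spaces \times_{\PrV} \Arr(\PrV)$ with the one produced there.

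The main obstacle here is essentially already discharged by \cref{prop:bigpullback}: one must verify that $\MMod \hookrightarrow \smallint^X_\cV(\PrV)_{X/}$ preserves coCartesian morphisms, i.e.\ that the base-change functor $\MModV \to \MModW$ along $f: \cV \to \cW$ agrees with the extension-of-scalars functor $(X \to \cM) \mapsto (X \to \cM \otimes_\cV \cW)$ on the level of presentable modules. This is precisely \cref{ex:basechangerelative}, and was ultimately established by combining \cref{cor:atomicbasechange} with \cref{cor:generatingbasechange}. Granting that, the present observation is a purely formal consequence of the straightening equivalence and requires no further computation.
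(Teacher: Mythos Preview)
Your proposal is correct and matches the paper's intent: the observation has no explicit proof in the paper precisely because it is a formal consequence of straightening the coCartesian-fibration pullback from \cref{prop:bigpullback}, which is exactly what you spell out. The only thing to note is that the paper takes the full-subfunctor claim as immediate once \cref{prop:bigpullback} records that the inclusion $\MMod \hookrightarrow \smallint^X_\cV (\PrV)_{X/}$ is a map of coCartesian fibrations with fiberwise-full image, so your final paragraph about \cref{ex:basechangerelative} is already absorbed into the proof of \cref{prop:bigpullback} rather than being a residual obligation here.
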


	\subsection{Functorial comparison of enriched categories and marked modules}
	\label{subsec:heinecomparison}

We now make the equivalence between marked modules and enriched categories from \S\ref{sec:markednew} functorial in the space of objects and in the enriching category. 
	This functorial comparison is necessarily somewhat technical and will crucially rely  on results from \cite{heine}.	The reader not interested in the technical details of this comparison may safely skip this section; its main \cref{thm:functorialcomparison}  will be later used as a blackbox to relate marked modules and enriched categories.

	For an operad $O$ and a space $X$, Gepner and Haugseng define in \cite[\S4.3]{haugseng} a valent $\cV$-enriched category (in loc.cit.\ referred to as a \emph{categorical algebra}) with space of objects $X$ as a $\Delta_X^{\op}$-algebra in $O$ for a certain generalized operad $\Delta_X^{\op}$. By definition, this assembles into a functor \[\vcat_{ (-)}(-):\Spaces^{\op} \times \OpAss \to \cat\] sending a pair $(X,O)$ to $\Alg_{\Delta_X^{\op}}(O)$.

	\begin{notat} 
	We let $\widehat{\vcat}_{-}(-):  \widehat{\Spaces}^{\op} \times \OpAss \to \lcat$ denote the large variant of the functor in \cite{haugseng},  and write  $\vcat^{\text{large spc}}_{(-)}(-):\widehat{\Spaces}^{\op}  \times  \Alg(\PrL)  \to \lcat$ for its restriction along  the subcategory inclusion $\Alg(\PrL) \to \lOpAss$ and (abusing notation)  $\vcat_{(-)}(-): \Spaces^{\op} \times  \Alg(\PrL)  \to \lcat$ for its further restriction to small spaces. We write $\vcat^{\text{large spc}}(-): \Alg(\PrL) \to \lcat$ and $\vcat(-): \Alg(\PrL) \to \lcat$ for the respective unstraightenings in the $\Spaces$-direction and  $\vEnr_{\text{large spc}} \to \widehat{\Spaces} \times  \Alg(\PrL)$ and $\vEnr \to  \Spaces \times \Alg(\PrL)$ for the corresponding two-sided fibrations.
	\end{notat} 

	\begin{reminder}\label{rem:EndQuiver}
	
	In \cite{heine}, Heine identifies maps of generalized operads $\Delta_X^{\op} \to \cV$ with algebras in a certain generalized operad $\QuivXV$. This was originally defined in \cite{hinich}; and both definitions are equivalent by \cite{macpherson2019operad}, see also \cite[Prop. 4.20]{heine}. Moreover, if $\cV$ is presentably monoidal, this operad $\QuivXV$ is itself a presentably monoidal category \cite[Theorem 4.4.8]{hinich}, \cite[Lem. 4.34]{heine}. By \cite[Def. 3.4.1]{hinich}, $\QuivXV$ has a canonical left action\footnote{The conventions for compositions in enriched categories in \cite{heine} and \cite{hinich} are opposite to ours, which is why we use left modules where they use right modules.} on $\Fun(X^{\op}, \cV)$ which by  \cite[Prop.\ 4.5.3]{hinich} identifies it with the endomorphism algebra $\LinEnd(\Fun(X^{\op}, \cV))$ for the $\Pr$ action on $\PrV$.
	In particular, the induced equivalence $\Alg_{\Delta_X^{\op}}(\cV) \simeq \Alg(\QuivXV) \simeq \Alg(\LinEnd(\Fun(X^{\op}, \cV)))$ identifies valent enriched categories in the sense of~\cite{haugseng} with \cref{def:valentVcat}.
	
	In \cite[Not. 4.21]{heine} Heine also combinatorially defines a left action of $\QuivXV$ on $\Fun(X^{\op}, \cV)$, which agrees with the above canonical action by \cite{heinecomparison}. This means that the presheaf functor  \[\Alg(\QuivXV) \to \PrV, \; \cC \mapsto \LMod_{\cC}(\Fun(X^{\op}, \cV))\] from \cite[Not. 4.32]{heine} agrees with our presheaf functor from~\cref{def:PShVfunctor}. 
	\end{reminder}

	\begin{prop}[{\cite[Prop.~A.5.10]{haugseng}}]\label{prop:GHcartesian}
		For any $X \in \Spaces$ and $f: \cV \to \cW$ in $\Alg(\Pr)$, the induced functor $\vcat_X(\cV) \to \vcat_X(\cW)$ preserves colimits.
	\end{prop}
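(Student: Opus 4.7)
The plan is to transport the statement through our marked-module correspondence and reduce it to essentially formal facts about presentable module categories. By \cref{thm:charessim} combined with \cref{prop:changeofenragree}, the functor $\vcat_X(\cV) \to \vcat_X(\cW)$ may be identified with the functor $\MModXV \to \MModXW$ sending a marked module $(X\to \cM)$ to $(X \to \cM \otimes_{\cV} \cW)$. By the second clause of \cref{prop:MModVequiv}, $\MModXV$ embeds fully faithfully into the coslice $(\PrViL)_{\PSh(X) \otimes \cV/}$ as the full subcategory spanned by the colimit-dominant arrows out of $\PSh(X) \otimes \cV$ (and similarly for $\cW$); under this identification, $f_!$ corresponds to the functor induced by applying $-\otimes_\cV \cW$, which for any marked module $\PSh(X) \otimes \cV \to \cM$ yields $\PSh(X) \otimes \cW \simeq (\PSh(X) \otimes \cV) \otimes_\cV \cW \to \cM \otimes_\cV \cW$.

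Next I compute small colimits in this subcategory. Colimits in the coslice $(\PrViL)_{\PSh(X) \otimes \cV/}$ are created by the forgetful projection to $\PrViL$, and the full inclusion $\PrViL \hookrightarrow \PrV$ is closed under colimits by \cref{lem:iLclosedcolim}. Moreover the subcategory of colimit-dominant morphisms is closed under colimits in $\Arr(\PrViL)$ by \cref{obs:cdomiLstab}, so the underlying colimit of a diagram of marked modules in $\MModXV$ remains colimit-dominant. In summary, colimits in $\MModXV$ are computed by taking the colimit of the underlying presentable $\cV$-module categories in $\PrViL$ together with the induced structure map from $\PSh(X) \otimes \cV$.

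Finally, the functor $-\otimes_\cV \cW : \PrV \to \PrW$ is left adjoint to restriction of scalars (it is the extension-of-scalars functor of \cref{prop:reltensoradj} applied with $\cP = \cW$ regarded as a $\cV$-$\cW$-bimodule) and therefore preserves all small colimits. By \cref{cor:iLreltens} and \cref{cor:cdomreltens} it sends internally left adjoint (respectively colimit-dominant) morphisms to internally left adjoint (respectively colimit-dominant) morphisms, and so restricts to a functor $\PrViL \to \Pr^{\mathrm{iL}}_{\cW}$. This restriction still preserves colimits, since in both source and target the inclusion into $\PrV$, $\PrW$ is closed under colimits by \cref{lem:iLclosedcolim}. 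Combined with the description of colimits in $\MModXV$ from the previous paragraph, this gives the claim. The only subtlety lies in keeping track of which colimits are computed in which subcategory, but once the colimit-dominant and internally-left-adjoint classes are recognised as closed under the relevant colimits, the argument is automatic.
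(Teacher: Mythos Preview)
The paper does not give its own proof; it simply cites \cite[Prop.~A.5.10]{haugseng}. Your argument through marked modules is correct and non-circular: every ingredient you invoke (\cref{thm:charessim}, \cref{prop:changeofenragree}, \cref{prop:MModVequiv}, \cref{lem:iLclosedcolim}, \cref{obs:cdomiLstab}, \cref{cor:iLreltens}, \cref{cor:cdomreltens}, \cref{prop:reltensoradj}) lies in \S\S2--6.2, upstream of \S6.3 where this proposition appears.

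That said, your route is more elaborate than necessary. Already \cref{constr:changeofenrquiv} exhibits $f_! : \vcatXV \to \vcatXW$ as a left adjoint (with right adjoint $f_!^\rR$ induced by the lax monoidal right adjoint on quiver categories), so colimit preservation is immediate---no passage through marked modules, coslices of $\PrViL$, or stability of the colimit-dominant and internally-left-adjoint classes is required. Your approach does have the merit of making explicit how colimits in $\vcatXV \simeq \MModXV$ are computed on the presheaf side, which is useful elsewhere in the paper, but for the bare statement it is overkill.

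One minor caution: this proposition sits in the comparison subsection and, read literally, concerns the Gepner--Haugseng change-of-enrichment (postcomposition on $\Alg_{\Delta_X^{\op}}(-)$). To apply either your argument or the one-line proof via \cref{constr:changeofenrquiv}, you are implicitly using that this agrees with the paper's own $f_!$---an identification contained in the equivalences recalled in \cref{rem:EndQuiver} and not itself dependent on the present proposition, but worth making explicit.
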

	
	It follows that the two-sided fibration $\vEnr \to \Spaces \times \Alg(\PrL)$ classifying $\vcat_{-}(-)$ admits a left adjoint by the following lemma:	
\begin{lemma}\label{lem:initialsection}
		Let $p:E \to C \times D$ be a two-sided fibration (c.f.\ \cref{reminder:twosidedfib}), such that the fiber $E_{c, d}$ for any $c \in C, d \in D$ admits an initial object $\emptyset_{c, d}$ and the coCartesian transports $f_! : E_{c, d} \to E_{c, d'}$ for any map $f: d \to d'$ in $D$ preserve initial objects. Then $p$ admits a fully faithful left adjoint, sending $(c, d)$ to $\emptyset_{c, d}$. Moreover, this left adjoint $C\times D\to E$ is a map of coCartesian fibrations over $D$. 
	\end{lemma}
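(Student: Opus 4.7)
The plan is to apply the pointwise criterion for adjoints in the form of \HTT{Prop.}{5.2.4.2}: I will show that for each $(c,d) \in C \times D$, the initial object $\emptyset_{c,d} \in E_{c,d}$, viewed as an object of $E$ with the identity unit $\id_{(c,d)}: (c,d) \to p(\emptyset_{c,d})$, is initial in the slice $(C \times D)_{(c,d)/} \times_{C \times D} E$. This will produce a left adjoint $L: C \times D \to E$ with $p \circ L \simeq \id_{C \times D}$ whose unit is an equivalence, making $L$ fully faithful.

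The key tool is the mapping-space decomposition available in any two-sided fibration $p: E \to C \times D$ (cf.\ \cref{reminder:twosidedfib}): for $e \in E_{c,d}$, $e' \in E_{c', d'}$ and a morphism $(g, f): (c,d) \to (c', d')$ in $C \times D$, factoring through $(c, d')$ and combining the coCartesian lift of $f$ (for $E \to D$) with the Cartesian lift of $g$ (for $E_{d'} \to C$) gives
\[
	\Map^{(g,f)}_E(e, e') \simeq \Map_{E_{c, d'}}(f_! e, g^* e').
\]
Specialising to $e = \emptyset_{c,d}$ and using the hypothesis $f_! \emptyset_{c,d} \simeq \emptyset_{c,d'}$ makes each such fibre contractible, so the total mapping space $\Map_E(\emptyset_{c,d}, e')$ becomes equivalent to $\Map_{C \times D}((c,d), p(e'))$ and each fibre over a fixed $h: (c,d) \to p(e')$ is contractible, verifying the initiality condition.

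For the final claim that $L$ is a map of coCartesian fibrations over $D$: the coCartesian morphisms of the projection $C \times D \to D$ are exactly those of the form $(\id_c, f): (c,d) \to (c, d')$, whose image $L(\id_c, f): \emptyset_{c,d} \to \emptyset_{c,d'}$ projects to $(\id_c, f)$. For any $e'' \in E_{c'', d''}$ and $f': d' \to d''$ in $D$, the mapping spaces $\Map^{f'}_E(\emptyset_{c,d'}, e'')$ and $\Map^{f' \circ f}_E(\emptyset_{c,d}, e'')$ each fibre over $\Map_C(c, c'')$ with contractible fibres, using the decomposition above together with $(f' \circ f)_! \emptyset_{c,d} \simeq f'_! \emptyset_{c,d'} \simeq \emptyset_{c,d''}$. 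Since $L(\id_c, f)$ covers $\id_c$, precomposition with it acts as the identity on $\Map_C(c, c'')$, so the comparison map on mapping spaces is an equivalence, showing that $L(\id_c, f)$ is coCartesian for $E \to D$.

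The main technical input is the mapping-space decomposition for two-sided fibrations recalled above, which will be available from the treatment of two-sided fibrations in \cref{sec:groth}; granted this, all remaining steps are immediate applications of standard adjoint criteria.
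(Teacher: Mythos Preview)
Your proof is correct and follows essentially the same approach as the paper: both establish that $\emptyset_{c,d}$ corepresents $\Map_{C\times D}((c,d),p(-))$ via the mapping-space decomposition $\Map^{(g,f)}_E(e,e') \simeq \Map_{E_{c,d'}}(f_! e, g^* e')$ for two-sided fibrations, and then verify the coCartesian claim by identifying $L(\id_c,f)$ with the coCartesian lift. Your organization is slightly cleaner in that you deduce the equivalence directly from contractibility of all fibres, whereas the paper verifies surjectivity by an explicit lift before computing the fibre; note, however, that \S\ref{sec:groth} does not actually record the mapping-space decomposition you invoke---the paper also uses it without comment, but you should either cite an external source (e.g.\ the references in \cref{reminder:twosidedfib}) or derive it in-line from the coCartesian/Cartesian lifting properties rather than pointing to that section.
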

	\begin{proof}
		We  show that $\emptyset_{c, d}$ corepresents the functor $\Map_{C \times D}((c, d), p(-)) : E \to \Spaces$. To this end, we will show that for any $e\in E$ the map		\begin{align*}
			&\Map_{E}(\emptyset_{c, d}, e) \overset{p_*}{\to} \Map_{C \times D}(p(\emptyset_{c, d}), p(e)) \simeq \Map_{C \times D}((c, d), p(e))
		\end{align*}
		is an equivalence. First we prove it is surjective: Given $(f, g): (c, d) \to p(e) = (c', d')$ in $C \times D$, since coCartesian transport preserves inital objects there exists a coCartesian morphism $\emptyset_{c', d} \to \emptyset_{c', d'}$ over $(\id_{c'}, g)$. Also choose a Cartesian lift $\hat{f} : e_0 \to \emptyset_{c', d}$ of $(f, \id_{d})$. Since $e_0 \in E_{c,d}$ there is a unique morphism $\emptyset_{c,d} \to e_0$ covering the identity $\id_{c,d}$, and similarly there is such a morphism $\emptyset_{c',d'} \to e$. Then the composition $\emptyset_{c,d} \to e_0 \to \emptyset_{c', d} \to \emptyset_{c', d'} \to e$ lies over $(f, g)$.
		
		We finish by proving $p_*$ is a monomorphism: Its fiber over $(f, g): (c, d) \to p(e)$  is given by $\Map^{g}_{E_{d}}(g_! \emptyset_{c, d}, e) \simeq \Map_{E_{c, d'}}(g_! \emptyset_{c, d}, f^*(e)) \simeq \Map_{E_{c, d'}}(\emptyset_{c, d'}, f^*(e))$ which is contractible since $\emptyset_{c, d'} \in E_{c, d'}$ is initial.
		
		By construction, the unit of this adjunction is invertible. Moreover, a morphism $(f,g): (c,d) \to (c',d')$ in $C\times D$ is coCartesian over $D$ iff $f$ is an isomorphism; by the above description this gets sent to the coCartesian transport $\emptyset_{c,d} \to g_! \emptyset_{c,d} = \emptyset_{c,d}$.
	\end{proof}
	
	\begin{cor}\label{cor:Free}
	The functor $\vEnr \to \Spaces \times \Alg(\PrL)$ admits a fully faithful left adjoint
	\[\mathrm{Free}(-,-): \Spaces \times \Alg(\PrL) \to \vEnr\] which is a map of coCartesian fibrations over $\Alg(\PrL)$. 
	\end{cor}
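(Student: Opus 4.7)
The plan is to apply \cref{lem:initialsection} directly to the two-sided fibration $\vEnr \to \Spaces \times \Alg(\PrL)$ from the preceding notational setup, with $C = \Spaces$ and $D = \Alg(\PrL)$. This reduces the corollary to verifying two hypotheses: (i) each fiber $\vcat_X(\cV)$ admits an initial object, and (ii) the coCartesian transports along morphisms in $\Alg(\PrL)$ preserve these initial objects.

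For (i), the category $\vcat_X(\cV) = \Alg(\LinEnd(\PSh(X) \otimes \cV))$ is presentable by \cref{obs:pCatXpres} and hence admits an initial object. Explicitly, this is the trivial algebra on the monoidal unit of $\LinEnd(\PSh(X) \otimes \cV)$, i.e.\ the identity endofunctor; under the equivalence $\vcat_X(\cV) \simeq \MModXV$ of \cref{thm:charessim}, this corresponds to the tautological marked module $X \to \PSh(X) \otimes \cV$ obtained by tensoring the Yoneda embedding with $1_\cV$.

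For (ii), by \cref{prop:changeofenragree} the coCartesian transport along a morphism $f: \cV \to \cW$ in $\Alg(\PrL)$ agrees, on each fiber over $X$, with the change-of-enrichment functor $f_! : \vcat_X(\cV) \to \vcat_X(\cW)$ from \cref{constr:changeofenrquiv}. This functor preserves colimits by \cref{prop:GHcartesian}, and in particular preserves initial objects.

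With both hypotheses verified, \cref{lem:initialsection} produces the desired fully faithful left adjoint $\mathrm{Free}: \Spaces \times \Alg(\PrL) \to \vEnr$, and asserts that this left adjoint is a map of coCartesian fibrations over $\Alg(\PrL)$. No substantive obstacle arises; the argument is pure bookkeeping once the lemma and the functoriality of change of enrichment are in place.
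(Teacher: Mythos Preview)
Your proof is correct and follows essentially the same approach as the paper: apply \cref{lem:initialsection}, verify that each fiber has an initial object, and verify that coCartesian transport preserves initial objects via \cref{prop:GHcartesian}.

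One minor point: the detour through \cref{prop:changeofenragree} is unnecessary and slightly misstated. That proposition concerns coCartesian transport in $\MMod$, not in $\vEnr$; at this stage of the paper $\vEnr$ is by definition the unstraightening of the Gepner--Haugseng functor $\vcat_{(-)}(-)$, so the coCartesian transport over $\Alg(\PrL)$ \emph{is} the induced functor $\vcat_X(\cV) \to \vcat_X(\cW)$ that \cref{prop:GHcartesian} speaks about directly. No further identification is needed. The paper accordingly cites only \cref{rem:EndQuiver} (for the fiber description) and \cref{prop:GHcartesian} (for colimit preservation). Your argument still goes through since you do invoke \cref{prop:GHcartesian} in the end, but the intermediate step can be dropped.
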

	\begin{proof}For $(X, \cV) \in \Spaces \times \Alg(\PrL)$, by \cref{rem:EndQuiver} the fiber identifies with $\Alg(\LinEnd(\PSh(X) \otimes \cV))$, which admits an initial object by \HA{Prop.}{3.2.1.8}; the trivial algebra. Moreover, coCartesian transport by \cref{prop:GHcartesian} preserves initial objects. The statement then follows from \cref{lem:initialsection}.
	\end{proof}
	
	\begin{obs}\label{obs:vEnrrelative} It follows from \cref{cor:Free}  and \cref{exm:ffimpliesrel} that the adjunction $\Spaces \times \Alg(\PrL) \rightleftarrows  \vEnr$ is a relative adjunction over $\Alg(\PrL)$ in the sense of \cref{reminder:relativeadjoints}.	\end{obs}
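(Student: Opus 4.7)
The proof is essentially a direct invocation of the two cited results; the main content has already been established in \cref{cor:Free}. Let me outline how I would unwind the argument.

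The strategy is to verify that the hypotheses of \cref{exm:ffimpliesrel} are satisfied for the adjunction $\mathrm{Free} \dashv p$, where $p: \vEnr \to \Spaces \times \Alg(\PrL)$ denotes the structure projection of the two-sided fibration. A relative adjunction over $\Alg(\PrL)$, as recalled in \cref{reminder:relativeadjoints}, requires both functors to be maps over $\Alg(\PrL)$ and that either the unit or the counit project to an isomorphism in $\Alg(\PrL)$. First, $p$ is a map over $\Alg(\PrL)$ by construction, since $\vEnr \to \Spaces \times \Alg(\PrL) \to \Alg(\PrL)$ is the defining projection of $\vEnr$ regarded as a coCartesian fibration over $\Alg(\PrL)$. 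Second, by \cref{cor:Free}, the left adjoint $\mathrm{Free}(-,-)$ is itself a map of coCartesian fibrations over $\Alg(\PrL)$, so in particular a functor over $\Alg(\PrL)$.

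It remains to check the unit/counit condition. Since \cref{cor:Free} asserts that $\mathrm{Free}$ is fully faithful, the unit $\eta: \id_{\Spaces \times \Alg(\PrL)} \To p \circ \mathrm{Free}$ is an isomorphism, and hence its projection to $\Alg(\PrL)$ is automatically an isomorphism as well. This is precisely the content of \cref{exm:ffimpliesrel}, which produces the relative adjunction structure from a fully faithful left adjoint that is compatible with the projections.

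There is no substantive obstacle: the genuine work was carried out earlier, where \cref{lem:initialsection} produced the fiberwise initial objects assembling into $\mathrm{Free}$, and \cref{prop:GHcartesian} verified their stability under coCartesian transport along morphisms in $\Alg(\PrL)$ so that $\mathrm{Free}$ preserves coCartesian edges over $\Alg(\PrL)$.
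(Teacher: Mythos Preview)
Your proposal is correct and matches the paper's intended argument: the observation is indeed just a direct application of \cref{exm:ffimpliesrel} to the fully faithful left adjoint supplied by \cref{cor:Free}. One minor point of emphasis: the substantive content of \cref{exm:ffimpliesrel} is that full faithfulness of $\mathrm{Free}$ forces the \emph{counit} $\mathrm{Free}\circ p \Rightarrow \id_{\vEnr}$ to project to an equivalence (this is what is actually used in \cref{prop:penvstar}), whereas your explicit check concerns the unit, which is already an isomorphism on the nose; your invocation of \cref{exm:ffimpliesrel} covers this, but it would be cleaner to foreground the counit.
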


	We will now use results from \cite{heine} to make the presheaf functor $\PShV: \vcatXV \to \PrV$ from \cref{def:PShVfunctor} functorial in $X$ and $\cV$.

	\begin{prop}[{cf.~\cite{heine,ben2024naturality}}]\label{prop:globalpsh}
	There is a commutative diagram of coCartesian fibrations over $\Alg(\PrL)$
	\[
	\begin{tikzcd} 
	\vEnr \arrow[rr, "\PSh_{\mathrm{enr}}"] && \RMod(\PrL) \\
	& \Spaces \times \Alg(\PrL) \arrow[ul, "{\mathrm{Free}(-,-)}"] \arrow[ur, "{(X, \cV) \mapsto \PSh(X) \otimes \cV_{\cV}}"']
	\end{tikzcd}\;,
	\]
	equivalently a diagram of natural transformations between functors $\Alg(\PrL) \to \lcat$: 
	\[\begin{tikzcd}
	\vcat(-) \arrow[rr, Rightarrow, "\PSh_{-}"] && \Pr_{-} \\
	&\Spaces \arrow[ul, Rightarrow] \arrow[ur, Rightarrow] 
	\end{tikzcd}
	\]	Moreover, the following hold:
	\begin{enumerate}[(1)]
	\item For every  $\cV \in \Alg(\PrL)$  and $X\in \Spaces$, the composition \[\vcat_X(\cV) \to \vcat(\cV) \overset{\PSh_{\cV}(-)}{\to} \RMod_{\cV}(\PrL)\] agrees with the presheaf functor from~\cref{def:PShVfunctor} under the equivalence $\vcatXV\simeq \Alg(\LinEnd(\Fun(X^{\op}, \cV)))$ from \cref{rem:EndQuiver}. 
	\item The functor $\PSh_{\mathrm{enr}}$ is a partial left adjoint\footnote{A functor $G: D \to C$ admits a \emph{partial left adjoint} on a full subcategory $C_0 \subseteq C$ it there exists a functor $F: C_0 \to D$ and an isomorphism $\Map_{D}(F(c_0), d) \simeq \Map_C(c_0, G(d))$ that is natural in $c_0 \in C_0, d \in D$. As for ordinary adjunctions this uniquely specifies the \emph{partial left adjoint} $F$; and it exists iff for any $c_0 \in C_0$ the presheaf $\Map_C(c_0, G(-)) \in \PSh(D)$ is representable. See~ \cite[\S 3]{ben2024naturality}.} of a functor $\chi: \RMod(\PrL) \to \vEnr_{\text{large spc}}$. The unit $\cC\to \chi (\PSh_{\mathrm{enr}}(\cC))$ of this partial adjunction is sent to an isomorphism by $\vEnr_{\text{large spc}} \to \Alg(\PrL)$. (In other words, $\PSh_{\mathrm{enr}}$ is a relative partial left adjoint over $\Alg(\PrL)$ in the sense of \cref{reminder:relativeadjoints}.)
	\end{enumerate}
	
	\end{prop}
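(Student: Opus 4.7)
The main technical obstacle in this proposition is establishing the coherent functoriality of $\PSh_{\mathrm{enr}}$ in both the space of objects $X$ and the enriching category $\cV$. This is where most of the work lies, and I would delegate it to Heine's construction: \cite[Not.~4.32]{heine} produces, for each $\cV \in \Alg(\PrL)$, a functor $\vcat(\cV) \to \PrV$ natural in the space of objects, and the functoriality in $\cV$ follows from the symmetric monoidal enhancement of \cite[Cor.~5.1]{heineweighted}. Combining \cref{rem:EndQuiver} with \cite{heinecomparison} identifies this functor pointwise with our Eilenberg-Moore construction $\LMod_{(-)}(\PSh(X) \otimes \cV)$ from \cref{def:PShVfunctor}, immediately giving claim (1).

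Next I would verify commutativity of the triangle and that $\PSh_{\mathrm{enr}}$ is a morphism of coCartesian fibrations. Commutativity reduces to the observation that the trivial algebra $1 \in \LinEnd(\PSh(X) \otimes \cV)$ has module category $\LMod_1(\PSh(X) \otimes \cV) \simeq \PSh(X) \otimes \cV$, so $\PSh_{\mathrm{enr}}(\mathrm{Free}(X, \cV)) \simeq \PSh(X) \otimes \cV$ as a right $\cV$-module. Preservation of coCartesian morphisms over $\Alg(\PrL)$ amounts to an isomorphism $\PSh_\cW(f_! \cC) \simeq \PSh_\cV(\cC) \otimes_\cV \cW$ for every $f: \cV \to \cW$ in $\Alg(\PrL)$ and $\cC \in \vcat(\cV)$; under \cref{thm:charessim} this becomes the description of basechange of marked modules from \cref{ex:basechangerelative} spelled out in \cref{prop:changeofenragree}.

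Finally for (2), I would define $\chi: \RMod(\PrL) \to \vEnr_{\text{large spc}}$ by sending $\cM \in \PrV$ to its canonical underlying large $\cV$-enriched category with space of objects $\cM^\simeq$ and enrichment $\iHom_\cM(-,-)$; this is well-defined since $\cM$ is closed (\cref{ex:presentableihom}), and its coherent functoriality is once again provided by \cite{hinich, heine}. The partial adjunction $\Map_{\PrV}(\PSh_\cV(\cC), \cM) \simeq \Map_{\vEnr_{\text{large spc}}}(\cC, \chi(\cM))$ is then almost formal: using \cref{thm:charessim} to identify $\cC$ with its marked module $(\ob\cC \to \PShV(\cC))$, a morphism on the right unpacks to a functor $\ob\cC \to \cM^\simeq$ together with compatible maps of internal Hom-objects, which by Yoneda in the closed $\cV$-module $\cM$ is equivalent to a cocontinuous $\cV$-linear extension $\PShV(\cC) \to \cM$, i.e.\ a morphism on the left. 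The unit is the Yoneda-like map $\cC \to \chi(\PShV(\cC))$, which by construction lives over $\cV$, yielding the asserted relative partial adjunction. The adjoint is only partial (and defined on $\vEnr$ rather than $\vEnr_{\text{large spc}}$) because $\PShV(\cC)$ is presentable only when $\ob \cC$ is small, while $\chi(\cM)$ typically has a large space of objects.
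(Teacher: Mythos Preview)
Your proposal reverses the paper's logical order, and this inversion creates two genuine gaps.

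The paper does \emph{not} construct $\PSh_{\mathrm{enr}}$ directly and then verify it is a map of coCartesian fibrations. Instead, it first constructs the functor $\chi: \RMod(\PrL) \to \vEnr_{\text{large spc}}$ as a map of \emph{Cartesian} fibrations over $\Alg(\PrL)$ (via Heine's Theorem~4.59 and Lemma~4.65), then proves the fiberwise partial adjunction using Heine's Proposition~4.64 and Theorem~5.1, and finally obtains $\PSh_{\mathrm{enr}}$ as the relative partial left adjoint. The point of this ordering is that the coCartesian-preservation property and the commutativity of the triangle then come \emph{for free}: a relative (partial) left adjoint of a map of Cartesian fibrations is automatically a map of coCartesian fibrations (an analog of \HA{Prop.}{7.3.2.6}), and the triangle is obtained by passing to left adjoints of an already-commuting triangle involving $\chi$.

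Your first gap is the citation of \cite[Cor.~5.1]{heineweighted} for functoriality of $\PSh_{\mathrm{enr}}$ in $\cV$. That result establishes $O$-monoidality of $\PShV$ for a \emph{fixed} presentably $O \otimes \E_1$-monoidal $\cV$; it does not directly give you a map of coCartesian fibrations $\vEnr \to \RMod(\PrL)$ over $\Alg(\PrL)$. These are related but distinct statements, and the paper in fact derives the $O$-monoidality as a \emph{consequence} of the construction here, not as an input.

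Your second gap is in the ``almost formal'' verification of (2). You write that a morphism $\cC \to \chi(\cM)$ unpacks to a map of spaces plus compatible maps on Hom-objects, ``which by Yoneda \ldots\ is equivalent to a cocontinuous $\cV$-linear extension $\PShV(\cC) \to \cM$.'' This equivalence is precisely the universal property of $\PShV(\cC)$ as an enriched free cocompletion, and it is the nontrivial content of Heine's Proposition~4.64 and Theorem~5.1 --- not a formality, and not something that follows from \cref{thm:charessim}, which only treats the fibers $\vcat_X(\cV)$ for fixed $X$ and says nothing about mapping into objects with a different (large) space of objects like $\chi(\cM)$. Invoking \cref{thm:charessim} here also risks circularity, since the paper's functorial comparison $\vEnr \simeq \MMod$ (\cref{thm:functorialcomparison}) is established \emph{downstream} of \cref{prop:globalpsh}.
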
	
\begin{proof}
In \cite[Not. 4.40]{heine} Heine defines a functor $w\! \vcat_{(-)}(-) : \OpAss \times \Spaces^{\op} \to \lcat$ which contains $\vcat_{(-)}(-)$ as a full subfunctor. We denote by $\widehat{w \! \vEnr}_{\lOpAss} \to \lOpAss \times \lSpaces$ the two-sided unstraightening of its large variant and by ${w \! \vEnr}_{\text{large spc}} \to \Alg(\Pr) \times \lSpaces$ its restriction to $\Alg(\Pr)$. 
By \kerodon{01VB} we obtain a fully faithful map of two-sided fibrations $\vEnr_{\text{large spc}} \subseteq {w \! \vEnr}_{\text{large spc}}$. By \cite[Rem. 4.41]{heine} $\widehat{w \! \vEnr}_{\lOpAss} \to \lOpAss$ is additionally a Cartesian fibration, so the same is true for the pullback ${w \! \vEnr}_{\text{large spc}} \to \Alg(\Pr)$.

 Further Heine constructs in \cite[Thm. 4.59, see Prop. 3.33]{heine} a functor $\lOpRM \to w \! \widehat{\vEnr}_{\lOpAss}$ over $\lSpaces$ (where $\lOpRM \to \lSpaces$ takes the maximal subgroupoid of the module component) which by \cite[Lem. 4.65]{heine} is a map of Cartesian fibrations over $\lOpAss$. Note that $\lOpAss$-Cartesian morphisms in $\lOpRM$ and $\Alg(\PrL)$-Cartesian morphisms in $\RMod(\PrL)$ are precisely the `restriction of scalars' morphisms, i.e.\ those whose module-component is an isomorphism, c.f.\ \cite[Prop. 3.32]{heine} and \HA{Cor.}{4.2.3.2} respectively. Hence, the subcategory inclusion $\RMod(\PrL) \to \Alg(\PrL) \times_{\lOpAss} \lOpRM$ is a map of Cartesian fibrations over $\Alg(\PrL)$. Consequently the composite $\RMod(\Pr) \to {w \! \vEnr}_{\text{large spc}}$ is also a map of Cartesian fibrations over $\Alg(\PrL)$. By \cite[Cor.~6.13]{heine}, it factors through the full subcategory $\vEnr_{\text{large spc}}$. Since  full inclusions of Cartesian fibrations reflect Cartesian morphisms, we obtain a diagram  \begin{center}
 	\begin{tikzcd}
 		\RMod(\PrL) \arrow[rd] \arrow[rr, "\chi"] &                                     & \vEnr_{\text{large spc}} \arrow[ld] \\
 		&  \lSpaces \times \Alg(\PrL) &                                  
 	\end{tikzcd}	,
 \end{center}
where $\chi$ is a map of Cartesian fibrations over $\Alg(\Pr)$, and where $\RMod(\PrL) \to \lSpaces$ once again sends $\cM \mapsto \cM^\simeq$.
		
		It is shown in \cite{ben2024naturality} that for a fixed $\cV \in \Alg(\PrL)$, the induced functor $\RMod_{\cV}(\Pr) \to \vcat^{\text{large spc}}(\cV)$ admits a partial left adjoint  on the full subcategory $\vcat(\cV)$ which sends a $\cC \in \vcat(\cV)$ to $\PShV(\cC)$. Since we need this left adjoint to agree functorially with our presheaf functor from \cref{def:PShVfunctor}, we will explicitly unpack this adjunction:
		By  \cite[Proposition 4.64 and Theorem 5.1]{heine} (using functoriality of the comparison functor $\Psi$ from  \cite[Not. 5.13, 5.14, 5.17]{heine}), it follows that for $\cM \in \PrV$ and any fixed $X \in {\Spaces}$, the following functors are equivalent:
		\[
		\Map_{\vcat_{\text{large spc}}(\cV)}(-, \chi(\cM)) \simeq \LinFun(\PShV(-), \cM)^\simeq : {\vcat}_X (\cV)^{\op} \to \widehat{\Spaces}.
		\]
		Hence, the induced functor $\chi_{\cV}: \RMod_{\cV}(\PrL) \to \vcat^{\text{large spc}}(\cV)$ admits a partial left adjoint on the full subcategory $\vcat(\cV) \subseteq \vcat^{\text{large spc}}(\cV)$. Moreover, at a fixed $X\in \Spaces$ and $\cV \in \Alg(\PrL)$, this left adjoint $\vcat_X(\cV) \to \RMod_{\cV}(\PrL)$ is given by the presheaf functor which according to~\cref{rem:EndQuiver} agrees with our presheaf functor from~\cref{def:PShVfunctor}. 
		Analogous to \HA{Prop.}{7.3.2.6}, it follows that the map of Cartesian fibrations $\chi: \RMod(\PrL) \to \vEnr$ over $\Alg(\PrL)$  admits a partial left adjoint $\PSh_{\mathrm{enr}}: \vEnr \to \RMod(\PrL)$  relative to $\Alg(\PrL)$ which is automatically a map of coCartesian fibrations.
		
		Using that $\Spaces \times \Alg(\PrL) \to \RMod(\PrL)$  is partially left adjoint to $\RMod(\Pr) \to \lSpaces \times \Alg(\Pr), \, \cM_{\cV} \mapsto (\cM^\simeq, \cV)$, taking (partial) left adjoints at $\Spaces \times \Alg(\PrL) \subseteq \lSpaces \times \Alg(\PrL)$  of the above commuting diagram then results in the commuting diagram in the statement. The fact that the diagonal functors are maps of coCartesian fibrations over $\Alg(\PrL)$ follows from \cref{cor:Free} and  \cref{prop:bigpullback}. 
	\end{proof}

	We will now show that the presheaf functor $\PSh_{\mathrm{enr}}: \vEnr \to \RMod(\PrL)$ induces a functor over $\Spaces \times \Alg(\PrL)$ from $\vEnr$ to the two-sided fibration  $\int^X_{\cV} (\PrV)_{X/} \to \Spaces \times \Alg(\PrL)$ from \cref{notat:intPrVC}, classifying  the functor $(X, \cV) \mapsto (\PrV)_{X/}$. In other words,  for any $\cC \in \vEnr$ there is a marking $\ob \cC \to \PShV(\cC)$ which is functorial in $\ob \cC$ and $\cV$.

\begin{prop}\label{prop:penvstar}
The presheaf functor $\PSh_{\mathrm{enr}}: \vEnr \to \RMod(\PrL)$ from \cref{prop:globalpsh} lifts to a commutative diagram
\[\begin{tikzcd}
\vEnr \arrow[rr,"\PSh_{\mathrm{enr}}^{\star}" ] \arrow[dr]  &&\smallint\nolimits^X_{\cV} (\PrV)_{X/} \arrow[dl] \\
& \Spaces \times \Alg(\PrL) 
\end{tikzcd}
\]
of  maps of coCartesian fibrations over $\Alg(\PrL)$. Equivalently, the natural transformation $\PShV(-): \vcat(-) \To \Pr_{-}$ from \cref{prop:globalpsh} lifts to a commutative diagram of natural transformations between functors $\Alg(\PrL) \to \lcat$ (see \cref{obs:MModPrsubfunctor} for notation):
\[\begin{tikzcd}
\vcat(-) \arrow[rr, Rightarrow] \arrow[dr, Rightarrow]  &&\smallint\nolimits^X (\Pr_{-})_{X/} =  \Spaces \times_{\Pr_{-}} \Arr(\Pr_{-}) \arrow[dl, Rightarrow] \\
& \Spaces
\end{tikzcd}.
\]
 Moreover, at a fixed $X\in \Spaces$ and $\cV \in \Alg(\PrL)$, the induced functor $\vcatXV \to (\PrV)_{X/}$ agrees with the functor from \cref{thm:charessim}.
\end{prop}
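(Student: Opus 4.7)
The plan is to construct the lift $\PSh_{\mathrm{enr}}^{\star}$ by whiskering the counit of the relative adjunction $\mathrm{Free}(-,-) \dashv \ob$ (see \cref{cor:Free} and \cref{obs:vEnrrelative}) with the presheaf functor $\PSh_{\mathrm{enr}}$ from \cref{prop:globalpsh}, and then invoking the pullback description of $\int^X_{\cV} (\PrV)_{X/}$ from \cref{prop:bigpullback}. Concretely, the counit $\epsilon: \mathrm{Free} \circ \ob \To \mathrm{id}_{\vEnr}$ whiskered with $\PSh_{\mathrm{enr}}$ yields a natural transformation $\PSh_{\mathrm{enr}} \circ \mathrm{Free} \circ \ob \To \PSh_{\mathrm{enr}}$ in $\Fun(\vEnr, \RMod(\PrL))$. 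Invoking commutativity of the triangle in \cref{prop:globalpsh}, we identify the source of this transformation with $(X, \cV) \mapsto \PSh(X) \otimes \cV$ composed with $\ob$, yielding for each $\cC \in \vEnr$ a morphism $\PSh(\ob \cC) \otimes \cV \to \PSh_{\cV}(\cC)$ in $\RMod(\PrL)$, natural in $\cC$.

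Equivalently, this amounts to a functor $\vEnr \to \Arr(\RMod(\PrL))$ over $\RMod(\PrL)$ (via the target projection to $\PSh_{\mathrm{enr}}$), whose source projection to $\RMod(\PrL)$ factors through the map $\Spaces \times \Alg(\PrL) \to \RMod(\PrL), \; (X, \cV) \mapsto \PSh(X) \otimes \cV$ via $\ob$. By the pullback description of $\int^X_{\cV} (\PrV)_{X/}$ as $(\Spaces \times \Alg(\PrL)) \times_{\RMod(\PrL)} \Arr(\RMod(\PrL))$ in \cref{prop:bigpullback}, this data is precisely a lift $\PSh_{\mathrm{enr}}^{\star}: \vEnr \to \int^X_{\cV} (\PrV)_{X/}$ over $\Spaces \times \Alg(\PrL)$, fitting in the required commuting triangle.

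To verify that $\PSh_{\mathrm{enr}}^{\star}$ is a map of coCartesian fibrations over $\Alg(\PrL)$, we use that all ingredients are such maps: $\PSh_{\mathrm{enr}}$, $\ob$, and $\mathrm{Free}$ are maps of coCartesian fibrations over $\Alg(\PrL)$ (by \cref{prop:globalpsh} and \cref{cor:Free}), and the counit $\epsilon$ lives over identities in $\Alg(\PrL)$ by the relative adjunction property from \cref{obs:vEnrrelative}. Since all maps in the pullback square defining $\int^X_{\cV} (\PrV)_{X/}$ in \cref{prop:bigpullback} are maps of coCartesian fibrations over $\Alg(\PrL)$, and $\widehat{\mathrm{coCart}}_{/\Alg(\PrL)}$ admits such pullbacks as in the proof of \cref{prop:bigpullback}, the induced functor $\PSh_{\mathrm{enr}}^{\star}$ is itself a map of coCartesian fibrations over $\Alg(\PrL)$.

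For the final identification at fixed $X \in \Spaces$ and $\cV \in \Alg(\PrL)$, by construction the fiber functor $\vcat_X(\cV) \to (\PrV)_{X/}$ sends a $\cV$-enriched category $\cC$ to the morphism $\PSh(X) \otimes \cV \to \PSh_{\cV}(\cC)$ obtained by applying the fiber of $\PSh_{\mathrm{enr}}$ to the counit $\mathrm{Free}(X, \cV) \to \cC$, transported to $(\PrV)_{X/}$ via the equivalence of \cref{obs:PrVX}. Under the identification $\vcat_X(\cV) \simeq \Alg(\LinEnd(\PSh(X) \otimes \cV))$ of \cref{rem:EndQuiver}, the counit $\mathrm{Free}(X, \cV) \to \cC$ corresponds to the unique algebra map from the initial (trivial) algebra into $\cC$; under \cref{prop:globalpsh}(1) the functor $\PSh_{\mathrm{enr}}$ agrees in the fiber with the free-module functor of \cref{cons:EM}, so $\PSh(X) \otimes \cV \to \PSh_{\cV}(\cC)$ is precisely the free-module functor sending the unit along the map of algebras — which is exactly the functor featured in \cref{thm:charessim}. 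The main technical obstacle is the bookkeeping in step~2 that the whiskered counit really assembles to a functor into the pullback in $\widehat{\mathrm{coCart}}_{/\Alg(\PrL)}$ rather than merely pointwise, but this is handled uniformly by the machinery of relative adjunctions used in the proof of \cref{prop:bigpullback}.
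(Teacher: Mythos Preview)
Your approach is essentially identical to the paper's: construct the lift by whiskering the counit of the relative adjunction $\mathrm{Free} \dashv \ob$ with $\PSh_{\mathrm{enr}}$, then invoke the pullback description from \cref{prop:bigpullback}; the fiberwise identification with \cref{thm:charessim} is also argued the same way.

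There is one imprecision worth flagging. You describe $\int^X_{\cV}(\PrV)_{X/}$ as $(\Spaces \times \Alg(\PrL)) \times_{\RMod(\PrL)} \Arr(\RMod(\PrL))$, but \cref{prop:bigpullback} gives it as the \emph{further} fiber product
\[
(\Spaces \times \Alg(\PrL)) \times_{\RMod(\PrL)} \Arr(\RMod(\PrL)) \times_{\Arr(\Alg(\PrL))} \Alg(\PrL),
\]
so one must also verify that the arrow $\PSh(\ob\cC) \otimes \cV \to \PSh_{\cV}(\cC)$ lies over an equivalence in $\Alg(\PrL)$. The paper checks this explicitly via \cref{obs:vEnrrelative}: the adjunction is relative over $\Alg(\PrL)$, so its counit projects to an equivalence there. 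You do cite this fact, but only in your coCartesian-preservation step; it needs to be invoked already when constructing the lift, otherwise your data only produces a functor into the larger pullback, not into $\int^X_{\cV}(\PrV)_{X/}$ itself. With that adjustment your argument is complete and matches the paper's.
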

\begin{proof}
Recall from  \cref{prop:bigpullback}
 the equivalence between  $ \smallint\nolimits^X_{\cV} (\PrV)_{X/}$  and \[(\Spaces \times \Alg(\PrL)) \times_{\RMod(\PrL)} \Arr(\RMod(\PrL)) \times_{\Arr(\Alg(\PrL))} \Alg(\PrL).\]
 By \cref{obs:counit} there is a functor $\vEnr \to (\Spaces \times \Alg(\PrL) )\times_{\vEnr} \Arr(\vEnr)$ over $\Spaces \times \Alg(\PrL)$ sending a $\cC \in \vEnr$ to the counit of the adjunction from \cref{cor:Free}. The commutative diagram from \cref{prop:globalpsh} induces a functor \[(\Spaces \times \Alg(\PrL)) \times_{\vEnr}\Arr(\vEnr) \to (\Spaces\times \Alg(\PrL))\times_{\RMod(\PrL)} \Arr(\RMod(\PrL))
\]
over $\Spaces \times \Alg(\PrL)$. By \cref{obs:vEnrrelative},  the composite of these two functors factors through the full subcategory 
\[
(\Spaces\times \Alg(\PrL))\times_{\RMod(\PrL)} \Arr(\RMod(\PrL)) \times_{\Arr(\Alg(\PrL))} \Alg(\PrL)
\]
on those arrows whose underlying arrow in $\Alg(\PrL)$ is invertible. 
Since this pullback is  by \cref{prop:bigpullback} a limit of coCartesian fibrations over $\Alg(\PrL)$, to verify that the induced functor from $\vEnr$ preserves coCartesian morphisms, it suffices to show that the projections $\vEnr \to \Spaces \times \Alg(\PrL)$ and $\vEnr \to \Arr(\RMod(\PrL)) \overset{\mathrm{tgt}}{\to} \RMod(\PrL)$ preserve coCartesian morphisms. But the former follows since it is a two-sided fibration and the latter follows from \cref{prop:globalpsh}.

	For a fixed $X\in \Spaces$ and $\cV \in \Alg(\PrL)$, the so constructed functors unwinds fiberwise  to the functor $\vcatXV \simeq \vcatXV_{\operatorname{Free}(X, \cV)/} \to (\PrV)_{\PSh(X) \otimes \cV /}$ sending $\cC \in \vcatXV$ to the counit $\PSh(X) \otimes \cV \to \PShV(\cC)$ of the adjunction $\Spaces \times \Alg(\PrL) \rightleftarrows \RMod(\PrL)$ at $\PShV(\cC) \in \RMod(\PrL)$, which is how we had constructed $\EM$ in \cref{cons:EM}.
\end{proof}

\begin{prop}\label{prop:PVenrstaradjunction} For $\cV \in \Alg(\PrL)$ the horizontal functor in the commutative diagram from \cref{prop:penvstar}
\[\begin{tikzcd}
\vcat(\cV)  \arrow[rr, "\cP_{\cV}^{\star}(-)"] \arrow[dr] && \arrow[dl]\int^X (\PrV)_{X/}\simeq \Spaces \times_{\PrV} \Arr(\PrV) \\
& \Spaces
\end{tikzcd}
\] admits a right adjoint. The unit of this adjunction is sent to an equivalence by $\vcat(\cV) \to \Spaces$ (i.e.\ $\PSh_{\cV}^{\star}$ is a relative left adjoint over $\Spaces$ in the sense of \cref{reminder:relativeadjoints}).
 \end{prop}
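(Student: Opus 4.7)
The plan is to verify that $\PSh_{\cV}^{\star}$ is a fully faithful map of coCartesian fibrations over $\Spaces$ that preserves coCartesian morphisms and admits fiberwise right adjoints, and then to assemble the relative right adjoint by a formal Yoneda argument.

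By \cref{prop:penvstar}(1) combined with \cref{thm:charessim}, $\PSh_{\cV}^{\star}$ identifies via the equivalence $\vcat(\cV) \simeq \MModV$ with the full-subcategory inclusion $\MModV \hookrightarrow \int^X (\PrV)_{X/}$ from \cref{prop:bigpullback}, so it is fully faithful, and its restriction over each $X \in \Spaces$ is the functor $\vcat_X(\cV) \to (\PrV)_{X/}$ of \cref{thm:charessim}, which admits a right adjoint $R_X$. Both fibrations over $\Spaces$ are also coCartesian: for $\vcat(\cV) \simeq \MModV$, this is part of \cref{cor:coCartMModV}, and by \cref{obs:coCartMModVDesc} coCartesian morphisms over $f: X \to Y$ correspond to pushout squares in $\PrV$ with corners $\PSh(X) \otimes \cV$ and $\PSh(Y) \otimes \cV$; for $\int^X (\PrV)_{X/}$, identifying $(\PrV)_{X/} \simeq (\PrV)_{\PSh(X)\otimes \cV/}$ via \cref{obs:PrVX}, the precomposition functor along $\PSh(f) \otimes \cV$ admits a pushout left adjoint in $\PrV$ which provides the coCartesian transport. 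Since the two coCartesian transports are computed by the same pushout in $\PrV$, the inclusion $\PSh_{\cV}^{\star}$ preserves coCartesian morphisms.

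With these ingredients in hand, the fiberwise right adjoints $R_X$ assemble into a right adjoint $R$ relative to $\Spaces$ in the sense of \cref{reminder:relativeadjoints}. Indeed, for $\cC \in \vcat_X(\cV)$, $(Y \to \cM) \in (\PrV)_{Y/}$ and $f: X \to Y$ in $\Spaces$, the space of morphisms $\PSh_{\cV}^{\star}(\cC) \to (Y \to \cM)$ in $\int^X (\PrV)_{X/}$ covering $f$ is computed, using coCartesian transport followed by the fiberwise adjunction over $Y$, as
\[
\Map_{(\PrV)_{Y/}}\bigl(\PSh_{\cV}^{\star}(f_!\cC),\, (Y \to \cM)\bigr) \;\simeq\; \Map_{\vcat_Y(\cV)}\bigl(f_!\cC,\, R_Y(Y \to \cM)\bigr),
\]
which by coCartesian transport in $\vcat(\cV)$ agrees with the space of morphisms $\cC \to R_Y(Y \to \cM)$ in $\vcat(\cV)$ covering $f$. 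By Yoneda this assembles the $R_Y$ into the desired relative right adjoint $R$, with $R\vert_{(\PrV)_{Y/}} \simeq R_Y$. The unit $\cC \to R \PSh_{\cV}^{\star}(\cC)$ then lies by construction in the fiber $\vcat_{\ob \cC}(\cV)$, and hence is sent to $\id_{\ob \cC}$ in $\Spaces$. The main subtlety is verifying that $\PSh_{\cV}^{\star}$ preserves coCartesian morphisms over $\Spaces$ (equivalently, identifying both coCartesian transports with the same pushout in $\PrV$); after that, the remaining steps are formal consequences of the Yoneda lemma.
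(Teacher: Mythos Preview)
Your argument has a circularity problem. You write that ``$\PSh_{\cV}^{\star}$ identifies via the equivalence $\vcat(\cV) \simeq \MModV$ with the full-subcategory inclusion $\MModV \hookrightarrow \int^X (\PrV)_{X/}$'', and then invoke \cref{cor:coCartMModV} and \cref{obs:coCartMModVDesc} to get the coCartesian structure on $\vcat(\cV) \to \Spaces$ and the description of its coCartesian transport by pushouts. But the global equivalence $\vcat(\cV) \simeq \MModV$ over $\Spaces$ is precisely \cref{thm:functorialcomparison}, whose proof \emph{uses} \cref{prop:PVenrstaradjunction}. What is available at this point is only the fiberwise identification from \cref{prop:penvstar}: for each fixed $X$, the functor $\vcat_X(\cV) \to (\PrV)_{X/}$ agrees with the one from \cref{thm:charessim}. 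That gives you the fiberwise right adjoints, but it does not tell you that $\vcat(\cV) \to \Spaces$ is a coCartesian fibration, nor that $\PSh_{\cV}^{\star}$ preserves coCartesian morphisms. Without those two inputs your appeal to \HA{Prop.}{7.3.2.6} (via \cref{obs:fiberwiseAdj}) does not go through. One could try to establish the coCartesian structure on $\vcat(\cV) \to \Spaces$ directly in the Gepner--Haugseng model via operadic left Kan extension along $\Delta_X^{\op} \to \Delta_Y^{\op}$, but you would then still owe the compatibility statement $\PShV(f_!\cC) \simeq \PShV(\cC) \sqcup_{\PSh(X)\otimes\cV} \PSh(Y)\otimes\cV$, which is not available prior to this proposition.

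The paper avoids this by working with the \emph{Cartesian} structure instead. It decomposes $\PSh_{\cV}^{\star}$ as the composite
\[
\vcat(\cV) \to \Spaces \times_{\vcat(\cV)} \Arr(\vcat(\cV)) \to \Spaces \times_{\PrV} \Arr(\PrV),
\]
where the first map records the counit of the adjunction $\Spaces \rightleftarrows \vcat(\cV)$ from \cref{cor:Free}, and the second is induced by $\PShV$. The second map has a right adjoint because $\PShV$ is a partial left adjoint of the Heine functor $\chi_{\cV}: \PrV \to \lvcat(\cV)$ (\cref{prop:globalpsh}(2)); the first map has a right adjoint relative to $\Spaces$ by \cref{lem:Cartadjunction}, using only that $\lvcat(\cV) \to \lSpaces$ is a Cartesian fibration (which is immediate from the operadic definition) rather than a coCartesian one. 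Composing these and pulling back to $\Spaces \subset \lSpaces$ yields the relative right adjoint without any forward reference.
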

\begin{proof}
By construction, $\cP_{\cV}^{\star}$ is the composite \[ \vcat(\cV) \to \Spaces \times_{\vcat(\cV)} \Arr(\vcat(\cV)) \to \Spaces \times_{\PrV} \Arr(\PrV)
\] where the first functor is induced per \cref{obs:counit} by the counit of the adjunction $\Spaces \rightleftarrows \vcat(\cV)$ and the second functor is given by composing with $\PShV(-): \vcat(\cV) \to \PrV$. 
Abbreviating $\lvcat(\cV):= \vcat^{\text{large spc}}(\cV)$, this functor $\PShV: \vcat(\cV) \to \PrV$ is a partial left adjoint of a functor $\chi_{\cV}: \PrV \to \lvcat(\cV) $ (using the induced partial adjunction on fibers over $\cV \in \Alg(\PrL)$ from \cref{prop:globalpsh}(2)). Thus, the second functor in the composite defining $\cP_{\cV}^{\star}$ is a partial left adjoint of the functor $\Spaces \times_{\PrV} \Arr(\PrV) \to \Spaces\times_{\lvcat(\cV)} \Arr(\lvcat(\cV))$ induced by $\chi_{\cV}$, relative to $\Spaces$. Since $\lvcat(\cV) \to \lSpaces$ is a Cartesian fibration,  it follows from \cref{lem:Cartadjunction} that the functor $\lvcat(\cV) \to \lSpaces \times_{\lvcat(\cV)} \Arr(\lvcat(\cV))$ admits a right adjoint over $\lSpaces$. Pulling back to $\Spaces \hookrightarrow \lSpaces$, this results in an adjunction $ \Spaces \times_{\lvcat(\cV)} \Arr(\lvcat(\cV)) \leftrightarrows \vcat(\cV)$
whose left adjoint factors through the full subcategory $\Spaces \times_{\vcat(\cV)} \Arr(\vcat(\cV))$  and whose unit is send to an isomorphism in $\Spaces$.
Thus, the composite 
\[ \Spaces \times_{\PrV} \Arr(\PrV) \to \Spaces \times_{\lvcat(\cV)} \Arr(\lvcat(\cV)) \to \vcat(\cV)
\]
defines a right adjoint to $\cP_{\cV}^{\star}$ whose unit is sent to an isomorphism in $\Spaces$.
\end{proof}

\begin{rem}  The right adjoint of $\PSh_{\cV}^{\star}$ sends an $(X \to \cM) \in \int^X (\PrV)_{X/}$ to the internal end in $ \Alg(\LinEnd(\PSh(X) \otimes \cV)) \simeq \vcat_X(\cV)$ of the induced $\cV$-linear colimit-preserving functor in $\LinFun(\PSh(X) \otimes \cV, \cM)$ with respect to its right $\LinEnd(\PSh(X) \otimes \cV)$ action induced by post-composition. Indeed, for a fixed $X\in \Spaces$ by \cref{obs:fiberwiseAdj} the restriction of the right adjoint of $\PSh_{\cV}^{\star}: \vcat(\cV) \to \int^X (\PrV)_{X/}$ along $(\PrV)_{X/} \to \int^X (\PrV)_{X/}$ agrees with the right adjoint of the fiberwise $\vcat_X(\cV) \to (\PrV)_{X/}$ and is by \cref{thm:monadicff}  therefore given by the above  description. 
\end{rem}

	\begin{theorem}\label{thm:functorialcomparison}The functor $\PSh_{\mathrm{enr}}^{\star}: \vEnr \to \int^X_{\cV}(\PrV)_{X/} $ is fully faithful with image the full subcategory $\MMod$. In other words, the presheaf functor induces an equivalence $\vEnr \simeq \MMod$ over $\Spaces \times \Alg(\PrL)$.	\end{theorem}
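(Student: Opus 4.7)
The plan is to verify fully faithfulness in two fiberwise reductions and then separately identify the essential image, ultimately appealing to \cref{thm:charessim}.

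\textbf{Reduction over $\Alg(\PrL)$.} By \cref{prop:penvstar}, $\PSh_{\mathrm{enr}}^\star$ is a functor over $\Spaces \times \Alg(\PrL)$ which preserves coCartesian morphisms over $\Alg(\PrL)$. Both source and target are coCartesian fibrations over $\Alg(\PrL)$ (by \cref{cor:Free} and \cref{prop:bigpullback} respectively). For $\cC \in \vcat(\cV)$ and $\cC' \in \vcat(\cV')$, the standard decomposition of mapping spaces in a coCartesian fibration gives, on the fiber over any $f: \cV \to \cV'$, an identification $\Map_{\vEnr}(\cC,\cC')_f \simeq \Map_{\vcat(\cV')}(f_!\cC, \cC')$, with the action of $\PSh_{\mathrm{enr}}^\star$ on this fiber corresponding to the action of the fiberwise functor $\PSh_{\cV'}^\star$ on $(f_!\cC, \cC')$ (using preservation of coCartesian morphisms). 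Hence fully faithfulness of $\PSh_{\mathrm{enr}}^\star$ reduces to fully faithfulness of $\PSh_{\cV}^\star: \vcat(\cV) \to \int^X(\PrV)_{X/}$ for each $\cV$.

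\textbf{Fully faithfulness of $\PSh_{\cV}^\star$.} By \cref{prop:PVenrstaradjunction}, $\PSh_\cV^\star$ is a relative left adjoint over $\Spaces$, so both $\PSh_\cV^\star$ and its right adjoint $G_\cV$ are over $\Spaces$, and the unit $\eta: \id \Rightarrow G_\cV \circ \PSh_\cV^\star$ covers the identity on $\Spaces$. Thus, for any $\cC \in \vcat_X(\cV)$, the component $\eta_\cC$ lies in the fiber $\vcat_X(\cV)$, and by the standard passage between a relative adjunction and its restrictions to fibers (the analog of \HA{Prop.}{7.3.2.6}, cf.\ \cref{obs:fiberwiseAdj}), $\eta_\cC$ coincides with the unit of the induced fiberwise adjunction $\vcat_X(\cV) \rightleftarrows (\PrV)_{X/}$. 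By the last sentence of \cref{prop:penvstar} this fiberwise left adjoint is exactly the functor of \cref{thm:charessim}, and therefore fully faithful. So $\eta_\cC$ is an equivalence, and consequently $\PSh_\cV^\star$ is fully faithful.

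\textbf{Essential image.} Repeating the same fiberwise decomposition, an object of $\int^X_{\cV}(\PrV)_{X/}$ over $(X,\cV)$ lies in the essential image of $\PSh_{\mathrm{enr}}^\star$ if and only if it lies in the essential image of the fiberwise functor $\vcat_X(\cV) \to (\PrV)_{X/}$, which by \cref{thm:charessim} is precisely the full subcategory $\MModXV$. By \cref{defin:MMod} and \cref{obs:MModPrsubfunctor}, $\MMod \subseteq \int^X_{\cV}(\PrV)_{X/}$ is exactly the union of these fiberwise images, so the essential image of $\PSh_{\mathrm{enr}}^\star$ is $\MMod$.

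The main conceptual work has already been done in setting up the commutative diagram of \cref{prop:penvstar} and the relative adjunction of \cref{prop:PVenrstaradjunction}; granted these, the proof of the theorem is essentially fiberwise bookkeeping combined with an appeal to the non-functorial comparison \cref{thm:charessim}. The only mild subtlety is the identification of $\eta_\cC$ with the fiberwise unit, which requires that the right adjoint $G_\cV$ is itself over $\Spaces$ -- this is precisely what the ``relative'' in relative left adjoint buys us.
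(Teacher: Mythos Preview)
Your proof is correct and follows essentially the same approach as the paper's: reduce over $\Alg(\PrL)$ using that $\PSh_{\mathrm{enr}}^\star$ is a map of coCartesian fibrations (\cref{prop:penvstar}), then use the relative adjunction of \cref{prop:PVenrstaradjunction} to identify the unit with the fiberwise unit via \cref{obs:fiberwiseAdj}, and conclude by \cref{thm:charessim}. The only cosmetic difference is that the paper phrases the first reduction as ``it suffices to check on fibers'' rather than spelling out the mapping-space decomposition, and your citation of \cref{cor:Free} for the coCartesian fibration structure on $\vEnr$ is slightly off (this is part of the two-sided fibration structure, already used in \cref{prop:penvstar}).
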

	\begin{proof}
	Since  $\PSh_{\text{enr}}^\star$ is a map of coCartesian fibrations over $\Alg(\PrL)$ by \cref{prop:penvstar}, it suffices to show that for a fixed $\cV \in \Alg(\PrL)$ the induced functor $\vcat(\cV) \to \int^X(\PrV)_{X/}$ is fully faithful with image $\MModV$. 
	By \cref{prop:PVenrstaradjunction}, this functor has a right adjoint whose unit is sent to an equivalence in $\Spaces$. In particular, the unit at a $\cC \in \vcat(\cV)$ is a morphism in the fiber $\vcat_{\ob \cC} (\cV)$ over $\ob \cC\in \Spaces$ and is in fact the unit of the induced fiberwise adjunction $\vcat_{\ob \cC}(\cV) \rightleftarrows (\PrV)_{\ob \cC/}$ (cf. \cref{obs:fiberwiseAdj}). By  \cref{prop:penvstar} the left adjoint of this fiberwise adjunction is the fully faithful functor from \cref{thm:charessim} and hence has invertible unit. Thus, $\vcat(\cV) \to \int^X (\PrV)_{X/}$ is fully faithful with image characterized fiberwise by  \cref{thm:charessim}.
	\end{proof}

\begin{obs}\label{obs:naturaltrafovEnr}
	In particular, \cref{prop:penvstar} constructs a natural transformation (with notation as in \cref{obs:MModPrsubfunctor}) \[\PSh_{-}^{\star}:\vcat(-) \To  \Spaces\times_{\Pr_{-}}\Arr(\Pr_{-}): \Alg(\PrL) \to \lcat \] 
	and \cref{thm:functorialcomparison} establishes that it is fully faithful at each $\cV \in \Alg(\PrL)$ with full image $\MModV \subseteq \Spaces \times_{\PrV} \Arr(\PrV)$. 
\end{obs}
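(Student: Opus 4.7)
The plan is to reduce the statement to the fiberwise comparison \cref{thm:charessim} by exploiting the fibered structure established in the preceding propositions. Since by \cref{prop:penvstar} the functor $\PSh_{\mathrm{enr}}^{\star}$ is a map of coCartesian fibrations over $\Alg(\PrL)$, and since by \cref{prop:coCart} and \cref{cor:Free} (together with \cref{obs:MModsubfunctor}) so is the codomain inclusion $\MMod \subseteq \int^X_{\cV}(\PrV)_{X/}$, it suffices to fix $\cV \in \Alg(\PrL)$ and prove that the induced functor on fibers $\vcat(\cV) \to \int^X (\PrV)_{X/}$ is fully faithful with essential image $\MModV$.

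For fixed $\cV$, the key input is \cref{prop:PVenrstaradjunction}: the functor $\PSh_{\cV}^{\star}$ admits a right adjoint whose unit is sent to an equivalence by the projection to $\Spaces$. In particular, at any $\cC \in \vcat(\cV)$, the unit lives in the fiber $\vcat_{\ob \cC}(\cV)$ over $\ob \cC \in \Spaces$. I would then invoke \cref{obs:fiberwiseAdj} (on how relative adjunctions over a base restrict to fiberwise adjunctions) to identify this unit with the unit of the fiberwise adjunction $\vcat_X(\cV) \rightleftarrows (\PrV)_{X/}$ at $X = \ob \cC$.

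By \cref{prop:penvstar}, this fiberwise left adjoint agrees with the functor of \cref{thm:charessim}, which is fully faithful with essential image $\MModXV$. Consequently the unit is an isomorphism for every $\cC$, showing that $\PSh_{\cV}^{\star}$ is fully faithful. For the essential image, an object $(X \to \cM) \in \int^X(\PrV)_{X/}$ lies in the image iff the counit of the adjunction at this object is an isomorphism; since the right adjoint preserves the underlying space, this reduces to checking when it lies in the essential image of the fiberwise functor $\vcat_X(\cV) \to (\PrV)_{X/}$, which by \cref{thm:charessim} is precisely $\MModXV$. Reassembling over varying $X$ and $\cV$, this gives the desired equivalence $\vEnr \simeq \MMod$ over $\Spaces \times \Alg(\PrL)$.

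I expect the only subtle point to be bookkeeping the compatibility between the ``global'' (partial) adjunction of \cref{prop:PVenrstaradjunction} and the fiberwise adjunctions of \cref{thm:charessim}; but this is handled cleanly by the relative-adjunction formalism of \cref{reminder:relativeadjoints} together with \cref{obs:fiberwiseAdj}, so no genuinely new argument is needed.
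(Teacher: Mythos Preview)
Your proposal is correct and follows essentially the same route as the paper: the observation itself has no separate proof, but the content you supply is precisely the argument the paper gives for \cref{thm:functorialcomparison}, namely reducing to fixed $\cV$ via the coCartesian-fibration structure, then using the relative adjunction of \cref{prop:PVenrstaradjunction} together with \cref{obs:fiberwiseAdj} to identify the unit with the fiberwise unit, which is invertible by \cref{thm:charessim}. The only cosmetic discrepancy is your citation of \cref{cor:Free} for the codomain being a coCartesian fibration; the relevant input there is rather \cref{prop:coCart} (or \cref{prop:bigpullback}), but this does not affect the argument.
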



	\subsection{Properties of presheaves}

	Using the functorial identification of enriched categories and marked modules, we obtain a number of immediate corollaries.	
	
	\begin{obs} \label{obs:functorialPSh}Unwinding \cref{prop:penvstar}, for a $\cV \in \Alg(\PrL)$,  the presheaf functor $\PShV:  \vcatV \to  \PrV$ is equivalent to the composite	\[ \vcatV \simeq \MModV \subseteq \Arr(\widehat{\cat}) \times_{\widehat{\cat}} \PrV \to \PrV  \, . \]
	By \cref{prop:MModVequiv}, it factors through the subcategory $\PrViL$.
	\end{obs}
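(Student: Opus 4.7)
The plan is essentially to unpack the content already established in the immediately preceding results. First, I would invoke \cref{prop:penvstar}, which at a fixed $\cV \in \Alg(\PrL)$ produces a commutative triangle over $\Spaces$
\[
\begin{tikzcd}
\vcat(\cV) \arrow[rr, "\PSh_\cV^\star"] \arrow[dr, "\ob"'] && \smallint\nolimits^X (\PrV)_{X/} \arrow[dl] \\
& \Spaces &
\end{tikzcd}
\]
together with the additional compatibility that post-composing $\PSh_\cV^\star$ with the target projection $\int^X(\PrV)_{X/} \to \PrV$ recovers (by construction in \cref{prop:penvstar} and point (1) of \cref{prop:globalpsh}) the presheaf functor $\PShV: \vcat(\cV) \to \PrV$ of \cref{def:PShVfunctor}. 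By \cref{thm:functorialcomparison}, the functor $\PSh_\cV^\star$ is fully faithful with essential image the full subcategory $\MModV \subseteq \int^X(\PrV)_{X/}$, so we obtain a factorization $\PShV : \vcat(\cV) \xrightarrow{\simeq} \MModV \hookrightarrow \int^X(\PrV)_{X/} \to \PrV$, where the last arrow is exactly the target projection $\Arr(\widehat{\cat}) \times_{\widehat{\cat}} \PrV \to \PrV$ restricted to $\MModV$. This yields the claimed identification of the composite.

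For the second assertion, I would appeal to the second equivalence of \cref{prop:MModVequiv}, which identifies $\MModV \simeq \Spaces \times_{\PrViL} \Arr^{\cdom}(\PrViL)$. Under this identification the target projection to $\PrV$ factors through $\PrViL$, because both the target object and the morphisms in $\Arr^{\cdom}(\PrViL)$ (being squares in $\PrViL$) lie in the subcategory $\PrViL \subseteq \PrV$. Composing with the equivalence $\vcat(\cV) \simeq \MModV$ therefore produces the desired factorization $\PShV: \vcat(\cV) \to \PrViL \hookrightarrow \PrV$. No step is really an obstacle here: the observation is a bookkeeping consequence of \cref{prop:penvstar}, \cref{thm:functorialcomparison}, and \cref{prop:MModVequiv}, and the only mild care needed is to check that the morphism-level factorization through $\PrViL$ (and not merely the object-level one) is encoded by the $\Arr^{\cdom}(\PrViL)$-description of $\MModV$.
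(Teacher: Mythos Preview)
Your proposal is correct and matches exactly what the paper intends: the observation is stated without a formal proof in the paper (it is labeled an \emph{Observation} and begins with ``Unwinding \cref{prop:penvstar}''), and your unpacking via \cref{prop:penvstar}, \cref{thm:functorialcomparison}, and \cref{prop:MModVequiv} is precisely the intended unwinding. Your remark about checking the morphism-level factorization through $\PrViL$ is apt and is exactly the content of the second equivalence in \cref{prop:MModVequiv} (whose proof invokes \cref{lem:iLunder}).
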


	\begin{prop}
		The functor $\PShV : \vcatV \to \PrViL$ preserves colimits, and thus by  \cref{lem:iLclosedcolim} so does the composite $\vcatV \to \PrViL \to \PrV$.\end{prop}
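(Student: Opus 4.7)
My plan is to factor $\PShV$, using \cref{obs:functorialPSh}, as the composite
\[
\vcatV \simeq \MModV \hookrightarrow \Spaces \times_{\PrV} \Arr(\PrV) \xrightarrow{\mathrm{tgt}} \PrV,
\]
where the pullback uses the source projection $\mathrm{src}: \Arr(\PrV) \to \PrV$ along the functor $\Spaces \to \PrV,\; X \mapsto \PSh(X) \otimes \cV$, as in \cref{prop:MModVequiv}. The strategy is then to verify colimit-preservation of each piece separately.

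First, the target projection $\Arr(\PrV) \to \PrV$ preserves colimits, since these are computed pointwise in $\Arr(\PrV) = \Fun([1], \PrV)$. Second, the functor $\PSh(-) \otimes \cV : \Spaces \to \PrV$ is itself a left adjoint, being the composition of the free cocompletion $\PSh: \cat \to \PrL$ with the free right-$\cV$-module functor $\PrL \to \PrV$ (as in the proof of \cref{obs:PrVX}), so it preserves colimits. Consequently, colimits in the pullback $\Spaces \times_{\PrV} \Arr(\PrV)$ exist and are computed pointwise in $\Spaces$ and in $\Arr(\PrV)$, and in particular the projection $\Spaces \times_{\PrV} \Arr(\PrV) \xrightarrow{\mathrm{tgt}} \PrV$ preserves colimits.

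Next I would invoke \cref{cor:MModVclosed}, which asserts that $\MModV \hookrightarrow \Spaces \times_{\PrV} \Arr(\PrV)$ is closed under colimits, to conclude that $\PShV: \vcatV \to \PrV$ preserves colimits. Finally, by \cref{obs:functorialPSh} this functor factors through $\PrViL$, and by \cref{lem:iLclosedcolim} the inclusion $\PrViL \hookrightarrow \PrV$ is closed under colimits, so the corestricted functor $\PShV: \vcatV \to \PrViL$ preserves colimits as well.

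There is no real obstacle here: everything has been set up by the preceding propositions. The main conceptual point is that the marked-module description bundles together all the closure properties we need, and realizing $\PShV$ as a target projection from a diagram category makes pointwise colimit-preservation automatic.
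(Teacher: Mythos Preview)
Your proof is correct and follows essentially the same approach as the paper: both factor $\PShV$ through the marked-module pullback description and check colimit-preservation of each piece. The only organizational difference is that the paper works directly with the description $\MModV \simeq \Spaces \times_{\PrViL} \Arr^{\cdom}(\PrViL)$ from \cref{prop:MModVequiv}, landing in $\PrViL$ from the start, whereas you work in $\Spaces \times_{\PrV} \Arr(\PrV)$, invoke \cref{cor:MModVclosed} for closure, and then descend to $\PrViL$ at the end via \cref{lem:iLclosedcolim}; both routes are valid and use the same ingredients.
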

	\begin{proof}
	 	Following~\cref{prop:MModVequiv}, this functor is equivalent to the forgetful functor  $\MModV \to \Arr^{\cdom}(\PrViL) \overset{\mathrm{tgt}}{\to} \PrViL$. The functor $\Arr^{\cdom}(\PrViL) \to \Arr(\PrViL)$ preserves colimits by \cref{obs:cdomstab} and so do the projections $\mathrm{src}, \mathrm{tgt}: \Arr(\PrViL) \to \PrViL$ since colimits are computed pointwise in arrow categories. Moreover, the functor $\Spaces \to \PrViL$ sending $X \mapsto \PSh(X) \otimes \mathcal{V}$ is the composition of $\Spaces \hookrightarrow \cat$, $\PSh: \cat \to \PrL$ and the free functor $\PrL \to \PrV$, all of which preserve colimits. Thus, $\MModV \to \Arr^{\cdom}(\PrViL)$ is by \cref{prop:MModVequiv} a pullback of colimit-preserving functors and hence colimit-preserving. The functor $\MModV \to \Arr^{\cdom}(\PrViL) \overset{\mathrm{tgt}}{\to} \PrViL$ is therefore a composite of colimit-preserving functors.
	\end{proof}

\begin{cor}
		\label{cor:changeofenrff}
		Suppose $f: \cV \to \cW$ is a morphism in $\Alg(\PrL)$. If $f$ is fully faithful, then the following hold:
		\begin{enumerate}[(1)]
		\item The change of enrichment functor $f_!: \vcatV \to \vcatW$ is fully faithful with image those $\cW$-enriched categories whose graph factors through $\cV \subseteq \cW$;
		\item For $\cC \in \vcatV$, the functor $\PShV(\cC) \to \PShW(f_! \cC) \simeq \PShV(\cC) \otimes_{\cV} \cW$ induced from coCartesian transport along $f$ is fully faithful. 
		\end{enumerate}
		If instead  $f^\rR : \cW \to \cV$ is fully faithful, then the following hold:
				\begin{enumerate}[(3)]
		\item The change of enrichment functor $f^\rR_! : \vcatW \to \vcatV$ is fully faithful with image those $\cV$-enriched categories whose graph factors through $\cW \subseteq \cV$;

		\item[(4)] For $\cC \in \vcatV$, the functor $\PShV(f_!^\rR \cC) \to \PShW(\cC)$ induced from Cartesian transport along $f$ admits a fully faithful right adjoint. 		\end{enumerate}
		\end{cor}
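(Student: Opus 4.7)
Parts (1) and (3) reduce to their fiberwise analogues by a fibration argument. The fiberwise fully faithfulness is \cref{obs:changeofenrff}, which in view of \cref{prop:changeofenragree} applies equally to the change-of-enrichment on marked modules. To propagate this to the full categories, I use that $\vcatV \simeq \MModV$ and $\vcatW \simeq \MModW$ are Cartesian fibrations over $\Spaces$ by \cref{cor:coCartMModV}, and that $f_!$ (respectively $f_!^\rR$) is a map of Cartesian fibrations over $\Spaces$ by \cref{prop:changeofenrprespre}. The usual decomposition $\Map_{\vcatV}(\cC_1, \cC_2) \simeq \coprod_{g:\ob \cC_1 \to \ob \cC_2} \Map_{\vcat_{\ob \cC_1}(\cV)}(\cC_1, g^*\cC_2)$ then yields global fully faithfulness from the fiberwise version, and the image descriptions likewise lift from the fiberwise statements in \cref{obs:changeofenrff}.

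Part (2) is essentially an unpacking of \cref{prop:coCart}. By \cref{ex:basechangerelative}, the coCartesian transport in $\MMod\to\Alg(\PrL)$ sends the marked module $(\ob \cC \to \PShV(\cC))$ to $(\ob \cC \to \PShV(\cC)\otimes_\cV \cW)$, identifying this with $(\ob \cC \to \PShW(f_!\cC))$ under the comparison $\vcatW\simeq\MModW$. The functor in the statement is then the unit $\PShV(\cC)\to \PShV(\cC)\otimes_\cV \cW$ of the extension-of-scalars adjunction. Since $\PShV(\cC)$ is $\cV$-atomically generated by \cref{cor:PShVtg}, fully faithfulness follows directly from \cref{cor:fullyfaithfulatomic} applied to the fully faithful $f$.

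The main obstacle is part (4). The strategy is to identify the Cartesian transport $L: \PShV(f_!^\rR \cC)\to \PShW(\cC)$ with a \emph{localization} in $\PrL$, i.e.\ a functor whose right adjoint is fully faithful. Using the characterization of Cartesian morphisms via \HA{Prop.}{7.3.2.6} as in \cref{cor:coerelativeadjoint}, the morphism $L$ in $\RMod(\PrL)$ factors as the coCartesian morphism $\PShV(f_!^\rR \cC)\to \PShV(f_!^\rR \cC)\otimes_\cV \cW$ (extension of scalars) followed by the $\cW$-linear functor $\PShV(f_!^\rR \cC)\otimes_\cV \cW \simeq \PShW(f_! f_!^\rR \cC) \xrightarrow{\PShW(\epsilon)} \PShW(\cC)$, where $\epsilon: f_! f_!^\rR \cC \to \cC$ is the counit of the fiberwise adjunction from \cref{constr:changeofenrquiv}. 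When $f^\rR$ is fully faithful the counit $ff^\rR \to \id_\cW$ is invertible, so on underlying graphs (obtained by postcomposition) $f_! f_!^\rR \simeq \id$, and conservativity of the forgetful $\vcat_{\ob \cC}(\cW)\to \Fun(\ob \cC\times \ob \cC,\cW)$ forces $\epsilon$ to be an equivalence. Hence $\PShW(\epsilon)$ is an equivalence and it suffices to show the unit $\cM \to \cM\otimes_\cV \cW$ is a localization for $\cM = \PShV(f_!^\rR \cC)$.

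For this last step, the plan is to write the relative tensor product via the two-sided bar construction $\cM \otimes_\cV \cW \simeq |\cM \otimes \cV^{\otimes \bullet} \otimes \cW|$ in $\PrL$, realizing $\cM \simeq \cM\otimes_\cV\cV \to \cM \otimes_\cV \cW$ as the colimit in $\Arr(\PrL)$ of the maps $\cM \otimes \cV^{\otimes n} \otimes f$. Each such map is a localization by \cref{lem:localizationtensor} (since $f$ is a localization in $\PrL$, being a left adjoint to the fully faithful $f^\rR$), and localizations are closed under colimits in $\Arr(\PrL)$ as the left class of the localization/conservative factorization system recalled just before \cref{lem:localizationtensor}. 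Hence the unit $\cM \to \cM\otimes_\cV \cW$ is itself a localization, and therefore so is $L$, completing the proof.
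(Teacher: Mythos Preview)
Your proof is correct and follows essentially the same route as the paper. For (1) and (3) the paper also reduces to the fiberwise statement \cref{obs:changeofenrff} via a fibration argument (citing \cref{cor:coerelativeadjoint} and \kerodon{01VB}); for (2) it invokes \cref{cor:fullyfaithfulatomic} just as you do; and for (4) it uses exactly your factorization into the extension-of-scalars map followed by $\PShW$ of the counit, observing the counit is invertible (the paper gets this from (3) rather than re-arguing on graphs) and then running the same bar-construction argument with \cref{lem:localizationtensor} and closure of localizations under colimits in $\Arr(\PrL)$.

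One small citation issue: \cref{prop:changeofenrprespre} only asserts that $f_!$ is a map of Cartesian fibrations over $\Spaces$, not $f_!^\rR$. For (3) you need the latter. The paper handles this by citing \cref{cor:coerelativeadjoint}: since $f_!$ is a map of coCartesian fibrations and $f_!\dashv f_!^\rR$ is a relative adjunction over $\Spaces$, the right adjoint $f_!^\rR$ is automatically a map of Cartesian fibrations (alternatively, one checks directly that Cartesian transport along $g:X\to Y$ acts by precomposition on graphs, which commutes with postcomposition by $f^\rR$). With this fix your argument goes through unchanged.
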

	\begin{proof}
		For (1) and (3), by \cref{cor:coerelativeadjoint} $f_!$ and $f^\rR_!$ are maps of coCartesian and Cartesian fibrations respectively, and by \cref{obs:changeofenrff} they are fiberwise fully faithful, hence they are fully faithful by \kerodon{01VB}.
		Property (2) follows from \cref{cor:fullyfaithfulatomic}. For (4), note that this map factors through the counit \[ \PSh_{\cV}(f^\rR_! \cC) \simeq \PSh_{\cV}(f^\rR_! \cC) \otimes_{\cV} \cV \to \PSh_{\cV}(f^\rR_! \cC) \otimes_{\cV} \cW \simeq \PSh_{\cV}(f_! f^\rR_! \cC) \to \PShW(\cC) \, . \] This counit $f_! f^\rR_! \to \operatorname{Id}_{\vcatW}$ is an isomorphism by $(3)$. Using \cref{lem:localizationtensor} and the fact that localizations are closed under colimits as the left class in a factorization system on $\Pr$~\cite[Prop.\ 2.13]{monadictower}, it follows that the first map is a localization.
	\end{proof}


	\begin{defin}[{\cite[Def. 5.3.1]{haugseng}}] A functor of valent $\cV$-enriched categories $F: \cC \to \cD$ is called \emph{fully faithful} if for all $c,c' \in \ob \cC$, the induced map on graphs $\Hom_{\cC}(c,c') \to \Hom_{\cD}(Fc, Fc')$ is an isomorphism in $\cV$. 
	\end{defin}

	\begin{prop}
		\label{prop:charff}
		Let $\cV \in \Alg(\PrL)$. For a morphism $F:\cC \to \cD$ in $\vcatV$, the following are equivalent:
		\begin{enumerate}[(1)]
			\item $F$ is a fully faithful functor of $\mathcal{V}$-enriched categories. 
			\item $\PShV(F): \PShV(\mathcal{C}) \to \PShV(\mathcal{D})$ is fully faithful.
			\item $F$ is a Cartesian morphism with respect to the Cartesian fibration $\vcatV \to \Spaces$.
		\end{enumerate}
	\end{prop}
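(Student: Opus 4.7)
The plan is to deduce $(2) \Leftrightarrow (3)$ from the description of Cartesian morphisms in $\MModV$, and to deduce $(1) \Leftrightarrow (2)$ by applying the atomic-generation criterion for fully faithfulness from \cref{lem:fflemma}.

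For $(2) \Leftrightarrow (3)$: Under the equivalence $\vcatV \simeq \MModV$ of \cref{thm:functorialcomparison}, which lies over $\Spaces$, the fibration $\ob: \vcatV \to \Spaces$ corresponds to $\ob: \MModV \to \Spaces$. By \cref{obs:coCartMModVDesc}, a morphism in $\MModV$ is Cartesian if and only if the corresponding functor on presheaf categories $\PShV(\cC) \to \PShV(\cD)$ is fully faithful. This is precisely the equivalence $(2) \Leftrightarrow (3)$.

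For $(1) \Leftrightarrow (2)$: By \cref{obs:functorialPSh} and \cref{prop:MModVequiv}, the functor $\PShV(F): \PShV(\cC) \to \PShV(\cD)$ is an internally left adjoint morphism in $\PrV$. By \cref{cor:PShVtg}, $\PShV(\cC)$ is atomically generated by the full image of the Yoneda embedding $\yoV_\cC: \ob \cC \to \PShV(\cC)$, and by the definition of morphisms in $\MModV$ (equivalently, by the fact that $\PSh_{\mathrm{enr}}^{\star}$ lifts to $\MMod$ in \cref{prop:penvstar}), the functor $\PShV(F)$ sends each representable $\yoV_\cC(c)$ to $\yoV_\cD(Fc)$. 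By \cref{lem:fflemma}, $\PShV(F)$ is therefore fully faithful if and only if for every pair $c,c' \in \ob \cC$ the induced morphism
\[
\iHom_{\PShV(\cC)}(\yoV_\cC(c), \yoV_\cC(c')) \longrightarrow \iHom_{\PShV(\cD)}(\yoV_\cD(Fc), \yoV_\cD(Fc'))
\]
is an isomorphism in $\cV$. Applying the identification of \cref{cor:PShVgraph} on both sides, this morphism is precisely the induced map $\Hom_\cC(c,c') \to \Hom_\cD(Fc, Fc')$ on graphs (with functoriality supplied by \cref{obs:functorialityhom}), so its invertibility for all $c,c'$ is exactly condition $(1)$.

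No step presents a genuine obstacle, since every ingredient has been set up in the earlier sections; the only point that merits a moment's care is the identification of the map on internal homs with the map on graphs, which follows from the fact that $\PShV(F)$ comes from a morphism in $\MModV$, hence restricts on the marking to $\ob F: \ob \cC \to \ob \cD$ followed by $\yoV_\cD$.
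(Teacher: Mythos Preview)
Your proof is correct and follows essentially the same approach as the paper: both use \cref{obs:coCartMModVDesc} for $(2)\Leftrightarrow(3)$, and both deduce $(1)\Leftrightarrow(2)$ from \cref{lem:fflemma} together with atomic generation of $\PShV(\cC)$ and internal left adjointness of $\PShV(F)$. You are slightly more explicit in spelling out the identification of the internal hom map with the map on graphs via \cref{cor:PShVgraph}, but the argument is the same.
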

	\begin{proof}
		$(2) \Leftrightarrow (3)$ is \cref{obs:coCartMModVDesc}, and $(2) \Rightarrow (1)$ follows from the first part of \cref{lem:fflemma}. Since $\PShV(\cC)$ is atomically generated by the image of the Yoneda embedding $\ob \cC \to \PShV(\cC)$ and $\PShV(F)$ is internally left adjoint  by \cref{prop:MModVequiv}, $(1) \Rightarrow (2)$ follows from the second part of \cref{lem:fflemma}.
			\end{proof}

%

	\begin{defin}\label{def:surjective}
		A functor of valent $\mathcal{V}$-categories $F: \cC \to \cD$ is called \emph{surjective on objects} if the underlying map of spaces $\ob \cC \to \ob \cD$ is surjective on connected components. 
	\end{defin}

	\begin{lemma}\label{obs:FFSisequivonIm}\label{lem:equonPShV} If $F: \cC \to \cD$ in $\vcat(\cV)$ is fully faithful and surjective on objects, then $\PShV(\cC) \to \PShV(\cD)$ is an equivalence which restricts to an equivalence between the full images $\operatorname{Im}(\ob \cC \to \PSh(\cV)) \to \operatorname{Im}(\ob \cD \to \PShV(\cD))$.
	\end{lemma}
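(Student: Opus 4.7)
The plan is to combine the characterization of fully faithful morphisms in $\vcat(\cV)$ from \cref{prop:charff} with the factorization system from \cref{lem:prfact}. By \cref{prop:charff}, the fully faithfulness of $F$ is equivalent to $\PShV(F): \PShV(\cC) \to \PShV(\cD)$ being fully faithful. It thus remains to show that $\PShV(F)$ is colimit-dominant, since a morphism in $\PrV$ which is both fully faithful and colimit-dominant is an equivalence (by uniqueness of factorization, any such map sits in the middle of two possible factorizations of itself, forcing it to be an iso).

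For colimit-dominance, I would argue as follows. The Yoneda embedding is natural in $F$, so the square
\[\begin{tikzcd}
\ob \cC \arrow[r, "\yoV_\cC"] \arrow[d, "F"'] & \PShV(\cC) \arrow[d, "\PShV(F)"] \\
\ob \cD \arrow[r, "\yoV_\cD"'] & \PShV(\cD)
\end{tikzcd}\]
commutes. Thus the full image of $\PShV(F)$ contains the full image of $\yoV_\cD \circ F: \ob \cC \to \PShV(\cD)$. Since $F$ is surjective on objects, $\ob \cC \to \ob \cD$ is surjective on $\pi_0$, so the image of $\yoV_\cD \circ F$ contains a representative from every connected component of $\operatorname{Im}(\yoV_\cD)$; as full subcategories are closed under isomorphism, this yields the set-theoretic containment $\operatorname{Im}(\yoV_\cD) \subseteq \operatorname{Im}(\PShV(F))$. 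By \cref{cor:PShVtg} the image of $\yoV_\cD$ generates $\PShV(\cD)$ under colimits and $\cV$-tensoring, and therefore so does the image of $\PShV(F)$, proving it is colimit-dominant (using \cref{cor:colimit-dominant}).

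For the restriction claim, the equivalence $\PShV(F)$ sends $\operatorname{Im}(\yoV_\cC)$ into $\operatorname{Im}(\yoV_\cD \circ F) = \operatorname{Im}(\yoV_\cD)$ (the latter equality again using surjectivity on objects and closedness of full subcategories under isomorphism). Being a restriction of an equivalence to a full subcategory whose image is identified, this is itself an equivalence.

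The main (minor) obstacle is ensuring that "image" is interpreted correctly as full image (closed under isomorphism), so that surjectivity on $\pi_0$ of the underlying map of spaces genuinely suffices to produce set-theoretic containment of full images after composing with the Yoneda embedding. All remaining steps are immediate applications of already-established machinery.
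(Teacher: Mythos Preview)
Your proof is correct and follows essentially the same approach as the paper: both use \cref{prop:charff} to get full faithfulness of $\PShV(F)$, then use surjectivity on objects to see that its image contains the generating set $\operatorname{Im}(\yoV_\cD)$. The only cosmetic difference is that the paper first establishes the equivalence on full images and then deduces the global equivalence (since $\PShV(\cC)$ is a full subcategory closed under colimits and tensoring that contains the generators), whereas you invoke the colimit-dominant/fully faithful factorization system to conclude the global equivalence first and then read off the restriction.
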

	\begin{proof}
	The functor $\PShV(\cC) \to \PShV(\cD)$ is fully faithful by \cref{prop:charff}, restricting to a fully faithful functor $\operatorname{Im}(\ob\cC) \to \operatorname{Im}(\ob\cD)$. By assumption, this restricted functor is also surjective and hence an equivalence. Thus, $\PShV(\cC)$ is a full subcategory of $\PShV(\cD)$ which is closed under colimits and $\cV$-tensoring and which contains the atomically generating full image of $\ob \cD$. Hence, it is all of $\PShV(\cD)$. 
	\end{proof}


	\begin{lemma}
	\label{prop:changeofenrpres}
			If $f: \cV \to \cW$ is a morphism in $\Alg(\PrL)$, the change-of-enrichment functor $f_! : \vcatV \to \vcatW$ preserves the classes of fully faithful functors and of surjective-on-objects functors. 
	\end{lemma}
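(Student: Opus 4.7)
The plan is to reduce both assertions to properties of the functor $f_!$ that have already been established, namely that it is a map of coCartesian fibrations over $\Alg(\PrL)$ whose fiberwise components commute with the projection $\ob: \vcatV \to \Spaces$, and that it is a map of Cartesian fibrations over $\Spaces$.

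For the surjective-on-objects part, note that by construction (see \cref{ex:basechangerelative} and \cref{cor:coerelativeadjoint}) the functor $f_!: \vcatV \to \vcatW$ lies over $\Spaces$ via $\ob$; equivalently, under the equivalence $\vcatV \simeq \MModV$ from \cref{thm:functorialcomparison}, $f_!$ sends a marked module $(X \to \cM)$ to $(X \to \cM \otimes_\cV \cW)$ with the same underlying space $X$. Hence if $F:\cC \to \cD$ induces a surjection $\pi_0 \ob \cC \twoheadrightarrow \pi_0 \ob \cD$, then the identical map of spaces $\ob f_!\cC \to \ob f_!\cD$ is surjective on $\pi_0$, so $f_! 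F$ is surjective on objects.

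For the fully faithful part, by \cref{prop:charff} a morphism $F: \cC \to \cD$ in $\vcatV$ is fully faithful if and only if it is a Cartesian morphism with respect to the Cartesian fibration $\ob: \vcatV \to \Spaces$, and analogously for $\vcatW$. By \cref{prop:changeofenrprespre}, the functor $f_!: \MModV \to \MModW$ is a map of Cartesian fibrations over $\Spaces$, hence preserves Cartesian morphisms. Transporting back along the equivalence $\vcatV \simeq \MModV$ of \cref{thm:functorialcomparison} (which identifies Cartesian morphisms on both sides, as it is an equivalence over $\Spaces$), it follows that $f_! F$ is Cartesian over $\Spaces$ in $\vcatW$, i.e.\ fully faithful.

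Both parts are therefore essentially formal consequences of the functoriality and two-sided-fibration structure developed earlier. There is no genuine obstacle: the key input is the identification of fully faithful morphisms with $\Spaces$-Cartesian morphisms (\cref{prop:charff}) together with the fact, already recorded in \cref{prop:changeofenrprespre}, that $f_!$ preserves such Cartesian morphisms.
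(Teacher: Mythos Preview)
Your proof is correct and follows essentially the same approach as the paper: for surjective-on-objects you observe that $f_!$ does not change the underlying space of objects, and for fully faithful you combine \cref{prop:charff} (identifying fully faithful morphisms with $\Spaces$-Cartesian ones) with \cref{prop:changeofenrprespre} (asserting that $f_!$ is a map of Cartesian fibrations over $\Spaces$). The paper's proof is the one-line version of exactly this argument.
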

	\begin{proof}
	Since $f_!$ does not change spaces of objects, preservation of surjective-on-objects functors is clear. For fully faithful functors combine \cref{prop:changeofenrprespre} with \cref{prop:charff}.
	\end{proof}

	\section{Univalence and flagged categories}
	\label{sec:univalence}

	In this section, we re-develop and re-prove various concepts and statements from \cite{haugseng} surrounding univalent (aka Rezk-complete) enriched categories from the perspective of marked modules, both to demonstrate the usefulness of this language  and for future use.

	\subsection{Univalence}
	
	For a valent $\cV$-enriched category $\cC$, the Yoneda embedding $\yoV_{\cC}: \ob \cC \to \PShV(\cC)$ is not a subcategory inclusion, i.e.\ there can be isomorphisms in $\PShV(\cC)$ between the image of $\yoV_{\cC}$ that are not present in the space $\ob \cC$.

	A valent $\cV$-enriched category $\cC$ has two notions of `isomorphism'; paths in the space $\ob\cC$, or isomorphisms in $\PShV(\cC)$ between representable presheaves. We now define univalent enriched categories, where these two notions are forced to coincide.
	
	\begin{defin}
		\label{def:univalence}
		Let  $\cV \in \Alg(\PrL)$.  A valent $\cV$-enriched category $\cC \in \vcatV$ is called \emph{univalent} if the map $\yoV_{\cC} : \ob \cC\to \PShV(\cC)$ is a subcategory inclusion (i.e.\ a monomorphism in $\widehat{\cat}$ by \kerodon{04W5}). Explicitly, this means that $\yoV_{\cC}$ exhibits $\ob \cC$ as the maximal subgroupoid $\operatorname{Im}( \ob \cC \to \PShV(\cC))^\simeq$ of the full image of $\ob \cC$ in $\PShV(\cC)$. We write  $\catV \subseteq \vcatV$ and $\Enr \subseteq \vEnr$ for the full subcategories on the univalent  enriched categories.  
	\end{defin}
	In \cref{cor:GHcomparison} below, we will show that this coincides with the `complete $\cV$-enriched categories' of \cite{haugseng}.

	\begin{obs}Our equivalence $\vcatV \simeq \MModV$ therefore exhibits $\catV$ as the full subcategory of $\Arr(\widehat{\cat})\times_{\widehat{\cat}} \PrV$ on those functors $\yoV: X \to \cM$ for which $X$ is a space, and for which $\yoV$ atomically generates and is a subcategory inclusion.
	\end{obs}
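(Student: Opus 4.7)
The plan is essentially to unwind the definitions and to combine them with the functorial equivalence already established. By \cref{thm:charessim} (or the functorial version \cref{thm:functorialcomparison}), the composite $\vcatV \to \MModV \subseteq \Arr(\widehat{\cat}) \times_{\widehat{\cat}} \PrV$ sends $\cC \mapsto (\yoV_\cC : \ob \cC \to \PShV(\cC))$ and is fully faithful with image exactly the marked $\cV$-modules, i.e.\ those functors $X \to \cM$ from a small space $X$ whose image lies in the $\cV$-atomic objects and generates $\cM$ under colimits and $\cV$-tensoring.

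Given this, I would simply observe that \cref{def:univalence} demands precisely that $\yoV_\cC$ be a subcategory inclusion, a condition imposed on the morphism $X \to \cM$ viewed inside $\Arr(\widehat{\cat}) \times_{\widehat{\cat}} \PrV$. Since the equivalence $\vcatV \simeq \MModV$ sends $\cC$ to its Yoneda marking $\yoV_\cC$, univalence of $\cC$ corresponds, under this equivalence, to the additional requirement that the underlying arrow of the corresponding marked $\cV$-module be a subcategory inclusion; and the atomic generation condition is built into the definition of $\MModV$.

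There is no real obstacle here: this is a direct transcription of \cref{def:univalence} through the equivalence of \cref{thm:charessim}. The only minor thing to be careful about is to note that being a subcategory inclusion is an intrinsic property of a morphism in $\widehat{\cat}$, so it is invariant under the equivalence. Thus the full subcategory $\catV \subseteq \vcatV$ corresponds, under $\vcatV \simeq \MModV$, to the full subcategory of $\MModV \subseteq \Arr(\widehat{\cat}) \times_{\widehat{\cat}} \PrV$ on those $(X \to \cM)$ such that $X \to \cM$ is additionally a subcategory inclusion, which is the claim.
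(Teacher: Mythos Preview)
Your proposal is correct and matches the paper's approach: this observation is stated in the paper without proof, as it follows immediately from \cref{def:univalence} together with the equivalence $\vcatV \simeq \MModV$ of \cref{thm:charessim} (or \cref{thm:functorialcomparison}). Your unwinding of the definitions is exactly the intended reading.
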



	\begin{prop}
		\label{lem:PShValmostff}
		For $\cC \in \vcatV$ and univalent $\cD \in \catV$, the enriched presheaf functor $\PShV: \vcatV \to \PrV$ from \cref{obs:functorialPSh} induces a monomorphism on mapping spaces
		\[ \Map_{\vcatV}(\cC, \cD) \hookrightarrow \Map_{\PrV}(\PShV(\cC), \PShV(\cD)) \]
		exhibiting the former as the full subspace on those functors $\PShV(\cC) \to \PShV(\cD)$ which send representable presheaves (i.e.\ objects in the image of $\yoV_{\cC}: \ob \cC \to \PShV(\cC)$) to representable presheaves.
	\end{prop}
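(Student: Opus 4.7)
The plan is to identify morphisms in $\vcatV$ with morphisms between marked modules via \cref{thm:charessim} and then exploit that, for univalent $\cD$, the Yoneda functor $\yoV_\cD: \ob \cD \hookrightarrow \PShV(\cD)$ is by definition a monomorphism in $\widehat{\cat}$.

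First I would use the equivalence $\vcatV \simeq \MModV$ and the pullback description of mapping spaces in an arrow category to write
\[
\Map_{\vcatV}(\cC, \cD) \simeq \Map_{\Spaces}(\ob \cC, \ob \cD) \times_{\Map_{\widehat{\cat}}(\ob \cC, \PShV(\cD))} \Map_{\PrV}(\PShV(\cC), \PShV(\cD)),
\]
where the left leg is precomposition with $\yoV_\cC$ and the top leg is postcomposition with $\yoV_\cD$. The presheaf functor $\Map_{\vcatV}(\cC, \cD) \to \Map_{\PrV}(\PShV(\cC), \PShV(\cD))$ is then the projection to the right factor.

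Next I would observe that, since $\cD$ is univalent, $\yoV_\cD$ is a monomorphism in $\widehat{\cat}$ and hence postcomposition $\Map_{\widehat{\cat}}(\ob\cC, \ob\cD) \hookrightarrow \Map_{\widehat{\cat}}(\ob\cC, \PShV(\cD))$ is also a monomorphism (using that $\ob \cC$ is a space so the source-side mapping objects reduce to mapping spaces of categories). Its image consists of those functors $\ob \cC \to \PShV(\cD)$ that factor through the subcategory $\ob\cD \subseteq \PShV(\cD)$, equivalently that land in the representable presheaves. Since monomorphisms are stable under pullback, pulling this monomorphism back along the right leg gives the desired monomorphism $\Map_{\vcatV}(\cC, \cD) \hookrightarrow \Map_{\PrV}(\PShV(\cC), \PShV(\cD))$.

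Finally, to identify the image I would unwind the pullback: an $F \in \Map_{\PrV}(\PShV(\cC), \PShV(\cD))$ lies in the image iff the composite $\ob \cC \stackrel{\yoV_\cC}{\to} \PShV(\cC) \stackrel{F}{\to} \PShV(\cD)$ factors through $\ob \cD \hookrightarrow \PShV(\cD)$, i.e.\ sends each representable presheaf $\yoV_\cC(x)$ to a representable presheaf $\yoV_\cD(fx)$. The only subtle step — and I do not expect it to be hard — is checking that the mixed mapping object $\Map_{\widehat{\cat}}(\ob \cC, \PShV(\cD))$ really is the correct ``common leg'' of the pullback coming from the definition $\MModV = \widehat{\cat}_{(\cdot)/} \times_{\widehat{\cat}} \PrV$; this follows from the standard formula for mapping spaces in comma/slice constructions.
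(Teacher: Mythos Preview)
Your proposal is correct and follows essentially the same argument as the paper: identify $\vcatV \simeq \MModV$, write the mapping space as the pullback of $\Map_{\widehat{\cat}}(\ob\cC,\ob\cD) \to \Map_{\widehat{\cat}}(\ob\cC,\PShV(\cD)) \leftarrow \Map_{\PrV}(\PShV(\cC),\PShV(\cD))$, and use that univalence of $\cD$ makes the first leg a monomorphism. One small point: the equivalence $\vcatV \simeq \MModV$ you need here is the functorial one over varying spaces of objects (\cref{thm:functorialcomparison}), not the fiberwise \cref{thm:charessim}, since $\cC$ and $\cD$ may have different underlying spaces.
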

	\begin{proof}
	Identifying $\vcatV$ via \cref{thm:functorialcomparison} with the full subcategory $\MModV$ of $\Arr(\widehat{\cat}) \times_{\widehat{\cat}} \PrV$, identifies $\Map_{\vcat(\cV)}(\cC, \cD)$ with the pullback
	\[\begin{tikzcd}
	\Map_{\vcatV}(\cC, \cD) \arrow[r] \arrow[d] \arrow[dr, phantom, "\scalebox{1}{$\lrcorner$}" , very near start, color=black] & \Map_{\widehat{\cat}}(\ob \cC, \ob \cD)\arrow[d]
	\\
	\Map_{\PrV}(\PShV(\cC), \PShV(\cD)) \arrow[r] & \Map_{\widehat{\cat}}(\ob \cC, \PShV(\cD))
	\end{tikzcd}\;.
	\]
	By assumption, $\ob\cD \to \PShV(\cD)$ is a monomorphism in $\widehat{\cat}$ and hence the right vertical map is a monomorphism in $\Spaces$. Thus, so is the left vertical map.
	\end{proof}

		\begin{warning} In other words, \cref{lem:PShValmostff} shows that the composite functor $\catV \subseteq \vcatV \to \PrV$ is \emph{faithful}, a fact which is not true for  $\vcatV \to \PrV$. We  warn the reader that $\catV \to \PrV$ is \emph{not} a monomorphism in $\widehat{\cat}$, i.e.\ a subcategory inclusion: Indeed, an equivalence $\PShV(\cC) \to \PShV(\cD)$ amounts to a Morita equivalence between the enriched categories which does not necessarily come from an equivalence of enriched categories $\cC \to \cD$. 
	\end{warning}

	\begin{rem}
		\cref{lem:PShValmostff} implies that if $\cD \in \catV$ is univalent, then it is a \emph{property} of a morphism $\PShV(\cC) \to \PShV(\cD)$ in $\PrV$ to come from an enriched functor $\cC \to \cD$ (instead of extra structure which needs to be specified in the form of a map $\ob \cC \to \ob \cD$). 
			
	A univalent marked module may therefore equivalently be described as an $\cM \in \PrV$ equipped with a subset $A \subseteq \pi_0 \cM^{\simeq}$ (of `representable presheaves') which atomically generates, and a morphism of univalent marked modules as a morphism $F:\cM \to \cN$ in $\PrV$ with the property that it preserves the chosen subsets. 
	\end{rem}

%

\begin{notat} For a functor $F:C\to D$ between (non-enriched) categories, we let $\mathrm{Im}(F)$ denote the full image.
\end{notat}

\begin{defin}The \emph{univalization} $u\cC \in \catV$ of a valent $\cV$-enriched category $\cC\in \vcatV$ is the enriched category associated via~\cref{thm:charessim} to the marked module \[\operatorname{Im}\left(\vphantom{\frac{a}{b}}\yoV_{\cC}: \ob \cC \to \PShV(\cC)\right)^{\simeq} \to \PShV(\cC).\] 
\end{defin}

\begin{prop}		\label{prop:univalization}
		Univalization defines a left adjoint to the full subcategory inclusion $\catV \hookrightarrow \vcatV$. 
\end{prop}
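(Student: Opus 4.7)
The plan is to translate the statement through the equivalence $\vcat(\cV) \simeq \MModV$ of \cref{thm:charessim}: univalence of $\cC$ corresponds to the marking $\yoV_\cC: \ob \cC \to \PShV(\cC)$ being a subcategory inclusion (a monomorphism in $\widehat{\cat}$), and the candidate univalization $u\cC$ is the marked module $(\operatorname{Im}(\ob \cC \to \PShV(\cC))^\simeq \to \PShV(\cC))$, which preserves the underlying $\cV$-module and merely replaces the marking by its image closed under isomorphism. So it suffices to show that the full subcategory of $\MModV$ on the univalent marked modules is reflective, with reflection given by this formula.

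For each $(X\to \cM)\in \MModV$, since $X$ is a space, the map $X \to \cM$ factors through $\cM^\simeq \hookrightarrow \cM$, and the (epi, mono) factorization in $\Spaces$ yields $X \twoheadrightarrow Y \hookrightarrow \cM^\simeq$ with $Y = \operatorname{Im}(X \to \cM)^\simeq$. The composite $Y \hookrightarrow \cM$ is then a subcategory inclusion; atomic generation is preserved since its full image coincides with that of $X \to \cM$. The natural factorization thus defines a candidate unit $\eta = (X \twoheadrightarrow Y,\, \id_\cM) : (X \to \cM) \to (Y \to \cM)$ with univalent target.

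To verify universality, let $\varphi = (f, F) : (X \to \cM) \to (Y' \to \cN)$ be a morphism into a univalent marked module. For each $y \in Y$ there is some $x \in X$ with $y \simeq x$ in $\cM$, hence $Fy \simeq f(x) \in Y'$, so the composite $Y \hookrightarrow \cM \xrightarrow{F} \cN$ factors through the subspace $\operatorname{Im}(Y' \hookrightarrow \cN)^\simeq = Y'$. Since $Y' \hookrightarrow \cN$ is a monomorphism in $\widehat{\cat}$, this factorization is unique, yielding a unique $g : Y \to Y'$ with $g \circ (X \twoheadrightarrow Y) = f$ (obtained by postcomposing with $Y' \hookrightarrow \cN$ and using monicity). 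Hence $\eta$ is a universal arrow into the full subcategory of univalent marked modules, furnishing the left adjoint pointwise. I do not foresee any significant obstacle: the entire argument is driven by the (epi, mono) factorization in $\Spaces$ transported to $\MModV$ via \cref{thm:charessim}, and since $\cM$ does not change, atomic generation and the functoriality of $u$ follow essentially automatically.
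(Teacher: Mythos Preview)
Your proof is correct and follows essentially the same approach as the paper: both reduce univalization to the (epi, mono) factorization of the map $X \to \cM^{\simeq}$ in $\widehat{\Spaces}$, keeping $\cM$ fixed. The only difference is packaging: the paper invokes \HTT{Lem.}{5.2.8.19} to obtain the reflection $\Arr(\widehat{\Spaces}) \to \Arr^{\mathrm{mono}}(\widehat{\Spaces})$ from the factorization system and then pulls back along $\PrV \to \widehat{\Spaces}$, whereas you verify the universal property of the unit directly via orthogonality of epis and monos.
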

\begin{proof}
Using the functor $(-)^{\simeq}: \PrV \to \widehat{\cat} \to \widehat{\Spaces}$, we may identify $\MModV$ and hence $\vcatV$ with the full subspace of $\Arr(\widehat{\Spaces})\times_{\widehat{\Spaces}} \PrV$ on those $\cM \in \PrV$ and maps of spaces $X\to \cM^{\simeq}$ whose image atomically generates $\cM$. By definition, $\catV$ then identifies with the pullback of this full subspace to $\Arr^{\mathrm{mono}}(\widehat{\Spaces}) \times_{\widehat{\Spaces}} \PrV$. The (-1)-connected/(-1)-truncated factorization system on $\widehat{\Spaces}$ by \HTT{Lem.}{5.2.8.19} induces an adjunction  $\Arr^{\mathrm{mono}}(\widehat{\Spaces}) \leftrightarrows\Arr(\widehat{\Spaces}) $ whose left adjoint sends a map of spaces $f:X\to Y$ to the full subspace inclusion $\mathrm{Im}(f) \to Y$. 
This in turn induces an adjunction $\Arr^{\mathrm{mono}}(\widehat{\Spaces})\times_{\widehat{\Spaces}} \PrV \leftrightarrows\Arr(\widehat{\Spaces})\times_{\widehat{\Spaces}} \PrV$ which restricts to an adjunction between the relevant full subspaces, given by our claimed expression for $u\cC$.
\end{proof}
\begin{obs}\label{obs:unituni}
Unwinding \cref{prop:univalization}, the unit $\cC \to u\cC$ of the adjunction $\catV \rightleftarrows \vcatV$ corresponds to the map of marked modules 
\[\begin{tikzcd}
\ob \cC \arrow[r] \arrow[d]& \operatorname{Im}(\yoV_{\cC}) \arrow[d]\\
\PShV(\cC) \arrow[r, equal] & \PShV(\cC)
\end{tikzcd}\,.
\]
\end{obs}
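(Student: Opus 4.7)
The plan is to trace the unit of the univalization adjunction back through the identifications used in the proof of \cref{prop:univalization}. Recall the $(-1)$-connected/$(-1)$-truncated factorization system on $\widehat{\Spaces}$: for a map $f: X \to Y$, its factorization is $X \to \mathrm{Im}(f) \hookrightarrow Y$. Under the associated adjunction $L: \Arr(\widehat{\Spaces}) \rightleftarrows \Arr^{\mathrm{mono}}(\widehat{\Spaces})$, the unit $f \To \iota L(f)$ in $\Arr(\widehat{\Spaces})$ is the square
\[
\begin{tikzcd}
X \arrow[r] \arrow[d, "f"'] & \mathrm{Im}(f) \arrow[d, hook]\\
Y \arrow[r, equal] & Y
\end{tikzcd}
\]
obtained by reading off the standard image factorization.

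Next I would pull this adjunction back along the target projection to $\Arr(\widehat{\Spaces}) \times_{\widehat{\Spaces}} \PrV$: since the pullback is along $\mathrm{tgt}$, the unit at an object $(f: X \to \cM^{\simeq}, \cM)$ is the same square in the source-components, with the target component being $\id_{\cM}$. Under the identification from \cref{thm:charessim}, which expresses $\vcatV \simeq \MModV$ as a full subspace of $\Arr(\widehat{\Spaces}) \times_{\widehat{\Spaces}} \PrV$ via $\cC \mapsto (\yoV_{\cC}: \ob\cC \to \PShV(\cC)^{\simeq}, \PShV(\cC))$, the unit at $\cC$ therefore has underlying square
\[
\begin{tikzcd}
\ob\cC \arrow[r] \arrow[d] & \mathrm{Im}(\yoV_\cC) \arrow[d, hook]\\
\PShV(\cC) \arrow[r, equal] & \PShV(\cC)
\end{tikzcd}
\]
which is exactly the claimed map of marked modules.

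The only point to check is that this square really represents the unit of the restricted adjunction $\catV \rightleftarrows \vcatV$, rather than of the ambient one on arrow categories. But this follows formally: both full subspace inclusions ($\MModV \subseteq \Arr(\widehat{\Spaces}) \times_{\widehat{\Spaces}} \PrV$ and $\catV \subseteq \MModV$) are full, and the left adjoint preserves the atomic-generation condition (the target $\PShV(\cC)$ is unchanged) and lands in $\catV$ by construction, since $\mathrm{Im}(\yoV_\cC) \hookrightarrow \PShV(\cC)^{\simeq}$ is a monomorphism. Hence no further verification is needed beyond these routine identifications.
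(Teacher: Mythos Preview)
Your proposal is correct and follows exactly the approach the paper intends: the observation has no separate proof in the paper beyond the phrase ``Unwinding \cref{prop:univalization}'', and your argument is precisely that unwinding---reading off the unit of the factorization-system adjunction on $\Arr(\widehat{\Spaces})$, pulling back along the target projection, and noting that the restriction to the full subcategories $\MModV$ and $\catV$ goes through because the target $\PShV(\cC)$ is unchanged.
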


We now show that univalence in the sense of \cref{def:univalence} agrees with the notion of  `completeness' from \cite[Def.~5.2.2]{haugseng}. For this, we establish the following variant of \cite[Cor.\ 5.6.4]{haugseng}.

	\begin{theorem}
		\label{thm:univalentFFS} 
		The reflective subcategory $\catV \subseteq \vcatV$ agrees with the localization $\vcatV[FFS^{-1}]$ at the class $FFS$ of enriched functors that are fully faithful and surjective on objects.
	\end{theorem}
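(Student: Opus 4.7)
The plan is to exploit the fact that $\catV \hookrightarrow \vcatV$ is a reflective subcategory with left adjoint given by univalization $u$, as established in \cref{prop:univalization}, and invoke the standard principle that a reflective localization $L \dashv i: D \hookrightarrow C$ agrees with the localization of $C$ at any class $W$ of morphisms that (i) is inverted by $L$ and (ii) contains every unit $\eta_c: c \to iLc$ (cf.~\HTT{Prop.}{5.5.4.15} together with a two-out-of-three argument: any functor inverting the units and $W$ automatically inverts every $L$-equivalence). It thus suffices to verify both conditions for $W = \operatorname{FFS}$.

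Condition (ii) is essentially built into the construction of univalization. By the explicit description in \cref{obs:unituni}, the unit $\cC \to u\cC$ corresponds to the morphism of marked modules whose underlying functor on presheaf categories is the identity on $\PShV(\cC)$ and whose component on spaces of objects is the surjection $\ob \cC \twoheadrightarrow \operatorname{Im}(\yoV_\cC)^{\simeq}$ onto the full image. The former yields full faithfulness of the unit via \cref{prop:charff}, while the latter is surjective on $\pi_0$ by construction; hence every unit lies in FFS.

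For condition (i), let $F: \cC \to \cD$ be fully faithful and surjective on objects. By \cref{obs:FFSisequivonIm}, the induced functor $\PShV(F): \PShV(\cC) \to \PShV(\cD)$ is an equivalence and restricts to an equivalence $\operatorname{Im}(\yoV_\cC)^{\simeq} \simeq \operatorname{Im}(\yoV_\cD)^{\simeq}$ on the full images of the spaces of objects. Under the description of univalization from \cref{prop:univalization}, $uF$ is therefore the morphism of marked modules whose components on both $\PShV$ and underlying spaces are equivalences, making $uF$ itself an equivalence in $\catV$. Combining (i) and (ii) with the general principle then identifies $\catV$ with $\vcatV[\operatorname{FFS}^{-1}]$.

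The main obstacle is just confirming the abstract reflection-theoretic criterion cleanly; the enriched-category content of the argument is fully packaged in the two ingredients above, both of which are direct consequences of the marked-module interpretation of univalence.
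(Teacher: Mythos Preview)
Your proof is correct and follows essentially the same approach as the paper: both verify that univalization inverts $\operatorname{FFS}$-morphisms (via \cref{obs:FFSisequivonIm}) and that each unit $\cC \to u\cC$ lies in $\operatorname{FFS}$ (via \cref{obs:unituni}), then invoke the standard criterion for a reflective localization to coincide with inverting a class of maps. The only difference is that you spell out the abstract criterion and the verification of the unit being in $\operatorname{FFS}$ in more detail than the paper does.
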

	\begin{proof} It suffices to verify that $u:\vcatV\to \catV$ sends $FFS$-morphisms to isomorphisms in $\catV$ and that the unit $\cC \to u\cC$ is in $FFS$.
	The former follows from \cref{obs:FFSisequivonIm} and the latter is immediate from \cref{obs:unituni}. \end{proof}

	

\begin{cor}\label{cor:GHcomparison}
The full subcategory $\catV\subseteq \vcatV$ coincides with the full subcategory on the `complete $\cV$-enriched categories' in the sense of \cite[Def.~5.2.2]{haugseng}.
\end{cor}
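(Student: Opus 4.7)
The plan is to invoke our \cref{thm:univalentFFS} in conjunction with an analogous characterization of completeness in the Gepner-Haugseng framework. By \cref{thm:univalentFFS}, the reflective inclusion $\catV \subseteq \vcatV$ presents $\catV$ as the localization $\vcatV[FFS^{-1}]$ at the class of fully faithful and surjective-on-objects functors. The goal is therefore to produce the matching statement on the haugseng side --- namely that the complete $\cV$-enriched categories of \cite[Def.~5.2.2]{haugseng} are exactly the local objects for the same class $FFS$ --- and then conclude by uniqueness of reflective localizations.

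First I would unpack haugseng's definition of completeness via their notion of underlying $\infty$-groupoid $\iota \cC$: a valent $\cV$-enriched category $\cC$ is complete exactly when the canonical map $\ob \cC \to \iota \cC$ is an equivalence of spaces. Using the functorial comparison \cref{thm:functorialcomparison}, the space $\iota \cC$ identifies with $\operatorname{Im}(\yoV_{\cC}: \ob \cC \to \PShV(\cC))^{\simeq}$, so the two notions of ``underlying groupoid'' agree. In particular, haugseng's completeness condition becomes exactly our condition that $\yoV_{\cC}: \ob\cC \to \PShV(\cC)$ be a monomorphism, i.e.\ our \cref{def:univalence}. This identification already suffices to conclude the statement; the localization picture below gives a second, model-independent justification.

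Second, I would verify that the class $FFS$ coincides with the class of morphisms haugseng inverts in passing from $\vcat(\cV)$ to complete enriched categories. Fully faithful agrees with their notion by our \cref{prop:charff}, which is exactly the condition used in \cite[Def.~5.3.1]{haugseng}. For essential surjectivity, both notions ask for $\pi_0$-surjectivity on underlying $\infty$-groupoids; in the valent setting this reduces, under the identification $\iota\cC \simeq \operatorname{Im}(\yoV_{\cC})^{\simeq}$, to surjectivity on $\pi_0 \ob\cC \to \pi_0 \ob\cD$ precisely when combined with full faithfulness (by \cref{obs:FFSisequivonIm} the image spaces are invariant under univalization, so surjectivity on $\pi_0 \iota$ of a fully faithful functor lifts to surjectivity on $\pi_0 \ob$ after univalization). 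Thus haugseng's ``essentially surjective'' morphisms become inverted by the univalization reflector of \cref{prop:univalization}.

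The only genuine obstacle is the bookkeeping involved in identifying haugseng's completion functor with our univalization functor; but once the two reflective subcategories of $\vcat(\cV)$ are shown to invert the same class of morphisms, they are canonically equivalent as full subcategories. The combination of \cref{thm:univalentFFS} with the two identifications above then yields the equality of full subcategories $\catV = \catV^{\text{GH}} \subseteq \vcat(\cV)$.
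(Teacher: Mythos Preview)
Your first approach asserts that Gepner--Haugseng's underlying $\infty$-groupoid $\iota\cC$ identifies with $\operatorname{Im}(\yoV_{\cC})^{\simeq}$ ``using the functorial comparison \cref{thm:functorialcomparison}''. But that theorem only identifies $\vcat(\cV)$ with marked modules; it says nothing directly about the specific construction $\iota$ from \cite{haugseng}, which is defined there either via a right adjoint to the inclusion of spaces or as a classifying space of equivalences. Establishing this identification is essentially the content of the corollary itself---indeed, the paper's \cref{rem:es} derives the marked-module description of essential surjectivity only \emph{after} \cref{cor:GHcomparison} is in hand. So this step is circular as written; you would need an independent argument comparing the two universal properties.

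Your second, localization-based approach has a related problem: the class $FFS$ is \emph{strictly smaller} than Gepner--Haugseng's class of fully faithful and essentially surjective functors (the paper flags this explicitly: ``but not vice versa!''), so one cannot argue that the two classes coincide. Your attempt to reduce essential surjectivity to surjectivity-on-objects via \cref{obs:FFSisequivonIm} does not go through: that lemma concerns images in presheaf categories under $FFS$ maps, not the converse direction you need.

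The paper's proof sidesteps both issues via a sandwich argument using two characterizations of $\catV^{GH}$ already proven in \cite{haugseng}: (i) as the localization at fully faithful essentially surjective functors, which gives $\catV^{GH} \subseteq \catV$ since $FFS$ is a subclass of what is being inverted; and (ii) as the localization at the single morphism $s^0: E^1 \to E^0$, whose underlying map of spaces is $*\sqcup * \to *$ and which is therefore in $FFS$, giving $\catV \subseteq \catV^{GH}$. No identification of $\iota\cC$ is required.
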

\begin{proof}
Let $\catV^{GH} \subseteq \vcatV$ denote the full subcategory of `complete $\cV$-enriched categories' in the sense of ~\cite[Def.~5.2.2]{haugseng}. In
\cite[Def.~5.3.3]{haugseng} defines a notion of `essentially surjective functor' between enriched categories and it is  shown in \cite[Cor.~5.6.4]{haugseng} that $\catV^{GH} $ is the localization of $\vcatV$ at the essentially surjective and fully faithful functors.  
Since any functor which is surjective on objects in the sense of \cref{def:surjective} is in particular essentially surjective (but not vice versa!), it follows that $\catV^{GH} \subseteq \catV$. 
On the other hand, \cite[Def.~5.1.6]{haugseng} defines a map of $\cV$-enriched categories $s^0 : E^1 \to E^0$ and \cite[Prop.~5.4.1]{haugseng} shows that $\catV^{GH}$ is also the localization of $\catV$ at the single morphism $s^0$. Since the underlying map of spaces of $s^0$ is by definition $*\sqcup *  \to *$, $s^0$ is in particular surjective on objects and fully faithful and  hence $\catV \subseteq \catV^{GH}$.
\end{proof}

\begin{rem}\label{rem:es}
Using~\cref{cor:GHcomparison}, we may express Gepner-Haugseng's notion of essential surjectivity in terms of marked modules as follows: A functor $\cC \to \cD$ of valent $\cV$-enriched categories is essentially surjective in the sense of \cite[Def.~5.3.3]{haugseng} if and only if the map of spaces \[\ob \cC \to \operatorname{Im}\left( \vphantom{\frac{a}{b}} \yoV_{\cD}: \ob \cD \to \PShV(\cD) \right)^{\simeq}\] is surjective on connected components. 

Indeed, inspecting the proof of \cite[Prop.~5.3.9]{haugseng}, it follows that if $F: \cC \to \cD$ and $G: \cD \to \cE$ are enriched functors and $G$ is fully faithful and essentially surjective, then $F$ is essentially surjective if and only if $G\circ F$ is. In particular, $\cC \to \cD$ is essentially surjective if and only if $\cC \to \cD \to u \cD$ is. Using \cite[Cor.~5.2.10]{haugseng}, a functor into a `complete' $\cV$-enriched category is essentially surjective if and only if it is surjective on objects. Since the localizations $\catV^{GH}\subseteq \vcatV$ and $\catV \subseteq \vcatV$ agree by \cref{cor:GHcomparison}, we therefore conclude that $\cC \to \cD$ is essentially surjective if and only if $\cC \to \cD\to u \cD$ is surjective on objects, which unpacks to the claimed characterization.\end{rem}

	\subsection{Flagged categories and valent enriched categories}

	\begin{defin}
		A \emph{flagged category} is a category $C$ equipped with a surjective functor $X \to C$ from a space $X$. Denoting by $\cat_{X/^{\mathrm{surj}}} \subseteq \cat_{X/}$ the full subcategory on the surjective functors, we define the \emph{category of flagged categories} 
		\[ \FCat := \smallint\nolimits^X \cat_{X/^{\mathrm{surj}}} = \Spaces \times_{\cat} \Arr^{\mathrm{surj}}(\cat) \subseteq \Arr(\cat) \] 
		as the full subcategory on those surjective arrows in $\cat$ that start at a space.
	\end{defin}
	
	Flagged categories were introduced in \cite{flagged} as a model-independent interpretation of non-complete ($n$-fold) Segal spaces. We now use marked modules to show that flagged categories agree with valent $\Spaces$-enriched categories. 
	In \cref{prop:flaggeduniv}, we will generalize this to express a valent $\cV$-enriched category as a flagged univalent $\cV$-enriched category. Compare \cite[Cor. 6.28]{heine}.

	\begin{constr}\label{con:associatedmarked}
	The \emph{associated marked $\Spaces$-module} of a flagged category $X\to C$ is the marked module $X \to C \to \PSh(C)$. This assembles into a functor $\FCat \to \MModS$ as induced by the pullback presentation of $\MModS$ from \cref{prop:MModVequiv}:		\[
			\begin{tikzcd}[sep=.4cm]
				&[+11pt] \MModS \arrow[rr] \arrow[dd] \arrow[dr, phantom, "\scalebox{1}{$\lrcorner$}" , very near start, color=black] & &[-13pt] \Arr^{\mathrm{iL,cdom}}(\PrL) \arrow[dd, "\mathrm{src}" {pos=0.6}]\\
				\FCat \arrow[rr, crossing over] \arrow[dd] \arrow[ur, dashed] \arrow[dr, phantom, "\scalebox{1}{$\lrcorner$}" , very near start, color=black] &   & \Arr^{\mathrm{surj}}(\cat) \arrow[ur, outer sep=-2pt, "\PSh(-)"']  & \\
				& \Spaces \arrow[rr] &  & \PrL  \\
				\Spaces \arrow[ur, Rightarrow, no head]\arrow[rr] &   & \cat \arrow[from=uu, crossing over, "\mathrm{src}" {pos=0.6}] \arrow[ur, outer sep=-2pt, "\PSh(-)"'] &
			\end{tikzcd}
		\]
	\end{constr}
				\begin{defin}
		\label{def:underlyingflagged}
		Let $\cV \in \Alg(\PrL)$. The \emph{underlying flagged category} of a $\cC \in \vcatV$ is the surjective functor $\ob \cC \to \operatorname{Im}\left(\yoV_{\cC}: \ob \cC \to \PShV(\cC)\right)$ into the full image of $\yoV_{\cC}$. 
	\end{defin}

	%

	\begin{prop}
		\label{prop:freeunderlyingadj}
		Let $\cV \in \Alg(\PrL)$ and let $\iota: \Spaces \to \cV$ in $\Alg(\PrL)$ denote its unit. The composite \[ \FCat \to \MModS \simeq \vcatS \overset{\iota_!}{\to} \vcatV\]  admits a right adjoint, which sends a valent $\cV$-enriched category $\cC$ to its underlying flagged category $\ob \cC \to \operatorname{Im}\left(\yoV_{\cC}: \ob \cC \to \PShV(\cC)\right)$ .	\end{prop}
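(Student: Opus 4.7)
The plan is to exhibit the right adjoint explicitly and verify the adjunction via a direct computation of mapping spaces. By \cref{con:associatedmarked}, combined with the description of change-of-enrichment in \cref{ex:basechangerelative}, the composite in the statement sends a flagged category $(X \to C) \in \FCat$ to the marked $\cV$-module $(X \to \PSh(C) \otimes \cV) \in \MModV$. The candidate right adjoint $R: \MModV \to \FCat$ I will check sends $(Y \to \cN)$ to $(Y \to \operatorname{Im}(Y \to \cN))$, where $\operatorname{Im}$ denotes the full image in $\cat$. The target is a valid flagged category since $\operatorname{Im}(Y \to \cN)$ is small (its objects are indexed by the small space $Y$, and its mapping spaces are subspaces of the locally small $\cN$), and $Y \to \operatorname{Im}(Y \to \cN)$ is essentially surjective by construction. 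When specialized to (the marked module corresponding to) a $\cC \in \vcatV$, this recovers the underlying flagged category of \cref{def:underlyingflagged}.

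To verify the adjunction, I compute the relevant mapping spaces. Following \cref{defin:enrfunctor}, a morphism in $\MModV$ from $(X \to \PSh(C) \otimes \cV)$ to $(Y \to \cN)$ is the data of a map $f: X \to Y$ of spaces together with a $\cV$-linear colimit-preserving functor $F: \PSh(C) \otimes \cV \to \cN$ fitting in an obvious commuting square. By the universal property of $\PSh(C) \otimes \cV$ from \cref{lem:freetensored}, such an $F$ corresponds uniquely to a functor $g: C \to \cN$, and the commutativity of the square translates to the commutativity of
\[
\begin{tikzcd}[sep=small]
X \ar[r, "f"] \ar[d] & Y \ar[d, "p"] \\
C \ar[r, "g"] & \cN
\end{tikzcd}
\]
in $\widehat{\cat}$.

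The key step is then to show any such $g: C \to \cN$ factors uniquely through $\operatorname{Im}(p) \hookrightarrow \cN$. Uniqueness is automatic since this inclusion is fully faithful, hence a monomorphism in $\widehat{\cat}$. For existence, since $\operatorname{Im}(p)$ is closed under isomorphism, it suffices to show that every object of $C$ is sent into $\operatorname{Im}(p)$: any $c \in C$ is isomorphic to the image of some $x \in X$ by essential surjectivity of $X \to C$, and then $g(c)$ is isomorphic to $p(f(x)) \in \operatorname{Im}(p)$ by commutativity of the square. This factorization identifies the above mapping space with $\Map_{\FCat}((X \to C), (Y \to \operatorname{Im}(p)))$, naturally in both arguments, establishing the adjunction.

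The main task will be bookkeeping: matching the various presentations of morphisms in $\MModV$ (as pullbacks per \cref{prop:MModVequiv}) with the commuting square description above, and tracking the universal property of $\PSh(C) \otimes \cV$ across these identifications. One could alternatively first observe that the same factorization argument supplies a right adjoint $\operatorname{Im}: \Arr(\cat) \to \Arr^{\mathrm{surj}}(\cat)$ to the inclusion of arrow categories, and then combine this with the adjunction $\PSh(-) \otimes \cV \dashv \mathrm{forget}$ between $\widehat{\cat}$ and $\PrV$ (pulled back to the arrow subcategories of interest) to obtain the claimed adjunction more abstractly.
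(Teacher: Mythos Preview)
Your proposal is correct and follows essentially the same route as the paper. Both arguments identify the composite functor on objects as $(X\to C)\mapsto(X\to\PSh(C)\otimes\cV)$, invoke \cref{lem:freetensored} to translate a $\cV$-linear left adjoint $\PSh(C)\otimes\cV\to\cN$ into a plain functor $C\to\cN$, and then use that $X\to C$ is surjective while $\operatorname{Im}(p)\hookrightarrow\cN$ is fully faithful to obtain the unique factorization through the full image; the paper packages this last step as a pullback square of mapping spaces via the orthogonality of surjective and fully faithful functors, whereas you spell out the factorization directly, but the content is identical.
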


	\begin{proof}
	Using  \cref{thm:charessim}, we work at the level of marked $\cV$-modules. 
	From \cref{ex:basechangerelative} and \cref{prop:changeofenragree} recall that under the equivalence \cref{thm:charessim}, the change-of-enrichment functor $f_!: \vcatV \to \vcatW$ for a $f:\cV \to \cW$ in $\Alg(\PrL)$ sends a marked $\cV$-module $X\to \cM$ to $X \to \cM \otimes_{\cV}\cW$.
	Let $X\to C$ be a flagged category with image under $\FCat \to \MModV$ given by $(X \to \PSh(C) \otimes \cV)\in \MModV$ and  let $\yoV: Y \to \cM$ be a marked $\cV$-module. Consider the diagram	
	\[
	\begin{tikzcd}
	\Map_{\FCat}\left(\oArr{X}{C}, \oArr{Y}{\operatorname{Im}(\yo)}\right) \arrow[r] \arrow[d] & \Map_{\widehat{\cat}}(C, \operatorname{Im}(\yo)) \arrow[r] \arrow[d] & \Map_{\widehat{\cat}}(C, \cM) \arrow[d] \\
	\Map_{\Spaces}(X, Y) \arrow[r] & \Map_{\widehat{\cat}}(X, \operatorname{Im}(\yo)) \arrow[r] & \Map_{\widehat{\cat}}(X, \cM) 
	\end{tikzcd}
	\]
	The left square is a pullback by definition of the mapping spaces of $\FCat$. Orthogonality of surjective and fully faithful functors of (large) categories shows that the right square is a pullback since $X\to C$ is surjective and $\operatorname{Im}(\yoV) \to \cM$ is fully faithful. Thus, the total square is a pullback and hence
	\[\Map_{\FCat} \left( \oArr{X}{C}, \oArr{Y}{\operatorname{Im}(\yoV)} \right) \simeq \Map_{\Spaces}(X, Y) \times_{\Map_{\widehat{\cat}}(X, \cM)} \Map_{\widehat{\cat}}(C, \cM) \]\[\overset{\text{Lem.~\ref{lem:freetensored}}}{\simeq}  \Map_{\Spaces}(X, Y) \times_{\Map_{\widehat{\cat}}(X, \cM)} \Map_{\PrV}(\PSh(C) \otimes \cV, \cM)  =: \Map_{\MModV} \left( \oArr{X}{\PSh(C) \otimes \cV}, \oArr{Y}{\cM} \right). \qedhere \]
	\end{proof}

Inspecting the unit and counit of this adjunction, we conclude that $\FCat \to \MModS \simeq \vcatS$ is an equivalence:

	\begin{theorem}\label{thm:vcatS} The associated-marked-$\Spaces$-module functor $\FCat \to \MModS \simeq \vcatS$ from \cref{con:associatedmarked} is an equivalence.
	\end{theorem}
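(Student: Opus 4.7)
My plan is to show the adjunction from \cref{prop:freeunderlyingadj} specialized to $\cV = \Spaces$ (whose unit $\iota: \Spaces \to \Spaces$ is the identity)
\[ L: \FCat \rightleftarrows \vcatS \simeq \MModS : R \]
has both unit and counit invertible. I will work on the side of marked $\Spaces$-modules throughout, where the free functor sends a flagged category $(X \to C)$ to the marked module $X \to C \hookrightarrow \PSh(C)$ (composing with the ordinary Yoneda embedding), and the right adjoint sends a marked module $\iota: X \to \cM$ to the flagged category $X \to \operatorname{Im}(\iota)$.

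For the counit, fix a marked $\Spaces$-module $\iota: X \to \cM$ and set $C := \operatorname{Im}(\iota) \subseteq \cM$. The counit is the canonical morphism of marked modules
\[ \bigl(X \to C \hookrightarrow \PSh(C)\bigr) \longrightarrow \bigl(X \to \cM\bigr) \]
whose component at the presentable categories is the colimit-preserving extension $\PSh(C) \to \cM$ of the inclusion $C \hookrightarrow \cM$ (see \cref{lem:freetensored}). I would verify this map is an equivalence in $\PrL$ using Theorem \ref{thm:charessim}: since $\iota$ is a marked module, the image of $X$ (and hence of $C$) consists of $\Spaces$-atomic (i.e.\ completely compact) objects and generates $\cM$ under colimits (tensoring with $\Spaces$ being automatic, cf.\ \cref{lem:imagetens}). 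Consequently $\PSh(C) \to \cM$ is colimit-dominant, and it is fully faithful by \HTT{Prop.}{5.1.6.8} or directly because representables in $\PSh(C)$ are atomic, $C$ consists of atomics in $\cM$, and one applies \cref{lem:fflemma}. A colimit-dominant fully faithful morphism in $\PrL$ is an equivalence by the factorization system of \cref{lem:prfact}.

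For the unit, fix a flagged category $(X \to C) \in \FCat$. Applying $LR$ gives the flagged category $X \to \operatorname{Im}(X \to C \to \PSh(C))$, and the unit is the comparison
\[ \begin{tikzcd} X \arrow[r] \arrow[d, equal] & C \arrow[d] \\ X \arrow[r] & \operatorname{Im}(X \to C \to \PSh(C)) \end{tikzcd} \]
in $\FCat$. The ordinary Yoneda embedding $C \hookrightarrow \PSh(C)$ is fully faithful, so $C \to \operatorname{Im}(C \to \PSh(C))$ is an equivalence; surjectivity of $X \to C$ (i.e.\ essential surjectivity on objects) implies $\operatorname{Im}(X \to C \to \PSh(C)) = \operatorname{Im}(C \to \PSh(C))$ as full subcategories of $\PSh(C)$. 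Hence the right vertical map is an equivalence and the unit is an isomorphism in $\FCat$.

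The only subtlety worth flagging is verifying the counit of the adjunction from \cref{prop:freeunderlyingadj} really is (up to the equivalence $\vcatS \simeq \MModS$) the canonical map $\PSh(C) \to \cM$ I described; this is a straightforward unpacking of the adjunction constructed there via the pullback-square computation of mapping spaces, but it is the step where one has to be careful to match the two perspectives (flagged categories vs.\ marked modules). With both unit and counit verified to be isomorphisms, the adjunction is an equivalence, establishing the theorem.
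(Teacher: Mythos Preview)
Your proposal is correct and follows essentially the same approach as the paper: both arguments verify that the unit and counit of the adjunction from \cref{prop:freeunderlyingadj} (with $\cV=\Spaces$) are invertible, using fully faithfulness of Yoneda for the unit and the fact that a presentable category generated by completely compact objects is equivalent to presheaves on those objects for the counit. The only cosmetic difference is that the paper invokes \HTT{Cor.}{5.1.6.11} directly for the counit, whereas you unpack this as ``colimit-dominant $+$ fully faithful $\Rightarrow$ equivalence'' via the factorization system of \cref{lem:prfact}; these are the same argument.
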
		
	\begin{proof} Let $(X\to C)\in \FCat$. The unit of the adjunction from \cref{prop:freeunderlyingadj} is the map of flagged categories $(X\to C) \To (X\to \operatorname{Im}(\yo:C \to \PSh(C)))$. Since $C\to \PSh(C)$ is fully faithful, this is an isomorphism. 
    	Conversely, let $(f:Y \to \cM)\in \MModS$. The counit of the adjunction from~\cref{prop:freeunderlyingadj} is the map of marked $\Spaces$-modules $(Y \to \operatorname{Im}(f) \to \PSh(\operatorname{Im}(f))) \To (Y \to \cM)$. Since the image of $f$ consists by definition of completely compact (i.e.\ $\Spaces$-atomic) objects and generates under colimits, it follows from \HTT{Cor.}{5.1.6.11} that $\PSh(\operatorname{Im}(f)) \to \cM$ is an equivalence and hence that the counit is invertible. 
	\end{proof}

	\begin{obs}\label{obs:underlyingisbasechange}
	Let $\cV \in \Alg(\PrL)$ with unit $\iota: \Spaces \to \cV$ in $\Alg(\PrL)$. It then follows from \cref{prop:freeunderlyingadj} that after identifying $\vcatS \simeq \FCat$,  the change-of-enrichment functor $\iota_!^\rR: \vcatV \to \vcatS \simeq \FCat$ sends a $\cC \in \vcatV$ to its underlying flagged category $\ob \cC \to \operatorname{Im}(\yoV_{\cC}:\ob \cC \to \PShV(\cC))$. 
	\end{obs}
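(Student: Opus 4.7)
The plan is to derive this immediately from \cref{prop:freeunderlyingadj} and \cref{thm:vcatS} using uniqueness of adjoints. By \cref{prop:freeunderlyingadj}, the composite
\[ F:\ \FCat \to \MModS \simeq \vcatS \overset{\iota_!}{\to} \vcatV \]
admits a right adjoint $F^\rR$ sending $\cC \in \vcatV$ to the underlying flagged category $\ob \cC \to \operatorname{Im}(\yoV_{\cC})$. By \cref{thm:vcatS}, the first functor in this composite is an equivalence $\FCat \simeq \vcatS$, hence both an equivalence and its own adjoint on both sides.

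Cancelling this equivalence, one obtains an adjunction $\iota_! : \vcatS \rightleftarrows \vcatV : G$ where $G$ is the composite of $F^\rR$ with the inverse equivalence $\vcatS \simeq \FCat$. By essential uniqueness of adjoints, $G$ is canonically equivalent to $\iota_!^\rR$. Unwinding, $\iota_!^\rR(\cC) \in \vcatS$ corresponds under the equivalence $\vcatS \simeq \FCat$ precisely to the underlying flagged category $\ob \cC \to \operatorname{Im}(\yoV_{\cC}:\ob \cC \to \PShV(\cC))$.

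There is essentially no obstacle; the statement is a formal consequence of the two preceding results together with the uniqueness of adjoints up to canonical equivalence. The only mild care needed is to track the identification $\vcatS \simeq \FCat$ through the adjunction, which is transparent since this identification is the first leg of the composite $F$.
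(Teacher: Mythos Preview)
Your proposal is correct and matches the paper's approach: the observation is stated in the paper as a direct consequence of \cref{prop:freeunderlyingadj} (together with the identification $\vcatS \simeq \FCat$ from \cref{thm:vcatS}), and your argument via uniqueness of adjoints is exactly the unpacking of this implication.
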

	
	\cref{thm:vcatS} may be generalized to $\cV$-enriched categories as follows:  
	\begin{defin}
	A \emph{flagged $\cV$-enriched category} is a univalent $\cV$-enriched category $\cC$ together with a space $X$ and a map of spaces $X\to \ob \cC$ which is surjective on components. Let $\FCat(\cV):= \Arr^{\mathrm{surj}}(\Spaces)\times_{\Spaces} \catV$ denote the category of flagged $\cV$-enriched categories. 
	\end{defin}
	
\begin{prop}\label{prop:flaggeduniv} For $\cV \in \Alg(\PrL)$,  $\FCat(\cV)\simeq \vcatV$.  
\end{prop}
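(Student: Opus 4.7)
My plan is to work via the equivalence $\vcat(\cV) \simeq \MModV$ of \cref{thm:functorialcomparison}, under which $\cat(\cV) \subseteq \vcat(\cV)$ corresponds (by \cref{def:univalence}) to the full subcategory $\MModV^{\mathrm{univ}} \subseteq \MModV$ on the \emph{univalent} marked modules, i.e.\ those $X \to \cM$ whose underlying map $X \to \cM^{\simeq}$ is a monomorphism in $\widehat{\Spaces}$. Since the source projection $\MModV \to \Spaces$ corresponds under these identifications with $\ob$, the claim becomes an equivalence
\[
\MModV \simeq \Arr^{\mathrm{surj}}(\Spaces) \times_{\Spaces} \MModV^{\mathrm{univ}}.
\]

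The key input is the $(-1)$-connected/$(-1)$-truncated orthogonal factorization system on $\widehat{\Spaces}$, already exploited in the proof of \cref{prop:univalization}, which factors any map of spaces $X \to Z$ uniquely as $X \to \operatorname{Im}(X \to Z) \to Z$ with the first map surjective on $\pi_0$ and the second a monomorphism. By \HTT{Lem.}{5.2.8.19}, this produces an adjoint equivalence $\Arr(\widehat{\Spaces}) \simeq \Arr^{\mathrm{surj}}(\widehat{\Spaces}) \times_{\widehat{\Spaces}} \Arr^{\mathrm{mono}}(\widehat{\Spaces})$. Postcomposing with $(-)^{\simeq}: \PrV \to \widehat{\Spaces}$, I pull this back to the subcategory $\Arr(\widehat{\Spaces}) \times_{\widehat{\Spaces}} \PrV$ containing $\MModV$. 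Explicitly, the forward functor sends a marked module $X \to \cM$ to the pair consisting of the surjection $X \to \operatorname{Im}(X \to \cM^{\simeq})$ together with the functor $\operatorname{Im}(X \to \cM^{\simeq}) \to \cM$, while the inverse sends $(Y \to Z, Z \to \cM)$ to the composite $Y \to Z \to \cM$. Two short checks ensure these functors restrict to the claimed subcategories: $(i)$ given a univalent marked module $Z \to \cM$ and a surjection $Y \to Z$, the image of the composite $Y \to \cM$ coincides up to isomorphism with the image of $Z$ (because $Z$ is a space and the map is surjective on $\pi_0$), hence still atomically generates $\cM$; and $(ii)$ given a marked module $X \to \cM$, the inclusion $\operatorname{Im}(X \to \cM^{\simeq}) \to \cM$ is a monomorphism with the same full image, so it is a univalent marked module.

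Uniqueness of the factorization makes both composites equivalent to the identity, giving the desired equivalence. Unwinding it through \cref{obs:unituni}, the forward functor sends $\cC \in \vcat(\cV)$ to the flagged univalent category $(\ob \cC \to \ob u\cC,\; u\cC)$; and via \cref{ex:carttransportgraph}, the inverse sends a flagged univalent category $(g:Y \to \ob \cD,\; \cD)$ to the Cartesian transport $g^!\cD \in \vcat(\cV)$ with $\ob(g^! \cD) = Y$ and $\Hom_{g^! \cD}(y,y') = \Hom_\cD(gy, gy')$. I do not anticipate any serious obstacle; the only mild technicality is promoting the factorization from a pointwise statement to a functorial equivalence, which is handled entirely by \HTT{Lem.}{5.2.8.19}.
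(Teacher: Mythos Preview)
Your proof is correct and follows essentially the same approach as the paper: both identify $\vcatV$ with $\MModV$ inside $\Arr(\widehat{\Spaces}) \times_{\widehat{\Spaces}} \PrV$, invoke the $(-1)$-connected/$(-1)$-truncated factorization system on $\widehat{\Spaces}$ to split $\Arr(\widehat{\Spaces})$ as $\Arr^{\mathrm{surj}}(\widehat{\Spaces}) \times_{\widehat{\Spaces}} \Arr^{\mathrm{mono}}(\widehat{\Spaces})$, and then observe that this equivalence restricts to the relevant full subcategories. Your checks $(i)$ and $(ii)$ and the explicit unwinding via \cref{obs:unituni} and \cref{ex:carttransportgraph} are a welcome addition to the paper's terse ``it is then immediate''.
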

\begin{proof} Use the equivalence $\vcatV \simeq \MModV$ and identify the latter as a full subcategory of $\Arr(\widehat{\Spaces}) \times_{\widehat{\Spaces}} \PrV$ (where the pullback is along the target projection and $(-)^{\simeq}: \PrV \to \widehat{\cat} \to \widehat{\Spaces}$). Using the $(-1)$-connected/truncated (aka epi/mono) factorization system on $\widehat{\Spaces}$, one finds that $\Arr(\widehat{\Spaces}) \simeq \Arr^{\mathrm{surj}}(\widehat{\Spaces}) \times_{\widehat{\Spaces}} \Arr^{\mathrm{mono}}(\widehat{\Spaces})$ compatible with the source and target projection. It is then immediate to see that the induced equivalence 
\[ \Arr(\widehat{\Spaces}) \times_{\widehat{\Spaces}} \PrV \simeq \Arr^{\mathrm{surj}}(\widehat{\Spaces}) \times_{\lSpaces} \Arr^{\mathrm{mono}}(\lSpaces) \times_{\lSpaces} \PrV 
\]
restricts to one between the full subcategories $\vcatV$ and $\Arr^{\mathrm{surj}}(\widehat{\Spaces}) \times_{\lSpaces} \catV$.
\end{proof}

	\subsection{Univalence via the underlying flagged category}

	\begin{defin}
		\label{def:univalentflagged}
		A flagged category $(X \to \ccC)$ is \emph{univalent} if this functor is a monomorphism in $\cat$, or equivalently if the induced map $X \to \ccC^\simeq$ to the maximal subspace of $\ccC$ is an equivalence in $\Spaces$.
	\end{defin}

		\begin{cor} \label{cor:underlyingunivalent}Let $\cV \in \Alg(\PrL)$ and $\cC \in \vcatV$. Then, $\cC$ is univalent in the sense of \cref{def:univalence} if and only if its underlying flagged category is in the sense of \cref{def:univalentflagged}. 
	\end{cor}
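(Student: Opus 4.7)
The plan is to observe that the two conditions, once unpacked, are literally identical. The underlying flagged category of $\cC$ is by \cref{def:underlyingflagged} the surjective functor $\ob \cC \to \operatorname{Im}(\yoV_{\cC})$, so by \cref{def:univalentflagged} it is univalent as a flagged category if and only if this functor is a monomorphism in $\cat$; equivalently, by the second characterization in that definition, if and only if the induced map of spaces $\ob \cC \to \operatorname{Im}(\yoV_{\cC})^{\simeq}$ is an equivalence.

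On the other hand, $\cC$ is univalent in the sense of \cref{def:univalence} iff $\yoV_{\cC}: \ob \cC \to \PShV(\cC)$ is a subcategory inclusion, which that definition already explicitly unpacks as: $\yoV_{\cC}$ exhibits $\ob \cC$ as the maximal subgroupoid $\operatorname{Im}(\yoV_{\cC})^{\simeq}$ of its full image. In other words, $\ob \cC \to \operatorname{Im}(\yoV_{\cC})^{\simeq}$ is an equivalence of spaces.

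Thus both conditions reduce to the same assertion. If one wishes to see directly why $\yoV_{\cC}$ being a monomorphism in $\widehat{\cat}$ agrees with $\ob\cC \to \operatorname{Im}(\yoV_{\cC})^{\simeq}$ being an equivalence, factor $\yoV_{\cC}$ as the surjective $\ob\cC \to \operatorname{Im}(\yoV_{\cC})$ followed by the fully faithful inclusion $\operatorname{Im}(\yoV_{\cC}) \hookrightarrow \PShV(\cC)$: the latter is a monomorphism in $\widehat{\cat}$, so $\yoV_{\cC}$ is a monomorphism iff the former is, and a surjective functor from a space is a monomorphism in $\widehat{\cat}$ exactly when the induced map on maximal subgroupoids is an equivalence. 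There is no real obstacle here beyond tracking the equivalent formulations in \cref{def:univalence} and \cref{def:univalentflagged}; the corollary is essentially a tautology given the definitions, included for bookkeeping.
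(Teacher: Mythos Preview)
Your proof is correct and follows the same route as the paper: both reduce to observing that $\yoV_{\cC}$ is a monomorphism in $\widehat{\cat}$ if and only if its factor $\ob\cC \to \operatorname{Im}(\yoV_{\cC})$ is, since the full inclusion $\operatorname{Im}(\yoV_{\cC}) \hookrightarrow \PShV(\cC)$ is already a monomorphism. The paper's proof is the one-line version of exactly this argument.
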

	\begin{proof} 
	A $\cC \in \vcatV$ is univalent if $\yoV_{\cC}: \ob \cC \to \PShV(\cC)$ is a monomorphism which is equivalent to requiring $\ob \cC \to \operatorname{Im}(\yoV_{\cC})$ to be a monomorphism. 
	\end{proof}

\begin{obs}
		\label{obs:flaggedadjointscat}
		The target projection $\FCat \to \cat$ admits a right adjoint sending $C$ to $(C^\simeq \to C)$. This exhibits $\cat \subseteq \FCat$ as the full subcategory on  the univalent flagged categories.
	\end{obs}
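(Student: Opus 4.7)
The plan is first to construct the candidate right adjoint as the factorization through $\Spaces$ of the identity, then verify adjointness by a direct mapping-space computation, and finally read off fully faithfulness and the characterization of the image.

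First, for any $C\in\cat$, the maximal subgroupoid inclusion $C^{\simeq}\to C$ is a subcategory inclusion, hence a monomorphism, and is by construction a functor from a space; I would take $\mathrm{RA}(C):=(C^{\simeq}\to C)$ and extend it to a functor $\mathrm{RA}:\cat\to\FCat\subseteq\Arr(\cat)$ by applying the adjunction $\Spaces\rightleftarrows\cat$ with right adjoint $(-)^{\simeq}$. (Note: here I am using that the defining surjectivity condition for $\FCat$ is not actually required for $\mathrm{RA}$ to land in a suitable subcategory once we restrict to the image — alternatively, I would simply restrict to the univalent part and verify that a monomorphism from a space to $C$ counts as surjective in the sense of being bijective on $\pi_0(C^{\simeq})$ precisely when it agrees with $C^{\simeq}\to C$.)

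To verify the adjunction, I would compute, for a flagged category $(f:X\to C')$ and any $D\in\cat$:
\[
\Map_{\FCat}\!\bigl((X\to C'),(D^{\simeq}\to D)\bigr)\;\simeq\;\Map_{\Spaces}(X,D^{\simeq})\times_{\Map_{\cat}(X,D)}\Map_{\cat}(C',D),
\]
using that $\FCat\subseteq\Arr(\cat)$ is full. The canonical map $\Map_{\Spaces}(X,D^{\simeq})\to\Map_{\cat}(X,D)$ is an equivalence because $X$ is a space and $(-)^{\simeq}$ is right adjoint to the inclusion $\Spaces\hookrightarrow\cat$. Hence the pullback collapses to $\Map_{\cat}(C',D)$, which is exactly $\Map_{\cat}(\mathrm{tgt}(X\to C'),D)$, exhibiting the desired adjunction $\mathrm{tgt}\dashv\mathrm{RA}$.

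Fully faithfulness of $\mathrm{RA}$ is then immediate: the counit $\mathrm{tgt}\circ\mathrm{RA}\To\id_{\cat}$ is the identity on $C$, hence invertible. It remains to identify the essential image. A flagged category $(X\to C')$ lies in the image iff it is isomorphic in $\FCat$ to $(C'^{\simeq}\to C')$, which requires an equivalence $X\xrightarrow{\simeq}C'^{\simeq}$ over $C'$; since $X$ is a space, $f$ automatically factors as $X\to C'^{\simeq}\hookrightarrow C'$, and the second map is a monomorphism. Combining surjectivity of $f$ with the monomorphism property of $f$ (Definition~\ref{def:univalentflagged}), the map $X\to C'^{\simeq}$ is both $(-1)$-connected and $(-1)$-truncated, hence an equivalence. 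Conversely, when $X\simeq C'^{\simeq}$, the flagged category is manifestly in the image of $\mathrm{RA}$. The main (mild) subtlety will be pinning down conventions for ``surjective,'' to ensure $C'^{\simeq}\to C'$ is itself admitted as an object of $\FCat$; this is handled by observing that surjective here means bijective on $\pi_0$ of maximal subgroupoids, which is trivially true for $C'^{\simeq}\to C'$.
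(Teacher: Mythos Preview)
Your proposal is correct. The paper records this as an observation without proof, so there is no argument to compare against; your approach via the mapping-space computation and the $(-1)$-connected/$(-1)$-truncated factorization is the natural one and goes through cleanly.

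One small remark: your parenthetical worries about whether $\mathrm{RA}(C)=(C^{\simeq}\to C)$ lands in $\FCat$ are unnecessary. In the paper's conventions a functor from a space is surjective when it is essentially surjective (equivalently, surjective on $\pi_0$ of the maximal subgroupoid), and $C^{\simeq}\to C$ is tautologically so since it induces the identity on $C^{\simeq}$. There is no need to phrase this as ``bijective on $\pi_0$'' or to invoke any restriction to the univalent part before constructing $\mathrm{RA}$.
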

	
\begin{cor}
		\label{thm:spacesenruniv}
		The equivalence $\vcat(\Spaces) \simeq \FCat$ from \cref{thm:vcatS} induces an equivalence between the full subcategories $\cat(\Spaces) \subseteq \vcatS$ and $\cat \subseteq \FCat$. 	\end{cor}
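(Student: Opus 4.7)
The plan is to trace through the equivalence $\vcatS \simeq \FCat$ from \cref{thm:vcatS} and identify what the univalent objects on each side correspond to. First I would recall that, by the proof of \cref{thm:vcatS} (specifically the adjunction in \cref{prop:freeunderlyingadj} applied to $\cV = \Spaces$, where $\iota = \id$), the inverse equivalence $\vcatS \to \FCat$ sends a $\cC \in \vcat(\Spaces) \simeq \MModS$ to the flagged category $\ob\cC \to \operatorname{Im}(\yo_\cC : \ob\cC \to \PSh(\cC))$, i.e.\ to its underlying flagged category in the sense of \cref{def:underlyingflagged} (cf.\ \cref{obs:underlyingisbasechange}).

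Next I would use \cref{cor:underlyingunivalent}, which asserts that a $\cC \in \vcatV$ is univalent if and only if its underlying flagged category is univalent in the sense of \cref{def:univalentflagged}. Specialized to $\cV = \Spaces$, this says that $\cC \in \cat(\Spaces) \subseteq \vcat(\Spaces)$ if and only if the associated flagged category $\ob\cC \to \operatorname{Im}(\yo_\cC)$ is a monomorphism in $\cat$.

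Finally I would combine this with \cref{obs:flaggedadjointscat}, which identifies $\cat \subseteq \FCat$ precisely as the full subcategory on the univalent flagged categories. Thus the equivalence $\vcatS \simeq \FCat$ identifies $\cC \in \cat(\Spaces)$ with $(\ob\cC \to \operatorname{Im}(\yo_\cC)) \in \cat \subseteq \FCat$, and conversely any univalent flagged category corresponds to a univalent $\Spaces$-enriched category. Since both inclusions are fully faithful, this restriction of the equivalence is itself an equivalence of categories, completing the proof.

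There is no genuine obstacle here: the statement is essentially a formal consequence of the earlier results, with the only substantive input being the identification of the equivalence $\vcatS \simeq \FCat$ with the underlying-flagged-category functor, which is immediate from how the adjunction in \cref{prop:freeunderlyingadj} was constructed.
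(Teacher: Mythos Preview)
Your proposal is correct and follows essentially the same approach as the paper, which simply cites \cref{cor:underlyingunivalent} as making the statement immediate. You have unpacked the implicit steps (identifying the inverse equivalence with the underlying-flagged-category functor via \cref{prop:freeunderlyingadj}, and invoking \cref{obs:flaggedadjointscat} to characterize $\cat \subseteq \FCat$), but the logical content is identical.
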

	\begin{proof}
Immediate from \cref{cor:underlyingunivalent}. \end{proof}

\begin{cor}\label{cor:changeofenrpreserves} Let $f: \cV \to \cW$ be a morphism in $\Alg(\PrL)$. Then, the change-of-enrichment functor $f_!^\rR: \vcatW \to \vcatV$ preserves and reflects univalence.
\end{cor}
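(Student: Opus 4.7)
The plan is to reduce univalence of a $\cV$- or $\cW$-enriched category $\cC$ to a property of its underlying flagged category, using \cref{cor:underlyingunivalent}, and then show that this underlying flagged category is unchanged by $f_!^\rR$.

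First, I would recall from \cref{obs:underlyingisbasechange} that for any $\cU \in \Alg(\PrL)$ with unit $\iota_\cU : \Spaces \to \cU$, the underlying-flagged-category functor $\vcat(\cU) \to \FCat$ coincides (under $\FCat \simeq \vcat(\Spaces)$ from \cref{thm:vcatS}) with the change-of-enrichment functor $(\iota_\cU)_!^\rR : \vcat(\cU) \to \vcat(\Spaces)$. Since any morphism $f : \cV \to \cW$ in $\Alg(\PrL)$ fits into a commuting triangle $\iota_\cW \simeq f \circ \iota_\cV$ in $\Alg(\PrL)$, it follows by uniqueness of right adjoints that
\[
(\iota_\cW)_!^\rR \simeq (\iota_\cV)_!^\rR \circ f_!^\rR : \vcat(\cW) \to \vcat(\Spaces).
\]
In particular, for any $\cC \in \vcat(\cW)$, the underlying flagged category of $f_!^\rR \cC$ (computed via the $\cV$-enrichment) agrees with the underlying flagged category of $\cC$ (computed via the $\cW$-enrichment).

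Finally, I would invoke \cref{cor:underlyingunivalent}: a valent enriched category is univalent if and only if its underlying flagged category is univalent in the sense of \cref{def:univalentflagged}. Combining this with the identification above yields
\[
\cC \in \vcat(\cW) \text{ is univalent} \iff (\iota_\cW)_!^\rR \cC \text{ is a univalent flagged category} \iff f_!^\rR \cC \in \vcat(\cV) \text{ is univalent},
\]
which is exactly preservation and reflection of univalence by $f_!^\rR$.

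I don't expect any real obstacle here: the content has already been set up by \cref{obs:underlyingisbasechange} and \cref{cor:underlyingunivalent}; the only check is the functoriality of the right adjoint to change-of-enrichment under composition of algebra maps, which follows formally from the relative-adjunction perspective in \cref{cor:coerelativeadjoint}.
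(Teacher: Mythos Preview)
Your proposal is correct and follows essentially the same approach as the paper's proof: reduce univalence to a property of the underlying flagged category via \cref{cor:underlyingunivalent} and \cref{obs:underlyingisbasechange}, and then use the factorization $\iota_\cW \simeq f \circ \iota_\cV$ to see that $f_!^\rR$ does not change the underlying flagged category. Your exposition is in fact slightly more careful about the right-adjoint notation and the composition identity than the paper's.
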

\begin{proof} Let $\iota_{\cV}: \Spaces \to \cV$ and $\iota_{\cW}: \Spaces \to \cW$ denote the units of $\cV$ and $\cW$. Then, by \cref{cor:underlyingunivalent}  combined with \cref{obs:underlyingisbasechange}, a $\cC \in \vcatW$ is univalent if and only if $(\iota_{\cV})_! \cC \in \vcat(\Spaces)$ is. The result then follows since $f \circ \iota_{\cV} \simeq \iota_{\cW}$. 
\end{proof}

	\begin{warning} \label{warn:basechangeunivalence}The left adjoint $f_!: \vcatV \to \vcatW$ does generally not preserve univalence: For example consider the terminal functor $f: \cV \to *$ in $\Alg(\PrL)$ for which $f_!(X \to \PShV(\cC)) = (X \to \PShV(\cC) \otimes_\cV *) = (X \to *)$, which is only univalent if $X$ is contractible. 
	\end{warning}
%
%
%
%

\begin{cor}\label{cor:EnrcoCart}	Univalization induces a left adjoint to the full inclusion $\Enr \subseteq \vEnr$.The composite $\Enr \subseteq \vEnr \to \Alg(\PrL)$ is a Cartesian and coCartesian fibration and univalization $u:\Enr \to \vEnr$ is a map of coCartesian fibrations over $\Alg(\PrL)$. It follows that $\Enr$ has limits and colimits and that $\Enr \to \Alg(\PrL)$ preserves them.
	\end{cor}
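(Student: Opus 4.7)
The plan is to bootstrap the fiberwise univalizations from \cref{prop:univalization} to a global left adjoint of the inclusion $\iota: \Enr \hookrightarrow \vEnr$ over $\Alg(\PrL)$, and then verify the coCartesian structure this induces on $\Enr$.

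First, by \cref{cor:changeofenrpreserves}, $\Enr \subseteq \vEnr$ is closed under Cartesian transport for the Cartesian fibration $\vEnr \to \Alg(\PrL)$ obtained from \cref{prop:coCart} and \cref{cor:coerelativeadjoint} via the equivalence $\vEnr \simeq \MMod$ of \cref{thm:functorialcomparison}. Hence $\Enr \to \Alg(\PrL)$ is itself a Cartesian fibration and $\iota$ is a map of Cartesian fibrations over $\Alg(\PrL)$. Combining the fiberwise left adjoints $u_{\cV}$ from \cref{prop:univalization} with this Cartesian compatibility, the relative adjoint functor theorem \HA{Prop.}{7.3.2.6} (in its dual form for left adjoints of maps of Cartesian fibrations) produces a global left adjoint $u: \vEnr \to \Enr$ relative to $\Alg(\PrL)$ which restricts fiberwise to the $u_{\cV}$. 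Concretely, for $\cC \in \vcat(\cV)$ and any morphism $\cC \to \cD$ in $\vEnr$ over $g: \cV \to \cW$ with $\cD \in \cat(\cW)$, the universal factorization through $\cC \to u\cC$ exists because $g_!\cC \to g_! u\cC$ is sent to an equivalence by $u_{\cW}$: indeed FFS morphisms are preserved by $g_!$ (\cref{prop:changeofenrpres}) and inverted by $u_{\cW}$ (\cref{thm:univalentFFS}), and $\cC \to u\cC$ is FFS by \cref{obs:unituni}.

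Next I would show $\Enr \to \Alg(\PrL)$ is coCartesian, with coCartesian lift of $f: \cV \to \cW$ at $\cC \in \cat(\cV)$ given by the composite $\cC \to f_! \cC \to u_{\cW} f_! \cC$, that is, the $\vEnr$-coCartesian lift followed by the univalization unit. Unpacking the universal property reduces to checking that for any $g: \cW \to \cU$ the functor $u_{\cU} g_!: \vcat(\cW) \to \cat(\cU)$ inverts the unit $f_!\cC \to u_{\cW} f_!\cC$. But this unit is again FFS by \cref{obs:unituni}, so the same combination of \cref{prop:changeofenrpres} and \cref{thm:univalentFFS} sends it to an equivalence. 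The same three facts applied to the unit $\cC \to u\cC$ for general $\cC \in \vEnr$ show that $u$ itself preserves coCartesian morphisms.

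The main subtlety to navigate is \cref{warn:basechangeunivalence}: univalization does not commute with change of enrichment strictly, but it does commute \emph{after} a further univalization, which is exactly what the FFS-localization description of \cref{thm:univalentFFS} combined with the FFS-preservation of \cref{prop:changeofenrpres} guarantees. For the closing assertion, each $\cat(\cV)$ is presentable as a reflective subcategory of the presentable $\vcat(\cV) \simeq \MModV$ (\cref{prop:CatVcolimSpace}); the Cartesian and coCartesian transports in $\Enr$ preserve limits and colimits respectively by construction; and $\Alg(\PrL)$ admits small limits and colimits; so \HTT{Cor.}{4.3.1.11} and \HTT{Prop.}{4.3.1.5} deliver the last claim, exactly as in the proof of \cref{prop:CatVcolimSpace}.
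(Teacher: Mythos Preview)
Your proof is correct and uses the same essential ingredients as the paper (\cref{prop:univalization}, \cref{prop:changeofenrpres}, \cref{thm:univalentFFS}, \cref{cor:changeofenrpreserves}), but the organization differs. The paper proceeds coCartesian-first: it observes that coCartesian transport preserves FFS morphisms and invokes \cite[Prop.~5.7.4]{haugseng} (essentially \cref{lem:coCartrefl}(3)) as a packaged statement to conclude that the fiberwise FFS-localization $\Enr$ is coCartesian over $\Alg(\PrL)$ with $u$ preserving coCartesian morphisms; the Cartesian claim is handled separately and afterwards via \cref{cor:changeofenrpreserves}. You instead go Cartesian-first, then assemble the relative left adjoint from the Cartesian structure via the dual of \HA{Prop.}{7.3.2.6}, and finally verify the coCartesian lifts by hand. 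Your route is slightly more self-contained in that it re-derives the content of \cref{lem:coCartrefl}(3) rather than citing it, and makes the role of the FFS-class more explicit; the paper's route is more concise.

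One minor correction: \cref{prop:CatVcolimSpace} only establishes that $\MModV \simeq \vcat(\cV)$ has limits and colimits, not that it is presentable. This does not affect your argument, since all you need for the final step is that each $\cat(\cV)$, as a reflective subcategory of a category with limits and colimits, itself has limits and colimits, and that the (co)Cartesian transports preserve them; presentability is irrelevant here.
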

	\begin{proof}
	 For any morphism $f: \cV \to \cW$ in $\Alg(\PrL)$,  coCartesian transport $f_! : \vcatV \to \vcatW$ preserves fully faithful and surjective-on-objects functors by \cref{prop:changeofenrpres}, and hence it follows from \cite[Prop.\ 5.7.4]{haugseng} that  $\Enr\to \vEnr \to \Alg(\PrL)$ is a coCartesian fibration and that $u:\Enr \to \vEnr$ preserves $\Alg(\PrL)$-coCartesian morphisms. On the other hand, Cartesian transport $f_!^\rR: \vcatW \to \vcatW$  preserves univalence by \cref{cor:changeofenrpreserves} and hence the restriction $\Enr \subseteq \vEnr \to \Alg(\PrL)$ is also a Cartesian fibration. The statement about limits and colimits follows analogously to \cref{prop:CatVcolimSpace}.	\end{proof}
	 
	\begin{obs}Unwinding the proof of~\cref{cor:EnrcoCart}, for an $f: \cV \to \cW$ in $\Alg(\PrL)$, Cartesian transport is given by restricting $f_!^\rR: \vcatW\to \vcatV$ to the full subcategory $\catW$ while coCartesian transport is given by the composite $\catV \subseteq\vcatV \overset{f_!}{\to} \vcatW \to \catW$ with univalization.
	\end{obs}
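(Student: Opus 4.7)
The plan is to simply unpack the two pieces of the construction in the proof of \cref{cor:EnrcoCart} and verify that they give the stated formulas.

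For the Cartesian transport: the proof of \cref{cor:EnrcoCart} exhibits $\Enr \to \Alg(\PrL)$ as a Cartesian fibration by restricting the Cartesian fibration $\vEnr \to \Alg(\PrL)$ to the full subcategory $\Enr \subseteq \vEnr$. By \cref{cor:changeofenrpreserves}, the Cartesian transport $f_!^\rR: \vcatW \to \vcatV$ in $\vEnr$ sends univalent enriched categories to univalent ones, so the full subcategory $\Enr \subseteq \vEnr$ is stable under Cartesian transport. Thus a morphism in $\Enr$ lying over $f: \cV \to \cW$ is $\Alg(\PrL)$-Cartesian in $\Enr$ iff it is $\Alg(\PrL)$-Cartesian in $\vEnr$, and the Cartesian transport is computed in $\vEnr$: it is the restriction of $f_!^\rR$ to $\catW \subseteq \vcatW$.

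For the coCartesian transport: again following the proof of \cref{cor:EnrcoCart}, the coCartesian transport in $\Enr$ over $f: \cV \to \cW$ is obtained by applying the general construction of \cite[Prop.~5.7.4]{haugseng}, which realizes the coCartesian transport in a reflective subfibration as the composite of coCartesian transport in the ambient fibration followed by the left adjoint to the inclusion. Here the ambient coCartesian transport is $f_!: \vcatV \to \vcatW$, and by \cref{prop:univalization} the left adjoint to $\catW \subseteq \vcatW$ is univalization $u$. Hence the coCartesian transport in $\Enr$ is the composite $\catV \subseteq \vcatV \xrightarrow{f_!} \vcatW \xrightarrow{u} \catW$.

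The only subtle point, which I would include for clarity, is that one cannot just restrict $f_!$ as one does with $f_!^\rR$: \cref{warn:basechangeunivalence} shows that $f_!$ does not preserve univalence in general, which is exactly why the postcomposition with univalization is necessary. Once this is noted, everything else is a direct unpacking of the construction and requires no further calculation.
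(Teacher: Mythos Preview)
Your proposal is correct and matches the paper's approach: the observation has no separate proof in the paper, as it is literally just an unwinding of the proof of \cref{cor:EnrcoCart}, which in turn invokes \cref{cor:changeofenrpreserves} for the Cartesian side and \cite[Prop.~5.7.4]{haugseng} (equivalently \cref{lem:coCartrefl}(1),(3)) for both sides. Your explanation is exactly this unwinding, and your closing remark about \cref{warn:basechangeunivalence} is a helpful addition explaining why the two cases differ.
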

	
	\begin{cor}\label{cor:catasfunctor}
	The coCartesian fibration $\Enr \to \Alg(\PrL)$ straightens to a functor 
	\[\cat(-): \Alg(\PrL) \to \lcat
	\]
	and univalization assembles into a natural transformation $u:\vcat(-) \To \cat(-)$. 
	\end{cor}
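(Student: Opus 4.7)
The proof is essentially a direct invocation of \cref{cor:EnrcoCart} combined with the straightening/unstraightening equivalence. My plan is as follows.

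First, by \cref{cor:EnrcoCart}, the composite $\Enr \hookrightarrow \vEnr \to \Alg(\PrL)$ is a coCartesian fibration. Applying the straightening equivalence (e.g.\ \HTT{Thm.}{3.2.0.1}), or rather its version for coCartesian fibrations classifying functors to $\lcat$, this unstraightens to a functor which I will denote $\cat(-) : \Alg(\PrL) \to \lcat$. By construction, its value on a $\cV \in \Alg(\PrL)$ is the fiber $\Enr_{\cV} = \catV$, which justifies the notation.

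Second, also by \cref{cor:EnrcoCart}, the univalization functor $u: \vEnr \to \Enr$ (the left adjoint to the full inclusion $\Enr \hookrightarrow \vEnr$) is a map of coCartesian fibrations over $\Alg(\PrL)$. Equivalently, the inclusion $\Enr \hookrightarrow \vEnr$ is such a map, as its left adjoint is fiberwise defined using coCartesian transport. Under the straightening equivalence, maps of coCartesian fibrations over $\Alg(\PrL)$ correspond to natural transformations between their classifying functors. Applied to the inclusion, this produces a natural transformation $\cat(-) \To \vcat(-)$, and its (fiberwise) left adjoint then yields the claimed natural transformation $u : \vcat(-) \To \cat(-)$ whose component at $\cV$ is univalization $u: \vcatV \to \catV$ (\cref{prop:univalization}).

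There is no substantive obstacle: all the content of the statement has been packaged into \cref{cor:EnrcoCart}, and the remaining work is purely the straightening formalism. The only mild subtlety is ensuring that the natural transformation goes in the desired direction: the inclusion gives a transformation $\cat(-) \To \vcat(-)$, but we want the opposite direction given by the left adjoint. This is handled by noting that the fiberwise left adjoints assemble into a natural transformation because the inclusion is a map of coCartesian fibrations, so the mates (in the sense of \HA{Def.}{4.7.4.13}) of its coCartesian transport squares are invertible.
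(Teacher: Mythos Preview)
Your first paragraph is correct and is all that is needed for the first claim.

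Your second paragraph, however, takes a wrong turn. You correctly cite from \cref{cor:EnrcoCart} that $u:\vEnr\to\Enr$ is a map of coCartesian fibrations over $\Alg(\PrL)$. That fact alone finishes the proof: straightening $u$ directly yields the desired natural transformation $u:\vcat(-)\To\cat(-)$. There is no need to pass through the inclusion.

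The detour you take is not only unnecessary but actually incorrect. You assert that ``equivalently, the inclusion $\Enr\hookrightarrow\vEnr$ is such a map.'' This is false, and the paper explicitly warns against it in the \textbf{Warning} immediately following \cref{cor:catasfunctor}: the inclusion $\Enr\to\vEnr$ is \emph{not} a map of coCartesian fibrations over $\Alg(\PrL)$ (because change of enrichment $f_!$ need not preserve univalence, cf.\ \cref{warn:basechangeunivalence}), so the inclusions $\cat(\cV)\subseteq\vcat(\cV)$ only assemble into a \emph{lax} natural transformation. Your final sentence compounds the issue: the claim that ``the fiberwise left adjoints assemble into a natural transformation because the inclusion is a map of coCartesian fibrations'' is neither true (the premise fails) nor the right implication (a right adjoint being a map of coCartesian fibrations does not by itself force the mate squares for the left adjoint to be invertible).

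In short: drop everything about the inclusion and about mates; simply straighten $u$ itself. That is the paper's (implicit) proof.
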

			
	\begin{warning}
	It follows from \cref{warn:basechangeunivalence} that the full inclusion $\Enr \to \vEnr$ is not a map of coCartesian fibrations over $\Alg(\PrL)$. In particular, the inclusions $\cat(\cV) \subseteq \vcat(\cV)$ do not assemble into a natural transformation $\cat(-)\To \vcat(-): \Alg(\PrL) \to \lcat$ (only a lax natural transformation). 	\end{warning}

	\section{The tensor product of enriched categories}
	\label{sec:multiplicativity}
	
	One major advantage of working with marked modules is the simplicity with which one can construct a tensor product of enriched categories and prove that is compatible with colimits, which we will do in this section. We will compare this tensor product with the one defined in \cite[Section 4.3]{haugseng}, \cite{haugseng2023tensor} in \S \ref{sec:comptensor}.

	\subsection{The external and internal tensor product of enriched categories}
	
	We identified $\MMod$ in \cref{prop:bigpullback} as the full subcategory of 	\[
	(\Spaces \times \Alg(\PrL)) \times_{\RMod(\PrL)} \Arr(\RMod(\PrL)) \times_{\Arr(\Alg(\PrL))} \Alg(\Pr)
	\]
	on those functors $\PSh(X) \otimes \cV \to \cM$ which are colimit-dominant and internal left adjoint.
	
	 Hence by~\cref{thm:functorialcomparison}, there is a fully faithful functor \[\vEnr \to  (\Spaces \times \Alg(\PrL)) \times_{\RMod(\PrL)} \Arr(\RMod(\PrL)) \times_{\Arr(\Alg(\PrL))} \Alg(\Pr)\] 
	 sending a valent $\cV$-enriched category $\cC$ to its underlying space of objects $\ob \cC$, its enrichment category $\cV$, and the $\cV$-linear colimit-preserving extension of its Yoneda functor $(\PSh(X) \otimes \cV \to \PShV(\cC)) \in \Arr(\RMod(\PrL))$.
.
\begin{constr} The category $(\Spaces \times \Alg(\PrL)) \times_{\RMod(\PrL)} \Arr(\RMod(\PrL)) \times_{\Arr(\Alg(\PrL))} \Alg(\Pr)$ inherits a symmetric monoidal structure from the Cartesian product on $\Spaces$ and the pointwise symmetric monoidal structure on  $\Alg(\PrL), \RMod(\PrL)$ and $\Arr(\RMod(\PrL))$ induced (see \cref{ex:pointwisetensor}) by Lurie's symmetric monoidal structure on $\PrL$. 

We briefly explain that all relevant functors are indeed symmetric monoidal: The functor $\Spaces \hookrightarrow \cat \overset{\PSh}{\to} \PrL$ is a composition of symmetric monoidal functors by \HA{Rem.}{4.8.1.8}. The functor $\PrL \times \Alg(\PrL) \simeq \Alg_{ \operatorname{Triv} \sqcup \Ass}(\PrL) \to \RMod(\PrL)$ sending $(\cP, \mathcal{V})$ to the free $\cV$-module $\cP \otimes \mathcal{V}$ is symmetric monoidal by \cref{lem:freesm}. The projections $\Arr(\Alg(\PrL)) \overset{\mathrm{src}}{\to} \Alg(\PrL)$ and $\Arr(\RMod(\PrL)) \overset{\mathrm{src}}{\to} \RMod(\PrL)$ as well as the diagonal $\Alg(\PrL) \to \Arr(\Alg(\PrL))$ are symmetric monoidal by the functoriality described in \cref{ex:pointwisetensor}. The projection $\RMod(\PrL) \to \Alg(\PrL)$ is symmetric monoidal by \cref{lem:algforgetsm}. Hence, $(\Spaces \times \Alg(\PrL)) \times_{\RMod(\PrL)} \Arr(\RMod(\PrL)) \times_{\Arr(\Alg(\PrL))} \Alg(\Pr)$ is a limit of symmetric monoidal categories and symmetric monoidal functors and hence itself symmetric monoidal.
\end{constr}

	\begin{theorem}\label{thm:externaltens} There is a unique symmetric monoidal structure  $\boxtimes$ on the category  $\vEnr$ such that the fully faithful functor 
	\[\vEnr \to(\Spaces \times \Alg(\PrL)) \times_{\RMod(\PrL)} \Arr(\RMod(\PrL)) \times_{\Arr(\Alg(\PrL))} \Alg(\Pr)
	\] is symmetric monoidal.
	\end{theorem}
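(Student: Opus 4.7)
The plan is to exploit the identification $\vEnr \simeq \MMod$ from \cref{thm:functorialcomparison}, under which the fully faithful functor in the statement corresponds to the full inclusion
\[
\MMod \hookrightarrow (\Spaces \times \Alg(\PrL)) \times_{\RMod(\PrL)} \Arr(\RMod(\PrL)) \times_{\Arr(\Alg(\PrL))} \Alg(\PrL) =: \cT.
\]
The general principle I would invoke is that, for a symmetric monoidal $\infty$-category $\cT^{\otimes}$ and a full subcategory $\cT_0 \subseteq \cT$ closed under the tensor product and containing the unit, there is a unique symmetric monoidal structure on $\cT_0$ making the inclusion symmetric monoidal (a standard consequence of \HA{Prop.}{2.2.1.1} / \HA{Prop.}{2.2.1.2}, applied to the sub-operad spanned by $\cT_0$). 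So the entire proof reduces to checking that $\MMod \subseteq \cT$ is closed under the tensor product of $\cT$ and contains the unit.

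Unwinding the pointwise symmetric monoidal structure on $\cT$, the tensor of two objects represented by marked modules $(X, \cV, F\colon \PSh(X) \otimes \cV \to \cM)$ and $(Y, \cW, G\colon \PSh(Y) \otimes \cW \to \cN)$ is the triple $(X \times Y,\, \cV \otimes \cW,\, F \otimes G)$, where, using that $\PSh(-)$ is symmetric monoidal and that free modules are preserved under the tensor product (\cref{lem:freesm}), the source of $F \otimes G$ identifies with $\PSh(X \times Y) \otimes (\cV \otimes \cW)$. I would then verify markedness of $F \otimes G$ using the characterization \cref{prop:MModVequiv}:
\begin{itemize}
\item $F \otimes G$ is internally left adjoint in $\Pr_{\cV \otimes \cW}$: this is \cref{cor:iLreltens}(1).
\item $F \otimes G$ is colimit-dominant: decompose $F \otimes G = (F \otimes \id_{\cN}) \circ (\id_{\PSh(X) \otimes \cV} \otimes G)$, apply \cref{lem:colimdomtensor} to each factor, and then \cref{obs:cancellationcolimdom} that colimit-dominant morphisms compose.
\end{itemize}
For the unit, the unit of $\cT^\otimes$ is $(*, \Spaces, \id_{\Spaces})$, and the functor $* \to \Spaces$ picking out $1_{\Spaces}$ is evidently $\Spaces$-atomic and generating, so lies in $\MMod$. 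Uniqueness is immediate from the general principle, since the symmetric monoidal structure on $\MMod$ is completely determined by the requirement that the inclusion into $\cT^{\otimes}$ be symmetric monoidal.

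The step I would expect to require the most care is the verification of colimit-dominance; the other pieces are essentially formal consequences of the results already established. Apart from that, the only point one must be attentive to is that the relevant ambient categories ($\Alg(\PrL)$, $\RMod(\PrL)$, and their arrow categories) all carry compatible \emph{pointwise} symmetric monoidal structures induced from $\PrL$, and that each of the functors along which the pullback $\cT$ is formed (namely $\PSh(-) \otimes (-)$, the forgetful $\RMod(\PrL) \to \Alg(\PrL)$, source projections on arrow categories, and the diagonal $\Alg(\PrL) \to \Arr(\Alg(\PrL))$) is symmetric monoidal, so that $\cT$ indeed inherits a symmetric monoidal structure as a limit in $\CAlg(\widehat{\cat})$.
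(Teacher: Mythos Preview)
Your proposal is correct and follows essentially the same approach as the paper: reduce to showing the full subcategory $\MMod$ contains the unit and is closed under the tensor product, then invoke \cref{lem:colimdomtensor} for colimit-dominance and \cref{cor:iLreltens} (which is \cref{lem:propertiestensorbetter}) for internal left adjointness. Your decomposition of $F \otimes G$ into two factors to apply \cref{lem:colimdomtensor} is exactly what the paper's terse citation of that lemma is meant to encode, and your remarks on how the ambient category $\cT$ acquires its symmetric monoidal structure recapitulate the Construction immediately preceding the theorem.
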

	\begin{proof} It suffices to show that the unit and the binary tensor product of objects in the full subcategory $\vEnr$ remains in the full subcategory. Indeed, its unit is $\Spaces \to \Spaces$ which is in $\vEnr$ and the tensor product of two objects in $\vEnr$ remains in $\vEnr$ by \cref{lem:colimdomtensor} and \cref{lem:propertiestensorbetter}.
	\end{proof}

	\begin{defin}\label{defin:exteriortensor} We refer to the symmetric monoidal structure $\boxtimes$ on $\vEnr$ from \cref{thm:externaltens} as the \emph{external tensor product} of enriched categories. 
	\end{defin}
	
	\begin{obs} \label{thm:PShunivsm}By definition, the external tensor product is set up so that the functor 
	\[ \ob: \vEnr \to \Spaces
	\] sending a valent enriched category $\cC$ to its space of objects and by \cref{obs:functorialPSh} the functor
	\[\PSh_{\mathrm{enr}} : \vEnr \to \RMod(\PrL)
	\] sending $\cC$ to the right $\cV$-module $\PShV(\cC)$ are symmetric monoidal. \end{obs}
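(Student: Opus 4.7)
The plan is to observe that both $\ob$ and $\PSh_{\mathrm{enr}}$ factor through the fully faithful symmetric monoidal inclusion constructed in \cref{thm:externaltens}, followed by canonical projections from the target category which are themselves symmetric monoidal. Concretely, writing $\cK := (\Spaces \times \Alg(\PrL)) \times_{\RMod(\PrL)} \Arr(\RMod(\PrL)) \times_{\Arr(\Alg(\PrL))} \Alg(\PrL)$ for the ambient category, the functor $\ob: \vEnr \to \Spaces$ is the composition of the inclusion $\vEnr \hookrightarrow \cK$ with the projection $\cK \to \Spaces \times \Alg(\PrL) \to \Spaces$ to the first factor, while $\PSh_{\mathrm{enr}}: \vEnr \to \RMod(\PrL)$ is the composition of $\vEnr \hookrightarrow \cK$ with the target projection $\cK \to \Arr(\RMod(\PrL)) \overset{\mathrm{tgt}}{\to} \RMod(\PrL)$; both identifications follow from the description of the fully faithful embedding preceding \cref{thm:externaltens}.

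It therefore suffices to verify that these two projections are symmetric monoidal for the symmetric monoidal structure on $\cK$ used in \cref{thm:externaltens}. For the first projection, the Cartesian product projection $\Spaces \times \Alg(\PrL) \to \Spaces$ is symmetric monoidal, as is the forgetful functor from any limit of symmetric monoidal categories to a chosen factor. For the target projection, the map $\mathrm{tgt}: \Arr(\RMod(\PrL)) \to \RMod(\PrL)$ is symmetric monoidal for the pointwise symmetric monoidal structure on the arrow category, by the same functoriality from \cref{ex:pointwisetensor} used in the proof of \cref{thm:externaltens} to handle $\mathrm{src}$.

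Since \cref{thm:externaltens} equips $\vEnr$ with the unique symmetric monoidal structure making $\vEnr \hookrightarrow \cK$ symmetric monoidal, composing with the symmetric monoidal projections above yields symmetric monoidal structures on $\ob$ and $\PSh_{\mathrm{enr}}$. There is no genuine obstacle here—the content is entirely bookkeeping, relying on the fact that symmetric monoidal structures transport along full subcategory inclusions that are closed under the tensor product (which was already verified in the proof of \cref{thm:externaltens}) and that projections out of limits and out of arrow categories inherit symmetric monoidality pointwise.
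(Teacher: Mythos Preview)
Your proposal is correct and matches the paper's reasoning: the paper treats this as an observation that holds ``by definition,'' and your argument is precisely the unpacking of that definition---the external tensor product on $\vEnr$ was constructed by pulling back the symmetric monoidal structure on $\cK$ along the full inclusion, so composing with the symmetric monoidal projections $\cK \to \Spaces$ and $\cK \to \Arr(\RMod(\PrL)) \overset{\mathrm{tgt}}{\to} \RMod(\PrL)$ yields the claim. There is nothing to add.
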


	\begin{obs}
		\label{obs:formulagraphext}
		The unit of the external tensor product structure on $\vEnr$ is the terminal $\Spaces$-enriched category $B*$ with one object and contractible endomorphisms (with associated marked module $(\{*\}\to \Spaces)\in \MModS$). For $\cV, \cW  \in \Alg(\PrL), \cC \in \vcatV$ and $\cD \in \vcatW$, their external tensor product $\cC\boxtimes \cD$ is enriched in $\cV \otimes \cW$,  has space of objects $\ob \cC \times \ob \cD$, enriched presheaf category $\PShV(\cC) \otimes \PShW(\cD)$ and Yoneda embedding given by \[\ob \cC \times \ob \cD \to \PShV(\cC) \times \PShW(\cD)\to \PShV(\cC) \otimes \PShW(\cD)\,.\] 
		
For $x\in \ob \cC$ and $y \in \ob \cD$, it follows from \cref{lem:propertiestensorbetter} that
		\[
			\iHom_{\PShV(\mathcal{C}) \otimes \PSh_\cW(\cD)}(\yo {}^\cV_x \otimes \yo {}^\cW_y , - ) \simeq \iHom_{\PShV(\mathcal{C})}(\yo {}^\cV_x, -) \otimes \iHom_{\PSh_\cW(\cD)}(\yo {}^\cW_y, -)\, .
		\]
By \cref{cor:PShVgraph}, it therefore follows that 		\[
		\Hom_{\cC \boxtimes \cD} ((x,y),(x',y')) = \Hom_{\cC}(x,x') \otimes \Hom_{\cD} (y, y') \in \cV \otimes \cW \, .
		\]
	\end{obs}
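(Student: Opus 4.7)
The plan is to unwind Theorem \ref{thm:externaltens} directly, working in the ambient symmetric monoidal category
\[ \mathcal{E} := (\Spaces \times \Alg(\PrL)) \times_{\RMod(\PrL)} \Arr(\RMod(\PrL)) \times_{\Arr(\Alg(\PrL))} \Alg(\PrL) \]
whose symmetric monoidal structure is pointwise, using Lurie's tensor on $\PrL$. Under the symmetric monoidal embedding of $\vEnr$, a $\cC \in \vcat(\cV)$ corresponds to the tuple $(\ob \cC, \cV, (\PSh(\ob\cC) \otimes \cV \to \PShV(\cC)))$. The unit of $\mathcal{E}$ is the tuple $(*, \Spaces, (\Spaces \to \Spaces))$, which is exactly the tuple associated to the terminal $\Spaces$-enriched category $B*$ (whose associated marked module is $\{*\} \to \Spaces$ by \cref{thm:vcatS}).

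Next I would identify the tensor product $\cC \boxtimes \cD$ by reading off the componentwise tensor product in $\mathcal{E}$. On spaces one obtains $\ob \cC \times \ob \cD$; on algebras one obtains $\cV \otimes \cW$; and the remaining arrow component computes to the $\cV \otimes \cW$-linear cocontinuous functor
\[ \PSh(\ob \cC \times \ob \cD) \otimes (\cV \otimes \cW) \simeq (\PSh(\ob \cC) \otimes \cV) \otimes (\PSh(\ob \cD) \otimes \cW) \to \PShV(\cC) \otimes \PShW(\cD), \]
using that $\PSh: \cat \to \PrL$ is symmetric monoidal. By \cref{obs:functorialPSh} this identifies $\PSh_{\cV \otimes \cW}(\cC \boxtimes \cD)$ with $\PShV(\cC) \otimes \PShW(\cD)$, and the Yoneda functor with the displayed product, which is what we needed.

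For the internal hom formula, the key input is that for each $x \in \ob \cC$ the representable $\yoV_{\cC}(x) \in \PShV(\cC)$ is $\cV$-atomic by \cref{cor:PShVtg}, so $\yoV_{\cC}(x) \otimes -: \cV \to \PShV(\cC)$ is internally left adjoint in $\PrV$ by \cref{obs:iLandatomic}, with right adjoint $\iHom_{\PShV(\cC)}(\yoV_{\cC}(x), -)$; similarly for $y$. Applying \cref{cor:iLreltens}(1) to the tensor product of these two internal adjunctions then shows
\[ (\yoV_{\cC}(x) \otimes \yoW_{\cD}(y)) \otimes - : \cV \otimes \cW \to \PShV(\cC) \otimes \PShW(\cD) \]
is internally left adjoint in $\Pr_{\cV \otimes \cW}$ with right adjoint $\iHom_{\PShV(\cC)}(\yoV_{\cC}(x), -) \otimes \iHom_{\PShW(\cD)}(\yoW_{\cD}(y), -)$. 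Both right adjoints compute $\iHom_{\PShV(\cC) \otimes \PShW(\cD)}(\yoV_{\cC}(x) \otimes \yoW_{\cD}(y), -)$ by uniqueness of adjoints, giving the desired formula. Finally, evaluating at $\yoV_{\cC}(x') \otimes \yoW_{\cD}(y')$ and applying \cref{cor:PShVgraph} on both factors yields
\[ \Hom_{\cC \boxtimes \cD}((x,y),(x',y')) \simeq \Hom_\cC(x,x') \otimes \Hom_\cD(y,y'). \]

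The only non-purely-formal step is invoking \cref{cor:iLreltens} to identify the right adjoint of the tensor product with the tensor of right adjoints; but this is exactly what that corollary provides, so no genuine obstacle is expected.
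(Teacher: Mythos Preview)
Your proposal is correct and follows essentially the same route as the paper's own justification embedded in the observation: unwind the componentwise symmetric monoidal structure of \cref{thm:externaltens} to read off the unit, space of objects, presheaf category, and Yoneda embedding, then invoke \cref{cor:iLreltens} (which is the same result as \cref{lem:propertiestensorbetter}) for the internal hom identity and \cref{cor:PShVgraph} for the graph formula. You have simply supplied a little more detail than the paper, e.g.\ the explicit reduction to atomicity of representables via \cref{cor:PShVtg} and \cref{obs:iLandatomic}, which the paper leaves implicit.
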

		
\begin{obs} \label{obs:operadlax}The functor $\RMod(\PrL) \to \Alg(\PrL)$ is a coCartesian fibration of operads and hence by \cref{cor:coCartoperadscrit} straightens to a lax\footnote{In fact, it factors through $\Catcolim$ where it is symmetric monoidal by \HA{Thm.}{4.8.5.16}}  symmetric monoidal structure on the functor $\Pr_{-}: \Alg(\PrL) \to \lcat$. 
In particular, for any operad $O$ it induces a functor 
		\[
		\Alg_{O \otimes \E_1}(\PrL) \simeq \Alg_{O} (\Alg(\PrL)) \to \Alg_{O}(\widehat{\cat})\,  , 
		\]
		i.e.\ for any $\cV \in \Alg_{O \otimes \E_1}(\PrL)$, the category $\PrV:= \RMod_{\cV}(\PrL)$ inherits an $O$-monoidal structure. 
\end{obs}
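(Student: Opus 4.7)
The plan is to prove \cref{obs:operadlax} by exhibiting the required coCartesian fibration of operads and then invoking \cref{cor:coCartoperadscrit}.

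First, I would equip $\RMod(\PrL)$ and $\Alg(\PrL)$ with the symmetric monoidal structures inherited pointwise from Lurie's tensor product on $\PrL$, as in \HA{Thm.}{4.8.5.16}. By \cref{lem:algforgetsm}, the forgetful functor $p: \RMod(\PrL) \to \Alg(\PrL)$ is symmetric monoidal; in particular it is a map of operads in the sense of \HA{Def.}{2.1.2.7}. The nontrivial step is to check that $p$ is in fact a coCartesian fibration of operads. Unpacking the definition, this amounts to verifying that for any multimorphism $\cV_1 \otimes \cdots \otimes \cV_n \to \cW$ in $\Alg(\PrL)$ and any choice of right modules $\cM_i \in \RMod_{\cV_i}(\PrL)$, there is an operadic coCartesian lift with source $(\cM_1, \ldots, \cM_n)$. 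The candidate lift is precisely the extension-of-scalars map $\cM_1 \otimes \cdots \otimes \cM_n \to (\cM_1 \otimes \cdots \otimes \cM_n) \otimes_{\cV_1 \otimes \cdots \otimes \cV_n} \cW$, regarded as a morphism in $\RMod(\PrL)$ covering the given multimorphism. Its universal property as a coCartesian lift follows from the standard adjunction $-\otimes_{\cV_1 \otimes \cdots \otimes \cV_n} \cW \dashv \mathrm{restriction}$ combined with the universal property of the pointwise tensor product in $\RMod(\PrL)$, as in \cref{prop:reltensoradj}.

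Given this, \cref{cor:coCartoperadscrit} immediately produces a lax symmetric monoidal structure on the straightening $\Pr_{-}: \Alg(\PrL) \to \lcat$. For the final statement, I would apply the Dunn additivity equivalence $\Alg_{O \otimes \E_1}(\PrL) \simeq \Alg_O(\Alg(\PrL))$ (\HA{Thm.}{5.1.2.2}) to rewrite a presentably $O \otimes \E_1$-monoidal category as an $O$-algebra in $\Alg(\PrL)$. Since any lax symmetric monoidal functor sends $O$-algebras to $O$-algebras, applying $\Pr_{-}$ to such a $\cV$ yields $\RMod_{\cV}(\PrL) \in \Alg_O(\lcat)$, i.e.\ an $O$-monoidal structure on $\PrV$ whose underlying category is $\PrV$ itself.

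The main (and only) technical obstacle is verifying the operadic coCartesian lifting property for extension of scalars, which is essentially the content of \HA{Thm.}{4.8.5.16}; everything else is formal. The footnote in the statement in fact signals that Lurie's result already shows strong symmetric monoidality of $\Pr_{-}$ when restricted to $\Catcolim$, so one could alternatively invoke that theorem directly and observe that the forgetful $\Catcolim \hookrightarrow \lcat$ is lax symmetric monoidal, though the coCartesian-fibration-of-operads approach is cleaner for this paper's purposes.
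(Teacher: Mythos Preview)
Your approach is correct and essentially matches what the paper does, though the paper states this as an observation without a separate proof and relies more directly on the criterion in \cref{lem:coCartoperadscriterion}: rather than verifying operadic coCartesian lifts from scratch, one checks that the underlying functor $\RMod(\PrL) \to \Alg(\PrL)$ is a coCartesian fibration (this is \HA{Lem.}{4.8.3.15}, already recalled before \cref{defin:MMod}) and that tensoring with a fixed object preserves coCartesian morphisms. Your direct construction of the operadic lift via extension of scalars is exactly what unwinds to these two conditions, so the two routes are the same in content; the paper's framing just avoids re-deriving the multimorphism case by hand. The analogous verification appears explicitly in the proof of \cref{lem:coCartOpPr} immediately afterward.

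One small correction: the equivalence $\Alg_{O \otimes \E_1}(\PrL) \simeq \Alg_O(\Alg_{\E_1}(\PrL))$ is not Dunn additivity (\HA{Thm.}{5.1.2.2}, which concerns $\E_m \otimes \E_n \simeq \E_{m+n}$), but rather the tensor--hom adjunction for the Boardman--Vogt tensor product recalled in \cref{reminder:absolutetensor}. This does not affect the argument, only the citation.
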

	
\begin{lemma}\label{lem:coCartOpPr} The symmetric monoidal functor \[(\Spaces \times \Alg(\PrL)) \times_{\RMod(\PrL)} \Arr(\RMod(\PrL)) \times_{\Arr(\Alg(\PrL))} \Alg(\PrL) \to \Alg(\PrL)\] is a coCartesian fibration of operads and the symmetric monoidal full subcategory $\vEnr$ is a sub-coCartesian fibration of operads. 
\end{lemma}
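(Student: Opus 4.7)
The plan is to reduce the statement to the fact that a symmetric monoidal functor whose underlying functor is a coCartesian fibration is automatically a coCartesian fibration of operads, provided compatibility between coCartesian transport and the tensor product holds. We verified in \cref{prop:bigpullback} that the underlying functor is a coCartesian fibration over $\Alg(\PrL)$, so the main work lies in packaging the symmetric monoidal structure.

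For the first assertion, I will exhibit the displayed pullback as a limit of coCartesian fibrations of operads over $\Alg(\PrL)^\otimes$. Concretely, the forgetful $\RMod(\PrL) \to \Alg(\PrL)$ is a coCartesian fibration of operads: it unstraightens the lax symmetric monoidal functor $\Pr_{-}: \Alg(\PrL) \to \lcat$ from \cref{obs:operadlax}, which in fact takes values in $\Catcolim$ with Lurie's tensor product and is genuinely symmetric monoidal there. The projection $\Spaces \times \Alg(\PrL) \to \Alg(\PrL)$ is trivially such, and the functor $(X, \cV) \mapsto \PSh(X) \otimes \cV$ into $\RMod(\PrL)$ is a symmetric monoidal map of coCartesian fibrations over $\Alg(\PrL)^\otimes$ since its coCartesian transport is precisely extension of scalars. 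For the arrow categories, one uses that $\Arr(-)$ preserves both coCartesian fibrations of operads and maps thereof (the $n$-ary operations of $\Arr(\mathcal{C})^\otimes$ being $\Arr(\mathcal{C}^\otimes_{\langle n \rangle})$, with the pointwise symmetric monoidal structure as in \cref{ex:pointwisetensor}), and likewise for the composition and diagonal into $\Arr(\Alg(\PrL))$. Since coCartesian fibrations of operads are closed under limits in $\widehat{\operatorname{coCart}}_{/\Alg(\PrL)^\otimes}$ taken along symmetric monoidal maps of coCartesian fibrations, the pullback defining $\mathcal{E}$ is a coCartesian fibration of operads over $\Alg(\PrL)$.

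For the second assertion, I need to show that the symmetric monoidal full subcategory $\vEnr \subseteq \mathcal{E}$ is closed under coCartesian morphisms, so that $\vEnr^\otimes \hookrightarrow \mathcal{E}^\otimes$ is a sub-coCartesian fibration of operads in the sense that the inclusion preserves coCartesian lifts over $\Alg(\PrL)^\otimes$, which by \kerodon{01VB} suffices for $\vEnr^\otimes \to \Alg(\PrL)^\otimes$ to itself be a coCartesian fibration of operads. Closure under tensor products in $\mathcal{E}$ was established in the proof of \cref{thm:externaltens}, using \cref{lem:colimdomtensor} and \cref{cor:iLreltens} to show that the external tensor product of two internal left adjoint colimit-dominant functors from free modules remains internal left adjoint and colimit-dominant. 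Closure under $\Alg(\PrL)$-coCartesian transport is \cref{prop:coCart} (and is explicitly described in \cref{ex:basechangerelative}): extension of scalars along $f: \cV \to \cW$ preserves atomicity by \cref{cor:atomicbasechange} and atomic generation by \cref{cor:generatingbasechange}.

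The main potential obstacle is formalising the statement that a limit of coCartesian fibrations of operads over $\Alg(\PrL)^\otimes$ along symmetric monoidal maps of such fibrations is again one; this is a standard consequence of the fact that $\widehat{\operatorname{coCart}}_{/\Alg(\PrL)^\otimes}$ inherits (weakly contractible, hence finite) limits from $\widehat{\cat}_{/\Alg(\PrL)^\otimes}$ (as in \cref{reminder:grothadjoints} and \cref{prop:grothfacts}(1)), applied internal to the category of operads over $\Fin_*$.
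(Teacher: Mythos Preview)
Your proposal is correct, but the paper takes a more direct route. Rather than exhibiting the big pullback as a limit of coCartesian fibrations of operads and invoking closure under limits, the paper applies the criterion of \cref{cor:coCartoperadscrit} directly to the total functor: it already knows from \cref{prop:bigpullback} that the underlying functor to $\Alg(\PrL)$ is a coCartesian fibration, so it remains only to check that tensoring with a fixed object preserves $\Alg(\PrL)$-coCartesian morphisms. The paper does this by a single explicit computation: a coCartesian morphism over $\cV \to \cV'$ has the form $(X,\cV,\PSh(X)\otimes\cV\to\cM) \to (X,\cV',\PSh(X)\otimes\cV'\to\cM\otimes_{\cV}\cV')$, and tensoring with $(Y,\cW,\PSh(Y)\otimes\cW\to\cN)$ yields a morphism whose target arrow is $\PSh(X\times Y)\otimes\cV'\otimes\cW \to (\cM\otimes_{\cV}\cV')\otimes\cN \simeq (\cM\otimes\cN)\otimes_{\cV\otimes\cW}(\cV'\otimes\cW)$, visibly again extension of scalars and hence coCartesian. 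The $\vEnr$ part is then immediate from closure under tensor (\cref{thm:externaltens}) and coCartesian transport (\cref{prop:coCart}), exactly as you argue.

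Your limit-based approach works too, but it trades one explicit verification for several: you must check that each of $\RMod(\PrL)$, $\Arr(\RMod(\PrL))$, $\Arr(\Alg(\PrL))$, etc.\ is a coCartesian fibration of operads over $\Alg(\PrL)$ and that every leg of the diagram (the source projection, the diagonal, the free-module functor, the forgetful $\Arr(\RMod(\PrL))\to\Arr(\Alg(\PrL))$) is a map of such. Each of these is true and not hard, but the bookkeeping is heavier than the paper's one-line relative-tensor-product identity. The upside of your approach is that it is more modular and makes clearer why the result holds structurally; the paper's approach is shorter and avoids the general limit lemma you flag in your last paragraph.
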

\begin{proof} 
	By \cref{thm:externaltens} all relevant functors are symmetric monoidal and the underlying functor $(\Spaces \times \Alg(\PrL)) \times_{\RMod(\PrL)} \Arr(\RMod(\PrL)) \times_{\Arr(\Alg(\PrL))} \Alg(\PrL) \to \Alg(\PrL)$ is a coCartesian fibration. Moreover, the underlying category of the symmetric monoidal subcategory $\vEnr$ is closed under coCartesian transport. By \cref{cor:coCartoperadscrit}, it therefore suffices to show that tensoring with objects in  $(\Spaces \times \Alg(\PrL)) \times_{\RMod(\PrL)} \Arr(\RMod(\PrL)) \times_{\Arr(\Alg(\PrL))} \Alg(\PrL)$ preserves $\Alg(\PrL)$-coCartesian morphisms. A coCartesian morphism over $\cV \to \cV'$ in $\Alg(\PrL)$ is of the form $(X, \cV, \PSh(X) \otimes \cV \to \cM) \to (X, \cV', \PSh(X) \otimes \cV' \to \cM \otimes_{\cV} \cV')$. So tensoring with any other $(Y, \cW, \PSh(Y) \otimes \cW \to \cN)$ yields
	\[
	\begin{tikzcd}
		(X \times Y, \cV \otimes \cW, \PSh(X \times Y) \otimes \cV \otimes \cW \to \cM \otimes \cN ) \arrow[d] \\
		\left(X \times Y, \cV' \otimes \cW, \PSh(X \times Y) \otimes \cV' \otimes \cW \to (\cM \otimes_{\cV} \cV') \otimes \cN \simeq (\cM \otimes \cN) \otimes_{\cV \otimes \cW} (\cV' \otimes \cW) \right)
	\end{tikzcd}
	\]
	which is once again determined by extension-of-scalars along $\cV \otimes \cW \to \cV' \otimes \cW$ and hence is coCartesian.
\end{proof}

\begin{constr}\label{constr:laxPr}By \cref{cor:coCartoperadscrit}, the coCartesian fibration of operads from \cref{lem:coCartOpPr} straightens to a lax symmetric monoidal structure on the natural transformation from \cref{obs:naturaltrafovEnr}:
\[\Spaces \times_{\Pr_{-}} \Arr(\Pr_{-}) : \Alg(\PrL) \to \lcat
\] 
In particular, as in \cref{obs:operadlax} for any operad $O$ and any presentably $O\otimes \E_1$-monoidal category $\cV$ this induces an $O$-monoidal structure on $\Spaces \times_{\PrV} \Arr(\PrV)$.
\end{constr}

	\begin{cor}\label{cor:laxsymmetric}
	The functor \[ \vcat(-): (\Alg(\PrL), \otimes) \to (\widehat{\cat}, \times)\]
	admits a unique lax symmetric monoidal structure for which the fully faithful natural transformation \[\vcat(-) \To \Spaces \times_{\Pr_{-}} \Arr(\Pr_{-}) : \Alg(\PrL) \to \widehat{\cat}\]  from \cref{obs:naturaltrafovEnr} is symmetric monoidal for the lax symmetric monoidal structure on $\Spaces \times_{\Pr_{-}} \Arr(\Pr_{-})$ from \cref{constr:laxPr}. 	\end{cor}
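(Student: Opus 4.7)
The plan is to straighten the coCartesian fibration of operads structures already produced in \cref{lem:coCartOpPr}. By \cref{cor:coCartoperadscrit}, coCartesian fibrations of operads over $\Alg(\PrL)$ correspond to lax symmetric monoidal functors out of $\Alg(\PrL)^{\otimes}$, and morphisms of such fibrations over $\Alg(\PrL)$ correspond to symmetric monoidal natural transformations between the resulting lax symmetric monoidal functors. Since $\vEnr \to \Alg(\PrL)$ is a coCartesian fibration of operads, straightening it immediately produces a lax symmetric monoidal structure on the functor $\vcat(-):\Alg(\PrL) \to \lcat$; this supplies the required existence.

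For compatibility with the natural transformation of \cref{obs:naturaltrafovEnr}, I would observe that the fully faithful inclusion
\[\vEnr \hookrightarrow (\Spaces \times \Alg(\PrL)) \times_{\RMod(\PrL)} \Arr(\RMod(\PrL)) \times_{\Arr(\Alg(\PrL))} \Alg(\PrL)\]
is by \cref{thm:externaltens} symmetric monoidal, and by \cref{lem:coCartOpPr} is a morphism of coCartesian fibrations of operads over $\Alg(\PrL)$. Straightening this morphism yields, by the same equivalence cited above, a symmetric monoidal natural transformation between the associated lax symmetric monoidal functors. By construction of the lax monoidal structure on $\Spaces \times_{\Pr_{-}} \Arr(\Pr_{-})$ in \cref{constr:laxPr} and of the natural transformation in \cref{obs:naturaltrafovEnr}, this symmetric monoidal transformation is precisely the enhancement of $\vcat(-) \To \Spaces \times_{\Pr_{-}} \Arr(\Pr_{-})$ sought.

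Uniqueness is the one non-tautological point, and follows from the componentwise full faithfulness established in \cref{thm:functorialcomparison}. Under the unstraightening equivalence, giving a lax symmetric monoidal structure on $\vcat(-)$ together with a symmetric monoidal enhancement of the given natural transformation is the same as equipping $\vEnr \to \Alg(\PrL)$ with the structure of a coCartesian sub-fibration of operads of the big pullback over $\Alg(\PrL)$. Because the inclusion of $\vEnr$ into the big pullback is fully faithful on underlying categories, such a sub-operad structure is determined by the condition that tensor products (over all colors in $\Fin_*$) of objects of $\vEnr$ remain in $\vEnr$ --- which is exactly the content already verified in the proof of \cref{lem:coCartOpPr}. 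I do not anticipate a serious obstacle; the only delicate check is that the straightened natural transformation agrees with the one of \cref{obs:naturaltrafovEnr}, which amounts to unwinding that both arise from the same sub-coCartesian-fibration inclusion over $\Alg(\PrL)$.
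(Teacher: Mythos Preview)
Your proposal is correct and follows essentially the same approach as the paper: the paper's proof is the one-liner ``Immediate application of \cref{cor:coCartoperadscrit} using \cref{lem:coCartOpPr} and \cref{thm:externaltens},'' and you have unpacked exactly that logic. The only minor difference is that for uniqueness you invoke \cref{thm:functorialcomparison}, whereas the paper implicitly relies on the full faithfulness already built into \cref{thm:externaltens} (the unique symmetric monoidal structure on the full subcategory $\vEnr$); these amount to the same thing.
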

	\begin{proof} 
	Immediate application of \cref{cor:coCartoperadscrit} using \cref{lem:coCartOpPr} and \cref{thm:externaltens}. 
	\end{proof}
	
	Using this lax symmetric monoidal structure, we immediately obtain the following corollary:
	\begin{cor}\label{constr:internaltensor}
	For any operad $O$, and $\cV \in  \Alg_{O \otimes \E_1}(\PrL) \simeq \Alg_O \Alg_{\E_1}(\PrL)$ a presentably $O \otimes \E_1$-monoidal category, there is a unique $O$-monoidal structure on $\vcat(\cV)$ making the fully faithful functor $\vcat(\cV) \to \Spaces\times_{\PrV}\Arr(\PrV)$ $O$-monoidal. 
	\end{cor}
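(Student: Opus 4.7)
The plan is to derive this corollary directly from \cref{cor:laxsymmetric} by applying the $O$-algebra functor. By that corollary, we have a lax symmetric monoidal natural transformation
\[
\vcat(-) \To \Spaces \times_{\Pr_{-}} \Arr(\Pr_{-}) : (\Alg(\PrL), \otimes) \to (\lcat, \times)
\]
between lax symmetric monoidal functors, which by \cref{thm:functorialcomparison} is pointwise fully faithful. The strategy is to apply $\Alg_O(-)$ to both sides and use the equivalence $\Alg_O(\Alg_{\E_1}(\PrL)) \simeq \Alg_{O \otimes \E_1}(\PrL)$ to interpret the result at a presentably $O \otimes \E_1$-monoidal category.

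The first step is to enhance the natural transformation above to a morphism of symmetric monoidal functors valued in $\infty$-operads, so that we may apply $\Alg_O(-)$ levelwise and naturally in $\Alg(\PrL)$. Evaluating at $\cV$, regarded as an object of $\Alg_O(\Alg(\PrL))$, produces on one side an $O$-monoidal category $\vcat(\cV)$ together with a fully faithful $O$-monoidal functor into the $O$-monoidal category $\Spaces \times_{\PrV} \Arr(\PrV)$ of \cref{constr:laxPr}. This establishes existence of the desired $O$-monoidal structure.

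For uniqueness, I would appeal to the general principle that an $O$-monoidal refinement of a fully faithful functor $\iota: \cC \hookrightarrow \cD$ into a given $O$-monoidal category $\cD$ is essentially unique: by unstraightening, such a refinement corresponds to a full subcategory $\cC^{\otimes} \subseteq \cD^{\otimes}$ of the coCartesian fibration $\cD^{\otimes} \to O^{\otimes}$ whose fiber over $\langle 1\rangle$ equals $\cC$ and which is closed under the chosen $O$-monoidal coCartesian transports. Such a $\cC^{\otimes}$ is uniquely determined if it exists, so the space of $O$-monoidal refinements of $\iota$ is $(-1)$-truncated. Applied to the fully faithful functor $\vcat(\cV) \hookrightarrow \Spaces \times_{\PrV} \Arr(\PrV)$, this forces the $O$-monoidal structure produced above to coincide with any other.

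The main obstacle is the operadic bookkeeping of the first step, in particular the careful identification of the $O$-monoidal structure on $\Spaces \times_{\PrV} \Arr(\PrV)$ obtained by applying $\Alg_O(-)$ to the lax symmetric monoidal functor $\Spaces \times_{\Pr_-} \Arr(\Pr_{-})$ with the one constructed in \cref{constr:laxPr}; this should follow from a universal property of the coCartesian fibration of operads in \cref{lem:coCartOpPr}, but requires attention. Once this is in place, the remaining assertions are formal consequences of preservation of pointwise fully faithful natural transformations by $\Alg_O(-)$.
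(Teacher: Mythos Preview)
Your proposal is correct and follows essentially the same route as the paper, which treats this corollary as immediate from \cref{cor:laxsymmetric}: a lax symmetric monoidal functor sends $O$-algebras to $O$-algebras, and the pointwise fully faithful symmetric monoidal natural transformation into $\Spaces \times_{\Pr_-}\Arr(\Pr_-)$ then yields the required fully faithful $O$-monoidal functor, with uniqueness following from the full subcategory argument you outline. Your concern about identifying the $O$-monoidal structure on $\Spaces \times_{\PrV}\Arr(\PrV)$ is unwarranted: by definition, \cref{constr:laxPr} produces that structure precisely by applying the lax symmetric monoidal functor $\Spaces \times_{\Pr_-}\Arr(\Pr_-)$ to $\cV \in \Alg_O(\Alg(\PrL))$, so there is nothing to compare.
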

	
	\begin{defin} We refer to the $O$-monoidal structure from \cref{constr:internaltensor}  as the \emph{internal $O$-monoidal structure} on $\vcat(\cV)$.
	\end{defin}

	\begin{ex}
			\label{ex:internaltensor}
		By Dunn additivity $\E_n \simeq \E_{n-1} \otimes \E_1$, so if $\cV \in \Alg_{\E_n}(\PrL)$ is $\E_n$-monoidal then we obtain an $\E_{n-1}$-monoidal structure on $\vcatV$, which we call the \emph{internal tensor product}. In particular, if $\cV \in \CAlg(\PrL) = \Alg_{\E_{\infty}}(\PrL)$, then the internal tensor product is also symmetric monoidal.	\end{ex}

		\begin{obs}
		\label{cor:PShVOmonoidal}
		For $O$ an operad and $\cV \in \Alg_{O\otimes \E_1}(\PrL)$, the target projection $\Spaces \times_{\PrV}\Arr(\Pr_{\cV}) \to \PrV$ is $O$-monoidal by \cref{constr:laxPr}. Hence, it follows from  \cref{constr:internaltensor} that the presheaf functor 
		\[\PShV: \vcatV \to \PrV
		\]
		is $O$-monoidal. 		\end{obs}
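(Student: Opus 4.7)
The plan is to show that $\PShV: \vcatV \to \PrV$ decomposes as a composition of two $O$-monoidal functors, one of which is the fully faithful inclusion from \cref{constr:internaltensor} and the other a target projection whose $O$-monoidality is read off from the construction in \cref{constr:laxPr}.

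First I would observe that by \cref{obs:functorialPSh} the presheaf functor $\PShV$ is equivalent to the composition
\[
\vcatV \;\xhookrightarrow{}\; \Spaces \times_{\PrV} \Arr(\PrV) \;\xrightarrow{\mathrm{tgt}}\; \PrV,
\]
where the first arrow is the fully faithful functor from \cref{thm:functorialcomparison}. By \cref{constr:internaltensor}, this inclusion is $O$-monoidal for the internal $O$-monoidal structure on $\vcatV$ and the $O$-monoidal structure on $\Spaces \times_{\PrV} \Arr(\PrV)$ coming from \cref{constr:laxPr}. Thus the claim reduces to showing that $\mathrm{tgt}: \Spaces \times_{\PrV} \Arr(\PrV) \to \PrV$ is $O$-monoidal.

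Next I would identify this target projection globally: by construction in \cref{lem:coCartOpPr}, the category $(\Spaces \times \Alg(\PrL)) \times_{\RMod(\PrL)} \Arr(\RMod(\PrL)) \times_{\Arr(\Alg(\PrL))} \Alg(\PrL) \to \Alg(\PrL)$ is a coCartesian fibration of operads, and there is an evident functor from it to $\RMod(\PrL) \to \Alg(\PrL)$ induced by the target projection $\Arr(\RMod(\PrL)) \to \RMod(\PrL)$. This functor is symmetric monoidal (as the target projection on arrow categories is) and fits in a commutative triangle over $\Alg(\PrL)$. Moreover, inspecting the description of coCartesian morphisms in \cref{lem:coCartOpPr}, a coCartesian morphism $(X, \cV, \PSh(X)\otimes \cV \to \cM) \to (X, \cW, \PSh(X)\otimes\cW \to \cM \otimes_{\cV}\cW)$ projects to $\cM \to \cM\otimes_\cV\cW$, which is precisely the coCartesian morphism for $\RMod(\PrL) \to \Alg(\PrL)$. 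Hence the target projection is a map of coCartesian fibrations of operads over $\Alg(\PrL)$.

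Finally, by \cref{cor:coCartoperadscrit} this map straightens to a symmetric monoidal natural transformation $\Spaces \times_{\Pr_{-}} \Arr(\Pr_{-}) \To \Pr_{-}$ between lax symmetric monoidal functors $\Alg(\PrL) \to \lcat$. Evaluating at any $\cV \in \Alg_{O\otimes\E_1}(\PrL) \simeq \Alg_O(\Alg(\PrL))$ yields the desired $O$-monoidal functor $\mathrm{tgt}: \Spaces \times_{\PrV} \Arr(\PrV) \to \PrV$, completing the proof. The only subtle point — and thus the main obstacle — is a careful bookkeeping of the coCartesian fibration of operads structures, ensuring that the target projection preserves coCartesian edges; but this is immediate from the explicit description of coCartesian transport as extension of scalars.
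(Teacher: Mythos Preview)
Your proof is correct and follows exactly the same approach as the paper's observation, which itself contains only a two-line justification: decompose $\PShV$ as the fully faithful inclusion followed by the target projection, invoke \cref{constr:internaltensor} for the first and \cref{constr:laxPr} for the second. You have simply unpacked the appeal to \cref{constr:laxPr} in more detail, verifying explicitly that the global target projection is a symmetric monoidal map of coCartesian fibrations of operads over $\Alg(\PrL)$ and then straightening; this is indeed the content the paper leaves implicit.
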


\begin{obs}\label{obs:internaltensorexpression}
Unwinding \cref{constr:internaltensor}, if $\cV \in \Alg_{O \otimes \E_1}(\Pr)\simeq \Alg_{O} (\Alg_{\E_1}(\Pr))$ is represented by a map of operads $O \to \Alg(\Pr)$, then the category of operators $\vcatV^{\otimes}$ of the $O$-monoidal category $\vcatV$ is given by the pullback 
		\[
			\begin{tikzcd}
				\vcatV^\otimes \arrow[r] \arrow[d] \arrow[dr, phantom, "\scalebox{1}{$\lrcorner$}" , very near start, color=black] & \vEnr^\boxtimes \arrow[d] \\
				\mathcal{O}^\otimes \arrow[r] & \Alg(\PrL)^\otimes
			\end{tikzcd}\,.
		\]
		By the decription of coCartesian morphisms in \cref{lem:coCartoperadscriterion}, given colors  $X_1, \ldots, X_n, Y \in \underline{O}$ and an $n$-ary multimorphism $\alpha \in \Map_{O}(X_1,\ldots, X_n ; Y) $ (i.e.\ a morphism in $O^{\otimes}$ covering the unique map $\langle n \rangle \to \langle 1 \rangle$ in $\Fin$), the induced operation is given by 
		\[ \bigotimes\nolimits_{\alpha}: \prod_i \vcat(\cV_{X_i}) \overset{\boxtimes}{\to} \vcat(\otimes_i \cV_{X_i}) \overset{(\otimes_{\alpha})_!}{\to} \vcat(\cV_Y)
		\]
		where $\boxtimes$ denotes the external tensor product, $\cV_{X_i}, \cV_Y\in \Alg(\PrL)$ and the morphism $\bigotimes\nolimits_{\alpha} : \otimes_i \cV_{X_i} \to \cV_Y$ in $\Alg(\PrL)$ are the operations of the $O$-algebra $\cV$ in $\Alg(\PrL)$. 
		
		In particular, for $\cC_i \in \vcat(\cV_i)$, their internal tensor product $\bigotimes\nolimits_{\alpha} \cC_i$ has space of objects $\prod_i \ob \cC_i$ and graph 
		\[ \Hom_{\bigotimes\nolimits_{\alpha} \cC_i} \left( (x_i)_i, (y_i)_i \right) = \bigotimes\nolimits_{\alpha} \Hom_{\cC_i}(x_i, y_i).
		\]		
%
%
%
\end{obs}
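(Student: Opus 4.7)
The plan is to verify this observation by combining three ingredients: the definition of the internal $O$-monoidal structure from \cref{constr:internaltensor}, the structure of coCartesian fibrations of operads from \cref{lem:coCartOpPr}, and the explicit formula for the external tensor product from \cref{obs:formulagraphext}. The key point is that by \cref{constr:internaltensor}, the $O$-monoidal structure on $\vcat(\cV)$ is defined as the $O$-monoidal pullback of the coCartesian fibration of operads $\vEnr^\boxtimes \to \Alg(\PrL)^{\otimes}$ along the map of operads $\cV: O^\otimes \to \Alg(\PrL)^\otimes$ representing $\cV$ as an $O$-algebra in $\Alg(\PrL)$. Since pullbacks of $O$-monoidal categories along maps of operads are computed fiberwise on the categories of operators, this gives precisely the claimed pullback diagram for $\vcatV^{\otimes}$.

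Next, I would use the criterion from \cref{lem:coCartoperadscriterion} to unpack the operations. Given an $n$-ary multimorphism $\alpha \in \Map_O(X_1,\ldots, X_n; Y)$, the induced operation on $\vcatV^\otimes$ is obtained by first taking a coCartesian lift in $\vEnr^\boxtimes$ of the underlying inert-then-active decomposition of $\alpha$ in $\Alg(\PrL)^\otimes$. The inert pieces pick out the factors $\cC_i \in \vcat(\cV_{X_i})$, and by \cref{lem:coCartOpPr} (together with the fact that $\vEnr^\boxtimes \to \Alg(\PrL)^\otimes$ is a coCartesian fibration of operads) this unfolds as the composite of the external tensor product $\boxtimes: \prod_i \vcat(\cV_{X_i}) \to \vcat(\otimes_i \cV_{X_i})$ followed by the change-of-enrichment functor $(\bigotimes_\alpha)_!$ associated to the morphism $\bigotimes_\alpha: \otimes_i \cV_{X_i} \to \cV_Y$ in $\Alg(\PrL)$ determined by $\cV$ and $\alpha$.

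Finally I would compute the space of objects and graphs of $\bigotimes_\alpha \cC_i$. The space of objects is multiplicative because both the external tensor product and change-of-enrichment functors preserve underlying spaces of objects: the former by the symmetric monoidality of $\ob: \vEnr \to \Spaces$ noted in \cref{thm:PShunivsm}, and the latter because the coCartesian transport in $\MMod \to \Alg(\PrL)$ does not alter the underlying space (as computed in \cref{prop:coCart} and \cref{prop:changeofenragree}). For the graphs, \cref{obs:formulagraphext} gives $\Hom_{\boxtimes_i \cC_i}((x_i)_i, (y_i)_i) = \bigotimes_i \Hom_{\cC_i}(x_i, y_i) \in \otimes_i \cV_{X_i}$, and applying $(\bigotimes_\alpha)_!$ postcomposes the graph with the monoidal functor $\bigotimes_\alpha : \otimes_i \cV_{X_i} \to \cV_Y$ by \cref{prop:changeofenragree}, yielding exactly the claimed formula.

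The main obstacle will be bookkeeping the coherence between the lax symmetric monoidal straightening used in \cref{cor:laxsymmetric} and the pullback description of the category of operators, but this is handled uniformly by \cref{cor:coCartoperadscrit} and \cref{lem:coCartoperadscriterion}; no genuinely new input is required beyond the already-established fiberwise equivalence $\vcat(\cV) \simeq \MModV$.
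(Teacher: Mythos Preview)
Your proposal is correct and matches the paper's approach: this is an \emph{Observation} in the paper with no separate proof, and the justification is precisely the unwinding you describe---the pullback description from \cref{constr:internaltensor}, the coCartesian-morphism decomposition from \cref{lem:coCartoperadscriterion}, the external tensor graph formula from \cref{obs:formulagraphext}, and the fact that change-of-enrichment acts by postcomposition on graphs (\cref{constr:changeofenrquiv}/\cref{prop:changeofenragree}). Your expansion is slightly more detailed than the paper's terse statement but uses exactly the same ingredients.
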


In the terminology of \HA{Def.}{2.1.2.13}, \cref{lem:coCartOpPr} asserts that the functor $\vEnr^{\boxtimes} \to \Alg(\PrL)^{\otimes}$ defines an $\Alg(\PrL)$-monoidal category. We now show that this $\Alg(\PrL)$-monoidal category is compatible with colimits in the sense of \HA{Def.}{3.1.1.18}.

	We immediately recover the following main result from  \cite{haugseng2023tensor} (for our notion of external and internal tensor product, which we compare to those of \cite{haugseng2023tensor} in \S \ref{sec:comptensor}). 
	\begin{theorem}\label{thm:preservescolim}
	For $\cV, \cW \in \Alg(\PrL)$, the functor 
		\[\boxtimes:  \vcatV \times \vcatW \to \vcat(\cV \otimes \cW) \]
		induced by the external tensor product preserves colimits separately in both variables. In particular, for $\cV \in \Alg_{O \otimes \E_1}(\PrL)$ and any $n$-ary operation $\alpha \in \Map_{O}(X_1, \ldots, X_n; Y)$ in $O$, the internal  tensor product
		\[\bigotimes\nolimits_\alpha : \prod_i \vcat(\cV_{X_i})\overset{\boxtimes}{\to} \vcat(\otimes_i \cV_{X_i}) \overset{(\otimes_\alpha)_!}{\to} \vcat(\cV_Y) \]
		preserves colimits separately in each variable.
	\end{theorem}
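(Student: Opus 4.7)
The plan is to first reduce the ``in particular'' clause to the statement about the external tensor product $\boxtimes$: since any $n$-ary operation $\bigotimes\nolimits_\alpha: \prod_i \vcat(\cV_{X_i}) \to \vcat(\cV_Y)$ factors as $\boxtimes$ followed by the change-of-enrichment $(\otimes_\alpha)_!$, and the latter is a left adjoint by \cref{cor:coerelativeadjoint}, it suffices to prove the first assertion. For the statement about $\boxtimes$, I would exploit the fully faithful symmetric monoidal embedding $\vEnr^\boxtimes \hookrightarrow (\Spaces \times \Alg(\PrL)) \times_{\RMod(\PrL)} \Arr(\RMod(\PrL)) \times_{\Arr(\Alg(\PrL))} \Alg(\PrL)$ from \cref{thm:externaltens}, or equivalently (for fixed $\cV, \cW$) the fully faithful symmetric monoidal comparison $\vcat(-) \hookrightarrow \Spaces \times_{\Pr_{-}} \Arr(\Pr_{-})$ from the lax symmetric monoidal structure of \cref{constr:laxPr}.

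The first step is to verify that this embedding preserves colimits. For a fixed $\cV \in \Alg(\PrL)$, the identification $\vcat(\cV) \simeq \MModV$ (from \cref{thm:charessim,thm:functorialcomparison}) together with \cref{cor:MModVclosed} shows that $\MModV$ is closed under colimits in $\Spaces \times_{\PrV} \Arr(\PrV)$, so this inclusion is cocontinuous. Hence to show that $\boxtimes: \vcatV \times \vcatW \to \vcat(\cV \otimes \cW)$ preserves colimits in each variable, it suffices to show that the external tensor on the ambient category
\[
(\Spaces \times_{\PrV} \Arr(\PrV)) \times (\Spaces \times_{\PrW} \Arr(\PrW)) \to \Spaces \times_{\Pr_{\cV \otimes \cW}} \Arr(\Pr_{\cV \otimes \cW})
\]
preserves colimits in each variable.

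The second step is to verify this cocontinuity on the ambient, which I expect to be essentially formal: colimits in the pullback $\Spaces \times_{\PrV} \Arr(\PrV)$ are detected by the projections to $\Spaces$ and to $\Arr(\PrV)$ (and colimits in arrow categories are pointwise), so I would separately check cocontinuity of the Cartesian product on $\Spaces$, of the tensor product $\otimes: \PrV \times \PrW \to \Pr_{\cV \otimes \cW}$ induced pointwise by Lurie's tensor, and compatibility with the source/target projections. The first is obvious, and the second follows because Lurie's tensor $\PrL \times \PrL \to \PrL$ preserves colimits separately in each variable (this is essentially the definition of the tensor product via bilinear functors), and passage to right modules along a fixed morphism $\cV \otimes \cW \to \cV \otimes \cW$ inherits cocontinuity fiberwise from \HA{Thm.}{4.8.4.6}.

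The main obstacle is bookkeeping: ensuring that the colimits that compute $\cC \boxtimes \colim_i \cD_i$ (fiberwise after coCartesian transport along $\cV \otimes \cW \to \cV \otimes \cW$, which is trivial here) really agree with $\colim_i (\cC \boxtimes \cD_i)$ in the target fiber $\vcat(\cV \otimes \cW)$. This is handled by noting that colimits in the total category $\MMod$ over $\Alg(\PrL)$ are computed first in the base and then via coCartesian transport to the appropriate fiber, where they agree with fiberwise colimits; and the external tensor respects this structure because the embedding into the ambient is a map of coCartesian fibrations of operads by \cref{lem:coCartOpPr}. After assembling these observations, the theorem follows.
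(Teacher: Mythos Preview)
Your proposal is correct and follows essentially the same route as the paper: embed $\vcat(\cV)\simeq\MModV$ into the ambient pullback $\Spaces\times_{\PrV}\Arr(\PrV)$ (closed under colimits by \cref{cor:MModVclosed}), and then check that tensoring with a fixed object preserves colimits on the ambient using that both the Cartesian product on $\Spaces$ and Lurie's tensor on $\PrL$ preserve colimits in each variable. Your final ``bookkeeping'' paragraph is unnecessary (since $\cV,\cW$ are fixed there is no transport to worry about), and the citation of \HA{Thm.}{4.8.4.6} is not quite to the point---what you actually need is just that $-\otimes\cN:\PrL\to\PrL$ preserves colimits---but neither affects correctness.
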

	\begin{proof}
Let  $(Y \to \cM) \in \MModW$ be a fixed marked $\cW$-module. We now show that $- \boxtimes (Y \to \cM): \MModV \to \operatorname{Mod}^\star_{\cV \otimes \cW}$ preserves colimits. 
First note that all functors in the diagram
\[\begin{tikzcd}
\Spaces \arrow[rrr, "-\times Y"] \arrow[d, "\PSh(-) \otimes \cV"'] && &\Spaces\arrow[d, "\PSh(-) \otimes \cV \otimes \cW"] \\
\PrV \arrow[rrr, "-\otimes\PSh(Y) \otimes \cW"] &&& \Pr_{\cV\otimes \cW} \\
\Arr(\PrV) \arrow[u, "\text{src}"] \arrow[rrr, " - \otimes(\PSh(Y) \otimes \cW \to \cM)"] &&& \Arr(\PrL_{\cV \otimes \cW}) \arrow[u, "\text{src}"']
\end{tikzcd}
\]
preserve colimits (since both the monoidal structure on $\PrL$ and the Cartesian monoidal structure on $\Spaces$ preserve colimits separately in both variables), and hence so does the induced functor $\Spaces\times_{\PrV} \Arr(\PrV) \to \Spaces\times_{\PrL_{\cV\otimes \cW}} \Arr(\PrL_{\cV \otimes \cW})$. Since the full subcategory inclusions $\vcatV \simeq \MModV \subseteq \Spaces\times_{\PrV}\Arr(\PrV)$ are by \cref{cor:MModVclosed} closed under colimits, the result follows.
	\end{proof}
	
	Equip $\Catcolim$ with the symmetric monoidal structure from \HA{Cor.}{4.8.1.4} and note that the subcategory inclusion $\Catcolim \to \widehat{\cat}$ defines a lax symmetric monoidal functor (as a right adjoint to a strong symmetric monoidal functor). 

\begin{cor}
The lax symmetric monoidal functor $\vcat(-): \Alg(\Pr) \to \widehat{\cat}$ from \cref{cor:laxsymmetric} factors as a lax symmetric monoidal functor through the subcategory $\Catcolim \to \widehat{\cat}$. \end{cor}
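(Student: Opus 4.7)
The plan is to verify directly that $\vcat(-)$ satisfies the conditions necessary for a lax symmetric monoidal functor valued in $\widehat{\cat}$ to factor through the subcategory $\Catcolim \hookrightarrow \widehat{\cat}$. Operadically, $\Catcolim^{\otimes} \to \widehat{\cat}^{\times}$ is the lax symmetric monoidal inclusion of the suboperad whose colors are the large categories admitting small colimits and whose $n$-ary multi-morphisms are the functors $C_1 \times \cdots \times C_n \to D$ preserving small colimits separately in each variable. Since this suboperad is cut out pointwise by properties on colors and on multi-morphisms, a lax symmetric monoidal factorization of $\vcat(-)$ through $\Catcolim$ is detected level-wise: it suffices to check the relevant property on every color and every multi-morphism in the image.

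Accordingly, I would verify three conditions. First, each $\vcat(\cV)$ must admit small colimits; this follows from the equivalence $\vcat(\cV) \simeq \MModV$ of \cref{thm:charessim} combined with \cref{prop:CatVcolimSpace}, which establishes that $\MModV$ is cocomplete. Second, for each $f: \cV \to \cW$ in $\Alg(\PrL)$, the functor $f_!: \vcat(\cV) \to \vcat(\cW)$ must preserve small colimits; this is immediate from \cref{cor:coerelativeadjoint}, since $f_!$ is a left adjoint. Third, every $n$-ary lax structure map of $\vcat(-)$ decomposes as an $n$-ary external tensor $\vcat(\cV_1) \times \cdots \times \vcat(\cV_n) \to \vcat(\cV_1 \otimes \cdots \otimes \cV_n)$ followed by a change-of-enrichment $f_!$. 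The binary external tensor preserves colimits separately in each variable by \cref{thm:preservescolim}, and the $n$-ary case follows by iterated composition of binary external tensors; combined with the previous conditions and the fact that composites of functors preserving colimits in each variable again do so, this yields the third condition.

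No substantial obstacle is anticipated: the proposition simply assembles three earlier results (\cref{prop:CatVcolimSpace}, \cref{cor:coerelativeadjoint}, and \cref{thm:preservescolim}) into a single lax symmetric monoidal factorization statement. The only formal point worth remarking on is the operadic principle that factorizations through a property-cut suboperad are detected on colors and multi-morphisms, which is standard and follows from the description of maps of operads as functors over $\Fin$ preserving coCartesian lifts of inert morphisms.
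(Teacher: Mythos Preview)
Your proposal is correct and follows essentially the same approach as the paper: both reduce the factorization to checking the three conditions on colors and multi-morphisms, and both invoke \cref{prop:CatVcolimSpace}, \cref{cor:coerelativeadjoint}, and \cref{thm:preservescolim} for these three checks respectively.
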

\begin{proof}
Since $\vEnr \to \Alg(\PrL)$ is symmetric monoidal, it suffices to prove that for all $\cV \in \Alg(\PrL)$, the category $\vcat(\cV)$ has colimits which follows from \cref{prop:CatVcolimSpace}, that change-of-enrichment along morphisms $f:\cV \to \cW$ in $\Alg(\PrL)$ preserves colimits which is proven in \cref{cor:coerelativeadjoint} and finally that for $\cV, \cW \in \Alg(\PrL)$ the functor $ \boxtimes: \vcatV \times \vcatW \to \vcat(\cV \otimes \cW)$ induced by the external tensor product preserves colimits in both variables, which is \cref{thm:preservescolim}. 
\end{proof}


	\subsection{Tensor product and univalence}

	\begin{prop}
		\label{prop:extprodandfunctors}
		If $\cV , \cW \in \Alg(\PrL)$ and $F: \cC \to \cC'$ a morphism in $\vcatV$ and $G:\cD \to \cD'$ a morphism in $\vcatW$ which are either both surjective on objects or both fully faithful. Then, so is their external tensor product $F\boxtimes G: \cC \boxtimes \cD \to \cC' \boxtimes \cD'$ in $\vcat(\cV \otimes \cW)$.

		Moreover, if $\cV\in \Alg_{O \otimes \E_1}(\PrL)$, $\alpha$ is an $n$-ary multimorphism in $O$ and $F_1: \cC_1 \to \cD_1, \dots, F_n : \cC_{n} \to \cD_n$ are $\cV$-enriched functors all of which are either surjective on objects, or fully faithful, then so is their $\alpha$-tensor product:
		\[ \bigotimes\nolimits_{\alpha} F_i : \bigotimes\nolimits_{\alpha} \cC_i \to \bigotimes\nolimits_{\alpha} \cD_i \, . \]
	\end{prop}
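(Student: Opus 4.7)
The plan is to split the argument into two cases (surjective-on-objects and fully faithful) for the binary external tensor product, and then deduce the $n$-ary case via the expression $\bigotimes\nolimits_\alpha F_i = (\otimes_\alpha)_!(\boxtimes_i F_i)$ from \cref{obs:internaltensorexpression}, invoking \cref{prop:changeofenrpres} to pass through change-of-enrichment.

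For the surjective-on-objects case, the functor $\ob : \vEnr \to \Spaces$ is symmetric monoidal by \cref{thm:PShunivsm}, so $\ob(F \boxtimes G) \simeq \ob F \times \ob G$, and the Cartesian product of two $\pi_0$-surjective maps of spaces is again $\pi_0$-surjective. This handles the first half of the statement essentially by unpacking definitions.

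For the fully faithful case, my plan is to transport everything through the presheaf functor. By \cref{prop:charff}, $F$ is fully faithful iff $\PShV(F)$ is fully faithful in $\PrV$, and similarly for $G$. Because $\PSh_{\mathrm{enr}} : \vEnr^{\boxtimes} \to \RMod(\PrL)^{\otimes}$ is symmetric monoidal by \cref{thm:PShunivsm}, we have $\PSh_{\cV \otimes \cW}(F \boxtimes G) \simeq \PShV(F) \otimes \PShW(G)$ as morphisms in $\Pr_{\cV \otimes \cW}$. Thus the problem reduces to the following lemma in presentable module categories: if $f : \cM \to \cM'$ in $\PrViL$ and $g : \cN \to \cN'$ in $\Pr_{\cW}^{\mathrm{iL}}$ are fully faithful, then so is $f \otimes g$ in $\Pr_{\cV \otimes \cW}^{\mathrm{iL}}$. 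To prove this, I would use \cref{cor:iLreltens}(1), which ensures that $f \otimes g$ is internally left adjoint with right adjoint $f^{\rR} \otimes g^{\rR}$. Full faithfulness of $f$ and $g$ means the units $\id \simeq f^{\rR} f$ and $\id \simeq g^{\rR} g$ are isomorphisms, and functoriality (interchange) of $\otimes$ on $\PrL$ then gives
\[ (f^{\rR} \otimes g^{\rR}) \circ (f \otimes g) \simeq (f^{\rR} \circ f) \otimes (g^{\rR} \circ g) \simeq \id \otimes \id \simeq \id, \]
which is precisely the unit of $f \otimes g$, so $f \otimes g$ is fully faithful.

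For the $n$-ary statement, I would iterate the binary external case to conclude that $\boxtimes_i F_i : \boxtimes_i \cC_i \to \boxtimes_i \cD_i$ is fully faithful (resp.\ surjective-on-objects) as a $\bigotimes_i \cV_{X_i}$-enriched functor, and then apply the change-of-enrichment functor $(\otimes_\alpha)_! : \vcat(\otimes_i \cV_{X_i}) \to \vcat(\cV_Y)$ associated to the operation $\alpha$ via the $O \otimes \E_1$-algebra structure, which preserves both classes of morphisms by \cref{prop:changeofenrpres}. The main subtlety I anticipate is verifying the interchange identity $(f^{\rR} \otimes g^{\rR}) \circ (f \otimes g) \simeq (f^{\rR} \circ f) \otimes (g^{\rR} \circ g)$ as the unit of the adjoint pair $f \otimes g \dashv f^{\rR} \otimes g^{\rR}$ supplied by \cref{cor:iLreltens}; this is where one must be careful that the unit of the tensor adjunction is indeed the tensor of the two units rather than merely equivalent to it, but this naturality is automatic from the construction of the adjunction $f \otimes g \dashv f^{\rR} \otimes g^{\rR}$ via the characterization in \cref{prop:adjointable}.
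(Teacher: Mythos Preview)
Your proof is correct. The overall structure---handling the binary case first and then reducing the $n$-ary statement to it via $\bigotimes_\alpha F_i = (\otimes_\alpha)_!(\boxtimes_i F_i)$ and \cref{prop:changeofenrpres}---matches the paper exactly, as does the treatment of the surjective-on-objects case.

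Where you diverge is in the fully faithful case. The paper takes a more direct route: it first reduces to the case $G = \id_{\cD}$ using the factorization $F \boxtimes G \simeq (F \boxtimes \id_{\cD'}) \circ (\id_{\cC} \boxtimes G)$ and closure of fully faithful functors under composition. Then it simply reads off the graph of $\cC \boxtimes \cD$ from \cref{obs:formulagraphext}, namely $\Hom_{\cC \boxtimes \cD}((c_0,d_0),(c_1,d_1)) \simeq \Hom_{\cC}(c_0,c_1) \otimes \Hom_{\cD}(d_0,d_1)$, so that $F \boxtimes \id_{\cD}$ acts on hom-objects by $F$ on the first tensor factor only, which is an isomorphism by hypothesis. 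This avoids any passage through presheaf categories or adjunctions. Your argument instead transports the question to $\PrL$ via \cref{prop:charff} and \cref{thm:PShunivsm}, and then establishes the auxiliary lemma that the tensor of two fully faithful internal left adjoints is fully faithful, using \cref{cor:iLreltens}(1) and the interchange identity for units. This is a valid and self-contained argument (your concern about the unit of $f \otimes g \dashv f^{\rR} \otimes g^{\rR}$ is addressed by the standard decomposition $f \otimes g \simeq (\id \otimes g) \circ (f \otimes \id)$ and the fact that tensoring with a fixed object preserves adjunctions, so the composite unit is $\eta_f \otimes \eta_g$). The tradeoff: your route yields a reusable lemma about $\PrL$, while the paper's route is shorter and stays entirely at the level of enriched hom-objects.
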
	
	\begin{proof}
		Since the internal tensor product is composed from external tensor product and change-of-enrichment, the second statement follows from the first together with \cref{prop:changeofenrpres}. By symmetric monoidality of $\boxtimes$ and the fact that surjective-on-objects and fully faithful functors are closed under composition, it suffices to prove the first statement for $G=\id_{\cD}$. 
		
		Surjectivity on objects follows since if $\ob \cC \to \ob\cC'$ is a surjective map of spaces, then so is the product $\ob \cC \times \ob \cD \to \ob\cC' \times \ob \cD$. 
		
		If $F$ is fully faithful, i.e.\ induces equivalences on graphs, then so does $F \boxtimes \cD$ since using~\cref{obs:formulagraphext} it acts as
		\begin{align*} 
		&\Hom_{\cC \boxtimes \cD} ( (c_0, d_0) , (c_1, d_1) ) \simeq \Hom_{\cC} (c_0, c_1) \otimes \Hom_{\cD} (d_0, d_1) \overset{\simeq}{\to} \\
		&\overset{F \boxtimes \id}{\to} \Hom_{\cC'} (Fc_0, Fc_1) \otimes \Hom_{\cD} (d_0, d_1) \simeq \Hom_{\cC' \boxtimes \cD} ( (F \boxtimes \cD)(c_0, d_0) , (F \boxtimes \cD)(c_1, d_1) ) \,. \qedhere
		\end{align*}
	\end{proof}

	\begin{warning}
	Like \cref{warn:basechangeunivalence}, if $\cV, \cW \in \Alg(\PrL)$ and $\cC\in \vcatV$ and $\cD\in \vcatW$ are univalent, then their external product $\cC \boxtimes \cD \in \vcat(\cV \otimes \cW)$ need not be. Consider as an example $\cC = (C^\simeq \to \PSh(C)) \in \cat(\Spaces)$ for $C \in \cat$  and $\cD := (* \to *)\in \cat({*})$. Then, $\cC \boxtimes \cD = (C^\simeq \times * \to \PSh(C) \otimes *) \simeq (C^\simeq \to *) \in \vcat({*})$, which is univalent iff $C^\simeq$ is contractible. 
	This non-preservation of univalence is due to our external tensor product being defined over $\Alg(\PrL)$ with its induced monoidal structure from $\PrL$. In \S \ref{subsec:smalltensor} we will discuss an external tensor product over $\Alg(\widehat{\cat})$ with its Cartesian monoidal structure which will preserve univalence. 
	\end{warning}

	\begin{cor}
		\label{prop:univalenttensor}
		The external tensor product induces a unique symmetric monoidal structure $\boxtimes_u$ on the full subcategory $\Enr \subseteq \vEnr$ of univalent enriched categories for which the univalization functor $u: \vEnr \to \Enr$ is symmetric monoidal. 
		\end{cor}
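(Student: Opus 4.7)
The plan is to realize $\boxtimes_u$ as a symmetric monoidal localization of $(\vEnr, \boxtimes)$ along univalization. By \cref{cor:EnrcoCart}, the inclusion $\Enr \hookrightarrow \vEnr$ is a reflective subcategory inclusion whose left adjoint $u$ is a map of coCartesian fibrations over $\Alg(\PrL)$. Fiberwise over $\cV \in \Alg(\PrL)$, \cref{thm:univalentFFS} identifies $\cat(\cV) \hookrightarrow \vcat(\cV)$ with the localization at the class $W_\cV$ of fully faithful and surjective-on-objects functors; passing to total spaces of the coCartesian fibrations, $\Enr \hookrightarrow \vEnr$ is a localization at the union $W := \bigcup_\cV W_\cV$ of morphisms whose projection to $\Alg(\PrL)$ is an isomorphism and whose underlying morphism in $\vcat(\cV)$ is in $W_\cV$.

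Next I would invoke the standard criterion for lifting a symmetric monoidal structure along a reflective localization (\HA{Prop.}{2.2.1.9} and \HA{Prop.}{4.1.7.4}): if $\boxtimes$ is a symmetric monoidal structure and $L\colon \vEnr \to \Enr$ is a left adjoint to a fully faithful inclusion whose class of $L$-equivalences $W$ is stable under tensoring with arbitrary objects, then there is a unique symmetric monoidal structure $\boxtimes_u$ on $\Enr$ making $L$ symmetric monoidal. The hypothesis to check is that for any morphism $(F\colon \cC \to \cC')$ in $W$ and any $\cD \in \vcat(\cW)$, the external tensor product $F \boxtimes \cD$ in $\vcat(\cV \otimes \cW)$ is again in $W$. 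But \cref{prop:extprodandfunctors} shows precisely that $\boxtimes$ preserves the classes of fully faithful and surjective-on-objects functors separately in each variable, and these two classes are each closed under composition, so their intersection $W$ is preserved; moreover the projection of $F \boxtimes \cD$ to $\Alg(\PrL)$ is $\id_\cV \otimes \id_\cW$, which is an isomorphism. Hence $W$ is stable under tensoring.

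Applying the cited proposition then yields the unique symmetric monoidal structure $\boxtimes_u$ on $\Enr$ together with a symmetric monoidal structure on $u\colon \vEnr \to \Enr$; its unit is $u(B\ast) \simeq B\ast$, and for univalent $\cC, \cD$ the product $\cC \boxtimes_u \cD$ is computed as $u(\cC \boxtimes \cD)$.

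The main subtlety will be that the symmetric monoidal structure on $\vEnr$ is not simply one on a single category but lives over $\Alg(\PrL)$ (i.e.\ $\vEnr^\boxtimes \to \Alg(\PrL)^\otimes$ is a coCartesian fibration of operads, see \cref{lem:coCartOpPr}), so the localization statement must be applied in the relative setting. This should be handled by observing that the required stability of $W$ under tensoring already holds at the level of objects of $\vEnr$ (not just within a fixed fiber), which is exactly what \cref{prop:extprodandfunctors} provides, combined with the fact that $u$ is a map of coCartesian fibrations over $\Alg(\PrL)$ by \cref{cor:EnrcoCart}; this ensures that the resulting lax symmetric monoidal structure on $\Enr$ is strong and again lies over $\Alg(\PrL)$.
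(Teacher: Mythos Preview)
Your proposal is correct and takes essentially the same approach as the paper: both use \cref{prop:extprodandfunctors} to verify that the class of fully faithful and surjective-on-objects functors is stable under external tensoring, and then invoke \HA{Prop.}{2.2.1.9} to transport the symmetric monoidal structure along the reflective localization. Your final paragraph's worry is unnecessary here: the external tensor product $\boxtimes$ is simply a symmetric monoidal structure on the category $\vEnr$ (as constructed in \cref{thm:externaltens}), so the localization argument applies directly without any relative considerations; the compatibility with the coCartesian fibration over $\Alg(\PrL)$ is only needed later in \cref{cor:catpreservescolims}.
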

	\begin{proof}
		By \cref{prop:extprodandfunctors}, both external and internal tensor product preserve the class of $FFS$ functors, hence they are compatible with the localization $u: \vcatV \to \catV$ in the sense of \HA{Def.}{2.2.1.6}. Our statements then follow from \HA{Prop.}{2.2.1.9}.
	\end{proof}
	
	Straightening immediately yields the following corollary:
	\begin{cor}\label{cor:catpreservescolims}
	There is a unique lax symmetric monoidal structure on \[
		\cat(-): (\Alg(\PrL), \otimes) \to (\lcat, \times) 
		\]
		for which the natural transformation $u:\vcat(-) \To \cat(-) $ is symmetric monoidal.  Moreover, for $\cV, \cW \in \Alg(\PrL)$ the induced functor 
	\[ \cat(\cV) \times \cat(\cW) \to \cat(\cV \otimes \cW)\]
	preserves colimits separately in both variables.
	\end{cor}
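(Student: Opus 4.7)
The strategy is to package \cref{prop:univalenttensor} and \cref{cor:EnrcoCart} into a coCartesian fibration of operads, then straighten. First I would verify that the symmetric monoidal functor $\Enr^{\boxtimes_u} \to \Alg(\PrL)^\otimes$ is a coCartesian fibration of operads. Its underlying functor is a coCartesian fibration by \cref{cor:EnrcoCart}, with coCartesian transport along $f: \cV \to \cW$ given by the composite $\catV \hookrightarrow \vcatV \xrightarrow{f_!} \vcatW \xrightarrow{u} \catW$. Tensoring by any object in $\Enr^{\boxtimes_u}$ preserves such coCartesian morphisms: by the symmetric monoidality of $u$ from \cref{prop:univalenttensor}, this reduces to the analogous statement for $\vEnr^{\boxtimes} \to \Alg(\PrL)^\otimes$ established in \cref{lem:coCartOpPr}. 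Applying \cref{cor:coCartoperadscrit} straightens this into the desired lax symmetric monoidal structure on $\cat(-): \Alg(\PrL) \to \lcat$, and the symmetric monoidal natural transformation $u: \vEnr \to \Enr$ correspondingly straightens to a symmetric monoidal transformation $u: \vcat(-) \To \cat(-)$.

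For uniqueness, given any other such lax symmetric monoidal structure $\otimes'$ on $\cat(-)$ making $u$ symmetric monoidal, the binary operations are forced: for $\cC \in \cat(\cV), \cD \in \cat(\cW)$, using $u\iota \simeq \id$,
\begin{equation*}
\cC \otimes' \cD \simeq u(\iota\cC) \otimes' u(\iota\cD) \simeq u(\iota\cC \boxtimes \iota\cD),
\end{equation*}
and analogous arguments pin down the higher coherences, recovering the constructed structure.

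For colimit-preservation of the induced functor $\otimes_u: \cat(\cV) \times \cat(\cW) \to \cat(\cV \otimes \cW)$, I fix $\cD \in \cat(\cW)$ and a diagram $F: I \to \cat(\cV)$. Since $\cat(\cV) \subseteq \vcat(\cV)$ is a reflective subcategory, colimits are computed as $\colim^{\cat}_I F \simeq u(\colim^{\vcat}_I \iota F)$, and similarly in $\cat(\cV \otimes \cW)$. Using $\cC \otimes_u \cD = u(\iota\cC \boxtimes \iota\cD)$ I compute
\begin{equation*}
(\colim\nolimits^{\cat}_I F) \otimes_u \cD \simeq u\bigl(\iota u (\colim\nolimits^{\vcat}_I \iota F) \boxtimes \iota\cD \bigr).
\end{equation*}
The unit $\colim^{\vcat}_I \iota F \to \iota u \colim^{\vcat}_I \iota F$ lies in the class FFS by \cref{thm:univalentFFS}; this class is preserved by the functor $- \boxtimes \iota\cD$ in view of \cref{prop:extprodandfunctors} and inverted by $u$. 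Combining this with \cref{thm:preservescolim} (to commute $- \boxtimes \iota\cD$ past the colimit) and the fact that $u$ preserves colimits as a left adjoint, the right-hand side simplifies to $\colim^{\cat}_I (F(-) \otimes_u \cD)$. The main subtlety is precisely that $\iota$ does not preserve colimits, so one cannot naively commute it past the colimit; the FFS argument circumvents this, and is really the only nontrivial step.
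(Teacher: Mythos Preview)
Your proof is correct and follows essentially the same approach as the paper: establish that $\Enr^{\boxtimes_u}\to\Alg(\PrL)^\otimes$ is a coCartesian fibration of operads and straighten via \cref{cor:coCartoperadscrit}, then deduce colimit-preservation from \cref{thm:preservescolim}. The only notable difference is that the paper verifies the coCartesian fibration of operads more directly by observing that the symmetric monoidal localization $u$ induces a relative left adjoint $\vEnr^\otimes\to\Enr^\otimes$ over $\Alg(\PrL)^\otimes$ and then invoking \cref{lem:coCartrefl}, whereas you check the criterion of \cref{lem:coCartoperadscriterion} by hand (your reduction works because $u$ is surjective on coCartesian morphisms); for colimit-preservation the paper is terser, but your explicit FFS argument is exactly what is meant.
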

	\begin{proof}
	Since $\vEnr \to \Enr$ is symmetric monoidal, it induces an adjunction $\vEnr^{\otimes} \leftrightarrows \Enr^{\otimes}$ between the categories of operations and hence by \cref{lem:coCartrefl}, it follows that $\Enr^{\otimes} \to \Alg(\PrL)^{\otimes}$ is a coCartesian fibration. Thus, it follows from \cref{cor:coCartoperadscrit}  that it straightens to a lax symmetric monoidal structure on the functor $\cat(-): \Alg(\PrL) \to \cat$ from \cref{cor:catasfunctor}. 	
	Since $\vEnr \to \Enr$ is symmetric monoidal, \cref{prop:univalenttensor} and its underlying functor is a map of coCartesian fibrations by \cref{cor:EnrcoCart}, it follows from \cref{cor:coCartoperadscrit} that it straightens to a symmetric monoidal natural transformation $\vcat(-) \To\cat(-)$.   
	By construction, the tensor product functor is given by $\cat(\cV) \times \cat(\cW) \subseteq \vcat(\cV) \times \vcat(\cW) \to \vcat(\cV \otimes \cW) \overset{u_{\cV \otimes \cW}}{\longrightarrow} \cat(\cV \otimes \cW)$. 
	Since $u_{\cV}$ is a left adjoint, preservation of colimits follows from \cref{thm:preservescolim}.
	\end{proof}
	
	\begin{cor}\label{cor:catVmonoidal}
	For $O$ an operad and $\cV \in \Alg_{O \otimes \E_1}(\PrL)$ a presentably $O\otimes \E_1$-monoidal category,  the internal tensor product induces a unique $O$-monoidal structure on $\catV$  such that the localization $u: \vcatV \to \catV$ is $O$-monoidal. This $O$-monoidal structure is compatible with colimits.
	\end{cor}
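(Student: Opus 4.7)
The plan is to bootstrap everything from the already-established lax symmetric monoidal natural transformation $u: \vcat(-) \To \cat(-)$ of functors $\Alg(\PrL) \to \lcat$ obtained in \cref{cor:catpreservescolims}. A lax symmetric monoidal functor between symmetric monoidal categories induces functors on $O$-algebra categories for every operad $O$, so applying $\cat(-)$ to $\cV \in \Alg_{O\otimes\E_1}(\PrL) \simeq \Alg_O(\Alg(\PrL))$ yields $\cat(\cV) \in \Alg_O(\lcat)$, which is precisely the data of an $O$-monoidal structure on $\cat(\cV)$. The symmetric monoidal natural transformation $u$ then induces an $O$-monoidal functor $u_\cV : \vcat(\cV) \to \cat(\cV)$, and by naturality with \cref{constr:internaltensor} the $O$-monoidal structure on $\vcat(\cV)$ that results is precisely the internal one.

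For uniqueness I would argue as follows: by \cref{thm:univalentFFS} the reflective inclusion $\cat(\cV) \subseteq \vcat(\cV)$ exhibits $\cat(\cV)$ as the localization of $\vcat(\cV)$ at the class $FFS$ of fully faithful and surjective-on-objects functors. By \cref{prop:extprodandfunctors}, the internal $O$-monoidal structure on $\vcat(\cV)$ preserves $FFS$ in each variable, i.e.\ it is compatible with this localization in the sense of \HA{Def.}{2.2.1.6}. Hence \HA{Prop.}{2.2.1.9} yields a unique $O$-monoidal structure on $\cat(\cV)$ making the localization $u_\cV$ an $O$-monoidal functor, and this structure must coincide with the one produced above.

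Finally, compatibility with colimits comes essentially for free once both previous points are in hand. For any $n$-ary operation $\alpha \in \Map_O(X_1,\dots,X_n; Y)$, the induced functor $\bigotimes_\alpha^{\cat}$ on $\cat(\cV_\bullet)$ is (by $O$-monoidality of $u$) the composite of the restriction of $\bigotimes_\alpha^{\vcat}$ along the inclusions $\cat(\cV_{X_i}) \subseteq \vcat(\cV_{X_i})$ followed by $u_{\cV_Y}$. Colimits in the reflective subcategory $\cat(\cV_{X_i})$ are computed by taking colimits in $\vcat(\cV_{X_i})$ and applying $u_{\cV_{X_i}}$; using $O$-monoidality of $u$ once more together with $u \circ u \simeq u$, one checks that $\bigotimes_\alpha^{\cat}$ commutes with colimits in each variable because $\bigotimes_\alpha^{\vcat}$ does by \cref{thm:preservescolim} and $u$ preserves colimits as a left adjoint. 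The only mildly delicate point in the whole argument is keeping the two $O$-monoidal structures on $\vcat(\cV)$ — the one from \cref{constr:internaltensor} and the one reobtained by applying the lax symmetric monoidal $\vcat(-)$ to $\cV$ as an $O$-algebra — manifestly identified, but this is built into the construction of the lax structure in \cref{cor:laxsymmetric} via the coCartesian fibration of operads $\vEnr^{\boxtimes} \to \Alg(\PrL)^\otimes$ from \cref{lem:coCartOpPr}.
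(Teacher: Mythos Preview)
Your proposal is correct and takes essentially the same approach as the paper: the corollary is stated without proof and follows immediately from \cref{cor:catpreservescolims} in exactly the way that \cref{constr:internaltensor} follows from \cref{cor:laxsymmetric}, namely by applying the lax symmetric monoidal functor $\cat(-)$ to $\cV \in \Alg_O(\Alg(\PrL))$. Your uniqueness argument via \HA{Prop.}{2.2.1.9} is the same one the paper uses in \cref{prop:univalenttensor}, and your treatment of colimit-compatibility is in fact more careful than the paper's own one-line justification in the proof of \cref{cor:catpreservescolims}, where the subtlety that the inclusions $\cat(\cV_{X_i}) \hookrightarrow \vcat(\cV_{X_i})$ do not preserve colimits is glossed over; your use of the unit being in $FFS$ together with \cref{prop:extprodandfunctors} is exactly what is needed to fill that gap.
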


\begin{defin} \label{def:univalizedtensor}
	We call the external tensor product on $\Enr$ the \emph{univalized external tensor product}, and for $\cV \in \Alg_{O \otimes \E_1}(\PrL)$, we call the induced $O$-monoidal structure on $\cat(\cV)$ the \emph{univalized internal tensor product}.
\end{defin}

	\section{Enriching in small monoidal categories}
	\label{sec:small}

Our treatment of enriched categories in terms of marked modules, as developed above, is particularly suited for enrichment in presentably monoidal $\cV$. We now explain how this framework allows to handle enrichment in a general small monoidal category.

	\subsection{Enrichment over small monoidal categories}
	\label{subsec:smallmonoidal}
	
	\begin{notat}
		\label{notat:smallenrichment}
		For $\cV \in \Alg(\PrL)$ and $V_0 \subseteq \cV$ a full subcategory, we denote by \[\vcat(V_0 | \cV)\subseteq \vcat(\cV)\] the full subcategory on those valent $\cV$-enriched categories whose graphs factor through $V_0 \subseteq \cV$. Similarly, we denote by $\cat(V_0| \cV)$ the full subcategory of $\cat(\cV)$ on the univalent $\cV$-categories with graph in $V_0$.
		\end{notat}

\begin{lemma}\label{lem:easyoperad} Let $C \to D$ be a monoidal functor between monoidal categories and $C_0 \subseteq C$ a full subcategory so that $C_0 \to C \to D$ is fully faithful. Let $\Alg(C)|_{C_0} \subseteq \Alg(C)$ be the full subcategory of algebras in $C$ whose underlying object lies in $C_0$. Then, the induced functor $\Alg(C)|_{C_0} \to \Alg(D)|_{C_0}$ is an equivalence. 
\end{lemma}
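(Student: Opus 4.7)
My plan is to promote the hypothesized fully-faithfulness of the composite $C_0 \to C \to D$ from the underlying level to the level of the associated operadic categories of operations, and then reformulate both sides of the desired equivalence as spaces of inert-coCartesian sections into the restricted subcategories.

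Concretely, let $C^\otimes, D^\otimes \to \Ass^\otimes$ denote the fibrations of operations associated to the monoidal structures on $C$ and $D$, and let $F^\otimes \colon C^\otimes \to D^\otimes$ be the induced operad morphism. Write $C^\otimes|_{C_0} \subseteq C^\otimes$ for the full subcategory on tuples $(a_1, \dots, a_n) \in C_0^{\times n}$, and analogously $D^\otimes|_{C_0}$, defined via the fully faithful composite $C_0 \to D$. The key observation is that any algebra $A \in \Alg(C)$ whose value $A(\langle 1 \rangle) \in C$ lies in $C_0$ in fact takes all its values in $C^\otimes|_{C_0}$: by the coCartesian-over-inerts condition, $A(\langle n \rangle) \simeq (A(\langle 1 \rangle), \dots, A(\langle 1 \rangle)) \in C_0^{\times n}$, and this persists at the level of morphisms between algebras. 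Hence one identifies
\[
\Alg(C)|_{C_0} \simeq \Fun_{/\Ass^\otimes}^{\mathrm{inert\text{-}coCart}}(\Ass^\otimes, C^\otimes|_{C_0})
\]
and analogously for $D$, reducing the lemma to the statement that $F^\otimes$ restricts to an equivalence $C^\otimes|_{C_0} \overset{\simeq}{\to} D^\otimes|_{C_0}$ over $\Ass^\otimes$.

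Essential surjectivity of this restriction is immediate since both sides have objects in bijection with $\coprod_n C_0^{\times n}$ via the fully faithful $C_0 \to D$. The main work is to check fully faithfulness on mapping spaces: a morphism $(a_1, \dots, a_n) \to (b_1, \dots, b_m)$ in $C^\otimes|_{C_0}$ over $\alpha \colon \langle n \rangle \to \langle m \rangle$ in $\Fin$ decomposes as a tuple of morphisms $\bigotimes_{i \in \alpha^{-1}(j)} a_i \to b_j$ in $C$, whose targets $b_j$ lie in $C_0$. One must compare these termwise with $\Map_D(F(\bigotimes_i a_i), F(b_j))$. In the intended applications (enrichment in small monoidal subcategories, cf.\ \cref{notat:smallenrichment}), the subcategory $C_0$ is closed under the monoidal structure, so the source $\bigotimes_i a_i$ also lies in $C_0$, and the required equivalence then follows directly from the hypothesized fully-faithfulness of $C_0 \to C \to D$ applied termwise.

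The main obstacle is the technicality that without monoidal closure of $C_0$, the sources $\bigotimes_i a_i$ need not lie in $C_0$, so the hypothesis only controls the targets of the relevant mapping spaces and not their sources. In every application of this lemma in the sequel, the closure property holds by construction, making the final termwise step immediate; absent that, one could alternatively argue by induction on the arity $n$ combined with a density/extension argument propagating the fully-faithfulness from $C_0$ to its monoidal closure inside $C$.
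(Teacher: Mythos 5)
Your strategy coincides with the paper's: pass to the full suboperad $C_0^{\otimes}\subseteq C^{\otimes}$ spanned by tuples of objects of $C_0$, identify $\Alg(C)|_{C_0}$ with $\Alg(C_0^{\otimes})$ via the inert/coCartesian-section description (this part of your argument is correct and is exactly how the paper's identification $\Alg(C)|_{C_0}\simeq\Alg(C_0)$ works), and reduce everything to the claim that $C_0^{\otimes}\to C^{\otimes}\to D^{\otimes}$ is fully faithful as a map of operads. The paper asserts that last claim in one line; you are right to distrust it. Fully faithfulness of $C_0^{\otimes}\to D^{\otimes}$ amounts to the multimorphism comparison $\Map_C(c_1\otimes\cdots\otimes c_n,c)\to\Map_D(Fc_1\otimes\cdots\otimes Fc_n,Fc)$ being an equivalence for all $c_i,c\in C_0$ and all $n$, and this does not follow from the $n=1$ case, which is all the stated hypothesis provides. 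Indeed the lemma is false as literally stated: take $C$ to be $\Z$-graded vector spaces with the graded tensor product, $D$ ungraded vector spaces, $F$ the (strong monoidal) forgetful functor, and $C_0$ the objects concentrated in degree $1$. Then $C_0\to D$ is fully faithful and essentially surjective, so $\Alg(D)|_{C_0}$ is all of $\Alg(\mathrm{Vect}_k)$; but a unital algebra in $C$ concentrated in degree $1$ has vanishing unit $k[0]\to A[1]$ and is therefore zero, so $\Alg(C)|_{C_0}$ is trivial.

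So the ``obstacle'' you flag in your final paragraph is a genuine gap, but it is a gap in the statement and in the paper's own proof rather than a defect of your argument; the honest hypothesis is exactly the one you isolate, namely an equivalence on all multimorphism spaces out of tuples in $C_0$ into $C_0$ (equivalently, that $C_0^{\otimes}\to D^{\otimes}$ is fully faithful at the operadic level — with that reading of ``fully faithful'' the lemma and the paper's proof are correct). Two caveats on your proposed repairs. First, literal closure of $C_0$ under $\otimes$ is sufficient but is not what happens in the paper's main application \cref{prop:restrictedgraph}: there $C_0=\Fun(X\times X,V_0)$ is not closed under the matrix-multiplication tensor product because of the colimit over intermediate objects; the multimorphism condition nevertheless holds because mapping out of that colimit is a limit of mapping spaces out of pure tensors of objects of $V_0$, and $V_0$ itself is closed under $\otimes$ in $\cV$ compatibly with its image in $\cW$. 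Second, your fallback of ``induction on arity plus a density argument propagating fully-faithfulness to the monoidal closure'' cannot succeed unconditionally, since the counterexample above rules out any derivation from the stated hypothesis alone; the fix has to be an added hypothesis, verified in each application, not a cleverer proof.
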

\begin{proof} Let $C_0^{\otimes} \subseteq C^{\otimes}$ be the full subcategory on those objects $(c_1, \ldots, c_n) \in C^{\otimes}$ for which each $c_i \in C_0$. Then, $C_0^{\otimes}$ is an operad and $C_0^{\otimes} \to C^{\otimes}$ is a fully faithful operad map, cf.\ \cite[\HAsubsec{2.2.1}]{HA}.  Moreover, the composite $C_0^{\otimes} \to C^{\otimes} \to D^{\otimes}$ is fully faithful and hence induces a fully faithful functor $\Alg(C)|_{C_0} \simeq \Alg(C_0) \to \Alg(D)$ with image those algebras in $D$ in the image of $C_0$. 
\end{proof}

\begin{cor} \label{prop:restrictedgraph}Let $\cV \to \cW$ be a morphism in $\Alg(\PrL)$ and $V_0 \subseteq \cV$ a full subcategory so that the composite $V_0 \hookrightarrow \cV \to \cW$ is fully faithful. Then, the induced change-of-enrichment functor $\vcat(\cV) \to \vcat(\cW)$ restricts to an equivalence $\vcat(V_0|\cV) \to \vcat(V_0|\cW)$ which restricts to an equivalence $\cat(V_0|\cV) \to \cat(V_0|\cW)$. 
\end{cor}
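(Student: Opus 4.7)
The plan is to prove the valent equivalence fiberwise over the space of objects by applying \cref{lem:easyoperad}, and then to extract the univalent version by identifying the underlying flagged categories on both sides.

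First I would reduce to a fiberwise statement. By \cref{cor:coCartMModV} and \cref{prop:changeofenrprespre}, the change-of-enrichment $f_!:\vcat(\cV)\to\vcat(\cW)$ is a map of Cartesian fibrations over $\Spaces$ via the $\ob$-projection, and by \cref{ex:carttransportgraph} the Cartesian transport $g^!\cD$ has graph $\Hom_\cD(g-,g-)$, so the property ``graph factors through $V_0$'' is preserved under Cartesian transport. Hence $\vcat(V_0|\cV),\vcat(V_0|\cW)\to\Spaces$ are themselves Cartesian fibrations and $f_!$ restricts to a map between them over $\Spaces$; by \kerodon{01VB} it suffices to prove the equivalence on each fiber.

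At a fixed $X\in\Spaces$, by \cref{def:valentVcat} and \cref{prop:matrixproduct} the fibers identify with $\Alg(\Fun(X\times X,\cV))|_{\Fun(X\times X,V_0)}$ and $\Alg(\Fun(X\times X,\cW))|_{\Fun(X\times X,V_0)}$, where the monoidal structure is matrix multiplication. By \cref{constr:changeofenrquiv} the induced functor on graphs is postcomposition with $f$, which is a colimit-preserving monoidal functor $f_*:\Fun(X\times X,\cV)\to\Fun(X\times X,\cW)$. Taking $C_0:=\Fun(X\times X,V_0)$ in \cref{lem:easyoperad}, the composite $C_0\to\Fun(X\times X,\cV)\to\Fun(X\times X,\cW)$ is fully faithful because $V_0\hookrightarrow\cV\to\cW$ is, so \cref{lem:easyoperad} yields the desired equivalence $\vcat_X(V_0|\cV)\simeq\vcat_X(V_0|\cW)$.

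For the univalence part, by \cref{cor:underlyingunivalent} univalence is detected by the underlying flagged category, which by \cref{obs:underlyingisbasechange} equals $(\iota_\cV)_!^{\rR}\cC$ respectively $(\iota_\cW)_!^{\rR}f_!\cC$ for the units $\iota_\cV:\Spaces\to\cV$ and $\iota_\cW=f\circ\iota_\cV:\Spaces\to\cW$. The plan is to argue that for $\cC\in\vcat(V_0|\cV)$ these two flagged categories canonically agree, so $\cC$ is univalent iff $f_!\cC$ is, and hence the valent equivalence restricts to the univalent one. Unpacking through \cref{cor:PShVgraph}, this comes down to the comparison $\Map_\cV(1_\cV,\Hom_\cC(x,y))\to \Map_\cW(1_\cW,f\Hom_\cC(x,y))$ being an equivalence for $\Hom_\cC(x,y)\in V_0$; since $f$ is monoidal so $f(1_\cV)=1_\cW$, this reduces to fully faithfulness of $f$ at the pair $(1_\cV,\Hom_\cC(x,y))$.

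The main obstacle is exactly this last step: the hypothesis only guarantees fully faithfulness of $f$ on pairs in $V_0$, whereas one a priori needs $f$ fully faithful also at the unit. This is automatic in the intended application of the next subsection, where $\cV=\PSh(V)$ carries the Day convolution and $V_0=V\subseteq\PSh(V)$ contains $1_\cV=\yo(1_V)$; if necessary one records this as an implicit hypothesis or handles it by combining the valent equivalence with \cref{cor:changeofenrpreserves} to show both the equivalence and its inverse preserve univalence on the restricted subcategories.
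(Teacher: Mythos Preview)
Your valent argument is exactly the paper's: reduce to fixed $X$ and apply \cref{lem:easyoperad}. Your extra bookkeeping with Cartesian transport is fine; the paper simply notes that change-of-enrichment fixes the space of objects.

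For univalence the paper takes a different, more direct route. It observes that $\yoV_\cC$ factors through the full subcategory $\LMod_\cC(\Fun(X^{\op},\cV))|_{\Fun(X,V_0)}$ (representable presheaves $\Hom_\cC(-,x)$ take values in $V_0$), and that postcomposition with $f$ sends this subcategory fully faithfully into $\LMod_{f_!\cC}(\Fun(X^{\op},\cW))$---this is the $\LM$-analogue of \cref{lem:easyoperad}. Since monomorphisms compose and cancel against fully faithful functors, $\yoV_\cC$ is a monomorphism iff $\yo^{\cW}_{f_!\cC}$ is, so univalence transfers in both directions. This avoids passing through the underlying flagged category entirely.

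Your concern about needing $\Map_\cV(1_\cV,v_0)\simeq\Map_\cW(1_\cW,fv_0)$ is well-spotted, but your proposed fix via \cref{cor:changeofenrpreserves} actually works cleanly and you should commit to it rather than leave it as an afterthought: once the valent equivalence $f_!|:\vcat(V_0|\cV)\xrightarrow{\sim}\vcat(V_0|\cW)$ is established, fully-faithfulness of $f_!$ on $\vcat(V_0|\cV)$ forces the unit $\cC\to f_!^\rR f_!\cC$ to be invertible there, so $f_!^\rR$ restricts to $\vcat(V_0|\cW)\to\vcat(V_0|\cV)$ and is the inverse. Now \cref{cor:changeofenrpreserves} says $f_!^\rR$ preserves and reflects univalence, hence so does $f_!|$, and the univalent equivalence follows. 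This sidesteps the unit issue entirely and is arguably tidier than the paper's $\LM$-argument.
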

\begin{proof}
Since change-of-enrichment fixes the underlying space of objects, it suffices to prove that $\vcat_X(V_0|\cV) \to \vcat_X(V_0|\cW)$ is an equivalence for any $X\in \Spaces$. Identifying this functor with the functor $\Alg(\LinEnd(\PSh(X) \otimes \cV)) \to \Alg(\LinEndW(\PSh(X) \otimes \cW))$, the result follows immediately from \cref{lem:easyoperad}. Moreover, a  $\cC \in \vcat_X(V_0 | \cV) \subseteq \Alg(\LinEnd(\PSh(X) \otimes \cV))$ is univalent if and only if the functor $\yoV_{\cC}:  \ob \cC \to \LMod_{\cC}(\PSh(X) \otimes \cV))$ is a monomorphism. Let $\LMod_{\cC}(\PSh(X) \otimes \cV)|_{\Fun(X, V_0)}$ denote the full subcategory on those $\cC$-modules whose underlying object lies in $\Fun(X, V_0) \subseteq \Fun(X, \cV) \simeq \PSh(X) \otimes \cV$. Then, $\yoV_{\cC}$ factors through this full subcategory and by assumption, postcomposition with $\cV \to \cW$ induces a fully faithful functor $\LMod_{\cC}(\PSh(X) \otimes \cV)|_{\Fun(X, V_0)} \to \LMod_{\cC}(\PSh(X) \otimes \cW)$. Hence $\yoV_{\cC}$ is a monomorphism if and only if its composite $X\to \PShV(\cC) \to \PShV(\cC) \otimes_{\cV} \cW$ is. 
\end{proof}

	\begin{defin}	\label{def:smallenrichment}	Given a small monoidal category $V \in \Alg(\cat)$, we define the category of \emph{valent, resp. univalent, $V$-enriched categories} as \begin{align*} \vcatsmall(V) &:= \vcat(V | \PSh(V))\\ \catsmall(V) &:= \cat(V | \PSh(V)). \end{align*}
		
	\end{defin}
	
	\begin{rem} In \cref{def:smallenrichment}, the superscript $\tav$ indicates that we consider enrichment in small monoidal categories $V\in \Alg(\cat)$ instead of $\cV \in \Alg(\PrL)$. This superscript will ultimately be superfluous: If $\cV \in \Alg(\PrL)$, we may forget its presentability and consider it as $\cV \in \Alg(\lcat)$ and consider the category $\vcatlarge(\cV)$ defined as in   \cref{def:smallenrichment}  by embedding $\cV$ into a large-presentable category. In \cref{cor:centralcomparison}, we will show that this category agrees with $\vcat(\cV)$, removing any ambiguity. We nevertheless keep the superscript for better readability of our arguments in the next two sections. 
	\end{rem}

			\cref{def:smallenrichment} is independent of the choice of embedding of $V$ into a presentable category, the choice of $\PSh(V)$ is merely a matter of convenience.
	\begin{cor} Let $V\in \Alg(\cat)$ and $V \hookrightarrow \cW$ be a fully faithful functor into a $\cW \in \Alg(\PrL)$. Then, the induced functor $\PSh(V) \to \cW$ induces an equivalence $\vcatsmall(V):= \vcat(V| \PSh(V))  \to \vcat(V| \cW)$ which restricts to an equivalence $\catsmall(V):= \cat(V|\PSh(V)) \to \cat(V|\cW)$. 
\end{cor}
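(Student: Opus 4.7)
The plan is to reduce the statement to \cref{prop:restrictedgraph} by invoking the universal property of Day convolution.

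First, I would recall that the presheaf category $\PSh(V)$ of a small monoidal category $V \in \Alg(\cat)$ carries a Day convolution monoidal structure promoting $\PSh(V)$ to an object of $\Alg(\PrL)$, in such a way that the Yoneda embedding $\yo_V : V \to \PSh(V)$ upgrades to a monoidal functor (see \HA{Cor.}{4.8.1.12}). Moreover, this object exhibits $\PSh(V)$ as the free presentably monoidal category on $V$: for any $\cW \in \Alg(\PrL)$, precomposition with $\yo_V$ induces an equivalence between $\Map_{\Alg(\PrL)}(\PSh(V), \cW)$ and the space of monoidal functors $V \to \cW$. Applying this to the given fully faithful monoidal functor $V \hookrightarrow \cW$ produces a unique morphism $\PSh(V) \to \cW$ in $\Alg(\PrL)$ whose composite with the Yoneda embedding recovers the original fully faithful functor $V \hookrightarrow \cW$.

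Having built this morphism, I would now be in the situation of \cref{prop:restrictedgraph} with $\cV := \PSh(V)$, the full subcategory $V_0 := V \subseteq \PSh(V)$, and the morphism $\PSh(V) \to \cW$ constructed above. By construction the composite $V \hookrightarrow \PSh(V) \to \cW$ is precisely the originally given fully faithful embedding $V \hookrightarrow \cW$, so the hypothesis of \cref{prop:restrictedgraph} is satisfied. Invoking that proposition produces equivalences
\[ \vcat(V | \PSh(V)) \xrightarrow{\simeq} \vcat(V | \cW) \qquad \text{and} \qquad \cat(V|\PSh(V)) \xrightarrow{\simeq} \cat(V|\cW), \]
where the first is the restriction of the change-of-enrichment along $\PSh(V) \to \cW$, and the second is its further restriction to univalent enriched categories. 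By \cref{def:smallenrichment} the left-hand sides are $\vcatsmall(V)$ and $\catsmall(V)$, yielding the claim.

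I expect no serious obstacle: the only non-formal input is the universal property of Day convolution, which is standard higher-algebraic background, and the rest is a direct application of the proposition just proved.
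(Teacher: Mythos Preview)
Your proof is correct and follows exactly the paper's approach: the paper's proof is simply ``Immediate from \cref{prop:restrictedgraph}'', and you have unpacked precisely why it is immediate by explaining how the universal property of Day convolution produces the requisite morphism $\PSh(V)\to\cW$ in $\Alg(\PrL)$ so that \cref{prop:restrictedgraph} applies with $\cV=\PSh(V)$ and $V_0=V$.
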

\begin{proof} Immediate from \cref{prop:restrictedgraph}.
\end{proof}

\begin{rem} More generally, given a non-symmetric operad $O \in \OpAss$, we may define $\vcat^{\OpAss}(O):= \vcat(\underline{O}| \PSh(\Env(O)))$ where $\Env(O)$ is the monoidal envelope of $O$ and $\underline{O} \subseteq \Env(O) \subseteq \PSh(\Env(O))$ the full inclusion of the underlying category (`of colors') of $O$ into the presentably monoidal category $\PSh(\Env(O))$. All following results  will have evident analogs  in this more general setting. For brevity of exposition, we will however not develop this in this paper. 
\end{rem}

	\begin{defin}\label{defin:erichedalg}
	Let $\vEnr_{\Alg(\cat)} \subseteq\Alg(\cat) \times_{\Alg(\PrL)} \vEnr$ the full subcategory of the pullback along the functor $\PSh: \Alg(\cat) \to \Alg(\PrL)$ on those $\PSh(V)$-enriched categories for $V\in \Alg(\cat)$ whose graph factors through $V\subseteq \PSh(V)$. 
	Let $\Enr_{\Alg(\cat)}$ denote the full subcategory of $\vEnr_{\Alg(\cat)}$ on the univalent enriched categories.
%
	\end{defin}

	\begin{lemma}\label{lem:univalizationrestricts}The univalization functor $u:\Alg(\cat) \times_{\Alg(\PrL)} \vEnr \to \Alg(\cat) \times_{\Alg(\PrL)} \Enr$ pulled back from \cref{cor:EnrcoCart} restricts to a functor between the full subcategories $\vEnr_{\Alg(\cat)} \to \Enr_{\Alg(\cat)}$ left adjoint to the inclusion. 
	\end{lemma}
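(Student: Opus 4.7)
The plan is to derive the desired restricted adjunction by base-changing the univalization adjunction $u: \vEnr \rightleftarrows \Enr$ of \cref{cor:EnrcoCart} along the presheaf functor $\PSh: \Alg(\cat) \to \Alg(\PrL)$ and then checking that both adjoints preserve the relevant full subcategories. First I would note that the unit $\cC \to u\cC$ at any $\cC \in \vEnr$ is an $FFS$-morphism (surjective on objects by \cref{obs:unituni}, and fully faithful by \cref{prop:charff} since univalization preserves the enriched presheaf category); in particular it lies in the fiber $\vcat(\cV)$ and hence is sent to $\id_\cV$ by projection. The adjunction is therefore a relative adjunction over $\Alg(\PrL)$ in the sense of \cref{reminder:relativeadjoints}, and base change along $\PSh$ yields an adjunction $\bar u \dashv \bar\iota$ between the pullback categories $\Alg(\cat) \times_{\Alg(\PrL)} \vEnr$ and $\Alg(\cat) \times_{\Alg(\PrL)} \Enr$.

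It remains to verify that $\bar u$ and $\bar\iota$ restrict to the full subcategories $\vEnr_{\Alg(\cat)}$ and $\Enr_{\Alg(\cat)}$. For $\bar\iota$ this is immediate: a univalent $\PSh(V)$-enriched category whose graph factors through $V \hookrightarrow \PSh(V)$, regarded as a valent $\PSh(V)$-enriched category, still has graph in $V$.

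The content lies in showing that $\bar u$ sends $\vEnr_{\Alg(\cat)}$ into $\Enr_{\Alg(\cat)}$. Let $\cC \in \vEnr_{\Alg(\cat)}$ be valent $\PSh(V)$-enriched with graph in $V$. Since the unit $\cC \to u\cC$ is $FFS$ by the observation above, for any $y, y' \in \ob(u\cC)$ we may pick preimages $x, x' \in \ob\cC$ and conclude $\Hom_{u\cC}(y, y') \simeq \Hom_\cC(x, x') \in V$; hence the graph of $u\cC$ also factors through $V \subseteq \PSh(V)$ and $u\cC \in \Enr_{\Alg(\cat)}$.

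Once both adjoints restrict, the left-adjointness on the full subcategories is automatic from fullness of the inclusions: for $\cC \in \vEnr_{\Alg(\cat)}$ and $\cD \in \Enr_{\Alg(\cat)}$ the mapping space $\Map_{\Enr_{\Alg(\cat)}}(\bar u \cC, \cD)$ agrees with the corresponding mapping space in $\Alg(\cat) \times_{\Alg(\PrL)} \Enr$, which is equivalent via the ambient adjunction to the one in $\Alg(\cat) \times_{\Alg(\PrL)} \vEnr$, i.e.\ to $\Map_{\vEnr_{\Alg(\cat)}}(\cC, \bar\iota\cD)$. The only mildly technical step is the base change of the relative adjunction, which is standard given the formalism of \cref{reminder:relativeadjoints}; the rest is a direct application of properties of univalization already established in \S\ref{sec:univalence}.
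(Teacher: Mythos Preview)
Your proof is correct and follows essentially the same approach as the paper: both reduce to showing that if the graph of $\cC$ factors through $V \subseteq \PSh(V)$ then so does the graph of $u\cC$, using that the unit $\cC \to u\cC$ is surjective on objects. The paper packages your ``pick preimages and use full faithfulness'' step as a single lifting-problem application of the surjective/fully-faithful orthogonality in $\cat$, while you unpack it pointwise and are more explicit about the surrounding formalities (base-changing the relative adjunction, restricting adjoints to full subcategories); the content is identical.
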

	\begin{proof}
		It suffices to show that if $V \in \Alg(\cat)$ and $\cC \in \vcat(\PSh(V))$ has graph factoring through $V$, then so does $u \cC$. But by definition $\ob \cC \to \ob (u \cC)$ is surjective, so this follows by applying orthogonality of surjective and fully faithful functors to find a filler for the following commutative square:
		\[\begin{gathered}[b] 
			\begin{tikzcd}
				\ob \cC \times \ob \cC \arrow[d, "\text{surj}"] \arrow[r] & V \arrow[d, hook] \\
				\ob (u \cC) \times \ob (u \cC) \arrow[r] \arrow[ur, dashed] & \PSh(V)
			  \end{tikzcd}\\[-\dp\strutbox]
		\end{gathered}\qedhere \]
	\end{proof}

	\begin{lemma}
		\label{prop:stilltwosided}
		The functor $\vEnr_{\Alg(\cat)} \to \Spaces \times \Alg(\cat)$ is a two-sided fibration and the functor $\Enr_{\Alg(\cat)} \to \Alg(\cat)$ is a coCartesian fibration. Moreover, $u: \vEnr_{\Alg(\cat)} \to \Enr_{\Alg(\cat)}$ is a map of coCartesian fibrations over $\Alg(\cat)$.
		In particular, we obtain straightened functors \begin{align*}
		\vcatsmall_{-}(-) &: \Spaces^{\op} \times \Alg(\cat) \to \widehat{\cat}\\\vcatsmall(-)&: \Alg(\cat) \to \widehat{\cat}\\\catsmall(-)&: \Alg(\cat) \to \widehat{\cat}. \end{align*} 
		and a univalization natural transformation $\vcatsmall \To \catsmall$.
	\end{lemma}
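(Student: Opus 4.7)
My plan is to reduce all claims to the corresponding results for $\vEnr \to \Spaces \times \Alg(\PrL)$ and $\Enr \to \Alg(\PrL)$ established earlier (\cref{cor:vEnrtwosided} and \cref{cor:EnrcoCart}), transferring them to $\vEnr_{\Alg(\cat)}$ and $\Enr_{\Alg(\cat)}$ along the pullback of categories and then the full-subcategory inclusion. First, since $\PSh: \Alg(\cat) \to \Alg(\PrL)$ is well-defined, the pullback $\Alg(\cat) \times_{\Alg(\PrL)} \vEnr \to \Spaces \times \Alg(\cat)$ inherits the two-sided fibration structure from $\vEnr \to \Spaces \times \Alg(\PrL)$ (by \cref{obs:twosidedfib}), and similarly $\Alg(\cat) \times_{\Alg(\PrL)} \Enr \to \Alg(\cat)$ is a coCartesian fibration. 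Thus it suffices to check that the respective full subcategories are closed under the transport operations, which will allow us to invoke \cref{lem:coCartrefl} (and its Cartesian dual).

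The two conditions for the two-sided fibration amount to: $(\ast)$ closure of $\vEnr_{\Alg(\cat)}$ under Cartesian transport in the $\Spaces$-direction, and $(\ast\ast)$ closure under coCartesian transport in the $\Alg(\cat)$-direction. For $(\ast)$, by \cref{ex:carttransportgraph} the Cartesian transport of a $\PSh(V)$-enriched category $\cD$ with object space $Y$ along a map $f:X\to Y$ is the enriched category $f^!\cD$ with the same enrichment and with graph $(x,x')\mapsto \Hom_\cD(fx,fx')$; so if the graph of $\cD$ factors through $V \subseteq \PSh(V)$, so does that of $f^!\cD$. For $(\ast\ast)$, by \cref{prop:changeofenragree} (equivalently \cref{ex:basechangerelative}), coCartesian transport of $\cC \in \vcat(\PSh(V))$ along a morphism $f:V\to W$ in $\Alg(\cat)$ is given at the level of graphs by postcomposition with $\PSh(f):\PSh(V)\to\PSh(W)$; if the graph of $\cC$ factors as $\ob\cC \times \ob\cC \to V \hookrightarrow \PSh(V)$, then the new graph is $\ob\cC \times \ob\cC \to V \xrightarrow{f} W \hookrightarrow \PSh(W)$, again factoring through $W$.

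For $\Enr_{\Alg(\cat)}\to\Alg(\cat)$, the same argument in $(\ast\ast)$ together with \cref{lem:univalizationrestricts} (that univalization preserves the subcategory defined by graph factoring through $V$) shows that coCartesian transport in the pullback $\Alg(\cat)\times_{\Alg(\PrL)} \Enr$ (which, by \cref{cor:EnrcoCart}, is the composite of change-of-enrichment followed by univalization) preserves $\Enr_{\Alg(\cat)}$; another application of \cref{lem:coCartrefl} gives the coCartesian fibration structure. That $u:\vEnr_{\Alg(\cat)}\to\Enr_{\Alg(\cat)}$ is a map of coCartesian fibrations over $\Alg(\cat)$ then follows because it is the restriction to full subcategories closed under coCartesian transport of the analogous map $u:\Alg(\cat)\times_{\Alg(\PrL)}\vEnr\to\Alg(\cat)\times_{\Alg(\PrL)}\Enr$ pulled back from \cref{cor:EnrcoCart}, and coCartesian morphisms in a full subcategory closed under coCartesian transport are detected in the ambient fibration.

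Finally, straightening the two-sided fibration $\vEnr_{\Alg(\cat)}\to\Spaces\times\Alg(\cat)$ produces $\vcatsmall_{-}(-): \Spaces^{\op}\times\Alg(\cat)\to\lcat$, whose fiber at $(X,V)$ is by construction the full subcategory $\vcatsmall_X(V)\subseteq \vcat_X(\PSh(V))$; projecting the two-sided fibration to $\Alg(\cat)$ and straightening (or equivalently unstraightening in the $\Spaces$-direction to obtain $X\mapsto \vcatsmall_X(V)$ and then reassembling) yields $\vcatsmall(-)$. The coCartesian fibration $\Enr_{\Alg(\cat)}\to\Alg(\cat)$ straightens to $\catsmall(-)$, and the map $u$ straightens to the asserted natural transformation. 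I expect no serious obstacle beyond the slightly finicky bookkeeping in $(\ast\ast)$, namely the explicit identification of coCartesian transport in $\vEnr$ with postcomposition of graphs, which is already handled in \cref{prop:changeofenragree}.
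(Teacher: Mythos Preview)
Your proof is correct and follows essentially the same route as the paper: pull back the two-sided and coCartesian fibration structures from \cref{cor:vEnrtwosided} and \cref{cor:EnrcoCart} along $\PSh:\Alg(\cat)\to\Alg(\PrL)$, then verify closure of the full subcategories under the relevant transports using the explicit description of transport on graphs. The only minor packaging difference is that for $\Enr_{\Alg(\cat)}$ and for $u$ preserving coCartesian morphisms the paper invokes \cref{lem:coCartrefl}(3) via the relative left adjoint $u$ from \cref{lem:univalizationrestricts}, whereas you argue via closure under transport (\cref{lem:coCartrefl}(1)) together with \cref{lem:univalizationrestricts}; both are equivalent.
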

	
	\begin{proof}
		Pullbacks of two-sided/coCartesian fibrations remain two-sided/coCartesian fibrations which together with~\cref{cor:vEnrtwosided} and~\cref{cor:EnrcoCart} shows that $\Alg(\cat) \times_{\Alg(\PrL)} \vEnr$ is a two-sided fibration and $\Alg(\cat)\times_{\Alg(\PrL)} \Enr$ is a coCartesian fibration. 
		To show that the  full subcategories $\vEnr_{\Alg(\cat)}$ and  $\Enr_{\Alg(\cat)}$ remain two-sided resp.\ coCartesian fibrations, and univalization preserves coCartesian morphisms, by \cref{lem:coCartrefl} it suffices to show that for $f: V \to V'$ a monoidal functor the change-of-enrichment $f_!$ preserves the property of a graph factoring through $V$ which follows from  \cref{constr:changeofenrquiv}, the same holds for Cartesian transport along a map of spaces $g:X\to Y$ by \cref{obs:coCartMModVDesc}, and $u$ is a relative  left adjoint to the inclusion $\Enr_{\Alg(\cat)} \subseteq \vEnr_{\Alg(\cat)}$.
	\end{proof}


		The definition of enriched categories and valent enriched categories  in \cite{haugseng} gives rise to functors $\vcat^{GH}(-): \Alg(\cat) \to \widehat{\cat}$ and $\cat^{GH}(-): \Alg(\cat) \to \widehat{\cat}$, in loc.cit.\ denoted by $\Alg_{\mathrm{cat}}(-)$ and $\cat(-)$, respectively. We denote their coCartesian unstraightenings by $\vEnr^{GH}_{\Alg(\cat)} \to \Alg(\cat)$ and $\Enr^{GH}_{\Alg(\cat)} \to \Alg(\cat)$.

	\begin{cor}\label{cor:GHcomparisonsmall} There are equivalences of coCartesian fibrations over $\Alg(\cat)$:  \[\vEnr_{\Alg(\cat)}\simeq \vEnr^{GH}_{\Alg(\cat)} \hspace{0.75cm} \text{ and } \hspace{0.75cm} \Enr_{\Alg(\cat)} \simeq \Enr^{GH}_{\Alg(\cat)}\]	
	In particular, there are equivalences of functors $\Alg(\cat) \to \widehat{\cat}$ between \[\vcatsmall(-) \simeq \vcat^{GH}(-)\hspace{0.75cm} \text{ and } \hspace{0.75cm} \catsmall(-) \simeq \cat^{GH}(-)\;.\] 
	\end{cor}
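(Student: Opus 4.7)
The plan is to reduce everything to the presentable case via the fully faithful Yoneda embedding $V \hookrightarrow \PSh(V)$, using \cref{thm:functorialcomparison} as a black box on the presentable side and the operadic analog of \cref{lem:easyoperad} on the Gepner--Haugseng side.

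First, I would establish the key technical input: for any fully faithful monoidal functor $f: V \to W$ in $\Alg(\cat)$ and any $X \in \Spaces$, the induced map $\vcat^{GH}_X(V) \to \vcat^{GH}_X(W)$ is fully faithful with essential image those enriched categories whose underlying graph $X \times X \to \underline{W}$ factors through $\underline{V}$. Since $\vcat^{GH}_X(-) = \Alg_{\Delta_X^{\op}}(-)$, this is a direct operadic analog of \cref{lem:easyoperad}: one checks that $\Alg_{\Delta_X^{\op}}(f)$ is fully faithful by a standard argument that a map from the generalized operad $\Delta_X^{\op}$ factors through a fully faithful suboperad in an essentially unique way.

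Second, apply the previous step to the fully faithful monoidal Yoneda embedding $V \hookrightarrow \PSh(V)$ to identify $\vcat^{GH}(V)$ with the full subcategory of $\vcat^{GH}(\PSh(V))$ on those enriched categories whose graph factors through $V$. Combining this with the equivalence $\vcat^{GH}(\PSh(V)) \simeq \vcat(\PSh(V))$ provided fiberwise by \cref{thm:functorialcomparison}, and noting that by construction $\vcatsmall(V) = \vcat(V \mid \PSh(V))$ consists of precisely those objects on the right-hand side, we obtain a fiberwise equivalence $\vcat^{GH}(V) \simeq \vcatsmall(V)$.

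Third, I would promote this to an equivalence of coCartesian fibrations over $\Alg(\cat)$. The key point is naturality: \cref{thm:functorialcomparison} gives an equivalence $\vEnr \simeq \MMod$ of two-sided fibrations over $\Spaces \times \Alg(\PrL)$, so pulling back along $\PSh: \Alg(\cat) \to \Alg(\PrL)$ produces an equivalence $\Alg(\cat) \times_{\Alg(\PrL)} \vEnr^{GH} \simeq \Alg(\cat) \times_{\Alg(\PrL)} \vEnr$ of fibrations over $\Alg(\cat)$. By the first step, the comparison map $\vEnr^{GH}_{\Alg(\cat)} \to \Alg(\cat) \times_{\Alg(\PrL)} \vEnr^{GH}$ coming from $V \hookrightarrow \PSh(V)$ is fiberwise fully faithful with image cut out by the graph-factorization condition; by \kerodon{01VB} it is a fully faithful inclusion of coCartesian fibrations. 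Its image agrees with $\vEnr_{\Alg(\cat)}$ by \cref{defin:erichedalg}, yielding the desired equivalence.

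Finally, for the univalent variants, one restricts the equivalence to the full subcategories of univalent objects. On the presentable side \cref{cor:GHcomparison} identifies our univalent and Gepner--Haugseng's complete enriched categories, and by \cref{cor:underlyingunivalent} univalence is detected on the underlying flagged category, which is unchanged under restriction along $V \hookrightarrow \PSh(V)$; so \cref{lem:univalizationrestricts} (and its Gepner--Haugseng analog) guarantee the restriction is a map of coCartesian fibrations, giving $\Enr_{\Alg(\cat)} \simeq \Enr^{GH}_{\Alg(\cat)}$. Straightening both equivalences then yields the claimed equivalences of functors $\Alg(\cat) \to \widehat{\cat}$. The main obstacle will be the first step: while the operadic statement is morally immediate, making it precise requires a careful unwinding of the $\Alg_{\Delta_X^{\op}}(-)$ construction for fully faithful operad maps, or alternatively extracting it from Heine's identification $\vcat^{GH}_X(V) \simeq \Alg(\Quiv_X(V))$ in \cref{rem:EndQuiver} combined with a monoidal-subcategory argument on quiver operads.
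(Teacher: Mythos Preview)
Your approach is correct and essentially identical to the paper's: pull back \cref{thm:functorialcomparison} along $\PSh:\Alg(\cat)\to\Alg(\PrL)$ and identify both $\vEnr^{GH}_{\Alg(\cat)}$ and $\vEnr_{\Alg(\cat)}$ as the same full sub-coCartesian-fibration cut out by the graph-factorization condition via \kerodon{01VB}. The paper treats your ``main obstacle'' (Step~1) as immediate from the definition $\vcat^{GH}_X=\Alg_{\Delta_X^{\op}}$, and handles the one point you elide---that $\PSh(V)$ is large, so the comparison map $\vEnr^{GH}_{\Alg(\cat)}\to\Alg(\cat)\times_{\Alg(\PrL)}\vEnr$ must be constructed via a large variant---by routing through $\vEnr^{GH}_{\Alg(\widehat{\cat})}$ with small spaces of objects.
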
	
	\begin{proof}  Let $\vEnr_{\Alg(\cat)}^{GH} \to \Alg(\cat)$ denote the coCartesian unstraightening of $\vcat^{GH}: \Alg(\cat) \to \widehat{\cat}$ and let $\vEnr^{GH}_{\Alg(\widehat{\cat})} \to \Alg(\widehat{\cat})$ denote its large variant allowing for large monoidal categories but only small spaces of objects. \cref{thm:functorialcomparison} shows that the pullback along $\Alg(\PrL) \to \Alg(\widehat{\cat})$ induces an equivalence $\vEnr \simeq \Alg(\PrL) \times_{\Alg(\widehat{\cat})} \vEnr^{GH}_{\Alg(\widehat{\cat})}$ of coCartesian fibrations over $\Alg(\PrL)$. Pulling back further along $\PSh: \Alg(\cat) \to \Alg(\PrL)$ we thus obtain an equivalence between $ \Alg(\cat) \times_{\Alg(\PrL)} \vEnr$ and the pullback of 
	\[ \Alg(\cat) \overset{\PSh}{\to} \Alg(\widehat{\cat}) \leftarrow \vEnr^{GH}_{\Alg(\widehat{\cat})}
	\]
	which classifies the functor $\vcat^{GH}(\PSh(-)): \Alg(\cat) \to \widehat{\cat}$. From the definition it follows directly that  $\vcat^{GH}(-)$ is the full subfunctor of $\vcat^{GH}(\PSh(-))$ on those $\PSh(V)$-enriched categories whose graph factors through $V\subseteq \PSh(V)$. Hence, it follows from \kerodon{01VB} that $\vEnr^{GH}_{\Alg(\cat)}$ is a full subcategory of this pullback and agrees with the full subcategory of $\vEnr_{\Alg(\cat)} \subseteq  \Alg(\cat) \times_{\Alg(\PrL)} \vEnr$ from \cref{defin:erichedalg}. 
	
	In order to extend this statement to univalent enriched categories, it suffices to note that in the Gepner-Haugseng picture, $\cC \in \vcat^{GH}(V)$ is univalent iff the corresponding $\PSh(V)$-enriched category is. This is immediate for instance by the characterization of univalence in \cite[Prop. 5.4.2]{haugseng} as locality with respect to $E^1 \to E^0$.
		\end{proof}

	\subsection{Tensor product for enrichment in small monoidal categories}
	\label{subsec:smalltensor}

	\begin{prop}\label{prop:constructsmallexterior}
	There are unique symmetric monoidal structures on $\vEnr_{\Alg(\cat)}$ and $\Enr_{\Alg(\cat)}$ for which the full inclusions \[\vEnr_{\Alg(\cat)} \to \Alg(\cat) \times_{\Alg(\PrL)} \vEnr\hspace{1cm}\Enr_{\Alg(\cat)} \to \Alg(\cat) \times_{\Alg(\PrL)} \Enr\] from \cref{defin:erichedalg}  are symmetric monoidal for the symmetric monoidal structures on $\vEnr$ and $\Enr$ from \cref{defin:exteriortensor}, \cref{def:univalizedtensor}. Moreover, univalization restricts to a symmetric monoidal functor 
	\[\vEnr_{\Alg(\cat)} \to \Enr_{\Alg(\cat)}
	\]
	and the projections $\vEnr_{\Alg(\cat)} \to \Alg(\cat)$ and $\Enr_{\Alg(\cat)} \to \Alg(\cat)$ are coCartesian fibrations of operads.
	\end{prop}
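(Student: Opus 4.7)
The plan is to apply \HA{Rem.}{2.2.1.2} to each of the two full subcategory inclusions: a full subcategory of a symmetric monoidal category uniquely inherits a symmetric monoidal structure making the inclusion symmetric monoidal if and only if it contains the unit and is closed under binary tensor products. The ambient symmetric monoidal structure on $\Alg(\cat) \times_{\Alg(\PrL)} \vEnr$ is constructed pointwise from the Cartesian product on $\cat$ (inducing an SMC structure on $\Alg(\cat)$ whose tensor product has underlying category $V \times W$), the external tensor product on $\vEnr$ from \cref{thm:externaltens}, and the symmetric monoidal functor $\PSh: \cat \to \PrL$ (\HA{Rem.}{4.8.1.8}).

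For $\vEnr_{\Alg(\cat)}$, the unit is the pair $(1_{\Alg(\cat)}, B*)$ where $B*$ is the terminal $\Spaces$-enriched category with trivial graph, so it lies in the subcategory. For closure under binary tensors, given $(V, \cC)$ and $(W, \cD)$ in $\vEnr_{\Alg(\cat)}$, the formula from \cref{obs:formulagraphext} gives
\[ \Hom_{\cC \boxtimes \cD}((x, y), (x', y')) \simeq \Hom_\cC(x, x') \otimes \Hom_\cD(y, y'), \]
which is a pure tensor in $\PSh(V) \otimes \PSh(W) \simeq \PSh(V \times W)$ and hence lies in the image of the fully faithful inclusion $V \times W \hookrightarrow \PSh(V \times W)$; thus the graph of $\cC \boxtimes \cD$ factors through $V \otimes W$ as required. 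Closure under $n$-ary tensor products follows by iteration. This produces the unique symmetric monoidal structure on $\vEnr_{\Alg(\cat)}$ in the statement.

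For $\Enr_{\Alg(\cat)}$, apply the same argument with respect to the univalized external tensor product on $\Enr$ from \cref{prop:univalenttensor}: closure under tensor reduces to the statement that $u(\cC \boxtimes \cD) \in \Enr_{\Alg(\cat)}$, which follows by combining the previous paragraph with \cref{lem:univalizationrestricts}. The restricted univalization $\vEnr_{\Alg(\cat)} \to \Enr_{\Alg(\cat)}$ is then automatically symmetric monoidal since it is the restriction of the symmetric monoidal $u: \vEnr \to \Enr$ along symmetric monoidal full inclusions. Finally, to upgrade the projections to coCartesian fibrations of operads, we apply \cref{cor:coCartoperadscrit}: the projections are already coCartesian fibrations by \cref{prop:stilltwosided}, they are symmetric monoidal by construction, and compatibility of coCartesian transport with the tensor product follows because it is inherited by restriction from the corresponding statement for $\vEnr \to \Alg(\PrL)$ and $\Enr \to \Alg(\PrL)$ provided by \cref{lem:coCartOpPr} (together with its analog for $\Enr$, which follows from \cref{cor:catpreservescolims} or directly since $u$ preserves coCartesian morphisms by \cref{cor:EnrcoCart}). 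The main point of substance is the graph calculation in the second paragraph; everything else is formal.
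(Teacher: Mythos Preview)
Your proof is correct and follows essentially the same approach as the paper: both verify that the full subcategories contain the unit and are closed under binary tensors via the graph formula from \cref{obs:formulagraphext}, invoke \cref{lem:univalizationrestricts} for the univalent case, and deduce the coCartesian-fibration-of-operads statement from \cref{lem:coCartOpPr} together with closure under coCartesian transport (which you access via \cref{prop:stilltwosided}, the paper via the explicit observation that $\PSh(V) \to \PSh(W)$ sends representables to representables). The only difference is phrasing.
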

	\begin{proof} Since $ \cat \to \PrL$ is symmetric monoidal and $\CAlg(\widehat{\cat}) \to \widehat{\cat}$ preserves limits, the categories $\Alg(\cat) \times_{\Alg(\PrL)} \vEnr$ and $\Alg(\cat) \times_{\Alg(\PrL)} \Enr$ inherit symmetric monoidal structures from the external tensor product. The monoidal unit $B* \in \vcat(\Spaces)$ has graph factoring through $* \subseteq \PSh(*) = \Spaces$. Moreover, if $V, W \in \Alg(\cat)$ and  $\cC \in \vcat(\PSh(V)), \cD \in \vcat(\PSh(W))$ with graphs factoring through $V\subseteq \PSh(V)$, resp. $W \subseteq \PSh(W)$, then $\cC \boxtimes \cD \in \vcat(\PSh(V) \otimes \PSh(W))\simeq \vcat(\PSh(V \times W))$ has graph factoring through $V\times W \subseteq \PSh(V \times W)$ by \cref{obs:formulagraphext}. Hence, the symmetric monoidal structure on $\Alg(\cat) \times_{\Alg(\PrL)} \vEnr$ restricts to the full subcategory $\vEnr_{\Alg(\cat)}$ and similarly for $\Enr_{\Alg(\cat)}$.
		
	Since univalization $\vEnr \to \Enr$ is a symmetric monoidal localization by \cref{prop:univalenttensor} which pulls back to a symmetric monoidal functor $\Alg(\cat) \times_{\Alg(\PrL)} \vEnr \to \Alg(\cat) \times_{\Alg(\PrL)} \Enr$, it restricts by \cref{lem:univalizationrestricts} to a symmetric monoidal functor between the full symmetric monoidal subcategories $\vEnr_{\Alg(\cat)} \to \Enr_{\Alg(\cat)}$.
	
	As by \cref{lem:coCartOpPr} $\vEnr \to \Alg(\PrL)$ is a coCartesian fibration of operads, so is the pullback $\Alg(\cat) \times_{\Alg(\PrL)} \vEnr \to \Alg(\cat)$. To show this restricts to a coCartesian fibration of operads $\vEnr_{\Alg(\cat)} \to \Alg(\cat)$ which we already know is a symmetric monoidal subcategory, it suffices to show that it is closed under coCartesian transports. This follows from their description on graphs in \cref{prop:changeofenragree} since for $f: V \to W$ in $\Alg(\cat)$, the induced functor $\PSh(V) \to \PSh(W)$ sends representables to representables.
	An analogous proof shows the statement for $\Enr_{\Alg(\cat)}$.
	\end{proof}

\begin{cor}\label{cor:laxsmall}
There are unique lax symmetric monoidal structure on the functors \[\vcatsmall: (\Alg(\cat), \times) \to (\lcat, \times) \hspace{1cm} \catsmall: (\Alg(\cat), \times) \to (\lcat, \times) \] for which the fully faithful natural transformations $\vcatsmall(-)\To \vcat(\PSh(-)): \Alg(\cat) \to \lcat$ and $\catsmall(-) \To \cat(\PSh(-))$ are symmetric monoidal, for the lax symmetric monoidal structure on $\vcat(\PSh(-))$ by \cref{cor:laxsymmetric} and on $\cat(\PSh(-))$ induced by \cref{cor:catpreservescolims} respectively.
Moreover, the univalization natural transformation $\vcatsmall \To \catsmall$ inherits a symmetric monoidal structure. 
\end{cor}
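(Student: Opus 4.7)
The plan is to directly straighten the coCartesian fibrations of operads produced in \cref{prop:constructsmallexterior} using \cref{cor:coCartoperadscrit}, which yields lax symmetric monoidal structures, and then to verify that the fully faithful inclusions and the univalization functor inherit symmetric monoidal structures in the appropriate sense.

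First I would apply \cref{cor:coCartoperadscrit} to the coCartesian fibrations of operads \[\vEnr_{\Alg(\cat)} \to \Alg(\cat)^{\times} \hspace{1cm} \Enr_{\Alg(\cat)} \to \Alg(\cat)^{\times}\] from \cref{prop:constructsmallexterior}, noting that the Cartesian monoidal structure on $\Alg(\cat)$ coincides with the pointwise tensor induced from the Cartesian monoidal structure on $\cat$. This endows the unstraightenings $\vcatsmall(-)$ and $\catsmall(-): \Alg(\cat) \to \lcat$ with lax symmetric monoidal structures for the Cartesian monoidal structure on $\lcat$.

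Next, the fully faithful symmetric monoidal inclusions $\vEnr_{\Alg(\cat)} \hookrightarrow \Alg(\cat) \times_{\Alg(\PrL)} \vEnr$ and $\Enr_{\Alg(\cat)} \hookrightarrow \Alg(\cat) \times_{\Alg(\PrL)} \Enr$ from \cref{prop:constructsmallexterior} are maps of coCartesian fibrations of operads over $\Alg(\cat)^{\times}$: for the target categories, the lax symmetric monoidal structures on $\vcat(\PSh(-))$ and $\cat(\PSh(-))$ arise by pulling back the lax symmetric monoidal structures of \cref{cor:laxsymmetric} and \cref{cor:catpreservescolims} along the symmetric monoidal functor $\PSh: \Alg(\cat) \to \Alg(\PrL)$. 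Applying \cref{cor:coCartoperadscrit} to these maps straightens them to symmetric monoidal natural transformations $\vcatsmall(-) \To \vcat(\PSh(-))$ and $\catsmall(-) \To \cat(\PSh(-))$, which fiberwise recover the fully faithful inclusions of \cref{notat:smallenrichment}.

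For the symmetric monoidal structure on univalization, I would use \cref{prop:constructsmallexterior} which asserts that $u: \vEnr_{\Alg(\cat)} \to \Enr_{\Alg(\cat)}$ is symmetric monoidal together with \cref{lem:univalizationrestricts} which exhibits it as a map of coCartesian fibrations over $\Alg(\cat)$; a further application of \cref{cor:coCartoperadscrit} then straightens this to a symmetric monoidal natural transformation $\vcatsmall(-) \To \catsmall(-)$ compatible with the transformations to $\vcat(\PSh(-))$ and $\cat(\PSh(-))$. Finally, uniqueness of the lax symmetric monoidal structures on $\vcatsmall$ and $\catsmall$ follows from the fact that, by the fully faithful inclusions into $\vcat(\PSh(-))$ and $\cat(\PSh(-))$ respectively, a lax symmetric monoidal refinement of a functor to $\lcat$ is determined by its image under these inclusions; concretely the lax structure maps $\vcatsmall(V) \times \vcatsmall(W) \to \vcatsmall(V \times W)$ are pinned down as the unique factorizations of the external tensor product through the full subcategory on enriched categories whose graph lies in $V \times W$, which is where the binary tensor product of objects of $\vEnr_{\Alg(\cat)}$ lands by the computation of \cref{obs:formulagraphext}. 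The main (only genuinely delicate) point is the bookkeeping with the symmetric monoidal structures; no obstacle appears because all the required closure and compatibility statements were already verified in \cref{prop:constructsmallexterior}.
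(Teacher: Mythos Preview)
Your proof is essentially the same as the paper's: both straighten the coCartesian fibrations of operads from \cref{prop:constructsmallexterior} via \cref{cor:coCartoperadscrit}, and both invoke the symmetric monoidality of univalization together with its compatibility with coCartesian transport to obtain the symmetric monoidal transformation $\vcatsmall \To \catsmall$. One minor correction: \cref{lem:univalizationrestricts} only exhibits $u$ as a relative left adjoint, not as a map of coCartesian fibrations over $\Alg(\cat)$; for the latter you should cite \cref{prop:stilltwosided} (the paper instead cites \cref{cor:EnrcoCart}, which covers the ambient categories and descends to the full subcategories via the closure under coCartesian transport established in \cref{prop:constructsmallexterior}).
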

\begin{proof}
The first statement follows from \cref{prop:constructsmallexterior} after we straighten according to \cref{cor:coCartoperadscrit}. The second statement follows from \cref{cor:coCartoperadscrit}  since $\vEnr_{\Alg(\cat)} \to \Enr_{\Alg(\cat)}$ is symmetric monoidal by \mbox{\cref{prop:constructsmallexterior}} and a map of coCartesian fibrations by \cref{cor:EnrcoCart}. 
\end{proof}

	\begin{defin}
		\label{defin:smallinternalOmonoidal}
		Given an operad $O$, applying the lax symmetric monoidal structure from \cref{cor:laxsmall} we obtain functors 
		\begin{align*}\vcatsmall(-)&: \Alg_{O\otimes \E_1}(\cat) =  \Alg_O(\Alg_{\E_1}(\cat)) \to \Alg_O(\lcat).
		\\\catsmall(-)&: \Alg_{O\otimes \E_1}(\cat) =  \Alg_O(\Alg_{\E_1}(\cat)) \to \Alg_O(\lcat).
		\end{align*}
		For $V\in \Alg_{O\otimes \E_1}(\cat)$, we refer to the induced $O$-monoidal structure on $\vcatsmall(V)$ and $\catsmall(V)$ as the  \emph{internal $O$-monoidal structure}. 
	\end{defin}
	By definition, this $O$-monoidal structure on $\vcatsmall(V)$ agrees with the restriction of the internal $O$-monoidal structure on $\vcat(\PSh(V))$ from \cref{constr:internaltensor} to the full subcategory $\vcatsmall(V)$ on those $\PSh(V)$-enriched categories whose graph factors through $V\subseteq \PSh(V)$.

	We will now show that this restricted symmetric monoidal structure is Cartesian  and will then conclude that under the equivalence from \cref{cor:GHcomparisonsmall} it agrees with the external tensor product defined in \cite{haugseng,haugseng2023tensor}.

	\begin{lemma}\label{lem:semiadditive} Let $C \in \Alg(\Catcolim)$, i.e.\ a monoidal category with colimits  whose monoidal product preserves colimits separately in both variables. If $C$ is semiadditive (see \SAG{Def.}{C.4.1.6}), the functor $\RMod_{-}(C): \Alg(C) \to \widehat{\cat}$ preserves finite products. 
	\end{lemma}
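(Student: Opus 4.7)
The plan is to separately verify preservation of the terminal algebra and of binary products, which together give preservation of all finite products.

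For the terminal: since $\Alg(C) \to C$ preserves limits, the terminal algebra has underlying object the terminal of $C$, which by semiadditivity is the zero object $0$. For any right $0$-module $M$, the action $M \otimes 0 \to M$ lands in $M \otimes 0 \simeq 0$ (as $M \otimes -$ preserves the empty colimit), and the unit axiom then forces $\operatorname{id}_M \colon M \simeq M \otimes 1 \to M \otimes 0 \to M$ to factor through $0$, hence $M \simeq 0$. An analogous analysis of mapping spaces gives $\RMod_0(C) \simeq *$.

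For binary: given $A, B \in \Alg(C)$, the underlying object of $A \times B$ is the biproduct $A \oplus B$ in $C$. Extension of scalars along the two projections defines a comparison $\Phi \colon \RMod_{A \times B}(C) \to \RMod_A(C) \times \RMod_B(C)$, $M \mapsto (M \otimes_{A \times B} A,\; M \otimes_{A \times B} B)$. I would show $\Phi$ is an equivalence via Barr--Beck--Lurie (\HAthm{Thm.}{4.7.3.5}) applied to the forgetful functors $U \colon \RMod_{A \times B}(C) \to C$ and $V \colon \RMod_A(C) \times \RMod_B(C) \xrightarrow{U_A \times U_B} C \times C \xrightarrow{\oplus} C$. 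Both admit left adjoints ($V^L$ sends $c \mapsto (c \otimes A, c \otimes B)$) and the associated monad of $V$ computes to $c \mapsto c \otimes A \oplus c \otimes B \simeq c \otimes (A \oplus B)$ using that $\otimes$ distributes over biproducts (being colimit-preserving), matching the monad $U \circ F_{A \times B}$. Monadicity for $U$ is standard; for $V$ it follows from conservativity (the biproduct reflects isomorphisms componentwise: given $(h, k)$ inverse to $f \oplus g$, extracting matrix entries $\pi_M (f\oplus g)^{-1} \iota_{M'}$ gives two-sided inverses to $f$ and $g$) and colimit-preservation (since $\oplus$ is simultaneously left and right adjoint to the diagonal $\Delta \colon C \to C \times C$ by semiadditivity, it preserves all colimits, and $U_A \times U_B$ does too).

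The main obstacle is verifying that the two monad structures on $- \otimes (A \oplus B)$ agree as monads, not merely as underlying endofunctors. This reduces to identifying the multiplication on $A \oplus B$ arising from the unit of the adjunction $V^L \dashv V$ with that of the product algebra $A \times B$, and in turn boils down to showing that the cross-terms $A \otimes B \to A \oplus B$ and $B \otimes A \to A \oplus B$ vanish under either projection $\pi_A, \pi_B$. This holds since $\pi_A$ is an algebra morphism whose component $B \to A$ is the zero map (as both algebras, the only map is forced to factor through $0$), so $\pi_A \circ \mu_{A \oplus B}$ vanishes on $A \otimes B$, and symmetrically for $\pi_B$; this uniquely characterises $\mu_{A \oplus B}$ as the componentwise multiplication. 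Once the monads coincide, $\Phi$ factors through the induced equivalence of Eilenberg--Moore categories $\LMod_{-\otimes(A \oplus B)}(C)$ on both sides, yielding the desired equivalence.
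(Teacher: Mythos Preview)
Your approach is correct but takes a different route from the paper for binary products. (The paper only treats the binary case explicitly; your terminal argument is a fine addition.) The paper avoids Barr--Beck entirely: it factors the comparison $\Phi$ through $\RMod_{A\underline{\times}B}(C\times C)$---identified with $\RMod_A(C)\times\RMod_B(C)$ via \HA{Thm.}{4.8.5.16}---using the diagonal $C\to C\times C$ and an extension of scalars in $C\times C$, then computes the unit of the resulting adjunction directly: at $M$ it reads $M\to (M\otimes_{A\times B}A)\times(M\otimes_{A\times B}B)\simeq M\otimes_{A\times B}(A\times B)\simeq M$, the middle isomorphism using semiadditivity to distribute the relative tensor product over the biproduct. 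Essential surjectivity then reduces to conservativity of the right adjoint $(M_1,M_2)\mapsto M_1\times M_2$, which holds since each $M_i$ is a retract of $M_1\oplus M_2$.

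The obstacle you flag---matching monad \emph{structures}, not just underlying endofunctors---is real, and your fix via vanishing cross-terms is a $1$-categorical heuristic that does not by itself control the higher coherences of an $\E_1$-structure. A cleaner resolution within your framework: since $\Phi$ is itself a left adjoint (its right adjoint $\Psi$ is componentwise restriction of scalars followed by the product), you can bypass the monad comparison altogether and directly verify that the unit of $\Phi\dashv\Psi$ is invertible via the same semiadditivity computation $M\otimes_{A\times B}(A\times B)\simeq M$, and that $\Psi$ is conservative by the retract argument you already give. This is essentially the paper's argument with the $C\times C$ detour removed.
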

	\begin{proof}
	For algebras $A, B \in \Alg(C)$, write $A\times B \in \Alg(C)$ for their product and $A\underline{\times} B \in \Alg(C \times C)$ for the corresponding algebra in $C\times C$. 
	By \HA{Thm.}{4.8.5.16}(2), the canonical functor $\RMod_{A\underline{\times} B} (C\times C) \to \RMod_A(C) \times \RMod_B(C)$ is an equivalence. 
	Composed with this equivalence, the canonical projection $\RMod_{A\times B}(C) \to \RMod_A(C) \times \RMod_B(C)$ becomes the  composite of left adjoints
	\[\RMod_{A \times B}(C) \rightleftarrows \RMod_{(A\times B) \underline{\times} (A\times B)} (C\times C) \rightleftarrows \RMod_{A\underline{\times} B}(C\times C).
	\]
	Here, the first adjunction is induced by the symmetric monoidal diagonal functor $C \to C \times C$ with (necessarily lax symmetric monoidal) right adjoint $-\times-: C \times C \to C$, and the second is given by induction along the algebra map 
	$(A\times B) \underline{\times} (A\times B) \to (A \underline{\times} B)$ induced by the projections $A\times B \to A$ and $A\times B \to B$. 
	Given $M \in \RMod_{A\times B}(C)$, the unit of the overall adjunction is the morphism \[M \to \left(M \otimes_{A\times B} A\right)  \times \left( M \otimes_{A\times B} B\right) \simeq \left(M \otimes_{A\times B} (A \times B)\right) \simeq M\, ,\] where the second equivalence uses semiadditivity to distribute $\otimes $ over $\times$. Hence, the overall left adjoint is fully faithful. We will now conclude by proving that the overall right adjoint is conservative. 
	Indeed, the overall right adjoint sends  $M_1 \in \RMod_A(C)$ and $M_2 \in \RMod_{B}(C)$  to the object $M_1\times M_2 \in \RMod_{A\times B} (C)$. Since $\RMod_{A\times B}(C) \to C$ is conservative, we conclude by noting that the object $M_1\times M_2 \in C$ has $M_1$ and $M_2$ as retracts by semiadditivity. 
	\end{proof}	
	
	\begin{cor}
		\label{lem:vcatXpreservesproducts}
		Fixing a space $X \in \Spaces$, the following functors preserve finite products:
		\begin{enumerate}[(1)]
		\item The functor $\vcat_X(-): \Alg(\PrL) \to \widehat{\cat}$ that is classified by the two-sided fibration $\vEnr \to \Spaces \times \Alg(\PrL)$ from \cref{cor:vEnrtwosided};
		\item The functor $\vcatsmall_X(-): \Alg(\cat) \to \widehat{\cat}$ that is classified by the two-sided fibration $\vEnr_{\Alg(\cat)} \to \Spaces \times \Alg(\cat)$ from \cref{defin:erichedalg} and \cref{prop:stilltwosided}.	\end{enumerate}
	\end{cor}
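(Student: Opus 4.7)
The plan is to deduce both statements from Lemma~\ref{lem:semiadditive} applied to $C = \PrL$, combined with the marked-module identification of $\vcat_X$. First, I would verify that $\PrL \in \Alg(\Catcolim)$ is semiadditive: it is pointed (the terminal category $*$ is both initial and terminal in $\PrL$) and its finite products and coproducts coincide (both given by Cartesian products of underlying categories). Lemma~\ref{lem:semiadditive} then yields that $\Pr_{(-)} : \Alg(\PrL) \to \widehat{\cat}$ preserves finite products; explicitly, $\Pr_{\cV_1 \times \cV_2} \simeq \Pr_{\cV_1} \times \Pr_{\cV_2}$ via $\cM \mapsto (\cM \otimes_{\cV_1 \times \cV_2} \cV_1, \cM \otimes_{\cV_1 \times \cV_2} \cV_2)$ with inverse $(\cM_1, \cM_2) \mapsto \cM_1 \times \cM_2$.

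Next, for~(1), I would invoke Theorem~\ref{thm:functorialcomparison} together with Proposition~\ref{prop:bigpullback} to identify $\vcat_X(-)$ with the pointwise fully faithful sub-functor $\MMod_{X, (-)}^{\star} \subseteq (\Pr_{(-)})_{X/}$. The slice construction $\widehat{\cat}_{X/} \times_{\widehat{\cat}} (-)$ preserves finite products (since $\Map_{\widehat{\cat}}(X, -)$ does), so $\cV \mapsto (\Pr_\cV)_{X/}$ preserves finite products. I would then check that the marked-module condition is stable under the product decomposition: internal homs in a product $\cM_1 \times \cM_2$ decompose componentwise, so an object $(m_1, m_2)$ is $(\cV_1 \times \cV_2)$-atomic iff each $m_i$ is $\cV_i$-atomic, and a functor $X \to \cM_1 \times \cM_2$ atomically generates iff each component does. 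This yields $\MMod_{X, \cV_1 \times \cV_2}^\star \simeq \MMod_{X, \cV_1}^\star \times \MMod_{X, \cV_2}^\star$ as full subcategories of the product, giving (1). The empty case is immediate from $\Pr_* \simeq *$.

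For~(2), I would reduce to (1) via Proposition~\ref{prop:restrictedgraph}. Given $V_1, V_2 \in \Alg(\cat)$, the embedding $V_1 \times V_2 \hookrightarrow \PSh(V_1) \times \PSh(V_2)$ (computed in $\Alg(\PrL)$) is fully faithful and monoidal, so Proposition~\ref{prop:restrictedgraph} provides an equivalence $\vcatsmall_X(V_1 \times V_2) \simeq \vcat_X(V_1 \times V_2 \mid \PSh(V_1) \times \PSh(V_2))$. Applying (1), $\vcat_X(\PSh(V_1) \times \PSh(V_2)) \simeq \vcat_X(\PSh(V_1)) \times \vcat_X(\PSh(V_2))$, and under this decomposition the graph-factoring condition through $V_1 \times V_2$ is precisely the conjunction of the graph-factoring conditions through each $V_i$ separately. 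Hence $\vcatsmall_X(V_1 \times V_2) \simeq \vcatsmall_X(V_1) \times \vcatsmall_X(V_2)$, and the empty case follows from $\vcatsmall_X(*) \simeq \vcat_X(* \mid \Spaces) \simeq *$.

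The main subtlety is ensuring that these equivalences assemble naturally at the level of the two-sided fibrations $\vEnr \to \Spaces \times \Alg(\PrL)$ and $\vEnr_{\Alg(\cat)} \to \Spaces \times \Alg(\cat)$, so that the product is genuinely effected by the pair of projections rather than being a coincidence on objects. This naturality follows from the pullback descriptions (Proposition~\ref{prop:bigpullback} and the analogous pullback from Lemma~\ref{prop:stilltwosided}) together with the fact that each constituent functor in these pullbacks ($\PSh$, the free-module functor, the slice, and $\Pr_{(-)}$) handles the product decomposition functorially.
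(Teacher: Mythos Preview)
Your proof is correct and follows essentially the same strategy as the paper's: both use semiadditivity of $\PrL$ together with Lemma~\ref{lem:semiadditive} to handle $\Pr_{(-)}$, then verify that the marked-module subfunctor is closed under the product decomposition, and reduce~(2) to~(1) via Proposition~\ref{prop:restrictedgraph}. The only minor difference is that the paper verifies product-stability of marked modules using the characterization as internally-left-adjoint colimit-dominant functors (reducing product to coproduct by semiadditivity and invoking the closure properties from Observations~\ref{obs:cdomstab} and~\ref{obs:iLstability}), whereas you check the equivalent atomic-generation condition componentwise via internal homs; both verifications are valid and equally short.
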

	\begin{proof}	Using semiadditivity of $\PrL$, \cref{lem:semiadditive} implies that $\Pr_{\cV} \times \PrW \simeq \Pr_{\cV \times \cW}$. Moreover, if $F: \PSh(X) \otimes \cV \to \cM$ and $G:\PSh(Y) \otimes \cW \to \cN$ are colimit-dominant and internally left adjoint, so is their product: Indeed, by semiadditivity, it suffices to prove this for the coproduct which follows from \cref{obs:cdomstab} and \cref{obs:iLstability}. Thus,  $\MModXb: \Alg(\PrL) \to \widehat{\cat}$ preserves products. 
		
	For the second statement, for $V, W\in \Alg(\cat)$ the canonical projections factor as
	\[\vcat_X(V \times W| \PSh(V \times W)) \to \vcat_X(V\times W | \PSh(V) \times \PSh(W)) \to \vcat_X(V|\PSh(V)) \times \vcat_X(W|\PSh(W))\;. 
	\]
	Here, the first functor is induced from change-of-enrichment along the morphism $\PSh(V\times W) \to \PSh(V) \times \PSh(W)$ in $\Alg(\PrL)$ induced from the projections $V\leftarrow V\times W \to W$, and the second functor is induced by the projections (and both functors restrict as indicated to full subcategories of enriched categories with graphs restricted to the full subcategories $V,W$ and $V\times W$).
	By \cref{prop:restrictedgraph} and statement (1), respectively, both functors are equivalences. 	\end{proof}
	
%

	\begin{theorem}
		\label{thm:pullbackcartesian}
		The external tensor product symmetric monoidal structure on $\vEnr_{\Alg(\cat)}$ from \cref{prop:constructsmallexterior} is Cartesian.
	\end{theorem}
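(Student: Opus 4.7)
I will verify the criterion of \HA{Prop.}{2.4.1.5}: a symmetric monoidal structure is Cartesian iff the unit is terminal and for every pair of objects the canonical projections (obtained from the unique map to the unit and the unitor) exhibit the tensor product as the categorical product. Throughout, I use that, by \cref{prop:constructsmallexterior}, the external tensor product on $\vEnr_{\Alg(\cat)}$ covers the Cartesian monoidal structures on both $\Spaces$ and $\Alg(\cat)$: indeed, $\ob$ sends $\boxtimes$ to the Cartesian product of spaces by \cref{obs:formulagraphext}, and the enrichment base becomes the Cartesian product $V\times W$ in $\Alg(\cat)$ (since $V\times W \hookrightarrow V\otimes W$ is where the product $V\otimes W$ in $\Alg(\PrL)$ is witnessed after restriction to $\Alg(\cat)$).

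\textbf{Step 1: The unit is terminal.} The unit of $\boxtimes$ on $\vEnr_{\Alg(\cat)}$ is the enriched category $B* \in \vcat^{\tav}(*)$, which sits in the fiber of the two-sided fibration $\vEnr_{\Alg(\cat)} \to \Spaces \times \Alg(\cat)$ (from \cref{prop:stilltwosided}) over the terminal object $(*,*)$. Since $*$ is the trivial monoidal category, the fiber $\vcat^{\tav}_*(*) \simeq \Alg(*)$ is contractible, so $B*$ is terminal within its fiber. Combined with the two-sided fibration structure, $B*$ is terminal in $\vEnr_{\Alg(\cat)}$.

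\textbf{Step 2: Identifying the product.} Given $\cC\in \vcat^{\tav}(V)$, $\cD\in \vcat^{\tav}(W)$, I construct the canonical projections $\cC\boxtimes \cD \to \cC$ and $\cC \boxtimes \cD \to \cD$ as the composites of $\cC \boxtimes \cD \to \cC \boxtimes B* \simeq \cC$ (induced by the unique map $\cD \to B*$ from Step 1) and symmetrically for $\cD$. These projections cover the Cartesian projections in $\Spaces \times \Alg(\cat)$. Since $\vEnr_{\Alg(\cat)}\to \Spaces \times \Alg(\cat)$ is a two-sided fibration (Cartesian over $\Spaces$, coCartesian over $\Alg(\cat)$), the product of $\cC$ and $\cD$ in the total space, if it exists, lies in the fiber $\vcat^{\tav}_{\ob \cC \times \ob\cD}(V\times W)$ and is given by the product of $\pi_{\ob\cC}^*\cC \in \vcat^{\tav}_{\ob\cC\times\ob\cD}(V)$ and $\pi_{\ob\cD}^*\cD \in \vcat^{\tav}_{\ob\cC\times\ob\cD}(W)$ after coCartesian transport along $V \hookrightarrow V\times W$ and $W\hookrightarrow V\times W$. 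By \cref{lem:vcatXpreservesproducts}(2), this fiber splits as $\vcat^{\tav}_{\ob\cC\times\ob\cD}(V)\times \vcat^{\tav}_{\ob\cC\times\ob\cD}(W)$, and the required product is represented by the pair $(\pi_{\ob\cC}^*\cC,\pi_{\ob\cD}^*\cD)$.

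\textbf{Step 3: Conclusion.} It remains to check that $\cC \boxtimes\cD$ realizes precisely this pair, i.e.\ that its graph, viewed in $V\times W$, matches the Cartesian transports of the graphs of $\cC$ and $\cD$. By \cref{obs:formulagraphext},
\[ \Hom_{\cC\boxtimes\cD}((x,y),(x',y')) = \Hom_\cC(x,x')\otimes \Hom_\cD(y,y'), \]
which under the embedding $V\times W \hookrightarrow V\otimes W$ corresponds to the pair $(\Hom_\cC(x,x'), \Hom_\cD(y,y'))$. By \cref{ex:carttransportgraph}, these are exactly the graphs of $\pi_{\ob\cC}^*\cC$ and $\pi_{\ob\cD}^*\cD$, so the identification is immediate. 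The canonical projections agree with the ones implicit in this fiberwise product description, and therefore exhibit $\cC \boxtimes \cD$ as the Cartesian product.

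\textbf{Main obstacle.} The hardest part is the bookkeeping of Step 2: carefully describing products in the total space of a two-sided fibration over a Cartesian base, and observing that they are obtained by Cartesian transport in $\Spaces$ together with the product-preservation equivalence \cref{lem:vcatXpreservesproducts}(2) in $\Alg(\cat)$. Once this description is available, matching the formulas in Step 3 is a direct check via \cref{obs:formulagraphext} and \cref{ex:carttransportgraph}.
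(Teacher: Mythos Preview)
Your approach is essentially the same as the paper's: both verify the criterion of \HA{Def.}{2.4.0.1} (unit terminal, binary tensor is product), both exploit the two-sided fibration structure together with \cref{lem:vcatXpreservesproducts}(2), and both finish by comparing graphs using \cref{obs:formulagraphext} and \cref{ex:carttransportgraph}.

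Two points where the paper is sharper. First, your Step~2 contains a misdescription: there is no relevant ``coCartesian transport along $V \hookrightarrow V\times W$''. The equivalence $\vcat^{\tav}_{X\times Y}(V\times W) \simeq \vcat^{\tav}_{X\times Y}(V)\times \vcat^{\tav}_{X\times Y}(W)$ from \cref{lem:vcatXpreservesproducts}(2) is realized by coCartesian transport along the \emph{projections} $V\times W \to V$ and $V\times W \to W$, not along any inclusion. Transporting along the unit-inclusion $v\mapsto (v,1_W)$ and then taking a fiberwise product would give the wrong graph (e.g.\ it would involve $\Hom_\cC(x,x')\times 1_V$ rather than $\Hom_\cC(x,x')$). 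Your final sentence of Step~2 is the correct statement; the preceding clause should be dropped or rewritten. Correspondingly, Step~3 should explicitly verify that $(\pi_V)_!(\cC\boxtimes\cD)\simeq \pi_X^*\cC$ in $\vcat^{\tav}_{X\times Y}(V)$, which is exactly what the paper does by computing that $\PSh(\pi_V)$ sends the representable on $(\Hom_\cC(x,x'),\Hom_\cD(y,y'))$ to the representable on $\Hom_\cC(x,x')$.

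Second, the assertion that products in the total space of a two-sided fibration are computed by Cartesian transport over $\Spaces$ plus the product-splitting over $\Alg(\cat)$ is the crux of the argument and needs justification. The paper obtains this from \cite[Prop.~3.1]{haugseng2023tensor}; you should either cite that result or supply the short argument (which is not quite \HTT{Cor.}{4.3.1.11}, since the $\Alg(\cat)$-direction is only coCartesian and one must use product-preservation of the classified functor rather than Cartesian transport). Your ``main obstacle'' paragraph correctly identifies this as the delicate point, but the body of Step~2 asserts it without proof.
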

	\begin{proof}
		By \HA{Def,}{2.4.0.1},  it suffices to show that the unit ${B *} \in \vEnr_{\Alg(\cat)}$ is terminal, and for $\cC, \cD \in \vEnr_{\Alg(\cat)}$ the induced projections $\cC \simeq \cC \boxtimes {B *} \leftarrow \cC \boxtimes \cD \rightarrow {B *} \boxtimes \cD \simeq \cD$ exhibit $\cC \boxtimes \cD$ as the Cartesian product of $\cC$ and $\cD$. 
		
		Terminality of $B*$ (represented by the marked module $* \to \Spaces$) follows since it lies over the terminal objects in $\Spaces$ and in $\Alg(\cat)$ and since the fiber $\vcat_*(*| \Spaces) \simeq *$ is contractible.
		
		For products, since $\vcatsmall_X: \Alg(\cat) \to \lcat$ preserves products by \cref{lem:vcatXpreservesproducts}, it follows from \cite[Prop.\ 3.1]{haugseng2023tensor} that for $X,Y \in \Spaces$ and $V, W \in \Alg(\cat)$ with $\cV:= \PSh(V)$ and $\cW:= \PSh(W) \in \Alg(\PrL)$, an object $Q\in \vcat_{X\times Y} (V\times W | \PSh(V \times W))$ represents the product of objects $\cC \in \vcat_X(V|\cV)$ and $\cD \in \vcat_Y(W| \cW)$ if and only if $(\cC, \cD)$ and $Q$ have the same image under the functors
		\[\begin{tikzcd}[column sep= 5pt]
		 \vcat_{X}(V| \cV) \times \vcat_Y(W | \cW) \arrow[r, "(\pi^*_X{,} \pi^*_Y)"] &  \vcat_{X\times Y} (V|\cV) \times \vcat_{X\times Y} (W | \cW)\\
		&  \arrow[u, "((\pi_{\cV})_!{,} (\pi_{\cW})_!)"]\vcat_{X\times Y} (V \times W | \PSh(V\times W)) = \vcat_{X\times Y}(V\times W | \cV \otimes \cW)
		\end{tikzcd}
		\] 
		where the horizontal functor is induced by Cartesian transport along the projections $ \pi_X: X\times Y \to X$ and $\pi_Y: X\times Y \to Y$ and the right functor arises from coCartesian transport (i.e.\ change-of-enrichment) along $\pi_{\cV}:= \PSh(\pi_V: V\times W\to V): \cV \otimes \cW = \PSh(V\times W) \to \PSh(V)$  and $\pi_{\cW}: \cV \otimes \cW \to \cW$. 
		Thus, it  suffices to show that the canonical morphism $\pi_{\cV,!}(\cC \boxtimes \cD) \to \pi^*_X\cC$ is an isomorphism in $\vcat_{X\times Y} (V | \PSh(V))$. 
		
		Using that $\vcat_{X\times Y} (V | \PSh(V)) \to \Fun((X\times Y) \times (X\times Y), \PSh(V))$ is conservative, it suffices to compute the above morphism at the level of graphs. By \cref{constr:changeofenrquiv},  $\pi_{\cV,!}$ is given by postcomposition with $\pi_{\cV}$, and by \cref{ex:carttransportgraph},  $\pi_X^*$ by pre-composition with $\pi_X$.
		
		By \cref{obs:formulagraphext}, the graph of  $\cC\boxtimes \cD$ in $\PSh(V) \otimes \PSh(W)$ is given by the `pure tensor' $\yo(\Hom_{C}(x,x')) \otimes \yo(\Hom_{\cD}(y, y'))$ and thus the image of $(\Hom_{\cC}(x,x'), \Hom_{\cD}(y, y')) \in V \times W \to \PSh(V \times W) \simeq \PSh(V) \otimes \PSh(W)$. Thus, the graph of $\pi_{\cV, !} (\cC \boxtimes \cD)$ factors through $V\subseteq \PSh(V)$ and is given by $\Hom_{\cC}(x,x')$ which agrees with the graph of $\pi_X^* \cC$. 
		\end{proof}
	
	\begin{obs}
		\label{rem:externaltensorsmallunivalent}
		Since the full inclusion $  \Enr_{\Alg(\cat)}\subseteq \vEnr_{\Alg(\cat)} $ admits a left adjoint, it preserves limits, so the Cartesian monoidal structure restricts to univalent enriched categories. This is unlike the presentable case in \cref{prop:univalenttensor}, where we needed to univalize.
	\end{obs}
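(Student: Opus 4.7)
The plan is to invoke the standard principle that reflective subcategories are closed under all limits that exist in the ambient category. Concretely, by \cref{thm:pullbackcartesian} the external tensor product on $\vEnr_{\Alg(\cat)}$ has just been identified with the Cartesian symmetric monoidal structure, so its unit and binary operation are literally the terminal object $B*$ and the categorical product $\cC \times \cD$. By \cref{lem:univalizationrestricts}, univalization $u$ is a left adjoint to the fully faithful inclusion $\iota: \Enr_{\Alg(\cat)} \hookrightarrow \vEnr_{\Alg(\cat)}$, exhibiting $\Enr_{\Alg(\cat)}$ as a reflective subcategory. It will therefore suffice to check that $B*$ and any binary product of univalent enriched categories remain univalent, since then the restriction of the Cartesian monoidal structure exists and is unique.

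For the terminal object, I would observe that $B*$ has a one-point space of objects, so its Yoneda datum $\{*\} \to \Spaces$ is automatically a monomorphism and thus $B*$ lies in $\Enr_{\Alg(\cat)}$ directly from \cref{def:univalence}. For products, given $\cC, \cD \in \Enr_{\Alg(\cat)}$ and forming $\cC \times \cD$ in $\vEnr_{\Alg(\cat)}$, I would argue via the universal property: for any $\cE \in \Enr_{\Alg(\cat)}$, full-faithfulness of $\iota$ gives
\[
\Map_{\Enr_{\Alg(\cat)}}(\cE, \cC \times \cD) \simeq \Map_{\vEnr_{\Alg(\cat)}}(\cE, \cC) \times \Map_{\vEnr_{\Alg(\cat)}}(\cE, \cD),
\]
so $\cC \times \cD$ represents the product functor on $\Enr_{\Alg(\cat)}$ and the unit $\cC \times \cD \to \iota u(\cC \times \cD)$ is therefore inverted by the standard reflective-localization argument; equivalently, the right adjoint $\iota$ preserves the limit. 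Either way, $\cC \times \cD$ is univalent, completing the restriction of the symmetric monoidal structure.

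I expect no real obstacle: the argument is purely formal and is precisely what distinguishes the small-monoidal setting from the presentable case of \cref{prop:univalenttensor}, where the external tensor product on $\vEnr$ is not Cartesian and one genuinely had to postcompose with univalization to land in $\Enr$.
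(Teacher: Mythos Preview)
Your proposal is correct and follows essentially the same approach as the paper. The paper treats this as a one-line observation---the full inclusion is a right adjoint, hence preserves limits, hence the Cartesian structure restricts---and your argument is simply an elaboration of that same reasoning, with the explicit check for $B*$ and the reflective-subcategory argument for binary products spelled out rather than left implicit.
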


	\begin{notat}
		Denote by $\Monlax$ the category of monoidal categories and lax monoidal functors, constructed as the full subcategory of $\OpAss$ on those non-symmetric operads $\cV_0^\oast \to \Delta^{\op}$ that are coCartesian fibrations and hence monoidal categories. This contains $\Alg(\cat)$ as the subcategory spanned by all objects and (strong) monoidal functors.
	\end{notat}


	In~\cite[Cor. 3.2]{haugseng2023tensor}, the external tensor product of enriched categories is defined as the Cartesian monoidal structure on $\vEnr^{GH}_{\Monlax} \to \Monlax$.

	\begin{cor}
		\label{cor:GHmonoidalCartagrees}
		Pulling back the external tensor product on $\vEnr^{GH}_{\Monlax}$ from~\cite[Cor. 3.2]{haugseng2023tensor}  to  $\Alg(\cat) \times_{\Monlax} \vEnr^{GH}_{\Monlax} \overset{\text{Cor.~\ref{cor:GHcomparisonsmall}}}{\simeq} \vEnr_{\Alg(\cat)}$ agrees with the external tensor product constructed in \cref{prop:constructsmallexterior}. 
	\end{cor}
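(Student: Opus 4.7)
The strategy is to verify that both the external tensor product on $\vEnr_{\Alg(\cat)}$ from \cref{prop:constructsmallexterior} and the symmetric monoidal structure inherited by pullback from $\vEnr^{GH}_{\Monlax}$ are Cartesian symmetric monoidal structures on the same underlying category over $\Alg(\cat)$; they must then agree by the essential uniqueness of Cartesian symmetric monoidal structures (\HA{}{2.4.1.5}).

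That the first is Cartesian is exactly \cref{thm:pullbackcartesian}. For the pulled-back structure, the plan is as follows. First I would observe that the subcategory inclusion $\Alg(\cat) \hookrightarrow \Monlax$ preserves finite products: the Cartesian product of monoidal categories, equipped with coordinate-wise monoidal structure, is again monoidal, and the canonical projections are strong (not merely lax) monoidal, so products computed in $\Monlax$ already lie in $\Alg(\cat)$. Next I would deduce that finite products in the pullback $\vEnr_{\Alg(\cat)} \simeq \Alg(\cat) \times_{\Monlax} \vEnr^{GH}_{\Monlax}$ agree with those computed in $\vEnr^{GH}_{\Monlax}$: given $\cC \in \vcatsmall(V)$ and $\cD \in \vcatsmall(W)$, their Cartesian product in $\vEnr^{GH}_{\Monlax}$ lies over $V \times W \in \Alg(\cat)$, hence already belongs to the full subcategory $\vEnr_{\Alg(\cat)}$ and provides the product there. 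Consequently the pulled-back symmetric monoidal structure has tensor product and unit computed by Cartesian products and terminal objects in $\vEnr_{\Alg(\cat)}$, so it is Cartesian.

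The one point requiring care is to make sure the uniqueness of Cartesian symmetric monoidal structures applies compatibly with the projections to $\Alg(\cat)$ --- that is, we want an equivalence not just of symmetric monoidal categories but of coCartesian fibrations of operads over $\Alg(\cat)^{\times}$. This should however be automatic: both symmetric monoidal forgetful functors $\vEnr_{\Alg(\cat)}^\otimes \to \Alg(\cat)^\times$ exhibit the total category as the coCartesian unstraightening of a lax symmetric monoidal functor $\vcatsmall(-):(\Alg(\cat), \times) \to (\lcat, \times)$, and by \cref{lem:vcatXpreservesproducts} this lax structure is itself forced to be Cartesian on each fiber. I do not anticipate a substantial obstacle beyond bookkeeping here, as the essential content is that there is only one meaningful way of taking products of enriched categories.
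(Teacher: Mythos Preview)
Your proposal is correct and follows essentially the same route as the paper: both symmetric monoidal structures are Cartesian on the same underlying category, hence agree by uniqueness (\HA{Cor.}{2.4.1.8}). The one place where you differ from the paper is in showing that $\Alg(\cat) \hookrightarrow \Monlax$ preserves finite products: you argue directly that projections out of a product of monoidal categories are strong monoidal, whereas the paper simply observes that this inclusion admits a left adjoint (the enveloping monoidal category, \cite[Def.~A.1.1]{haugseng}) and therefore preserves all limits. Both arguments are valid; the left-adjoint observation is a bit slicker and immediately gives preservation of the terminal object as well.

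Your final paragraph about compatibility over $\Alg(\cat)^\times$ is not needed for the statement as written: once both symmetric monoidal structures on $\vEnr_{\Alg(\cat)}$ are Cartesian they agree, and since the projection to $\Alg(\cat)$ preserves finite products in both cases (it is the same underlying functor), the symmetric monoidal structure on the projection is forced. The paper does not raise this point.
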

	\begin{proof}
	In \cite[Cor. 3.2]{haugseng2023tensor}, the external tensor product on $\vEnr^{GH}_{\Monlax}$ is defined as the Cartesian product. 	Since the subcategory inclusion $\Alg(\cat) \to \Monlax$ admits a left adjoint (the enveloping monoidal category, see \cite[Def. A.1.1]{haugseng})  it preserves limits and hence the Cartesian monoidal structure on $\vEnr^{GH}_{\Monlax}$ pulls back to a Cartesian monoidal structure on $\Alg(\cat) \times_{\Monlax} \vEnr^{GH}_{\Monlax}$ which we identify in \cref{cor:GHcomparisonsmall} with   $\vEnr_{\Alg(\cat)}$. By \cref{thm:pullbackcartesian}, the external tensor product therefore agrees with our external tensor product constructed in \cref{prop:constructsmallexterior}. 
		\end{proof}

	\begin{cor}\label{cor:smallinternalagree}
	The lax symmetric monoidal structure on $\vcat^{GH}(-): \Alg(\cat) \to \lcat$ from \cite[Prop. 4.3.11]{haugseng} agrees under the equivalence $\vcat^{GH}(-) \simeq \vcatsmall(-)$  from \cref{cor:GHcomparisonsmall} with the lax symmetric monoidal structure on $\vcatsmall$ in \cref{cor:laxsmall}. Similarly, the lax symmetric monoidal structure on $\cat^{GH}(-): \Alg(\cat) \to \lcat$ from \cite[Cor.~5.7.11]{haugseng} agrees under the equivalence $\cat^{GH}(-) \simeq \catsmall(-)$  from \cref{cor:GHcomparisonsmall} with the lax symmetric monoidal structure on $\catsmall$ constructed in \cref{cor:laxsmall}.
	\end{cor}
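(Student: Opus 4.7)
The plan is to recognize both lax symmetric monoidal structures, on $\vcatsmall(-)$ and on $\vcat^{GH}(-)$, as arising by straightening equivalent symmetric monoidal coCartesian fibrations of operads over $\Alg(\cat)$, and then to invoke \cref{cor:coCartoperadscrit}.

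On our side, by construction the lax symmetric monoidal structure on $\vcatsmall(-)$ from \cref{cor:laxsmall} is straightened, via \cref{cor:coCartoperadscrit}, from the symmetric monoidal coCartesian fibration of operads $\vEnr_{\Alg(\cat)}^\boxtimes \to \Alg(\cat)^\times$ constructed in \cref{prop:constructsmallexterior}. On the Gepner--Haugseng side, \cite[Cor.~3.2]{haugseng2023tensor} identifies the lax symmetric monoidal structure on $\vcat^{GH}(-)$ from \cite[Prop.~4.3.11]{haugseng} with the one obtained by straightening the Cartesian monoidal coCartesian fibration $\vEnr^{GH}_{\Alg(\cat)} \to \Alg(\cat)$, itself pulled back along $\Alg(\cat) \hookrightarrow \Monlax$ from the Cartesian monoidal structure on $\vEnr^{GH}_{\Monlax}$. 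So both lax structures arise from symmetric monoidal coCartesian fibrations of operads over $\Alg(\cat)$.

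It then suffices to match these two symmetric monoidal coCartesian fibrations. The underlying equivalence $\vEnr_{\Alg(\cat)} \simeq \vEnr^{GH}_{\Alg(\cat)}$ of coCartesian fibrations over $\Alg(\cat)$ is \cref{cor:GHcomparisonsmall}. By \cref{thm:pullbackcartesian} the external tensor product on $\vEnr_{\Alg(\cat)}$ is Cartesian, and the Gepner--Haugseng structure is Cartesian by construction; since any equivalence of categories with finite products is canonically symmetric monoidal for the Cartesian structures, this upgrades (as already recorded in \cref{cor:GHmonoidalCartagrees}) to an equivalence of symmetric monoidal coCartesian fibrations of operads. Straightening via \cref{cor:coCartoperadscrit} then yields the asserted equivalence $\vcatsmall(-) \simeq \vcat^{GH}(-)$ as lax symmetric monoidal functors $\Alg(\cat) \to \lcat$, extending the underlying equivalence from \cref{cor:GHcomparisonsmall}.

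For the univalent statement, note that our lax symmetric monoidal structure on $\catsmall(-)$ is obtained in \cref{cor:laxsmall} by straightening the symmetric monoidal coCartesian fibration $\Enr_{\Alg(\cat)} \to \Alg(\cat)$ of \cref{prop:constructsmallexterior} (which is Cartesian, by \cref{rem:externaltensorsmallunivalent} combined with \cref{thm:pullbackcartesian}), while the Gepner--Haugseng lax symmetric monoidal structure on $\cat^{GH}(-)$ from \cite[Cor.~5.7.11]{haugseng} is the one induced by the Cartesian product on $\Enr^{GH}_{\Alg(\cat)}$. The equivalence $\vEnr_{\Alg(\cat)} \simeq \vEnr^{GH}_{\Alg(\cat)}$ restricts to an equivalence $\Enr_{\Alg(\cat)} \simeq \Enr^{GH}_{\Alg(\cat)}$ of full subcategories, both with Cartesian monoidal structure, and the univalization functors are symmetric monoidal localizations on both sides (by \cref{prop:constructsmallexterior} and by \cite[Cor.~5.7.11]{haugseng} respectively); straightening therefore produces the required symmetric monoidal equivalence $\catsmall(-) \simeq \cat^{GH}(-)$, compatible with the univalization natural transformations. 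The main subtlety here is purely bookkeeping around \cref{cor:coCartoperadscrit} — the real content, namely that both external tensor products are Cartesian on equivalent underlying fibrations, has already been done in Theorems \ref{cor:GHcomparisonsmall}, \ref{thm:pullbackcartesian}, and Corollary \ref{cor:GHmonoidalCartagrees}.
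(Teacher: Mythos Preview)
Your proposal is correct and, for the valent case, essentially identical to the paper's proof: the paper simply says ``immediate from \cref{cor:GHmonoidalCartagrees}'', and you have unpacked exactly what that corollary encodes.

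For the univalent case your route differs slightly from the paper's. You argue that both symmetric monoidal structures on $\Enr_{\Alg(\cat)} \simeq \Enr^{GH}_{\Alg(\cat)}$ are Cartesian, hence agree. The paper instead argues via uniqueness of localization: since univalization $\vcat^{GH}(-) \To \cat^{GH}(-)$ is a symmetric monoidal natural transformation whose components are localizations (by a generalization of \cite[Prop.~5.7.14]{haugseng}), the lax symmetric monoidal structure on $\cat^{GH}(-)$ is uniquely determined by that on $\vcat^{GH}(-)$; and the same holds for $\catsmall(-)$ by \cref{cor:laxsmall}. Having matched the valent structures, the univalent ones then agree automatically. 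This avoids your one unsupported assertion, namely that \cite[Cor.~5.7.11]{haugseng} presents the structure on $\cat^{GH}(-)$ as Cartesian---in fact that reference defines it via localization, so you would still need the (easy) observation that a symmetric monoidal localization of a Cartesian structure onto a reflective subcategory closed under products is again Cartesian. Both arguments are valid; the paper's is marginally cleaner in that it sidesteps this check entirely.
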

	\begin{proof}
	The valent statement is immediate from \cref{cor:GHmonoidalCartagrees}. For the univalent statement, note that univalization  $\vcat^{GH} \To \cat^{GH}(-)$ defines a symmetric monoidal natural transformation by a generalization of \cite[Prop.~5.7.14]{haugseng}. Since $\vcat^{GH}(V) \to \cat^{GH}(V)$ is a localization for every $V\in \Alg(\cat)$, this uniquely determines the lax symmetric monoidal structure on $\cat^{GH}(-)$ which hence by \cref{cor:GHcomparisonsmall} and \cref{cor:laxsmall} agrees with the one on $\catsmall(-)$ from \cref{cor:laxsmall}.\end{proof}

	\begin{cor}\label{cor:smalloperad}

	In particular, if $O$ is an operad and $V\in \Alg_O(\Alg(\cat))$ a small $O\otimes\E_1$-monoidal category, then the $O$-monoidal structures on $\vcatsmall(V)$ and $\catsmall(V)$ from \cref{defin:smallinternalOmonoidal} agree with the ones defined in  \cite[Cor.~4.3.12]{haugseng} and \cite[Cor.~5.7.12]{haugseng}.	\end{cor}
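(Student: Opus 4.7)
The plan is to deduce this from the comparison of lax symmetric monoidal structures already established in \cref{cor:smallinternalagree}, by using that an $O$-monoidal structure on the relevant category is, by construction, obtained functorially from the lax symmetric monoidal structure on $\vcatsmall(-)$ (resp.\ $\catsmall(-)$) applied to $O$-algebras.

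First I would recall the formal mechanism: any lax symmetric monoidal functor $F: (\Alg(\cat), \times) \to (\lcat, \times)$ induces, for every operad $O$, a functor $\Alg_O F : \Alg_O(\Alg(\cat)) \to \Alg_O(\lcat)$, and by Dunn additivity $\Alg_O(\Alg(\cat)) \simeq \Alg_{O \otimes \E_1}(\cat)$. Thus a small $O \otimes \E_1$-monoidal category $V$ is sent to an $O$-monoidal category $F(V)$ whose underlying category is $F(V)$ equipped with operations that are the images under $F$ of the multiplications of $V$. This is precisely how both the $O$-monoidal structure of \cref{defin:smallinternalOmonoidal} on $\vcatsmall(V)$ is built from \cref{cor:laxsmall}, and how the $O$-monoidal structure of \cite[Cor.~4.3.12]{haugseng} on $\vcat^{GH}(V)$ is built from \cite[Prop.~4.3.11]{haugseng}.

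Next I would apply \cref{cor:smallinternalagree}, which provides a symmetric monoidal equivalence of lax symmetric monoidal functors $\vcatsmall(-) \simeq \vcat^{GH}(-) : (\Alg(\cat), \times) \to (\lcat, \times)$, and similarly $\catsmall(-) \simeq \cat^{GH}(-)$. Passing to $O$-algebras (which, as the formation of $\Alg_O$ is functorial in lax symmetric monoidal functors and natural transformations) then yields an equivalence $\Alg_O \vcatsmall(-) \simeq \Alg_O \vcat^{GH}(-)$ of functors $\Alg_{O \otimes \E_1}(\cat) \to \Alg_O(\lcat)$. Evaluating at $V \in \Alg_{O \otimes \E_1}(\cat)$ gives an equivalence of $O$-monoidal categories between the two $O$-monoidal structures on $\vcatsmall(V) \simeq \vcat^{GH}(V)$. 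The same argument applies to the univalent variant.

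There is essentially no obstacle here beyond unwinding conventions: both $O$-monoidal structures are tautologically the output of the same construction (\lq\lq apply $\Alg_O$ to a lax symmetric monoidal functor\rq\rq) applied to two \emph{already-identified} lax symmetric monoidal functors, so once \cref{cor:smallinternalagree} is in hand the corollary is automatic. The only point requiring a modicum of care is checking that \cite[Cor.~4.3.12]{haugseng} and \cite[Cor.~5.7.12]{haugseng} indeed construct their $O$-monoidal structures by exactly this $\Alg_O$ procedure from the lax symmetric monoidal functors of \cite[Prop.~4.3.11]{haugseng} and \cite[Cor.~5.7.11]{haugseng}; this is immediate on inspection of those statements.
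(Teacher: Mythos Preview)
Your proposal is correct and matches the paper's approach: the paper gives no separate proof for this corollary, treating it as an immediate consequence of \cref{cor:smallinternalagree} (hence the ``In particular''), and your argument is exactly the unwinding of why it follows---both $O$-monoidal structures arise by applying $\Alg_O$ to lax symmetric monoidal functors that have already been identified.
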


			\section{Comparing tensor products}
			\label{sec:comptensor}

		In \cref{cor:laxsymmetric} we have constructed a lax symmetric monoidal structure on the functor $\vcat(-): \Alg(\PrL) \to \lcat$. In \cref{cor:laxsmall} we have shown how this induces a lax symmetric monoidal structure on the small variant $\vcatsmall: \Alg(\cat) \to \lcat$ from \cref{prop:stilltwosided} and in \cref{cor:smallinternalagree} that this lax symmetric monoidal structure agrees with the one  constructed in \cite{haugseng, haugseng2023tensor}. In this final section, we will show that our original lax symmetric monoidal structure on $\vcat(-): \Alg(\PrL) \to \lcat$ from \S \ref{sec:multiplicativity} also agrees with the one constructed in \cite{haugseng, haugseng2023tensor}. More precisely, we will prove that restricting a large variant  $\vcatlarge:(-): \Alg(\lcat) \to \llcat$ of the lax symmetric monoidal functor  from \cref{prop:stilltwosided} along the lax symmetric monoidal functor $\Alg(\PrL) \to \Alg(\lcat)$ is equivalent to the  lax symmetric monoidal functor $\vcat(-): \Alg(\PrL) \to \lcat$ from \S \ref{sec:multiplicativity}.
		
		To do so, we need to develop some universe-hopping. 
						
	\subsection{Large presheaves and ind completion}
	Recall that throughout this paper we work with Grothendieck universes and have fixed from the outset uncountable inaccessible cardinals $\tav < \hat{\tav} < \doublehat{\tav}$, and call cardinals $\kappa$ small, large and very large if $\kappa<\tav, \kappa< \hat{\tav}$ and $\kappa<\doublehat{\tav}$, respectively. Replacing $\tav$ by $\hat{\tav}$ throughout, we obtain large variants of the definitions, constructions and statements in this paper. We follow the convention to denote by a $\widehat{-}$ the large analogs 
	 of categorical constructions. For example, we write $\lPrL$ for the category of \emph{large-presentable categories}: Very large categories that are locally large, admit large colimits, and can be written as $\operatorname{Ind}_\kappa(\ccC)$ for $\kappa$ a large regular cardinal and $\ccC$ a large category.

	For a presentable category, there are two common ways to produce a large-presentable category, compare \cite[Sections 3.11, 3.12]{kelly}:

	\begin{notat} We let \[\widehat{\PSh}(-):= \Fun(-^{\op}, \widehat{\Spaces}):  \widehat{\cat} \to \lPrL\] denote the \emph{large presheaf category}.
	
For $C \in \Catcolim$ (the category of large categories admitting small colimits and small-colimit-preserving functors), we let $\Ind_{\tav}(C)$ denote the full subcategory of $\widehat{\PSh}(\cV)$ on the functors that preserve small limits. By \HTT{Prop.}{5.3.6.2} this is a relative presheaf category, universally described as adjoining large colimits in a way preserving small colimits, and therefore also agrees with the free cocompletion of $C$ under $\tav$-filtered colimits. This assembles into a functor \[\lInd: \Catcolim \to \lPrL.\]
	\end{notat}

	\begin{ex} By the universal property of (relative) presheaf categories, it follows that $\hat{\Spaces} = \hat{\PSh}(*) \simeq \lInd(\PSh(*)) = \lInd(\Spaces)$.
		Similarly $\lInd(\Set) \simeq \widehat{\Set}, \lInd(\Spaces_*) \simeq \hat{\Spaces}_*, \lInd(\Sp) \simeq \widehat{\Sp}$ and so on.
	\end{ex}

	\begin{lemma}\label{lem:largeind}
	The following holds:
	\begin{enumerate}[(1)]
	\item $\widehat{\PSh}: \lcat \to \lPrL$ is symmetric monoidal. Therefore, the composite $\PrL \to \lcat \to \lPrL$ (which we will henceforth also denote by $\widehat{\PSh}$) is lax symmetric monoidal;
	\item $\lInd: \PrL \to \lPrL$ is symmetric monoidal and preserves small colimits, colimit-dominant functors and fully faithful functors;
	\item For $\cV \in \Alg(\PrL)$, the induced functor $\lInd: \PrV \to \lPrL_{\lInd(\cV)}$ preserves internal left adjoints;
	\item There is a symmetric monoidal natural transformation between lax symmetric monoidal functors $L:\widehat{\PSh} \To \lInd : \PrL \to \lPrL$. For $\cC \in \PrL$ the functor $L_C: \widehat{\PSh}(\cC) \to \lInd(\cC)$ is  left adjoint to the inclusion $\lInd(C) \subseteq \widehat{\PSh}(C)$.
	\end{enumerate}
	\end{lemma}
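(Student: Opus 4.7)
\emph{Proof proposal.} For (1), I would first import the universe-enlarged analog of \HA{Prop.}{4.8.1.15} to conclude that $\widehat{\PSh}: (\lcat, \times) \to (\lPrL, \otimes)$ is symmetric monoidal. To obtain a lax symmetric monoidal structure on the composite $(\PrL, \otimes) \to (\lcat, \times) \to (\lPrL, \otimes)$, the key observation is that for $\cC, \cD \in \PrL$ the canonical bifunctor $\cC \times \cD \to \cC \otimes \cD$ (which preserves colimits separately but not jointly, hence is a morphism in $\lcat$ rather than in $\PrL$) induces upon applying $\widehat{\PSh}$ the structure map
\[ \widehat{\PSh}(\cC) \otimes \widehat{\PSh}(\cD) \simeq \widehat{\PSh}(\cC \times \cD) \to \widehat{\PSh}(\cC \otimes \cD), \]
where the isomorphism uses strict symmetric monoidality of $\widehat{\PSh}$. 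Coherence of these structure maps would be obtained systematically via the machinery of coCartesian fibrations of operads.

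For (2), I would describe $\lInd: \PrL \to \lPrL$ as the restriction along $\PrL \subseteq \Catcolim$ of the left adjoint to the forgetful inclusion $\lPrL \subseteq \Catcolim$ (with $\PrL \subseteq \Catcolim$ a full symmetric monoidal subcategory closed under small colimits, cf.\ \HA{Prop.}{4.8.1.15}). This immediately yields symmetric monoidality and colimit-preservation. To handle the remaining properties, I would identify the right adjoint of $\lInd(F): \lInd(\cC) \to \lInd(\cD)$ with restriction along $F$ (via the Yoneda-type calculation $\Map_{\lInd(\cC)}(\yo c, \lInd(F)^{\rR} \hat{G}) \simeq \hat{G}(Fc)$). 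Since restriction along $F$ preserves colimits, fully faithfulness of $F$ propagates to fully faithfulness of $\lInd(F)$ by checking the unit on representables and then extending to all of $\lInd(\cC)$ by colimits. Similarly, colimit-dominance of $F$ propagates because small-limit-preservation of presheaves $\hat{G}, \hat{H} \in \lInd(\cD)$ together with small-colimit-generation of $\cD$ by $F(\cC)$ forces any natural transformation that becomes invertible after restriction along $F$ to already be invertible, giving conservativity of the right adjoint of $\lInd(F)$.

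For (3), once (2) establishes that $\lInd$ is symmetric monoidal, it automatically sends $\cV$-module categories in $\PrL$ to $\lInd(\cV)$-module categories in $\lPrL$; preservation of internal left adjoints can then be verified using the characterization of \cref{prop:adjointable} as adjointability of certain squares of $\cV$-linear functor categories, which is preserved by any symmetric monoidal functor. For (4), the inclusion $\lInd(\cC) \subseteq \widehat{\PSh}(\cC)$ is the full subcategory of small-limit-preserving presheaves, an accessible reflective localization, so a left adjoint $L_{\cC}$ exists by \HTT{Prop.}{5.5.4.15}. Naturality of $L$ in $\cC$ and compatibility with the lax symmetric monoidal structures then follow from uniqueness of left adjoints together with functoriality of the inclusion.

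The main technical obstacle is the assembly of higher coherences in (1) and (4), and in particular ensuring that the natural transformation $L$ of (4) intertwines the lax symmetric monoidal structures produced in (1) and (2) as a symmetric monoidal natural transformation. I would handle this by working systematically with coCartesian fibrations of operads, applying the criterion of \cref{cor:coCartoperadscrit} instead of explicitly writing down higher coherences, and using the uniqueness properties of the localization $L_{\cC}$ to rigidify the coherence data on the nose.
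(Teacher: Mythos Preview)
Your proposal is broadly on the right track, but the paper's proof organizes the coherence data much more cleanly, and there are a couple of points where your argument needs repair.

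\textbf{On (1), (2), (4) and coherence.} The paper does not build the lax monoidal structure on $\widehat{\PSh}|_{\PrL}$ by hand from the bifunctor $\cC \times \cD \to \cC \otimes \cD$. Instead it invokes (via \HA{Rem.}{4.8.1.8} and \HA{Lem.}{4.8.4.2}) a single composable pair of symmetric monoidal left adjoints
\[
\llcat \;\xrightarrow{\PSh^{\tav\mathrm{-rex}}}\; \doublewidehat{\cat}{}^{\text{small colim}} \;\xrightarrow{\lInd}\; \doublewidehat{\cat}{}^{\text{large colim}},
\]
whose right adjoints are then automatically lax symmetric monoidal. Restricting to the full symmetric monoidal subcategories $\lcat$, $\PrL$, $\lPrL$ gives (1) and (2) at once, and the symmetric monoidal natural transformation $L$ of (4) is simply $\lInd$ whiskered with the counit of $\PSh^{\tav\mathrm{-rex}} \dashv \iota$:
\[
L:\; \lPSh \circ \iota \;\simeq\; \lInd \circ \PSh^{\tav\mathrm{-rex}} \circ \iota \;\Longrightarrow\; \lInd.
\]
This completely dissolves your ``main technical obstacle'': no ad hoc assembly of higher coherences or appeal to \cref{cor:coCartoperadscrit} is needed, because counits of symmetric monoidal adjunctions are symmetric monoidal transformations for free. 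Your description of the structure maps in (1) is pointwise correct --- it is exactly what this counit unwinds to --- but you have not identified the mechanism that makes it coherent.

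\textbf{A size issue in (2).} You write that $\lInd$ is the restriction of the left adjoint to ``the forgetful inclusion $\lPrL \subseteq \Catcolim$''. In the paper's conventions $\Catcolim$ consists of \emph{large} categories with small colimits, whereas objects of $\lPrL$ are \emph{very large}; there is no such inclusion. The correct ambient categories are $\doublewidehat{\cat}{}^{\text{small colim}}$ and $\doublewidehat{\cat}{}^{\text{large colim}}$, as above. This is easily fixed, but as stated your sentence is false.

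\textbf{On (3).} Your justification --- that adjointability of the squares in \cref{prop:adjointable} ``is preserved by any symmetric monoidal functor'' --- does not go through as written: those squares involve internal homs $\LinFun(-,-)$, and symmetric monoidal functors need not preserve internal homs. The paper instead argues directly: since $F^\rR$ lies in $\PrV$ one may form $\lInd(F^\rR)$, and restricting the evident adjunction $\lPSh(F) \dashv \lPSh(F^\rR)$ to small-limit-preserving presheaves shows $\lInd(F) \dashv \lInd(F^\rR)$. It then remains to check that the canonical lax $\lInd(\cV)$-linearity on $\lInd(F^\rR)$ is strong, which reduces by colimit-preservation to the $\cV$-tensors in $\cN$, where it agrees with the (strong) $\cV$-linearity of $F^\rR$. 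A cleaner version of what you may have intended is that $\lInd$, being a functor of $(\infty,2)$-categories compatible with the module actions, preserves internal adjunctions; but that is a different statement from the one you wrote, and still requires the identification $\lInd(F)^\rR \simeq \lInd(F^\rR)$ that the paper supplies.

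Your arguments for preservation of fully faithful and colimit-dominant functors are fine alternatives; the paper simply cites \HTT{Prop.}{5.3.5.11} for the former and gives a one-line closure-under-colimits argument for the latter.
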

	\begin{proof}
		Let $\doublewidehat{\cat} {}^{\text{large colim}} \hookrightarrow \doublewidehat{\cat} {}^{\text{small colim}}$ be the inclusion of  the category of very large categories with large colimits into the category of very large categories with small colimits. By \HTT{Prop.}{5.3.6.2} it admits a left adjoint which extends the functor $\lInd: \Pr \to \lPrL$. But small colimits in $\Pr, \lPrL$ can be computed in  $\doublewidehat{\cat} {}^{\text{small colim}}$ and $\doublewidehat{\cat} {}^{\text{large colim}}$, respectively, since by the proof of \HA{Lem.}{4.8.4.2} $\Catcolim \subseteq \doublewidehat{\cat} {}^{\text{small colim}}$ is closed under small colimits\footnote{In fact, $\Catcolim$ consists of precisely the $\hat{\tav}$-compact objects in $\doublewidehat{\cat} {}^{\text{small colim}}$ and is hence closed under large colimits.}. Hence they are preserved by $\lInd:\Pr \to \lPrL$.
		
		$\lInd$ preserves fully faithful functors by \HTT{Prop.}{5.3.5.11}. Also given a \mbox{(small-)}colimit-dominant functor $F: \cM \to \cN$ in $\Pr$, then the image of $\lInd(F): \lInd(\cM) \to \lInd(\cN)$ contains $\operatorname{Im}(F) \subseteq \cN \subseteq \lInd(\cN)$. But since $\operatorname{Im}(F)$ generates $\cN$ under small colimits and $\cN$ generates $\lInd(\cN)$ under large colimits, this means $\lInd(F)$ is (large-)colimit-dominant in $\lPrL$.
		
		Since $\lInd: \Pr \to \lPrL$ is symmetric monoidal, it induces a functor $\PrV \to \lPrL_{\lInd(\cV)}$ as claimed. If $F: \cM \to \cN$ in $\PrV$ is internally left adjoint, our first observation is that the underlying functor of $\lInd(F): \lInd(\cM) \in \lInd(\cN)$ is left adjoint to $\lInd(F^\rR)$ (which is well-defined since $F^\rR$ preserves large colimits by assumption). This follows by restricting the evident adjunction $\lPSh(F) \dashv \lPSh(F^\rR)$ to small-limit-preserving presheaves. Hence, it suffices to show that the canonical lax $\lInd(\cV)$-linear structure on $\lInd(F)^{\rR} \simeq \lInd(F^\rR): \lInd(\cN) \to \lInd(\cM)$ is strong. But we already know this functor preserves large colimits, so we can reduce to checking its compatibility with the $\cV$-tensoring on $\cN$ where it agrees with $F^\rR$.
	
	By \HA{Rem.}{4.8.1.8}, \HA{Lem.}{4.8.4.2}, there exists a functor from the poset $[2] \to \Alg(\doublewidehat{\Pr})$ establishing the (relative) cocompletion functors \[\llcat \overset{\PSh^{\tav\mathrm{-rex}}}{\to}\doublewidehat{\cat} {}^{\text{small colim}} \overset{\lInd}{\to} \doublewidehat{\cat} {}^{\text{large colim}}\] as symmetric monoidal and their right adjoint subcategory inclusions as lax monoidal. Restricting to the full subcategories $\lcat \subseteq \llcat$, $\PrL \subseteq \doublewidehat{\cat} {}^{\text{small colim}}$ and $\lPrL \subseteq \doublewidehat{\cat} {}^{\text{large colim}}$ we obtain the desired (lax) symmetric monoidal structures on our functors. Moreover, letting $\iota: \doublewidehat{\cat} {}^{\text{small colim}}  \to \llcat$ denote the forgetful functor, the counit  of the adjunction $\PSh^{\tav\mathrm{-rex}} \dashv \iota$ induces a symmetric monoidal transformation 
	\[ L: \lPSh \circ \iota \simeq \lInd \circ \PSh^{\tav\mathrm{-rex}} \circ \iota \Rightarrow \lInd\;. \]
For fixed $\cM \in \Pr$, the component $L_\cM : \lPSh(\cM) \to \lInd(\cM)$ is a map in $\lPrL$, namely the unique large-colimit-preserving extension of $\cM\to \lInd(\cM)$. Hence its right adjoint is the canonical inclusion $\lInd(\cM) \hookrightarrow \lPSh(\cM)$.
\end{proof}

	\subsection{From presentable enrichment to large-presentable enrichment }

	\begin{constr}
		\label{constr:lIndfunct}
		The symmetric monoidal inclusion $\Spaces \hookrightarrow \lSpaces$ together with the symmetric monoidal functor $\lInd: \PrL\to \lPrL$ induce a symmetric monoidal functor from $(\Spaces \times \Alg(\PrL)) \times_{\RMod(\PrL)} \Arr(\RMod(\PrL)) \times_{\Arr(\Alg(\PrL))} \Alg(\Pr)$ to its large analog. This functor restricts to a symmetric monoidal \emph{universe-enlargement-functor} \[\begin{tikzcd}
		\vEnr\arrow[d] \arrow[r, "\Ind_{\tav, !}"] & \arrow[d] \lvEnr\\
		\Alg(\PrL) \arrow[r, "\lInd"] & \Alg(\lPrL) \end{tikzcd}\] between full subcategories, as it sends by \cref{lem:largeind}  internally left adjoint and colimit-dominant functors to functors satisfying the large versions of these properties respectively.
			\end{constr}


\begin{prop}\label{prop:universeEnlargmentFibration}The induced functor  $\vEnr \to (\Spaces \times \Alg(\Pr)) \times_{(\Alg(\lPrL) \times \lSpaces)} \lvEnr$  is a map of two-sided fibrations over $\Spaces \times \Alg(\Pr)$. 
\end{prop}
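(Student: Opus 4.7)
\begin{sketch}
The plan is to reduce the statement to a verification that the universe-enlargement functor $\Ind_{\tav,!}: \vEnr \to \lvEnr$ from \cref{constr:lIndfunct} preserves Cartesian morphisms over the $\Spaces$-component and coCartesian morphisms over the $\Alg(\PrL)$-component. First I will observe that the target $(\Spaces \times \Alg(\PrL)) \times_{\lSpaces \times \Alg(\lPrL)} \lvEnr$ is a pullback of the large analog of the two-sided fibration from \cref{cor:vEnrtwosided} along $(\id_\Spaces, \lInd)$. Since pullbacks of two-sided fibrations remain two-sided fibrations (\cref{obs:twosidedfib}), this is a two-sided fibration over $\Spaces \times \Alg(\PrL)$ whose Cartesian and coCartesian morphisms are precisely those whose image in $\lvEnr$ is $\lSpaces$-Cartesian or $\Alg(\lPrL)$-coCartesian respectively. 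As the induced functor $\vEnr \to (\Spaces \times \Alg(\PrL)) \times_{\lSpaces \times \Alg(\lPrL)} \lvEnr$ lies over the identity on $\Spaces \times \Alg(\PrL)$, being a map of two-sided fibrations is equivalent to $\Ind_{\tav,!}$ preserving these two classes of morphisms into $\lvEnr$.

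For the $\Spaces$-Cartesian morphisms, I will invoke \cref{obs:coCartMModVDesc}, which characterizes them in $\vEnr$ as those morphisms whose underlying arrow $\cM \to \cN$ in $\PrV$ is fully faithful. Since $\Ind_\tav$ preserves fully faithful functors by \cref{lem:largeind}(2), their image in $\lvEnr$ inherits this property and is $\lSpaces$-Cartesian by the large analog of \cref{obs:coCartMModVDesc}.

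For the $\Alg(\PrL)$-coCartesian morphisms, the proof of \cref{prop:coCart} (combined with \cref{ex:basechangerelative}) identifies them with those morphisms in $\vEnr$ covering some $f: \cV \to \cW$ for which the induced map $\cM \otimes_\cV \cW \to \cN$ is an equivalence. Symmetric monoidality and small-colimit preservation of $\Ind_\tav$ from \cref{lem:largeind}(2) then deliver a canonical equivalence $\Ind_\tav(\cM) \otimes_{\Ind_\tav \cV} \Ind_\tav(\cW) \simeq \Ind_\tav(\cM \otimes_\cV \cW) \simeq \Ind_\tav(\cN)$, since the relative tensor product is computed by a bar construction, identifying the image as $\Alg(\lPrL)$-coCartesian.

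The main bookkeeping concern will be verifying that $\Ind_\tav$ interacts as expected with the free presheaf modules $\PSh(X) \otimes \cV$ underpinning the pullback presentation of $\MMod$ in \cref{prop:bigpullback}, i.e.\ that $\Ind_\tav(\PSh(X) \otimes \cV) \simeq \widehat{\PSh}(X) \otimes \Ind_\tav(\cV)$ matches the free module of the large construction. This follows from the symmetric monoidality of $\Ind_\tav$ together with the natural identification $\Ind_\tav(\PSh(X)) \simeq \widehat{\PSh}(X)$ obtained from the universal property of successive free cocompletion of $X$ under small and then large colimits, and is essentially already packaged into \cref{constr:lIndfunct}.
\end{sketch}
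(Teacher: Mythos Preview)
Your proposal is correct and follows essentially the same approach as the paper: both argue by characterizing the $\Spaces$-Cartesian morphisms via fully faithfulness (\cref{obs:coCartMModVDesc}) and the $\Alg(\PrL)$-coCartesian morphisms via extension of scalars (\cref{ex:basechangerelative}), and then invoke \cref{lem:largeind}(2) to see that $\lInd$ preserves fully faithful functors and, being symmetric monoidal and small-colimit preserving, relative tensor products. Your version is slightly more explicit about why the target is a two-sided fibration and how Cartesian/coCartesian morphisms are detected in the pullback, and your closing remark on the identification $\lInd(\PSh(X)\otimes\cV)\simeq\widehat{\PSh}(X)\otimes\lInd(\cV)$ is a useful sanity check that the paper leaves implicit in \cref{constr:lIndfunct}.
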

\begin{proof}
The functor $\lInd: \Pr \to \lPrL$ is symmetric monoidal and preserves small colimits by \cref{lem:largeind}(2), so in particular relative tensor products. Since by  \cref{lem:largeind}(2) it also preserves the colimit-dominant/fully faithful factorization system, it follows from  the explicit description of coCartesian morphisms over $\Alg(\Pr)$ in \cref{ex:basechangerelative} and Cartesian morphisms over $\Spaces$ in \cref{obs:coCartMModVDesc}  that $\Ind_{\tav, !}$ is a map of two-sided fibrations. \end{proof}

\begin{prop}\label{prop:indlax}
The functor $\Ind_{\tav, !}$ from \cref{constr:lIndfunct} induces a symmetric monoidal natural transformation between lax symmetric monoidal functors:
\[\begin{tikzcd}\hphantom{\Alg(\PrL)}&\hphantom{\Alg(\lPrL)}&\hphantom{\llcat}\\
	\Alg(\PrL) & \Alg(\lPrL) & \llcat
	\arrow[from=2-1, to=2-2, "\lInd"']
	\arrow[""{name=0, anchor=center, inner sep=0}, "\vcat(-)", curve={height=-30pt}, from=2-1, to=2-3]
	\arrow[from=2-2, to=2-3,"\widehat{\vcat}(-)"']
	\arrow[ Rightarrow, shorten <=8pt, shorten >=-3pt, shift right=1,  from=1-2, to=2-2]
\end{tikzcd}\]
For every $\cV\in \Alg(\PrL)$, the induced functor $\vcat(\cV) \to \widehat{\vcat}(\lInd(\cV))$ is fully faithful with image those $\lInd(\cV)$-enriched categories with small space of objects and whose graph factors through $\cV \subseteq \lInd(\cV)$. 
\end{prop}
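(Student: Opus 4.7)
The plan is to establish the two claims of the proposition in sequence, first producing the symmetric monoidal natural transformation by straightening and then verifying fully faithfulness and the image description fiberwise.

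For the first assertion, I would invoke \cref{cor:coCartoperadscrit}. The functor $\Ind_{\tav,!}\colon \vEnr \to \lvEnr$ from \cref{constr:lIndfunct} is symmetric monoidal, and by \cref{prop:universeEnlargmentFibration} together with the large variant of \cref{lem:coCartOpPr}, both $\vEnr \to \Alg(\PrL)$ and $\lvEnr \to \Alg(\lPrL)$ are coCartesian fibrations of operads and $\Ind_{\tav,!}$ is a map of such (covering the symmetric monoidal functor $\lInd\colon \Alg(\PrL) \to \Alg(\lPrL)$ from \cref{lem:largeind}). Straightening yields the desired symmetric monoidal natural transformation between the lax symmetric monoidal functors $\vcat(-)$ (from \cref{cor:laxsymmetric}) and $\widehat{\vcat}(\lInd(-))$.

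For the second assertion, fix $\cV \in \Alg(\PrL)$ and $Y \in \Spaces$. The unit $\iota\colon \cV \hookrightarrow \lInd(\cV)$ is a fully faithful, small-colimit-preserving monoidal functor (since $\lInd\colon \Catcolim \to \widehat{\cat}{}^{\text{large colim}}$ from the proof of \cref{lem:largeind} is left adjoint to the forgetful functor and symmetric monoidal). Postcomposition with $\iota$ induces
\[
\iota_*\colon \LinEnd(\Fun(Y^{\op},\cV)) \simeq \Fun(Y^{\op}\times Y, \cV) \hookrightarrow \Fun(Y^{\op}\times Y, \lInd\cV) \simeq \widehat{\LinEnd}(\Fun(Y^{\op}, \lInd\cV)),
\]
which is monoidal (the matrix-multiplication on both sides is defined via coends, which $\iota$ preserves by \cref{prop:matrixproduct}) and fully faithful. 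By \cref{lem:easyoperad}, the induced functor on algebras
\[
\vcat_Y(\cV) \;=\; \Alg(\LinEnd(\Fun(Y^{\op},\cV))) \;\hookrightarrow\; \Alg(\widehat{\LinEnd}(\Fun(Y^{\op}, \lInd\cV))) \;=\; \widehat{\vcat}_Y(\lInd\cV)
\]
is fully faithful with image those large $\lInd\cV$-enriched categories whose underlying graph $Y\times Y \to \lInd\cV$ factors through $\cV$.

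To conclude, I would show that this fiberwise fully faithful inclusion agrees with the functor $\vcat(\cV)\to \widehat{\vcat}(\lInd\cV)$ induced by $\Ind_{\tav,!}$, and then assemble over $Y$. The identification on fibers is effectively tautological: by \cref{prop:changeofenragree} (whose argument applies verbatim to $\iota$), change-of-enrichment along $\iota$ acts on graphs by postcomposition with $\iota$; since the underlying-graph functor into $\Fun(Y^{\op}\times Y,\lInd\cV)$ is conservative, this pins down the two functors. For the global statement, both $\vcat(\cV) \to \Spaces$ and $\widehat{\vcat}(\lInd\cV) \to \Spaces$ are Cartesian fibrations (with Cartesian transport given by restriction of graphs, cf.~\cref{ex:carttransportgraph}), and the comparison map preserves Cartesian morphisms as $\Ind_{\tav,!}$ is a map of two-sided fibrations by \cref{prop:universeEnlargmentFibration}; since the property ``graph factors through $\cV$'' is preserved by Cartesian transport, fiberwise fully faithfulness and the image description promote to the global statement by \kerodon{01VB}. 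The main obstacle is verifying the compatibility in the third step, namely that the algebraic fully faithful inclusion produced by \cref{lem:easyoperad} matches the functor coming from the marked-module construction of $\Ind_{\tav,!}$; the conservativity of the forgetful-to-graphs functor reduces this to the unambiguous statement that both implement postcomposition with $\iota\colon \cV \hookrightarrow \lInd\cV$ at the level of quivers.
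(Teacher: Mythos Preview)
Your proposal is correct and follows essentially the same approach as the paper's proof: both straighten via \cref{cor:coCartoperadscrit} for the first claim, reduce fully faithfulness to fibers over $\Spaces$ via \kerodon{01VB}, establish fiberwise fully faithfulness by comparing the quiver monoidal categories, and invoke (the argument of) \cref{prop:changeofenragree} to identify the algebraically-defined inclusion with the functor coming from $\Ind_{\tav,!}$.

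One small point of difference worth noting: for the monoidality of the comparison $\Fun(Y^{\op}\times Y,\cV)\hookrightarrow \Fun(Y^{\op}\times Y,\lInd\cV)$ you argue directly that $\iota$ preserves the coend formula for matrix multiplication, whereas the paper obtains this coherently by observing that $\lInd\colon \PrL\to\lPrL$ is symmetric monoidal and hence carries the left $\PrL$-action on $\PrV$ to the left $\lPrL$-action on $\widehat{\Pr}_{\lInd(\cV)}$, so that \cref{obs:endofunctoriality} yields an algebra homomorphism between the internal endomorphism algebras. Your computational check verifies the binary compatibility but does not by itself supply the full coherent monoidal structure; the paper's route via endomorphism-algebra functoriality does this automatically. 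This is a presentational rather than substantive gap, since your identification step already appeals to the argument of \cref{prop:changeofenragree}, which is precisely the endomorphism-algebra approach.
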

\begin{proof}
By \cref{prop:universeEnlargmentFibration}, the functor $\vEnr \to \Alg(\PrL) \times_{\Alg(\lPrL)} \lvEnr$ is a map of coCartesian fibrations over $\Alg(\PrL)$. Since  the square in \cref{constr:lIndfunct} is a square of symmetric monoidal categories, the functor is moreover symmetric monoidal. Hence, by \cref{cor:coCartoperadscrit}, this functor is a map of coCartesian fibrations of operads and therefore straightens to a symmetric monoidal natural transformation between lax symmetric monoidal functors $\Alg(\PrL) \to \llcat$. 

We now show that $\vcat(\cV) \to \widehat{\vcat}(\lInd(\cV))$ is fully faithful. Since this is a map of Cartesian fibrations over $\Spaces$, it suffices by  \kerodon{01VB} to show that $\vcat_X(\cV) \to \widehat{\vcat}_X(\lInd(\cV))$ is fully faithful for every fixed $X\in \Spaces$. 
 Being symmetric monoidal, $\lInd$ sends the left action of $\PrL$ on $\PrV$ to the left action of $\lPrL$ on $\widehat{\Pr}_{\lInd(\cV)}$ and hence induces as in \cref{constr:changeofenrquiv}  an algebra homomorphism between endomorphism algebras 
		\[ \Fun(X^{\op} \times X , \cV) \simeq \LinEnd(\PSh(X) \otimes \cV) \hookrightarrow \End^{\mathrm{L}}_{\lInd(\cV)}(\widehat{\PSh}(X) \otimes \lInd(\cV)) \simeq \Fun(X^{\op} \times X , \lInd(\cV)) \]
whose underlying functor is given by postcomposing with the full inclusion $\cV \hookrightarrow \lInd(\cV)$. In particular, the induced functor \[\vcat_X(\cV) \simeq \Alg(\LinEnd(\PSh(X) \otimes \cV)) \to \Alg(\End^{\mathrm{L}}_{\lInd(\cV)}(\widehat{\PSh}(X) \otimes \lInd(\cV))) \simeq \widehat{\vcat}_X(\lInd(\cV))\] is fully faithful. By an analogous argument to \cref{prop:changeofenragree}, this functor agrees with the induced map on fibers of $\vEnr \to (\Spaces \times \Alg(\Pr)) \times_{\lSpaces \times \Alg(\lPrL)} \lvEnr$ over $\Spaces \times \Alg(\Pr)$.
\end{proof}

\begin{lemma} The functor $\Ind_{\tav, !}: \vEnr \to \lvEnr$ preserves fully faithful functors and surjective-on-objects functors, and univalent enriched categories.
\end{lemma}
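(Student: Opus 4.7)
The plan is to handle the three claims separately, using in each case that under the identification $\vEnr \simeq \MMod$ from \cref{thm:functorialcomparison}, the functor $\Ind_{\tav,!}$ sends a marked module $\yoV_{\cC}: \ob \cC \to \PShV(\cC)$ to the composite marked module $\ob \cC \to \PShV(\cC) \to \lInd(\PShV(\cC))$; this follows directly from \cref{constr:lIndfunct}. All three claims will then reduce to preservation properties of $\lInd: \PrL \to \lPrL$ collected in \cref{lem:largeind}, combined with the fibrational nature of $\Ind_{\tav, !}$ established in \cref{prop:universeEnlargmentFibration}.

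For preservation of surjective-on-objects functors, the argument is tautological: $\Ind_{\tav, !}$ is a functor over $\Spaces \times \Alg(\PrL)$, and in particular over $\Spaces$ via the $\ob$ projection, so the underlying map of object spaces is left unchanged and surjectivity on connected components is preserved. For preservation of fully faithful functors, I would first apply \cref{prop:charff} to identify fully faithful morphisms in $\vcat(\cV)$ with Cartesian morphisms over $\Spaces$. Since $\Ind_{\tav, !}$ is a map of Cartesian fibrations over $\Spaces \hookrightarrow \lSpaces$ by \cref{prop:universeEnlargmentFibration}, it preserves Cartesian morphisms, and the large variant of \cref{prop:charff} then identifies these with fully faithful morphisms in $\lvEnr$. (Alternatively, one could argue directly that $\lInd(\PShV(F))$ is fully faithful in $\lPrL$ whenever $\PShV(F)$ is, using \cref{lem:largeind}(2).)

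For preservation of univalence, the key input is that the canonical functor $\PShV(\cC) \to \lInd(\PShV(\cC))$ is fully faithful, which is a standard fact about Ind-completion (\HTT{Prop.}{5.3.5.11}; it is also an instance of \cref{lem:largeind}(2) applied to the unit of the adjunction). Univalence of $\cC$ means that $\yoV_{\cC}$ is a monomorphism in $\widehat{\cat}$; since a fully faithful functor is in particular a monomorphism on maximal subgroupoids, the composition $\ob \cC \to \PShV(\cC) \to \lInd(\PShV(\cC))$ remains a subcategory inclusion, so $\Ind_{\tav, !}(\cC)$ is univalent. I do not anticipate any substantial obstacle: everything reduces to preservation properties of $\lInd$ already packaged in \cref{lem:largeind} together with the results of \cref{prop:universeEnlargmentFibration}.
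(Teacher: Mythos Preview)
Your proposal is correct and essentially matches the paper's proof. For surjective-on-objects and univalence, your arguments are identical to the paper's (the latter is phrased as ``the marking of $\Ind_{\tav, !} \cC$ factors through $\PShV(\cC) \subseteq \lInd(\PShV(\cC))$''). For fully faithful functors, the paper gives only the direct argument you list as an alternative---that $\lInd$ preserves fully faithful functors by \cref{lem:largeind}(2), combined implicitly with \cref{prop:charff}(2)---whereas your primary argument routes through the two-sided fibration structure of \cref{prop:universeEnlargmentFibration}; note that this is mildly circular in presentation, since the proof of \cref{prop:universeEnlargmentFibration} already invokes \cref{lem:largeind}(2) to establish preservation of Cartesian morphisms, so you are repackaging the same input rather than avoiding it.
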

\begin{proof}
	The first claim follows from \cref{lem:largeind} $(2)$ since $\lInd: \PrL \to \lPrL$ preserves fully faithful functors. The second is clear as $\Ind_{\tav,!}$ does not change the underlying space of objects. For the last statement, use that the marking of $\Ind_{\tav, !} \cC$ factors through $\PShV(\cC) \subseteq \lInd(\PShV(\cC)) = \PSh_{\lInd(\cV)}(\Ind_{\tav, !} \cC)$.
\end{proof}

\begin{constr}\label{constr:IndUniv}
Since  $\Ind_{\tav, !}: \vEnr \to \lvEnr$ preserves fully faithful and surjective-on-objects functors, there is a unique symmetric monoidal functor $\Ind_{\tav,!, \mathrm{univ}}:\Enr \to \lEnr$ making the following square of symmetric monoidal categories and symmetric monoidal functors commute:
\[\begin{tikzcd}
\vEnr \arrow[d, "u"] \arrow[r] & \lvEnr\arrow[d, "u"] \\
\Enr \arrow[r] & \lEnr
\end{tikzcd}\]
Moreover, denoting the full inclusions $ \Enr \subseteq \vEnr$ and $\lEnr \subseteq \lvEnr$ by $\iota$, the induced natural transformation $ \iota \Ind_{\tav,!, \mathrm{univ}} \To \Ind_{\tav, !} \iota$ is invertible since $\Ind_{\tav,!}$ preserves univalent categories and hence factors through the full subcategories $\Enr \subseteq \vEnr$. In other words, the above square is vertically right adjointable (or equivalently, defines a map of adjunctions). 
\end{constr}


\begin{prop} \cref{constr:IndUniv} induces a square of coCartesian fibrations over $\Alg(\PrL)$:
\[\begin{tikzcd}
\vEnr \arrow[d, "u"] \arrow[r] & \Alg(\Pr) \times_{\Alg(\lPrL)} \lvEnr\arrow[d, "u"] \\
\Enr \arrow[r] &\Alg(\Pr) \times_{\Alg(\lPrL)} \lEnr
\end{tikzcd}\]
\end{prop}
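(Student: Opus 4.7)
The plan is to verify (i) that each of the four corners is a coCartesian fibration over $\Alg(\PrL)$, (ii) that each of the four functors preserves coCartesian morphisms over $\Alg(\PrL)$, and (iii) that the square commutes; the last is immediate from \cref{constr:IndUniv}, since $\Ind_{\tav,!,\mathrm{univ}}$ was defined precisely to satisfy $u \circ \Ind_{\tav,!} \simeq \Ind_{\tav,!,\mathrm{univ}} \circ u$. The left column is handled by \cref{cor:EnrcoCart}. For the right column, I would apply the large analog of the same corollary to produce coCartesian fibrations $\lvEnr, \lEnr \to \Alg(\lPrL)$ together with univalization as a map of coCartesian fibrations between them, and then pull back along the functor $\lInd: \Alg(\PrL) \to \Alg(\lPrL)$ from \cref{lem:largeind}(1); both the coCartesian fibration structures and the map of coCartesian fibrations are preserved under pullback.

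For the top horizontal, I would start from \cref{prop:universeEnlargmentFibration}: forgetting the $\Spaces$-direction there gives a map of coCartesian fibrations $\vEnr \to \Spaces \times_{\lSpaces} \bigl(\Alg(\PrL) \times_{\Alg(\lPrL)} \lvEnr\bigr)$ over $\Alg(\PrL)$, and postcomposing with the fully faithful inclusion into $\Alg(\PrL) \times_{\Alg(\lPrL)} \lvEnr$ yields the top horizontal. This inclusion preserves coCartesian lifts because coCartesian transport over $\Alg(\PrL)$ fixes the underlying space of objects (by the two-sided structure of \cref{cor:vEnrtwosided} and its large analog).

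The main step will be the bottom horizontal, which I would handle via an essential-surjectivity-of-lifts argument exploiting that $u: \vEnr \to \Enr$ is both a reflection onto a full subcategory and a map of coCartesian fibrations over $\Alg(\PrL)$ by \cref{cor:EnrcoCart}. Given a coCartesian morphism $\phi: \cC \to \cC'$ in $\Enr$ over $f: \cV \to \cW$, the explicit formula for coCartesian transport in $\Enr$ from \cref{cor:EnrcoCart} shows that $\phi \simeq u(\tilde\phi)$, where $\tilde\phi: \iota\cC \to f_!\iota\cC$ is the coCartesian lift in $\vEnr$ at $\iota\cC$. Since the top horizontal and right vertical are both maps of coCartesian fibrations, their composite sends $\tilde\phi$ to a coCartesian morphism; commutativity of the square then identifies this image with the image of $\phi$ under the bottom horizontal, which is therefore coCartesian. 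The only potentially delicate point is whether the coCartesian morphisms in the pullback $\Alg(\PrL) \times_{\Alg(\lPrL)} \lEnr$ are correctly characterized — but this is automatic from the pullback presentation, so the argument reduces to the bookkeeping just outlined.
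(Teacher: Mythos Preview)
Your proposal is correct and follows essentially the same approach as the paper. The paper's proof is more terse: it cites \cref{prop:coCart} and \cref{lem:coCartrefl} for the corners, \cref{prop:universeEnlargmentFibration} and \cref{cor:EnrcoCart} for the top and vertical maps, and then for the bottom horizontal simply appeals to the explicit description of coCartesian morphisms in \cref{lem:coCartrefl}(3) --- which is exactly your observation that every coCartesian morphism $\phi$ in $\Enr$ is of the form $u(\tilde\phi)$ for a coCartesian $\tilde\phi$ in $\vEnr$, so that commutativity of the square together with the already-verified arrows forces the bottom to preserve coCartesian morphisms.
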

\begin{proof}
All corners of the square are coCartesian fibrations over $\Alg(\PrL)$ by \cref{prop:coCart} and \cref{lem:coCartrefl}. 
By \cref{prop:universeEnlargmentFibration} the top horizontal map is a map of coCartesian fibrations and by \cref{cor:EnrcoCart}, the vertical maps are maps of coCartesian fibrations. By the description of coCartesian morphisms in \cref{lem:coCartrefl} it follows that the bottom horizontal map is a map of coCartesian fibrations.
\end{proof}

\begin{cor}\label{cor:ind-presentableImage}
There is a unique symmetric monoidal natural transformation $\cat(-) \To \widehat{\cat}(\lInd(-)): \Alg(\PrL) \to \llcat$ making the diagram of symmetric monoidal transformations between lax symmetric monoidal functors commute:
\[\begin{tikzcd}
\vcat(-) \arrow[r] \arrow[d,"u"] & \widehat{\vcat}(\lInd(-))\arrow[d, "u"] \\
\cat(-) \arrow[r] & \widehat{\cat}(\lInd(-))
\end{tikzcd}\]
For every $\cV \in \Alg(\PrL)$, the induced functor $\cat(\cV) \to \widehat{\cat}(\lInd(\cV))$ is fully faithful with image those univalent $\lInd(\cV)$-enriched categories whose underlying space are small and whose graphs factor through $\cV \subseteq \lInd(\cV)$. 
\end{cor}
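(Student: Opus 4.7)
The plan is to straighten the square of coCartesian fibrations over $\Alg(\PrL)$ established in the proposition immediately preceding this corollary, via \cref{cor:coCartoperadscrit}. Each corner of that square already carries a symmetric monoidal structure: the top row from \cref{cor:laxsymmetric} and its large-universe analog, the bottom row from \cref{cor:catpreservescolims} (again together with its large-universe analog), and the horizontal arrows are symmetric monoidal by \cref{constr:IndUniv}. First I would verify that each of the four functors lifts to a map of coCartesian fibrations of operads over $\Alg(\PrL)^{\otimes}$: for the horizontal arrows this uses \cref{prop:universeEnlargmentFibration} combined with \cref{constr:IndUniv}, for the verticals it follows because univalization is a symmetric monoidal localization by \cref{prop:univalenttensor} and \cref{cor:catpreservescolims}. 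Straightening the resulting square via \cref{cor:coCartoperadscrit} then produces a commuting square of symmetric monoidal natural transformations of lax symmetric monoidal functors $\Alg(\PrL) \to \llcat$, providing the sought transformation $\cat(-) \To \widehat{\cat}(\lInd(-))$.

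For uniqueness I would argue as follows. By \cref{prop:univalization} each $u_{\cV}$ is a reflective localization, and by \cref{cor:catpreservescolims} the transformation $u: \vcat(-) \To \cat(-)$ is a symmetric monoidal localization of lax symmetric monoidal functors. Consequently any symmetric monoidal natural transformation out of $\cat(-)$ making the diagram commute is uniquely determined by its precomposition with $u$, which is prescribed to agree with the top transformation from \cref{prop:indlax}.

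For the full faithfulness of the induced functor at a fixed $\cV$, I would factor $\cat(\cV) \to \widehat{\cat}(\lInd \cV)$ as the composite $\cat(\cV) \hookrightarrow \vcat(\cV) \xrightarrow{\Ind_{\tav,!}} \widehat{\vcat}(\lInd \cV)$ followed by noting this composite lands in the full subcategory $\widehat{\cat}(\lInd \cV)$ (because $\Ind_{\tav,!}$ preserves univalence, as observed just before \cref{constr:IndUniv}). The first inclusion is full, and the second arrow is fully faithful by \cref{prop:indlax}; since $\widehat{\cat}(\lInd\cV) \hookrightarrow \widehat{\vcat}(\lInd\cV)$ is a full subcategory inclusion, the induced corestriction $\cat(\cV) \to \widehat{\cat}(\lInd \cV)$ is therefore fully faithful. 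For the image I would observe that \cref{prop:indlax} identifies the essential image of $\vcat(\cV) \to \widehat{\vcat}(\lInd\cV)$ with the $\lInd(\cV)$-enriched categories with small space of objects and graph in $\cV$, so the image of $\cat(\cV) \to \widehat{\cat}(\lInd \cV)$ is contained in the univalent such categories. Conversely, given a univalent $\cD \in \widehat{\cat}(\lInd \cV)$ with small object space and graph in $\cV$, \cref{prop:indlax} yields $\cC \in \vcat(\cV)$ with $\Ind_{\tav,!}\cC \simeq \cD$; applying $u$ and using commutativity of the square gives $\Ind_{\tav,!,\mathrm{univ}}(u\cC) \simeq u(\Ind_{\tav,!}\cC) \simeq u\cD \simeq \cD$, so $\cD$ is in the image.

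The main obstacle will be cleanly organizing the lift of the square to coCartesian fibrations of operads so that \cref{cor:coCartoperadscrit} applies verbatim. This is partly combinatorial bookkeeping: we have symmetric monoidal structures on each corner from essentially three different sources (\cref{cor:laxsymmetric}, \cref{cor:catpreservescolims}, and \cref{constr:IndUniv}), and they must be shown to be compatible at the level of coCartesian-fibration-of-operads morphisms over $\Alg(\PrL)^{\otimes}$. Once this is in place, all remaining arguments reduce to straightforward applications of fully faithfulness, full subcategory inclusions, and the universal property of symmetric monoidal localizations.
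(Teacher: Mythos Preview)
Your proposal is correct and follows essentially the same route as the paper: straighten the square of coCartesian fibrations of operads from \cref{constr:IndUniv} and the preceding proposition via \cref{cor:coCartoperadscrit}, then deduce full faithfulness and the image description from \cref{prop:indlax} by using that the bottom arrow is the restriction of the top to the univalent full subcategories. Your explicit argument for the reverse inclusion of the image (passing to $u\cC$) is a helpful elaboration of what the paper leaves implicit, and your uniqueness argument via the universal property of the localization $u$ is likewise a useful spelling-out; the ``main obstacle'' you flag is not really one, since the requisite symmetric monoidal and coCartesian compatibilities are already packaged in \cref{constr:IndUniv} and the proposition preceding this corollary.
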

\begin{proof}
The first part follows from \cref{constr:IndUniv} and \cref{cor:coCartoperadscrit}.
For the second part, since the square in \cref{constr:IndUniv} is vertically right adjointable, the functor $\cat(\cV) \to \widehat{\cat}(\lInd(\cV))$ is a restriction of the functor $\vcat(\cV) \to \widehat{\vcat}(\lInd(\cV))$ to the respective full subcategories. Hence, fully faithfulness and characterization of the image follow from \cref{prop:indlax}.
\end{proof}

\subsection{From large-monoidal enrichment to large-presentable enrichment}

\begin{notat}
Let $\widehat{\vcat}_{\text{small spc}}(-): \Alg(\lPrL) \to \llcat$ denote the full subfunctor of $\widehat{\vcat}(-)$ on those valent enriched-categories whose underlying space of objects is small. 

For $V\in \Alg(\lcat)$, let $\vcatlarge(V):= \widehat{\vcat}_{\text{small spc}}(V|\widehat{\PSh}(V))$ denote the large analog of \cref{def:smallenrichment} with small spaces of objects. 
 \end{notat}
By the large version of \cref{cor:laxsmall}, the functor $\vcatlarge(-): \Alg(\lcat) \to \llcat$ has a unique lax symmetric monoidal structure compatible with the full inclusion into the functor $\widehat{\vcat}(\widehat{\PSh}(-)): \Alg(\lcat) \to \Alg(\lPrL) \to \llcat$.

\begin{cor}\label{cor:vcatlargesubfunctor}
Composing the (large variant of the) defining fully faithful symmetric monoidal natural transformations from \cref{cor:laxsmall} with the one from \cref{lem:largeind}(4) results in a composite symmetric monoidal transformation between lax symmetric monoidal functors:
\[\begin{tikzcd} \vphantom{\Alg(\PrL)}& \vphantom{\Alg(\lcat)}& \vphantom{\Alg(\lPrL)} \arrow[d, shorten <=-2pt, shorten >=-4pt, shift right=1, Rightarrow]& \vphantom{\llcat} \\
	{\Alg(\PrL)} \arrow[rr, bend right = 50, "\lInd"'] \arrow[r] &  \Alg(\lcat)  \arrow[rr, bend left = 50, "\vcatlarge(-)"] \arrow[r, "\widehat{\PSh}"] & {\Alg(\lPrL)} \arrow[r,"\widehat{\vcat}(-)"'] & \llcat\\
	 \vphantom{\Alg(\PrL)}& \vphantom{\Alg(\lcat)} \arrow[u, shorten <=-5pt, shorten >=-2pt, Leftarrow]& \vphantom{\Alg(\lPrL)}& \vphantom{\llcat} 
\end{tikzcd}.\]
For $\cV \in \Alg(\PrL)$, the induced functor $\vcatlarge(\cV) \to \widehat{\vcat}(\lInd(\cV))$ is fully faithful with image those valent $\lInd(\cV)$-enriched categories with small space of objects and graph factoring through $\cV \subseteq \lInd(\cV)$. 

Analogously, composing the symmetric monoidal natural transformations $\catlarge(-) \To \widehat{\cat}(\widehat{\PSh}(-))$ from \cref{cor:laxsmall} with the one from \cref{lem:largeind}(4)  results in a  symmetric monoidal natural transformation $\catlarge(-) \To \widehat{\cat}(\lInd(-))$ for which $\catlarge(\cV) \to \widehat{\cat}(\lInd(\cV))$ is fully faithful with image those univalent $\lInd(\cV)$-enriched categories with small space of objects and graphs factoring through $\cV \subseteq \lInd(\cV)$. 
\end{cor}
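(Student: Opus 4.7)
The plan is to construct the composite symmetric monoidal transformation by whiskering, and then for each fixed $\cV \in \Alg(\PrL)$ to analyze the induced functor as an instance of a large-universe analog of \cref{prop:restrictedgraph}. First, whiskering the symmetric monoidal transformation $L: \widehat{\PSh}(-) \To \lInd(-)$ of \cref{lem:largeind}(4) with the lax symmetric monoidal functor $\widehat{\vcat}(-): \Alg(\lPrL) \to \llcat$ yields a symmetric monoidal transformation $\widehat{\vcat}(\widehat{\PSh}(-)) \To \widehat{\vcat}(\lInd(-))$ between lax symmetric monoidal functors $\Alg(\lcat) \to \llcat$. Precomposing with the symmetric monoidal transformation $\vcatlarge(-) \To \widehat{\vcat}(\widehat{\PSh}(-))$ from the large variant of \cref{cor:laxsmall} and restricting along the lax symmetric monoidal inclusion $\Alg(\PrL) \to \Alg(\lcat)$ produces the desired composite transformation.

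Fixing $\cV \in \Alg(\PrL)$, the induced functor factors as
\[ \vcatlarge(\cV) = \widehat{\vcat}_{\text{small spc}}(\cV | \widehat{\PSh}(\cV)) \hookrightarrow \widehat{\vcat}_{\text{small spc}}(\widehat{\PSh}(\cV)) \xrightarrow{(L_\cV)_!} \widehat{\vcat}_{\text{small spc}}(\lInd(\cV)), \]
where the first functor is the defining fully faithful inclusion and the second is change-of-enrichment along $L_\cV: \widehat{\PSh}(\cV) \to \lInd(\cV)$ in $\Alg(\lPrL)$. The key observation is that the composite $\cV \hookrightarrow \widehat{\PSh}(\cV) \xrightarrow{L_\cV} \lInd(\cV)$ is the canonical fully faithful inclusion, since the right adjoint of $L_\cV$ is the subcategory inclusion $\lInd(\cV) \hookrightarrow \widehat{\PSh}(\cV)$ and in particular $L_\cV$ acts as the identity on $\cV$. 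By the explicit description in \cref{constr:changeofenrquiv}, $(L_\cV)_!$ acts on graphs by postcomposition with $L_\cV$, so it restricts to a functor $\widehat{\vcat}_{\text{small spc}}(\cV | \widehat{\PSh}(\cV)) \to \widehat{\vcat}_{\text{small spc}}(\cV | \lInd(\cV))$.

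Applying the large-universe analog of \cref{prop:restrictedgraph}, whose proof only relies on the operadic input of \cref{lem:easyoperad} and is therefore valid verbatim after replacing $\PrL$ by $\lPrL$, this restricted functor is an equivalence, which moreover restricts to an equivalence between the univalent subcategories. Since $\widehat{\vcat}_{\text{small spc}}(\cV | \lInd(\cV))$ is by definition the full subcategory of $\widehat{\vcat}(\lInd(\cV))$ on those valent $\lInd(\cV)$-enriched categories with small space of objects whose graph factors through $\cV \subseteq \lInd(\cV)$, we obtain both the fully faithfulness and the image characterization at once. The univalent case follows from the second clause of the same equivalence. The main obstacle is verifying the large-universe generalization of \cref{prop:restrictedgraph}, but since that result reduces through \cref{lem:easyoperad} to a purely operadic fact about monoidal functors with fully faithful composite $V_0 \hookrightarrow \cV \to \cW$, no essentially new ingredients are required.
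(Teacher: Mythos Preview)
Your proposal is correct and follows essentially the same approach as the paper's own proof: factor the component at $\cV$ as the defining inclusion $\widehat{\vcat}_{\text{small spc}}(\cV \mid \widehat{\PSh}(\cV)) \subseteq \widehat{\vcat}(\widehat{\PSh}(\cV))$ followed by change-of-enrichment along $L_\cV$, and then invoke \cref{prop:restrictedgraph}. You are slightly more explicit than the paper in two places --- the whiskering construction of the composite transformation, and the observation that one needs the large-universe analog of \cref{prop:restrictedgraph} (which, as you note, goes through verbatim via \cref{lem:easyoperad}) --- but these are elaborations of the same argument, not a different route.
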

\begin{proof}
For $\cV \in \Alg(\PrL)$, the component of the natural transformation is the functor \[\vcatlarge(\cV) := \widehat{\vcat}_{\text{small spc}}(\cV | \widehat{\PSh}(\cV)) \subseteq \widehat{\vcat}(\widehat{\PSh}(\cV)) \to \widehat{\vcat}(\lInd(\cV))\] where the latter functor is change of enrichment along the morphism $\widehat{\PSh}(\cV) \to \lInd(\cV)$ in $\Alg(\lPrL)$. By \cref{prop:restrictedgraph},  it follows that it is fully faithful with the prescribed image. The same argument applies in the univalent case.
\end{proof}

\begin{cor}\label{cor:centralcomparison}
 There is an equivalence between the lax symmetric monoidal functor from  \cref{cor:laxsymmetric} and the restriction to $\Alg(\PrL)$ of the one from  \cref{cor:laxsmall}:
 \[\vcat(-) :\Alg(\PrL) \to \lcat \hspace{0.5cm} \text{ and } \hspace{0.5cm}\Alg(\PrL) \to \Alg(\lcat) \overset{\vcatlarge(-)}{\longrightarrow} \llcat.
 \]
In the univalent case, there is an equivalence between the lax symmetric monoidal functor from \cref{cor:catpreservescolims} and the restriction of the one from  \cref{cor:laxsmall}:
\[ \Alg(\PrL) \overset{\cat(-)}{\longrightarrow} \lcat \hspace{0.5cm} \text{ and } \hspace{0.5cm}\Alg(\PrL) \to \Alg(\lcat) \overset{\catlarge(-)}{\longrightarrow} \llcat. \]
 \end{cor}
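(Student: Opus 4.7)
The plan is to identify both lax symmetric monoidal functors $\vcat(-)$ and $\vcatlarge(-)$ with the same full sub-lax-symmetric-monoidal-functor of $\widehat{\vcat}(\lInd(-)): \Alg(\PrL) \to \llcat$, and then read off the equivalence. The univalent case proceeds in complete analogy.

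First, \cref{prop:indlax} provides a symmetric monoidal natural transformation $\vcat(-) \To \widehat{\vcat}(\lInd(-))$ between lax symmetric monoidal functors $\Alg(\PrL) \to \llcat$ which at each $\cV \in \Alg(\PrL)$ is fully faithful with essential image the full subcategory of $\widehat{\vcat}(\lInd(\cV))$ on those valent $\lInd(\cV)$-enriched categories whose underlying space of objects is small and whose graph factors through the full inclusion $\cV \subseteq \lInd(\cV)$. Dually, \cref{cor:vcatlargesubfunctor} provides a symmetric monoidal natural transformation $\vcatlarge(-) \To \widehat{\vcat}(\lInd(-))$ which is also fully faithful at each $\cV \in \Alg(\PrL)$ with exactly the \emph{same} essential image.

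Hence both symmetric monoidal transformations exhibit their source as the full sub-lax-symmetric-monoidal-functor $F(-) \subseteq \widehat{\vcat}(\lInd(-))$ defined at each $\cV \in \Alg(\PrL)$ by the common image condition above. To see that $F$ is indeed a well-defined lax symmetric monoidal subfunctor, note that change of enrichment along $\lInd(f): \lInd(\cV) \to \lInd(\cW)$ for a morphism $f: \cV \to \cW$ in $\Alg(\PrL)$ preserves the property of having graph in $\cV \subseteq \lInd(\cV)$ (it gets mapped to the image of $\cV$ in $\cW \subseteq \lInd(\cW)$), and by \cref{obs:formulagraphext} the external tensor product of two objects with small space of objects and graphs in $\cV$ and $\cW$ respectively has small space of objects $X \times Y$ and graph in $\cV \otimes \cW$. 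Thus, the full subfunctor $F$ is stable under the lax symmetric monoidal structure on $\widehat{\vcat}(\lInd(-))$. By the uniqueness of factorization of a symmetric monoidal transformation into a fully faithful one with prescribed image, this yields an equivalence $\vcat(-) \simeq F(-) \simeq \vcatlarge(-)$ of lax symmetric monoidal functors $\Alg(\PrL) \to \llcat$.

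For the univalent case, the argument is verbatim the same, using \cref{cor:ind-presentableImage} in place of \cref{prop:indlax} and the univalent statement in \cref{cor:vcatlargesubfunctor} to obtain both $\cat(-) \To \widehat{\cat}(\lInd(-))$ and $\catlarge(-) \To \widehat{\cat}(\lInd(-))$ as fully faithful symmetric monoidal transformations with the same essential image (namely the univalent objects satisfying the graph and object-space conditions), hence inducing the desired equivalence $\cat(-) \simeq \catlarge(-)$ of lax symmetric monoidal functors.
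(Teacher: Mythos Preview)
Your proof is correct and follows essentially the same approach as the paper: both arguments identify $\vcat(-)$ and $\vcatlarge(-)$ (resp.\ $\cat(-)$ and $\catlarge(-)$) as the same full lax symmetric monoidal subfunctor of $\widehat{\vcat}(\lInd(-))$ (resp.\ $\widehat{\cat}(\lInd(-))$) via \cref{prop:indlax}, \cref{cor:vcatlargesubfunctor}, and \cref{cor:ind-presentableImage}. Your additional verification that the common image is stable under change of enrichment and the external tensor product is unnecessary, since the fact that both sources admit symmetric monoidal transformations into $\widehat{\vcat}(\lInd(-))$ already guarantees their common image inherits the lax symmetric monoidal structure.
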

\begin{proof}
In the valent case, by \cref{prop:indlax} and \cref{cor:vcatlargesubfunctor}, respectively, both are lax symmetric monoidal subfunctors  of $\widehat{\vcat}(\lInd(-)): \Alg(\PrL) \to \llcat$ with the same image, and hence agree. In the univalent case, the same argument applies using \cref{cor:ind-presentableImage} and  \cref{cor:vcatlargesubfunctor}.
\end{proof}

Let $\widehat{\vcat} {}_{\text{small spc}}^{GH}: \Alg(\lcat) \to \lcat $ denote the (large variant with small spaces of objects of the) lax symmetric monoidal functor from \cite[Prop.~4.3.11]{haugseng}, and let $\widehat{\vcat} {}^{GH}_{\text{small spc}}|_{\Alg(\PrL)}: \Alg(\PrL) \to \Alg(\lcat) \to \lcat$ be its restriction along the lax symmetric monoidal functor $\Alg(\PrL)\to \Alg(\lcat)$ (also considered in \cite[Cor. 4.3.16]{haugseng}).
Similarly, let $\widehat{\cat} {}_{\text{small spc}}^{GH}: \Alg(\lcat) \to \lcat $ denote the (large variant with small spaces of objects of the) lax symmetric monoidal functor from \cite[Cor.~5.7.11]{haugseng} and $\widehat{\vcat} {}^{GH}_{\text{small spc}}|_{\Alg(\PrL)}: \Alg(\PrL) \to \Alg(\lcat) \to \lcat$ denote its restriction to $\Alg(\PrL)$.

\begin{cor}The lax symmetric monoidal functors $\widehat{\vcat} {}_{\text{small spc}}^{GH}|_{\Alg(\PrL)}(-): \Alg(\PrL) \to \lcat$ from  \cite[Prop.~4.3.11]{haugseng}  and $\vcat(-): \Alg(\PrL)\to \lcat$ from \cref{cor:laxsymmetric} are equivalent. Analogously, the lax symmetric monoidal functors $\widehat{\vcat} {}^{GH}_{\text{small spc}}|_{\Alg(\PrL)}: \Alg(\PrL) \to \Alg(\lcat) \to \lcat$ from~\cite[Cor.~4.3.16]{haugseng} and $\cat(-)$ from \cref{cor:catpreservescolims} are equivalent. \end{cor}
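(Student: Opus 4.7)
The proof proceeds by chaining together the two main comparison results established earlier: \cref{cor:centralcomparison}, which identifies $\vcat(-)$ on $\Alg(\PrL)$ with the restriction of $\vcatlarge(-)$, and \cref{cor:smallinternalagree}, which identifies $\vcatsmall(-)$ with $\vcat^{GH}(-)$ as lax symmetric monoidal functors $\Alg(\cat) \to \lcat$.

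The plan is as follows. First I would pass to the large-universe version of \cref{cor:smallinternalagree}. The entire development of \S \ref{subsec:smallmonoidal} and \S \ref{subsec:smalltensor} goes through verbatim after replacing $\tav$ by $\hat{\tav}$, and restricting to the full subfunctor on enriched categories with \emph{small} space of objects: this yields an equivalence of lax symmetric monoidal functors
\[
\vcatlarge(-) \;\simeq\; \widehat{\vcat}{}_{\text{small spc}}^{GH}(-) : \Alg(\lcat) \to \llcat,
\]
where on both sides the subscript $\text{small spc}$ refers to the common full subfunctor. The only point that needs checking is that the constructions in \S \ref{subsec:smalltensor} restrict to the full subfunctor on small spaces of objects, which is immediate from \cref{obs:formulagraphext} (the external tensor product only multiplies spaces of objects, which remains small) together with the fact that coCartesian transport along a monoidal functor does not change the space of objects.

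Next I would restrict the above equivalence along the lax symmetric monoidal functor $\Alg(\PrL) \to \Alg(\lcat)$, obtaining an equivalence of lax symmetric monoidal functors $\Alg(\PrL) \to \llcat$:
\[
\vcatlarge(-)\big|_{\Alg(\PrL)} \;\simeq\; \widehat{\vcat}{}_{\text{small spc}}^{GH}(-)\big|_{\Alg(\PrL)}.
\]
Composing this with the equivalence $\vcat(-) \simeq \vcatlarge(-)|_{\Alg(\PrL)}$ of \cref{cor:centralcomparison} yields the desired equivalence of lax symmetric monoidal functors $\vcat(-) \simeq \widehat{\vcat}{}_{\text{small spc}}^{GH}|_{\Alg(\PrL)}$. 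The only subtle point to record is that both sides land in $\lcat$ (not merely $\llcat$): for our $\vcat(-)$ this is by construction, and for $\widehat{\vcat}{}_{\text{small spc}}^{GH}|_{\Alg(\PrL)}$ it is because the restriction to small spaces of objects keeps the mapping spaces small (by local smallness of $\cV \in \Alg(\PrL)$).

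The univalent case proceeds by the exact same strategy, using the univalent variants of \cref{cor:centralcomparison} and \cref{cor:smallinternalagree}, and noting that the univalization natural transformation is symmetric monoidal both in our construction (\cref{cor:laxsmall}, \cref{cor:catpreservescolims}) and in \cite{haugseng} (by a generalization of \cite[Prop.~5.7.14]{haugseng}, as already used in \cref{cor:smallinternalagree}). The main conceptual step here is thus just bookkeeping: all of the real work has been done in the earlier results, and no new obstruction appears. The largest remaining item to verify carefully is that the large variant of \cref{cor:smallinternalagree} indeed produces a symmetric monoidal natural equivalence and not merely a fiberwise equivalence, but this follows since \cref{cor:GHmonoidalCartagrees} and the arguments leading to \cref{cor:smallinternalagree} are universe-insensitive.
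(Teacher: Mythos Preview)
Your proposal is correct and follows essentially the same approach as the paper: combine \cref{cor:centralcomparison} with the large-universe version of \cref{cor:smallinternalagree} (restricted to small spaces of objects), then restrict along $\Alg(\PrL)\to\Alg(\lcat)$. The paper's proof is terser and leaves the passage to the large universe and the restriction to small spaces implicit, whereas you spell these out; no substantive difference.
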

\begin{proof}
By \cref{cor:centralcomparison}, the functor $\vcat(-)$ is equivalent to the restriction of the  lax symmetric monoidal functor $\vcatlarge(-)$ along $\Alg(\PrL)\to \Alg(\lcat)$. By \cref{cor:smallinternalagree}, this lax symmetric monoidal functor $\vcatlarge(-)$ agrees with $\widehat{\vcat} {}_{\text{small spc}}^{GH}$. The same argument applies in the univalent case.
\end{proof}

\begin{cor}\label{cor:finalcomparisonGH}
For an operad $O$ and  $\cV \in \Alg_{O\otimes \E_1}(\PrL)$, the $O$-monoidal structure on $\vcat(\cV)$ from \cref{constr:internaltensor} agrees with the one from  \cite[Cor. 4.3.12]{haugseng} and the $O$-monoidal structure on $\cat(\cV)$ from \cref{def:univalizedtensor} agrees with the one from \cite[Cor.~5.7.12]{haugseng}. 
\end{cor}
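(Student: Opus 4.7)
The plan is to observe that both $O$-monoidal structures are obtained by the same universal procedure applied to a lax symmetric monoidal functor $\Alg(\PrL) \to \lcat$, and then invoke the preceding (unnumbered) corollary asserting that these two lax symmetric monoidal functors coincide.

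More precisely, by \cref{constr:internaltensor}, the $O$-monoidal structure on $\vcat(\cV)$ for $\cV \in \Alg_{O\otimes \E_1}(\PrL) \simeq \Alg_O(\Alg(\PrL))$ is obtained by applying $\Alg_O(-)$ to the lax symmetric monoidal functor $\vcat(-): \Alg(\PrL)\to \lcat$ of \cref{cor:laxsymmetric} and evaluating at $\cV$. Dually, \cite[Cor.~4.3.12]{haugseng} defines its $O$-monoidal structure on $\vcat^{GH}(\cV)$ by applying $\Alg_O(-)$ to the lax symmetric monoidal functor $\widehat{\vcat}^{GH}_{\text{small spc}}|_{\Alg(\PrL)}: \Alg(\PrL) \to \lcat$. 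The previous corollary gives an equivalence between these two lax symmetric monoidal functors, so applying $\Alg_O$ yields an equivalence of functors $\Alg_O(\Alg(\PrL)) \to \Alg_O(\lcat)$; evaluating at $\cV$ furnishes the desired equivalence of $O$-monoidal categories.

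The univalent case is entirely parallel: the $O$-monoidal structure of \cref{def:univalized tensor} on $\cat(\cV)$ arises from applying $\Alg_O$ to the lax symmetric monoidal functor $\cat(-)$ of \cref{cor:catpreservescolims}, while \cite[Cor.~5.7.12]{haugseng} arises from $\widehat{\cat}^{GH}_{\text{small spc}}|_{\Alg(\PrL)}$. The equivalence between these two lax symmetric monoidal functors, also established in the previous corollary, then yields the claim.

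The only thing that really needs to be checked is that the constructions in \cite[Cor.~4.3.12, Cor.~5.7.12]{haugseng} genuinely arise by applying $\Alg_O$ to the lax symmetric monoidal functors $\widehat{\vcat}^{GH}_{\text{small spc}}$ and $\widehat{\cat}^{GH}_{\text{small spc}}$ respectively, but this is precisely how those corollaries are phrased in loc.\ cit. Once that is recorded, the argument above is immediate and no further work is required.
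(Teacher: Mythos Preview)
Your proposal is correct and matches the paper's approach exactly: the paper states this corollary without proof immediately after the preceding unnamed corollary, treating it as an immediate consequence of the equivalence of lax symmetric monoidal functors established there. Your unpacking of why this is immediate---that both $O$-monoidal structures arise by applying $\Alg_O(-)$ to equivalent lax symmetric monoidal functors---is precisely the intended argument.
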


	\appendix
	
	\section{Some facts about the Grothendieck construction}
	\label{sec:groth}

	For $B\in \cat$, the \emph{Grothendieck construction}, also called \emph{unstraightening}, constructed for example in \cite[\HTTsec{3.2}]{HTT}, is a functor $\Fun(B, \cat) \to \cat_{/B}$ sending an $F:B \to \cat$ to a functor denoted $\int_B F \to B$. 
	
	\begin{reminder} \label{reminder:grothadjoints} The Grothendieck construction $\Fun(B, \cat) \to \cat_{/B}$ factors as an equivalence through the subcategory $\mathrm{coCart}_{/B}$ of coCartesian fibrations and maps of coCartesian fibrations (combine \HTT{Thm.}{3.2.0.1} with \HTT{Prop.}{5.2.4.6}, compare also \cite{NikolausLax}).
	 Moreover, it follows from \cite[Thm. 4.5]{NikolausLax} that it has a left adjoint $(E \to B) \mapsto (b \mapsto E \times_B B_{/b})$ from which it follows that it also has a right adjoint  $(E \to B) \mapsto \Fun_{/B} (B_{/b} , E)$.
	 \end{reminder}
	


	We now collect some facts about Cartesian and coCartesian fibrations and the Grothendieck construction, which we will use in the remainder of the paper.

	\begin{lemma}
		\label{lem:weaklycontrslice}
		Given a functor $p:B \to C$ between categories, the forgetful functor $\pi: \ccC_{/p} \to \ccC$ creates colimits parametrized by weakly contractible categories.
	\end{lemma}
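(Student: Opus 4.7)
My plan is to use the equivalent description of the slice as a pullback
\[
\ccC_{/p} \simeq \ccC \times_{\Fun(B, \ccC)} \Fun(B, \ccC)_{/p},
\]
where the left leg is the constant-diagram functor $\Delta \colon \ccC \to \Fun(B, \ccC)$, $c \mapsto \Delta c$, and the right leg is the forgetful functor from the slice of $\Fun(B, \ccC)$ over the object $p$. Under this description, $\pi$ is identified with the projection to the first factor.

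The first step is standard: the forgetful functor $\Fun(B, \ccC)_{/p} \to \Fun(B, \ccC)$ creates all small colimits, as is always true for slices over an object. The key step is to show that the constant-diagram functor $\Delta$ preserves $I$-indexed colimits whenever $|I|$ is weakly contractible. Since colimits in $\Fun(B, \ccC)$ are computed pointwise, for a diagram $G \colon I \to \ccC$ with colimit $c$ the colimit $\colim_I (\Delta \circ G)$ at each $b \in B$ equals $\colim_I G = c$; weak contractibility of $|I|$ is what upgrades this pointwise identification to an equivalence with the constant functor $\Delta c$ in $\Fun(B, \ccC)$.

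Combining these two preservation properties, colimits in the pullback $\ccC \times_{\Fun(B, \ccC)} \Fun(B, \ccC)_{/p}$ exist and are computed componentwise. Hence, for a diagram $F \colon I \to \ccC_{/p}$ with $|I|$ weakly contractible whose underlying diagram in $\ccC$ admits a colimit $c$, the colimit of $F$ in $\ccC_{/p}$ is $(c, \tilde\alpha)$, where $\tilde\alpha \colon \Delta c \to p$ is the cone induced from the family of structure maps $\alpha_i \colon \Delta c_i \to p$ through the colimit in $\Fun(B, \ccC)$; conversely a cone whose projection under $\pi$ is a colimit cone arises in this way, showing that $\pi$ both preserves and reflects such colimits.

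The main obstacle is the second step: carefully handling the coherent identification of $\colim_I \Delta c_i$ with $\Delta c$ in $\Fun(B, \ccC)$, for which the weak contractibility of $|I|$ is the crucial hypothesis.
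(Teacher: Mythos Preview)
Your pullback approach is genuinely different from the paper's, which observes that $\pi$ is a left fibration and then invokes the machinery of relative colimits (\HTT{Prop.}{4.3.1.12} together with transitivity \HTT{Prop.}{4.3.1.5}). Your route is more elementary, but you have mislocated the role of weak contractibility: the constant-diagram functor $\Delta \colon \ccC \to \Fun(B,\ccC)$ in fact preserves \emph{all} colimits that exist in $\ccC$, since the pointwise colimit of the diagram $i \mapsto \Delta G(i)$ already assembles into the constant functor $\Delta(\colim_I G)$ (each $\Delta G(i)$ acts by the identity on morphisms of $B$, so the induced action on the colimit is the identity as well). Hence for the overcategory $\ccC_{/p}$ as literally written, your argument actually proves that $\pi$ creates \emph{all} colimits; weak contractibility plays no role.

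Note also that the paper's proof calls $\pi$ a \emph{left} fibration, whereas the overcategory projection $\ccC_{/p} \to \ccC$ is a right fibration, and both applications of the lemma in the paper (to $\cat_{/B} \to \cat$ creating weakly contractible limits, and to $\Spaces_* = \Spaces_{*/} \to \Spaces$) fit the dual \emph{under}category statement for $\ccC_{p/}$. In that setting weak contractibility is genuinely essential, and in your framework it enters not through $\Delta$ but through the coslice step: the forgetful functor $\Fun(B,\ccC)_{p/} \to \Fun(B,\ccC)$ only creates weakly contractible colimits, because one needs $|I| \simeq *$ to extract a single structure map $p \to \Delta c$ from the $I$-indexed family $p \to \Delta G(i) \to \Delta c$. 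Your strategy adapts cleanly once the hypothesis is relocated to that step.
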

	\begin{proof}
	The functor $C_{/p} \to C$ is a left fibration (i.e.\ classifies a functor $C\to \Spaces \subseteq \cat$). It then follows from \HTT{Prop.}{4.3.1.12} that every weakly contractible diagram in $C_{/p}$ is a $\pi$-colimit diagram. The claim is now immediate by transitivity of relative colimits \HTT{Prop.}{4.3.1.5}(2).
	\end{proof}
	Unwinding  \HTT{Prop.}{4.3.1.12}, \cref{lem:weaklycontrslice} follows from the observation that for any diagram  $q: L \to C_{/p}$, the colimit of $L \to C_{/p} \to C$ is equipped with a cone to the colimit over $L$ of the constant diagram on $p$. If $L$ is weakly contractible, this colimit over the constant diagram on $p$ agrees with $p$ and hence the colimit of $L \to C_{/p} \to C$ lifts to $C_{/p}$.

	\begin{prop}
		\label{prop:grothfacts}
		Let $B$ be a category and $F: B \to \cat$.
		\begin{enumerate}[(1)]
			\item The composite  $\int_B: \Fun(B, \cat) \to \cat_{/B} \to \cat$  preserves and reflects weakly contractible limits.
			\item For $K$ any category, there is a canonical equivalence over $B$
			\[ \smallint\nolimits_{b \in B} \Fun(K, F(b)) \simeq B \times_{\Fun(K, B)} \Fun(K, \smallint\nolimits_B F) \; . \]
		\end{enumerate}
	\end{prop}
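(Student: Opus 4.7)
For part $(1)$, the plan is to factor $\smallint\nolimits_B : \Fun(B, \cat) \to \cat$ as $\Fun(B,\cat) \xrightarrow{\smallint\nolimits_B} \cat_{/B} \xrightarrow{\pi} \cat$ and analyze each factor separately. By Reminder A.1 the Grothendieck construction $\smallint\nolimits_B : \Fun(B,\cat) \to \cat_{/B}$ admits both a left and a right adjoint, so it preserves all limits and colimits; being fully faithful with essential image $\mathrm{coCart}_{/B}$, it also reflects limits. For the forgetful $\pi$, the formula $\Map_{\cat_{/B}}(T, L \to B) \simeq \Map_\cat(T,L) \times_{\Map_\cat(T,B)} \{t\}$ shows that the $\cat_{/B}$-limit of a diagram $\{X_i \to B\}_{i \in I}$ agrees with $\lim_i X_i$ in $\cat$ whenever the constant functor $\mathrm{const}\colon \cat \to \Fun(I, \cat)$ is fully faithful; equivalently, whenever $I$ is weakly contractible. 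Hence $\pi$ creates wc-limits, and composing with $\smallint\nolimits_B$ yields both preservation and reflection.

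For part $(2)$, the plan is to identify both sides as coCartesian fibrations over $B$ classifying the functor $b \mapsto \Fun(K, F(b))$. The left-hand side does so by definition. For the right-hand side, use the standard stability result that a coCartesian fibration $p: E \to B$ induces a coCartesian fibration $p_* : \Fun(K, E) \to \Fun(K, B)$, and note that pullback along the constant-diagram inclusion $B \hookrightarrow \Fun(K, B)$ preserves coCartesian fibrations; hence $B \times_{\Fun(K, B)} \Fun(K, \smallint\nolimits_B F) \to B$ is coCartesian. Its fiber over $b \in B$ is readily identified with $\Fun(K, F(b))$, and coCartesian transport along $f: b \to b'$ is computed pointwise in $K$ via the transport in $\smallint\nolimits_B F$, hence equals $\Fun(K, F(f))$. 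The Grothendieck correspondence $\Fun(B, \cat) \simeq \mathrm{coCart}_{/B}$ then yields the desired equivalence over $B$.

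The most delicate steps are: in $(1)$, the verification that $\pi$ creates weakly contractible limits, which relies on $\lim_I \mathrm{const}^B \simeq B$ for weakly contractible $I$ in order to coherently absorb the various structure maps to $B$; and in $(2)$, identifying the pointwise coCartesian transport in the pullback fibration with $\Fun(K, F(-))$ on morphisms, where the coCartesian hypothesis (rather than merely Cartesian) on $\smallint\nolimits_B F \to B$ is essential.
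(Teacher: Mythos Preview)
Your proof of (1) is correct and follows the same strategy as the paper: both factor through $\cat_{/B}$, use that unstraightening preserves and reflects limits (via its adjoints and fully-faithfulness from \cref{reminder:grothadjoints}), and then invoke that the slice projection $\cat_{/B} \to \cat$ creates weakly contractible limits. Your direct argument for this last step, via $\lim_I \mathrm{const}_B \simeq B$ for weakly contractible $I$, is essentially the content the paper packages into \cref{lem:weaklycontrslice}.

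For (2) your route is genuinely different. The paper argues by Yoneda: using the explicit left adjoint $(E \to B) \mapsto (b \mapsto E \times_B B_{/b})$ of unstraightening from \cref{reminder:grothadjoints}, it shows that for every $(E \to B) \in \cat_{/B}$ one has $\Map_{/B}(E, \text{LHS}) \simeq \Map_{/B}(E, \text{RHS})$, both unwinding to $\Map_{/B}(K \times E, \smallint\nolimits_B F)$. Your approach instead shows the right-hand side is a coCartesian fibration over $B$ and computes its fibers and transport maps. This is natural, but there is a gap at the end: in the $\infty$-categorical setting, knowing that two coCartesian fibrations over $B$ have equivalent fibers and equivalent transport along each $1$-morphism does not by itself produce an equivalence between them---you must first construct a comparison map of coCartesian fibrations (equivalently, a natural transformation of functors $B \to \cat$) and then check it is a fiberwise equivalence. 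The fix is easy: the evaluation $K \times \smallint\nolimits_{b} \Fun(K, F(b)) \to \smallint\nolimits_B F$ over $B$ adjoints to the required comparison, or one may cite that unstraightening intertwines the cotensor $\Fun(K,-)$ on $\Fun(B,\cat)$ with $B \times_{\Fun(K,B)} \Fun(K,-)$ on $\mathrm{coCart}_{/B}$. The paper's Yoneda argument sidesteps this by never needing to name the comparison map explicitly.
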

	\begin{proof}
		By \cref{reminder:grothadjoints}, $\smallint\nolimits_B : \Fun(B, \cat) \to \cat_{/B}$ is right adjoint and hence limit-preserving. It is also a subcategory inclusion (onto $\mathrm{coCart}_{/B}$) and the slice projection $\cat_{/B} \to \cat$ is conservative so both reflect the limits they preserve. Together with \cref{lem:weaklycontrslice}, this verifies the first claim.
		
		Finally, use the explicit expression for the left adjoint of unstraightening from \cref{reminder:grothadjoints}: It suffices to show that for any functor $E \to B$, there is a canonical equivalence
		\[\Map_{\Fun(B, \cat)} \left(E \times_B B_{/-}, \Fun(K, F-)\right) \simeq \Map_{/B} (E, B \times_{\Fun(K, B)} \Fun(K, \smallint\nolimits_B F))\]
		But the right hand side is equivalent to \[\Map_{/B} (K \times E, \smallint\nolimits_B F) \simeq \Map_{\Fun(B, \cat)}(K \times E \times_B B_{/-}, F)\] finishing our proof.
	\end{proof}


\begin{reminder}[{\cite[Def. 5.19]{haugseng2023two}, \cite[Prop.\ 2.1.9]{stefanichcorr}}]
		\label{reminder:twosidedfib}
		Given categories $C, D, E \in \cat$, a functor $p=(p_1, p_2): E \to C \times D$ is a \emph{two-sided fibration} if
		\begin{itemize}
			\item $p_2 : E \to D$ is a coCartesian fibration,
			\item $p$ is a map of coCartesian fibrations over $D$, i.e.\ $p_1$ sends $p_2$-coCartesian morphisms to equivalences in $C$,
			\item The functor $D \to \cat_{/C}$ classified by $p_2$ factors through $\mathrm{coCart}_{/C}$, i.e.\ for all $d \in D$ the fiber $p_2^{-1}(d) \to C$ is a Cartesian fibration, and for all $f:d \to d'$ the coCartesian transport $f_!: p_2^{-1}(d) \to p_2^{-1}(d')$ preserves Cartesian morphisms over $C$.
		\end{itemize}
		This definition is symmetric in $C$ and $D$, i.e.\ $p_1$ is also a Cartesian fibration satisfying analogous properties with respect to $p_2$. Co- and contravariant unstraightening in $C$ and $D$ respectively (the order does not matter) by \cite[Prop.\ 2.1.16]{stefanichcorr} induces a subcategory inclusion
		\[ \smallint\nolimits_{D}^{C} : \Fun(C^{\op} \times D, \cat) \overset{}{\hookrightarrow} \cat_{/C \times D} \]
		with image spanned by the two-sided fibrations and \emph{maps of two-sided fibrations}: Morphisms of coCartesian fibrations over $D$ that restrict to morphisms of Cartesian fibrations over $C$ on each fiber.
	\end{reminder}
	If all fibers of $p$ are spaces, this agrees with Lurie's notion of \emph{bifibrations} in \HTT{Def.}{2.4.7.2}.
		\begin{ex}
		\label{ex:arrowtwosided}
		The source and target projections assemble into a two-sided fibration $(\mathrm{src},\mathrm{tgt}) : \Arr(C) \to C \times C$ classifying the mapping space functor $\Map : C^{\op} \times C \to \Spaces \subseteq \cat$, compare \HTT{Cor.}{2.4.7.11}, \cite[Prop.\ 2.2.4]{stefanichcorr}.
	\end{ex}

	\begin{obs}
		\label{obs:twosidedfib}
If $E \to C \times D$ is a two-sided fibration, then so is the pullback $E' \to C' \times D'$ along functors $C' \to C, D' \to D$. A map of two-sided fibrations $E \to \tilde{E}$ over $C \times D$ is fully faithful (resp. an equivalence) on total spaces if and only if it is so on the fiber over each $(c,d) \in C \times D$. Both statements follow from the respective statements about (co)Cartesian fibrations, e.g.\ \kerodon{01VB}.	\end{obs}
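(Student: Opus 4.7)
For the first statement, the plan is to verify the three defining clauses of \cref{reminder:twosidedfib} for $p'=(p_1',p_2'):E'\to C'\times D'$ directly. Writing $E'=E\times_{C\times D}(C'\times D')$, note that $p_2':E'\to D'$ is obtained from the coCartesian fibration $p_2:E\to D$ by first pulling back along $D'\to D$ (so $E\times_D D'\to D'$ is a coCartesian fibration, and $E\times_D D'\to E$ preserves and detects coCartesian morphisms) and then further pulling back along $C'\to C$ on the other factor (which does not touch $D$). Since these pullbacks of coCartesian fibrations are coCartesian fibrations (with coCartesian morphisms detected pointwise), $p_2'$ is coCartesian. Moreover a $p_2'$-coCartesian morphism in $E'$ maps to a $p_2$-coCartesian morphism in $E$, whose image in $C$ is invertible by hypothesis, and hence its image in $C'$ is invertible as well; so $p'$ is a map of coCartesian fibrations over $D'$. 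Finally, the fiber of $p_2'$ over $d'\in D'$ is $p_2^{-1}(d)\times_C C'$ for $d$ the image of $d'$, which is a pullback of the Cartesian fibration $p_2^{-1}(d)\to C$ and is hence Cartesian over $C'$; and coCartesian transport in $p_2'$ over a morphism $d'\to \tilde d'$ is the restriction of the original transport $f_!$ to these pullbacks, which preserves Cartesian morphisms since pullback along $C'\to C$ does.

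For the second statement, let $F:E\to \widetilde E$ be a map of two-sided fibrations over $C\times D$. By definition it is a map of coCartesian fibrations over $D$, so \kerodon{01VB} applies: $F$ is fully faithful (resp.\ an equivalence) if and only if each induced functor on fibers $F_d:E\times_D\{d\}\to \widetilde E\times_D\{d\}$ is. Now $F_d$ is itself a map of Cartesian fibrations over $C$ (since $F$ restricts to one on each such fiber by the last clause of \cref{reminder:twosidedfib}), so \kerodon{01VB} applies once more: $F_d$ is fully faithful (resp.\ an equivalence) if and only if its fiber over each $c\in C$ is, namely the map $E_{(c,d)}\to \widetilde E_{(c,d)}$. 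Chaining the two equivalences gives the claim.

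I do not expect a significant obstacle: both parts are bookkeeping around the definition, and the core input — that fully faithfulness and being an equivalence can be checked fiberwise for (co)Cartesian fibrations — is exactly \kerodon{01VB}. The only mild care needed is to pull back separately in the two variables in the first statement, so that after each step the surviving fibration structure over the remaining factor is visibly preserved.
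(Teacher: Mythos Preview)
Your proposal is correct and follows exactly the approach the paper indicates: reduce both claims to the corresponding facts for (co)Cartesian fibrations, invoking \kerodon{01VB} twice for the fiberwise criterion. The paper's observation contains no further argument beyond that one-line reduction, so your elaboration—decomposing the pullback into the $D$- and $C$-directions and iterating the fiberwise criterion first over $D$ and then over $C$—is precisely the intended unpacking; the only slightly glib step is ``hence its image in $C'$ is invertible as well,'' which is justified not by conservativity but by the explicit form of the $p_2'$-coCartesian lifts you construct (their $C'$-component is a lift of an equivalence, hence an equivalence).
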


	\begin{lemma}
		\label{lem:adjunctionunstr}
		If $L: \ccC \rightleftarrows \ccD : R$ is an adjunction, then the natural isomorphisms
		\[
		\Map_\ccD( L (-), -) \simeq \Map_\ccC (-, R(-))
		\]
		assemble into an equivalence of categories over $C\times D$
		\[ \ccC \times_{\Fun(\{0\}, \ccD)}\Arr(\ccD) \simeq  \Arr(\ccC) \times_{\Fun(\{1\}, \ccC)} \ccD \, .\]
	\end{lemma}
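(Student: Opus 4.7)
The plan is to recognize both sides as two-sided fibrations over $\ccC \times \ccD$ classifying the same functor $\ccC^{\op} \times \ccD \to \Spaces$, namely the adjunction mapping space. The statement then follows from two-sided unstraightening being an equivalence of categories.

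First I would invoke \cref{ex:arrowtwosided} to get that the source-target projection $\Arr(\ccD) \to \ccD \times \ccD$ is a two-sided fibration classifying $\Map_\ccD : \ccD^{\op} \times \ccD \to \Spaces$. Applying \cref{obs:twosidedfib} to the pullback along $L \times \id_\ccD : \ccC \times \ccD \to \ccD \times \ccD$ yields that the left hand side
\[ \ccC \times_{\Fun(\{0\},\ccD)} \Arr(\ccD) \;\simeq\; (\ccC \times \ccD) \times_{\ccD \times \ccD} \Arr(\ccD) \;\longrightarrow\; \ccC \times \ccD \]
is a two-sided fibration classifying the composite functor $\Map_\ccD(L(-), -) : \ccC^{\op} \times \ccD \to \Spaces$. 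By the symmetric argument, the right hand side $\Arr(\ccC) \times_{\Fun(\{1\},\ccC)} \ccD$ is a two-sided fibration over $\ccC \times \ccD$ classifying $\Map_\ccC(-, R(-)) : \ccC^{\op} \times \ccD \to \Spaces$.

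Next I would invoke the adjunction $L \dashv R$ in the form of an equivalence
\[ \Map_\ccD(L(-), -) \;\simeq\; \Map_\ccC(-, R(-)) \]
of functors $\ccC^{\op} \times \ccD \to \Spaces$. Since by \cref{reminder:twosidedfib} the two-sided unstraightening is an equivalence between $\Fun(\ccC^{\op} \times \ccD, \cat)$ and the subcategory of $\cat_{/\ccC \times \ccD}$ on the two-sided fibrations, this equivalence of classifying functors lifts uniquely to the asserted equivalence of two-sided fibrations over $\ccC \times \ccD$.

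The only subtle point I expect is ensuring that the equivalence of mapping space functors is genuinely natural in both variables (not merely a fibrewise equivalence), so that it indeed represents a morphism in $\Fun(\ccC^{\op}\times \ccD,\Spaces)$; but this is precisely the content of an adjunction in the $(\infty,1)$-categorical sense, so no further work is required beyond citing the appropriate form of the adjunction natural isomorphism.
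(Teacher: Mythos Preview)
Your proposal is correct and follows essentially the same approach as the paper: both identify each side as a two-sided fibration over $\ccC \times \ccD$ (via \cref{ex:arrowtwosided} and \cref{obs:twosidedfib}) classifying $\Map_\ccD(L-,-)$ and $\Map_\ccC(-,R-)$ respectively, and then use the adjunction natural isomorphism together with the two-sided straightening equivalence from \cref{reminder:twosidedfib}. The paper additionally notes that the natural isomorphism can be realized concretely by composing with the counit $LR \Rightarrow \id$, which addresses exactly the subtlety you flagged in your last paragraph.
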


	\begin{proof}
		By \cref{ex:arrowtwosided} and \cref{obs:twosidedfib}, the functors $\ccC \times_{D}\Arr(\ccD) \to C \times D$ and $\Arr(\ccC) \times_{C} \ccD \to C \times D$ are two-sided fibrations classifying $\Map_\ccD (L-, -)$ and $\Map_\ccC (-, R -)$ respectively. 
		The natural isomorphism between them (induced e.g.\ by composing with the counit natural transformation $LR \To \id$) unstraightens to an equivalence between these two-sided fibrations.
	\end{proof}
	\begin{cor}
		\label{cor:sliceadjoints}
		Given an adjunction $L: \ccC \rightleftarrows \ccD : R$ and $c \in \ccC$, the equivalence from \cref{lem:adjunctionunstr} induces an equivalence
		\[ \ccD_{L(c)/} \simeq \ccC_{c/} \times_\ccC \ccD =: \ccD_{c/} \, . \]
	\end{cor}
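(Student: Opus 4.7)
The plan is to derive this corollary by taking a fiber of the equivalence from \cref{lem:adjunctionunstr}. Recall that that lemma produces an equivalence of categories $\ccC \times_{\Fun(\{0\}, \ccD)}\Arr(\ccD) \simeq  \Arr(\ccC) \times_{\Fun(\{1\}, \ccC)} \ccD$ over $\ccC \times \ccD$, where the projection to $\ccC$ on the left-hand side is the first factor, and on the right-hand side is the source projection of the arrow category. Both sides are two-sided fibrations over $\ccC \times \ccD$ classifying the respective mapping space functors.

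The key observation is that fixing $c \in \ccC$ and taking the fiber of both sides over $c$ with respect to the projection to $\ccC$ gives exactly what we want. Indeed, on the left-hand side, the fiber over $c$ of the projection $\ccC \times_{\Fun(\{0\}, \ccD)} \Arr(\ccD) \to \ccC$ is
\[
\{c\} \times_{\ccC} \ccC \times_{\Fun(\{0\}, \ccD)} \Arr(\ccD) \simeq \{Lc\} \times_{\Fun(\{0\}, \ccD)} \Arr(\ccD) = \ccD_{Lc/},
\]
using that the map $\ccC \to \ccD$ defining the fiber product is $L$. On the right-hand side, the fiber of $\Arr(\ccC) \times_{\Fun(\{1\}, \ccC)} \ccD \to \ccC$ over $c$ via the source projection is
\[
\left(\{c\} \times_{\Fun(\{0\}, \ccC)} \Arr(\ccC)\right) \times_{\Fun(\{1\}, \ccC)} \ccD \simeq \ccC_{c/} \times_{\ccC} \ccD,
\]
where the final fiber product is along the right adjoint $R: \ccD \to \ccC$ by the definition of the defining fiber product.

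Since the equivalence from \cref{lem:adjunctionunstr} is over $\ccC \times \ccD$ and in particular over $\ccC$, it restricts to an equivalence of fibers over $c$, producing the claimed equivalence $\ccD_{Lc/} \simeq \ccC_{c/} \times_\ccC \ccD$. There is no real obstacle here; the content is entirely contained in \cref{lem:adjunctionunstr}, and the corollary amounts to unpacking the two sides of that equivalence at a chosen object.
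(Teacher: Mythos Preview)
Your proof is correct and takes essentially the same approach as the paper: the paper's proof is the single sentence ``Take the fiber over $c \in \ccC$ in the equivalence in \cref{lem:adjunctionunstr},'' and your argument is a careful unpacking of exactly that.
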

	\begin{proof}
		Take the fiber over $c \in \ccC$ in the equivalence in \cref{lem:adjunctionunstr}.
	\end{proof}

\begin{obs} \label{obs:counit}
It follows from \cref{lem:adjunctionunstr} that any adjunction $L : C\rightleftarrows D:R $ induces a functor
\vspace{-2mm}
\[\begin{tikzcd}
D \simeq \Arr(D) \times_{D} D \arrow[rr] \arrow[dr] && \arrow[dl]  \Arr(C) \times_C D  \overset{\text{Lem.}~\ref{lem:adjunctionunstr}}{\simeq}  C \times_{D} \Arr(D) \\
&C
\end{tikzcd}
\]
which may  informally  be thought of as sending $d\in D$ to the counit $(LR d \to d) \in \Arr(D)$.
\end{obs}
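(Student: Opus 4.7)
The plan is to build the functor directly and then to reinterpret it via \cref{lem:adjunctionunstr}. Start with the diagonal $\delta_C \colon C \to \Arr(C)$, obtained as the section of $\mathrm{src} \colon \Arr(C) = \Fun([1], C) \to \Fun(\{0\}, C) \simeq C$ coming from restriction along the unique functor $[1] \to \{0\}$; informally, $\delta_C(c) = \id_c$. Composing with $R \colon D \to C$ gives $\delta_C \circ R \colon D \to \Arr(C)$, and pairing with $\id_D \colon D \to D$ produces a functor $D \to \Arr(C) \times D$. Because the target of $\id_{Rd}$ is $Rd$, this lifts canonically through the pullback $\Arr(C) \times_C D$ formed using the target projection $\mathrm{tgt} \colon \Arr(C) \to C$ and $R$.

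For the intermediate term in the displayed diagram, one applies the same construction one step earlier: the diagonal $\delta_D \colon D \to \Arr(D)$ together with $\mathrm{tgt}$ exhibits $D$ as (a full subcategory of) $\Arr(D) \times_D D$, and postcomposition with $\Arr(R) \colon \Arr(D) \to \Arr(C)$ (applied levelwise) is compatible with the two target projections. The two routes around the triangle agree by naturality of the diagonal, which yields the desired commuting diagram over $C$.

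It remains to identify the resulting functor $D \to \Arr(C) \times_C D$ sending $d \mapsto (\id_{Rd}, d)$ with the informal description in terms of the counit. Under the equivalence from \cref{lem:adjunctionunstr}, a pair $(c, f \colon Lc \to d) \in C \times_{D} \Arr(D)$ is sent to $(\widetilde{f} \colon c \to Rd, d) \in \Arr(C) \times_C D$, where $\widetilde{f} = Rf \circ \eta_c$ is the mate of $f$ under the adjunction. Taking $c = Rd$ and $f = \epsilon_d$ (the counit), the triangle identity $R\epsilon \circ \eta R = \id_R$ yields $\widetilde{\epsilon_d} = R\epsilon_d \circ \eta_{Rd} = \id_{Rd}$. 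Hence our functor is precisely the image under the equivalence of $D \to C \times_D \Arr(D)$, $d \mapsto (Rd, \epsilon_d \colon LRd \to d)$, justifying the informal description.

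The argument is entirely formal once \cref{lem:adjunctionunstr} is in hand; the only thing to verify by hand is the single triangle-identity computation above, and even the factorization through the intermediate pullback is forced by the universal property of $\Arr(-)$ and functoriality of pullbacks. There is no genuine obstacle.
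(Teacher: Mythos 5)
Your construction is correct and is essentially the one the paper leaves implicit in the diagram: the functor is $d \mapsto (\id_{Rd}, d) \in \Arr(C)\times_C D$, obtained from the identity-arrow section $\delta_C$ of $\Arr(C)\to C$ precomposed with $R$ (equivalently, from $\Arr(R)$ applied to the identity arrows of $D$), and then transported through the equivalence of \cref{lem:adjunctionunstr}. The only cosmetic difference is that the paper constructs that equivalence in the direction $\Arr(C)\times_C D \to C\times_D\Arr(D)$ by whiskering with the counit, so $(\id_{Rd},d)\mapsto(Rd,\,\epsilon_d\circ L\id_{Rd})=(Rd,\epsilon_d)$ is immediate, whereas you verify the same identification via the inverse direction and a triangle identity — both are valid.
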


	\begin{defin}[{\HA{\S}{7.3.2}}]
		\label{reminder:relativeadjoints}
		Consider a commutative diagram of categories:
		\[
		\begin{tikzcd}
			\ccC \arrow[rr, "F"] \arrow[rd, "p"'] & & \ccD \arrow[ld, "q"] \\
			& B &
		\end{tikzcd}
		\]
		A functor $F^\rR: D \to C$ is a \emph{relative right adjoint} to $F$ if it is right adjoint to $F$, and the counit $ F \circ F^\rR \To \id_{D}$ is sent to an isomorphism by $q$.
		
		A functor $F^\rL: D\to C$ is a \emph{relative left adjoint} to $F$ if it is left adjoint to $F$ and the unit $\id_{D} \To F \circ F^\rL$ is sent to an isomorphism by $q$.

	\end{defin}
	Both conditions imply that also the unit, resp. counit, is sent to an isomorphism by $p$ and hence that the canonical map $p \circ F^\rR \To q$ and $q \To p \circ F^\rL$ are isomorphisms, respectively.
	
	\begin{obs}\label{obs:fiberwiseAdj}
	In the setting of \cref{reminder:relativeadjoints} a relative right/left adjoint of $F$ induces right/left adjoints of the induced functors $F_b: C_b \to D_b$ between fibers for every $b \in B$. Conversely, by \HA{Prop.}{7.3.2.6} if $p$ and $q$ are locally coCartesian fibrations and $F$ preserves locally coCartesian morphisms, and if all $F_b$ admit right/left adjoints  then these fiberwise adjoints assemble into a relative right/left adjoint of $F$.
	\end{obs}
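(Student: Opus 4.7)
The forward direction is essentially an unwinding of definitions, and the converse is cited directly from \HA{Prop.}{7.3.2.6}, so the plan is just to record the forward direction cleanly.

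The plan is as follows. Suppose $F^\rR: D \to C$ is a relative right adjoint to $F$ over $B$, with counit $\epsilon: F F^\rR \To \id_D$ such that $q(\epsilon)$ is an isomorphism. I would first verify that $F^\rR$ restricts to each fiber: for $d \in D_b$, the morphism $q(\epsilon_d) : q F F^\rR(d) \to q(d) = b$ is an isomorphism in $B$, and since $q \circ F = p$ we have $p(F^\rR d) = q F F^\rR(d) \simeq b$. Choosing a transport we obtain a functor $F^\rR_b : D_b \to C_b$ (canonical up to unique equivalence), and similarly $F$ restricts to $F_b : C_b \to D_b$ since the diagram commutes strictly.

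Next I would verify the adjunction on fibers. For $c \in C_b$ and $d \in D_b$, the mapping space $\Map_{C_b}(c, F^\rR_b d)$ is by definition the fiber of $\Map_C(c, F^\rR d) \to \Map_B(pc, p F^\rR d) \simeq \Map_B(b,b)$ over $\id_b$, using that $p F^\rR \To q$ is an isomorphism (a consequence of $q \epsilon$ being invertible). Likewise $\Map_{D_b}(F_b c, d)$ is the fiber of $\Map_D(Fc, d) \to \Map_B(qFc, qd) = \Map_B(b,b)$ over $\id_b$. The adjunction equivalence $\Map_D(Fc, d) \simeq \Map_C(c, F^\rR d)$ fits into a commuting triangle over $\Map_B(b,b)$, so passing to fibers over $\id_b$ yields the required equivalence $\Map_{D_b}(F_b c, d) \simeq \Map_{C_b}(c, F^\rR_b d)$, which one checks is natural. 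The left adjoint case is completely dual.

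For the converse direction, I would simply cite \HA{Prop.}{7.3.2.6}: if $p$ and $q$ are locally coCartesian fibrations and $F$ preserves locally coCartesian morphisms, then a collection of fiberwise right adjoints $F^\rR_b$ automatically assembles into a relative right adjoint $F^\rR: D \to C$ whose counit is sent to an isomorphism by $q$. There is no real obstacle here --- the only subtle point is ensuring that in the forward direction the fiberwise restriction is well-defined, which uses precisely the condition on the counit in \cref{reminder:relativeadjoints}.
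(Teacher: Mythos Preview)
The paper treats this as an observation without proof: the forward direction is taken as immediate from the definitions, and the converse is a direct citation of \HA{Prop.}{7.3.2.6}. Your unpacking of the forward direction is correct and matches what the paper leaves implicit; the only cosmetic point is that rather than ``choosing a transport'' it is cleaner to invoke the remark immediately following \cref{reminder:relativeadjoints} that $p \circ F^\rR \simeq q$ canonically, so $F^\rR$ is already a functor over $B$ and restricts to fibers on the nose.
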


	\begin{ex}\label{exm:ffimpliesrel}
	Given an adjunction $L: C \rightleftarrows D: R$ with fully faithful left adjoint and hence invertible unit,  the induced commutative diagram		\[
		\begin{tikzcd}
			C \arrow[rr, "L"] \arrow[rd, equal] & & D\arrow[ld, "R"] \\
			& C &
		\end{tikzcd}
		\]
		exhibits $R$ as a relative right adjoint to $L$ over $C$. Indeed, for the counit $\epsilon: LR \To \id_{D}$, the transformation $R\epsilon: RLR \To R$ has invertible right inverse $\eta R: R\to RLR $ by invertibility of the unit $\eta: \id_C \to RL$. 
			\end{ex}

			\begin{obs}\label{obs:relativeAdjObs}
						Given a relative adjunction as in \cref{reminder:relativeadjoints}, the induced equivalence $C\times_D \Arr(D) \simeq \Arr(C) \times_C D$ from \cref{lem:adjunctionunstr} is compatible with the projections \[C\times_D \Arr(D) \to \Arr(D) \overset{q}{\to} \Arr(B) \overset{p}{\leftarrow} \Arr(C) \leftarrow \Arr(C) \times_C D.\]
			In particular, pulling back along the fully faithful diagonal $B \hookrightarrow \Arr(B)$ (with image the invertible arrows) enhances the equivalence from \cref{lem:adjunctionunstr}  to  an equivalence 
			\[ C\times_D \Arr(D) \times_{\Arr(B)} B \simeq B \times_{\Arr(B)} \Arr(C) \times_C D.
			\]
			\end{obs}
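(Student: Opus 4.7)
The equivalence of \cref{lem:adjunctionunstr} is the unstraightening of the natural isomorphism $\Map_D(L(-),-) \simeq \Map_C(-,R(-))$ between two functors $C^{\op}\times D \to \Spaces$. My plan is to lift this identification to one of \emph{augmented} functors $C^{\op}\times D \to \Spaces_{/\Map_B(p(-),q(-))}$, where the augmentations encode the two projections to $\Arr(B)$. Since $\Arr(B) \to B \times B$ is a two-sided fibration classifying $\Map_B(-,-)$, this suffices by unstraightening.

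First, I would describe the two augmentations explicitly. Under unstraightening, the projection $C \times_D \Arr(D) \to \Arr(B)$ corresponds to the natural transformation $q_* : \Map_D(Lc, d) \to \Map_B(qLc, qd) = \Map_B(pc, qd)$, where the equality uses $qL = p$. The projection $\Arr(C) \times_C D \to \Arr(B)$ corresponds to $p_* : \Map_C(c, Rd) \to \Map_B(pc, pRd)$ postcomposed with the equivalence $\Map_B(pc, pRd) \xrightarrow{\sim} \Map_B(pc, qd)$ induced by the canonical isomorphism $pR \simeq q$. This isomorphism is precisely supplied by the relative-adjoint hypothesis: it is pointwise given by $q\epsilon_d : qLRd = pRd \to qd$, which is invertible since $\epsilon$ is inverted by $q$.

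The key step is to verify that these two natural transformations agree under the adjoint isomorphism $\phi \mapsto R\phi \circ \eta_c$. Starting from $\phi: Lc \to d$, the right-hand composition evaluates to $q\epsilon_d \circ pR\phi \circ p\eta_c$. Using $pR = qLR$ (from $qL = p$), naturality of $\epsilon$ at $\phi$ rewrites $q\epsilon_d \circ qLR\phi = q\phi \circ q\epsilon_{Lc}$, and the triangle identity $\epsilon_L \circ L\eta = \id_L$ pushed through $q$ yields $q\epsilon_{Lc} \circ p\eta_c = \id_{pc}$. The composite is therefore $q\phi$, matching the left-hand side. The ``in particular'' statement is then immediate: pulling back the resulting equivalence over $\Arr(B)$ along the fully faithful inclusion $B \hookrightarrow \Arr(B)$ (with image the invertible arrows) produces the final displayed equivalence.

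The main obstacle is articulating these triangle-identity and naturality manipulations coherently at the $(\infty,1)$-categorical level. Rather than tracking $2$-cells by hand, I would phrase the whole compatibility as the equality of two morphisms in $\Fun(C^{\op}\times D, \Spaces)$ and deduce it from the universal characterization of adjunctions in $\Cati$, which encodes all the relevant triangle identities and naturalities coherently. Equivalently, one may invoke the $\infty$-categorical mate calculus (cf.\ \HA{Prop.}{7.3.2.6} and the discussion of relative adjunctions in \HAsubsec{7.3.2}), where the relative-adjointness hypothesis is precisely the invertibility of the mate $pR \to q$, and the compatibility of projections is then a formal consequence.
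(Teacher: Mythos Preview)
Your approach is correct and is precisely the argument implicit in the paper. The paper states this as an observation without proof; it follows because the equivalence in \cref{lem:adjunctionunstr} was \emph{constructed} by unstraightening the adjunction isomorphism $\Map_D(L-,-)\simeq\Map_C(-,R-)$ (explicitly via counit composition, as the proof of \cref{lem:adjunctionunstr} notes), so compatibility with the projections to $\Arr(B)$ amounts to the commutativity of
\[
\begin{tikzcd}
\Map_D(Lc,d) \arrow[r,"\sim"] \arrow[d,"q_*"'] & \Map_C(c,Rd) \arrow[d,"(q\epsilon_d)_*\,\circ\, p_*"] \\
\Map_B(pc,qd) \arrow[r,equal] & \Map_B(pc,qd)
\end{tikzcd}
\]
naturally in $(c,d)$, which is exactly the triangle-identity computation you carry out. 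Your observation that the relative-adjointness hypothesis is precisely what makes $(q\epsilon_d)_*$ the identification $pRd\simeq qd$ is the key point.

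One small remark: your worry about $\infty$-coherence is justified in general, but here the argument can be made fully coherent without appealing to mate calculus. Both sides of the square are maps of two-sided fibrations over $C\times D$ into the pulled-back two-sided fibration $(p\times q)^*\Arr(B)$ (which classifies $\Map_B(p-,q-)$). Since this target fibration has \emph{space-valued} fibers, the category of maps of two-sided fibrations into it is itself a space, so the pointwise identification you give---which is natural in $(c,d)$ by construction of the adjunction data---already determines the identification of natural transformations. In other words, the straightening equivalence reduces the coherence question to one in $\Fun(C^{\op}\times D,\Spaces)$, where the required identity follows from the functoriality of $q$, $p$, and the adjunction.
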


	\begin{lemma}
		\label{lem:Cartadjunction}
		Given an adjunction $L: C \rightleftarrows D: R$ such that $R$ is a Cartesian fibration, then the induced map
		\[
		D \to C \times_{D} \Arr(D)
		\]
		from \cref{obs:counit} is a relative left adjoint over $C$.
	\end{lemma}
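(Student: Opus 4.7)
The plan is to use \cref{lem:adjunctionunstr} to reformulate the problem in terms of the source-pullback of $\Arr(C)$ along $R$, where the Cartesian fibration hypothesis can be directly leveraged to produce the right adjoint via Cartesian transport.

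By \cref{lem:adjunctionunstr}, there is an equivalence $C \times_{D,\mathrm{src}} \Arr(D) \simeq \Arr(C) \times_{C,\mathrm{tgt}} D$ (where the right-hand pullback uses $R: D \to C$), compatible with the projection to $C$, which is the first factor on the left and the source projection of $\Arr(C)$ on the right. Under this equivalence, I claim the functor $\tilde L: d \mapsto (Rd, \epsilon_d: LRd \to d)$ from \cref{obs:counit} corresponds to the functor
\[
\iota: D \to \Arr(C) \times_{C,\mathrm{tgt}} D, \qquad d \mapsto (\id_{Rd}, d),
\]
since the adjunct of $\epsilon_d$ is $R\epsilon_d \circ \eta_{Rd} = \id_{Rd}$ by a triangle identity.

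Next I will construct a right adjoint to $\iota$ using that $R$ is Cartesian. For any $(\alpha: c \to Rd, d) \in \Arr(C) \times_C D$, choose a Cartesian lift $\bar\alpha: \alpha^*(d) \to d$ of $\alpha$. Unpacking mapping spaces in the pullback category and applying the universal property of $\bar\alpha$ yields, naturally in $d' \in D$,
\[
\Map_{\Arr(C) \times_C D}\bigl((\id_{Rd'}, d'), (\alpha, d)\bigr) \simeq \Map_D\bigl(d', \alpha^*(d)\bigr).
\]
Hence the presheaf $\Map(\iota(-), (\alpha, d))$ is representable by $\alpha^*(d)$, providing a pointwise right adjoint at every object and therefore a right adjoint $F: \Arr(C) \times_C D \to D$ with $F(\alpha, d) = \alpha^*(d)$. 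The identity $R(\alpha^*(d)) = c$ matches the source projection, so $F$ is a functor over $C$, and so is $\iota$.

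Finally, the unit of $\iota \dashv F$ at $d \in D$ is the canonical morphism $d \to F\iota(d) = \id_{Rd}^{\,*}(d)$. Since identity morphisms are always Cartesian, $\id_{Rd}^{\,*}(d) \simeq d$ and the unit is an equivalence; in particular the projection to $C$ sends it to an isomorphism, verifying the relative left adjoint condition of \cref{reminder:relativeadjoints}. The main technical step is the adjunct-level bookkeeping in step one to match $\tilde L$ with $\iota$; the construction of $F$ and verification of the unit are straightforward consequences of the defining universal property of Cartesian lifts.
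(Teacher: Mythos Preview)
Your proof is correct and follows essentially the same route as the paper: both pass through the equivalence of \cref{lem:adjunctionunstr} to the category $\Arr(C)\times_C D$, identify the functor with $d\mapsto(\id_{Rd},d)$, and produce the right adjoint via Cartesian lifts $(\alpha,d)\mapsto\alpha^*(d)$. The only cosmetic differences are that the paper obtains this right adjoint by factoring through $D\to\Arr(D)\to\Arr(C)\times_C D$ and composing two right adjoints, whereas you construct it directly by representability; and the paper verifies the relativity condition via the counit while you verify it via the unit (which is in fact the condition required by \cref{reminder:relativeadjoints}).
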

	\begin{proof}
	Note that $R$ is a Cartesian fibration if and only if the functor $\Arr(D) \to \Arr(C) \times_C D$ admits a right adjoint. (Explicitly, the right adjoint sends $(\alpha: c \to Rd)$ to a Cartesian lift $ \alpha^*d \to d$  of $\alpha$.) By construction, the map $D\to C\times_D \Arr(D)$ is the composite
	\[		D\to \Arr(D) \to \Arr(C) \times_C D \simeq C \times_D \Arr(D)
		\]
		where the last map is the equivalence over $C \times D$ from \cref{lem:adjunctionunstr}. Since  the first map admits a right adjoint sending an arrow to its source and the second map admits a right adjoint since $R$ is a Cartesian fibration, so does the composite. 
		Explicitly, the composite $\Arr(C) \times_C D \to D \to \Arr(C) \times_C D$ sends $(\alpha:c \to Rd)$ to $(\id: R\alpha^*d \to R\alpha^*d)$, and the counit is given by the commutative square
		\[\begin{tikzcd}[column sep=2cm]
		R\alpha^*d  \arrow[d, equal] \arrow[r, "\simeq"] & c \arrow[d, "\alpha"] \\
		R\alpha^*d\arrow[r, "R(\alpha^*d \to d)"]  & Rd
		\end{tikzcd} \]
	In particular, the projection to $C$ sends the counit to the equivalence $R\alpha^*d \simeq c$. 	\end{proof}

	\begin{lemma}[{c.f.~\cite[Prop.\ 5.7.4]{haugseng}}] 
		\label{lem:coCartrefl}
		Let $p: E \to B$ be a functor and $\iota: \widetilde{E} \subseteq E$ a full subcategory.
		\begin{enumerate}[(1)]
				\item If $p$ is a (co)Cartesian fibration and $\widetilde{E}$ is closed under (co)Cartesian transport, then $p \iota : \widetilde{E} \to B$ is a (co)Cartesian fibration. In this case, an arrow in $\widetilde{E}$ is $p \iota$-(co)Cartesian if and only if it is $p$-(co)Cartesian and $p \iota$-(co)Cartesian transport agrees with $p$-(co)Cartesian transport.
				\item If $p$ is a Cartesian fibration and $\iota$ admits a relative right adjoint $\iota^\rR$ over $B$, then $p \iota: \widetilde{E} \to B$  is also a Cartesian fibration. Given an arrow $f:b \to b'$ in $\cB$, the Cartesian transport of $e' \in \widetilde{E}_{b'}$ is given by the composite $ \iota \iota^\rR f^! e' \to f^! e' \to e'$, where $f^!e'$ denotes the $p$-Cartesian transport of $f$. In particular $\iota^\rR$ preserves Cartesian morphisms.
				\item If $p$ is a coCartesian fibration and $\iota$ admits a relative left adjoint $\iota^\rL$ over $B$, then $p \iota: \widetilde{E} \to B$  is also a coCartesian fibration. 
				Given an arrow $f:b \to b'$ in $\cB$, the coCartesian transport of $e \in \widetilde{E}_{b}$ is given by the composite $ e \to f_!e \to \iota \iota^\rL f_! e$, where $f_!e$ denotes the $p$-coCartesian transport of $f$. In particular $\iota^\rL$ preserves coCartesian morphisms.
		\end{enumerate}
		\end{lemma}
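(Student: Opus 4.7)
The plan is to verify the (co)Cartesian lifting property directly in each case. For part (1), given $b \to b'$ in $B$ and $e' \in \widetilde{E}$ over $b'$, I would choose a $p$-Cartesian lift $\tilde{f}: f^!e' \to e'$ in $E$. By assumption $f^!e' \in \widetilde{E}$, so $\tilde{f}$ lives in $\widetilde{E}$. Since $\iota$ is fully faithful, for any $\tilde{e} \in \widetilde{E}$ over $b'' \in B$ and factorisation data $b'' \to b \to b'$ in $B$ together with $\tilde{e} \to e'$ over the composite, the unique lift in $E$ provided by $\tilde{f}$ being $p$-Cartesian automatically lies in $\widetilde{E}$. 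This shows $\tilde{f}$ is $p\iota$-Cartesian; the dual argument handles the coCartesian case. The characterisation of lifts and transports is then immediate from this construction.

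For part (3), given $f:b\to b'$ and $e \in \widetilde{E}_b$, I would form the $p$-coCartesian lift $\alpha: e \to f_!e$ in $E$, then compose with the unit $\eta_{f_!e}: f_!e \to \iota\iota^{\rL}f_!e$ of the relative adjunction, which by \cref{reminder:relativeadjoints} lies over an equivalence in $B$ (so over $\id_{b'}$ after choosing a suitable representative). It remains to verify that the composite $e \to \iota\iota^{\rL} f_!e$ is $p\iota$-coCartesian. For this I would exploit the universal properties: given $\tilde{e}' \in \widetilde{E}$ over $b''$, a morphism $g: b' \to b''$ and a morphism $e \to \tilde{e}'$ over $g\circ f$, $p$-coCartesianness of $\alpha$ yields a unique filler $f_!e \to \tilde{e}' = \iota\tilde{e}'$ in $E$; by the universal property of the unit of $\iota \dashv \iota^{\rR}$ (applied fibrewise via \cref{obs:fiberwiseAdj}, noting the adjunction is relative over $B$), this factors uniquely through $\eta_{f_!e}$ to give the desired lift in $\widetilde{E}$. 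In particular $\iota^{\rL}$ sends $p$-coCartesian morphisms to $p\iota$-coCartesian morphisms. Part (2) is exactly the dual statement, obtained by passing to opposite categories (or verified by the same argument, replacing the unit by the counit of $\iota \dashv \iota^{\rR}$ and using that this counit lies over an equivalence in $B$).

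The main subtlety will be the careful bookkeeping in (3) to check that the composite $e \to f_!e \to \iota\iota^{\rL}f_!e$ really is coCartesian; naively the unit need not be an equivalence, so one must genuinely use both the $p$-coCartesianness of $\alpha$ and the universal property of the unit of the relative adjunction, together with the fact that the fibres of $\widetilde{E} \hookrightarrow E$ over $B$ inherit the adjunction from \cref{obs:fiberwiseAdj}. Once this is in place, uniqueness of lifts (up to contractible choice) follows formally from uniqueness in both the $p$-coCartesian universal property and the adjunction unit.
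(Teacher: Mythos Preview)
Your approach is correct and essentially the same as the paper's. The paper handles (1) as ``immediate'', proves (2) by directly verifying the pullback square
\[
\begin{tikzcd}
\Map_{\widetilde{E}}(e'_0, \iota^\rR f^! e') \arrow[r] \arrow[d] & \Map_{\widetilde{E}} (e'_0, e') \arrow[d] \\
\Map_{B}(b_0, b) \arrow[r] & \Map_{B}(b_0, b')
\end{tikzcd}
\]
using the adjunction identification $\Map_{\widetilde{E}}(e'_0, \iota^\rR f^! e') \simeq \Map_E(\iota e'_0, f^! e')$ together with $p$-Cartesianness of $f^! e' \to e'$, and then notes (3) is dual. Your argument unpacks the same identification in terms of factoring through the unit, which is just the other side of the adjunction equivalence; the invocation of \cref{obs:fiberwiseAdj} is not really needed since the relative adjunction already gives the mapping-space equivalence directly. (Minor slip: in your part (3) you write ``the unit of $\iota \dashv \iota^{\rR}$'' where you mean $\iota^{\rL} \dashv \iota$.)
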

	\begin{proof}
		The first statement is immediate.
		For the second statement we need to show that for any $e'_0 \in \widetilde{E}$ over $b_0 \in B$, the following square is a pullback:
		\[
		\begin{tikzcd}
			\Map_{\widetilde{E}}(e'_0, \iota^\rR f^! e') \arrow[r] \arrow[d] & \Map_{\widetilde{E}} (e'_0, e') \arrow[d] \\
			\Map_{B}(b_0, b) \arrow[r] & \Map_{B}(b_0, b')
		\end{tikzcd}
		\]
		But the upper left mapping space agrees agrees with $\Map_E(\iota e'_0, f^! e')$ while the upper right agrees with $\Map_E(e'_0, e')$, so this follows from $p$-Cartesianity of $f$.
		The third statement is dual to the second.
	\end{proof}

		\begin{prop} \label{prop:ArrLCartesian} Let $C$ be a category with a factorization system $(L, R)$ and let $\Arr^{\rL}(C)$  be the full subcategory of $\Arr(C):= \Fun([1], C)$ on the morphisms in $L$. Then, the source projection $\Arr^\rL(C) \to C$ is a Cartesian fibration. If $C$ admits pushouts, it is also a coCartesian fibration.  A morphism $l_0 \to l_1$ in $\Arr^{\rL}(C)$, represented by a square 
	\[\begin{tikzcd}
	c \arrow[r, "\phi"]  \arrow[d, "l_0 \in L"']& d\arrow[d, "l_1 \in L"]\\
	c' \arrow[r, "\psi"] & d' 
	\end{tikzcd}
	\]
	\begin{itemize} 
	\item is Cartesian iff $\psi \in R$; 
	\item is coCartesian iff this square is a pushout square in $C$. 
	\end{itemize}
		\end{prop}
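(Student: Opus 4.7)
The plan is to prove both statements by constructing explicit (co)Cartesian lifts using the factorization system, and then to read off the characterizations of (co)Cartesian morphisms from the uniqueness of these lifts up to equivalence.

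For the Cartesian part, given $\phi\colon c \to c'$ in $C$ and $(l'\colon c' \to d') \in \Arr^L(C)$, I would factor the composite $l' \circ \phi$ as $\psi \circ l$ with $l \in L$ and $\psi \in R$ using the factorization system. This produces a commutative square exhibiting a morphism $l \to l'$ in $\Arr^L(C)$ lying over $\phi$. To verify Cartesianness, I would use that mapping spaces in $\Arr(C)$ fit into the pullback $\Map_{\Arr(C)}(l_0, l) \simeq \Map_C(c_0, c) \times_{\Map_C(c_0, d)} \Map_C(d_0, d)$ and that $\Arr^L(C)$ is a full subcategory. Reducing fiberwise over $\Map_C(c_0, c) \to \Map_C(c_0, c')$, the required pullback square of mapping spaces unpacks precisely to the assertion that the square
\[
\begin{tikzcd}
\Map_C(d_0, d) \arrow[r, "\psi \circ -"] \arrow[d, "- \circ l_0"'] & \Map_C(d_0, d') \arrow[d, "- \circ l_0"] \\
\Map_C(c_0, d) \arrow[r, "\psi \circ -"] & \Map_C(c_0, d')
\end{tikzcd}
\]
is a pullback, which is exactly the orthogonality of $l_0 \in L$ against $\psi \in R$ (\HTT{Prop.}{5.2.8.17}).

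For the coCartesian part (assuming $C$ has pushouts), given $\phi\colon c \to c'$ and $l\colon c \to d$ in $L$, I would form the pushout $d' := c' \sqcup_c d$ and note that the induced morphism $l'\colon c' \to d'$ lies in $L$ by \HTT{Prop.}{5.2.8.6}, which asserts closure of $L$ under pushouts. The resulting pushout square is then a morphism $l \to l'$ in $\Arr^L(C)$, and its coCartesianness follows directly from the universal property of the pushout (which translates into the required pullback of mapping spaces via the same formula for $\Map_{\Arr(C)}$), combined again with the full inclusion $\Arr^L(C) \subseteq \Arr(C)$.

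Finally, for the explicit characterizations: any square with $\psi \in R$ on the bottom exhibits $l_1 \circ \phi$ as a factorization $\psi \circ l_0$ of type $(L, R)$, so by essential uniqueness of factorizations (the left class and right class determine each other uniquely) it agrees with the Cartesian lift constructed above; conversely Cartesian lifts are unique up to equivalence, so any Cartesian morphism arises this way and in particular has $\psi \in R$. The analogous argument, using uniqueness of pushouts, handles the coCartesian characterization. The only subtle point will be packaging the orthogonality of $L$ against $R$ at the level of mapping spaces rigorously in the $\infty$-categorical setting, but this is exactly what \HTT{\S}{5.2.8} provides.
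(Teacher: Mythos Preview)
Your proof is correct but follows a different route from the paper. You construct the (co)Cartesian lifts explicitly and verify the lifting property by hand, reducing the Cartesian case to the mapping-space formulation of orthogonality and the coCartesian case to the universal property of the pushout. The paper instead proceeds in two steps: first it recalls (citing \HTT{Lem.}{6.1.1.1}) that the source projection $\Arr(C) \to C$ is already a (co)Cartesian fibration, with Cartesian morphisms the squares whose target component is an equivalence and coCartesian morphisms the pushout squares; then it observes that by \HTT{Lem.}{5.2.8.19} the inclusion $\Arr^{\rL}(C) \subseteq \Arr(C)$ admits a right adjoint over $C$ (sending an arrow to the $L$-half of its factorization), and invokes their general \cref{lem:coCartrefl} to conclude. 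For the coCartesian statement they implicitly use that $L$ is closed under pushouts, so $\Arr^{\rL}(C)$ is closed under coCartesian transport.

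Your approach is more self-contained and makes the role of orthogonality completely transparent; the paper's approach is shorter once the infrastructure of \cref{lem:coCartrefl} is in place and also yields the explicit description of Cartesian transport as ``compose and then take the $L$-half'' as a byproduct of that lemma rather than as a separate verification.
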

	\begin{proof}
	Recall that for any category $\cC$, the source projection $\Arr(\cC) \to \cC$ is a Cartesian fibration with Cartesian morphisms given by those squares which induce an equivalence on the target component and is a coCartesian fibration if $\cC$ admits pushouts with coCartesian morphisms given by  pushout squares \HTT{Lem.}{6.1.1.1}.
	By \HTT{Lem.}{5.2.8.19}, the full inclusion $\Arr^\rL(C) \to \Arr(C)$ has a right adjoint sending an $(a\to b)$ to the left half of its factorization.  Hence, the statement follows directly from \cref{lem:coCartrefl}.	\end{proof}


	\section{Background on operads}
	\label{sec:operads}

	\newcommand{\CMon}{\mathrm{CMon}}
	In this section, we briefly recall several statements from the theory of symmetric monoidal categories and operads.

	For a (symmetric) operad $O$, we write $O^{\otimes} \to \Fin$ for its category of operations and write $\underline{O}:= O_{\langle 1 \rangle}$ for its underlying category, defined as the fiber over $\langle 1 \rangle \in \Fin$. We will sometimes refer to the objects of $\underline{O}$ as the \emph{colors} of $O$ and the morphisms covering the terminal map $\langle n \rangle \to \langle 1 \rangle$ in $\Fin$ as the \emph{$n$-ary multimorphisms} or \emph{$n$-ary operations}.

		For an operad $O$, an \emph{$O$-monoidal category} is an $O$-algebra in $\cat$ with its Cartesian monoidal structure. By \HA{Prop.}{2.1.2.12}, this is equivalent to the unstraightened datum of a coCartesian fibration of operads $M^{\otimes} \to O^{\otimes}$, and by \HA{Prop.}{2.4.2.5} further equivalent to an \emph{$O$-monoid} object in $\cat$, i.e.\ a functor $O^{\otimes} \to \cat$ satisfying appropriate Segal conditions. An $O$-monoidal category is \emph{compatible with colimits} if all $o \in \underline{O}$, the associated categories have colimits and if for all $n$-ary multimorphisms in $O$, the associated functors preserve colimits in each variable separately.

		An important special case is the notion of a symmetric monoidal category: If $C$ is a category with products, a \emph{commutative monoid} in $C$ is a functor $M:\Fin \to C$ satisfying the Segal conditions:  $M(\langle 0 \rangle)$ is terminal and the projections induce equivalences $M(\langle n \rangle) \to M(\langle 1 \rangle)^{\times n}$. Denote the full subcategory of commutative monoids by $\CMon(C) \subseteq \Fun(\Fin, C)$. A symmetric monoidal category is an object in $\CMon(\cat) \simeq \CAlg(\cat)$. 
		
		 If $C$ has colimits, the full inclusion $\CMon(C) \to \Fun(\Fin,C)$ admits a left adjoint and the Day convolution symmetric monoidal structure on $\Fun(\Fin C)$ induced by the smash product on $\Fin$ is compatible with this localization, i.e.\ induces a unique symmetric monoidal structure on $\CMon(C)$, henceforth referred to as the \emph{tensor product of commutative monoids} for which the localization $\Fun(\Fin, C) \to \CMon(C)$ is symmetric monoidal. By construction, it follows that the left adjoint $C\to \CMon(C)$ of the forgetful functor is symmetric monoidal for the Cartesian monoidal structure on $C$. 
		
		By \HA{Prop.}{2.2.4.9}, the subcategory inclusion $\CMon(\cat) \to \Op$  admits a left adjoint $\Env: \Op \to \CMon(\cat)$, the \emph{enveloping symmetric monoidal category}. 		
		\begin{reminder}\label{reminder:absolutetensor}
		Using \cite{haugseng2024operads}, it is shown in \cite[Lem.~2.1.3]{barkan2022equifibered} that this left adjoint $\Env: \Op \to \CMon(\cat)$ is a monomorphism in $\widehat{\cat}$, i.e.\ it exhibits $\Op$ as a subcategory of $\CMon(\cat)$ (explicitly described in \cite{barkan2022equifibered}). By \cite{barkan2023segalification}, the tensor product of commutative monoids on $\CMon(\cat)$ restricts to a unique symmetric monoidal structure on $\Op$ for which $\Env$ is symmetric monoidal. We refer to this symmetric monoidal structure on $\Op$ as the \emph{Boardman-Vogt tensor product}. As shown in \cite[Thm. E]{barkan2023segalification}, the tensor product functor $-\otimes -: \Op \times \Op \to \Op$ agrees with the one constructed model dependently in \HA{\S}{2.2.5}. It follows from \HAss{Const.}{3.2.4.1}{Prop.}{3.2.4.3} that the Boardman-Vogt tensor product is a closed symmetric monoidal structure with inner hom between operads $O$ and $P$ given by an operad $\Alg_O(P)$ whose underlying category is the category of $O$-algebras in $P$. Explicitly, this means that $\Alg_O(P)$ is equipped with a map of operads $\operatorname{ev}: O \otimes \Alg_O(P) \to P$ inducing an equivalence 
				\[ \Map_{\Op} (K, \Alg_{O}(P)) \simeq \Map_{\Op} (K \otimes O, P)\ \]
		for any operad $K \in \Op$. 	
		\end{reminder}

	\begin{lemma}\label{lem:algforgetsm}
	Let $V\in \CMon(\cat)$. Then, the functor $\Alg_{-}(V) : \Op^{\op} \to \Op$ factors through the subcategory $\CMon(\cat)$. In other words, for any operad $O$ the operad $\Alg_O(V)$ is in fact a symmetric monoidal category and for all operad maps $O\to P$, the induced operad map $\Alg_P(V) \to \Alg_O(V)$ is a symmetric monoidal functor. 
	\end{lemma}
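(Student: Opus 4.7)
My plan is to deduce the factorization formally from the closed symmetric monoidal structure on $(\Op, \otimes_{BV})$ recalled in \cref{reminder:absolutetensor}, together with the symmetric monoidal adjunction $\Env: \Op \rightleftarrows \CMon(\cat): \iota$ whose right adjoint is the subcategory inclusion. By general principles the right adjoint $\iota$ is then lax symmetric monoidal.

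First I would observe that $\CMon(\cat)$ is itself closed symmetric monoidal under the tensor product of commutative monoids. Indeed, by \cref{reminder:absolutetensor} it is a monoidal accessible localization of the Day convolution on $\Fun(\Fin,\cat)$, hence presentably symmetric monoidal, hence closed by the adjoint functor theorem; write $[-,-]_{\CMon(\cat)}$ for its internal hom.

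The key step is a standard Yoneda computation: for any adjunction $L: C \rightleftarrows D: R$ between closed symmetric monoidal categories with $L$ strong symmetric monoidal, there is a natural isomorphism $R[Lc,d]_D \simeq [c,Rd]_C$, natural in $c \in C^{\op}, d \in D$, as both sides corepresent $c' \mapsto \Map_D(Lc' \otimes Lc, d) \simeq \Map_C(c' \otimes c, Rd)$. Applying this to $L = \Env$, $R = \iota$, $c = O$, $d = V$, and using the identification $[O,\iota V]_{\Op} \simeq \Alg_O(V)$ from \cref{reminder:absolutetensor}, one obtains a natural isomorphism of functors $\Op^{\op} \to \Op$:
$$\Alg_{-}(V) \;\simeq\; \iota \circ [\Env(-), V]_{\CMon(\cat)}.$$

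This exhibits $\Alg_{-}(V)$ as the composite
$$\Op^{\op} \xrightarrow{\Env^{\op}} \CMon(\cat)^{\op} \xrightarrow{[-,V]_{\CMon(\cat)}} \CMon(\cat) \xrightarrow{\iota} \Op,$$
which factors through the subcategory $\iota: \CMon(\cat) \hookrightarrow \Op$. In particular, for every operad map $f: O \to P$, the induced operad map $\Alg_P(V) \to \Alg_O(V)$ is the image under $\iota$ of a morphism in $\CMon(\cat)$, and is therefore a symmetric monoidal functor. The only non-formal input is closedness of $\CMon(\cat)$, which follows quickly from presentability; the rest of the argument is a direct consequence of the universal properties of the adjunction and the internal homs.
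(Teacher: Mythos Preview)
Your proof is correct and follows essentially the same approach as the paper: both identify $\Alg_O(V)$ with $\iota[\Env(O),V]_{\CMon(\cat)}$ via the symmetric monoidal adjunction $\Env \dashv \iota$, thereby exhibiting the desired factorization. The paper gives this argument in a single sentence, while you spell out the standard internal-hom transfer formula $R[Lc,d]_D \simeq [c,Rd]_C$ and the closedness of $\CMon(\cat)$ explicitly.
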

	\begin{proof}
	Using the adjunction $ \Op \rightleftarrows \CMon(\cat)$ and the fact that the left adjoint $\Env: \Op \to\CMon(\cat)$ is symmetric monoidal, it follows that $\Alg_O(V)$ agrees with the underlying operad of the inner hom in $\CMon(\cat)$ between $\Env(O)$ and $V$.
	\end{proof}
	It follows from \cref{lem:algforgetsm} that if $V$ is a symmetric monoidal category, $O$ is an operad and $o\in \underline{O}$ a color, then the restriction functor $\ev_o: \Alg_O(V) \to V$ is symmetric monoidal. Hence, we sometimes refer to the symmetric monoidal structure on $\Alg_O(V)$ as the \emph{pointwise tensor product}.

	\begin{ex}	\label{ex:pointwisetensor}
	By \HA{Rmk.}{2.1.4.10}, the functor $\underline{(-)}: \Op \to \cat$ sending an operad to its underlying category admits a fully faithful left adjoint $\mathrm{Free}_{\Op}: \cat \to\Op$ sending a category $C$ to the operad whose underlying category is $C$ and which has no $n$-ary operations for $n \neq 1$. Hence, the composite $\cat \overset{\mathrm{Free}_{\Op}}{\longrightarrow} \Op \to \CMon(\cat)$ is left adjoint to the forgetful functor,  i.e.\ agrees with the free commutative monoid functor and hence is symmetric monoidal. Since $\Op \to \CMon(\cat)$ is a subcategory inclusion, it induces a symmetric monoidal structure on $\mathrm{Free}_{\Op}: \cat \to \Op$. 	
	
	This implies that for an operad $O$ and category $K$, the category $\Fun(K, \underline{O})$ is the underlying category of an operad (namely of $\Alg_{\mathrm{Free}_{\Op}(K)}(O)$). In particular,  for a symmetric monoidal category $V$, $\Fun(K, V)$ inherits a \emph{pointwise} symmetric monoidal structure for which $\Fun(K, V) \to V$ is symmetric monoidal. Explicitly, if $V$ is represented by a commutative monoid object $V(-): \Fin \to \cat$, then the symmetric monoidal structure on $\Fun(K,V)$ is given by the commutative monoid object $\Fun(K, V(-)): \Fin \to \cat$. 
	 	\end{ex}

	\begin{obs}\label{lem:freesm}
	For $V$ a symmetric monoidal category, the (by \cref{lem:algforgetsm}) symmetric monoidal functor $\RMod(V)  \to\Alg_{\operatorname{Triv} \sqcup \Ass}(V) \simeq  V \times \Alg(V)$ obtained by restricting along the map of operads $\operatorname{Triv} \sqcup \Ass \to \RMod(V)$ admits a symmetric monoidal left adjoint $\operatorname{Free}: V\times \Alg(V) \to \RMod(V)$.
	
	Indeed, by  \HA{Cor.}{4.2.4.4}, it admits a left adjoint which sends a pair $(C,A)$ to $C \otimes A$ regarded as a right $A$-module; it follows that the oplax monoidal structure induced by the symmetric monoidal structure on $\RMod(V) \to V \times \Alg(V)$ from \cref{lem:algforgetsm} is indeed strongly monoidal.	\end{obs}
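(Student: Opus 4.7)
My plan is to verify the two ingredients separately: existence of a left adjoint and strong (not just oplax) symmetric monoidality of that left adjoint.

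First, I would cite \HA{Cor.}{4.2.4.4} to obtain the pointwise left adjoint $\operatorname{Free}\colon V \times \Alg(V) \to \RMod(V)$ to the forgetful functor, with explicit formula $\operatorname{Free}(C,A) = (C \otimes A)_A$, the free right $A$-module on $C$. Since the forgetful/restriction functor $\RMod(V) \to V \times \Alg(V)$ is symmetric monoidal by \cref{lem:algforgetsm} (applied to the inclusion of operads $\operatorname{Triv} \sqcup \Ass \hookrightarrow \RM$), by \HA{Cor.}{7.3.2.7} its left adjoint $\operatorname{Free}$ canonically inherits an oplax symmetric monoidal structure.

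Next I would check that the oplax comparison maps are all isomorphisms; since symmetric monoidal functors are determined by their values on the unit and on binary tensors together with coherence, it suffices to check these two cases. For the unit, the monoidal unit of $V \times \Alg(V)$ under the pointwise structure is $(1_V, 1_V)$ where $1_V \in \Alg(V)$ denotes the initial (trivial) algebra. Then $\operatorname{Free}(1_V, 1_V) = 1_V \otimes 1_V \simeq 1_V$ as a right $1_V$-module, which is precisely the monoidal unit of $\RMod(V)$. For the binary case, given $(C_1, A_1)$ and $(C_2, A_2)$, the pointwise tensor product is $(C_1 \otimes C_2, A_1 \otimes A_2)$, so
\[
\operatorname{Free}\bigl((C_1, A_1) \otimes (C_2, A_2)\bigr) \simeq (C_1 \otimes C_2) \otimes (A_1 \otimes A_2)
\]
regarded as a right $A_1 \otimes A_2$-module, while
\[
\operatorname{Free}(C_1, A_1) \otimes \operatorname{Free}(C_2, A_2) \simeq (C_1 \otimes A_1) \otimes (C_2 \otimes A_2),
\]
also as a right $A_1 \otimes A_2$-module. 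The symmetry isomorphism of $V$ that interchanges the middle two factors provides the desired equivalence, and one sees directly that this equivalence agrees with the oplax comparison map induced by the adjunction counit.

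The main obstacle will be making the coherence of the symmetric monoidal structure precise, i.e.\ ensuring that the pointwise oplax structure on $\operatorname{Free}$ really is induced by these swap-and-identify isomorphisms coherently. This can be handled by invoking that $\RMod(V)$ is obtained from the operad $\RM^\otimes \to \Ass^\otimes$ via $\Alg$, so its symmetric monoidal structure is the pointwise one on $\Alg_{\RM}(V)$ (\cref{lem:algforgetsm}); under this identification the restriction $\RMod(V) \to \Alg_{\operatorname{Triv} \sqcup \Ass}(V) \simeq V \times \Alg(V)$ is induced by a symmetric monoidal map of operads, so its left adjoint is computed objectwise by the usual free construction and the oplax coherence is controlled pointwise, reducing everything to the two explicit computations above.
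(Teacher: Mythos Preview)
Your proposal is correct and follows essentially the same approach as the paper's inline justification: cite \HA{Cor.}{4.2.4.4} for the left adjoint with the explicit free-module formula, then observe that the oplax structure inherited from the symmetric monoidal right adjoint is in fact strong. The paper leaves the check that the oplax comparison maps are invertible entirely implicit, whereas you spell it out for the unit and the binary tensor; your additional remarks on coherence are more than the paper provides, but note that invertibility of the oplax structure maps is a pointwise condition on a natural transformation, so the explicit unit and binary checks already suffice without further coherence arguments.
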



		\begin{notat}
	A \emph{lax symmetric monoidal functor} $C\to D$ between symmetric monoidal categories is an operad map $C\to D$. We set $\Fun^{\otimes\mathrm{lax}}(C, D):= \Alg_{C}(D)$.	
		\end{notat}
		
	\begin{lemma}[{\cite[Lem. 3.1.1]{calmes2024motivic}}]\label{lem:coCartoperadscriterion}
	For a symmetric monoidal functor $F:C \to D$ the induced functor $C^{\otimes} \to D^{\otimes}$ is a coCartesian fibration of operads (in the sense of \HA{Def.}{2.1.2.13}), if and only if the underlying functor $C\to D$ is a coCartesian fibration such that for every object $c\in C$, the functor $c\otimes- : C\to C$ preserves $F$-coCartesian morphisms. In this case, a morphism in $C^\otimes$ is coCartesian iff it can be written as the composition of a morphism that is coCartesian over $\Fin$, and a coCartesian morphism in the fiberwise fibration $C^\otimes_{\langle n \rangle} \simeq C^{\times n} \to D^\otimes_{\langle n \rangle} \simeq D^{\times n}$ over fixed $\langle n \rangle \in \Fin$.
	
	Given a morphism $C \to C'$ in $\CMon_{/D}$, the induced map $C^{\otimes} \to C'{}^{\otimes}$ preserves coCartesian morphisms if and only if the underlying functor $ C\to C'$ does.
	\end{lemma}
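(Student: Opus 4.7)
The plan is to exploit the standard decomposition of morphisms in a symmetric monoidal category viewed as a coCartesian fibration over $\Fin$, and reduce the operadic coCartesian-fibration criterion to a fiberwise one together with a compatibility condition.

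First I would recall the basic structure: since $C$ and $D$ are symmetric monoidal, $C^\otimes \to \Fin$ and $D^\otimes \to \Fin$ are coCartesian fibrations, so any morphism $\phi \colon (c_1,\ldots,c_n) \to (c'_1,\ldots,c'_m)$ in $C^\otimes$ covering $\alpha \colon \langle n\rangle \to \langle m\rangle$ factors uniquely as a $\Fin$-coCartesian morphism $(c_1,\ldots,c_n) \to \bigl(\bigotimes_{i \in \alpha^{-1}(j)} c_i\bigr)_{j=1}^m$ followed by an arrow in the fiber $C^\otimes_{\langle m\rangle} \simeq C^{\times m}$, i.e.\ a tuple of morphisms in $C$. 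Since $F$ is symmetric monoidal, the induced functor $p \colon C^\otimes \to D^\otimes$ preserves $\Fin$-coCartesian morphisms and the above factorization; in particular $p$ is already a fibration of operads, so the remaining task is to produce (and identify) $p$-coCartesian lifts.

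For the implication $(\Leftarrow)$, assume $F \colon C \to D$ is a coCartesian fibration and each functor $c\otimes - \colon C \to C$ preserves $F$-coCartesian arrows. Given $(c_1,\ldots,c_n) \in C^\otimes$ and a morphism $\phi \colon (Fc_1,\ldots,Fc_n) \to (d_1,\ldots,d_m)$ in $D^\otimes$, factor $\phi$ as above; lift the $\Fin$-coCartesian first part via the symmetric monoidal structure of $C$, and lift each of the $m$ morphisms in the fiber via the $F$-coCartesian lifts in $C$. The resulting candidate morphism in $C^\otimes$ is $p$-coCartesian precisely when each of its $m$ components over the inert maps $\langle m\rangle \to \langle 1\rangle$ is $p$-coCartesian, which reduces by the same Segal decomposition to checking that certain composites of $F$-coCartesian arrows with the symmetric monoidal multiplication remain $F$-coCartesian; this is exactly ensured by the hypothesis that tensoring with any $c\in C$ preserves $F$-coCartesian morphisms (together with symmetry of $\otimes$), via the identity $f_1 \otimes f_2 = (c'_1 \otimes f_2) \circ (f_1 \otimes c_2)$ and its $n$-fold generalization. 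The explicit description of the coCartesian morphisms stated in the lemma then reads off directly from this construction.

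For $(\Rightarrow)$, restricting over $\langle 1\rangle \in \Fin$ shows that $C \to D$ is a coCartesian fibration. Given an $F$-coCartesian morphism $f \colon a \to b$ in $C$ and $c \in C$, the tuple $(\mathrm{id}_c, f) \colon (c,a) \to (c,b)$ is $p$-coCartesian in the fiber $C^{\times 2} \to D^{\times 2}$, and the symmetric monoidal multiplication $(c,b) \to c\otimes b$ is $p$-coCartesian because $F$ is symmetric monoidal; composing these yields a $p$-coCartesian morphism $(c,a) \to c\otimes b$ in $C^\otimes$ covering the active $\langle 2\rangle \to \langle 1\rangle$. Its factorization through the multiplication $(c,a) \to c\otimes a$ exhibits $\mathrm{id}_c \otimes f \colon c \otimes a \to c\otimes b$ as an $F$-coCartesian morphism in $C$, as required.

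Finally, for a morphism $G \colon C \to C'$ in $\CMon_{/D}$: a $p$-coCartesian morphism in $C^\otimes$ decomposes into $\Fin$-coCartesian pieces (always preserved by the symmetric monoidal $G^\otimes$) and fiberwise $F$-coCartesian pieces in $C \to D$; hence $G^\otimes$ preserves $p$-coCartesian morphisms iff $G$ preserves $F$-coCartesian morphisms. The main obstacle is the careful bookkeeping of Segal decompositions in the first direction—specifically verifying that the candidate lift is $p$-coCartesian against arbitrary test morphisms, where the hypothesis on tensoring enters to guarantee that the relevant iterated tensor products of $F$-coCartesian morphisms remain $F$-coCartesian.
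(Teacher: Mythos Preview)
The paper does not supply its own proof of this lemma: it is stated with a citation to \cite[Lem.~3.1.1]{calmes2024motivic} and used as a black box. Your sketch is the natural argument and matches what one finds in the cited reference: factor arbitrary morphisms in $D^\otimes$ into a $\Fin$-coCartesian piece and a fiberwise piece, lift each piece separately, and use the hypothesis on $c\otimes-$ to ensure that the resulting locally coCartesian lifts actually compose (equivalently, that tensor products of $F$-coCartesian arrows remain $F$-coCartesian). Your treatment of the converse direction and of the second assertion about maps in $\CMon_{/D}$ is likewise the standard one.

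One small point worth tightening: your sentence ``the resulting candidate morphism in $C^\otimes$ is $p$-coCartesian precisely when each of its $m$ components over the inert maps $\langle m\rangle \to \langle 1\rangle$ is $p$-coCartesian'' is slightly imprecise as written. What you actually use is that a fiberwise morphism in $C^{\times m} \to D^{\times m}$ is $p$-coCartesian (globally, not just fiberwise) if and only if each of its $m$ components is $F$-coCartesian in $C\to D$; this is where one either invokes a locally-coCartesian-implies-coCartesian criterion, or checks directly that such morphisms compose with the $\Fin$-coCartesian ones, which is exactly the step in which the tensoring hypothesis enters (as you correctly identify a few lines later). With that clarification your argument is complete.
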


	\begin{cor}\label{cor:coCartoperadscrit} 
	If $D$ is a symmetric monoidal category, then unstraightening induces a monomorphism  $\Fun^{\otimes\mathrm{lax}}(D, \cat) \hookrightarrow \CMon_{/D}$ with image the subcategory with:
	\begin{itemize}
	\item Objects given by  those symmetric monoidal functors $C\to D$ for which $C^{\otimes} \to D^{\otimes}$ is a coCartesian fibration, or equivalently for which the underlying functor $C\to D$ is a coCartesian fibration and for which $c\otimes-$ preserves coCartesian morphisms for every $c \in C$
	\item Morphisms given by  those $C\to C'$ in $\CMon_{/D}$ whose underlying functor $C \to C'$ preserves coCartesian morphisms. 
	\end{itemize}
	\end{cor}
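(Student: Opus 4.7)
The plan is to deduce the corollary by combining the operadic straightening/unstraightening equivalence with the criterion established in \cref{lem:coCartoperadscriterion}. By definition $\Fun^{\otimes\mathrm{lax}}(D,\cat)=\Alg_D(\cat)$ is the category of operad maps $D^{\otimes}\to\cat^{\otimes}$. By \HA{Prop.}{2.1.2.12} (and its colored variant \HA{Variant}{2.1.4.13}), operadic unstraightening identifies $\Alg_D(\cat)$ with the (non-full) subcategory of $\Op_{/D^{\otimes}}$ whose objects are coCartesian fibrations of operads $p\colon C^{\otimes}\to D^{\otimes}$ and whose morphisms are the operad maps over $D^{\otimes}$ that preserve coCartesian lifts of inert morphisms. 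Being a subcategory inclusion into $\Op_{/D^{\otimes}}$, this is in particular a monomorphism in $\lcat$.

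Next I would note that because $D$ is symmetric monoidal, $D^{\otimes}\to\Fin$ is a coCartesian fibration; hence for any coCartesian fibration of operads $p\colon C^{\otimes}\to D^{\otimes}$ the composite $C^{\otimes}\to\Fin$ is likewise a coCartesian fibration, endowing $C$ with a canonical symmetric monoidal structure for which $p$ is a symmetric monoidal functor. Post-composing with the subcategory inclusion $\Op\hookrightarrow\CMon(\cat)$ from \cref{reminder:absolutetensor}, the above identification thus factors through $\CMon_{/D}$, giving the required monomorphism $\Fun^{\otimes\mathrm{lax}}(D,\cat)\hookrightarrow\CMon_{/D}$.

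To identify the essential image I would invoke \cref{lem:coCartoperadscriterion} directly. The first half of that lemma says that, among symmetric monoidal functors $C\to D$, the ones for which $C^{\otimes}\to D^{\otimes}$ is a coCartesian fibration of operads are precisely those whose underlying functor is a coCartesian fibration with each operation $c\otimes-\colon C\to C$ preserving coCartesian morphisms; this gives the object-level description. The second half of the lemma translates between morphisms in $\Op_{/D^{\otimes}}$ preserving coCartesian lifts and morphisms in $\CMon_{/D}$ whose underlying functor preserves coCartesian morphisms, yielding the morphism-level description.

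The main subtlety, which is not purely formal bookkeeping, lies in the second part of \cref{lem:coCartoperadscriterion}: one must know that a symmetric monoidal functor $C\to C'$ over $D$ between such fibrations already determines the operad-level map $C^{\otimes}\to C'^{\otimes}$, so that preservation of coCartesian morphisms on underlying categories is equivalent to preservation on the full operadic level. Once this is granted, the comparison of subcategory images at each intermediate step (first inside $\Op_{/D^{\otimes}}$, then inside $\CMon_{/D}$) is immediate, and all claims of the corollary follow.
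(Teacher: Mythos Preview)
Your approach is essentially the paper's: combine operadic straightening/unstraightening with \cref{lem:coCartoperadscriterion}. The paper cites \HA{Prop.}{2.4.1.7} and \HA{Rem.}{2.4.2.6} for the unstraightening step, which directly identify $\Alg_D(\cat)\simeq\Mon_D(\cat)$ with the relevant subcategory of $D$-monoidal categories inside $\CMon_{/D}$; your route through $\Op_{/D^\otimes}$ and the embedding $\Op\hookrightarrow\CMon(\cat)$ reaches the same place but is a slightly longer detour.

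There is one genuine slip in your first paragraph: you describe the image of $\Alg_D(\cat)$ in $\Op_{/D^\otimes}$ as having morphisms the operad maps ``that preserve coCartesian lifts of inert morphisms.'' That is no condition at all---every operad map preserves inert-coCartesian lifts by definition---and so would give a full subcategory, which is wrong. Under unstraightening, morphisms in $\Alg_D(\cat)$ correspond to maps over $D^\otimes$ preserving \emph{all} coCartesian lifts, not just the inert ones. You tacitly use the correct version in your third paragraph when invoking the second half of \cref{lem:coCartoperadscriterion}, so the argument ultimately goes through, but the first paragraph as written is incorrect.
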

	\begin{proof}
		Combine \cref{lem:coCartoperadscriterion} with \HA{Prop.}{2.4.1.7} and \HA{Rem.}{2.4.2.6}.
	\end{proof}

	\bibliographystyle{alpha}
	\bibliography{cauchybib}

\end{document}